\theoremstyle{plain}
\newtheorem{thm}{Théorème}[chapter]
\newtheorem{lem}[thm]{Lemme}
\newtheorem{prop}[thm]{Proposition}
\newtheorem{coro}[thm]{Corollaire}
\theoremstyle{definition}
\newtheorem{defi}[thm]{Définition}
\theoremstyle{remark}
\newtheorem{rem}[thm]{Remarque}
\newtheorem{ex}[thm]{Exemple}
\newcommand\mat[4]{{\left(\begin{smallmatrix} #1 & #2\\#3 & #4\end{smallmatrix}\right)}}
\def\Z{{\mathbb{Z}}}
\def\C{{\mathbb{C}}}
\def\R{{\mathbb{R}}}
\def\Q{{\mathbb{Q}}}
\def\pte{{\mathbb{P}^1(\mathbb{Q})}}
\def\H{{\mathcal{H}}}
\def\G{{SL_2(\Z)}}
\def\S{{\mathfrak{S}}}
\renewcommand{\d}{\, {d}}
\renewcommand{\Re}{\mathrm{Re}}
\renewcommand{\dim}{\mathrm{dim}}
\renewcommand{\Im}{\mathrm{Im}}
\def\Ker{{\mathrm{Ker}}}
\def\Per{{\mathrm{Per}}}
\def\Ser{{\mathrm{Ser}}}
\def\holo{{\mathrm{holo}}}
\def\Res{{\mathrm{Res}}}
\title{
Valeurs multiples de fonctions $L$ de formes modulaires
\\ \large{Sous la direction de Loïc Merel}
}
\author{Nicolas Provost}
\date{9\text{ décembre }2014}
\begin{document}


\thispagestyle{empty}

\begin{center}
  \begin{tabular}{ll}
	\includegraphics[height = 3cm,width = 1cm]{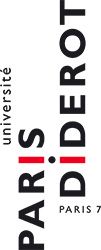}
	& {\large Université Paris VII - Denis Diderot}
  \end{tabular}
\end{center}

\begin{center}
\vspace{\stretch{1}}
{\Large \textbf{École Doctorale de Science Mathématiques de Paris Centre}}

\vspace{\stretch{2}}

{\Huge \textsc{Thèse de doctorat}}

\vspace{\stretch{1}}

{\LARGE Discipline : Mathématiques}

\vspace{\stretch{3}}

{\large présentée par}
\vspace{\stretch{1}}

\textbf{{\LARGE Nicolas \textsc{Provost}}}

\vspace{\stretch{2}}
\hrule
\vspace{\stretch{1}}
{\LARGE \textbf{
Valeurs multiples de fonctions $L$ de formes modulaires
}}
\vspace{\stretch{1}}
\hrule
\vspace{\stretch{2}}

{\Large 
dirigée par Loïc \textsc{Merel}}

\vspace{\stretch{3}}

{\Large Soutenue le 12 décembre 2014 devant le jury composé de :}

\vspace{\stretch{1}}
{\Large
\begin{tabular}{ll}
M. Pierre \textsc{Cartier} & Institut des Hautes \'Etudes Scientifiques\\
M. Loïc \textsc{Merel} & Université Denis Diderot Paris\\
M. Joseph \textsc{Oesterlé} & Université Pierre et Marie Curie Paris\\
M. Tanguy \textsc{Rivoal} & Institut Fourier Grenoble\\
M. Don \textsc{Zagier} & Max-Planck Institute Bonn
\end{tabular}
}
\vspace{\stretch{3}}

{\Large Rapportée par :}

\vspace{\stretch{1}}
{\Large
\begin{tabular}{ll}
M. Mladen \textsc{Dimitrov} & Université Lille 1\\
M. Don \textsc{Zagier} & Max-Planck Institute Bonn
\end{tabular}
}

\end{center}


\newpage

\vspace*{\fill}

\noindent
\begin{flushright}

\begin{minipage}[0]{9cm}
Institut de Mathématiques de Jussieu\\
Paris Rive Gauche\\
58-56, avenue de France \\
75 013 Paris\\
\\
UPMC - Paris Diderot\\
 Ecole Doctorale de Sciences\\
 Mathématiques de Paris Centre\\
 4 place Jussieu\\
 75252 Paris Cedex 05
 \end{minipage}
\end{flushright}

\newpage

\hrule
\begin{center}
\huge \textbf{Remerciements}
\end{center}
\hrule
\vspace{2.8cm}
{\large
\par
 
Je suis heureux d'exprimer ici ma reconnaissance à mon directeur de thèse, Loïc Merel, qui m'a proposé ce sujet d'études particulièrement propice à la recherche et à l'inventivité. Ses orientations et nos échanges ont toujours été d'une aide précieuse. Son soutien durant cette période et ses conseils toujours pertinents ont été très utiles à l'avancement de ce travail.\par

Mes remerciements vont également à Daniel Bertrand qui, en tant que tuteur, a su être à la fois à l'écoute et force de proposition pour rendre mon travail le plus accessible, me permettant d'avoir un nouveau point de vue de ce travail.\par

Don Zagier et Mladen Dimitrov m'ont fait l'honneur de bien vouloir officier comme rapporteurs de cette thèse. Je les en remercie vivement. Leurs remarques ouvrent de nombreuses pistes de progrès pour poursuivre sur ce sujet de recherche.\par

Je remercie également chaleureusement, Pierre Cartier, Joseph Oesterlé et Tanguy Rivoal pour avoir accepté de faire partie de mon jury.\par

Je remercie mes professeurs de Master $2$, et en particulier Marc Hindry, Christophe Cornut et David Harari, qui m'ont permis de découvrir un large panorama des Mathématiques que j'apprécie. Je remercie tout particulièrement Pierre Charollois qui en acceptant de m'encadrer pour l'élaboration de mon mémoire m'as permis de découvrir de belles propriétés des séries d'Eisenstein.\par

Je souhaite remercier mes professeurs de terminale et de classes préparatoires, Mr Pot, Mr Mélin et Mr Yebbou, qui m'ont donné à la fois le goût et les notions essentielles permettant de construire nos objets d'études mathématiques. Ma formation à l'ENS m'a alors permis d'entrevoir l'étendue et la richesse des mathématiques modernes.\par

Enfin, des remerciements plus personnels pour ma famille et mes proches qui m'ont encouragé et soutenu tout au long de ce travail de thèse.\par
}

\dominitoc


\tableofcontents

\chapter*{Introduction}
\addcontentsline{toc}{chapter}{Introduction}

\section*{0. Polyzêtas, périodes d'une forme modulaire et leurs généralisations selon Manin}

L'objet de cette étude est la rencontre de deux mondes celui des formes modulaires d'une part et d'autre part celui des polyzêtas.\par

Euler démontre que certaines valeurs aux entiers de la fonction $\zeta$ de Riemann sont algébriquement liées. On a par exemple $\zeta(2)^2=\frac{5}{2}\zeta(4)$. Ces relations prennent un sens parmi les relations de dépendance vérifiées par \textit{les valeurs multiples de zêta}(polyzêta selon la terminologie de Cartier \cite{Car}) formalisés par Hoffman \cite{Ho92} et Zagier \cite{Za94}:
\begin{equation}
\zeta(m_1,...,m_n)=\sum_{l_1,...,l_n\in\Z_{>0}}(l_1+...+l_n)^{-m_1}(l_2+...+l_n)^{-m_2}...l_n^{-m_n}.
\end{equation}
L'entier $n$ est appelé la \textit{longueur} et la somme $m_1+...+m_n$ le \textit{poids} de la valeur multiple de zêta.
On retrouve les relations connues ainsi que des nouvelles, comme $\zeta(3,2)=\frac{9}{2}\zeta(5)-2\zeta(2)\zeta(3)$.\par
L'étude de l'algèbre engendrée sur $\Q$ par les valeurs multiples de zêta et graduée par le poids a permis à Ecalle \cite{Eca} et Racinet \cite{Rac} de mettre en évidence des relations algébriques entre ces valeurs multiples de zêta conjecturées exhaustives:
\begin{equation}
\mathcal{Z}=\bigoplus_{k\geq 0} \mathcal{Z}_k,\quad\text{ où }\mathcal{Z}_k=\langle\zeta(m_1,...,m_n)\text{ pour }m_1+...+m_n=k\rangle_{\Q}.
\end{equation}
Les travaux de Kontsevitch et Zagier leurs ont permis de formuler la conjecture:
\begin{equation}
\dim_{\Q}(\mathcal{Z}_k)=d_k\text{ où on a posé }\sum_{k\geq 0}d_kt^k=(1-t^2-t^3)^{-1}.
\end{equation}
Grâce à une étude réalisée par Brown \cite{Br12}, on sait désormais que:
\begin{equation}
\mathcal{Z}_k=\langle\zeta(m_1,...,m_n)\text{ pour }m_1+...+m_n=k\text{ et }m_1,...,m_n\in\{2,3\}\rangle_{\Q}.
\end{equation}\par

Manin introduit des objets analogues pour la théorie des formes modulaires. Soit $k\geq 2$ un entier. Notons $S_k$ l'espace des formes modulaires paraboliques de poids $k$ et de niveau $1$. Toute forme modulaire $f(z)=\sum_{n>0} a_n(f)\exp(2i\pi n z)$ est caractérisée par sa fonction $L(f,s)=\sum_{n>0} a_n(f)n^{-s}$ associée. Plus précisément encore, Eichler et Shimura \cite{Sh59} démontrent l'injectivité de l'application:
\begin{equation}
S_k\to\C^{k-1},\quad f\mapsto \left(L(f,m)\right)_{ 1\leq m\leq k-1}.
\end{equation}
De plus, si $f$ est une forme primitive de Hecke alors il existe des nombres $\Omega_f^+,\Omega_f^-\in\C$ tel que:
\begin{equation}\label{per6}
L(f,m)\in \pi^{m}\Omega_f^{(-1)^m}\Q_f^*\text{ pour tout entier critique }1\leq m\leq k-1,
\end{equation}
où $\Q_f=\langle a_n(f);n>0\rangle_{\Q}$ est le corps engendré par les coefficients du développement en $q$-série de $f$.\par

Dans \cite{Ma1}, Manin combine ces deux constructions et définit une série génératrice en des variables non commutatives $A_1,...,A_n$ donnée par l'intégrale itérée
\begin{equation}
\sum_{N\geq 0}\sum_{1\leq i_1,...,i_n\leq N}\int_{0<t_1<...<t_N} \omega_{i_1}(it_1)...\omega_{i_N}(it_N)A_{i_1}...A_{i_N},.
\end{equation}
où $\omega_1,...,\omega_n\in\Omega^1(\H)$ sont des $1$-formes différentielles sur $\H$, le demi-plan de Poincaré, liées à une famille $(f_1,...,f_n)$ de formes modulaires.
Les coefficients de cette série peuvent être vus comme des valeurs multiples de fonctions $L$ de formes modulaires $L(f_1,...,f_n;m_1,...,m_n)$, où $f_j\in S_{k_j}$ et $1\leq m_j\leq k_j-1$ sont des entiers critiques, de la façon suivante.

\section*{1. Fonctions $L$ à plusieurs variables}

Soit $n\geq 0$ un entier. Soit $k=(k_1,...,k_n)$ une famille d'entiers naturels pairs. 
Soit $f_1,...,f_n$ une famille de formes modulaires paraboliques de niveau $1$ et de poids respectifs $k_1,...,k_n$. Pour chacune d'elles, on a un développement en $q$-série: $f(z)=\sum_{l>0}a_l(f)\exp(2i\pi lz)$ qui permet à Manin de définir la série de Dirichlet multiple:
\begin{equation}\label{defL}
L(f_1,...,f_n;s_1,...,s_n)=\sum_{l_1,...,l_n\in\Z_{>0}} \frac{a_{l_1}(f_1)...a_{l_n}(f_n)}{(l_1+...+l_n)^{s_1}(l_2+...+l_n)^{s_2}...l_n^{s_n}}.\\
\end{equation}
Considérons par ailleurs la transformée de Mellin itérée:
\begin{equation}\label{defLambda}
\Lambda(f_1,...,f_n;s_1,...,s_n)=\int_{0<t_1<...<t_n}f_1(it_1)t_1^{s_1-1}...f_n(it_n)t_n^{s_n-1}\d t_1...\d t_n,
\end{equation}
pour $(s_1,...,s_n)$ vérifiant $\Re(s_j)>k_j\text{ pour }1\leq j\leq n$.  On prolonge alors $\Lambda$ en une fonction holomorphe sur $\C^n$. On a une équation fonctionnelle:
\begin{equation}\label{equafonctio}
\Lambda(f_1,...,f_n;s_1,...,s_n)=i^{\sum_{j=1}^n k_j}\Lambda(f_n,...,f_1;k_n-s_n,...,k_1-s_1)
\end{equation}

Pour tout $\sigma\in\mathfrak{S}_n$, notons $\Lambda^{\sigma}(f_1,...,f_n;s_1,...,s_n)=\Lambda(f_{\sigma(1)},...,f_{\sigma(n)};s_{\sigma(1)},...,s_{\sigma(n)})$. On dispose alors de la formule, pour tout $(s_1,...,s_n)\in\C^n$:
\begin{equation}
\sum_{\sigma\in\mathfrak{S}_n}\Lambda^{\sigma}(f_1,...,f_n;s_1,...,s_n)=\Lambda(f_1,s_1)...\Lambda(f_n,s_n).
\end{equation}

\section*{2. Multipériodes des formes de niveau $1$}

On peut relier les formules (\ref{defL}) et (\ref{defLambda}), chapitre $2$ section $1$:
\begin{prop}
Soient $m_1,...,m_n\geq 1$ des entiers. On a: 
\begin{align}
\frac{L(f_1,...,f_n;m_1,...,m_n)}{(2\pi)^{m_1+...+m_n}}&=\sum_{\substack{M_1,...,M_n\geq 1\\M_1+..+M_n=m_1+...+m_n}}A_{m_1,...,m_n}(M_1,...,M_n)\frac{\Lambda(f_1,...,f_n;M_1,...,M_n)}{(M_1-1)!...(M_n-1)!},\\
\frac{\Lambda(f_1,...,f_n;m_1,...,m_n)}{(m_1-1)!...(m_n-1)!}&=\sum_{\substack{M_1,...,M_n\geq 1\\ M_1+..+M_n=m_1+...+m_n}}B_{m_1,...,m_n}(M_1,...,M_n)\frac{L(f_1,...,f_n;M_1,...,M_n)}{(2\pi)^{M_1+...+M_n}}.
\end{align}
On a dans $\Z$:
\begin{align}
A_{m_1,...,m_n}(M_1,...,M_n)&=(-1)^{\sum_{j=1}^n (n+1-j)(M_j-m_j)}\prod_{j=1}^{n-1}\binom{m_{j+1}-1}{\sum_{a=1}^j (M_a-m_a)},\\
B_{m_1,...,m_n}(M_1,...,M_n)&=\sum_{\substack{\alpha_{a,b}\geq 0\\ \sum_{a=1}^j\alpha_{a,j}=m_j-1\text{ et } \sum_{b=j}^n\alpha_{j,b}=M_j-1}}\frac{(m_1-1)!...(m_n-1)!}{\prod_{1\leq a\leq b\leq n} \alpha_{a,b}!}.
\end{align}
\end{prop}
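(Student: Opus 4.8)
The plan is to treat both identities as two expansions of one and the same object, exploiting the elementary Gamma integral $\int_0^\infty e^{-2\pi L u}u^{M-1}\d u=(M-1)!/(2\pi L)^M$ together with the unipotent change of variables $t_j=u_1+\cdots+u_j$ on the simplex $0<t_1<\cdots<t_n$, whose inverse is $u_a=t_a-t_{a-1}$ (with $t_0=0$) and whose Jacobian is $1$. I would first establish each identity in the region $\Re(s_j)>k_j$, where the expansions $f_j(it_j)=\sum_{l_j>0}a_{l_j}(f_j)e^{-2\pi l_j t_j}$ converge absolutely and every interchange of summation and integration is legitimate, and then pass to the integer arguments through the analytic continuation of $\Lambda$ recorded in the excerpt, the coefficients $A$ and $B$ being fixed finite numbers independent of $s$. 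The observation that makes both directions work is that, after the change of variables, $\sum_j l_j t_j=\sum_a L_a u_a$ with $L_a=l_a+\cdots+l_n$, so the nested partial sums in the denominators of $L$ are exactly the exponential rates attached to the $u_a$.

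For the second identity (expressing $\Lambda$ through $L$), I would insert the $q$-expansions into \eqref{defLambda}, perform the change of variables, and expand each monomial $t_j^{m_j-1}=(u_1+\cdots+u_j)^{m_j-1}$ by the multinomial theorem. This is what introduces the indices $\alpha_{a,b}$ ($1\le a\le b\le n$) subject to $\sum_{a\le j}\alpha_{a,j}=m_j-1$, together with the multinomial coefficients $\binom{m_j-1}{\alpha_{1,j},\dots,\alpha_{j,j}}$ that contribute the denominator $\prod_{a\le b}\alpha_{a,b}!$. The total power of $u_a$ is then $M_a-1:=\sum_{b\ge a}\alpha_{a,b}$, and integrating each $u_a$ against $e^{-2\pi L_a u_a}$ produces the factor $(M_a-1)!/(2\pi L_a)^{M_a}$; summing over $l_1,\dots,l_n$ reconstitutes $L(f_1,\dots,f_n;M_1,\dots,M_n)$. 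Regrouping the result according to the indices $(M_1,\dots,M_n)$, which automatically satisfy $\sum_a M_a=\sum_j m_j$, leaves precisely the inner sum over the $\alpha_{a,b}$ that defines $B_{m_1,\dots,m_n}(M_1,\dots,M_n)$.

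For the first identity (expressing $L$ through $\Lambda$) I would run the same machinery in reverse: starting from \eqref{defL}, write each factor $(2\pi L_a)^{-m_a}=\frac{1}{(m_a-1)!}\int_0^\infty e^{-2\pi L_a u_a}u_a^{m_a-1}\d u_a$, recombine the exponentials back into a simplex integral, and recognise the integrand as $\prod_a(t_a-t_{a-1})^{m_a-1}$ up to the normalising factor $\prod_a(m_a-1)!^{-1}$. Expanding this product by the binomial theorem is what generates the signs: the term $(-t_{a-1})^{k_a}$ contributes $(-1)^{k_a}\binom{m_a-1}{k_a}$. Collecting the total exponent of each $t_j$ and setting it equal to $M_j-1$ gives the telescoping relation $M_j-m_j=k_{j+1}-k_j$ (with $k_1=k_{n+1}=0$), whence $k_{j+1}=\sum_{a=1}^j(M_a-m_a)$; this identifies the binomials $\prod_{j=1}^{n-1}\binom{m_{j+1}-1}{\sum_{a=1}^j(M_a-m_a)}$, and, since $\sum_j(M_j-m_j)=0$, lets me rewrite the accumulated sign $(-1)^{\sum_a k_a}$ in the stated form $(-1)^{\sum_j(n+1-j)(M_j-m_j)}$. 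The remaining simplex integrals are exactly the $\Lambda(f_1,\dots,f_n;M_1,\dots,M_n)$.

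I expect the main obstacle to be twofold. Analytically, one must justify the term-by-term integration and the descent to the critical integer arguments, which lie outside the half-plane of absolute convergence; the safe route is to prove the identities first as relations between holomorphic functions of $(s_1,\dots,s_n)$ in the convergent range, and only then invoke the continuation. Combinatorially, the genuine labour is the bookkeeping — matching the expansion indices to the prescribed constraints on the $\alpha_{a,b}$ and verifying the exact sign in $A$. As a final consistency check I would confirm that the matrices $(A)$ and $(B)$, indexed by compositions of a fixed weight, are mutually inverse; this both validates the two formulas against one another and pins down the normalising factorials.
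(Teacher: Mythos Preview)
Your proposal is correct and follows essentially the same route as the paper: the paper's proof performs exactly the change of variables $t_j=u_1+\cdots+u_j$ (equivalently $u_a=t_a-t_{a-1}$) on the simplex, inserts the Gamma integrals $\int_0^\infty e^{-2\pi L u}u^{M-1}\d u=(M-1)!/(2\pi L)^M$, and then expands $(t_j-t_{j-1})^{m_j-1}$ by the binomial theorem in one direction and $(u_1+\cdots+u_j)^{m_j-1}$ by the multinomial theorem in the other, arriving at the same coefficients $A$ and $B$. Your combinatorial bookkeeping (the telescoping $k_{j+1}=\sum_{a\le j}(M_a-m_a)$ and the sign rewriting via $\sum_j(M_j-m_j)=0$) is exactly what underlies the paper's formula; the paper simply carries out the computation in the convergence region and then uses the resulting relation to \emph{define} $L^*$ at all positive integer arguments, which sidesteps the analytic-continuation concern you raise.
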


Ces formules ont été obtenues indépendamment par Sreekantan \cite{Sr}.
On obtient par exemple dans le cas $n=2$:
\begin{equation}
\frac{\Lambda(f_1,f_2;m_1,m_2)}{(m_1-1)!(m_2-1)!}=\sum_{\alpha=0}^{m_2-1}\binom{m_1-1+\alpha}{\alpha}\frac{L(f_1,f_2;m_1+\alpha,m_2-\alpha)}{(2\pi)^{m_1+m_2}}.
\end{equation}\par

La fonction $\Lambda$ nous paraît plus naturelle que la fonction $L$. En effet, elle vérifie notamment l'équation fonctionnelle (\ref{equafonctio}) et est donc plus maniable pour nos démonstrations.\par

Notons $B_k=\{(m_1,...,m_n)\in\Z^n\text{ tel que }1\leq m_j\leq k_j-1\text{ pour tout }1\leq j\leq n\}$ l'ensemble des \textit{multi-entiers critiques} pour la famille $(f_1,...,f_n)$.\par
Dans la théorie d'Eichler-Shimura-Manin-Zagier(voir par exemple Kohnen-Zagier \cite{KZ}), les nombres $\Lambda(f,m)$ (pour $1\leq m\leq k-1$) sont les \textit{périodes} de $f$. Par analogie, nous dirons que les nombres suivants sont les \textit{multipériodes de longueur} $n$ de la famille $(f_1,...,f_n)$:
\begin{equation}
\Lambda^{\sigma}(f_{1},...,f_{n};m_{1},...,m_{n})\text{ pour }(m_1,...,m_n)\in B_k\text{ et }\sigma\in\mathfrak{S}_n.
\end{equation}

Il résulte simplement (chapitre $4$ section $1$) des formules (5) et (11) que la famille des multipériodes caractérise, à proportionnalité près, la famille des formes $(f_1,...,f_n)$ (i.e. l'élément $f_1\otimes...\otimes f_n$).

\begin{prop}\label{ninj}
L'application $\C$-linéaire suivante est injective:
\begin{equation}
\otimes_{j=1}^n S_{k_j}(SL_2(\Z))\to \C^{\mathfrak{S}_n}\otimes \C^{B_k},
\quad f_1\otimes...\otimes f_n\mapsto\left(\Lambda_{f_1,...,f_n}^{\sigma}(m_1,...,m_n)\right)_{\substack{\sigma\in\mathfrak{S}_n\\(m_1,...,m_n)\in B_k}}.
\end{equation}
\end{prop}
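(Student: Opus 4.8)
The plan is to reduce the whole statement to the single-variable Eichler--Shimura injectivity (5), using the shuffle-type identity (11) to separate the variables. The crucial observation is that we do not need to exploit all the multiperiods $\Lambda^{\sigma}$ individually: it is enough to retain the symmetrized combination $\sum_{\sigma\in\mathfrak{S}_n}\Lambda^{\sigma}$, and by (11) this symmetrization already recovers a full tensor product of single-variable period vectors.

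Concretely, write $\Phi$ for the linear map of the proposition (it is well defined because each $\Lambda^{\sigma}$ is $\C$-multilinear in $(f_1,\dots,f_n)$). Let $S\colon \C^{\mathfrak{S}_n}\otimes\C^{B_k}\to\C^{B_k}$ be the linear map that sums over the $\mathfrak{S}_n$-index, i.e. $S=(\mathrm{sum})\otimes\mathrm{id}_{\C^{B_k}}$, and set $\Psi=S\circ\Phi$. Since $\Ker\Phi\subseteq\Ker\Psi$, it suffices to prove that $\Psi$ is injective. On a pure tensor $f_1\otimes\cdots\otimes f_n$ and at a critical multi-integer $(m_1,\dots,m_n)\in B_k$, formula (11) gives
\[
\Psi(f_1\otimes\cdots\otimes f_n)_{(m_1,\dots,m_n)}=\sum_{\sigma\in\mathfrak{S}_n}\Lambda^{\sigma}(f_1,\dots,f_n;m_1,\dots,m_n)=\prod_{j=1}^{n}\Lambda(f_j,m_j).
\]
Under the canonical identification $\C^{B_k}\cong\bigotimes_{j=1}^{n}\C^{k_j-1}$, the right-hand side is exactly $v_{f_1}\otimes\cdots\otimes v_{f_n}$, where $v_f=(\Lambda(f,m))_{1\le m\le k-1}$. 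Hence $\Psi=\bigotimes_{j=1}^{n}T_j$ is the tensor product of the single-variable maps $T_j\colon S_{k_j}\to\C^{k_j-1}$, $f\mapsto v_f$.

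It then remains to see that each $T_j$ is injective and to invoke the fact that a tensor product of injective linear maps between finite-dimensional $\C$-vector spaces is again injective. For the injectivity of $T_j$ I would use the single-variable case of the Proposition relating $L$ and $\Lambda$: for $n=1$ the matrices degenerate, since $M_1=m_1$ is forced and $A_{m_1}(m_1)=1$, so $\Lambda(f,m)$ and $L(f,m)$ differ only by the invertible diagonal factor $(2\pi)^{-m}(m-1)!$. Thus $T_j$ has the same kernel as the Eichler--Shimura map $f\mapsto(L(f,m))_{1\le m\le k_j-1}$, which is injective by (5). Combining these facts, $\Psi$ is injective, and therefore so is $\Phi$.

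The proof is essentially formal once (11) is available, so I do not anticipate a genuine obstacle; the only real content is the conceptual step of discarding all the antisymmetric information and keeping the symmetrized sum, which is precisely what makes the separation of variables, and hence the reduction to a Kronecker product of full-rank matrices, possible. The one point I would check carefully is that (11) is valid at the critical integer points, which it is since it holds for every $(s_1,\dots,s_n)\in\C^{n}$, and that the identification $\C^{B_k}\cong\bigotimes_j\C^{k_j-1}$ correctly matches the indexing of $B_k$ as the product $\prod_j\{1,\dots,k_j-1\}$.
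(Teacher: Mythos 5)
Your proof is correct, but it follows a genuinely different route from the one the paper actually writes out in Chapter~4, Section~1 (it is, in fact, the derivation from (5) and (11) that the introduction merely alludes to). The paper proves the injectivity of $R_k$ by induction on $n$, using only the partial shuffle relation for $(a,b)=(n,1)$, namely $\sum_{\sigma\in\mathfrak{S}_{n,1}}P^{\sigma}_{f_1,\dots,f_{n+1}}=P_{f_1,\dots,f_n}\otimes P_{f_{n+1}}$: it writes an arbitrary kernel element as $\sum_{\alpha}F_{\alpha}\otimes f_{\alpha}$ with $(f_{\alpha})$ linearly independent, invokes Eichler--Shimura to get the linear independence of the $P_{f_{\alpha}}$, deduces $R_{k_1,\dots,k_n}(F_{\alpha})=0$ for each $\alpha$, and closes the induction. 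Your full symmetrization over $\mathfrak{S}_n$ instead collapses the map in one step to the Kronecker product $\bigotimes_j T_j$ of the single-variable period maps; the passage from pure tensors to arbitrary tensors is then automatic (two linear maps agreeing on the spanning set of pure tensors coincide), and you conclude by the standard fact that a tensor product of injections of vector spaces over a field is injective. Your route is shorter and makes transparent that only the fully symmetrized multiperiods are needed; the paper's induction has the merit of isolating the $\mathfrak{S}_{n,1}$-shuffles, which is the form in which the shuffle relations are exploited throughout Chapters~3--5, and of not requiring the explicit identification $\C^{B_k}\cong\bigotimes_j\C^{k_j-1}$. The two points you flag at the end --- validity of (11) at the critical integers (it holds on all of $\C^n$ by analytic continuation) and the compatibility of the indexing of $B_k$ with the tensor factorization --- are indeed the only things to verify, and both check out.
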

Lorsque $n=1$, Eichler-Shimura-Manin-Zagier ont déterminé l'image de cette application. La généralisation à $n>1$ est l'objet de notre étude.

\section*{3. Détermination des multipériodes par les périodes}

Certaines formes modulaires se prêtent à des calculs explicites. Considérons la famille des formes $F_m^k\in S_k$ pour les entiers critiques $1\leq m\leq k-1$, voir par exemple Cohen \cite{Cohen81}, définie par:
$$F_m^k(z)=C_m\sum_{\mat{a}{b}{c}{d}\in\G}(az+b)^{-m}(cz+d)^{m-k}\text{ avec }C_m=-\frac{(2i)^{k-2}i^m}{\pi\binom{k-2}{m-1}}.$$
C'est une famille génératrice de $S_k$. On peut encore caractériser $F_m^k$ comme l'unique forme de $S_k$ telle que:
$$L(f,m)=\langle f, F_m^k\rangle,\text{ pour tout }f\in S_k,$$
où $\langle , \rangle$ est le produit scalaire de Petersson.
Kohnen et Zagier \cite{KZ} ont donnés une formule explicite pour $\Lambda(F_m,n)$ pour des entiers critiques $m$ et $n$ de parités opposées. Nous la rappelons chapitre $1$ section $6$. Dans le même esprit, nous obtenons la formule suivante (chapitre $2$ section $7$):

\begin{thm}\label{thmchap2}
Supposons que les entiers $m_1$, $n_1$, $m_2$ et $n_2$ vérifient les propriétés suivantes:
\begin{itemize}
\item Les entiers $m_1$ et $n_1$ sont de parités différentes.
\item Les entiers $m_2$ et $n_2$ sont de parités différentes.
\item Soit $n_1<m_1$, $n_1<\tilde{m_1}$, $n_2<m_2$ et $n_2<\tilde{m_2}$,\\
soit $m_1<n_1$, $\tilde{m_1}<n_1$, $m_2<n_2$ et $\tilde{m_2}<n_2$.
\end{itemize}
Alors on a:
\begin{align*}
\Lambda(F_{m_1}^{k_1},&F_{m_2}^{k_2};n_1,n_2)=\frac{i^{k_1+k_2}2^{k_1+k_2-2}}{(k_1-2)!(k_2-2)!}(Q(m_1,n_1,m_2,n_2)+i^{k_1}Q(\tilde{m_1},n_1,m_2,n_2)\\
&+i^{k_2}Q(m_1,n_1,\tilde{m_2},n_2)+i^{k_1+k_2}Q(\tilde{m_1},n_1,\tilde{m_2},n_2)+i^{k_1+k_2}Q(m_2,\tilde{n_2},m_1,\tilde{n_1})\\
&+i^{k_1}Q(\tilde{m_2},\tilde{n_2},m_1,\tilde{n_1})+i^{k_2}Q(m_2,\tilde{n_2},\tilde{m_1},\tilde{n_1})+Q(\tilde{m_2},\tilde{n_2},\tilde{m_1},\tilde{n_1})+R_2),
\end{align*}
où: $$\frac{Q(m_1,n_1,m_2,n_2)}{(\tilde{m_1}-1)!(n_1-1)!(\tilde{m_2}-1)!(n_2-1)!}$$
$$=\begin{cases}
\sum_{a=0}^{n_1-1}\sum_{b=0}^{m_2-1}(-1)^b\binom{m_2-1}{b}\binom{n_2-1+a}{a}\frac{\zeta(n_2-m_2+1+a+b,n_1-m_1+1-a-b)}{(2\pi)^{n_1+n_2-m_1-m_2+2}} &\text{si } n_1+n_2>m_1+m_2\\
C_{m_1}C_{m_2}J(m_2,m_1)&\text{si } n_1+n_2=m_1+m_2-2\\
 0&\text{sinon,}
 \end{cases}$$
avec :
$$\zeta(A,B)=\sum_{0<l_2<l_1} l_1^{-A}l_2^{-B},$$
$$J(\alpha,\beta)=\int_0^1 \d u\int_{-\infty}^{\infty}(1+iux)^{-\alpha}\d x\int_0^u\d v \int_{-\infty}^{\infty}(1+ivy)^{-\beta}\d y\in\frac{\pi^2}{2(\alpha-1)(\beta-1)}+\Q,$$
et enfin le nombre rationnel:
\begin{align*}
R_2&=1/2(\delta_{(n_1+n_2=2)}+i^{k_1}i^{k_2}\delta_{(\tilde{n_1}+\tilde{n_2}=2)})\frac{i^{m_1}\zeta(m_1)\zeta(\tilde{m_1})}{(k_1-1)\zeta(k_1)}\frac{i^{m_2}\zeta(m_2)\zeta(\tilde{m_2})}{(k_2-1)\zeta(k_2)}\\
&+L(F_{m_1},n_1)\delta_{(n_2=1)}\frac{i^{m_2}\zeta(m_2)\zeta(\tilde{m_2})}{(k_2-1)\zeta(k_2)}\\
&+L(F_{m_2},n_2)\delta_{(\tilde{n_1}=1)}\frac{i^{\tilde{m_1}}\zeta(m_1)\zeta(\tilde{m_1})}{(k_1-1)\zeta(k_1)}\\
&+\delta_{(\tilde{n_1}=1)}\frac{i^{\tilde{m_1}}\zeta(m_1)\zeta(\tilde{m_1})}{(k_1-1)\zeta(k_1)}\delta_{(n_2=1)}\frac{\zeta(m_2)\zeta(\tilde{m_2})}{(k_2-1)\zeta(k_2)}.
\end{align*}
\end{thm}

Les nombres $J(\alpha,\beta)$ définissent la série génératrice:
$$H(X,Y)=\sum_{\alpha,\beta\geq 2} (\alpha-1)(\beta-1)J(\alpha,\beta)X^{\alpha-2}Y^{\beta-2}.$$
Elle est entièrement déterminée par les formules:
$$\frac{d}{dY}\left((1-Y)H(X,Y)\right)=\frac{4}{(2-X-Y)(Y-X)}\log\left(\frac{1-X}{1-Y}\right)$$
et
$$H(X,Y)+H(Y,X)=\frac{\pi^2}{(1-X)(1-Y)}.$$

Nous en déduisons le résultat suivant pour la forme modulaire de Ramanujan de poids $12$, donnée par la formule:
$$\Delta(z)=q\prod_{n\geq 1}(1-q^n)^{24}=\sum_{n\geq 1}\tau(n)q^n\text{ avec }q=\exp(2i\pi z).$$

\begin{thm}\label{thmdeltadelta}
Soit $(n_1,n_2)\in \llbracket 1,6\rrbracket^2\cup \llbracket 6,11\rrbracket^2$. Alors on a:
$$\frac{\Lambda(\Delta,\Delta;n_1,n_2)}{\Lambda(\Delta,n_1)\Lambda(\Delta,n_2)}\in \sum_{\substack{2<a+b\text{ pair}\\1<a}}\Q\frac{\zeta(a,b)}{\pi^{a+b}}.$$
\end{thm}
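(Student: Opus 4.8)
The plan is to exploit that $\dim_{\C}S_{12}=1$, so that the quotient is a quantity independent of the chosen generator of $S_{12}$, and then to read it off from the explicit formula of Theorem~\ref{thmchap2}. First I would pass from $\Delta$ to the forms $F_m^{12}$: since $S_{12}=\C\Delta$, each generator is a nonzero scalar multiple $F_m^{12}=c_m\Delta$ (with $c_m\neq0$, being up to conjugation $L(\Delta,m)/\langle\Delta,\Delta\rangle$ by the characterisation $L(f,m)=\langle f,F_m^{12}\rangle$). Bilinearity of $\Lambda$ in $(f_1,f_2)$ then gives, for any admissible $m_1,m_2$,
\begin{equation*}
\frac{\Lambda(\Delta,\Delta;n_1,n_2)}{\Lambda(\Delta,n_1)\Lambda(\Delta,n_2)}
=\frac{\Lambda(F_{m_1}^{12},F_{m_2}^{12};n_1,n_2)}{\Lambda(F_{m_1}^{12},n_1)\,\Lambda(F_{m_2}^{12},n_2)},
\end{equation*}
the scalars $c_{m_1}c_{m_2}$ cancelling between numerator and denominator. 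I would then use the functional equation~(\ref{equafonctio}): for $f_1=f_2=\Delta$ and $k_1=k_2=12$ it reads $\Lambda(\Delta,\Delta;s_1,s_2)=\Lambda(\Delta,\Delta;12-s_2,12-s_1)$, while its one-variable form gives $\Lambda(\Delta,n)=\Lambda(\Delta,12-n)$. Hence the quotient at $(n_1,n_2)$ equals the quotient at $(12-n_2,12-n_1)$, and this involution sends $\llbracket1,6\rrbracket^2$ onto $\llbracket6,11\rrbracket^2$. It therefore suffices to treat $(n_1,n_2)\in\llbracket6,11\rrbracket^2$, which also explains why exactly these two ranges occur.

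For such $(n_1,n_2)$ I would choose $m_1,m_2$ of parities opposite to $n_1,n_2$ with $12-n_j<m_j<n_j$; writing $\tilde m_j=12-m_j$, such $m_j$ exist as soon as $n_j\geq7$ and place us precisely in the second alternative of the hypotheses of Theorem~\ref{thmchap2} (namely $m_j<n_j$ and $\tilde m_j<n_j$). Applying that theorem expands the numerator as an explicit $\Q$-linear combination of values $\zeta(A,B)/(2\pi)^{A+B}$ coming from the eight terms $Q$, of several even weights, together with the boundary contributions $J$ and $R_2$. The leading term $Q(m_1,n_1,m_2,n_2)$ lies in the branch $n_1+n_2>m_1+m_2$; since each $m_j+n_j$ is odd the weight $A+B=n_1+n_2-m_1-m_2+2$ is even and $>2$, while the outer index satisfies $A=n_2-m_2+1+a+b\geq2$, exactly matching the constraints $2<a+b$ even and $1<a$ of the target space.

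It remains to divide by the two single periods and to verify that nothing but such double zeta values survives. Here I would invoke the Kohnen--Zagier formula for $\Lambda(F_m^{12},n)$ at opposite parities (\cite{KZ}, recalled earlier), together with~(\ref{per6}) and $\Q_\Delta=\Q$, which pin down the transcendence of $\Lambda(\Delta,n)$ as a rational multiple of $\Omega_\Delta^{(-1)^n}$. A careful bookkeeping of the powers of $\pi$ and of the periods $\Omega_\Delta^{\pm}$ should show that the denominator supplies exactly the factors needed to turn each $(2\pi)^{-(A+B)}$ into $\pi^{-(A+B)}$ and to cancel both periods $\Omega_\Delta^{\pm}$, leaving a $\Q$-combination of the $\zeta(a,b)/\pi^{a+b}$.

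This $\pi$/$\Omega$ cancellation is the delicate core of the argument, and I expect the main obstacle to lie in controlling the two exceptional inputs it must absorb. On one hand, the terms $J(\alpha,\beta)\in\frac{\pi^2}{2(\alpha-1)(\beta-1)}+\Q$ and the rational-period term $R_2$ do not manifestly lie in $\sum\Q\,\zeta(a,b)/\pi^{a+b}$, and must be rewritten inside that space, using low-weight identities such as $\zeta(3,1)=\pi^4/360$ together with the relations satisfied by the generating series $H(X,Y)$; confirming that their stray powers of $\pi$ either cancel or are reabsorbed is where the computation is least routine. On the other hand, the central value $n_j=6$ is genuinely not covered by the hypotheses of Theorem~\ref{thmchap2} (no admissible $m_j$ exists, and the functional-equation involution fixes it), so it will require a separate degeneration or limiting argument, presumably by specialising the analytic continuation of $\Lambda$ as $s_j\to6$.
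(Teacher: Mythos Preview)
Your strategy is exactly the paper's: use $\dim_{\C}S_{12}=1$ to replace $\Delta$ by suitable $F_{m_1}^{12},F_{m_2}^{12}$, read off the numerator from Theorem~\ref{thmchap2}, and control the denominator by the Kohnen--Zagier formula (Theorem~\ref{thmKZ}). The reduction via the functional equation and your check that admissible $m_j$ exist for $n_j\in\llbracket7,11\rrbracket$ (resp.\ $\llbracket1,5\rrbracket$) are also what the paper does.

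Where you go astray is in the last step. Once you have passed entirely to the forms $F_{m_j}^{12}$, the denominator $\Lambda(F_{m_1}^{12},n_1)\Lambda(F_{m_2}^{12},n_2)$ is already a \emph{rational number} by Theorem~\ref{thmKZ}: there are no periods $\Omega_\Delta^{\pm}$ left to cancel, and no ``careful bookkeeping'' of transcendental factors is needed. The passage from $(2\pi)^{-(A+B)}$ to $\pi^{-(A+B)}$ is just absorption of $2^{-(A+B)}\in\Q$. Invoking~(\ref{per6}) and $\Omega_\Delta^{\pm}$ here is a detour; the equality you wrote at the outset makes them irrelevant. This is the simplification that makes the paper's argument one line rather than a delicate cancellation.

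Your two residual worries are handled as follows. The term $R_2$ is rational, and $\Q$ lies in the target space because e.g.\ $\zeta(2,2)/\pi^{4}=1/120\in\Q$; so there is nothing to absorb. For the $J$-branch, note that it is triggered only by the equality $n_1+n_2=m_1+m_2-2$ (and the analogous conditions on the seven other $Q$-arguments); with the freedom in choosing $m_1,m_2$ one simply avoids these finitely many collisions, as the paper does in its worked example $(m_1,m_2,n_1,n_2)=(5,6,2,3)$ where no $J$-term occurs. No identity such as $\zeta(3,1)=\pi^4/360$ is needed.

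Finally, you are right that $n_j=6$ is not reached by Theorem~\ref{thmchap2}: there is no odd $m$ with $6<m$ and $6<12-m$. The paper's own discussion covers only $\llbracket1,5\rrbracket^2\cup\llbracket7,11\rrbracket^2$ directly and treats the diagonal $n_1=n_2$ by the shuffle relation $\Lambda(f,f;n,n)=\tfrac12\Lambda(f,n)^2$; the off-diagonal cases with one coordinate equal to $6$ are not addressed explicitly there either. Your instinct that this boundary value needs a separate argument is correct.
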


Nous avons fait le calcul explicite dans un cas particulier et on obtient:
\begin{align*}
\Lambda(\Delta,\Delta&;2,3)=2^73^35^27 \Lambda(\Delta;2)\Lambda(\Delta;3)\Big[30\frac{\zeta(6,4)}{(2\pi)^{10}}+180\frac{\zeta(7,3)}{(2\pi)^{10}}+630\frac{\zeta(8,2)}{(2\pi)^{10}}+1680\frac{\zeta(9,1)}{(2\pi)^{10}}\\
&+\frac{\zeta(4,4)}{(2\pi)^{8}}+4\frac{\zeta(5,3)}{(2\pi)^{8}}+10\frac{\zeta(6,2)}{(2\pi)^{8}}+20\frac{\zeta(7,1)}{(2\pi)^{8}}-630\frac{\zeta(10)}{(2\pi)^{10}}\\
&-\frac{1105}{126}\frac{\zeta(7)}{(2\pi)^{8}}+\frac{12155}{63}\frac{\zeta(13)}{(2\pi)^{8}}-\frac{12155}{6}\frac{\zeta(15)}{(2\pi)^{8}}+\frac{5525}{3}\frac{\zeta(17)}{(2\pi)^{8}}\\
&+\frac{7456}{3}\frac{\zeta(9)}{(2\pi)^{10}}-330\frac{\zeta(11)}{(2\pi)^{10}}+143\frac{\zeta(13)}{(2\pi)^{10}}+\frac{67925}{3}\frac{\zeta(15)}{(2\pi)^{10}}-24310\frac{\zeta(17)}{(2\pi)^{10}}\Big].
\end{align*}

\section*{4. Polynôme des multipériodes}

Pour tout sous-anneau $A$ de $\C$, notons $V^{A}_{k}=\{P\in A[X_1,...,X_n];deg_{X_j}(P)\leq k_j-2\}$. Le $\C$-espace vectoriel $V_k=V_k^{\C}$ est munie d'une $\R$-structure donnée par $V_k^{\R}$ et donc d'une conjugaison complexe.

Le groupe $\S_n$ agit sur $P\G^n$ par:
\begin{equation}
(\gamma_1,...,\gamma_n)^{\rho}=(\gamma_{\rho(1)},...,\gamma_{\rho(n)}),\text{ pour }\gamma_1,...,\gamma_n\in P\G\text{ et }\rho\in\S_n.
\end{equation}

On a ainsi un produit semi-direct $P\G^n\rtimes \S_n$ donné par:
\begin{equation}
[\gamma,\rho][\gamma',\rho']=[\gamma^{\rho'}\gamma',\rho\rho'].
\end{equation}

Le groupe $P\G^n\rtimes \S_n$ opère sur $\left(V_k\right)^{\S_n}$ par:
\begin{equation}
\left(\sigma\mapsto P^{\sigma}(X_1,...,X_n)\right)|_{[\gamma,\rho]}=\left(\sigma\mapsto P^{\rho\sigma}(X_1|_{\gamma_{\sigma(1)}},...,X_n|_{\gamma_{\sigma(n)}})\right),
\end{equation}
où $[\gamma,\rho]\in P\G^n\rtimes \S_n$ et où l'action de $\mat{a}{b}{c}{d}\in PSL_2(\Z)$ sur un polynôme $P\in\C_{k_j-2}[X_j]$ est donnée par:
\begin{equation}
P\left(X_j|_{\mat{a}{b}{c}{d}}\right)=P\left(\frac{aX_j+b}{cX_j+d}\right)(cX_j+d)^{k_j-2}.
\end{equation}
Par $\Z$-linéarité, on obtient une action de $\Z[P\G^n\rtimes\S_n]$.

Définissons alors \textit{le polynôme des multipériodes}, en les indéterminées $X_1,...,X_n$, par:
\begin{equation}
P_{f_1,...,f_n}(X_1,...,X_n)=\int_{0<t_1<...<t_n} f_1(it_1)(X_1-it_1)^{k_1-2}...f_n(it_n)(X_n-it_n)^{k_n-2}\d t_1...\d t_n.
\end{equation}
C'est un élément de l'espace $V_k$. Considérons le polynôme des multipériodes de la famille permuté par $\sigma\in\S_n$ dans $V_k$ de $(f_{\sigma(1)},...,f_{\sigma(n)})$, ce polynôme donnés par:
\begin{equation}
P^{\sigma}_{f_1,...,f_n}(X_1,...,X_n)=P_{f_{\sigma(1)},...,f_{\sigma(n)}}(X_{\sigma(1)},...,X_{\sigma(n)}).
\end{equation}

Ceci permet de considérer l'application $\C$-linéaire et injective par la Proposition \ref{ninj} :
\begin{equation}
R_k:\otimes_{j=1}^n S_{k_j}(SL_2(\Z))\to \left(V_k\right)^{\S_n},\quad f_1\otimes...\otimes f_n\mapsto \left(\sigma\mapsto P_{f_1,...,f_n}^{\sigma}(X_1,...,X_n)\right).
\end{equation}
Notons $MP(k)$ son image.

Soit $a,b\geq 0$ des entiers tel que $a+b=n$. On définit le sous-ensemble de permutations:
\begin{equation}
\mathfrak{S}_{a,b}=\{\sigma\in\mathfrak{S}_n\text{ telle que }\sigma(1)<...<\sigma(a)\text{ et }\sigma(a+1)<...<\sigma(a+b)\}.
\end{equation}
Manin \cite{Ma1} met en évidence des propriétés liées que nous interprétons comme des relations de mélange (chapitre 2 section 5):

\begin{prop}
Les polynômes des périodes vérifient les relations de mélange:
\begin{equation}
\sum_{\sigma\in\mathfrak{S}_{a,b}}P_{f_1,...,f_n}^{\sigma}(X_1,...,X_n)=P_{f_1,...,f_a}(X_1,...,X_a)P_{f_{a+1},...,f_{a+b}}(X_{a+1},...,X_{a+b}).
\end{equation}
\end{prop}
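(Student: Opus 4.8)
The plan is to recognise each period polynomial as an iterated integral over a standard simplex and then to prove the relation by a Fubini-type decomposition of a product of simplices into chambers indexed by shuffles. First I would introduce, for each $j$, the auxiliary function $g_j(t)=f_j(it)(X_j-it)^{k_j-2}$ of a single real variable $t>0$, so that for every $\sigma\in\mathfrak{S}_n$
$$P^{\sigma}_{f_1,\dots,f_n}(X_1,\dots,X_n)=\int_{0<t_1<\dots<t_n} g_{\sigma(1)}(t_1)\cdots g_{\sigma(n)}(t_n)\,\d t_1\cdots\d t_n .$$
Before manipulating these integrals I would check absolute convergence, so that Fubini and Tonelli apply: since each $f_j$ is a cusp form of level $1$, modularity at the cusp gives $f_j(it)=(it)^{-k_j}f_j(i/t)$, whence $f_j(it)$ decays exponentially both as $t\to 0^+$ and as $t\to+\infty$, while each factor $(X_j-it)^{k_j-2}$ grows only polynomially. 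Thus every integral above converges absolutely, and likewise for $P_{f_1,\dots,f_a}$ and $P_{f_{a+1},\dots,f_{a+b}}$ taken separately.

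Next I would write the right-hand side as a single integral over a product of simplices. Relabelling the integration variables of the two factors as $x_1<\dots<x_a$ and $x_{a+1}<\dots<x_{a+b}$, Fubini gives
$$P_{f_1,\dots,f_a}(X_1,\dots,X_a)\,P_{f_{a+1},\dots,f_{a+b}}(X_{a+1},\dots,X_{a+b})=\int_{\Delta_{a}\times\Delta_{b}}\;\prod_{j=1}^{n} g_j(x_j)\,\d x_1\cdots\d x_n,$$
where $\Delta_a\times\Delta_b=\{x_1<\dots<x_a\}\cap\{x_{a+1}<\dots<x_{a+b}\}\subset\R_{>0}^n$. I would then decompose $\R_{>0}^n$, up to the measure-zero union of diagonals $\{x_i=x_j\}$, into the $n!$ open chambers $D_{\sigma}=\{x_{\sigma(1)}<\dots<x_{\sigma(n)}\}$. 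The chambers contained in $\Delta_a\times\Delta_b$ are exactly those whose associated total order is a common linear extension of the two chains $x_1<\dots<x_a$ and $x_{a+1}<\dots<x_{a+b}$, that is, the $(a,b)$-shuffles, indexed by $\mathfrak{S}_{a,b}$.

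Finally, on each admissible chamber $D_{\sigma}$ I would perform the order-preserving substitution $y_r=x_{\sigma(r)}$, which carries $D_\sigma$ onto the standard simplex $0<y_1<\dots<y_n$ and transforms $\int_{D_\sigma}\prod_j g_j(x_j)\,\d x$ into $\int_{0<y_1<\dots<y_n} g_{\sigma(1)}(y_1)\cdots g_{\sigma(n)}(y_n)\,\d y=P^{\sigma}_{f_1,\dots,f_n}(X_1,\dots,X_n)$. Summing over the admissible chambers then yields the asserted identity.

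The computation itself is elementary; the points needing care are the justification of Fubini, handled by the exponential decay of cusp forms at both cusps, and, above all, the combinatorial bookkeeping. One must pin down precisely the convention relating a chamber $D_\sigma$ to the permuted polynomial $P^{\sigma}$, and check that the admissible chambers are counted by $\mathfrak{S}_{a,b}$ rather than by the inverse set $\mathfrak{S}_{a,b}^{-1}$; the two descriptions of a shuffle, by the images $\sigma(i)$ versus the preimages $\sigma^{-1}(i)$, are easy to confuse. This indexing is the only genuine subtlety of the argument.
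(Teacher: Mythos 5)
Your argument is correct in substance but follows a genuinely different route from the paper. The paper never decomposes a product of simplices: it first establishes Manin's comultiplication identity $\Delta(J_a^b(\Omega))=J_a^b(\Omega)\otimes J_a^b(\Omega)$ on the non-commutative generating series, by observing that both sides solve the same Cauchy problem $\partial_z\varphi=\Delta(\Omega)\varphi$, $\varphi(a)=1\otimes 1$, and then reads off the shuffle relations on the multiperiod polynomials as the homogeneous components of this identity through the graded isomorphism $\Psi$. Your Fubini--chamber decomposition is the classical Chen-style proof: it is more elementary and self-contained (no uniqueness theorem for ODEs with values in non-commutative power series), and your convergence discussion via $f_j(it)=(it)^{-k_j}f_j(i/t)$ supplies exactly the integrability needed for Fubini. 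What the paper's route buys is that all shuffle relations, for every $(a,b)$, come packaged at once in a single Hopf-algebraic identity which is reused elsewhere (non-commutative Manin relations, higher level).

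The one point you flag but do not settle is precisely where the computation and the printed statement can part ways, so it should be settled. With the paper's conventions $P^{\sigma}_{f_1,\dots,f_n}(X_1,\dots,X_n)=P_{f_{\sigma(1)},\dots,f_{\sigma(n)}}(X_{\sigma(1)},\dots,X_{\sigma(n)})$ and $\mathfrak{S}_{a,b}=\{\sigma:\sigma(1)<\dots<\sigma(a),\ \sigma(a+1)<\dots<\sigma(a+b)\}$, your chamber $D_\sigma=\{x_{\sigma(1)}<\dots<x_{\sigma(n)}\}$ does satisfy $\int_{D_\sigma}\prod_j g_j(x_j)=P^{\sigma}$, as you say; but $D_\sigma\subset\Delta_a\times\Delta_b$ holds exactly when $\sigma^{-1}(1)<\dots<\sigma^{-1}(a)$ and $\sigma^{-1}(a+1)<\dots<\sigma^{-1}(n)$, i.e.\ when $\sigma^{-1}\in\mathfrak{S}_{a,b}$. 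Your decomposition therefore yields $\sum_{\sigma^{-1}\in\mathfrak{S}_{a,b}}P^{\sigma}$, and the two index sets genuinely differ as soon as $n\geq 3$: for $n=3$, $a=2$, $b=1$, the set $\mathfrak{S}_{2,1}$ contains the cycle with one-line notation $(2,3,1)$ but not its inverse $(3,1,2)$, and $P^{(2,3,1)}\neq P^{(3,1,2)}$ for generic $f_1,f_2,f_3$. This is a convention discrepancy rather than a defect of the method --- the two formulations carry the same information, and every instance the paper actually uses ($n=2$, or the inversion-stable permutation $(n,\dots,1)$) is unaffected --- but your proof is only complete once you commit to one description of a shuffle (images versus preimages) and record that it is the set $\{\sigma:\sigma^{-1}\in\mathfrak{S}_{a,b}\}$ that indexes the admissible chambers under the paper's definition of $P^{\sigma}$.
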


Notons $S=\mat{0}{-1}{1}{\phantom{-}0}$ et $U=\mat{\phantom{-}0}{1}{-1}{1}$.\par 
Pour $n=1$, rappelons que $P_{f_1}$ vérifie les relations de Manin:
\begin{equation}
P_{f_1}(X_1|_{1+S})=0\text{ et }P_{f_1}(X_1|_{1+U+U^2})=0.
\end{equation}
Elles engendrent essentiellement les relations linéaires entre périodes de $f_1$. En effet, on a:
\begin{equation}\label{decomp29}
W_{k_1}=\{P\in \C_{k_1-2}[X_1]\text{ tel que }P|_{1+S}=P|_{1+U+U^2}=0\}=MP({k_1})\oplus \overline{MP({k_1})}\oplus \langle 1-X_1^{k_1-2}\rangle_{\C}.
\end{equation}
Il résulte de \cite{Ma1} la généralisation suivante de ces relations:
\begin{prop}
On a:
\begin{align}
\sum_{\substack{a,b\geq 0\\ a+b=n}}P_{f_1,...,f_a}|_{(S,...,S)}&\otimes P_{f_{a+1},...,f_{a+b}}=0\\
\sum_{\substack{a,b,c\geq 0\\ a+b+c=n}}P_{f_1,...,f_a}|_{(U^2,...,U^2)}&\otimes P_{f_{a+1},...,f_{a+b}}|_{(U,...,U)}\otimes P_{f_{a+b+1},...,f_{a+b+c}}=0,
\end{align}
où nous posons $P_{F_{\emptyset}}=1\in\C$ pour $F_{\emptyset}$ la famille vide afin de simplifier les notations.
\end{prop}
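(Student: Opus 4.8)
Le plan est de plonger les polyn\^omes des multip\'eriodes dans la s\'erie g\'en\'eratrice des int\'egrales it\'er\'ees de Shimura, puis de lire les deux relations comme les traductions des identit\'es $S^{2}=1$ et $U^{3}=1$ dans $P\G$. Pour deux points $p$ et $q$ de $\H^{*}=\H\cup\pte$, je poserais $\omega_{j}=f_{j}(z)(X_{j}-z)^{k_{j}-2}\d z$ et $J(p,q)=\sum_{N\geq 0}J_{N}(p,q)$, o\`u $J_{N}(p,q)$ d\'esigne l'int\'egrale it\'er\'ee de longueur $N$ des formes $\omega_{1},...,\omega_{N}$ le long du chemin reliant $p$ \`a $q$. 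Les $f_{j}$ \'etant paraboliques, ces int\'egrales convergent jusque dans les pointes; les $\omega_{j}$ \'etant holomorphes, donc ferm\'ees, la valeur $J(p,q)$ ne d\'epend que de $p$ et $q$. Par construction $J_{N}(0,i\infty)=P_{f_{1},...,f_{N}}$, de sorte que le terme de longueur $N$ de $J:=J(0,i\infty)$ est le polyn\^ome des multip\'eriodes cherch\'e.

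Deux ingr\'edients serviraient de pivot. D'une part, la formule de composition des chemins de Chen fournit la multiplicativit\'e $J(p,q)\,J(q,r)=J(p,r)$ et l'inversion $J(q,p)=J(p,q)^{-1}$, le produit des s\'eries correspondant \`a la concat\'enation: le terme de longueur $N$ de $J(p,q)\,J(q,r)$ vaut $\sum_{a+b=N}J_{a}(p,q)\otimes J_{b}(q,r)$, soit exactement le produit tensoriel par blocs disjoints de variables $(X_{1},...,X_{a})$ et $(X_{a+1},...,X_{a+b})$ de l'\'enonc\'e. D'autre part, la modularit\'e de chaque $f_{j}$ et l'action sur les variables d\'efinie plus haut donnent l'\'equivariance $J(\gamma p,\gamma q)=J(p,q)|_{\gamma}$ pour tout $\gamma\in P\G$, obtenue par le changement de variable $z\mapsto\gamma z$ dans l'int\'egrale.

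La relation en $S$ en r\'esulterait aussit\^ot: comme $S$ \'echange les pointes $0$ et $i\infty$, on aurait $J|_{S}=J(i\infty,0)=J^{-1}$, c'est-\`a-dire $J|_{S}\cdot J=1$; l'extraction du terme de longueur $n$ donnerait alors $\sum_{a+b=n}P_{f_{1},...,f_{a}}|_{(S,...,S)}\otimes P_{f_{a+1},...,f_{a+b}}=0$. Pour la relation en $U$, je partirais du triangle id\'eal de sommets $i\infty,0,1$ que $U$ permute cycliquement ($U:i\infty\mapsto 0\mapsto 1\mapsto i\infty$): ce lacet \'etant contractile, la multiplicativit\'e donnerait $J(i\infty,0)\,J(0,1)\,J(1,i\infty)=1$, et l'\'equivariance transformerait les deux derniers facteurs en $J(i\infty,0)|_{U}$ et $J(i\infty,0)|_{U^{2}}$. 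Avec $J(i\infty,0)=J^{-1}$, on obtiendrait, en inversant, $J|_{U^{2}}\cdot J|_{U}\cdot J=1$, dont le terme de longueur $n$ est pr\'ecis\'ement la seconde relation. Dans les deux cas, le cas $n=1$ restitue les relations de Manin $P|_{1+S}=0$ et $P|_{1+U+U^{2}}=0$; l'ordre des facteurs $U$ et $U^{2}$, lui, est dict\'e par l'\'equivariance et par l'action cyclique de $U$ sur les pointes.

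L'obstacle principal me para\^it de nature analytique plut\^ot qu'alg\'ebrique: il faudra justifier l'invariance par homotopie des int\'egrales it\'er\'ees au voisinage des pointes, les sommets du triangle appartenant \`a $\pte$. Je pr\'evois d'y rem\'edier en tronquant chaque chemin pr\`es des pointes, puis en passant \`a la limite, la d\'ecroissance exponentielle des formes paraboliques $f_{j}$ assurant \`a la fois la convergence des int\'egrales et l'annulation des termes de bord. Le reste se ram\`ene \`a une comptabilit\'e combinatoire sur les d\'ecompositions $a+b=n$ et $a+b+c=n$, d\'ej\`a contenue dans le formalisme des symboles modulaires non commutatifs de Manin \cite{Ma1}.
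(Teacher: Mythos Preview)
Votre d\'emarche est correcte et co\"incide avec celle de l'article : on y \'etablit d'abord les relations de Manin non commutatives $J\cdot J|_{S}=1$ et $J|_{U^{2}}\cdot J|_{U}\cdot J=1$ sur la s\'erie g\'en\'eratrice des int\'egrales it\'er\'ees (via la relation de Chasles et l'action de $S,U$ sur les pointes $0,i\infty,1$), puis on extrait le terme de degr\'e $n$ apr\`es avoir v\'erifi\'e que le passage aux polyn\^omes est un morphisme de $\G$-modules. La seule diff\'erence est d'emballage : l'article transite par la s\'erie formelle en variables non commutatives et l'isomorphisme $\Psi$ vers les polyn\^omes, tandis que vous travaillez directement avec les int\'egrales it\'er\'ees des $\omega_{j}$ --- ce qui revient au m\^eme, l'article n'insistant d'ailleurs pas sur la justification analytique aux pointes que vous mentionnez, celle-ci \'etant implicite dans le caract\`ere parabolique des $f_{j}$.
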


\section*{5. Délimitation de $MP(k)$ par un $\Z[P\G^n\rtimes\S_n]-$module}

Nous définissons un analogue pour $n>1$ de $W_{k_1}$, un espace optimal $W_k$ délimitant $MP(k)$.
Pour tout idéal $A\subset\Z[\G^n\rtimes\S_n]$, posons:
$$\left(V_k^{\Q}\right)^{\S_n}[A]=\{P\in \left(V_k^{\Q}\right)^{\S_n}\text{ tel que }P|_a=0\text{ pour tout }a\in A\}.$$
Nous donnerons la construction par récurrence sur $n$ d'idéaux $\mathcal{A}_n$ et $\mathcal{A}_{n-1}[j,\sigma]$ pour tout $0\leq j\leq n$ et $\sigma\in\S_n$ vérifiant:
$$\mathcal{A}_n=\bigcap_{j=0}^n\bigcap_{\sigma\in\S_n} \mathcal{A}_{n-1}[j,\sigma].$$
De plus, si on pose $W_k^{\Q}=\left(V_k^{\Q}\right)^{\S_n}[\mathcal{A}_n]$ et
$ME_k^{\Q}=\sum_{j=0}^n\sum_{\sigma\in\S_n}\left(V_k^{\Q}\right)^{\S_n}[\mathcal{A}_{n-1}[j,\sigma]],$
alors on a (chapitre $4$ section $3$):

\begin{thm}\label{thmchap4}
Le $\Q$-espace vectoriel $W_k^{\Q}$ est le plus petit sous-$\Q$-espace de $\left(V_k^{\Q}\right)^{\S_n}$ contenant $ME_k^{\Q}$ dont l'extension des scalaires à $\C$ contient $MP(k)$:
\begin{equation}
ME_k^{\Q}\subset W_k^{\Q}\text{ et }MP(k)\subset W_k^{\Q}\otimes_{\Q}\C.
\end{equation}
\end{thm}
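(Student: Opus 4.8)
\emph{Plan de d\'emonstration.} On proc\`ede en trois temps, correspondant aux deux inclusions affich\'ees puis \`a la minimalit\'e. L'inclusion $ME_k^{\Q}\subset W_k^{\Q}$ est purement formelle. Puisque $\mathcal{A}_n=\bigcap_{j=0}^n\bigcap_{\sigma\in\S_n}\mathcal{A}_{n-1}[j,\sigma]$, chaque id\'eal $\mathcal{A}_{n-1}[j,\sigma]$ contient $\mathcal{A}_n$ ; or l'op\'eration $A\mapsto \left(V_k^{\Q}\right)^{\S_n}[A]$ renverse les inclusions, de sorte que $\left(V_k^{\Q}\right)^{\S_n}[\mathcal{A}_{n-1}[j,\sigma]]\subset \left(V_k^{\Q}\right)^{\S_n}[\mathcal{A}_n]=W_k^{\Q}$ pour tout couple $(j,\sigma)$. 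En sommant sur $j$ et $\sigma$ on obtient $ME_k^{\Q}\subset W_k^{\Q}$.

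Pour l'inclusion $MP(k)\subset W_k^{\Q}\otimes_{\Q}\C$, je proc\`ede par r\'ecurrence sur $n$ : il s'agit de v\'erifier que tout polyn\^ome des multip\'eriodes $\left(\sigma\mapsto P_{f_1,\dots,f_n}^{\sigma}\right)$ est annul\'e par tous les \'el\'ements de $\mathcal{A}_n$. Le cas $n=1$ n'est autre que les relations de Manin $P_{f_1}|_{1+S}=P_{f_1}|_{1+U+U^2}=0$. Pour le pas de r\'ecurrence, j'utilise les relations de m\'elange et les relations de Manin g\'en\'eralis\'ees (les relations en $S$ et en $U$ de la Proposition pr\'ec\'edente) : dans chacune de ces identit\'es, l'application de $S$, $U$ ou $U^2$ \`a un bloc d'indices scinde la longueur $n$ en sous-familles de longueur strictement inf\'erieure. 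La construction r\'ecursive des id\'eaux $\mathcal{A}_{n-1}[j,\sigma]$ \`a partir de $\mathcal{A}_{n-1}$ est pr\'ecis\'ement faite pour que ce d\'ecoupage en blocs corresponde terme \`a terme \`a la r\'ecurrence, l'indice $j$ rep\'erant le point de coupure et $\sigma$ assurant la $\S_n$-\'equivariance ; l'hypoth\`ese de r\'ecurrence appliqu\'ee aux facteurs de longueur $n-1$ fournit alors l'annulation voulue sur les g\'en\'erateurs de $\mathcal{A}_n$.

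La minimalit\'e \'equivaut \`a la d\'ecomposition
\begin{equation*}
W_k:=W_k^{\Q}\otimes_{\Q}\C=MP(k)+\overline{MP(k)}+ME_k^{\C},\quad\text{o\`u } ME_k^{\C}=ME_k^{\Q}\otimes_{\Q}\C.
\end{equation*}
L'inclusion $\supset$ r\'esulte des deux \'etapes pr\'ec\'edentes : $MP(k)$ et $ME_k^{\C}$ sont contenus dans $W_k$, et comme $W_k$ est d\'efini sur $\Q$ il est stable par la conjugaison complexe, d'o\`u $\overline{MP(k)}\subset W_k$. L'inclusion r\'eciproque $\subset$ est l'obstacle principal : elle exprime que les relations d\'efinissant $\mathcal{A}_n$ sont en nombre suffisant (optimalit\'e de $W_k$). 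Je la d\'emontrerais par r\'ecurrence sur $n$, le cas $n=1$ \'etant exactement la d\'ecomposition d'Eichler-Shimura (\ref{decomp29}) ; pour $P\in W_k$, l'\'ecriture $\mathcal{A}_n=\bigcap_{j,\sigma}\mathcal{A}_{n-1}[j,\sigma]$ permet, en isolant successivement un bloc, de ramener $P$ \`a des polyn\^omes des multip\'eriodes de longueur $n-1$, la conclusion exigeant un contr\^ole pr\'ecis des dimensions via la structure du produit semi-direct $\G^n\rtimes\S_n$. Une fois la d\'ecomposition acquise, on conclut par descente : tout sous-$\Q$-espace $W'$ de $\left(V_k^{\Q}\right)^{\S_n}$ v\'erifiant $ME_k^{\Q}\subset W'$ et $MP(k)\subset W'\otimes_{\Q}\C$ est tel que $W'\otimes_{\Q}\C$, stable par conjugaison, contient $MP(k)+\overline{MP(k)}+ME_k^{\C}=W_k$ ; comme $W_k^{\Q}$ et $W'$ sont d\'efinis sur $\Q$, on en tire $W_k^{\Q}=W_k\cap\left(V_k^{\Q}\right)^{\S_n}\subset W'$, ce qui \'etablit la minimalit\'e.
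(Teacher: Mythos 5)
Vos deux premi\`eres \'etapes sont essentiellement correctes. L'inclusion $ME_k^{\Q}\subset W_k^{\Q}$ est bien purement formelle \`a partir de $\mathcal{A}_n\subset\mathcal{A}_{n-1}[j,\sigma]$. Pour $MP(k)\subset W_k^{\Q}\otimes_{\Q}\C$, votre appel aux relations de m\'elange et aux relations de Manin g\'en\'eralis\'ees pointe vers le bon contenu, mais reste une affirmation~: la v\'erification effective que les g\'en\'erateurs de $\mathcal{A}_n$ annulent $R_k(f_1\otimes\dots\otimes f_n)$ ne passe pas par un d\'ecoupage terme \`a terme des relations r\'ecursives, elle passe par la dualit\'e homologique~: pour $\gamma\in\mathcal{I}_n$, le bord $\gamma\,\delta_n\tau_n$ est transverse, donc $\gamma\tau_n\in\Omega_k^{\bot}$ (Propositions \ref{condcycle} et \ref{decompomegathm}), et les g\'en\'erateurs de type m\'elange reposent sur l'identit\'e $\sum_{\rho\in\S_{a,b}}\tau_n^{\rho}=\tau_a\otimes\tau_b$. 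Ce point se r\'epare, mais il faut l'\'ecrire.

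Le vrai probl\`eme est la minimalit\'e. Vous la ramenez \`a la d\'ecomposition $W_k=MP(k)+\overline{MP(k)}+ME_k^{\C}$, que vous reconnaissez vous-m\^eme comme \og l'obstacle principal\fg{} et que vous ne d\'emontrez pas~: l'esquisse \og r\'ecurrence sur $n$ avec un contr\^ole pr\'ecis des dimensions\fg{} est pr\'ecis\'ement l'endroit o\`u toute la difficult\'e est concentr\'ee, et rien n'indique comment isoler un bloc ram\`enerait un \'el\'ement quelconque de $W_k$ \`a des multip\'eriodes de longueur $n-1$. De plus, cette d\'ecomposition est un \'enonc\'e de type Eichler--Shimura en longueur $n$ que le texte n'\'etablit pas pour $n>2$ et dont le th\'eor\`eme n'a pas besoin. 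La d\'emonstration effective proc\`ede autrement~: (i) la suite exacte $0\to\mathcal{A}_n\to\Z[G_n]\to H_n(P_n)^0\to 0$ reste exacte apr\`es tensorisation par le $\Q[G_n]$-module simple $\left(V_k^{\Q}\right)^{\S_n}$ (simplicit\'e issue de l'irr\'eductibilit\'e des $\Gamma\to GL(V_{k_j})$)~; (ii) l'accouplement bilin\'eaire non d\'eg\'en\'er\'e et $G_n$-invariant met en dualit\'e les coinvariants $\left(V_k^{\Q}\right)^{\S_n}|_{\mathcal{A}_n}$ et l'annulateur $W_k^{\Q}$, ce qui convertit l'exactitude en \'egalit\'e de dimensions~; (iii) l'identification $H_n(P_n)^0\otimes_{\Q[G_n]}\left(V_k^{\Q}\right)^{\S_n}=\left(V_k^{\Q}\right)^{\S_n}[\mathcal{A}(k)]+ME_k^{\Q}$ donne alors $W_k^{\Q}=\left(V_k^{\Q}\right)^{\S_n}[\mathcal{A}(k)]+ME_k^{\Q}$~; (iv) enfin, la surjectivit\'e de $\Q[G_n]\to End_{\Q}\bigl(\left(V_k^{\Q}\right)^{\S_n}\bigr)$ montre que tout sous-$\Q$-espace est l'annulateur d'un seul \'el\'ement, de sorte que $\left(V_k^{\Q}\right)^{\S_n}[\mathcal{A}(k)]$ est automatiquement le plus petit sous-$\Q$-espace dont l'extension \`a $\C$ contient $MP(k)$ --- c'est ce m\'ecanisme, et non une d\'ecomposition en somme de $MP(k)$, $\overline{MP(k)}$ et $ME_k^{\C}$, qui livre la minimalit\'e. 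En l'\'etat, votre proposition laisse donc la moiti\'e difficile du th\'eor\`eme non d\'emontr\'ee.
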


Voici la construction de $\mathcal{A}_n$. Posons $\mathcal{I}_1=(1+S,1+U+U^2)$. Définissons les applications, pour les indices $0< j< n$:
\begin{equation}
\phi_j^n:P\G^{n-1}\to P\G^n, (\gamma_1,...,\gamma_{n-1})\mapsto (\gamma_1,...,\gamma_j,\gamma_j,...,\gamma_{n-1}).
\end{equation}
On définit alors par récurrence sur $n$:
\begin{equation}
\mathcal{I}_{n-1}[j]=
\begin{cases}
\phi_j^n(\mathcal{I}_{n-1})&\text{ si }0<j<n\\
(1-T,1,...,1)\Z[P\G^n]+(1,\mathcal{I}_{n-1})&\text{ si }j=0\\
(1,...,1,1-US)\Z[P\G^n]+(\mathcal{I}_{n-1},1)&\text{ si }j=n.
\end{cases}
\text{ et }\mathcal{I}_n=\bigcap_{j=0}^n\mathcal{I}_{n-1}[j].
\end{equation}

On pose alors :
\begin{equation}
\mathcal{A}_n=\sum_{a+b=n}[(\widetilde{\mathcal{I}_a},1),\sum_{\sigma\in\S_{a,b}}(\sigma)]+([(S,...,S),id]-[(id,...,id),(n,...,1)])\Z[P\G^n\rtimes\S_n],
\end{equation}
où on utilise l'antiautomorphisme de $\Z[P\G^n]$ donné par $\widetilde{(\gamma_1,...,\gamma_a)}=(\gamma_1^{-1},...,\gamma_a^{-1})$.

Cette méthode donne notamment un algorithme de construction par récurrence sur $n$ des idéaux $\mathcal{A}_n$. Ces idéaux sont de type fini et nous avons déterminé une famille de générateurs lorsque $n=2$.

\section*{6. Description explicite pour $n=2$}

Ceci est l'objet du chapitre $3$. Posons $n=2$ et $k=(k_1,k_2)$.
\begin{thm}\label{thmchap3}
1) On dispose d'un système de générateurs de $\mathcal{A}_2$. En effet, on a: 
$$\mathcal{A}_2=\left( [(1,1),id]-[(S,S),(2,1)],[\widetilde{\mathcal{I}_2},id]\right),$$
 où $\widetilde{\mathcal{I}_2}$ est l'idéal à droite de $\Z[P\G^2]$ engendré par:
\begin{multline}
(1+S,1+S),\quad[(1,1)+(S,S)](1+U+U^2,1),\quad[(1,1)+(S,S)](1,1+U+U^2)\\
(1+U+U^2,1+U+U^2)\text{ et }(S,S)+(S,SU^2)+(SU^2,SU^2)+(1,U^2)-(U,U).
\end{multline}
2) On obtient la décomposition explicite, l'analogue de (\ref{decomp29}):
\begin{equation}
W_{k_1,k_2}=MP(k_1,k_2)\oplus\overline{MP(k_1,k_2)}\oplus \left[\left(W_{k_1}\otimes 1\right)+ V_{k_1,k_2}[I_D] +\left(X_1^{k_1-2}\otimes W_{k_2}\right)\right],
\end{equation}
où $V_{k_1,k_2}[I_D]=\{P\in V_{k_1,k_2};P|_{(1,1)+(S,S)}=P_{(1,1)+(U,U)+(U^2,U^2)}=0\}$. Ce dernier est calculable au sens où:
\begin{equation}
V_{k_1,k_2}[I_D]\cong\bigoplus_{l=0}^{min(k_1,k_2)} W_{k_1+k_2-2l}.(X_1-X_2)^l.
\end{equation}
\end{thm}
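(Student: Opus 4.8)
The plan is to establish the two assertions in turn: first derive the explicit presentation of the right ideal $\mathcal{A}_2\subset\Z[P\G^2\rtimes\S_2]$ (part 1), then read off the decomposition of $W_{k_1,k_2}$ from it together with a Clebsch--Gordan computation for the diagonal piece (part 2). For part 1 I would unfold the recursive definition of Section 5 in the case $n=2$. Since $0<j<2$ forces $j=1$, the only interior map is the diagonal $\phi_1^2(\gamma)=(\gamma,\gamma)$, so $\mathcal{I}_1[1]=\phi_1^2(\mathcal{I}_1)$ is generated by $(1+S,1+S)$ and $(1+U+U^2,1+U+U^2)$, while the boundary pieces $\mathcal{I}_1[0]$ and $\mathcal{I}_1[2]$ contribute $(1,\mathcal{I}_1)$, $(\mathcal{I}_1,1)$ together with the edge relations coming from $1-T$ and $1-US$. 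The heart of the argument is then the explicit intersection $\mathcal{I}_2=\mathcal{I}_1[0]\cap\mathcal{I}_1[1]\cap\mathcal{I}_1[2]$ inside $\Z[P\G^2]$: I would locate elements lying simultaneously in all three right ideals, which after applying the antiautomorphism $\widetilde{\,\cdot\,}$ produces the five displayed generators of $\widetilde{\mathcal{I}_2}$ (note that on $P\G$ one has $S^{-1}=S$ and $U^{-1}=U^2$, so $\widetilde{\mathcal{I}_1}=\mathcal{I}_1$). Finally I would assemble $\mathcal{A}_2=\sum_{a+b=2}[(\widetilde{\mathcal{I}_a},1),\sum_{\sigma\in\S_{a,b}}(\sigma)]+([(S,S),id]-[(id,id),(2,1)])\Z[P\G^2\rtimes\S_2]$. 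The term $(a,b)=(2,0)$ gives exactly $[\widetilde{\mathcal{I}_2},id]$; the mixed term $(a,b)=(1,1)$, for which $\S_{1,1}=\S_2$, feeds in the first-coordinate relations $(1+U+U^2,1)$ and, through the symmetrizer $id+(2,1)$, their swaps $(1,1+U+U^2)$, which once combined with the $(S,S)$-twist reproduce the factored generators $[(1,1)+(S,S)](1+U+U^2,1)$ and $[(1,1)+(S,S)](1,1+U+U^2)$ of $\widetilde{\mathcal{I}_2}$; and using the rule $[\gamma,\rho][\gamma',\rho']=[\gamma^{\rho'}\gamma',\rho\rho']$ with $S^2=1$ in $P\G$, left multiplication of the symmetry generator $[(S,S),id]-[(id,id),(2,1)]$ by the involution $[(S,S),id]$ turns it into $[(1,1),id]-[(S,S),(2,1)]$. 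Checking that these manipulations, carried out modulo the remaining terms of the sum, neither lose nor create relations is the combinatorial core of part 1.

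For part 2 I would translate this presentation into linear conditions on a symmetric polynomial $P\in(V_{k_1,k_2}^{\Q})^{\S_2}$: the generator $[(1,1),id]-[(S,S),(2,1)]$ couples $P$ with its variable-swapped $S$-slash, and $[\widetilde{\mathcal{I}_2},id]$ imposes the five Manin-type relations. The decomposition then parallels the length-one identity (\ref{decomp29}). One checks, as in the proof of Theorem \ref{thmchap4} and using the shuffle and Manin relations of Section 4, that every multiperiod polynomial $P^{\sigma}_{f_1,f_2}$ and its complex conjugate satisfies all these relations, so $MP(k_1,k_2)\oplus\overline{MP(k_1,k_2)}\subset W_{k_1,k_2}$; injectivity of $R_k$ (Proposition \ref{ninj}) gives $\dim_{\C}MP(k_1,k_2)=\dim S_{k_1}\cdot\dim S_{k_2}$, and the directness of the cuspidal sum follows from the complex conjugation attached to the $\R$-structure $V_k^{\R}$ as in the classical case. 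One then verifies that the three summands $W_{k_1}\otimes 1$, $V_{k_1,k_2}[I_D]$ and $X_1^{k_1-2}\otimes W_{k_2}$ — the two outer pieces being the degenerate boundary loci where one slot is replaced by the Eisenstein monomial, playing the role of $1-X_1^{k_1-2}$ in (\ref{decomp29}) — lie in $W_{k_1,k_2}$ and are complementary to the cuspidal part; exhaustiveness is obtained by matching $\dim W_{k_1,k_2}$, computed directly from the relations of part 1, with the sum of the dimensions of the three declared pieces.

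The computation of $V_{k_1,k_2}[I_D]$ is representation-theoretic. The conditions $P|_{(1,1)+(S,S)}=0$ and $P|_{(1,1)+(U,U)+(U^2,U^2)}=0$ are the ordinary Manin relations for the diagonal slash action of $P\G$ on $V_{k_1,k_2}=\C_{k_1-2}[X_1]\otimes\C_{k_2-2}[X_2]$. Under this diagonal action one has the Clebsch--Gordan decomposition
\[
\mathrm{Sym}^{k_1-2}\otimes\mathrm{Sym}^{k_2-2}\cong\bigoplus_{l=0}^{\min(k_1,k_2)-2}\mathrm{Sym}^{k_1+k_2-4-2l},
\]
where the inclusion of the $l$-th summand is realized, via the $l$-th transvectant, by multiplication by the covariant $(X_1-X_2)^l$; one computes directly that $(X_1-X_2)^l|_{(\gamma,\gamma)}=(X_1-X_2)^l(cX_1+d)^{k_1-2-l}(cX_2+d)^{k_2-2-l}$ for $\gamma=\mat{a}{b}{c}{d}$, so this factor is diagonal-covariant of the correct weight and identifies the $l$-th summand with the single-variable weight-$(k_1+k_2-2l)$ model. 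Because the transvectant is $P\G$-equivariant, the diagonal operators $(1,1)+(S,S)$ and $(1,1)+(U,U)+(U^2,U^2)$ act on that summand exactly as the ordinary Manin operators $1+S$ and $1+U+U^2$ in weight $k_1+k_2-2l$; taking kernels yields $W_{k_1+k_2-2l}$ on each factor, and the summands with $l>\min(k_1,k_2)-2$ vanish for degree reasons, giving the stated isomorphism $V_{k_1,k_2}[I_D]\cong\bigoplus_{l=0}^{\min(k_1,k_2)}W_{k_1+k_2-2l}\,(X_1-X_2)^l$.

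I expect the main obstacle to be the explicit intersection $\mathcal{I}_2=\bigcap_j\mathcal{I}_1[j]$ of part 1: the three right ideals live in the large noncommutative algebra $\Z[P\G^2]$, and proving that the five displayed elements both belong to every $\mathcal{I}_1[j]$ and generate the whole intersection requires tight control of the $P\G$-relations linking $S$, $U$, $T$ and $US$ on the diagonal versus on the two boundary faces. In part 2 the delicate point is showing that the cuspidal image $MP\oplus\overline{MP}$ meets the Eisenstein/diagonal complement trivially and that the two together exhaust $W_{k_1,k_2}$; I expect this to reduce to an exact dimension count combining Proposition \ref{ninj}, the Clebsch--Gordan formula above, and the determination of $\dim W_{k_1,k_2}$ from the presentation of $\mathcal{A}_2$, but the bookkeeping across the three complementary pieces is where the effort concentrates.
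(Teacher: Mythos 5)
Your overall architecture parallels the paper's, and your Clebsch--Gordan treatment of $V_{k_1,k_2}[I_D]$ via transvectants is an acceptable variant of the paper's inductive exact sequence $0\to V_{k_1-1,k_2-1}^{\Q}[I_D]\to V_{k_1,k_2}^{\Q}[I_D]\to W_{k_1+k_2-2}^{\Q}\to 0$ (with $\phi(P)=(X_1-X_2)P$ and $\psi(P)=P(Z,Z)$): that sequence is precisely the peeling-off of the top Clebsch--Gordan summand, and your splitting argument replaces the paper's explicit antecedents $Q_h(X_1,X_2)=\int_0^\infty h(it)(X_1-it)^{k_1-2}(X_2-it)^{k_2-2}\,dt$. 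But there are two genuine gaps. First, in part 1 you correctly identify that the hard point is proving that the five listed elements \emph{generate} $\mathcal{I}_2=I_H\cap I_V\cap I_D$, yet you offer no mechanism for doing so. The paper's mechanism is essential: it introduces the $\G$-invariant distance $d$ on $\pte$ coming from continued fractions, the height $h(p)=\max(d(p,i\infty),d(p,0))$, and the graphs $\mathcal{G}_1,\mathcal{G}_2$, and proves by descent on the height (Lemma \ref{lemtri}) that $\mathcal{I}_2=\Z[\Gamma^2]\cdot C_2^0(B_0^2)$ with $B_0=\{i\infty,0,1,-1\}$; only after this finiteness reduction does the generation claim become a finite combinatorial problem on the $18$ triangles supported in $B_0^2$, which is then checked exhaustively. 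Without that reduction, ``locating elements lying simultaneously in all three right ideals'' yields containment but never generation.

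Second, in part 2 your exhaustiveness argument rests on ``matching $\dim W_{k_1,k_2}$, computed directly from the relations of part 1,'' but no such direct computation is available, and the paper proceeds quite differently. It introduces $\Phi_S:P\mapsto\left[P|_{(1,1)+(S,S)}\right]$ from $W_{k_1,k_2}+\delta(W_{k_1,k_2})$ to $\left(W_{k_1}/E_{k_1}\right)\otimes\left(W_{k_2}/E_{k_2}\right)$, proves its surjectivity by lifting a class $[P_1\otimes P_2]$ via Eichler--Shimura to a harmonic form $\omega_1\wedge\omega_2$ with $\omega_j\in\Omega_{k_j}^+\oplus\Omega_{k_j}^-$ and integrating over $\tau_2$ (using $[(1,1)+(S,S)]\tau_2=\tau_1\times\tau_1$), and computes its kernel to be exactly the Eisenstein part via the identity $\mathcal{I}_2+[(1,1)+(S,S)]\Z[\Gamma^2]=I_D$. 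The cuspidal part $\Per_{k_1,k_2}\oplus\overline{\Per_{k_1,k_2}}$ is then realized as the image of the harmonic section, and the minimality statement over $\Q$ further requires the duality pairing of Remarque \ref{prodvk} together with the irreducibility of the $V_{k_j}^{\Q}$ as $\Q[\Gamma]$-modules to identify $V^{\Q}_{k_1,k_2}[\mathcal{I}_2]$ with $V^{\Q}_{k_1,k_2}[\mathcal{J}(k_1,k_2)]+E^{\Q}_{k_1,k_2}$. Your sketch omits $\Phi_S$, its surjectivity, and the kernel computation entirely, so the direct-sum decomposition in part 2 is not actually established.
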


\section*{7. Séries d'Eisenstein}

Comme Zagier \cite{Za91} l'a fait pour $n=1$, on définit un polynôme des multipériodes pour $f_1,...,f_n$ forme modulaires holomorphes pour $\G$ de poids $k_1,...,k_n$.
En particulier, nous donnons une formule pour les périodes doubles d'un couple de série d'Eisenstein (chapitre 2 section 8).

\section*{8. Diverses Généralisations}
Dans le chapitre $5$, on étudie certaines généralisations. Soient $\rho_j: GL_2(\Z)\to GL(V_j)$ une famille de représentations de dimension finie de $GL_2(\Z)$ sur $\C$ pour $1\leq j\leq n$. On définit $\Omega_{\rho_1,...,\rho_n}$ l'ensemble des $n$-formes différentielles de $\Omega^n(\H^n,\otimes_{j=1}^nV_j)$ invariantes par $GL_2(\Z)^n$. Ceci permet de définir \textit{l'espace des multipériodes}:
\begin{equation}
MP({\rho_1,...,\rho_n})=\left\{\left(\sigma\mapsto \int_{\tau_n^{\sigma}} \omega\right)\text{ pour }\omega\in\Omega_{\rho_1,...,\rho_n}\right\},
\end{equation}
où $(\tau_n^{\sigma})_{\sigma\in\mathfrak{S}_n}$ est une famille explicite de $n$-cycles de $\H^n$.\par

Les théorèmes \ref{thmchap3} et \ref{thmchap4} se généralisent alors de la manière suivante:
\begin{thm}\label{thmchap5}
1) L'idéal $\mathcal{A}_n$ donne un contrôle à nouveau de l'espace des multipériodes:
\begin{equation}
MP({\rho_1,...,\rho_n})\subset W_{\rho_1,...,\rho_n}=\{v\in\left(\otimes_{j=1}^nV_j\right)^{\mathfrak{S}_n};v|_{a}=0\text{ pour tout }a\in\mathcal{A}_n\}.
\end{equation}
2) Pour $n=2$, on a:
\begin{equation}\label{decomp2}
W_{\rho_1,\rho_2}=MP(\rho_1,\rho_2)\oplus\overline{MP(\rho_1,\rho_2)}\oplus \left[\left(W_{\rho_1}\otimes V_2^{T}\right) +\left(V_1^{US}\otimes W_{\rho_2}\right)+ W_{\rho_1\otimes \rho_2}\right].
\end{equation}
\end{thm}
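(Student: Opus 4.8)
The plan is to transport, to this general coefficient system, the two arguments that proved Theorems~\ref{thmchap4} and~\ref{thmchap3}; the crucial observation is that the generators of $\mathcal{A}_n$ encode \emph{geometric} relations among the cycles $\tau_n^\sigma$ that are insensitive to the particular representations $\rho_j$.

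For part~1), I would make explicit that the action of $[\gamma,\rho]\in P\G^n\rtimes\S_n$ on a multiperiod family $\left(\sigma\mapsto\int_{\tau_n^\sigma}\omega\right)$ is realized by transforming the cycles, because $\omega\in\Omega_{\rho_1,\dots,\rho_n}$ is closed and $GL_2(\Z)^n$-invariant on $\H^n$. Reading off the recursive construction, each family of generators of $\mathcal{A}_n$ --- the diagonal images $\phi_j^n(\mathcal{I}_{n-1})$ for $0<j<n$, the boundary ideals carrying $1-T$ and $1-US$, and the antipode/shuffle relation $[(S,\dots,S),\mathrm{id}]-[(\mathrm{id},\dots,\mathrm{id}),(n,\dots,1)]$ --- corresponds to a $\Z$-combination of transformed cycles that is a boundary in the relevant (relative) homology of $\H^n$. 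Integrating the closed invariant form $\omega$ over such a combination gives $0$ by Stokes, whence $v|_a=0$ for every $v\in MP(\rho_1,\dots,\rho_n)$ and $a\in\mathcal{A}_n$. These homological identities are exactly the ones used in the cuspidal polynomial case (which is the specialization $V_j=\mathrm{Sym}^{k_j-2}$), and since cutting, gluing and bounding the $\tau_n^\sigma$ never refer to the coefficients, the argument carries over verbatim; the induction mirrors the identity $\mathcal{A}_n=\bigcap_{j,\sigma}\mathcal{A}_{n-1}[j,\sigma]$.

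For part~2) I would argue in three steps. \emph{Containment.} Using the explicit generators of $\mathcal{A}_2$ from Theorem~\ref{thmchap3}, I check that each of the three announced summands lies in $W_{\rho_1,\rho_2}$. The identifications to keep in mind are: the diagonal ideal $\phi_1^2(\mathcal{I}_1)$ cuts out precisely the $W$-space for the \emph{diagonal} action of $\G$ on $V_1\otimes V_2$, so that the polynomial space $V_{k_1,k_2}[I_D]$ becomes $W_{\rho_1\otimes\rho_2}$; and the two boundary ideals $\mathcal{I}_1[0]$, $\mathcal{I}_1[2]$ (those carrying $1-T$ and $1-US$) produce the factorized pieces $W_{\rho_1}\otimes V_2^T$ and $V_1^{US}\otimes W_{\rho_2}$, the parabolic invariants $V_2^T$ and $V_1^{US}$ playing the role of the constant $1$ and the top monomial $X_1^{k_1-2}$ in Theorem~\ref{thmchap3}. \emph{Directness.} The splitting $MP(\rho_1,\rho_2)\oplus\overline{MP(\rho_1,\rho_2)}$ is the Hodge-type decomposition according to the holomorphic/antiholomorphic type of $\omega$ on $\H^2$; its independence from the degenerate summand follows from complex conjugation (via the real structure on the $V_j$) together with the nondegeneracy of the Poincar\'e pairing between the cycles $\tau_2^\sigma$ and the invariant forms --- the input that plays the role of the Petersson pairing in the Eichler--Shimura case $n=1$ and underlies the injectivity of Proposition~\ref{ninj}.

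The main obstacle is the third step, \emph{exhaustiveness}: that $W_{\rho_1,\rho_2}$ contains nothing beyond the announced sum. In the polynomial case this rested on the finite, explicit presentation of $\mathcal{A}_2$ and on the Clebsch--Gordan-type splitting $V_{k_1,k_2}[I_D]\cong\bigoplus_l W_{k_1+k_2-2l}(X_1-X_2)^l$, neither of which is available for a general $\rho_j$ lacking symmetric-power coordinates. I would therefore keep the diagonal term abstract as $W_{\rho_1\otimes\rho_2}$ and prove surjectivity by naturality: the action of the universal element of $\Z[P\G^2\rtimes\S_2]$, the projections onto the boundary and diagonal pieces, and the conjugation are all functorial in the coefficient system, so the equality of Theorem~\ref{thmchap3} should propagate from the symmetric powers to an arbitrary finite-dimensional pair $\rho_1,\rho_2$. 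The delicate point, where I expect the real work to lie, is that finite-dimensional representations of $GL_2(\Z)$ need not be semisimple, so a bare dimension count is unavailable; the argument must instead run through a $GL_2(\Z)^2$-stable filtration with symmetric-power subquotients, checking that the three-term degenerate complement is strictly compatible with that filtration.
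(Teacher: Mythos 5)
Your part 1) is essentially the paper's argument: the action of $\Z[P\G^n\rtimes\S_n]$ on a multiperiod family is transported onto the cycles $(\tau_n^{\sigma})$, and $\mathcal{A}_n$ is by construction the annihilator of the class of $\partial T_n$ in $H_n(P_n)^0$, so every $a\in\mathcal{A}_n$ sends $\mathcal{T}_n$ into a sum of boundaries and transverse chains, over which a closed $GL_2(\Z)^n$-invariant form integrates to zero by Stokes. Your containment and directness steps for 2) also match the paper, which identifies $\Per^{-,-}_{V_1,V_2}=\overline{\Per_{V_1,V_2}}$ via the conjugations $c_j$ and obtains directness from the kernel/image analysis of the map $\phi_S:P\mapsto\left[P|_{(1,1)+(S,S)}\right]$.

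The gap is in your exhaustiveness step. You propose to propagate the equality from symmetric powers to a general pair $(\rho_1,\rho_2)$ through a $GL_2(\Z)^2$-stable filtration with symmetric-power subquotients. No such filtration exists in general: $GL_2(\Z)$ is an infinite discrete group whose finite-dimensional representations are not built from symmetric powers --- the permutation modules $V_{\Gamma}=\C[\Gamma\backslash SL_2(\Z)]$, which are precisely the motivating examples of the level-$N$, weight-$2$ application, are not of this form. The paper does not reduce to the polynomial case at all: it first reproves the $n=1$ decomposition $W_V=\Per_V\oplus\overline{\Per_V}\oplus V^T|_{(1-S)}$ for an arbitrary coefficient module via parabolic group cohomology $H_{par}^1(PSL_2(\Z),V)$, using only the surjectivity of $\Theta_1$ and the identification $\varphi\mapsto\varphi(S)$; it then reruns the Chapter 3 argument verbatim in the new coefficients, where surjectivity of $\phi_S$ onto $\left(W_{V_1}/E_{V_1}\right)\otimes\left(W_{V_2}/E_{V_2}\right)$ follows from this generalized $n=1$ statement together with the cycle identity $[(1,1)+(S,S)]\tau_2=\tau_1\times\tau_1$, and the kernel of $\phi_S$ is computed purely from the relations defining $I_H$, $I_V$ and $I_D$, yielding $W_{\rho_1}\otimes V_2^{T}$, $V_1^{US}\otimes W_{\rho_2}$ and the (deliberately abstract) diagonal piece $W_{\rho_1\otimes\rho_2}$. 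Note finally that the paper assumes $V_1$ and $V_2$ irreducible over $\Q$, which is where Schur's lemma enters for the minimality statements; the non-semisimplicity issue you raise is thus resolved by hypothesis, not by a filtration argument.
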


On applique ceci au cas d'un niveau quelconque et du poids $2$. Soient $\Gamma_1,...,\Gamma_n$ des sous-groupes de congruence de $\G$ normalisés par $\mat{-1}{0}{\phantom{-}0}{1}$. Pour $1\leq j\leq n$, posons $V_{\Gamma_j}=\C^{\Gamma_j\backslash SL_2(\Z)}$. 
Dans ce cadre le rôle joué par le polynôme des bipériodes dans la section 4 de l'introduction est maintenant joué par:
\begin{equation}
\int_{0<t_1<t_2} f_1|_{g_1}(it_1)f_2|_{g_2}(it_2)\d t_1\d t_2\text{ pour toutes classes }\Gamma_j g_j\in \Gamma_j\backslash SL_2(\Z),j=1,2.
\end{equation}

\section*{9. Organisation du texte}
Le chapitre 1 ne contient pas de résultat nouveau. Nous donnons toutes les définitions dont nous avons besoin, y compris au niveau le plus élémentaire. En particulier, nous rappelons les résultats de Kohnen et Zagier sur les périodes de formes modulaires.\par
Dans le chapitre 2, nous rappelons la théorie des intégrales itérées de Manin et son application aux formes modulaires. Cela nous amène à la construction des polynômes des multipériodes. Nous démontrons les théorèmes \ref{thmchap2} et \ref{thmdeltadelta}. Nous étendons le polynôme des multipériodes aux formes non nécessairement paraboliques. Nous calculons certaines bipériodes d'un couple de série d'Eisenstein.\par
Les chapitres 3 et 4 sont consacrés aux cas $n=2$ et $n>2$ respectivement. On y démontre les théorèmes \ref{thmchap3} et \ref{thmchap4} respectivement.\par
Les généralisations aux formes de niveaux plus grand que $1$ sont contenues dans le chapitre 5. Nous démontrons le théorème \ref{thmchap5}.

\chapter{Rappels sur les formes modulaires et les périodes simples}
\setcounter{mtc}{2}
\minitoc
\bigskip
Ce chapitre ne contient aucun résultat nouveau. Il rassemble les éléments sur lesquels nous fondons notre travail.

L'étude des courbes elliptiques nous amène à étudier l'action de $SL_2(\Z)$\nomenclature{$\G$}{Groupe spéciale linéaire de matrice}
 sur le demi-plan de Poincaré $\mathfrak{H}=\{z\in\C; Im(z)>0\}$\nomenclature{$\mathfrak{H}$}{Demi-plan de Poincaré}. 
Une matrice $\mat{a}{b}{c}{d}\in\G$ agit à gauche sur $z\in\mathfrak{H}$ par:
\begin{equation}
\mat{a}{b}{c}{d}.z=\frac{az+b}{cz+d}.
\end{equation}
On peut étendre cette définition au bord rationnel de $\mathfrak{H}$ que l'on définit comme $\pte$:\nomenclature{$\pte$}{Droite projective des rationnels}
\begin{equation}
\mat{a}{b}{c}{d}.[r_0,r_1]=[ar_0+br_1,cr_0+dr_1].
\end{equation}
On notera ainsi $\H=\mathfrak{H}\sqcup \pte$\nomenclature{$\H$}{Demi-plan de Poincaré complété}
 que l'on munie de la topologie usuelle. Dans ce contexte, on notera $i\infty=[1,0]$ et $r=[r:1]$.\par
L'étude des surfaces de Riemann compactes $\Gamma\backslash\H$ ainsi obtenues, se fait alors via l'étude d'applications dites modulaires.

\section{Définitions et exemples}
\begin{defi}
Soient $k\geq 2$ un entier pair le poids et $\Gamma$ un groupe de congruence c'est à dire un sous-groupe d'indice fini de $SL_2(\Z)$ contenant 
$\Gamma_{\infty}=\langle T^n;n\in\Z\rangle$ \nomenclature{$\Gamma_{\infty}$}{Sous-groupe des translations}
où $T=\mat{1}{1}{0}{1}$.\\
Une fonction $f:\mathfrak{H}\to\C$ est \textit{modulaire} si:
\begin{itemize}
\item $f$ est holomorphe sur $\mathfrak{H}$
\item Pour tout $\mat{a}{b}{c}{d}\in\Gamma, f|_k\mat{a}{b}{c}{d}(z)=(cz+d)^{-k} f\left(\frac{az+b}{cz+d}\right) =f(z),\text{ pour tout }z\in\mathfrak{H}$.
\item $f$ est holomorphe en toute pointe $p\in\Gamma\backslash \pte$  de $\Gamma\backslash\H$.
\end{itemize}
\end{defi}

Toute pointe peut s'écrire $p=[g]i\infty$ avec $[g]\in \Gamma\backslash SL_2(\Z)$. Or on sait que pour tout $\gamma\in\Gamma, f|_k\gamma g=f|_k g$. Donc l'application $z\mapsto f|_k [g](z)$ est bien définie.\\
De plus, comme la multiplication à gauche par $T$ est d'ordre fini dans $\Gamma\backslash SL_2(\Z)$ alors il existe un entier $h>0$ tel que $f|_k [g](z+h)=f|_k [g](z)$. Cette périodicité permet d'écrire un développement en série de Fourier: 
\begin{equation}
f|_k [g](z)=\sum_{n\geq 0} a_n^g(f) \exp(2i\pi nz/h).
\end{equation}
On dit alors que $f$ est \textit{holomorphe en la pointe} $[g]i\infty$ si : $\lim_{z\to i\infty} f|_k [g](z)=a_0^g(f)$.\par

On note $M_k(\Gamma)$\nomenclature{$M_k(\Gamma)$}{Espace des formes modulaires}
 l'ensemble des applications modulaires pour $\Gamma$ de poids $k$.\par
De plus, si pour toute pointe $[g]i\infty, a_0^g(f)=0$, on dit alors que $f$ est \textit{parabolique} et on note $S_k(\Gamma)$\nomenclature{$S_k(\Gamma)$}{Espace des formes modulaires paraboliques}
 l'ensemble de ces applications.

\begin{ex}[Série d'Eisenstein]
$SL_2(\Z)$ est le groupe de congruence trivial et pour tout poids $k\geq 4$, on peut définir:
\begin{equation}
E_k(z)=\sum_{(m,n)\neq (0,0)} (mz+n)^{-k}\in M_k(SL_2(\Z)).\nomenclature{$E_k$}{Série d'Eisenstein de poids $k$ de niveau $1$}
\end{equation}
La convergence normale de la série de fonctions est assurée par la condition $k\geq 4$ et donne en particulier l'holomorphie sur $\mathfrak{H}$ ainsi que:
$$\lim_{z\to i\infty} E_k(z)=\sum_{n\neq 0} n^{-k}=2\zeta(k).$$
D'autre part, pour $\mat{a}{b}{c}{d}\in SL_2(\Z)$ et $z\in\mathfrak{H}$, on a:
\begin{align*}
E_k|_k\mat{a}{b}{c}{d}(z)&=(cz+d)^{-k}\sum_{(m,n)\neq (0,0)} \left(m \frac{az+b}{cz+d} +n\right)^{-k}\\
&=\sum_{(m,n)\neq (0,0)} \left((am+cn)z+(bm+dn)\right)^{-k}=E_k(z).
\end{align*}
Car toute matrice de $SL_2(\Z)$ induit un automorphisme de $\Z^2 \setminus \{(0,0)\}$.\par
On peut alors calculer la série de Fourier associée à $E_k$. Elle est donnée pour $z\in\mathfrak{H}$ par :
$$E_k(z)=\frac{2(2i\pi)^k}{(k-1)!}G_k(z),$$
\begin{equation}
 \text{où }G_k(z)=\frac{(k-1)!\zeta(k)}{(2i\pi)^k}+\sum_{n>0} \sigma_{k-1}(n)q_z^n\in\mathbb{Q}[[q_z]]\text{ et }q_z=\exp(2i\pi z).
\end{equation}
\end{ex}

Pour démontrer ce résultat on utilise le lemme suivant (voir par exemple Lang \cite{Lang}):

\begin{lem}[Formule d'Hurwitz]
Pour tout entier $m>1$ et $z\in\mathfrak{H}$, on a:
\begin{equation}
\sum_{N\in\Z} (z+N)^{-m}=\frac{(-2i\pi)^m}{(m-1)!}\sum_{\alpha>0}\alpha^{m-1}\exp(2i\pi \alpha z).
\end{equation}
\end{lem}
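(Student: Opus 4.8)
The plan is to derive the formula by differentiating the classical partial-fraction expansion of the cotangent. The starting point is the identity
$$\pi\cot(\pi z)=\sum_{N\in\Z}\frac{1}{z+N},$$
understood with Eisenstein (symmetric) summation, valid for $z\in\mathfrak{H}$. I would first record its Fourier expansion on the upper half-plane. Writing $q_z=\exp(2i\pi z)$, so that $|q_z|<1$ when $z\in\mathfrak{H}$, one has
$$\cot(\pi z)=i\,\frac{q_z+1}{q_z-1}=-i\Bigl(1+2\sum_{\alpha>0}q_z^{\alpha}\Bigr),$$
using the geometric series $\tfrac{1}{1-q_z}=\sum_{\alpha\geq 0}q_z^{\alpha}$, hence
$$\pi\cot(\pi z)=-i\pi-2i\pi\sum_{\alpha>0}\exp(2i\pi\alpha z).$$

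Next I would differentiate both expressions $m-1$ times with respect to $z$. On the partial-fraction side, term-by-term differentiation gives
$$\frac{d^{m-1}}{dz^{m-1}}\sum_{N\in\Z}\frac{1}{z+N}=(-1)^{m-1}(m-1)!\sum_{N\in\Z}(z+N)^{-m},$$
while on the Fourier side the constant term drops and each exponential contributes a factor $(2i\pi\alpha)^{m-1}$, yielding
$$\frac{d^{m-1}}{dz^{m-1}}\bigl[\pi\cot(\pi z)\bigr]=-(2i\pi)^{m}\sum_{\alpha>0}\alpha^{m-1}\exp(2i\pi\alpha z).$$
Equating the two and simplifying the sign and factorial constants, using $-(-1)^{-(m-1)}=(-1)^{m}$ so that $(-1)^{m}(2i\pi)^{m}=(-2i\pi)^{m}$, gives exactly the claimed identity.

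The one point requiring care is the justification of the term-by-term differentiation. For $m\geq 2$ the series $\sum_{N}(z+N)^{-m}$ converges absolutely and locally uniformly on $\mathfrak{H}$, so differentiating the conditionally convergent $m=1$ identity is legitimate, the differentiated series being normally convergent on compact subsets; the same holds for the exponential series since $|q_z|<1$ forces geometric decay. As the statement is only needed for $m>1$, no delicate summation issue remains, and the main obstacle thus reduces to this standard convergence verification.
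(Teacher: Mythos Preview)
Your proof is correct and follows exactly the approach indicated in the paper: start from the partial-fraction expansion $\sum_{N\in\Z}(z+N)^{-1}=\pi\cot(\pi z)$, expand the cotangent as a $q$-series on $\mathfrak{H}$, and differentiate $m-1$ times. The paper's own proof is in fact more terse than yours, merely stating that the lemma follows by recursive differentiation of this identity; your version spells out the computation and the convergence justification explicitly.
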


\begin{proof}
Ce lemme provient de le dérivation récursive de la formule:
$$\sum_{N\in\Z}(z+N)^{-1}=\frac{\pi}{\tan(\pi z)},$$
valide pour tout $z\in\mathfrak{H}$ du demi-plan de Poincaré.
\end{proof}

La série de Fourier de $E_k$ découle alors du calcul:
\begin{align*}
E_k(z)&=\sum_{(m,n)\neq (0,0)} (mz+n)^{-k}\\
&=2\zeta(k)+2\sum_{m>0}\sum_{n\in\Z}(mz+n)^{-k}\\
&=2\zeta(k)+2\sum_{m>0}\frac{(-2i\pi)^k}{(k-1)!}\sum_{\alpha>0}\alpha^{k-1}\exp(2i\pi \alpha(mz))\\
&=2\zeta(k)+\frac{2(2i\pi)^k}{(k-1)!}\sum_{n>0}\sigma_{k-1}(n)q_z^n.
\end{align*}
Ceci nous donne de plus l'holomorphie de $E_k$ en $i\infty$ l'unique pointe.

\begin{ex}[Groupe modulaire de niveau $N$]
Pour tout entier $N\geq 1$, on dispose des groupes de congruence:
\begin{equation}
\Gamma_0(N)=\{\mat{a}{b}{c}{d}\equiv\mat{*}{*}{0}{*}\mod N\}\text{ et }\Gamma_1(N)=\{\mat{a}{b}{c}{d}\equiv\mat{1}{*}{0}{1}\mod N\}.
\end{equation}
Pour tout caractère non nul $\chi:\Z\to\C$ multiplicatif et $N$-périodique, on peut définir:
\begin{equation}
E_{k,\chi}(z)=\sum_{(m,n)\neq (0,0)} \frac{\chi(m)}{(mz+n)^k}\in M_k(\Gamma_1(N)).
\end{equation}
La convergence normale de la série de fonctions nous donne à nouveau l'holomorphie sur $\mathfrak{H}$ et $\lim_{i\infty} E_{k,\chi}=2\chi(0)\zeta(k)$.
On calcul alors pour tout $\mat{a}{b}{c}{d}\in SL_2(\Z)$:
\begin{align*}
E_{k,\chi}|_k\mat{a}{b}{c}{d}(z)&=\sum_{(m,n)\neq (0,0)} \frac{\chi(m)}{\left((am+cn)z+(bm+dn)\right)^k}\\
&=\sum_{(m,n)\neq (0,0)}\frac{\chi(dm-cn)}{(mz+n)^k}.
\end{align*}
Ainsi on obtient bien la condition de modularité car $\chi(dm-cn)=\chi(m)$ lorsque $\mat{a}{b}{c}{d}\in\Gamma_1(N)$.\par
On s'intéresse alors à l'holomorphie aux pointes, le calcul donne: 
$$\lim_{i\infty} E_{k,\chi}|_k\mat{a}{b}{c}{d} = \sum_{n\neq 0} \chi(-cn) n^{-k} =\chi(c)(1+\chi(-1))L_{\chi}(k).$$
Et la formule d'Hurwitz permet de développer $\sum_{m\neq 0} \sum_{n\in \Z} \frac{\chi(dm-cn)}{(mz+n)^k}$ de la même manière que précédemment en série de Fourier avec un terme constant nul. Ceci nous permet d'obtenir l'holomorphie aux pointes.\par
De plus, si le caractère $\chi$ est impair (i.e. $\chi(-1)=-1$), on remarque que cette application est parabolique.
\end{ex}

\begin{ex}[Groupe de congruence quelconque]
Soient $\Gamma$ un groupe de congruence et \\
$\chi:\Gamma_{\infty}\backslash SL_2(\Z) / \Gamma\to \C$ un caractère multiplicatif. On définit :
\begin{equation}
E_{k,\chi}(z)=\sum_{g=\mat{*}{*}{m}{n}\in \Gamma_{\infty}\backslash SL_2(\Z)} \frac{\chi(g)}{(mz+n)^k}.
\end{equation}
Un calcul direct montre que $E_{k,\chi}(\gamma z)=\bar{\chi}(\gamma)(cz+d)^kE_{k,\chi}(z)$, pour tout $z\in\mathfrak{H}$ et $\gamma=\mat{a}{b}{c}{d}\in\G$.
L'holomorphie de la fonction sur $\mathfrak{H}$ est dû à la convergence normale et donne une valeur des limites aux pointes.
 L'holomorphie aux pointes est à nouveau due aux développements en série de Fourier donnés par le résultat d'Hurwitz.\par
Ceci nous permet d'en déduire que $E_{k,\chi}\in M_k(\Gamma)$.
\end{ex}

Pour étudier la structure de ces espaces d'applications l'introduction d'un produit scalaire par Petersson donne un outil puissant, \cite{Za82}:

\begin{defi}[Produit de Petersson]
On définit le \textit{produit scalaire de Petersson} sur $S_k(\Gamma)\times M_k(\Gamma)$ par:
\begin{equation}
\langle f , g \rangle=\frac{i}{2}\int_{\Gamma\backslash \H} f(z)\overline{g(z)} Im(z)^{k-2} \d z\d\bar{z}.
\end{equation}
Pour des problèmes de convergence, on ne peut étendre ce produit à $M_k(\Gamma)^2$ mais la même définition convient dès lors qu'une des formes est parabolique. Ceci permet de considérer l'orthogonal de $S_k(\Gamma)$ dans $M_k(\Gamma)$. Ses éléments sont appelés \textit{séries d'Eisenstein}.
\end{defi}

\begin{proof}
Tout d'abord, on démontre que le domaine d'intégration a bien un sens :\\
On remarque que $Im(z)^{-2}\frac{i}{2}\d z\d\bar{z}=\frac{dxdy}{y^2}$ est une forme invariante par $SL_2(\Z)$, car :
\begin{equation}
Im\left(\frac{az+b}{cz+d}\right)=\frac{Im(z)}{|cz+d|^2}\text{ et }\frac{d}{dz}\left(\frac{az+b}{cz+d}\right)=\frac{1}{(cz+d)^2}.
\end{equation}
Ces deux relations peuvent être déduites de la suivante :
\begin{equation}
\left(\frac{az_1+b}{cz_1+d}\right) - \left(\frac{az_2+b}{cz_2+d}\right)=\frac{z_1-z_2}{(cz_1+d)(cz_2+d)}.
\end{equation}
Ainsi l'action de $\gamma=\mat{a}{b}{c}{d}\in\Gamma$ sur l'intégrande est triviale étant donnée que :
\begin{align*}
f(\gamma z)\overline{g(\gamma z)} Im(\gamma z)^{k}
&=f|_k\gamma(z)(cz+d)^k\overline{g|_k\gamma(z)(cz+d)^k} \left(\frac{Im(z)}{|cz+d|^2}\right)^{k}\\
&=f(z)\overline{g(z)}Im(z)^k.
\end{align*}
De plus comme une des deux formes est parabolique alors l'intégrande tend vers $0$ aux voisinages des pointes. Donc il existe une borne de l'intégrande sur $\Gamma\backslash \H$ qui par ailleurs est de mesure fini sous la forme $\frac{\d x\d y}{y^2}$.\\
En effet $\int_{SL_2(\Z)\backslash \H}\frac{\d x\d y}{y^2}=\frac{\pi}{6}$ et $\Gamma$ est d'indice fini dans $SL_2(\Z)$. Donc :
\begin{equation}
\int_{\Gamma\backslash \H}\frac{\d x\d y}{y^2}=\frac{\pi}{6}[\Gamma:SL_2(\Z)].
\end{equation}
\end{proof}

\begin{ex}
L'ensemble des séries d'Eisenstein de $M_k(SL_2(\Z))$ est $\C E_k$.\par
En effet, pour tout $f\in S_k(SL_2(\Z))$:
\begin{align*}
\langle f , E_k \rangle &= \int_{SL_2(\Z)\backslash\H} \sum_{(m,n)\neq (0,0)} \frac{f(z)}{(m\bar{z}+n)^k} Im(z)^{k} \frac{\d x\d y}{y^2}\\
&=\int_{SL_2(\Z)\backslash\H} \sum_{\alpha>0}\sum_{c\wedge d=1} \frac{f(z)}{(\alpha c\bar{z}+\alpha d)^k} Im(z)^{k} \frac{\d x\d y}{y^2}\\
&=\int_{SL_2(\Z)\backslash\H} \zeta(k)\sum_{\mat{a}{b}{c}{d}\in \Gamma_{\infty}\backslash SL_2(\Z)} \frac{f|k\mat{a}{b}{c}{d}(z)}{(c\bar{z}+d)^k} Im(z)^{k} \frac{\d x\d y}{y^2}\\
&=\zeta(k)\int_{SL_2(\Z)\backslash\H} \sum_{\gamma\in \Gamma_{\infty}\backslash SL_2(\Z)} f(\gamma z) Im(\gamma z)^{k} \frac{\d x\d y}{y^2}\\
&=\zeta(k)\int_{\Gamma_{\infty}\backslash\H} f(z) Im(z)^{k} \frac{\d x\d y}{y^2}\\
&=\zeta(k)\int_0^1\d x\int_0^{\infty}\d yf(x+iy)y^{k-2}=0.
\end{align*}
Or $S_k(SL_2(\Z))$ est un hyperplan de $M_k(SL_2(\Z))$ car il est défini comme le noyau de la forme linéaire non nulle $f\mapsto a_0(f)$. Donc l'ensemble des séries d'Eisenstein est bien la droite supplémentaire $\C E_k$:
\begin{equation}
M_k(SL_2(\Z))=S_k(SL_2(\Z))\oplus\C E_k.
\end{equation}
\end{ex}

\section{Fonction $\Delta$ et dimension de $M_k(SL_2(\Z))$}

Le calcul des formes modulaires de petit poids révèlent une forme parabolique remarquable de poids $12$ introduisant la fonction $\tau$ de Ramanujan. Nous allons voir qu'elle joue un rôle central dans la structure des espaces des formes modulaires.

\begin{prop}
On définit la fonction $\Delta(z)=q_z\prod_{n>0}(1-q_z^n)^{24}=\sum_{n>0}\tau(n)q_z^n$.\\
$\Delta$ est une forme modulaire parabolique de poids $12$ pour $SL_2(\Z)$.\nomenclature{$\Delta$}{La forme modulaire parabolique normalisée de poids $12$}
\end{prop}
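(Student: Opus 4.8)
Le plan est de ramener la preuve à la vérification de trois points : l'holomorphie de $\Delta$ sur $\mathfrak{H}$, l'invariance modulaire de poids $12$ sous les générateurs $T=\mat{1}{1}{0}{1}$ et $S=\mat{0}{-1}{1}{0}$ de $\G$, et le comportement parabolique à la pointe $i\infty$. Comme $\G$ est engendré par $T$ et $S$, il suffit de traiter ces deux matrices. J'observerais d'abord que la forme produit rend immédiats presque tous ces points : le produit $q\prod_{n>0}(1-q^n)^{24}$ avec $q=\exp(2i\pi z)$ converge normalement sur tout compact de $\mathfrak{H}$ (car $|q|<1$ et $\sum_n|q|^n$ converge), ce qui fournit une fonction holomorphe et sans zéro sur $\mathfrak{H}$ ; de plus $\Delta$ ne dépend que de $q$, d'où $\Delta(z+1)=\Delta(z)$, c'est-à-dire l'invariance sous $T$. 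Enfin le développement $\Delta=q+O(q^2)$ montre que $\Delta$ est holomorphe en $i\infty$ avec terme constant nul, donc parabolique, l'unique pointe de $\G\backslash\H$ étant $i\infty$.

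Le c\oe ur de la démonstration est la relation sous $S$ :
\begin{equation*}
\Delta(-1/z)=z^{12}\Delta(z).
\end{equation*}
J'utiliserais la dérivée logarithmique. En posant $D=\frac{1}{2i\pi}\frac{d}{dz}$, le calcul terme à terme dans le produit donne
\begin{equation*}
\frac{D\Delta}{\Delta}=1-24\sum_{n>0}\frac{nq^n}{1-q^n}=1-24\sum_{n>0}\sigma_1(n)q^n=:E_2(z).
\end{equation*}
Je poserais alors $\phi(z)=z^{-12}\Delta(-1/z)$ et calculerais $\frac{d}{dz}\log\phi$. En reportant la loi de transformation quasi-modulaire $E_2(-1/z)=z^2E_2(z)+\frac{6z}{i\pi}$, les termes en $1/z$ se compensent exactement et il vient
\begin{equation*}
\frac{d}{dz}\log\phi(z)=-\frac{12}{z}+\frac{2i\pi}{z^2}E_2(-1/z)=2i\pi E_2(z)=\frac{d}{dz}\log\Delta(z).
\end{equation*}
Donc $\phi/\Delta$ est constante sur $\mathfrak{H}$.

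Pour identifier cette constante, j'évaluerais au point fixe $z=i$ de $S$ : comme $-1/i=i$ et $i^{-12}=1$, on a $\phi(i)=\Delta(i)$, et $\Delta(i)\neq 0$ puisque le produit ne s'annule pas. On en déduit $\phi=\Delta$, soit la relation cherchée, qui combinée à l'invariance sous $T$ donne la modularité de poids $12$ sous $\G$ tout entier. Le principal obstacle est l'établissement de la loi de transformation quasi-modulaire de $E_2$ sous $S$ : la série double définissant $E_2$ ne convergeant pas absolument, il faut la régulariser, par exemple via la sommation d'Eisenstein $\lim_{s\to 0}\sum_{(m,n)\neq(0,0)}(mz+n)^{-2}|mz+n|^{-2s}$, puis suivre l'effet de cette régularisation sur la transformation. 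C'est le seul point véritablement non formel ; tout le reste découle de manipulations de la forme produit et de l'argument de dérivée logarithmique ci-dessus.
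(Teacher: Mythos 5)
Votre démonstration est correcte et suit essentiellement la même voie que celle du texte : dérivée logarithmique de $\Delta$ donnant $2i\pi G_2$, loi de transformation quasi-modulaire de $G_2$ sous $S$, égalité des dérivées logarithmiques de $z^{-12}\Delta(-1/z)$ et de $\Delta(z)$, puis détermination de la constante au point fixe $i$ de $S$. Votre remarque finale sur la nécessité de régulariser la série définissant $E_2$ pour établir sa quasi-modularité est pertinente ; le texte admet cette loi sans la démontrer.
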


\begin{proof}
Par définition, $\Delta$ est périodique de période $1$, holomorphe sur $\mathfrak{H}$ et en $i\infty$. Ainsi il reste à démontrer $\Delta|_{kS}=\Delta$ car $S$ et $T$ engendre le groupe $SL_2(\Z)$. Or :
\begin{align*}
\frac{\d}{\d z} \log(\Delta(z))&=2i\pi+24\sum_{n>0}\frac{-2i\pi nq_z^{n}}{1-q_z^n}\\
&=2i\pi-48i\pi\sum_{n,m>0}nq_z^{mn}=2i\pi G_2(z),
\end{align*}
où $G_2(z)=1-24\sum_{n>0} \sigma_1(n)q_z^n$ vérifie: $G_2(-1/z)z^{-2}=G_2(z)+\frac{12}{2i\pi z}$. Ainsi:
$$\frac{\d}{\d z} \log(\Delta(-1/z)z^{-12})=\frac{2i\pi}{z^2}G_2(-1/z)-\frac{12}{z}=2i\pi G_2(z)=\frac{\d}{\d z}\log\Delta(z).$$
Ce qui donne le résultat à une constante multiplicative près. Or $i$ est fixé par $S$ donc la constante est triviale et $\Delta$ est modulaire et parabolique.
\end{proof}

\begin{thm}[Dimension de $M_k(SL_2(\Z))$]
On dispose d'un algorithme de construction des espaces $M_k$ donné par :
\begin{itemize}
\item $S_2=M_2=\{0\}$ et $S_{12}=\C \Delta$,
\item Pour tout $k\in\{4,6,8,10,14\}, S_k=\{0\}$ et $M_k=\C E_k$,
\item Pour tout $k\geq 4, M_k=S_k\oplus\C E_k,$
\item Pour tout $k\geq 16, S_k=\Delta M_{k-12}$.
\end{itemize} 
On obtient ainsi les dimensions $\dim(M_k)=\left\lfloor k/12 \right\rfloor+1-\delta_{(k=2\mod 12)}$.
\end{thm}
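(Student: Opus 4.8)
L'ingr\'edient central est la \emph{formule des valences} (ou formule $k/12$) : pour toute forme $0\neq f\in M_k(\G)$,
\[
v_\infty(f)+\frac12 v_i(f)+\frac13 v_\rho(f)+\sum_{P}v_P(f)=\frac{k}{12},
\]
o\`u $\rho=\exp(2i\pi/3)$, o\`u $v_P(f)$ d\'esigne l'ordre d'annulation de $f$ en $P$, et o\`u la somme porte sur les points $P$ d'un syst\`eme de repr\'esentants de $\G\backslash\mathfrak{H}$ distincts des orbites de $i$ et $\rho$. Pour l'\'etablir, je constaterais d'abord, gr\^ace au d\'eveloppement en $q$-s\'erie et \`a la compacit\'e de $\G\backslash\H$, que $f$ n'a qu'un nombre fini de z\'eros ; puis j'int\'egrerais $\frac{1}{2i\pi}\frac{f'}{f}\d z$ le long du bord d'un domaine fondamental modifi\'e par de petits arcs \'evitant les points elliptiques et par une troncature en hauteur. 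Le principe de l'argument identifie l'int\'egrale \`a $\sum_P v_P(f)$ ; les c\^ot\'es verticaux se compensent par $T$-p\'eriodicit\'e ; les deux arcs du cercle unit\'e recoll\'es par $S$ laissent le terme $k/12$ issu de la relation $\frac{\d}{\d z}\log f(-1/z)=\frac{\d}{\d z}\log f(z)+\frac{k}{z}$ (cons\'equence de $f(-1/z)=z^kf(z)$) ; enfin les petits arcs contournant $i$, $\rho$ et la pointe fournissent respectivement $\frac12 v_i(f)$, $\frac13 v_\rho(f)$ et $v_\infty(f)$.

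Une fois cette formule acquise, les cons\'equences se d\'eduisent rapidement. Le produit infini $\Delta(z)=q\prod_{n>0}(1-q^n)^{24}$ converge et ne s'annule pas sur $\mathfrak{H}$ (car $|q|<1$) : $\Delta$ poss\`ede donc un unique z\'ero, simple, en $i\infty$, ce que confirme la formule des valences pour $k=12$. Il s'ensuit que la multiplication par $\Delta$ r\'ealise un isomorphisme $M_{k-12}\xrightarrow{\sim} S_k$ : elle envoie $M_{k-12}$ dans $S_k$ puisque $\Delta$ s'annule en la pointe, et inversement, si $f\in S_k$ alors $f/\Delta$ est holomorphe sur $\mathfrak{H}$ (o\`u $\Delta$ ne s'annule pas) et d'ordre positif ou nul en $i\infty$, donc appartient \`a $M_{k-12}$. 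D'o\`u $S_k=\Delta M_{k-12}$ pour $k\geq 16$.

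Pour les cas initiaux, j'appliquerais la formule des valences \`a une \'eventuelle forme non nulle. En la multipliant par $12$, on doit \'ecrire $k=12v_\infty+6v_i+4v_\rho+12\sum_P v_P$ avec des entiers $v\geq 0$. Pour $k=2$ ce membre de droite, pair et au moins \'egal \`a $4$ d\`es qu'il est non nul, ne peut valoir $2$ : ainsi $M_2=0$. Pour une forme \emph{parabolique} la condition $v_\infty\geq 1$ impose $k\geq 12v_\infty\geq 12$, et une br\`eve inspection montre qu'aucune \'ecriture n'est possible pour $k\in\{4,6,8,10,14\}$, d'o\`u $S_k=0$ et $M_k=\C E_k$. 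Pour $k=12$, la contrainte force $v_\infty=1$ et tous les autres ordres nuls, de sorte que $f/\Delta$ est une forme holomorphe de poids $0$, c'est-\`a-dire une constante : $S_{12}=\C\Delta$.

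Il ne reste qu'\`a assembler les dimensions. La d\'ecomposition $M_k=S_k\oplus\C E_k$ (\'etablie plus haut pour $k\geq 4$) donne $\dim M_k=\dim S_k+1$, tandis que $S_k=\Delta M_{k-12}$ fournit $\dim M_k=\dim M_{k-12}+1$ pour $k\geq 16$. Une r\'ecurrence de pas $12$, amorc\'ee par les valeurs d\'ej\`a obtenues pour $k\in\{0,2,4,6,8,10,12,14\}$, conduit alors \`a $\dim M_k=\lfloor k/12\rfloor+1-\delta_{(k\equiv 2\bmod 12)}$, qu'il suffit de v\'erifier sur un repr\'esentant de chaque classe modulo $12$. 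Le point le plus d\'elicat est sans conteste la formule des valences : la difficult\'e r\'eside dans la gestion soigneuse du contour, en particulier des petits arcs \'evitant $i$ et $\rho$ et du comportement \`a la pointe. Une fois cet ingr\'edient en main, le reste n'est qu'une suite de d\'eductions alg\'ebriques et d'une r\'ecurrence.
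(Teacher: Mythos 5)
Votre démonstration est correcte et suit exactement la voie indiquée par le texte, qui se contente de renvoyer à Serre et d'énoncer la formule des valences $\frac{k}{12}=v_{i\infty}(f)+\frac12 v_i(f)+\frac13 v_\rho(f)+\sum_z v_z(f)$ comme point de départ d'une récurrence sur le poids. Vous explicitez simplement ce que le texte laisse implicite (intégration de contour pour la formule des valences, non-annulation de $\Delta$ sur $\mathfrak{H}$, isomorphisme $M_{k-12}\xrightarrow{\sim}S_k$ par multiplication par $\Delta$, examen des petits poids), sans divergence de méthode.
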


\begin{proof}
Une démonstration donnée par Serre \cite{Se70} peut se faire par l'étude des valuations des pôles des fonctions. On a en effet une formule qui dérive du Théorème de Riemann-Roch qui lie leurs valuations aux poids des formes et qui permet une récurrence sur les poids:
\begin{equation}
\frac{k}{12}=val_{i\infty}(f)+\frac{1}{2}val_{i}(f)+\frac{1}{3}val_{\rho}(f)+\sum_{\substack{z\in SL_2(\Z)\backslash\H\\z\neq i\infty,i,\rho}} val_z(f).
\end{equation}
\end{proof}

\section{Fonction $L$ d'une forme modulaire}
On se place dans le cadre où $\Gamma=SL_2(\Z)$.\par
On peut associer à toute forme modulaire une série de Dirichlet $L$ qui vérifie une équation fonctionnelle. La donnée de cette fonction $L$ et donc leur étude est équivalente à celle des formes en elles-mêmes. Dans la pratique, nous introduirons aussi la transformée de Mellin $\Lambda$ de la forme qui s'avère être une simple renormalisation de $L$.

\begin{defi}
Pour une application modulaire $f(z)=\sum_{n\geq 0} a_n(f) q_z^n\in M_k(SL_2(\mathbb{Z}))$, on définit les fonctions:
\begin{align}
L(f,s)&=\sum_{n>0} \frac{a_n(f)}{n^s} \text{ pour }Re(s)>k/2+1,\\
\text{et }\Lambda(f,s)&=\int_0^{\infty} \left(f(it)-a_0(f)\right)t^{s-1}\d t\text{ pour }Re(s)>k.
\end{align}
Il existe des prolongements méromorphes à $\C$ de ces fonctions. Elles vérifient alors pour tout $s\in\C$:
\begin{equation}
\Lambda(f,s)=\frac{\Gamma(s)}{(2\pi)^s}L(f,s).
\end{equation}
\end{defi}

\begin{proof}
La convergence de la série de Dirichlet provient d'une estimation des coefficients due à Hecke $|a_n(f)|\leq C n^{k/2}$. En effet :
$$|a_n(f)|\exp(-2\pi ny)=\left|\int_0^1 f(x+iy)\exp(-2i\pi nx)\d x\right|\leq \left(\int_0^1 |f(z)Im(z)^{k/2}|\d x\right)y^{-k/2}.$$
On obtient le résultat en prenant $y=1/n$ et en remarquant que $|f(z)Im(z)^{k/2}|$ est invariant par $\Gamma$ et est donc bornée car $f$ holomorphe à l'infini.\par
On remarquera qu'il existe un résultat plus fort de Ramanujan bornant les coefficients:
\begin{equation}
\forall\varepsilon>0, |a_l(f)|=O_{l\to\infty}(l^{\frac{k-1}{2}+\varepsilon}).
\end{equation}
Pourtant ici le résultat de Hecke est suffisant à la définition de la fonction $L$.\par
La convergence de l'intégrale provient de l'annulation à l'infini de $f(it)-a_0(f)$ et de la modularité par $S=\mat{\phantom{-}0}{1}{-1}{0}$. On peut alors effectuer le calcul pour $Re(s)>k$:
\begin{align*}
\Lambda(f,s)&=\int_0^{\infty}\sum_{n>0} a_n(f) \exp(-2\pi nt)t^{s-1}\d t\\
&=\sum_{n>0}a_n(f)(2\pi n)^{-s}\int_0^{\infty}\exp(-t)t^{s-1}\d t=\frac{\Gamma(s)}{(2\pi)^s}L(f,s),
\end{align*}
qui s'étend à $\C$ par prolongement méromorphe.
\end{proof}

\begin{prop}
Pour tout $f\in M_k(SL_2(\Z))$, on dispose de l'équation fonctionnelle:
\begin{equation}
\Lambda(f,s)=i^k\Lambda(f,k-s).
\end{equation}
De plus, on  dispose d'une écriture généralisée de $\Lambda$, pour tout $s\in\C$ et $t_0\in\R_+^*$:
\begin{equation}
\Lambda(f,s)=\int_0^{t_0} \left(f(it)-\frac{a_0(f)}{(it)^k}\right)t^{s-1}\d t+\int_{t_0}^{\infty}\left(f(it)-a_0(f)\right)t^{s-1}\d t
-a_0(f)\left(\frac{t_0^s}{s}+\frac{i^k t_0^{s-k}}{k-s}\right).
\end{equation}
\end{prop}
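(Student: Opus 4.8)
Le plan est d'exploiter la modularit� par $S=\mat{\phantom{-}0}{1}{-1}{0}$ pour comprendre le comportement de $f(it)$ en $t=0$. Comme $k$ est pair, $(-it)^k=(it)^k$, de sorte que la relation $f|_kS=f$ se r�crit $f(i/t)=(it)^kf(it)$, soit encore $f(it)=f(i/t)/(it)^k$ pour tout $t\in\R_+^*$. J'en d�duis que la partie singuli�re de $f(it)$ en $t=0$ est pr�cis�ment $a_0(f)/(it)^k$, car
$$f(it)-\frac{a_0(f)}{(it)^k}=\frac{f(i/t)-a_0(f)}{(it)^k}=\frac{1}{(it)^k}\sum_{n\geq 1}a_n(f)\exp(-2\pi n/t),$$
qui tend vers $0$ exponentiellement vite lorsque $t\to 0$, malgr� le facteur $(it)^{-k}$. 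Je note au passage que la parit� de $k$ donne $i^{-k}=i^k$ (puisque $i^k=(-1)^{k/2}\in\{\pm1\}$) et $i^{2k}=1$, ce qui servira dans les calculs de constantes.

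Je d�montre ensuite la formule g�n�ralis�e. Pour $\Re(s)>k$, je scinde l'int�grale d�finissant $\Lambda(f,s)$ en $t_0$, puis sur $]0,t_0[$ je retranche et rajoute le terme $a_0(f)/(it)^k$~:
$$\int_0^{t_0}\big(f(it)-a_0(f)\big)t^{s-1}\d t=\int_0^{t_0}\Big(f(it)-\frac{a_0(f)}{(it)^k}\Big)t^{s-1}\d t+a_0(f)\int_0^{t_0}\big((it)^{-k}-1\big)t^{s-1}\d t.$$
Le second morceau est �l�mentaire~: comme $(it)^{-k}=i^kt^{-k}$, les int�grales $\int_0^{t_0}t^{s-k-1}\d t$ et $\int_0^{t_0}t^{s-1}\d t$ se calculent pour $\Re(s)>k$ et fournissent exactement $-a_0(f)\big(\frac{t_0^s}{s}+\frac{i^kt_0^{s-k}}{k-s}\big)$. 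La formule annonc�e est donc �tablie pour $\Re(s)>k$. Or, d'apr�s le premier paragraphe, l'int�grande de $\int_0^{t_0}\big(f(it)-a_0(f)/(it)^k\big)t^{s-1}\d t$ d�cro�t exponentiellement en $t=0$ (et $\int_{t_0}^{\infty}$ converge pour tout $s$)~; le membre de droite est donc holomorphe sur $\C$ priv� de $\{0,k\}$, avec p�les simples �ventuels en $s=0$ et $s=k$. C'est le prolongement m�romorphe cherch�, et la formule vaut pour tout $s\in\C$ par prolongement analytique.

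Pour l'�quation fonctionnelle, je sp�cialise $t_0=1$ et je ram�ne l'int�grale sur $]0,1[$ � une int�grale sur $]1,\infty[$. En utilisant $f(it)-a_0(f)/(it)^k=(f(i/t)-a_0(f))/(it)^k$ puis le changement de variable $u=1/t$, j'obtiens
$$\int_0^1\Big(f(it)-\frac{a_0(f)}{(it)^k}\Big)t^{s-1}\d t=i^k\int_1^{\infty}\big(f(iu)-a_0(f)\big)u^{(k-s)-1}\d u.$$
En posant $g(s)=\int_1^{\infty}\big(f(it)-a_0(f)\big)t^{s-1}\d t$, fonction enti�re de $s$, la formule g�n�ralis�e en $t_0=1$ devient
$$\Lambda(f,s)=g(s)+i^kg(k-s)-\frac{a_0(f)}{s}-\frac{i^ka_0(f)}{k-s}.$$
Cette expression est invariante, au facteur $i^k$ pr�s, par l'�change $s\leftrightarrow k-s$~: en rempla�ant $s$ par $k-s$, en multipliant par $i^k$ et en utilisant $i^{2k}=1$, on retrouve le membre de droite, d'o� $\Lambda(f,s)=i^k\Lambda(f,k-s)$.

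La seule v�ritable difficult� est le contr�le de l'int�grande apr�s soustraction de $a_0(f)/(it)^k$ au voisinage de $t=0$~: c'est la modularit� par $S$ qui convertit le comportement en $0$ en comportement en $i\infty$, garantissant � la fois la d�croissance exponentielle (donc le prolongement m�romorphe avec localisation des p�les) et la sym�trie sous-jacente � l'�quation fonctionnelle. Tout le reste se ram�ne au calcul d'int�grales de puissances et � un suivi attentif des puissances de $i$, o� la parit� de $k$ joue un r�le essentiel.
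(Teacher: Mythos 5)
Votre preuve est correcte et suit essentiellement la m\^eme d\'emarche que celle du texte : on isole la partie singuli\`ere $a_0(f)/(it)^k$ en $t=0$ gr\^ace \`a la modularit\'e par $S$, on calcule les int\'egrales \'el\'ementaires pour $\Re(s)>k$ puis on prolonge m\'eromorphiquement, et l'\'equation fonctionnelle r\'esulte du changement de variable $t\mapsto 1/t$ qui \'echange les deux int\'egrales (le texte garde $t_0$ g\'en\'eral et \'echange $t_0\leftrightarrow 1/t_0$, vous sp\'ecialisez \`a $t_0=1$, ce qui revient au m\^eme). Les calculs de constantes ($i^{-k}=i^k$, $i^{2k}=1$) sont exacts.
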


\begin{proof}
Pour démontrer l'écriture généralisée, on commence par démontrer que l'écriture ne dépend pas du choix de $t_0$ par une simple dérivation. Puis il suffit de remarquer qu'on dispose bien d'une fonction méromorphe qui coïncide à la définition pour tout $Re(s)>k$ lorsque $t_0\to 0$.\par
Enfin l'équation fonctionnelle va ainsi intervertir $t_0$ en $1/t_0$, en effet:
$$\int_0^{t_0} \left(f(it)-\frac{a_0(f)}{(it)^k}\right)t^{s}\frac{\d t}{t}=\int_{1/t_0}^{\infty} \left(f\left(\frac{-1}{it}\right)-i^ka_0(f)t^k\right)\frac{t^{-s}\d t}{t}=i^k\int_{1/t_0}^{\infty}\left(f(it)-a_0(f)\right)t^{k-s-1}\d t,$$
échangeant les deux premiers termes de l'écriture généralisée. L'échange des deux derniers par l'équation fonctionnelle étant trivial.
\end{proof}

\begin{rem}
On peut faire disparaitre le paramètre $t_0$ lorsque $0<Re(s)<k$ en remarquant que:
$$\frac{t_0^s}{s}=\int_0^{t_0} t^{s-1}\d t\text{ et }\frac{i^k t_0^{s-k}}{k-s}=\int_{t_0}^{\infty} \frac{t^{s-1}\d t}{(it)^k}.$$
On peut ainsi construire l'intégrale convergente suivante par la formule :
\begin{equation}
\Lambda(f,s)=\int_0^{\infty} \left[f(it)-a_0(f)\left(1+(it)^{-k}\right)\right]t^{s-1}\d t\quad, 0<Re(s)<k.
\end{equation}
On notera $f^*(z)=f(z)-a_0(f)\left(1+z^{-k}\right)$ la \textit{forme modulaire épointée}.
\end{rem}

\begin{ex}[Périodes des séries d'Eisenstein]
Pour tout complexe $s\in\mathbb{C}$, on a:
\begin{equation}
L(G_k,s)=\zeta(s)\zeta(s-k+1).
\end{equation}
Pour tout entier critique $m$ (i.e. $1\leq m\leq k-1$), on a:
\begin{equation}
\Lambda(E_k,m)=\begin{cases}4 i^m\frac{(m-1)!(k-m-1)!}{(k-1)!} \zeta(m) \zeta(k-m)\text{ si $m$ pair}\\
0\text{ sinon.}\end{cases}
\end{equation}
Pour cela on effectue d'une part le calcul qui est valide pour $Re(s)>k$: 
$$L(G_k,s)=\sum_{l>0} \frac{\sigma_{k-1}(l)}{l^s}=\sum_{m,n>0} \frac{m^{k-1}}{(mn)^s}=\zeta(s)\zeta(s-k+1),$$
Puis on étend ce résultat à tout le plan complexe par méromorphie des deux fonctions.\par
D'autre part, comme: $E_k^*(z)=E_k(z)-2\zeta(k)\left(1+z^{-k}\right)=\sum_{ab\neq 0}(az+b)^{-k}$, on obtient:
\begin{align*}
\Lambda(E_k,m)&=\int_0^{\infty}\sum_{ab\neq 0} (ait+b)^{-k} t^{m-1}\d t\\
&=2\sum_{a>0}\frac{1}{(ia)^k}\sum_{b\neq 0}\int_0^{\infty}\frac{t^{m-1}\d t}{(t-ib/a)^k}\\
&=2i^k\sum_{a>0}a^{-k}\sum_{b\neq 0}\left[-\sum_{l=1}^m \frac{t^{m-l}}{(t-ib/a)^{k-l}}\frac{(m-1)!}{(m-l)!}\frac{(k-l-1)!}{(k-1)!}\right]_{t=0}^{\infty}\\
&=2i^k\sum_{a>0}a^{-k}\sum_{b\neq 0} \left(\frac{ia}{b}\right)^{k-m}\frac{(m-1)!(k-m-1)!}{(k-1)!}\\
&=\begin{cases}4i^m\left(\sum_{a>0}a^{-m}\right)\left(\sum_{b>0}b^{m-k}\right)\frac{(m-1)!(k-m-1)!}{(k-1)!}\text{ si $m$ pair}\\0 \text{ sinon.}\end{cases}
\end{align*}

\begin{rem}
L'équation fonctionnelle de la fonction $\zeta$:
\begin{equation}
\zeta(s)=2^s\pi^{s-1}\sin\left(\frac{\pi s}{2}\right)\Gamma(1-s)\zeta(1-s),
\end{equation}
permet de retrouver le lien entre les applications $L$ et $\Lambda$ à savoir: $\Lambda(E_k,s)=\frac{\Gamma(s)}{(2\pi)^s}L(E_k,s).$
Ainsi que l'équation fonctionnelle : $\Lambda(E_k,k-s)=i^k\Lambda(E_k,s).$
\end{rem}
\end{ex}

\section{Polynômes des périodes}

\begin{defi}
On appelle \textit{périodes} d'une forme modulaire parabolique $f\in S_k(SL_2(\Z))$ les valeurs aux entiers critiques de la fonction $L$ de $f$:
$$L(f,m)\text{ pour } 1\leq m \leq k-1.$$
On rassemble ces valeurs dans le \textit{polynôme des périodes}:
\begin{equation}
P_f(X)=-\sum_{m=1}^{k-1} \binom{k-2}{m-1} \frac{\Lambda(f,m)}{i^{m}}X^{k-m-1}=\int_0^{i\infty} f(z) (X-z)^{k-2}\d z.
\end{equation}
C'est un élément de $V_k=\C_{k-2}[X]$ l'espace des polynômes de degré au plus $k-2$ que l'on muni de l'action $|_{2-k}$ de $SL_2(\Z)$-module.
\end{defi}

\nomenclature{$V_k^K$}{Espaces des polynômes sur un corps $K$ de degré au plus $k-2$ }
\nomenclature{$P_f$}{Polynôme des périodes de la forme $f$}

\begin{rem}
On peut étendre cette définition aux séries d'Eisenstein et donc à toute forme modulaire $f\in M_k(SL_2(\Z))$ en posant:
\begin{multline}\label{intpolnonmod}
P_f(X)=\int_{z_0}^{i\infty} \left(f(z)-a_0(f)\right) (X-z)^{k-2}\d z+\int_{0}^{z_0} \left(f(z)-a_0(f)z^{-k}\right) (X-z)^{k-2}\d z\\
+\frac{a_0(f)}{k-1}\left((X-z_0)^{k-1}+\frac{1}{X}(1-\frac{X}{z_0})^{k-1}\right),
\end{multline}
où $z_0\in \mathfrak{H}$ est un paramètre quelconque qui permet d'assurer la convergence.\\
Cette définition est bien indépendante de $z_0$ et étend la précédente. L'espace parcouru est ici $\frac{1}{X}\C_k[X]$ qui n'est pas un $SL_2(\Z)$-module sous l'action précédente.\par
On dispose aussi d'une écriture des coefficients :
\begin{equation}
P_f(X)=-\sum_{m=0}^{k} \lim_{s\to m}\frac{\Lambda(f,s)}{\Gamma(s)\Gamma(k-s)} \frac{(k-2)!}{i^{m}}X^{k-m-1}.
\end{equation}
\end{rem}

\begin{ex}[Polynôme étendu des périodes de $E_k$] Pour tout poids $k\geq 4$, on a :
\begin{equation}
P_{E_k}(X)=-\frac{(2i\pi)^k}{k-1}\sum_{n=0}^k\frac{B_n}{n!}\frac{B_{k-n}}{(k-n)!}X^{n-1}-(2i\pi)\zeta(k-1)(1-X^{k-2}),
\end{equation}
où $B_n$ sont les nombres de Bernoulli donnés par la série génératrice $\sum B_n\frac{t^n}{n!}=\frac{t}{e^t-1}$.
\end{ex}

\begin{thm}[Relation de Manin]
On dispose de la relation de modularité en la variable $X$ pour tout $\gamma\in SL_2(\Z)$ et pour tout $(z_1,z_2)\in\H^2$:
\begin{equation}
\left(\int_{\gamma z_0}^{\gamma z_1} f(z)(X-z)^{k-2}\d z\right)|_{2-k}\gamma=\int_{z_0}^{z_1} f(z)(X-z)^{k-2}\d z,.
\end{equation}
Les polynômes des périodes vérifient les relations de Manin, pour tout $f\in M_k(SL_2(\Z))$:
\begin{equation}
P_f(X)|_{(1+S)}=P_f(X)|_{(1+U+U^2)}=0,
\end{equation}
où $S=\mat{\phantom{-}0}{1}{-1}{0}$ et $U=\mat{\phantom{-}0}{1}{-1}{1}$.
\end{thm}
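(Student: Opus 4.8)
The fundamental tool is the change-of-variables identity stated just above: for $\gamma=\mat{a}{b}{c}{d}\in SL_2(\Z)$ and any $z_0,z_1\in\H$, one has
\begin{equation}
\left(\int_{\gamma z_0}^{\gamma z_1} f(z)(X-z)^{k-2}\d z\right)\Big|_{2-k}\gamma=\int_{z_0}^{z_1} f(z)(X-z)^{k-2}\d z.
\end{equation}
This follows by substituting $z\mapsto\gamma z$ in the integral on the left, using $f(\gamma z)=(cz+d)^k f(z)$ together with $\frac{\d(\gamma z)}{\d z}=(cz+d)^{-2}$ and the transformation of $(X-\gamma z)$ under the $|_{2-k}$ action, so that the factors $(cz+d)$ cancel exactly. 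First I would verify this identity carefully (for the parabolic case it is immediate; for the Eisenstein extension one must check that the boundary correction terms in the formula \eqref{intpolnonmod} transform consistently, which is the one place where care is needed).

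Next I would introduce the \emph{path cocycle} viewpoint. For $\gamma\in SL_2(\Z)$ write $I_f(z_0,z_1)=\int_{z_0}^{z_1}f(z)(X-z)^{k-2}\d z$, which is additive in the path: $I_f(z_0,z_1)+I_f(z_1,z_2)=I_f(z_0,z_2)$. The identity above reads $I_f(\gamma z_0,\gamma z_1)|_{2-k}\gamma=I_f(z_0,z_1)$, equivalently $I_f(\gamma z_0,\gamma z_1)=I_f(z_0,z_1)|_{2-k}\gamma^{-1}$. Taking $z_0=0$ and $z_1=i\infty$ gives $P_f(X)=I_f(0,i\infty)$.

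For the relation $P_f|_{1+S}=0$: since $S\cdot 0=i\infty$ and $S\cdot i\infty=0$, the identity gives $P_f|_{2-k}S=I_f(0,i\infty)|_{2-k}S=I_f(S^{-1}\cdot 0,S^{-1}\cdot i\infty)=I_f(i\infty,0)=-P_f$, whence $P_f|_{(1+S)}=0$. For the relation $P_f|_{1+U+U^2}=0$ I would use $U=\mat{0}{1}{-1}{1}$, which is of order $3$ in $PSL_2(\Z)$ and acts on the cusp chain by permuting the three vertices $0,1,i\infty$ of a fundamental triangle cyclically ($U\cdot i\infty=1$, $U\cdot 1=0$, $U\cdot 0=i\infty$). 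Applying the change-of-variables identity with $\gamma=U$ and then $\gamma=U^2$, and summing the three additive pieces $I_f(i\infty,1)+I_f(1,0)+I_f(0,i\infty)$ around the closed triangle, the telescoping additivity forces the total to vanish, giving $P_f|_{(1+U+U^2)}=0$.

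The main obstacle is not the parabolic case but extending everything to general $f\in M_k$ via the regularized formula \eqref{intpolnonmod}: there one must confirm that the auxiliary terms involving $a_0(f)$ and the base point $z_0$ reassemble correctly after the $|_{2-k}$ action, so that the cocycle relation $I_f(\gamma z_0,\gamma z_1)|_{2-k}\gamma=I_f(z_0,z_1)$ still holds despite the fact that the target space $\tfrac{1}{X}\C_k[X]$ is no longer a genuine $SL_2(\Z)$-module. Once that compatibility is checked, the purely formal cocycle argument above applies verbatim and yields both Manin relations simultaneously.
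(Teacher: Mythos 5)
Your proposal is correct and follows essentially the same route as the paper: you first establish the change-of-variables identity $\left(\int_{\gamma z_0}^{\gamma z_1}f(z)(X-z)^{k-2}\d z\right)|_{2-k}\gamma=\int_{z_0}^{z_1}f(z)(X-z)^{k-2}\d z$ by direct substitution, then apply it with $\gamma=S$ and $\gamma=U,U^2$ and use the additivity of the path integral (a closed contour of a holomorphic integrand) on the cusps $\{0,1,i\infty\}$, which is exactly the paper's argument, including the remark that the non-parabolic case needs the regularized formula. One small slip: the action of $U=\mat{\phantom{-}0}{1}{-1}{1}$ is $i\infty\mapsto 0\mapsto 1\mapsto i\infty$ (you wrote the inverse cycle), but since the three integrals still telescope around the same closed triangle this does not affect the conclusion.
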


\begin{proof}
En effet pour toutes bornes d'intégration $z_1,z_2$ et toute matrice $\gamma=\mat{a}{b}{c}{d}$:
\begin{align*}
\left(\int_{\gamma z_1}^{\gamma z_2} f(z)(X-z)^{k-2}\d z\right)|_{2-k}\gamma&=\int_{z_1}^{z_2} f(\gamma z)(\gamma X-\gamma z)^{k-2}(cX+d)^{k-2}\d (\gamma z)\\
&=\int_{z_1}^{z_2} f(z)(cz+d)^k\left(\frac{X-z}{(cX+d)(cz+d)}\right)^{k-2}(cX+d)^{k-2}\frac{\d z}{(cz+d)^2}\\
&=\int_{z_0}^{z_1} f(z)(X-z)^{k-2}\d z.
\end{align*}
On peut alors appliquer ce résultat à:
$$P_f|_{2-k}\gamma=\left(\int_{\gamma\gamma^{-1}0}^{\gamma\gamma^{-1}i\infty} f(z)(X-z)^{k-2}\d z\right)|_{2-k}\gamma=\int_{\gamma^{-1}0}^{\gamma^{-1}i\infty}f(z)(X-z)^{k-2}\d z.$$
Puis $P_f|_{2-k}(1+S)=\int_0^{\infty}+\int_{S0}^{Si\infty}$ et $P_f|_{2-k}(1+U+U^2)=\int_0^{\infty}+\int_{U^20}^{U^2i\infty}+\int_{U0}^{Ui\infty}$.\par
Tout les deux nulles car elles constituent des intégrales sur un contour fermé d'une application holomorphe. Cette démonstration s'adapte bien lorsque la forme est non modulaire en utilisant la formule (\ref{intpolnonmod}).
\end{proof}

On cherche à déterminer si les relations de Manin sont suffisantes pour déterminer l'espace des polynômes des périodes. Pour cela on va comparer l'espace image de $M_k(SL_2(\Z))$ par $f\mapsto P_f$ avec l'ensemble $W_k=\{P\in\C_{k-2}[X];P|_{(1+S)}=P|_{(1+U+U^2)}=0\}$ des polynômes vérifiant les relations de Manin. Ce dernier peut se scinder en $W_k=W_k^+\oplus W_k^-$ suivants les parités des polynômes.\par
Les relations de Manin se traduisent sur les coefficients des polynômes de $W_k$ par des relations de dépendance linéaire. 
\begin{prop}
Notons $w=k-2$. Soit $P(X)=\sum_{m=0}^w a_mX^m\in W_k$.\\
Alors pour tout $0\leq m \leq w$ :
\begin{equation}\label{eqcoefwk}
a_m+(-1)^ma_{w-m}=0\text{ et }\sum_{i=0}^w C_w(m,i) a_i=0,
\end{equation}
où on a défini: $C_w(m,i)=\begin{cases}\binom{m}{i}&\text{ si }m>i\\\binom{w-m}{w-i}&\text{ sinon.}\end{cases}$\par
\end{prop}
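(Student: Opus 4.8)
La preuve repose sur un calcul explicite de l'action $|_{2-k}$ sur les mon\^omes, suivi d'une identification des coefficients. Posons $w=k-2$, qui est pair puisque $k$ l'est.

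\textbf{Premi\`ere relation.} On explicite d'abord $P|_S$. Comme $S=\mat{0}{1}{-1}{0}$, on a $P|_S(X)=(-X)^wP(-1/X)=\sum_{i=0}^w(-1)^ia_iX^{w-i}$, en utilisant que $w$ est pair. La relation $P|_{1+S}=0$ s'\'ecrit $P+P|_S=0$, et le coefficient de $X^m$ (issu du terme $i=w-m$ de la somme) donne $a_m+(-1)^{w-m}a_{w-m}=a_m+(-1)^ma_{w-m}=0$. Ceci \'etablit la premi\`ere relation pour tout $0\le m\le w$.

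\textbf{Seconde relation.} On explicite de m\^eme $P|_U$ et $P|_{U^2}$. Avec $U=\mat{0}{1}{-1}{1}$ et $U^2=\mat{-1}{1}{-1}{0}$, le d\'eveloppement en s\'erie fournit
$$P|_U(X)=(1-X)^wP\left(\tfrac{1}{1-X}\right)=\sum_{j=0}^wa_j(1-X)^{w-j}\quad\text{et}\quad P|_{U^2}(X)=\sum_{j=0}^wa_j(X-1)^jX^{w-j}.$$
En d\'eveloppant chaque bin\^ome et en extrayant le coefficient de $X^m$ dans l'\'egalit\'e $P+P|_U+P|_{U^2}=0$, on obtient
$$a_m+(-1)^m\left[\sum_{j=0}^{w-m}\binom{w-j}{m}a_j+\sum_{j=w-m}^w\binom{j}{w-m}a_j\right]=0.$$
Il reste \`a ramener cette expression \`a la forme annonc\'ee. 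Pour cela on utilise la premi\`ere relation sous la forme $a_{w-j}=-(-1)^ja_j$ afin de transformer l'une des deux sommes, puis on regroupe les termes \`a l'aide des identit\'es de Pascal et de Chu--Vandermonde. Le coefficient de $X^m$ se ram\`ene alors exactement \`a $\sum_{i=0}^wC_w(m,i)a_i$, ce qui donne la seconde relation.

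\textbf{Principale difficult\'e.} Le point d\'elicat est cette derni\`ere r\'eduction combinatoire : il s'agit de montrer que, modulo la premi\`ere relation, les deux sommes binomiales provenant de $U$ et de $U^2$ se recombinent pr\'ecis\'ement en le coefficient par morceaux $C_w(m,i)$ (\`a savoir $\binom{m}{i}$ pour $i<m$ et $\binom{w-m}{w-i}$ pour $i\ge m$). Cela demande une manipulation soigneuse des indices et l'emploi r\'ep\'et\'e d'identit\'es binomiales standard ; la parit\'e de $w$ et la sym\'etrie $a_i\leftrightarrow a_{w-i}$ y jouent un r\^ole essentiel. On notera que la seconde relation, prise pour tous les $m$, est strictement plus forte que la seule relation issue de $1+U+U^2$, ce qui explique l'usage combin\'e des deux relations de Manin.
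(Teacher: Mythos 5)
Votre d\'emarche est exactement celle du texte : on fait agir $S$, $U$ et $U^2$ sur les mon\^omes et on identifie les coefficients. La premi\`ere relation est trait\'ee correctement et \`a l'identique, et votre formule interm\'ediaire pour le coefficient de $X^m$ dans $P|_{1+U+U^2}$,
\begin{equation*}
a_m+(-1)^m\Bigl[\sum_{j=0}^{w-m}\tbinom{w-j}{m}a_j+\sum_{j=w-m}^{w}\tbinom{j}{w-m}a_j\Bigr]=0,
\end{equation*}
est exacte et \'equivaut au calcul du texte, qui extrait le coefficient de $(-X)^{w-j}$ au lieu de celui de $X^m$.

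C'est la derni\`ere \'etape, que vous ne faites qu'esquisser, qui pose probl\`eme. D'une part elle est bien plus \'el\'ementaire que vous ne le laissez entendre : les deux sommes ont des supports disjoints hors de $j=w-m$ (puisque $\binom{w-j}{m}=0$ pour $j>w-m$ et $\binom{j}{w-m}=0$ pour $j<w-m$), le terme compt\'e deux fois en $j=w-m$ vaut $2a_{w-m}$, et la premi\`ere relation $a_m+(-1)^ma_{w-m}=0$ absorbe exactement cet exc\'edent ; aucune identit\'e de Chu--Vandermonde n'est n\'ecessaire. D'autre part, et surtout, ce que l'on obtient ainsi est $(-1)^m\sum_{i=0}^{w}C_w(i,w-m)\,a_i=0$, c'est-\`a-dire, en faisant varier $m$, la famille $\sum_i C_w(i,m)a_i=0$ : la somme porte sur le \emph{premier} argument de $C_w$. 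La forme $\sum_i C_w(m,i)a_i=0$ que vous annoncez (et qui est celle, mal indic\'ee, de l'\'enonc\'e) est fausse en g\'en\'eral : pour $P=X^2(1-X^2)^3\in W_{12}$ et $m=3$, on trouve $\binom{3}{2}-3\binom{7}{6}+3\binom{7}{4}-\binom{7}{2}=66\neq 0$, alors que $\sum_i C_{10}(i,3)a_i=8-12+60-56=0$. Votre r\'eduction ne peut donc pas aboutir \og exactement \fg\ \`a la forme annonc\'ee ; il faut intervertir les arguments de $C_w$, comme le fait implicitement la preuve du texte, qui conclut \`a $\sum_m C_w(m,j)a_m=0$ pour tout $j$.
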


\begin{proof}
Pour cela, il suffit de réaliser le calcul :
\begin{align*}
\left(\sum_{m=0}^w a_mX^m\right)|_{1+S}&=\sum_{m=0}^w a_m\left(X^m+(-X)^{w-m}\right)\\
&=\sum_{m=0}^w \left(a_m+(-1)^ma_{w-m}\right)X^m.
\end{align*}
Ce qui nous donne bien la famille des premières relations. Puis on obtient de même : 
\begin{align*}
\left(\sum_{m=0}^w a_mX^m\right)|_{1+U+U^2}&=\sum_{m=0}^w a_m\left(X^m+(-X+1)^{w-m}+(X-1)^mX^{w-m}\right)\\
&=\sum_{m=0}^w a_m\left(X^m+\sum_j\binom{w-m}{w-j}(-X)^{w-j}+\sum_j\binom{m}{j}(-X)^{w-j}\right)\\
&=\sum_{j=0}^w\left(\sum_{m=0}^w C_w(m,j)a_m\right)(-X)^{w-j},
\end{align*}
où on a utilisé $a_{w-m}=-(-1)^ma_m$ pour substituer $a_mX^m$ dans la dernière égalité.
\end{proof}

On remarque que pour $P_f\in W_k$ les relations peuvent se scinder pour les coefficients pairs et impairs en spécialisant les parties réelle et imaginaire car $a_m\in i^{m-1}\R$. Ceci se retrouve dans la proposition suivante liant le produit scalaire sur $S_k(SL_2(\Z))$ à un sur $W_k$.

\begin{prop}\label{prop123}
Pour tout $f,g\in S_k(SL_2(\Z))$, on dispose de la relation :
\begin{equation}
\langle f,g \rangle=\frac{-1}{6(2i)^{k-1}}\sum_{a,b}(i^{a-b}-i^{b-a})\binom{k-2}{a+b}\binom{a+b}{a}\Lambda(f,a)\overline{\Lambda(g,b)}.
\end{equation}
\end{prop}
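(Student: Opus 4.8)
Je veux établir la formule de Petersson
\[
\langle f,g \rangle=\frac{-1}{6(2i)^{k-1}}\sum_{a,b}(i^{a-b}-i^{b-a})\binom{k-2}{a+b}\binom{a+b}{a}\Lambda(f,a)\overline{\Lambda(g,b)},
\]
en reliant l'intégrale de Petersson sur $\G\backslash\H$ aux périodes via le polynôme des périodes $P_f$. Le plan est d'utiliser la méthode de Haberland (ou Kohnen--Zagier, \cite{KZ}), qui exprime le produit scalaire comme un accouplement entre $P_f$ et $\overline{P_g}$.

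D'abord, j'introduirais l'accouplement $\G$-invariant naturel sur $V_k=\C_{k-2}[X]$, donné sur les monômes par $\langle X^a,X^b\rangle_{V}=(-1)^a\binom{k-2}{a}^{-1}\delta_{a+b,k-2}$ à une constante près ; c'est l'unique accouplement (à scalaire) invariant par l'action $|_{2-k}$, et il est antisymétrique puisque $k$ est pair. Le point-clé est une \emph{formule de Haberland} : pour deux formes paraboliques, on a une identité du type
\[
\langle f,g\rangle = c_k \,\{P_f,\overline{P_g}\},
\]
où $\{\cdot,\cdot\}$ est un accouplement construit à partir de $\langle\cdot,\cdot\rangle_V$ et d'éléments du groupe (typiquement $1,S,U,U^2$), et $c_k$ une constante explicite. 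Pour l'obtenir, j'écrirais
\[
\langle f,g\rangle=\frac{i}{2}\int_{\G\backslash\H} f(z)\overline{g(z)}\,\Im(z)^{k-2}\,\d z\,\d\bar z,
\]
puis j'exprimerais l'intégrande comme une forme fermée : en posant $F(z)=\int_{z}^{i\infty}f(w)(w-\bar z)^{k-2}\d w$ on a $\d\big(F(z)\overline{g(z)}\,(\text{facteur})\big)$ proportionnel à l'intégrande, ce qui transforme l'intégrale de volume en une intégrale de bord sur $\partial(\G\backslash\H)$. Le bord se découpe en translatés par $S,U,U^2$ d'un chemin de base, et la contribution de chaque côté fait apparaître $\Lambda(f,a)$ et $\overline{\Lambda(g,b)}$.

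Une fois l'accouplement $\{P_f,\overline{P_g}\}$ obtenu, je développerais $P_f$ et $P_g$ sur la base des monômes à l'aide de l'écriture
\[
P_f(X)=-\sum_{m=1}^{k-1}\binom{k-2}{m-1}\frac{\Lambda(f,m)}{i^m}X^{k-m-1},
\]
déjà donnée dans l'excerpt. En substituant $X^{k-a-1}$ et $X^{k-b-1}$ et en calculant $\langle X^{k-a-1},X^{k-b-1}\rangle_V$, les coefficients binomiaux $\binom{k-2}{a+b}\binom{a+b}{a}$ et les puissances de $i$ se recombinent. Le facteur antisymétrisant $(i^{a-b}-i^{b-a})$ viendra naturellement de la différence entre la contribution de $f\otimes\overline g$ et celle issue de l'équation fonctionnelle ou de la conjugaison complexe, c'est-à-dire de l'antisymétrie de $\langle\cdot,\cdot\rangle_V$ combinée à $\overline{\Lambda(g,b)}=\overline{i^{-b}}\cdots$ ; le coefficient $\tfrac{-1}{6(2i)^{k-1}}$ encode le volume $\tfrac{\pi}{6}$ du domaine fondamental (le $6$) et la normalisation $(2i)^{k-1}$ des formes différentielles.

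L'obstacle principal sera le contrôle rigoureux des termes de bord : il faut justifier la convergence aux pointes (assurée par la paraboliticité de $f$ et $g$, comme dans la preuve du produit de Petersson citée plus haut), puis organiser correctement la somme sur les côtés du domaine fondamental triangulé par $\{1,S,U,U^2\}$ pour que les intégrales partielles se regroupent exactement en $\Lambda(f,a)\overline{\Lambda(g,b)}$ sans termes résiduels. C'est précisément là que les relations de Manin $P_f|_{1+S}=P_f|_{1+U+U^2}=0$ interviennent pour annuler les contributions parasites et ne laisser que la partie antisymétrique. Le suivi soigneux des puissances de $i$ et des signes — notamment l'apparition de $(-1)^a$ dans $\langle\cdot,\cdot\rangle_V$ et de $i^{-m}$ dans les coefficients de $P_f$ — est la partie calculatoire délicate, mais elle est purement algébrique une fois la formule de Haberland établie.
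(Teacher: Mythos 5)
Votre plan est correct et suit essentiellement la même route que la démonstration du texte : on introduit la primitive $F(z)=\int_z^{i\infty}f(u)(u-\bar z)^{k-2}\d u$, on applique Stokes sur le domaine fondamental, on utilise la $1$-périodicité et la modularité pour réduire au bord, puis les relations de Manin $P_f|_{1+S}=P_f|_{1+U+U^2}=0$ pour ne garder que $\int_0^{i\infty}P_f|_{(U-U^2)}(\bar z)g(\bar z)\d\bar z$, dont le développement via $U=ST^{-1}$ donne les coefficients binomiaux. L'accouplement invariant sur $V_k$ que vous mettez en avant est exactement celui que le texte introduit dans la remarque qui suit la proposition pour reformuler le résultat.
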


\begin{proof}
Pour la démonstration, on introduit $F(z)=\int_z^{i\infty} f(u) (u-\bar{z})^{k-2}\d u$, de sorte que $dF=f(z)\left(2iIm(z)\right)^{k-2}\d z$. Et ainsi:
$$\langle f,g \rangle=\frac{i}{2}\int_{SL_2(\Z)\backslash \H} f(z)\overline{g(z)}Im(z)^{k-2}\d z\d \bar{z}
=\frac{-1}{(2i)^{k-1}}\int_{SL_2(\Z)\backslash \H} \d (F(z)\overline{g(z)}\d \bar{z}).$$
On considère alors $D_0=\{z\in \mathfrak{H};|Re(z)|<1/2\text{ et }|z|>1\}$ un système de représentant de $\mathfrak{H}$ modulo $SL_2(\Z)$ au bord près. Ceci nous permet d'appliquer le Théorème de Stokes pour obtenir:
$$\int_{D_0}\d (F(z)g(\bar{z})\d \bar{z})=\int_{\partial D_0}F(z)g(\bar{z})\d \bar{z}.$$
On commence par remarquer que $F$ est $1$-périodique et donc que: 
$$\int_{i\infty}^{\rho^2}F(z)\overline{g(z)}\d \bar{z}=-\int_{\rho}^{i\infty}F(z)\overline{g(z)}\d \bar{z},$$
où $\rho=e^{i\frac{\pi}{6}}$ et $\rho^2=\rho-1$. 
De plus on remarque que la modularité de $f$ et $g$ permet d'obtenir pour tout $a,b,c\in\H$ et pour toute matrice $\gamma\in\G$ :
$$\int_{\gamma a}^{\gamma b}\int_{\gamma c}^z f(u)g(\bar{z})(u-\bar{z})^{k-2}\d u\d \bar{z}=\int_{a}^{b}\int_{c}^z f(u)g(\bar{z})(u-\bar{z})^{k-2}\d u\d \bar{z}.$$
Cette propriété de modularité permet de calculer:
\begin{align*}
-(2i)^{k-1}\langle f,g \rangle &=\int_{\rho^2}^{\rho} F(z)g(\bar{z})\d \bar{z}\\
&=-\int_{\rho^2}^{\rho} \int_{i\infty}^{z} f(u)g(\bar{z})(u-\bar{z})^{k-2}\d u\d \bar{z}\\
&=\int_{Si}^{S\rho} \int_{S0}^{z} f(u)g(\bar{z})(u-\bar{z})^{k-2}\d u\d \bar{z}-\int_{i}^{\rho} \int_{i\infty}^{z} f(u)g(\bar{z})(u-\bar{z})^{k-2}\d u\d \bar{z}\\
&=\int_{i}^{\rho} \int_{0}^{i\infty} f(u)g(\bar{z})(u-\bar{z})^{k-2}\d u\d \bar{z}\\
&=\int_{i}^{\infty} \int_{0}^{i\infty} f(u)g(\bar{z})(u-\bar{z})^{k-2}\d u\d \bar{z}-\int_{\rho}^{\infty} \int_{0}^{i\infty} f(u)g(\bar{z})(u-\bar{z})^{k-2}\d u\d \bar{z}
\end{align*}
Puis on a, d'une part:
\begin{align*}
\int_{i}^{i\infty} \int_{0}^{i\infty} f(u)g(\bar{z})(u-\bar{z})^{k-2}\d u\d \bar{z}
&=\int_{i}^{i\infty} P_f(\bar{z})g(\bar{z})\d \bar{z}\\
&=\frac{1}{2}\left(\int_{i}^{i\infty} P_f(\bar{z})g(\bar{z})\d \bar{z}+\int_{Si}^{Si\infty} P_f|_{2-k}S(\bar{z})g(\bar{z})\d \bar{z}\right)\\
&=\frac{1}{2}\int_{0}^{i\infty} P_f(\bar{z})g(\bar{z})\d \bar{z}\text{ car }P_f|S=-P_f.
\end{align*}
Et d'autre part: 
\begin{align*}
\int_{\rho}^{i\infty} P_f(\bar{z})g(\bar{z})\d \bar{z}
&=\frac{1}{3}\left(\int_{\rho}^{i\infty} P_f(\bar{z})g(\bar{z})\d \bar{z}+\int_{U\rho}^{Ui\infty} P_f|_{2-k}U(\bar{z})g(\bar{z})\d \bar{z}+\int_{U^2\rho}^{U^2i\infty} P_f|_{2-k}U^2(\bar{z})g(\bar{z})\d \bar{z}\right)\\
&=\frac{1}{3}\left(\int_{0}^{i\infty} P_f|_{2-k}(1-U)(\bar{z})g(\bar{z})\d \bar{z}\right)\text{ car }P_f|U^2=-P_f-P_f|U.
\end{align*}
On obtient ainsi $\int_0^{\infty} P_f|(1/6Id+1/3U)(\bar{z})g(\bar{z})\d \bar{z}=\frac{1}{6}\int_0^{\infty} P_f|(U-U^2)(\bar{z})g(\bar{z})\d \bar{z}$.
Enfin on conclu :
\begin{align*}
\int_0^{i\infty} P_f|(U-U^2)(\bar{z})g(\bar{z})\d \bar{z}&=\int_0^{i\infty} P_f|U(\bar{z})g(\bar{z})d\bar{z}-\int_0^{i\infty} f(z)\overline{P_g|U(z)}\d z\\
&=-\int_0^{i\infty} P_f|T^{-1}(\bar{z})g(\bar{z})\d \bar{z}+\int_0^{i\infty} f(z)\overline{P_g|T^{-1}(z)}\d z\text{ car }U=ST^{-1}\\
&=\sum_{a+b+c=k-2}\frac{(k-2)!}{a!b!c!}\left(i^a\Lambda(f,a)\overline{i^b\Lambda(g,b)}-(-i)^{a}\Lambda(f,a)\overline{(-i)^b\Lambda(g,b)}\right).
\end{align*}
\end{proof}

\begin{rem}\label{prodvk}
Une manière de récrire ce résultat est de considérer la forme bilinéaire symétrique définie sur $\Q$ et qu'on étendra au corps des complexes:
\begin{equation}
V_k^{\Q}\times V_k^{\Q}\to \Q,\quad (P,Q)\mapsto [P,Q],
\end{equation}
définit sur les monômes par $[X^m,X^n]=\delta_{(m+n=k-2)}(-1)^n\binom{k-2}{m}^{-1}$. Elle est non dégénérée et vérifie les propriétés suivantes:\\
(i) Pour tout $\gamma\in PSL_2(\Z), [P|_{\gamma},Q|_{\gamma}]=[P,Q]$, on dira $P\G$-invariant.\\
(ii) Pour tout $\alpha\in\C, [P,(X-\alpha)^{k-2}]=P(\alpha)$.\par
On peut alors reformuler la Proposition \ref{prop123} par:
\begin{equation}
\langle f,g \rangle=\frac{1}{6(2i)^{k-1}}[P_f|_{(T-T^{-1})},\overline{P_g}].
\end{equation}
\end{rem}

\begin{coro}
Les applications suivantes sont injectives:
$$S_k(SL_2(\Z))\to V_k, f\mapsto P_f^+,$$
$$\text{ et }S_k(SL_2(\Z))\to V_k,f\mapsto P_f^-,$$
les parties paire et impaire du polynôme des périodes.
\end{coro}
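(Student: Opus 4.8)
The plan is to derive both injectivity statements from the positive-definiteness of the Petersson product, using the formula of Remark~\ref{prodvk} (equivalently Proposition~\ref{prop123}). Let $f\in S_k(\G)$ and suppose that its even part vanishes, $P_f^+=0$; since $\langle f,f\rangle=\int_{\G\backslash\H}|f(z)|^2\Im(z)^{k}\frac{\d x\d y}{y^2}>0$ for $f\neq0$, it suffices to prove $\langle f,f\rangle=0$. The case $P_f^-=0$ will be handled symmetrically, so I treat the two at once by isolating the mechanism that makes $\langle f,f\rangle$ a pure cross term between $P_f^+$ and $P_f^-$.

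The argument rests on two elementary structural facts about the bracket of Remark~\ref{prodvk}. First, since $k$ is even, $[X^m,X^n]=0$ unless $m+n=k-2$, and in that case $m$ and $n$ have the same parity; hence the bracket vanishes on pairs of polynomials of opposite parity, i.e. $[P^+,Q^-]=[P^-,Q^+]=0$ for the even/odd decomposition of $V_k$. Second, the operator $T-T^{-1}$ exchanges the two parities: a direct check on $P|_{(T-T^{-1})}(X)=P(X+1)-P(X-1)$ shows that this polynomial is odd when $P$ is even and even when $P$ is odd. Complex conjugation preserves the degree, hence the parity, of a polynomial.

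Combining these with $\langle f,f\rangle=\frac{1}{6(2i)^{k-1}}[P_f|_{(T-T^{-1})},\overline{P_f}]$, I would write $P_f=P_f^++P_f^-$ and expand. The summand $P_f^+|_{(T-T^{-1})}$ is odd and $P_f^-|_{(T-T^{-1})}$ is even, while $\overline{P_f^+}$ is even and $\overline{P_f^-}$ is odd. By the diagonality of the bracket only the same-parity pairings survive, giving
\[
\langle f,f\rangle=\frac{1}{6(2i)^{k-1}}\left([\,P_f^+|_{(T-T^{-1})},\overline{P_f^-}\,]+[\,P_f^-|_{(T-T^{-1})},\overline{P_f^+}\,]\right).
\]
Each of the two brackets contains one factor built from $P_f^+$ and one built from $P_f^-$. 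Therefore $\langle f,f\rangle=0$ as soon as either $P_f^+=0$ or $P_f^-=0$, and positive-definiteness forces $f=0$ in both cases, which is the desired injectivity.

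I expect the only point requiring care to be the bookkeeping identifying which monomials of $P_f$ carry the critical periods: working directly with the polynomial bracket of Remark~\ref{prodvk} rather than the coefficientwise sum of Proposition~\ref{prop123} is what keeps the argument clean, because $P_f$ for a cusp form involves only the critical values $\Lambda(f,m)$ with $1\le m\le k-1$ and no boundary term such as $\Lambda(f,0)$. The substantive input --- the factor $i^{a-b}-i^{b-a}$ in Proposition~\ref{prop123}, equivalently the parity-exchanging property of $T-T^{-1}$ --- is precisely the statement that the Petersson product pairs the even periods of $f$ against its odd periods, so that it cannot detect a single parity in isolation; this is the conceptual heart of the proof.
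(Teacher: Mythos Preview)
Your proof is correct and is essentially the argument the paper intends: the corollary is stated immediately after Proposition~\ref{prop123} and Remark~\ref{prodvk} without proof, and the key input is exactly that the factor $i^{a-b}-i^{b-a}$ (equivalently, your parity-switching property of $T-T^{-1}$) forces the Petersson norm to pair odd periods against even ones. Your choice to work with the bracket of Remark~\ref{prodvk} rather than the coefficientwise sum is a clean way to organise the bookkeeping, but the substance is the same.
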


On a alors le théorème suivant dû à Eichler et Shimura:

\begin{thm}[Eichler-Shimura]
Les applications suivantes sont des isomorphismes de $\C$-espaces vectoriels:
$$S_k(SL_2(\Z))\to W_k^-, f\mapsto P_f^-$$
$$\text{et }M_k(SL_2(\Z))\to W_k^+, f\mapsto P_f^+.$$
\end{thm}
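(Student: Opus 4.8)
The plan is to show that each map is well defined and injective, and then to deduce surjectivity from a dimension count. First I would check that the maps take values in the asserted spaces. By the relation de Manin, $P_f|_{(1+S)}=P_f|_{(1+U+U^2)}=0$ for every $f\in M_k(SL_2(\Z))$, so $P_f\in W_k$; since $W_k=W_k^+\oplus W_k^-$, the even and odd parts satisfy $P_f^+\in W_k^+$ and $P_f^-\in W_k^-$. Hence $f\mapsto P_f^-$ sends $S_k$ into $W_k^-$ and $f\mapsto P_f^+$ sends $M_k$ into $W_k^+$.

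Injectivity of $f\mapsto P_f^-$ on $S_k$ and of $f\mapsto P_f^+$ on $S_k$ is exactly the content of the Corollaire. It remains to see that $f\mapsto P_f^+$ stays injective on all of $M_k=S_k\oplus\C E_k$, which amounts to checking $P_{E_k}^+\notin P^+(S_k)$. Here I would invoke the regularized integral \eqref{intpolnonmod}: a genuine cusp form yields a convergent period integral with no boundary term, whereas the non-cuspidal $E_k$ contributes a boundary term proportional to $a_0(E_k)$. Concretely, the explicit formula for $P_{E_k}$ exhibits the nonzero coefficient $-2i\pi\zeta(k-1)$ on the distinguished even polynomial $1-X^{k-2}\in W_k^+$, a component that is absent from every cusp-form period. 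Thus $P_{E_k}^+$ lies outside $P^+(S_k)$, and the even map is injective on $M_k$.

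For surjectivity I would argue by dimensions. The two injections give $\dim W_k^-\geq\dim S_k$ and $\dim W_k^+\geq\dim M_k=\dim S_k+1$. It therefore suffices to establish, independently, the identity $\dim W_k=2\dim S_k+1$, for then
$$2\dim S_k+1=\dim W_k=\dim W_k^++\dim W_k^-\geq(\dim S_k+1)+\dim S_k=2\dim S_k+1,$$
which forces $\dim W_k^-=\dim S_k$ and $\dim W_k^+=\dim S_k+1$. Each injection then matches dimensions with its target and is an isomorphism, as desired.

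The main obstacle is precisely the identity $\dim W_k=2\dim S_k+1$. I would prove it by a direct analysis of the coefficient relations \eqref{eqcoefwk}: the relation $a_m+(-1)^m a_{w-m}=0$ coming from $1+S$ halves the free coefficients, and computing the rank of the remaining system $\sum_i C_w(m,i)a_i=0$ coming from $1+U+U^2$, then comparing with the value of $\dim S_k$ furnished by the dimension formula for $M_k(SL_2(\Z))$, yields the claim. Equivalently, one may compute $\dim W_k$ through the cohomology of $PSL_2(\Z)=\Z/2\Z\ast\Z/3\Z$, using the vanishing of the invariants $V_k^{SL_2(\Z)}=0$ for $k\geq 4$. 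This combinatorial (or cohomological) step is where the genuine content lies; the Petersson identity of the Remarque \ref{prodvk}, which expresses $\langle f,g\rangle$ through the nondegenerate pairing $[\cdot,\cdot]$ of the Proposition \ref{prop123} on $V_k$, provides an independent confirmation that the cuspidal image is nondegenerate and of the expected size.
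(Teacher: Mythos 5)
Your overall architecture is the same as the paper's: well-definedness from the relations de Manin, injectivity from the Corollaire (itself resting on the Petersson formula of la Proposition \ref{prop123}), and surjectivity by comparing dimensions, the identity $\dim W_k=2\dim S_k+1$ being extracted from the coefficient relations (\ref{eqcoefwk}). You also correctly identify that dimension count as the place where the real work sits, which is exactly what the paper defers to as well.

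There is, however, one localized gap: your justification that $P_{E_k}^+\notin P^+(S_k)$. You argue that the $1-X^{k-2}$ ``component'' is \emph{absent} from every cusp-form period because the cuspidal integral has no boundary term. Read literally this is false: for a cusp form $f$ the coefficients of $X^0$ and $X^{k-2}$ in $P_f$ are $-i\Lambda(f,1)$ and $+i\Lambda(f,1)$, both even-degree terms, so $P_f^+$ generically \emph{does} carry a nonzero multiple of $1-X^{k-2}$ in its coefficients. Read instead as ``the projection onto the line $\C(1-X^{k-2})$ in the decomposition $W_k^+=\Per_k^+\oplus\C(1-X^{k-2})$ vanishes on cusp forms,'' it is circular, since that decomposition is precisely what is being proved. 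What you actually need is that the polynomial $1-X^{k-2}$ itself (which, by the explicit formula, equals $P_{E_k}^+$ up to the nonzero factor $-2i\pi\zeta(k-1)$) is not of the form $P_f^+$ for a nonzero $f\in S_k$. Note also that the raw inequality $\dim W_k^++\dim W_k^-\le 2\dim S_k+1$ cannot by itself tell you on which side the extra dimension lives, so this independence statement cannot be absorbed into the dimension count; one must either prove it directly or compute $\dim W_k^+$ and $\dim W_k^-$ separately from (\ref{eqcoefwk}) \emph{and} still rule out $1-X^{k-2}\in\Per_k^+$. The standard repair is the pairing of la Remarque \ref{prodvk}: since $[X^m,X^n]$ vanishes unless $m+n=k-2$, one computes $[1-X^{k-2},Q]=q_{k-2}-q_0=0$ for every odd $Q\in W_k^-$, so $1-X^{k-2}$ lies in the radical of the pairing between $W_k^+$ and $W_k^-$, whereas la Proposition \ref{prop123} and the nondegeneracy of the Petersson product ($\langle f,f\rangle\neq 0$) prevent $P_f^+$ from doing so for $f\neq 0$. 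Your closing sentence gestures at exactly this, but as written it is offered as ``independent confirmation'' rather than as the missing step of the proof.
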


\begin{proof}
On dispose déjà de l'injectivité des deux applications. En effet, la partie paire des séries d'Eisenstein vérifie aussi ce qui précède car elle est dans $V_k$. Il suffit alors de démontrer l'égalité des dimensions. Ceci se déduit par exemple de l'étude des relations (\ref{eqcoefwk}) vérifiées par les coefficients.
\end{proof}

\begin{rem}Il existe une démonstration dans un cadre plus large de ce résultat. En effet, pour tout $\Gamma$-module $M$ on peut introduire une cohomologie parabolique:
$$Z_{par}^1(\Gamma,M)=\{\varphi:\Gamma\to M;\varphi(T)=0,\varphi(\gamma_1\gamma_2)=\varphi(\gamma_1)|\gamma_2+\varphi(\gamma_2)\},$$
$$B_{par}^1(\Gamma,M)=\{\varphi:\gamma\mapsto m|_{(1-\gamma)},m\in M^T\},$$
$$\text{ et }H_{par}^1(\Gamma,M)=Z_{par}^1(\Gamma,M)/B_{par}^1(\Gamma,M).$$
Alors l'application $\varphi\mapsto\varphi(S)$ identifie $Z_{par}^1(\Gamma,M)$ à $W=\{m\in M,m|_{1+S}=m|_{1+U+U^2}=0\}$.
En effet, $\varphi(S)\in W$ car on peut vérifier que: 
\begin{align*}
\varphi(S)|(1+S)&=\varphi(S^2)=0,\\
\text{et }\varphi(S)|(1+U+U^2)&=\varphi(S)|(1+(TS)^2+TS)=0.
\end{align*}
De plus cette application est bien bijective car $\varphi(T)=0$ et $\Gamma$ est engendré par $S$ et $T$. Et l'image de $B_{par}^1(\Gamma,M)$ est simplement $M^{T}|_{1-S}$. Démontrant ainsi:
$$H_{par}^1(SL_2(\Z),M)\oplus M^{T}|_{1-S}=W=M/\left(M^S+M^U\right).$$
On peut utiliser ceci dans le cas où $M=V_k$. En effet, il nous reste à lier $S_k(\Gamma)$ et $H_{par}^1(\Gamma,V_k)$, pour cela on associe à $f$ une primitive d'ordre $k-1$: 
$$F(z)=\int^{i\infty}_z f(u)(u-z)^{k-2}\d u,$$ 
qui permet de définir les bijections:
$$S_k(SL_2(\Z))\to H_{par}^1(\Gamma,V_k^{\pm}),\quad f\mapsto\varphi_f^{\pm}(\gamma)=F|_{(\gamma-1)}^{\pm}(z).$$
La condition de modularité équivaut à l'appartenance à $V_k$ et la périodicité $f|T=f$ équivaut à $\varphi_f(T)=0$. 
\end{rem}

\section{Structures rationnelles de $M_k(\Gamma)$}

On peut définir deux structures rationnelles sur $M_k(\Gamma)$. La décomposition suivant les coefficients de Fourier permet de définir:
\begin{equation}
M_k^{\Z}(\Gamma)=\{f\in M_k(\Gamma)\text{ telle que }a_n(f)\in\Z,\text{ pour tout } n\}.
\end{equation}
On peut montrer que $M_k^{\Z}(\Gamma)$ est un $\Z$-module de rang la dimension de $M_k(\Gamma)$. Ceci permet notamment de déduire que les valeurs propres des opérateurs de Hecke sont des entiers algébriques totalement réels. 
Ceci permet de définir $M_k^{\Q}(\Gamma)=M_k^{\Z}(\Gamma)\otimes\Q$ et plus généralement $M_k^{K}(\Gamma)=M_k^{\Z}(\Gamma)\otimes K$ où $K$ est un corps de nombre. ceci permet d'obtenir:
\begin{equation}
M_k(\Gamma)=M_k^{\Z}(\Gamma)\otimes_{\Z}\C=S_k^{\Z}(\Gamma)\otimes_{\Z}\C\oplus\C E_k,
\end{equation}
où on a adapté la définition aux formes paraboliques. Ceci est possible car la condition $a_0(f)=0$ est bien indépendante de l'anneau de base.\par
Une structure duale est liée à celle-ci grâce au produit de Petersson:
\begin{equation}
\widehat{M_k^{K}(\Gamma)}=\{f\in M_k(\Gamma)\text{ telle que } \langle f,g\rangle\in K,\text{ pour tout } g\in M_k^{K}(\Gamma)\}.
\end{equation}

Le travail de Rankin sur des formules liant les périodes à un calcul de produit de Petersson permet d'obtenir un Théorème de Eichler-Shimura plus précis:

\begin{thm}
Soit $f\in M_k(\Gamma)$ une forme de Hecke normalisée. Notons $\Q_f$ le corps de nombres engendrés par les $(a_n(f))_{n\geq 0}$.
Il existe deux nombres, $\omega_f^+\in i\R$ et $\omega_f^-\in\R$, tels que:
\begin{equation}
P_f^+\in \omega_f^+\Q_f[X]\quad\text{ et }\quad P_f^-\in\omega_f^-\Q_f[X].
\end{equation}
De plus, le carré scalaire de Petersson vérifie : 
\begin{equation}
i\langle f,f\rangle\in \omega_f^+\omega_f^-\Q_f.
\end{equation}
\end{thm}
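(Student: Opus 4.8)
The plan is to combine three ingredients already available in the excerpt: the Hecke-equivariance of the Eichler-Shimura isomorphisms, multiplicity one, and the Petersson formula of Remark \ref{prodvk}. First I would recall that the Hecke operators $T_n$ act on $V_k$ preserving both the rational structure $V_k^{\Q}$ and the subspace $W_k$, and that the isomorphisms $f\mapsto P_f^{\pm}$ of Eichler-Shimura intertwine this action with the Hecke action on $S_k(\G)$ and $M_k(\G)$. Hence $P_f^+$ and $P_f^-$ are simultaneous eigenvectors of all the $T_n$, with the common eigenvalues $a_n(f)\in\Q_f$, which are moreover totally real algebraic integers as noted in Section 5, so that $\Q_f\subset\R$.

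Next I would invoke multiplicity one. The common eigenline for the system $(a_n(f))_n$ inside $W_k^{+}$ (resp. $W_k^{-}$) is one dimensional over $\C$; since the $T_n$ act on $W_k^{\pm}\cap V_k^{\Q}$ through matrices defined over $\Q$, the absolute Galois group permutes the eigensystems and the eigenspace attached to $(a_n(f))_n$ is defined, and one dimensional, over $\Q_f$. I would fix a generator $R_f^{\pm}\in\Q_f[X]$ of it. As $P_f^{\pm}$ lies in the same complex eigenline and is nonzero (the Eichler-Shimura maps are injective), there exist scalars $\omega_f^{\pm}\in\C^{\times}$ with $P_f^{\pm}=\omega_f^{\pm}R_f^{\pm}$, whence $P_f^{\pm}\in\omega_f^{\pm}\Q_f[X]$. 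The reality constraint then drops out of the coefficient computation recalled just before Proposition \ref{prop123}, namely that the coefficient of $X^m$ in $P_f$ lies in $i^{m-1}\R$: the coefficients of the even part $P_f^+$ lie in $i\R$ and those of the odd part $P_f^-$ in $\R$, while $R_f^{\pm}$ has coefficients in $\Q_f\subset\R$; comparing one nonzero coefficient forces $\omega_f^+\in i\R$ and $\omega_f^-\in\R$.

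For the last assertion I would put $g=f$ in the reformulation $\langle f,f\rangle=\frac{1}{6(2i)^{k-1}}[P_f|_{(T-T^{-1})},\overline{P_f}]$ of Remark \ref{prodvk}, and write $P_f=P_f^++P_f^-$. Two elementary facts then organize the computation: the pairing $[X^m,X^n]$ vanishes unless $m+n=k-2$, so (as $k$ is even) it pairs only monomials of equal parity in $X$; and $T-T^{-1}$ reverses that parity, since $P|_{(T-T^{-1})}(X)=P(X+1)-P(X-1)$. Combined with $\overline{P_f^+}=-P_f^+$ and $\overline{P_f^-}=P_f^-$, every surviving term in the pairing crosses $P_f^+$ with $P_f^-$, namely $[P_f^+|_{(T-T^{-1})},P_f^-]$ and $-[P_f^-|_{(T-T^{-1})},P_f^+]$, each of which therefore carries the factor $\omega_f^+\omega_f^-$. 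The residual pairings $[R_f^{\pm}|_{(T-T^{-1})},R_f^{\mp}]$ are $\Q_f$-rational because both $T-T^{-1}$ and $[\cdot,\cdot]$ are defined over $\Q$. Thus $[P_f|_{(T-T^{-1})},\overline{P_f}]\in\omega_f^+\omega_f^-\Q_f$, and since $(2i)^{k-1}=\pm 2^{k-1}i$ for even $k$ makes $\frac{1}{6(2i)^{k-1}}\in i\Q$, I get $\langle f,f\rangle\in i\,\omega_f^+\omega_f^-\Q_f$, i.e. $i\langle f,f\rangle\in\omega_f^+\omega_f^-\Q_f$.

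The main obstacle is the rationality input of the second paragraph: establishing that the Hecke action on the period polynomials is genuinely defined over $\Q$ with respect to $V_k^{\Q}$, and that multiplicity one pins the eigensystem $(a_n(f))_n$ to a single $\Q_f$-line in $W_k^{\pm}$ (for the even part one must also check that no Eisenstein eigensystem coincides with the cuspidal one, which follows from the Ramanujan-type bounds on the $a_p(f)$). Everything after that is the bilinear bookkeeping above, where the decisive structural point is simply that the Petersson norm pairs the even period polynomial against the odd one, so that it is forced to be a $\Q_f$-multiple of $\omega_f^+\omega_f^-$.
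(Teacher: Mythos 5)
Your proof is correct, but it follows a genuinely different route from the paper's. The paper deduces the theorem from Rankin's method: it uses the identity $\langle f,[E_{m_1},E_{m_2}]_n\rangle=\left(\tfrac{i}{2}\right)^{k-1}\binom{k-2}{n-1}\Lambda(f,n+1)\Lambda(f,m_1+n)$ for the Rankin--Cohen brackets of Eisenstein series, together with the fact (checked by computation) that these brackets have rational Fourier coefficients; since the $f$-eigencomponent of a rational form pairs with $f$ inside $\langle f,f\rangle\Q_f$, every product of two critical values of opposite parity lies in $i^{k-1}\langle f,f\rangle\Q_f$, which delivers both assertions simultaneously --- ratios of periods within one parity class are in $\Q_f$, and any single cross product exhibits $i\langle f,f\rangle\in\omega_f^+\omega_f^-\Q_f$. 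You instead decouple the two claims: the first comes from the $\Q$-rationality of the Hecke action on $W_k^{\pm}$ plus multiplicity one, the second from the formula of the Proposition \ref{prop123} via the parity bookkeeping $[P_f|_{T-T^{-1}},\overline{P_f}]=[P_f^+|_{T-T^{-1}},P_f^-]-[P_f^-|_{T-T^{-1}},P_f^+]$, which is correct ($T-T^{-1}$ flips parity, the pairing only matches equal parities, and $\overline{P_f^{\pm}}=\mp P_f^{\pm}$ since the coefficient of $X^m$ in $P_f$ lies in $i^{m-1}\R$). What the paper's route buys is explicitness: the rational ratios of periods are produced by a finite computation on Fourier coefficients of $[E_{m_1},E_{m_2}]_n$, independent of $f$. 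What yours buys is a softer, more structural argument at the cost of the two inputs you rightly flag: multiplicity one (with the check, available from Hecke's bound, that a cuspidal eigensystem is never Eisenstein), and the stability of $W_k^{\pm,\Q}$ under the operators $P\mapsto\sum_{\gamma\in M_n^0}P|_{\gamma}$ --- the latter is not stated in the paper, but follows by intersecting the $\C$-stability of $W_k^{\pm}$ (which is spanned by period polynomials of eigenforms, hence preserved) with the evident stability of $V_k^{\Q}$ under integral matrices.
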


\begin{proof}
Ceci peut être déduit d'une généralisation d'un résultat de Rankin, qui donne pour $f$ forme de Hecke et pour des entiers pairs vérifiant $m_1+m_2=k$:
\begin{equation}\label{eqrank}
\langle f,E_{m_1}E_{m_2}\rangle=\left(\frac{i}{2}\right)^{k-1}\Lambda(f,1)\Lambda(f,m_1).
\end{equation}
On peut élargir ce résultat en introduisant les crochets de Rankin:
\begin{equation}
(2i\pi)^n[E_{m_1},E_{m_2}]_n=\sum_{n_1+n_2=n}(-1)^{n_1}\binom{m_1+n-1}{m_1+n_1-1}\binom{m_2+n-1}{m_2+n_2-1}E_{m_1}^{(n_1)}E_{m_2}^{(n_2)},
\end{equation}
qui est une forme modulaire de $M_{m_1+m_2+2n}(\Gamma)$. Ceci permet d'obtenir pour une forme $f$ de Hecke parabolique et des entiers pairs vérifiant $m_1+m_2=k-2n$:
\begin{equation}
\langle f,[E_{m_1}E_{m_2}]_n\rangle=\left(\frac{i}{2}\right)^{k-1}\binom{k-2}{n-1}\Lambda(f,n+1)\Lambda(f,m_1+n).
\end{equation}
Ceci permet de ramener le question de la rationalité à des opérations sur les crochet de Rankin indépendamment de $f$. Et on démontre par le calcul que les formes modulaires $[E_{m_1},E_{m_2}]_n$ sont à coefficients de Fourier rationnels.
\end{proof}

On peut ainsi définir les espaces:
\begin{equation}
S_{k,\Gamma}^{\pm}(K)=\{f\in S_k(\Gamma)\text{ telle que }P_f^{\pm}\in K[X]\}.
\end{equation}
Et on dispose à nouveau de la décomposition:
\begin{equation}
S_k(\Gamma)=S_{k,\Gamma}^{\pm}(K)\otimes_{K}\C,\text{ pour tout corps de nombres }K.
\end{equation}
De plus, les espaces $S_{k,\Gamma}^+(\Q)$ et $S_{k,\Gamma}^-(\Q)$ sont duaux l'un de l'autre par rapport au produit de Petersson, c'est à dire:
\begin{align}
S_{k,\Gamma}^+(\Q)&=\{f\in S_k(\Gamma)\text{ telle que }\langle f ,g \rangle\in i\Q,\text{ pour tout } g\in S_{k,\Gamma}^-(\Q)\},\\
\text{ et }S_{k,\Gamma}^-(\Q)&=\{f\in S_k(\Gamma)\text{ telle que }\langle f ,g \rangle\in i\Q,\text{ pour tout } g\in S_{k,\Gamma}^+(\Q)\}.
\end{align}
Dans la partie suivante, on montre un exemple d'éléments de cette structure rationnelle.

\section{Une famille de formes génératrices de $S_k(\G)$}

Notons $\Gamma=SL_2(\mathbb{Z})$. Soit $f\in S_k(\Gamma)$ une forme modulaire parabolique de niveau $1$ de poids $k\geq 4$. La forme $f$ est entièrement déterminée par ses périodes, les $\Lambda(f,m)$ pour les entiers critiques $m$ (i.e $1\leq m\leq k-1$). De plus le produit de Petersson donne une famille de formes modulaires paraboliques $F_m^k$ représentant ces évaluations:
\begin{equation}
\Lambda(f,m)=\langle f,F_m^k\rangle_k=\int_{\Gamma\backslash\H} f(z)\overline{F_m^k(z)}y^{k-2}\d x\d y.
\end{equation}
Nous noterons $F_m=F_m^k$ s'il n'y pas ambiguïté et $\tilde{m}=k-m$.\par
Dans leur article \cite{KZ}, Kohnen et Zagier calculent $\Lambda(F_m,n)$ pour $m+n$ impaire. Ceci permet, la famille $F_m$ étant génératrice de l'espace des formes paraboliques, de faire les calculs des périodes pour toute forme modulaire parabolique.
\nomenclature{$F_m^k$}{Forme de Kohnen-Zagier pour le poids $k$}

\subsection{Décomposition de $F_m$}
Une formule de Cohen \cite{Cohen81} donne une expression simple de $F_m$:

\begin{prop}[Cohen]\label{propcoh}
La forme modulaire $F_m$ est donnée par la formule:
\begin{equation}
F_m(z)=C_m\sum_{\left(\begin{smallmatrix} a & b \\ c & d \end{smallmatrix}\right)\in \Gamma} (az+b)^{-m}(cz+d)^{-\tilde{m}}\in S_k(\Gamma),
\end{equation}
où on a défini la constante $C_m=-\frac{(2i)^{k-2}i^m}{\pi\binom{k-2}{m-1}}$.
\end{prop}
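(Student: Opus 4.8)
The plan is to show that the right-hand side is a cusp form in $S_k(\G)$ representing the same period functional as $F_m$, and then to invoke the uniqueness characterization $\Lambda(f,m)=\langle f,F_m\rangle$. Set
\[
\Phi_m(z)=\sum_{\mat{a}{b}{c}{d}\in\G}(az+b)^{-m}(cz+d)^{-\tilde m},
\]
so the claim is $F_m=C_m\Phi_m$. First I would establish convergence and modularity. For $2\le m\le k-2$ both exponents satisfy $m,\tilde m\ge 2$, and summing first over the top rows sharing a fixed bottom row and then over primitive bottom rows shows the double series converges absolutely and locally uniformly on $\mathfrak H$. Modularity of weight $k=m+\tilde m$ then follows exactly as for $E_k$: for $g=\mat{p}{q}{r}{s}\in\G$ the substitution $z\mapsto gz$ turns each summand into $(rz+s)^{k}$ times the summand attached to $\gamma g$, and $\gamma\mapsto\gamma g$ permutes $\G$, whence $\Phi_m(gz)=(rz+s)^k\Phi_m(z)$.

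Next I would check cuspidality. Local uniform convergence gives holomorphy on $\mathfrak H$, so it remains to see that the constant Fourier coefficient vanishes. The terms with $c=0$ are $\pm\sum_{b\in\Z}(\pm z+b)^{-m}$, whose expansion by the Hurwitz formula has no constant term, while the terms with $c\neq 0$ each tend to $0$ as $\Im(z)\to\infty$ and contribute nothing to the constant coefficient. Hence $\Phi_m\in S_k(\G)$.

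The heart of the argument is unfolding the Petersson product. Rewriting each summand as $(cz+d)^{-k}(\gamma z)^{-m}$ and using $f(\gamma z)=(cz+d)^kf(z)$ together with $\Im(\gamma z)=\Im(z)/|cz+d|^2$, one finds that the integrand of $\langle f,\Phi_m\rangle$ is, term by term, $f(\gamma z)\,\Im(\gamma z)^{k}\,\overline{(\gamma z)}^{-m}$ times the invariant measure $\frac{\d x\d y}{y^2}$. The classical Rankin--Selberg unfolding then collapses $\int_{\G\backslash\H}\sum_{\gamma}$ to an integral over all of $\mathfrak H$ (the factor $2$ coming from $\pm I$):
\[
\langle f,\Phi_m\rangle=2\int_{\mathfrak H} f(z)\,\Im(z)^{k-2}\,\bar z^{-m}\,\d x\d y.
\]
Evaluating the inner $x$-integral by residues -- the pole of $(x-iy)^{-m}$ sits at $x=iy$ in the upper half-plane, where the $n$-th Fourier mode $e^{2i\pi nx}$ of $f$ decays -- produces $\frac{(2i\pi)^m}{(m-1)!}\sum_{n>0}a_n(f)n^{m-1}e^{-4\pi ny}$, and the remaining $y$-integral is a Gamma integral giving the factor $\frac{(k-2)!}{(4\pi)^{k-1}}$ and the Dirichlet series $\sum_{n>0}a_n(f)n^{m-k}$. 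Thus $\langle f,\Phi_m\rangle$ is an explicit constant times $L(f,k-m)$, equivalently times $\Lambda(f,k-m)$.

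Finally I would identify the forms: since both $\Phi_m$ and $F_m$ lie in $S_k$ and, via the nondegenerate Petersson pairing, represent the period functional at the same critical integer, they are proportional, and reading off the explicit constant from the previous step -- accounting for the factor $2$ from $\pm I$, the conjugation in the second argument of the product, the normalization of the fundamental domain, and the bookkeeping between $m$ and $\tilde m=k-m$ -- yields precisely $C_m=-\frac{(2i)^{k-2}i^m}{\pi\binom{k-2}{m-1}}$. The main obstacle is convergence at the edges $m\in\{1,k-1\}$ of the critical strip, where one exponent equals $1$ and the double series is only conditionally convergent: there I would insert a factor $|cz+d|^{-s}$, develop everything for $\Re(s)$ large, and analytically continue to $s=0$ by Hecke's trick, checking that the continuation commutes with both the unfolding and the residue evaluation.
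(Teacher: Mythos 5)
Your proof is correct, and it rests on the same identity as the paper's: the unfolding relation between $\langle f,\Phi_m\rangle$ and $\Lambda(f,\tilde m)$, where $\Phi_m=\sum_{\gamma}(az+b)^{-m}(cz+d)^{-\tilde m}$. The difference is one of direction and of technique for the intermediate integral. The paper starts from $\Lambda(f,\tilde m)=\int_0^\infty f(iu)u^{\tilde m-1}\d u$, integrates by parts $m-1$ times, writes $f^{(m-1)}$ via Cauchy's integral formula to produce the two-dimensional integral $\int_{\mathfrak H}f(z)\bar z^{-m}\Im(z)^{k-2}\d x\d y$, and then \emph{folds} it into $\int_{\Gamma\backslash\mathfrak H}$ against the sum over $\Gamma$, reading off the Petersson product at the end. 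You run the computation backwards: unfold $\langle f,\Phi_m\rangle$ to the same integral over $\mathfrak H$, then evaluate the inner $x$-integral by residues (which is Cauchy's formula in reverse) and the $y$-integral as a Gamma integral to land on the Dirichlet series. The two evaluations are equivalent, so nothing essential is gained or lost there; what your version does buy is the explicit verification that $\Phi_m$ is modular and cuspidal (the paper relegates this to a remark), and, more substantively, the observation that for $m\in\{1,k-1\}$ one exponent equals $1$ and the double series is only conditionally convergent, so that a Hecke-trick regularization is needed. The paper's remark that ``la convergence normale est assur\'ee car $k\geq 4$'' silently assumes $2\leq m\leq k-2$; your treatment of the edge cases is a genuine improvement in rigor over the text.
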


\begin{rem}
La convergence normale de cette série est assurée car $k\geq 4$.\par
Pour simplifier les notations, nous introduisons de manière générique les coefficients d'une matrice $\gamma=\mat{a}{b}{c}{d}$, quitte à ajouter des indices éventuels. De plus, on introduit la notation: 
$$F_m\left[\mat{a}{b}{c}{d}\right](z)=(az+b)^{-m}(cz+d)^{-\tilde{m}},$$
de manière à obtenir:
$$F_m=C_m\sum_{\gamma\in\Gamma}F_m[\gamma].$$\par
La modularité de $F_m$ peut alors simplement s'observer sur la formule:
$$F_m[\gamma'](\gamma z)=(cz+d)^kF_m[\gamma'\gamma](z).$$
Le caractère parabolique se montre à nouveau grâce au lemme d'Hurwitz.
\end{rem}

\begin{proof}
Un calcul direct montre ce résultat. En effet, pour $f\in S_k(\Gamma)$:
\begin{align*}
i^k\Lambda(f,m)&=\Lambda(f,\tilde{m})\quad\text{ d'après l'équation fonctionnelle}\\
&=\int_0^{\infty} f(iu)u^{\tilde{m}-1}\d u\\
&=\int_0^{\infty} \frac{(-i)^{m-1}(\tilde{m}-1)!}{(k-2)!}f^{(m-1)}(iu)u^{k-2}\d u
\end{align*}

\begin{align*}
&=\frac{(-i)^{m-1}(\tilde{m}-1)!}{(k-2)!}\int_0^{\infty}\left(\frac{(m-1)!}{2i\pi}\int_{-\infty}^{\infty} \frac{f(x+iu/2)}{(x-iu/2)^m}\d x\right)u^{k-2}\d u\\
&=\frac{(-i)^{m}2^{k-2}}{\pi\binom{k-2}{m-1}}\int_{\H} \frac{f(z)}{\bar{z}^m}y^{k-2}\d x\d y\\
&=\frac{(-i)^{m}2^{k-2}}{\pi\binom{k-2}{m-1}}\int_{\Gamma\backslash\H} \sum_{\gamma\in\Gamma}\frac{f(\gamma z)Im(\gamma z)^k}{(\gamma\bar{z})^m}\frac{\d x\d y}{y^2}\\
&=\frac{(-i)^{m}2^{k-2}}{\pi\binom{k-2}{m-1}}\int_{\Gamma\backslash\H} f(z)Im(z)^k\sum_{\gamma\in\Gamma}\frac{(cz+d)^k(c\bar{z}+d)^m}{|cz+d|^{2k}(a\bar{z}+b)^m}\frac{\d x\d y}{y^2}\\
&=i^k\bar{C_m}\int_{\Gamma\backslash\H}f(z)\sum_{\gamma\in\Gamma}(a\bar{z}+b)^{-m}(c\bar{z}+d)^{-\tilde{m}}y^{k-2}\d x\d y
\end{align*}
Ceci donnant bien $F_m(z)=C_m\sum(az+b)^{-m}(cz+d)^{-\tilde{m}}$. 
\end{proof}

\subsection{Calcul de $\Lambda(F_m,n)$}

\begin{thm}[Kohnen-Zagier]\label{thmKZ}
Soient $m$ et $n$ des entiers critiques (i.e. $1\leq m,n \leq k-1$). S'ils sont de parités opposées alors:
$$\Lambda(F_m,n)=\frac{i^k2^{k-1}}{(k-2)!}(Q(m,n)+i^kQ(m,\tilde{n})+i^kQ(\tilde{m},n)+Q(\tilde{m},\tilde{n})+R_1),$$
où:
$$Q(m,n)=(\tilde{m}-1)!(n-1)!\frac{\zeta(n-m+1)}{(2\pi)^{n-m+1}},$$
 et:
 \begin{align*}
 R_1=&(\delta_1(n)+i^k\delta_1(\tilde{n}))(m-1)!(\tilde{m}-1)!\frac{i^m\zeta(m)\zeta(\tilde{m})}{(k-1)\zeta(k)}\\
 &+(\delta_1(m)+i^k\delta_1(\tilde{m}))(n-1)!(\tilde{n}-1)!\frac{i^{n}\zeta(n)\zeta(\tilde{n})}{(k-1)\zeta(k)}.
 \end{align*}
où l'on a noté $\delta_1$ le symbole de Kronecker en $1$.
\end{thm}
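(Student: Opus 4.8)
The plan is to compute $\Lambda(F_m,n)=\int_0^\infty F_m(it)t^{n-1}\d t$ (valid since $F_m$ is cuspidal, so the regularized definition of Section~3 reduces to this integral for critical $n$) directly from Cohen's formula, Proposition~\ref{propcoh}: $F_m=C_m\sum_{\gamma\in\Gamma}F_m[\gamma]$ with $F_m[\gamma](z)=(az+b)^{-m}(cz+d)^{-\tilde m}$. After substituting and interchanging the sum with the Mellin integral, I would organize the sum over $\Gamma$ by right $\Gamma_\infty$-cosets, using the identity $F_m[\gamma T^\ell](z)=F_m[\gamma](z+\ell)$ noted after Proposition~\ref{propcoh}: summing a fixed coset over $\ell\in\Z$ periodizes the summand and, by the Hurwitz formula (the Lemma of Section~1), produces a genuine $q$-expansion. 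Since $\Lambda(F_m,n)=\frac{\Gamma(n)}{(2\pi)^n}L(F_m,n)$, the Mellin transform then sends each exponential $e^{2i\pi\alpha z}$ to $(2\pi\alpha)^{-n}\Gamma(n)$, so the whole problem reduces to summing the Fourier coefficients of $F_m$ against $\alpha^{-n}$.

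The coset sum splits according to the first column of the representative. The rank-one cosets, with first column $(0,\pm1)$, are disposed of immediately by Hurwitz: the summand has a single linear factor to the power $-\tilde m$, so Hurwitz yields a coefficient proportional to $\alpha^{\tilde m-1}/(\tilde m-1)!$, and summing $\sum_\alpha\alpha^{\tilde m-1-n}$ against $\alpha^{-n}$ produces exactly $\zeta(n-\tilde m+1)$. Tracking the constant $C_m$ (whose $\binom{k-2}{m-1}^{-1}$ supplies the factorials $(m-1)!(\tilde m-1)!/(k-2)!$) reproduces the factor $(m-1)!(n-1)!$ and the global $i^k2^{k-1}/(k-2)!$ of the statement. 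The four terms $Q(m,n)$, $Q(m,\tilde n)$, $Q(\tilde m,n)$, $Q(\tilde m,\tilde n)$ arise from the interplay of the $\pm$ ambiguity of matrices with the involution $z\mapsto-1/z$, which exchanges $m\leftrightarrow\tilde m$ through $\Lambda(F_m,n)=i^k\Lambda(F_m,\tilde n)$, together with the functional equation of $\zeta$ relating $\zeta(n-m+1)$ to $\zeta(m-n+1)$; this repackages the contributions of the two ends $t\to0$ and $t\to\infty$ of the integral into the advertised symmetric four-term sum.

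The rational correction $R_1$ collects the degenerate contributions where the naive termwise integral fails to converge at $t\to0$ or $t\to\infty$, that is, precisely the boundary terms prescribed by the regularized $\Lambda$ of Section~3. These survive only when one of the critical arguments equals $1$ or $k-1$, which is what the Kronecker symbols $\delta_1(n),\delta_1(\tilde n),\delta_1(m),\delta_1(\tilde m)$ record. The denominators $\zeta(k)$ and the factors $\zeta(m)\zeta(\tilde m)$ in $R_1$ are the fingerprint of an Eisenstein contribution: they match the Eisenstein periods computed in Section~3, $\Lambda(E_k,m)=4i^m\frac{(m-1)!(\tilde m-1)!}{(k-1)!}\zeta(m)\zeta(\tilde m)$, normalized by the constant term $2\zeta(k)$ of $E_k$, reflecting the cuspidal projection implicit in the definition of $F_m$.

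I expect the main obstacle to be twofold. First, the analytic bookkeeping: one must justify interchanging the conditionally convergent sum over $\Gamma$ with the integral, and extract the boundary contributions through the regularized $\Lambda$ with care, since this is exactly what produces $R_1$ rather than discarding it. Second, and more serious, is the combinatorial collapse: the cosets with both columns generic contribute double sums over coprime pairs whose coefficients are convolutions, and one must show, by coprimality (Möbius) summation, that everything degenerates onto products of two Riemann zeta values with the stated factorials, or else cancels. The opposite-parity hypothesis on $m$ and $n$ is what forces the relevant $\zeta$-arguments $n-m+1$, $\tilde n-m+1$, etc.\ to be even integers, keeping them away from the pole of $\zeta$ at $1$ and sending several of them to trivial zeros; verifying that this parity condition governs both convergence and the final shape of the formula, while tracking the signs and powers of $i$ throughout, is the delicate heart of the argument.
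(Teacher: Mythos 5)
Your overall scaffolding matches the paper's: unfold Cohen's formula $F_m=C_m\sum_{\gamma}F_m[\gamma]$ into the Mellin integral, justify the interchange by truncating to $[\varepsilon,1/\varepsilon]$, recover the degenerate ($b=0$ or $d=0$) families via Hurwitz to produce the $Q$-terms, and read off $R_1$ from the boundary contributions at $t\to 0$ and $t\to\infty$ (where the $u\mapsto 1/u$ substitution, not the functional equation of $\zeta$, is what generates the $n\leftrightarrow\tilde n$ pairs with the factor $i^k$). But there is a genuine gap at what you yourself flag as the delicate heart: the terms with $bd\neq 0$. You propose to handle them by a coprimality/M\"obius summation that would "degenerate onto products of two Riemann zeta values... or else cancel." No such combinatorial collapse occurs, and the periodization you describe does not help here: summing a $\Gamma_\infty$-coset gives $\sum_{\ell}(a(z+\ell)+b)^{-m}(c(z+\ell)+d)^{-\tilde m}$, a product of \emph{two} shifted linear factors to which the Hurwitz formula does not apply.

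The mechanism the paper uses is entirely different and is precisely where the opposite-parity hypothesis enters. Pairing $\gamma=\mat{a}{b}{c}{d}$ with $\gamma^{-}=\mat{\phantom{-}a}{-b}{-c}{\phantom{-}d}$ and using $(-1)^{m+n-1}=1$ turns the sum of the two Mellin integrals into a single integral over the whole real line,
$$I(\gamma)+I(\gamma^{-})=\int_{-\infty}^{\infty}\frac{u^{n-1}\d u}{(aiu+b)^m(ciu+d)^{\tilde m}},$$
whose poles $ib/a$ and $id/c$ lie in the same open half-plane whenever $bd\neq 0$ (since $ad-bc=1$ forces $abcd\geq 0$); the residue theorem then kills every such pair outright. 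Without this pairing-plus-residues argument the generic terms do not cancel, and your reading of the parity condition (as merely placing the $\zeta$-arguments at even integers away from the pole) misses its actual role. The only surviving contributions are the $b=0$ and $d=0$ families, which is why exactly four $Q$-terms appear: two from $m\leftrightarrow\tilde m$ (which degenerate row vanishes) and two from $n\leftrightarrow\tilde n$ (the two ends of the truncated integral).
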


\begin{rem}
D'après le calcul de la fonction $\zeta$ en un entier pair, on obtient que les $\Lambda(F_m,n)$ sont des rationnels. 
\begin{equation}
\frac{\zeta(2n)}{(2\pi)^{2n}}=\frac{(-1)^{n-1}}{2}\frac{B_{2n}}{(2n)!}.
\end{equation}
Ceci était anticipé par le résultat sur la rationalité des périodes.\par
Pour effectuer la démonstration de ce théorème, nous remarquons que nous pouvons nous ramener au cas $1\leq n< m\leq k/2$. Il suffit de prendre éventuellement les symétriques dans la première demi-bande, puis d'inverser les deux paramètres par symétrie du résultat.
\end{rem}

\subsubsection{Termes d'interversion}

Nous recherchons à intervertir la somme et l'intégrale dans l'expression:
$$\int_0^{\infty} \sum_{\gamma\in\Gamma} F_m[\gamma](iu)u^{n-1}\d u.$$\par
Pour se faire nous exprimons l'intégrale comme une limite sur un paramètre $\varepsilon>0$:
$$\lim_{\varepsilon\to 0} \int_{\varepsilon}^{1/\varepsilon} \sum_{\gamma\in\Gamma} F_m[\gamma](iu)u^{n-1}\d u.$$\par
La convergence absolue nous permet d'intervertir sans problème dans ce cas. Le terme d'erreur est donc:
$$\lim_{\varepsilon\to 0} \left(\int_0^{\varepsilon}+\int_{1/\varepsilon}^{\infty}\right) \sum_{\gamma\in\Gamma} F_m[\gamma](iu)u^{n-1}\d u.$$\par
Les deux termes apparaissant se calculent de la même manière. En effet, après changement de variables $u\to1/u$ et utilisation de la modularité, on obtient:
$$\int_{1/\varepsilon}^{\infty} F_m(iu)u^{n-1}\d u=i^k\int_{0}^{1/\varepsilon} F_m(iu)u^{\tilde{n}-1}\d u.$$  
Il faut toutefois séparer la somme pour $bd\neq 0$ et $bd=0$:

\textsl{Cas $bd\neq 0$}
\begin{align*}
\int_0^{\varepsilon}& \sum_{bd\neq 0} F_m[\gamma](iu)u^{n-1}\d u\\
&=\varepsilon^{n}\int_0^1\sum_{bd\neq 0} (aiu\varepsilon+b)^{-m}(ciu\varepsilon+d)^{-\tilde{m}}u^{n-1}\d u\\
&=\varepsilon^{n}\int_0^1\sum_{b\wedge d=1}\sum_{\alpha\in\mathbb{Z}} ((a+\alpha b)iu\varepsilon+b)^{-m}((c+\alpha d)iu\varepsilon+d)^{-\tilde{m}}u^{n-1}\d u\\
&=\int_0^1\sum_{b\wedge d=1}b^{-m}d^{-\tilde{m}}\left(\varepsilon^n\sum_{\alpha\in\mathbb{Z}}(1+iu\alpha\varepsilon+\varepsilon aiu/b)^{-m}(1+iu\alpha\varepsilon+\varepsilon ciu/d)^{-\tilde{m}}\right)u^{n-1}\d u\\
&\to_{\varepsilon\to 0}\int_0^1\sum_{b\wedge d=1} b^{-m}d^{-\tilde{m}}\left(\delta_1(n)\int_{-\infty}^{\infty}(iux+1)^{-k}dx\right)\d u\\
&=\delta_1(n)\frac{2\zeta(m)\zeta(\tilde{m})}{\zeta(k)}\frac{\pi}{k-1}.
\end{align*}
En multipliant par $C_m$ et en ajoutant le terme issue du changement de variables $u\to1/u$, nous obtenons bien $\frac{i^k2^{k-1}}{(k-2)!}R_1$. Car nous avons fait l'hypothèse que $m$ ne pouvait être $1$ ou $k-1$ ($1\leq n< m\leq k/2$).

\textsl{Cas $bd=0$}\\
Le calcul de ce terme est analogue et donne:
\begin{align*}
\int_0^{\varepsilon}& \sum_{bd=0} F_m[\gamma](iu)u^{n-1}\d u\\
&=\int_0^{\varepsilon}\sum_{\alpha\in\mathbb{Z}} \left(F_m[(\begin{smallmatrix}\alpha & 1\\ -1 & 0\end{smallmatrix})](iu) + F_m[(\begin{smallmatrix}1 & 0\\ \alpha & 1\end{smallmatrix})](iu)\right)u^{n-1}\d u\\
&=\int_0^1\sum_{\alpha\in\mathbb{Z}} (\alpha iu\varepsilon+1)^{-m}(-iu\varepsilon)^{-\tilde{m}}\varepsilon^{n}u^{n-1}\d u\\
&\quad+\int_0^1\sum_{\alpha\in\mathbb{Z}} (iu\varepsilon)^{-m}(\alpha iu\varepsilon+1)^{-\tilde{m}}\varepsilon^{n}u^{n-1}\d u.
\end{align*}
Puis en passant à la limite sur le paramètre $\varepsilon$ nous obtenons des intégrales de Riemann:
\begin{align*}
&\to_{\varepsilon\to 0}\delta_1(n-\tilde{m})\int_0^1\int_{-\infty}^{\infty} (iux+1)^{-m}i^{\tilde{m}}\d u\d x\\
&\quad+\delta_1(n-m)\int_0^1\int_{-\infty}^{\infty} (iux+1)^{-\tilde{m}}i^{-m}\d u\d x\\
&=i^{-m}\left(i^{k}\delta_1(n-\tilde{m})\frac{\pi}{m-1}+\delta_1(n-m)\frac{\pi}{\tilde{m}-1}\right).
\end{align*}
La multiplication par $C_m=-i^m\frac{2^{k-2}i^k}{\pi \binom{k-2}{m-1}}$ permet d'obtenir le résultat car $\zeta(0)=-\frac{1}{2}$.

\subsubsection{Termes principaux}

Nous recherchons à simplifier la somme des:
$$I(\gamma)=\int_0^{\infty}F_m[\gamma](iu)u^{n-1}du.$$
En effet, pour $\gamma^{-}=\big(\begin{smallmatrix} \phantom{-}a & -b \\ -c & \phantom{-}d \end{smallmatrix}\big)$, nous avons:
\begin{align*}
I(\gamma^{-})&=\int_0^{\infty}\frac{u^{n-1}\d u}{(aiu-b)^m(-ciu+d)^{\tilde{m}}}\\
&=(-1)^{m+n-1}\int_{-\infty}^0\frac{u^{n-1}\d u}{(aiu+b)^m(ciu+d)^{\tilde{m}}}.
\end{align*}
D'après l'hypothèse de parité sur $m+n$, nous obtenons:
$$I(\gamma)+I(\gamma^{-})=\int_{-\infty}^{\infty}\frac{u^{n-1}\d u}{(aiu+b)^m(ciu+d)^{\tilde{m}}}.$$\par
Or les pôles de l'intégrande sont $ib/a$ et $id/c$ qui sont dans le même demi-plan strict dès lors que $bd\neq0$. Et comme $n<k=m+\tilde{m}$ alors l'intégrande est contrôlée à l'infini. Ainsi, d'après le Théorème des résidus, l'intégrale sur cette portion de bord est nulle.\par

Il nous reste à calculer la somme des termes avec $bd=0$. Nous la séparons selon deux sommes celle des termes tels que $b=0$ d'une part et ceux tels que $d=0$ d'autre part. Le calcul de ces sommes est similaire, quitte à changer $m$ en $\tilde{m}$, et repose sur la formule d'Hurwitz:
$$\sum_{\alpha\in\mathbb{Z}} (z+\alpha)^{-m}=\frac{(-2i\pi)^m}{(m-1)!}\sum_{\beta>0} \beta^{m-1} \exp(2i\pi\beta z).$$\par
Ainsi 
\begin{align*}
\sum_{b=0}I(\gamma)&=\int_0^{\infty}\sum_{\alpha\in\mathbb{Z}}\frac{u^{n}}{(iu)^m(\alpha iu+1)^{\tilde{m}}}\frac{\d u}{u}\\
&=i^k\int_0^{\infty}\sum_{\alpha\in\mathbb{Z}}\frac{u^{\tilde{n}}}{(\alpha-iu)^{\tilde{m}}}\frac{\d u}{u}\\
&=i^k(-1)^{\tilde{m}}\frac{(-2i\pi)^{\tilde{m}}}{(\tilde{m}-1)!}\sum_{\beta>0} \beta^{\tilde{m}-1} \int_0^{\infty}\exp(-2\pi\beta u)u^{\tilde{n}-1}\d u\\
&=i^k(-1)^m\frac{(-2i\pi)^{\tilde{m}}}{(\tilde{m}-1)!}\sum_{\beta>0} \beta^{\tilde{m}-1} (2\pi\beta)^{-\tilde{n}}\Gamma(\tilde{n})\\
&=\frac{\zeta(\tilde{n}-\tilde{m}+1)(\tilde{n}-1)!}{i^m(\tilde{m}-1)!(2\pi)^{\tilde{n}-\tilde{m}}}.
\end{align*}
Il nous reste à multiplier par $C_m$ pour obtenir $\frac{i^k2^{k-1}}{(k-2)!}Q(\tilde{m},\tilde{n})$.\par

\subsection{Calcul du polynôme des période de $F_m$}

On peut écrire les polynômes des périodes que l'on vient de calculer de manière simple. Ils dépendent des polynômes de Bernoulli que nous introduisons.

\begin{defi}
On peut définir les \textit{nombres de Bernoulli} par la formule:
$$\sum_{n\geq 0} B_n \frac{t^n}{n!}=\frac{t}{e^t-1}.$$
Et les \textit{polynômes de Bernoulli} par:
$$\sum_{n\geq 0} B_n(X) \frac{t^n}{n!}=\frac{e^{tX}t}{e^t-1}.$$
On pose $B_n^0(X)=B_n(X)-B_1 X^{n-1}$, pour tout $n\geq 1$.
\end{defi}
\nomenclature{$B_n, B_n(X)$}{Nombres et polynômes de Bernoulli}
\begin{prop}On dispose de formule explicite selon les parité:\par
1) Pour tout entier critique impair $n$, nous obtenons:
$$P_{F_n}^-(X)=2^{k-2}(Q_n(iX)+Q_{k-n}(iX)+R_-(X))$$
$$\text{avec }nQ_n(X)=B^0_n(X)-X^{k-2}B^0_n(1/X)$$
$$\text{et }R_-(X)=\frac{\delta_1(n)-\delta_{k-1}(n)}{(k-1)B_k}\sum_{\alpha=2}^{k-2}\binom{k}{\alpha}B_{\alpha}B_{k-\alpha}X^{\alpha-1}.$$\par
2) Pour tout entier critique pair $n$, nous obtenons:
$$P_{F_n}^+(X)=2^{k-2}(Q_n(iX)+Q_{k-n}(iX)+R_+(iX))$$
$$\text{avec }R_+(X)=\frac{k}{B_k}\frac{B_n}{n}\frac{B_{k-n}}{k-n}(1-X^{k-2}).$$
\end{prop}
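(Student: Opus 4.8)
The plan is to feed the Kohnen--Zagier evaluation (Theorem \ref{thmKZ}) into the definition $P_{F_n}(X)=-\sum_{m=1}^{k-1}\binom{k-2}{m-1}i^{-m}\Lambda(F_n,m)X^{k-m-1}$ and to recognise the resulting sum over $m$ as a combination of Bernoulli polynomials. I would begin with a parity remark: since $k$ is even, the monomial $X^{k-m-1}$ has the parity of $m-1$, so the odd part $P_{F_n}^-$ collects the indices $m$ even and the even part $P_{F_n}^+$ collects the indices $m$ odd. For $n$ odd the odd part $P_{F_n}^-$ therefore involves only values $\Lambda(F_n,m)$ with $m+n$ odd, and for $n$ even the even part $P_{F_n}^+$ likewise involves only $m+n$ odd; in both cases every coefficient to be produced is furnished by Theorem \ref{thmKZ}. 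Substituting the formula, the global constant $\frac{i^k2^{k-1}}{(k-2)!}$ factors out, and the binomial $\binom{k-2}{m-1}$ cancels the $(k-2)!$ against the factorials hidden in the four terms $Q(n,m),\,i^kQ(n,\tilde m),\,i^kQ(\tilde n,m),\,Q(\tilde n,\tilde m)$ of $\Lambda(F_n,m)$.

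Next I would turn each surviving $\zeta$-value into a Bernoulli number through $\frac{\zeta(2j)}{(2\pi)^{2j}}=\frac{(-1)^{j-1}}{2}\frac{B_{2j}}{(2j)!}$, which is legitimate precisely because the condition $m+n$ odd forces the arguments $m-n+1$, $\tilde m-n+1$, etc.\ to be even. After this substitution the factorials reassemble into binomial coefficients, and for a fixed index (say $\tilde n=k-n$) the sum over the exponent $\alpha=k-m-1$ of the resulting terms $\binom{\tilde n}{\alpha}B_{\tilde n-\alpha}X^{\alpha}$ is exactly the Bernoulli polynomial $B_{\tilde n}(X)$; the companion group produces $B_n(X)$. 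This is the source of the two summands $Q_n(iX)$ and $Q_{k-n}(iX)$: the terms $Q(n,m),Q(\tilde n,\tilde m)$ supply the ``straight'' Bernoulli polynomials, while the $i^k$-weighted terms $Q(n,\tilde m),Q(\tilde n,m)$ supply, under $\alpha\mapsto k-2-\alpha$, their reflected counterparts $X^{k-2}B^0(1/X)$ appearing in $nQ_n(X)=B_n^0(X)-X^{k-2}B_n^0(1/X)$.

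Finally the boundary term $R_1$ must be matched against $R_\pm$. Its Kronecker deltas $\delta_1(m),\delta_1(\tilde m)$ isolate the coefficients of $X^{k-2}$ and $X^{0}$, giving the shape $1-X^{k-2}$, and after rewriting $\frac{\zeta(n)\zeta(\tilde n)}{(k-1)\zeta(k)}$ through Bernoulli numbers (using $\zeta(k)\sim B_k$) this produces $R_+(iX)$ in the even case; the deltas $\delta_1(n),\delta_1(\tilde n)$, which depend only on the fixed form index, vanish unless $n=1$ or $n=k-1$ and assemble, via $\sum_m\binom{k}{m}B_mB_{k-m}X^{m-1}$, the term $R_-(X)$ in the odd case. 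The main obstacle throughout is the bookkeeping of the powers of $i$ and of the signs $(-1)^{(m-n-1)/2}$: these are what rescale the variable from $X$ to $iX$ and fix the relative normalisation of the four $Q$-groups, and the truly delicate point is that the ``diagonal'' contribution where $\zeta$ would be evaluated at $1$ is absent from the $Q$-sum and must be exactly absorbed into the passage from $B_n$ to $B_n^0=B_n-B_1X^{n-1}$ together with the $R_1$ terms.
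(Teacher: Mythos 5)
Your proposal is correct and follows essentially the same route as the paper: substitute the Kohnen--Zagier evaluation of $\Lambda(F_n,\alpha)$ into the coefficient formula for $P_{F_n}$, use the parity of $k-\alpha-1$ to see that only the indices with $\alpha+n$ odd survive in $P^{\mp}_{F_n}$, convert $\zeta(2N)/(2\pi)^{2N}$ into Bernoulli numbers so the four $Q$-terms reassemble into $B^0_n$ and $B^0_{k-n}$ together with their reflections, and treat the $R_1$ contribution separately to produce $R_\pm$. The paper's own proof is just a terser version of this same computation (it only writes out the remainder term in the even case), and your observation about the excluded $\alpha=n$ term being responsible for the passage from $B_n$ to $B_n^0$ is exactly the point the paper leaves implicit.
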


\begin{proof} 
Les formules suivantes démontrent le résultat:
$$B_n(X)=\sum_{\alpha=0}^n\binom{n}{\alpha}B_{n-\alpha}X^{\alpha}\text{ et }
\frac{\zeta(2N)}{(2\pi)^{2N}}=\frac{(-1)^N}{2}\frac{B_{2N}}{(2N)!}.$$
Ceci permet de déduire la proposition du calcul de Kohnen et Zagier. Par exemple lorsque $n$ est pair:
$$P_{F_n}^+(X) =-\sum_{\substack{\alpha=1\\ \alpha \text{ impair}}}^{k-1} \binom{k-2}{\alpha-1} \frac{\Lambda(F_n,\alpha)}{i^{\alpha}}X^{k-\alpha-1}.$$
Nous obtenons quatre termes similaires et un terme de reste. Ce dernier est donné par:
\begin{align*}
2^{k-2}R_+(iX)&=\frac{i^k2^{k-1}}{(k-2)!}(\Lambda(F_n,1)+\Lambda(F_n,k-1)X^{k-2})\\
&=\frac{i^k 2^{k-1}}{(k-2)!}(n-1)!(k-n-1)!\frac{\zeta(n)\zeta(k-n)}{(k-1)\zeta(k)}(1+i^kX^{k-2})\\
&=\frac{i^k 2^{k-2}kB_nB_{k-n}}{n(k-n)B_k}(1-(iX)^{k-2}).
\end{align*}
\end{proof}

\section{Symboles modulaires}
On va voir dans cette partie que nous pouvons nous abstraire de l'espace $\H$ en nous limitant seulement à ses pointes $\pte$.\par
En effet, si l'on prend deux chemins $c_1$ et $c_2$ de $\H$ reliant $\alpha$ à $\beta$ deux pointes. Alors l'holomorphie d'une forme modulaire $f$ donne l'égalité:
$\int_{c_1} \omega_f=\int_{c_2} \omega_f$ car la chaîne $c_1-c_2$ est fermée et d'intérieur sans pôle. Ainsi ce nombre ne dépend pas du chemin dans $\H$ mais uniquement de ses extrémités.

\begin{defi}
Pour tout $\alpha,\beta\in\pte$, on définit l'application:
\begin{equation}
\{\alpha,\beta\}_k:S_k(SL_2(\Z))\to V_k,\quad f\mapsto \int_{\alpha}^{\beta} f(z)(X-z)^{k-2}\d z.
\end{equation}
On dit que $\{\alpha,\beta\}_k$ est le \textit{symbole modulaire} du chemin reliant $\alpha$ à $\beta$ dans $SL_2(\Z)\backslash \H$.
\end{defi}
\nomenclature{$\{\alpha,\beta\}_k$}{Symbole modulaire du chemin reliant $\alpha$ et $\beta$}
\begin{prop}
Pour tout $\alpha,\beta$ et $\eta$ dans $\pte$, on a:
\begin{equation}
\{\alpha,\beta\}_k+\{\beta,\eta\}_k=\{\alpha,\eta\}_k\text{ et }\{\alpha,\alpha\}_k=0.
\end{equation}
De plus pour tout $g\in SL_2(\Z)$,
\begin{equation}
\{g\alpha,g\beta\}_k=g\{\alpha,\beta\}_k.
\end{equation}
\end{prop}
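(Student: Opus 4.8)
The plan is to deduce all three assertions from elementary properties of the path integral defining $\{\alpha,\beta\}_k$, together with the change-of-variables computation already carried out in the proof of the Manin relations. Throughout I fix $f\in S_k(\G)$ and abbreviate by $\omega_f=f(z)(X-z)^{k-2}\d z$ the $V_k$-valued $1$-form that is integrated, so that $\{\alpha,\beta\}_k(f)=\int_{\alpha}^{\beta}\omega_f$. Since the stated identities are equalities of linear maps $S_k(\G)\to V_k$, it suffices to check them after evaluation at an arbitrary such $f$.

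First I would dispose of additivity and the vanishing on a constant path, both of which are purely formal. Because $f$ is holomorphic, the remark preceding the definition shows that $\int_{\alpha}^{\beta}\omega_f$ depends only on the endpoints and not on the chosen path in $\H$; this is the closed-contour argument, the convergence at the cusps being guaranteed by $f$ being parabolic. I may therefore compute $\{\alpha,\eta\}_k(f)$ along any path from $\alpha$ to $\eta$, and choosing one that passes through $\beta$ and splitting the integral at $\beta$ gives
\[
\{\alpha,\eta\}_k(f)=\int_{\alpha}^{\eta}\omega_f=\int_{\alpha}^{\beta}\omega_f+\int_{\beta}^{\eta}\omega_f=\{\alpha,\beta\}_k(f)+\{\beta,\eta\}_k(f).
\]
Specialising to $\beta=\eta=\alpha$ (equivalently, integrating over the degenerate path) yields $\{\alpha,\alpha\}_k(f)=0$.

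The equivariance is the substantive point, and it is exactly the content of the substitution performed in the proof of the Manin relations. Recall that $\G$ acts on $V_k$ through the weight $2-k$ slash operator, the associated left action being $g\cdot P=P|_{2-k}g^{-1}$. For $g=\mat{a}{b}{c}{d}\in\G$ I would apply the change of variable $z\mapsto gz$ to the integral defining $\{g\alpha,g\beta\}_k(f)$: using the modularity $f(gz)=(cz+d)^k f(z)$, the identity $gX-gz=\tfrac{X-z}{(cX+d)(cz+d)}$ and $\d(gz)=\tfrac{\d z}{(cz+d)^2}$, the automorphy factors $(cz+d)$ cancel by the same bookkeeping $k-(k-2)-2=0$ as in that proof, leaving
\[
\left(\{g\alpha,g\beta\}_k(f)\right)|_{2-k}g=\{\alpha,\beta\}_k(f).
\]
Rewriting this as $\{g\alpha,g\beta\}_k(f)=\{\alpha,\beta\}_k(f)|_{2-k}g^{-1}=g\cdot\{\alpha,\beta\}_k(f)$ is precisely the claimed equivariance.

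I expect no serious obstacle here, since none of the three identities requires a new idea beyond the computation underlying the Manin relations. The only point demanding genuine care is the bookkeeping of the action convention: one must check that the right action by the slash operator produced by the substitution corresponds, after passing to $g^{-1}$, to the intended left action of $g$ on $V_k$, so that the exponents $k-2$ and the automorphy factors line up. Because this cancellation is identical to the one already verified in the Manin relations, I would simply invoke that computation rather than reproduce it.
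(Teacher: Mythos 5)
Your proof is correct and follows essentially the same route as the paper: the Chasles relations and the vanishing of $\{\alpha,\alpha\}_k$ are treated as formal consequences of path-independence, and the equivariance is obtained from the change of variables $z\mapsto gz$ already carried out in the proof of the Manin relations, yielding $\{g\alpha,g\beta\}_k(f)=\{\alpha,\beta\}_k(f)|_{2-k}g^{-1}=g\{\alpha,\beta\}_k(f)$. Your extra care about the left-action convention matches the paper's own bookkeeping.
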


\begin{proof}
Les relations de Chasles sont triviales une fois les définitions écrites. La propriété de modularité provient de la relation:
$$\int_{g\alpha}^{g\beta}f(z)(X-z)^{k-2}\d z=\int_{\alpha}^{\beta}f(z)(X-z)^{k-2}\d z|_{2-k}g^{-1}=\{\alpha,\beta\}_k(f)(X)|_{2-k}g^{-1}.$$
Ce qui donne bien l'action à gauche $g\{\alpha,\beta\}_k(f)(X)=\{\alpha,\beta\}_k(f)(X)|_{2-k}g^{-1}$.
\end{proof}

Ceci nous permet de donner une structure aux symboles modulaires de $SL_2(\Z)$-module. Les relations de Chasles peuvent se retrouver dans le noyau de l'application:
$$\Z[\pte]\to \Z,\quad \sum \lambda_i (p_i)\mapsto \sum \lambda_i.$$
Ainsi si l'on note $\Z[\pte]^0$ ce noyau, on peut l'envoyer dans le groupe des symboles modulaires.
Le symbole modulaire $\{i\infty,0\}$ image de $(i\infty)-(0)\in \Z[\pte]^0$ joue un rôle central dans cette théorie.

\begin{prop}
L'application $\Z$-linéaire suivante:
\begin{equation}
\Theta_1:\Z[SL_2(\Z)]\to \Z[\pte]^0,\quad \gamma\mapsto (\gamma.i\infty)-(\gamma.0)
\end{equation}
est surjective et son noyau est donné par les relations de Manin:
\begin{equation}
\Ker(\Theta_1)=\mathcal{I}_1=(1+S,1+U+U^2)\Z[SL_2(\Z)].
\end{equation}
En particulier, la suite suivante est exacte:
\begin{equation}
0\to \mathcal{I}_1 \to \Z[SL_2(\Z)]\to \Z[\pte]^0\to 0.
\end{equation}
\end{prop}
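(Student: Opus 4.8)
The plan is to realise the short exact sequence by factoring $\Theta_1$ through the quotient $\Z[\G]/\mathcal{I}_1$ and proving that the induced map $\overline{\Theta_1}\colon\Z[\G]/\mathcal{I}_1\to\Z[\pte]^0$ is an isomorphism; equivalently I prove surjectivity of $\Theta_1$ together with the two inclusions $\mathcal{I}_1\subseteq\Ker\Theta_1$ and $\Ker\Theta_1\subseteq\mathcal{I}_1$. The starting observation is that $\Theta_1$ is left $\Z[\G]$-linear: since $\Theta_1(h\gamma)=(h\gamma\cdot i\infty)-(h\gamma\cdot 0)=h\cdot\bigl((\gamma\cdot i\infty)-(\gamma\cdot 0)\bigr)$, where $\G$ acts on $\Z[\pte]$ through its action on $\pte$, the kernel is a left ideal, and it contains $\mathcal{I}_1$ as soon as the two generators lie in it. This I check directly: from $S\cdot i\infty=0$ and $S\cdot 0=i\infty$ one gets $\Theta_1(1+S)=\bigl((i\infty)-(0)\bigr)+\bigl((0)-(i\infty)\bigr)=0$, and from the $3$-cycle $U\colon i\infty\mapsto 0\mapsto 1\mapsto i\infty$ one gets $\Theta_1(1+U+U^2)=\bigl((i\infty)-(0)\bigr)+\bigl((0)-(1)\bigr)+\bigl((1)-(i\infty)\bigr)=0$. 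Hence $\mathcal{I}_1\subseteq\Ker\Theta_1$ and $\Theta_1$ descends to $\overline{\Theta_1}$.

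For surjectivity I use that $\G$ acts transitively on $\pte$, so $\Z[\pte]^0$ is generated by the differences $(p/q)-(i\infty)$ and it suffices to reach each cusp from $i\infty$. I would run the continued fraction (Euclidean) algorithm: writing the convergents of $p/q$ as $p_{-1}/q_{-1}=1/0=i\infty,\ p_0/q_0,\dots,p_r/q_r=p/q$, consecutive convergents satisfy $p_kq_{k-1}-p_{k-1}q_k=\pm1$, so after adjusting a sign the matrix $g_k=\mat{p_k}{\pm p_{k-1}}{q_k}{\pm q_{k-1}}$ lies in $\G$ and $\Theta_1(g_k)=(p_k/q_k)-(p_{k-1}/q_{k-1})$. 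The telescoping sum $\sum_{k=0}^r\Theta_1(g_k)=(p/q)-(i\infty)$ then exhibits the required preimage.

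The main work, and the expected obstacle, is the reverse inclusion $\Ker\Theta_1\subseteq\mathcal{I}_1$, i.e. the injectivity of $\overline{\Theta_1}$. First I would descend to $\overline{\G}=PSL_2(\Z)$: since $-I$ acts trivially on $\pte$ and $[\,gS\,]=-[\,g\,]$ forces $[-I]=[S^2]=[1]$ in $\Z[\G]/\mathcal{I}_1$, one may work with $\overline{\G}=\langle S\rangle\ast\langle U\rangle\cong C_2\ast C_3$, where the relators $1+S$ and $1+U+U^2$ are precisely the norm elements of the two cyclic factors. I then construct an explicit inverse $\Z[\pte]^0\to\Z[\G]/\mathcal{I}_1$ by sending a unimodular difference $(a/c)-(b/d)$, with $\mat{a}{b}{c}{d}\in\G$, to the class of $\mat{a}{b}{c}{d}$, and extending to an arbitrary $(\alpha)-(\beta)$ by summing the classes attached to the successive steps of a continued fraction path from $\beta$ to $\alpha$. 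The hard part is well-definedness: one must show the result is independent of the chosen path, namely that any two unimodular paths with the same endpoints differ by finitely many elementary "triangle" moves, each of which alters the associated element of $\Z[\G]$ only by a member of $\mathcal{I}_1$. This reduces to the simple connectivity of the Farey tessellation of $\H$, whose edge-flips account for the relation $1+S$ and whose rotations about the order-$3$ elliptic points account for $1+U+U^2$; conceptually it is the simple connectivity encoded by the Bass--Serre tree of $C_2\ast C_3$. Once well-definedness is secured, the constructed map is a two-sided inverse of $\overline{\Theta_1}$, giving simultaneously the kernel computation $\Ker\Theta_1=\mathcal{I}_1$ and the asserted exact sequence.
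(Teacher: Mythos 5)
Your proposal is correct, and for the two routine parts it coincides with the paper's argument: surjectivity is obtained exactly as in the text by telescoping along the convergents of a continued-fraction expansion (the paper reaches $(p/q)-(0)$ rather than $(p/q)-(i\infty)$, an immaterial difference), and the inclusion $\mathcal{I}_1\subseteq\Ker(\Theta_1)$ follows, as you say, from the left $\Z[\G]$-equivariance of $\Theta_1$ together with the orbits $i\infty\stackrel{S}{\to}0\stackrel{S}{\to}i\infty$ and $i\infty\stackrel{U}{\to}0\stackrel{U}{\to}1\stackrel{U}{\to}i\infty$. (Note in passing that, exactly as you treat it, $\mathcal{I}_1$ must be read as the \emph{left} ideal $\Z[\G](1+S)+\Z[\G](1+U+U^2)$: the right ideal is not contained in $\Ker(\Theta_1)$, as one sees already on $(1+S)T$.) Where you genuinely diverge is on the reverse inclusion, which the paper explicitly defers to Chapter 3 and then proves by a combinatorial descent: cusps carry a height $h(p)=\max(d(p,i\infty),d(p,0))$ for the $\G$-invariant distance on the graph $\mathcal{G}_1$ with vertices $\pte$ and edges $PSL_2(\Z)$, and a lemma on Farey neighbours lets one rewrite any closed $1$-cycle, modulo $\Z[\G]\cdot C^0(B_0)$, as one whose set of points of maximal height strictly shrinks. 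You instead build an explicit inverse $\Z[\pte]^0\to\Z[\G]/\mathcal{I}_1$ and reduce its well-definedness to the fact that two unimodular paths with the same endpoints differ by elementary moves --- edge reversal (the relation $1+S$) and rotation around a Farey triangle (the relation $1+U+U^2$) --- i.e.\ to the simple connectivity of the Farey tessellation, equivalently the presentation $PSL_2(\Z)\cong C_2\ast C_3$. This is the classical Manin argument and it is sound; it buys conceptual transparency (the two relations are exactly the two cell types of the tessellation), at the price of invoking rather than proving the simple-connectivity / Bass--Serre input, which is where all the content sits and which you should at least reference or establish. The paper's descent is longer but self-contained, and --- more to the point for this thesis --- it is the template that is iterated in Chapters 3 and 4 to compute $\mathcal{I}_2$ and $\mathcal{I}_n$, where no tessellation argument is available.
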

\nomenclature{$\mathcal{I}_1$}{Idéal de $\Z[\Gamma]$ des relations de Manin}

\begin{proof}
Pour démontrer la surjectivité on peut se servir de l'écriture en fractions continues des nombres rationnelles:
$$\frac{p}{q}=[a_0,a_1,...,a_N]=a_0+\frac{1}{a_1+\frac{1}{a_2+...}}\text{ et on définit }\frac{p_i}{q_i}=[a_0,...,a_i].$$
Définissons la famille de matrice, $\gamma_{i+1}=\mat{(-1)^ip_{i+1}}{p_i}{(-1)^iq_{i+1}}{q_i}\in\G$ et $\gamma_1=\mat{1}{0}{a_0}{1}$ vérifiant:
$$p/q=\gamma_N.i\infty\to\gamma_N.0=\gamma_{N-1}.i\infty\to...\to\gamma_2.0=\gamma_1.i\infty\to\gamma_1.0=0.$$
Ainsi on peut télescoper de la manière suivante:
$$(p/q)-(0)=\sum_{i=1}^N \left[(\gamma_i.i\infty)-(\gamma_i.0)\right]=\sum_{i=1}^N\Theta_1(\gamma_i).$$
Et la famille des $(\frac{p}{q})-(0)$ engendre bien $Z[\pte]^0$.\\
Pour le noyau, il est simple de vérifier que les relations de Manin annulent bien $\Theta_1$. L'inclusion réciproque demande plus de travail et nous présentons une démonstration dans le Chapitre 3.
\end{proof}

\begin{coro}
Le polynôme des périodes de $T_nf$ en fonction de celui de $f$:
\begin{equation}
P_{T_nf}(X)=\sum_{\gamma\in M_n^0} P_f(X)|_{\gamma},
\end{equation}
où $M_n^0$ est l'ensemble des matrices $\mat{a}{b}{c}{d}\in M_n(\Z)$ vérifiant l'une des trois conditions:
\begin{itemize}
\item $bc<0$, $0<|b|<a$ et $0<|c|<d$.
\item $b=0$ et $|c|<d/2$.
\item $c=0$ et $|b|<a/2$.
\end{itemize}
\end{coro}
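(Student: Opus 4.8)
Il s'agit de calculer l'action de l'opérateur de Hecke $T_n$ sur le polynôme des périodes. Le polynôme des périodes $P_f(X)=\int_0^{i\infty} f(z)(X-z)^{k-2}\d z$ s'identifie au symbole modulaire $\{0,i\infty\}_k(f)$, et la proposition précédente relie l'élément $(i\infty)-(0)\in\Z[\pte]^0$ à $\Theta_1(\gamma)$ pour $\gamma\in\G$. Le plan est d'exprimer $P_{T_nf}$ à l'aide de la description classique de $T_n$ sur les symboles modulaires, puis de réduire la somme sur toutes les matrices de déterminant $n$ à un système de représentants explicite, celui décrit par les trois conditions sur $M_n^0$. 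L'obstacle principal sera précisément cette réduction combinatoire: passer de la définition abstraite de $T_n$ à l'ensemble explicite $M_n^0$.

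\textbf{Étape 1: action de Hecke sur les symboles modulaires.} D'abord je rappellerais que l'opérateur de Hecke $T_n$ agit sur $f\in S_k(\G)$ par $T_nf=\sum_{\gamma} f|_k\gamma$, où $\gamma$ parcourt un système de représentants des classes $\G\backslash\Delta_n$, $\Delta_n$ étant l'ensemble des matrices entières de déterminant $n$. En utilisant la relation de modularité établie dans le théorème des relations de Manin, à savoir $\left(\int_{\gamma z_1}^{\gamma z_2} f(z)(X-z)^{k-2}\d z\right)|_{2-k}\gamma=\int_{z_1}^{z_2} f(z)(X-z)^{k-2}\d z$, on obtient que pour toute matrice $\gamma\in\Delta_n$, l'intégrale $\int_{\gamma^{-1}\cdot 0}^{\gamma^{-1}\cdot i\infty} f(z)(X-z)^{k-2}\d z$ vaut $P_f|_{2-k}\gamma$. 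Ainsi $P_{T_nf}$ s'écrit comme une somme de $P_f|_\gamma$ sur un certain ensemble de matrices, chacune contribuant par le symbole modulaire $\{\gamma^{-1}\cdot 0,\gamma^{-1}\cdot i\infty\}$.

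\textbf{Étape 2: décomposition en fractions continues et télescopage.} L'étape suivante consiste à décomposer chaque symbole modulaire $\{\gamma\cdot 0,\gamma\cdot i\infty\}$, qui relie deux pointes rationnelles, en une chaîne de symboles unimodulaires de la forme $\{\delta\cdot 0,\delta\cdot i\infty\}$ avec $\delta\in\G$, exactement comme dans la démonstration de surjectivité de $\Theta_1$ via les fractions continues. Le télescopage transforme alors chaque $P_f|_\gamma$ en une somme $\sum_i P_f|_{\delta_i}$ sur des matrices de $\G$. La difficulté est que différentes matrices $\gamma\in\Delta_n$ donnent lieu à des chaînes qui se recouvrent partiellement, et il faut contrôler les annulations dues aux relations de Manin (puisque $P_f|_{1+S}=P_f|_{1+U+U^2}=0$) pour obtenir un système de représentants sans redondance.

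\textbf{Étape 3: identification de $M_n^0$.} Enfin, je montrerais que l'ensemble des matrices résiduelles après simplification est précisément $M_n^0$. Les trois conditions — le cas générique $bc<0,\ 0<|b|<a,\ 0<|c|<d$, et les deux cas dégénérés $b=0,\ |c|<d/2$ et $c=0,\ |b|<a/2$ — correspondent respectivement aux matrices dont les arcs $\{\gamma\cdot 0,\gamma\cdot i\infty\}$ sont déjà unimodulaires (d'où les inégalités strictes garantissant qu'on ne peut plus subdiviser) et aux matrices triangulaires pour lesquelles la condition de demi-intervalle $|c|<d/2$ ou $|b|<a/2$ élimine la double comptabilisation liée à la symétrie $z\mapsto z+1$. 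Cette identification finale, qui requiert de vérifier que le domaine fondamental pour l'action décrite coïncide exactement avec $M_n^0$, est le cœur technique et l'obstacle principal de la preuve; c'est là qu'il faut soigneusement traquer quelles matrices survivent au télescopage et vérifier que chacune apparaît avec multiplicité un.
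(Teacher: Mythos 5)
Votre plan suit essentiellement la même démarche que la preuve du texte: exprimer $T_nf$ par des représentants de $\Gamma\backslash M_n(\Z)$, transporter l'action sur la forme différentielle $\omega_f(z,X)$ et le symbole modulaire $\{i\infty,0\}$, puis décomposer chaque $\{\gamma.i\infty,\gamma.0\}$ en symboles unimodulaires par fractions continues pour retomber sur une somme de $P_f|_{\gamma g_i^{-1}}$. Comme dans le texte, l'identification explicite de l'ensemble $M_n^0$ (votre étape 3) reste à l'état d'esquisse, mais vous signalez correctement que c'est là le c\oe ur technique de l'argument.
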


\begin{proof}
On va associer à toute forme modulaire $f$ la forme différentielle présente dans la définition des symboles modulaires $\omega_f(z,X)=f(z)(X-z)^{k-2}\d z$.\par
De la définition, des opérateurs de Hecke:
$$T_n f=n^{k/2-1}\sum_{\gamma\in\Gamma\backslash M_n(\Z)} f|_{\gamma},$$
on scinde $\omega_{T_n f}(z,X)$ selon les classes $\mat{a}{b}{c}{d}\gamma\in\Gamma\backslash M_n(\Z)$ en:
\begin{align*}
\omega_{f|_{\gamma}}(z,X)&=n^{k/2-1}\frac{(ad-bc)^{k/2}}{(cz+d)^k}f(\gamma.z)(X-z)^{k-2}\d z\\
&=\frac{n^{k-1}}{(cz+d)^k}f(\gamma.z)\left(\frac{\gamma.X-\gamma.z}{ad-bc}\right)^{k-2}(cz+d)^{k-2}(cX+d)^{k-2}(ad-bc)(cz+d)^2\d(\gamma.z)\\
&=\left[f(\gamma.z)(\gamma.X-\gamma.z)\d(\gamma.z)\right](cX+d)^{k-2}.
\end{align*}
On obtient ainsi une décomposition de la forme différentielle:
\begin{equation}
\omega_{T_nf}(z,X)=\sum_{\gamma\in\Gamma\backslash M_n(\Z)}\omega_{f|\gamma}(z,X)=\sum_{\gamma\in\Gamma\backslash M_n(\Z)}\omega_{f}(\gamma.z,X|_{\gamma}).
\end{equation}
Et ainsi du polynôme des périodes:
$$P_{T_nf}(X)=\{i\infty,0\}(\omega_{T_nf}(z,X))=\sum_{\gamma\in\Gamma\backslash M_n(\Z)} \{i\infty,0\}(\omega_{f}(\gamma.z,X|_{\gamma})).$$
On connait l'action de $\gamma$ sur une telle image:
$$\{i\infty,0\}(\omega_{f}(\gamma.z,X|_{\gamma}))=\{\gamma.i\infty,\gamma.0\}(\omega_{f}(z,X|_{\gamma})).$$
Le symbole modulaire $\{\gamma.i\infty,\gamma.0\}$ se décompose en $\sum_i \{g_i.i\infty,g_i.0\}$ pour des matrices $g_i\in SL_2(\Z)$ et étant donnée que $f|_{g_i}=f$ alors on peut déduire:
$$\{i\infty,0\}(\omega_{f}(\gamma.z,X|_{\gamma}))=\sum_i \{i\infty,0\}(\omega_{f}(z,X|_{\gamma.g_i^{-1}}))=\sum_i P_f(X)|_{\gamma.g_i^{-1}}.$$
Ceci donne une formule liant $P_{T_nf}$ et $P_f$.
\end{proof}

\begin{rem}
On remarque que l'ensemble $M_n^0$ respecte l'échange de $X$ en $-X$ ainsi on peut déduire la stabilité des espaces $S_{k,\Gamma}^+(\Q)$ et $S_{k,\Gamma}^-(\Q)$ par les opérateurs de Hecke.
\end{rem}

\chapter{Intégrale itérée}
\minitoc
\bigskip
La première idée de généralisation de fonction $L$ en plusieurs variables est de construire un analogue de la série de Dirichlet tronquée définissant les valeurs multiples de zêta:
\begin{equation}
\zeta(s_1,...,s_n)=\sum_{l_1,...,l_n> 0}(l_1+...+l_n)^{-s_1}(l_2+...+l_n)^{-s_2}...l_n^{-s_n}\nomenclature{$\zeta(s_1,...,s_n)$}{Valeurs multiples de zêta}
\end{equation}
Nous allons ensuite voir que pour exploiter les propriétés de modularité il est plus commode d'utiliser un analogue des transformées de Mellin grâce aux intégrales itérées.

\section{Séries de Dirichlet multiples à plusieurs variables}

Soient $n\geq 1$ et $k_1,...,k_n\geq 0$ des entiers. Soient $f_1,...,f_n$ des formes modulaires paraboliques de niveau $1$ et de poids respectifs $k_1,...,k_n$. Elles ont des développements en $q$-série de la forme:
\begin{equation}
f_j(z)=\sum_{l\geq 0} a_m(f_j)\exp(2i\pi lz)\text{ pour tout }1\leq j\leq n.
\end{equation}
On suppose de plus que les coefficients $a_m(f_j)$ sont rationnels pour tout $1\leq j\leq n$ et $m>0$.

\begin{defi}
On appelle \textit{série de Dirichlet multiple} associée à la famille $(f_1,...,f_n)$ la fonction en les variables complexes $s_1,...,s_n$ définie par:
\begin{equation}
L(f_1,...,f_n;s_1,...,s_n)=\sum_{l_1,...,l_n> 0}\frac{a_{l_1}(f_1)...a_{l_n}(f_n)}{(l_1+...+l_n)^{s_1}(l_2+...+l_n)^{s_2}...l_n^{s_n}},\nomenclature{$L(f_1,...,f_n;s_1,...,s_n)$}{Valeurs multiples de fonction $L$}
\end{equation}
lorsque $\Re(s_n)>k_n/2+1,...,\Re(s_1+...+s_n)>k_1/2+...+k_n/2+n$.\\
On appelle \textit{transformée de Mellin} associée à la famille $(f_1,...,f_n)$ la fonction en les variables complexes $s_1,...,s_n$ définie par:
\begin{equation}
\Lambda(f_1,...,f_n;s_1,...,s_n)=\int_{0<t_1<...<t_n} f_1(it_1)t_1^{s_1-1}...f_n(it_n)t_n^{s_n-1}\d t_1...\d t_n,\nomenclature{$\Lambda(f_1,...,f_n;s_1,...,s_n)$}{Valeurs multiples de fonction $\Lambda$}
\end{equation}
lorsque $\Re(s_n)>k_n/2+1,...,\Re(s_1+...+s_n)>k_1/2+...+k_n/2+n$.
\end{defi}

\begin{prop}
Pour le domaine de $\C^n$ considéré dans la définition, la série de Dirichlet multiple et la transformée de Mellin converge.
\end{prop}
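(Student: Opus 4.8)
The plan is to prove the two convergences separately, because they are of very different natures: the iterated transform $\Lambda$ converges for \emph{all} $s\in\C^n$, whereas the multiple series $L$ is where the stated half-plane conditions are genuinely needed.

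For $\Lambda$ I would exploit that each $f_j$ is parabolique, hence decays exponentially at both ends of the ray $\{it:t>0\}$. On one side $f_j(it)=O(e^{-2\pi t})$ as $t\to\infty$, directly from the $q$-expansion with vanishing constant term; on the other, the modularity $f_j(-1/z)=z^{k_j}f_j(z)$ taken at $z=it$ gives $f_j(it)=(it)^{-k_j}f_j(i/t)$, whence $f_j(it)=O\!\left(t^{-k_j}e^{-2\pi/t}\right)$ as $t\to 0^{+}$. Therefore $\bigl|f_j(it)\,t^{\Re(s_j)-1}\bigr|$ is integrable on $(0,\infty)$ for every single $s_j\in\C$, with no restriction. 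Restricting the product $\prod_j f_j(it_j)t_j^{s_j-1}$ to the simplex $0<t_1<\dots<t_n$ only shrinks the domain of integration, so by Tonelli the integral defining $\Lambda$ converges absolutely throughout $\C^n$, in particular on the region of the statement.

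For $L$ I would pass to a series of nonnegative terms using Hecke's estimate $|a_l(f_j)|\le C_j\,l^{k_j/2}$ recalled in Chapter~1, bounding the modulus of the general term by $C\prod_{j=1}^n l_j^{k_j/2}\big/\prod_{j=1}^n m_j^{\Re(s_j)}$ where $m_j=l_j+\dots+l_n$. The key move is to reindex by the strictly decreasing chain $m_1>m_2>\dots>m_n\ge 1$; since $l_j=m_j-m_{j+1}\le m_j$, the series is dominated by $\sum_{m_1>\dots>m_n\ge1}\prod_{j}m_j^{k_j/2-\Re(s_j)}$. I would then sum the chain one index at a time, starting from the largest, using at each stage the elementary comparison $\sum_{m>m'}m^{-e}\ll (m')^{1-e}$ (valid when $e>1$); the surplus exponent produced at step $j$ is absorbed into the next variable, and tracking the accumulation shows that the sum over $m_1,\dots,m_j$ converges exactly when $\sum_{i\le j}\bigl(\Re(s_i)-k_i/2\bigr)>j$, i.e. under the conditions on the initial partial sums $\Re(s_1+\dots+s_j)$ that cut out the domain. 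This is the precise analogue of the admissibility criterion for the multiple zeta values written in the same indexing, where it is the first argument that must dominate.

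The main obstacle is the bookkeeping of this cascade of comparisons: one has to check that each inequality $\sum_{m>m'}m^{-e}\ll (m')^{1-e}$ is invoked only where its exponent $e$ has the right sign, and that the exponents telescope cleanly so that precisely the advertised partial-sum conditions — and no hidden stronger constraint on an individual variable — are produced. Because Hecke's bound has already reduced everything to a positive series, the order of summation is immaterial and absolute convergence follows once this telescoping is verified; any attempt to enlarge the region further would have to abandon absolute values and instead extract cancellation from the oscillation of the $a_l(f_j)$ through summation by parts against the cusp-form bound $\sum_{l\le x}a_l(f_j)=O(x^{k_j/2})$, which lies beyond what the stated domain requires.
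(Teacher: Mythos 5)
Your proof is correct, but it follows a genuinely different route from the paper's on both halves, and in both cases yours is the sharper argument. For $\Lambda$, the paper only uses the polynomial bounds $f_j(it)=O(t^{-k_j/2-1})$ as $t\to0$ and $O(t^{-k_j/2+1})$ as $t\to\infty$ obtained from Hecke's estimate and modularity; you instead use the exponential decay of a cusp form at both ends of the imaginary axis, $f_j(it)=O(e^{-2\pi t})$ and $f_j(it)=O(t^{-k_j}e^{-2\pi/t})$, which immediately gives absolute convergence of the iterated Mellin integral on all of $\C^n$ by domination over the full cube $(0,\infty)^n$. This is stronger than what is stated, it is what makes the later assertion that $\Lambda$ extends holomorphically to $\C^n$ transparent for parabolic $f_j$, and it is not merely cosmetic: the bound $O(t^{-k_j/2+1})$ at infinity would by itself require $\Re(s_n)<k_n/2-1$ rather than $\Re(s_n)>k_n/2+1$, so the exponential decay is actually needed. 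For $L$, the paper separates the variables with the arithmetic--geometric mean inequality $(l_j+\cdots+l_n)\ge(n-j+1)(l_j\cdots l_n)^{1/(n-j+1)}$ and reduces to a product of one-variable series, which yields a cruder convergence region; your reindexing by the strictly decreasing chain $m_j=l_j+\cdots+l_n$ together with the cascade of comparisons $\sum_{m>m'}m^{-e}\ll(m')^{1-e}$ is the standard multiple-zeta argument and produces the natural domain $\Re(s_1+\cdots+s_j)>k_1/2+\cdots+k_j/2+j$ for $1\le j\le n$. One caution: these are conditions on the \emph{initial} partial sums, so the first of them reads $\Re(s_1)>k_1/2+1$, whereas the displayed domain in the definition begins with $\Re(s_n)>k_n/2+1$. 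Since $s_1$ is the exponent attached to the largest quantity $l_1+\cdots+l_n$, it is indeed $\Re(s_1)$, and not $\Re(s_n)$, that must individually be large (compare the convergence of $\zeta(2,1)$ with the divergence of $\zeta(1,2)$), so you should state explicitly that the region you establish is the one cut out by the initial partial sums rather than claim a literal match with the inequalities as printed.
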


\begin{proof}
On vérifie la convergence de ces définitions. On a démontré précédemment le résultat de l'estimée de Hecke donnant l'existence de constante $C_1,...,C_n$ tel que:
$$|a_{l}(f_j)|\leq C_jl^{k_j/2}\text{ pour tout }1\leq j\leq n\text{ et }l>0.$$
Il permet d'obtenir la convergence uniforme de la série de Dirichlet sur un certain domaine:
$$\left|\frac{a_{l_1}(f_1)...a_{l_n}(f_n)}{(l_1+...+l_n)^{s_1}...l_n^{s_n}}\right|\leq C_1...C_n \frac{l_1^{k_1/2}...l_n^{k_n/2}}{(l_1+...+l_n)^{\Re(s_1)}...l_n^{\Re(s_n)}}.$$
Ceci est à nouveau majorée par des inégalités arithmético-géométrique par:
$$\frac{C_1...C_n\prod_{j=1}^nl_j^{k_j/2}}{\prod_{j=1}^n (n-j+1)^{\Re(s_j)}(l_j...l_n)^{\Re(s_j)/(n-j+1)}}=O(l_1^{k_1/2-\Re(s_1)/n}...l_n^{k_n/2-\Re(s_n)-\Re(s_{n-1})/2-...-\Re(s_1)/n}).$$
Cette série de réels converge bien pour les valeurs complexe de $(s_1,...,s_n)$ vérifiant:
$$\Re(k_1/2-s_1/n)<-1,...,\Re(k_n/2-s_n-...-s_1/n)<-1.$$
Donc en particulier pour le domaine inclue définie par: 
$$\Re(s_n)>k_n/2+1,...,\Re(s_1+...+s_n)>k_1/2+...+k_n/2+n.$$
On notera que la majoration de Ramanujan donne un domaine de converge plus grand.\par
On obtient de même la convergence de l'intégrale de Mellin à partir de $f(it)=O_{t\to 0}(t^{-k/2-1})$ provenant de l'estimée de Hecke:
\begin{multline*}
|f(it)|=\left|\sum_{l>0}a_l(f)\exp(-2\pi lt)\right|\leq C\sum_{l>0} l^{k/2} \exp(-2\pi lt)\\
=\frac{C}{(-2\pi)^{k/2}}\left(\frac{d}{dt}\right)^{k/2}\left[\sum_{l>0}\exp(-2\pi lt)\right]=\frac{C}{(-2\pi)^{k/2}}\left(\frac{d}{dt}\right)^{k/2}\left[\frac{\exp(-2\pi t)}{1-\exp(-2\pi t)}\right]=O_{t\to 0}(t^{-k/2-1}).
\end{multline*}
La modularité de la fonction donne aussi $f(it)=O_{t\to \infty}(t^{-k/2+1})$, pour tout $f\in S_k(\G)$. Ceci nous donnant bien la convergence de l'intégrale itérée sur le domaine donné par la définition.
\end{proof}

\begin{prop}
L'application $\Lambda$ admet un prolongement holomorphes à $\C^n$ et vérifie l'équation fonctionnelle:
\begin{equation}
\Lambda(f_1,...,f_n;s_1,...,s_n)=i^{\sum k_j}\Lambda(f_n,...,f_1;k_n-s_n,...,k_1-s_1),
\end{equation}
pour tout $(s_1,...,s_n)\in\C^n$.
\end{prop}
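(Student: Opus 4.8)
The plan is to establish the two assertions separately, both resting on the single relation $f_j(i/t)=(it)^{k_j}f_j(it)$ (the case $\gamma=S$ of the modularity of $f_j$, since $f_j(-1/z)=z^{k_j}f_j(z)$) together with the fact that a parabolic form decays exponentially on the imaginary axis at both ends.

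First I would treat the holomorphic continuation. Since each $f_j$ is parabolic, its $q$-expansion gives $f_j(it_j)=O(e^{-2\pi t_j})$ as $t_j\to\infty$, while the relation $f_j(it_j)=(it_j)^{-k_j}f_j(i/t_j)$ shows $f_j(it_j)=O\!\left(t_j^{-k_j}e^{-2\pi/t_j}\right)$ as $t_j\to 0^+$. Hence, for every fixed $(s_1,\dots,s_n)$, each factor $|f_j(it_j)|\,t_j^{\Re(s_j)-1}$ is integrable on $(0,\infty)$, and the modulus of the integrand on the simplex $0<t_1<\dots<t_n$ is dominated by the product $\prod_j|f_j(it_j)|\,t_j^{\Re(s_j)-1}$ over the full orthant $(0,\infty)^n$. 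This bound is locally uniform in $(s_1,\dots,s_n)\in\C^n$, so the integral converges absolutely and locally uniformly everywhere; differentiation under the integral sign (or Morera applied in each variable) then shows it is holomorphic in each variable, hence entire on $\C^n$ and agreeing with the originally defined $\Lambda$ on its domain of definition.

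Next I would derive the functional equation by the change of variables $u_j=1/t_{n+1-j}$ for $1\le j\le n$, which maps the increasing simplex $\{0<t_1<\dots<t_n\}$ bijectively onto $\{0<u_1<\dots<u_n\}$. Writing $t_l=1/u_{n+1-l}$ and $\d t_l=u_{n+1-l}^{-2}\,\d u_{n+1-l}$, and substituting $f_l(it_l)=f_l(i/u_{n+1-l})=(iu_{n+1-l})^{k_l}f_l(iu_{n+1-l})$, each factor transforms as
\[
f_l(it_l)\,t_l^{s_l-1}\,\d t_l=i^{k_l}\,f_l(iu_{n+1-l})\,u_{n+1-l}^{\,k_l-s_l-1}\,\d u_{n+1-l}.
\]
Re-indexing by $m=n+1-l$ collects a global factor $i^{\sum_j k_j}$ and turns the integrand into $\prod_m f_{n+1-m}(iu_m)\,u_m^{(k_{n+1-m}-s_{n+1-m})-1}\,\d u_m$, which is exactly the integrand defining $i^{\sum k_j}\Lambda(f_n,\dots,f_1;k_n-s_n,\dots,k_1-s_1)$.

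The main point requiring care --- rather than a genuine obstacle --- is the index bookkeeping of the substitution: one must check that $t\mapsto 1/t$ reverses the ordering of the variables, so that the simplex is preserved, and that the simultaneous reversal of the forms $f_j$ and of the arguments $s_j\mapsto k_j-s_j$ is produced consistently by the reindexing $m=n+1-l$. The exponential decay of parabolic forms removes any analytic difficulty: no regularization by a cut-off $t_0$ (as in the single-variable non-cuspidal case) is required, the identity of integrands holds pointwise on the whole simplex, and the equality of the two integrals therefore holds for all $(s_1,\dots,s_n)\in\C^n$ by the continuation established in the first step.
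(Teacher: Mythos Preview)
Your proof is correct but takes a different, more direct route than the paper. The paper introduces a cut-off parameter $T>0$ and splits the simplex $\{0<t_1<\dots<t_n\}$ into the $n+1$ regions determined by where $T$ falls among the $t_j$; applying $t\mapsto 1/t$ to the variables below $T$ and using modularity yields
\[
\Lambda(f_1,\dots,f_n;s_1,\dots,s_n)=\sum_{j=0}^n i^{\sum_{a=1}^j k_a}\,\Lambda_{1/T}(f_j,\dots,f_1;k_j-s_j,\dots,k_1-s_1)\,\Lambda_T(f_{j+1},\dots,f_n;s_{j+1},\dots,s_n),
\]
where each $\Lambda_T$ is an integral over a simplex bounded away from $0$ and is therefore visibly entire. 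The functional equation then follows by observing that the same decomposition, applied to $\Lambda(f_n,\dots,f_1;k_n-s_n,\dots,k_1-s_1)$, produces the identical sum (up to the relabelling $j\leftrightarrow n-j$, $T\leftrightarrow 1/T$). By contrast, you exploit directly that a cusp form has exponential decay at \emph{both} ends of the imaginary axis, so the original integral already converges absolutely and locally uniformly on all of $\C^n$, and the functional equation becomes a single global change of variables $u_j=1/t_{n+1-j}$. Your argument is shorter and entirely adequate in the parabolic case; the paper's Hecke-style splitting is the device that survives when some $f_j$ is not cuspidal (as in Section~2.6 of the paper, for Eisenstein series), where the behaviour at $t_j\to 0$ produces genuine poles and cannot be absorbed by a crude bound.
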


\begin{proof}
Pour $(s_1,...,s_n)\in\C^n$ vérifiant $\Re(s_n)>k_n/2+1,...,\Re(s_1+...+s_n)>k_1/2+...+k_n/2+n$ et pour un paramètre réel $T>0$, on a:
\begin{align*}
\Lambda(f_1,...,f_n;s_1,...,s_n)&=\int_{0<t_1<...<t_n}f_1(it_1)t_1^{s_1}...f_n(it_n)t_n^{s_n}\frac{\d t_1}{t_1}...\frac{\d t_n}{t_n}\\
&=\sum_{j=0}^n\int_{0<t_1<...<t_j<T<t_{j+1}<...<t_n}f_1(it_1)t_1^{s_1}...f_n(it_n)t_n^{s_n}\frac{\d t_1}{t_1}...\frac{\d t_n}{t_n}.
\end{align*}
Pour $T>0$, posons :
\begin{equation}
\Lambda_T(g_1,...,g_m;z_1,...,z_m)=\int_{T<t_1<...<t_m}g_1(it_1)t_1^{z_1}...g_1(it_m)t_m^{z_m}\frac{\d t_1}{t_1}...\frac{\d t_m}{t_m}.
\end{equation}
Alors le changement de variable $(t_1,...,t_j)\mapsto (1/t_1,...,1/t_j)$ et la propriété de modularité $f(i/t)=(it)^{k}f(it)$ donne:
$$\Lambda(f_1,...,f_n;s_1,...,s_n)=\sum_{j=0}^ni^{\sum_{a=1}^j k_a}\Lambda_{1/T}(f_j,...,f_1;k_j-s_j,...,k_1-s_1)\Lambda_T(f_{j+1},...,f_n;s_{j+1},...,s_n).$$
Cette famille de fonctions admet un prolongement holomorphe.\par
Ainsi il en va de même pour $\Lambda$. On trouve l'équation fonctionnelle en faisant le même calcul pour $\Lambda(f_n,...,f_1;k_n-s_n,...,k_1-s_1)$ et en obtenant le même résultat.
\end{proof}

Dans son article \cite{Ma1}, Manin considère le prolongement de la série de Dirichlet multiple en prenant la limite pour $z\to 0$ des applications:
$$L_z(f_1,...,f_n;s_1,...,s_n)=\sum_{l_1,...,l_n> 0}\frac{a_{l_1}(f_1)...a_{l_n}(f_n)\exp(2i\pi(l_1+...+l_n)z)}{(l_1+...+l_n)^{s_1}(l_2+...+l_n)^{s_2}...l_n^{s_n}}.$$
Il montre ensuite que ceci convient pour les valeurs entières de $(s_1,...,s_n)\in\Z_{>0}^n$. Nous proposons ici de les relier à la transformée de Mellin que l'on vient d'étendre pour ces valeurs.

\begin{prop}\label{prop24}
Soient $(m_1,...,m_n)\in\Z_{>0}^n$ vérifiant $m_n>k_n/2+1,...,m_1+...+m_n>k_1/2+...+k_n/2+n$. Considérons la fonction $L$ complétée par:
$$L^*(f_1,...,f_n;s_1,...,s_n)=\frac{\Gamma(s_1)...\Gamma(s_n)}{(2\pi)^{s_1+...+s_n}}L(f_1,...,f_n;s_1,...,s_n),$$
et les familles de coefficients entiers:
\begin{align}
A_{m_1,...,m_n}(M_1,...,M_n)&=\sum_{\substack{\alpha_{a,b}\geq 0\\ \sum_{a=1}^b\alpha_{a,b}=m_b-1\\ \sum_{b=j}^n\alpha_{j,b}=M_j-1}}\frac{(m_1-1)!...(m_n-1)!}{\prod_{1\leq a\leq b\leq n} \alpha_{a,b}!},\\
\text{et }B_{m_1,...,m_n}(M_1,...,M_n)&=(-1)^{\sum_{j=1}^n (n+1-j)(M_j-m_j)}\prod_{j=1}^{n-1}\binom{m_{j+1}-1}{\sum_{a=1}^j (M_a-m_a)}.
\end{align}
Alors on a les relations:
\begin{align}
\Lambda(f_1,...,f_n;m_1,...,m_n)&=\sum_{\substack{M_1,...,M_n\geq 0\\ \sum M_j=\sum m_j}} A_{m_1,...,m_n}(M_1,...,M_n) L^*(f_1,...,f_n;M_1,...,M_n),\\
L^*(f_1,...,f_n;m_1,...,m_n)&=\sum_{\substack{M_1,...,M_n\geq 0\\ \sum M_j=\sum m_j}} B_{m_1,...,m_n}(M_1,...,M_n) \Lambda(f_1,...,f_n;M_1,...,M_n).\label{defln}
\end{align}
Cette dernière permet d'étendre la définition de $L^*$ à tout $n$-uplet d'entiers naturels.\nomenclature{$L^*(f_1,...,f_n;s_1,...,s_n)$}{Valeurs multiples de fonction $L$ complétée}
\end{prop}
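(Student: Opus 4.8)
The plan is to substitute the $q$-expansions $f_j(it)=\sum_{l_j>0}a_{l_j}(f_j)e^{-2\pi l_j t}$ into both $\Lambda$ and $L^*$, to interchange summation and integration (legitimate throughout the domain of the statement by the absolute convergence established in the previous proposition), and thereby reduce each identity to an elementary computation on the simplex $0<t_1<\cdots<t_n$. The two relations will then correspond to the two mutually inverse changes of variable that un-nest this simplex: consecutive differences for the first, partial sums for the second. In both cases the analytic content is trivial and the entire work is a bookkeeping of multinomial coefficients.

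For the first relation I would insert the $q$-expansions into the definition of $\Lambda$ and set $u_1=t_1$, $u_j=t_j-t_{j-1}$, so that the domain becomes the positive orthant and $\sum_j(2\pi l_j)t_j=\sum_a L_a u_a$ with $L_a=2\pi(l_a+\cdots+l_n)$. The exponential then factorizes over the $u_a$, while each factor $t_j^{m_j-1}=(u_1+\cdots+u_j)^{m_j-1}$ is expanded by the multinomial theorem, producing exponents $\alpha_{a,b}\ge 0$ ($a\le b$) subject to $\sum_{a=1}^b\alpha_{a,b}=m_b-1$. Integrating each variable against $\int_0^\infty u_a^{M_a-1}e^{-L_a u_a}\d u_a=(M_a-1)!\,L_a^{-M_a}$, where $M_a-1=\sum_{b\ge a}\alpha_{a,b}$, and resumming over the $l_j$, the factors $(2\pi)^{M_a}$ assemble into the completion and the factorials $(M_a-1)!$ cancel, leaving exactly $L^*(f_1,\dots,f_n;M_1,\dots,M_n)$. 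Grouping by the value of $(M_1,\dots,M_n)$ turns the surviving multinomial coefficients into $A_{m_1,\dots,m_n}(M_1,\dots,M_n)$; equivalently, $A_m(M)$ is the coefficient of $u_1^{M_1-1}\cdots u_n^{M_n-1}$ in $\prod_{j=1}^n(u_1+\cdots+u_j)^{m_j-1}$.

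For the second relation I would instead insert $\Gamma(m_j)\,(2\pi(l_j+\cdots+l_n))^{-m_j}=\int_0^\infty e^{-2\pi(l_j+\cdots+l_n)v_j}v_j^{m_j-1}\d v_j$ into the definition of $L^*$, then set $w_i=v_1+\cdots+v_i$, which carries the positive orthant to the simplex $0<w_1<\cdots<w_n$ and yields $\prod_i e^{-2\pi l_i w_i}\prod_j(w_j-w_{j-1})^{m_j-1}$ with $w_0=0$. Comparing term-by-term against the same $q$-expansion of $\Lambda(f_1,\dots,f_n;M_1,\dots,M_n)$, the identity reduces to the polynomial expansion $\prod_{j=1}^n(w_j-w_{j-1})^{m_j-1}=\sum_M B_m(M)\prod_j w_j^{M_j-1}$. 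Expanding each factor binomially with exponent $\beta_j$ carried by $-w_{j-1}$, the requirement that $w_i$ have total degree $M_i-1$ forces $M_i-m_i=\beta_{i+1}-\beta_i$, so $\beta_j=\sum_{a=1}^{j-1}(M_a-m_a)$ is uniquely determined; the product of binomials is then $\prod_{j=1}^{n-1}\binom{m_{j+1}-1}{\sum_{a=1}^j(M_a-m_a)}$, and the sign $(-1)^{\sum_j\beta_j}$ equals $(-1)^{\sum_j(n+1-j)(M_j-m_j)}$ since the two exponents differ by $\sum_j(M_j-m_j)=0$. This is precisely $B_{m_1,\dots,m_n}(M_1,\dots,M_n)$.

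I expect the main obstacle to be purely combinatorial: maintaining the triangular array $(\alpha_{a,b})$ consistently with the constraints on both $m$ and $M$, and above all verifying the sign of $B$ through the telescoping relation $\beta_{n+1}=\sum_a(M_a-m_a)=0$. Everything analytic is already in hand in the stated domain. Finally, since the difference and partial-sum substitutions are mutually inverse, the matrices $A$ and $B$ are automatically inverse to one another, so the second relation reads as an expression of $L^*$ in terms of the values $\Lambda(f_1,\dots,f_n;M_1,\dots,M_n)$; as the latter extend holomorphically to all of $\C^n$, the right-hand side is defined for every $n$-tuple of positive integers and thus extends $L^*$ beyond its domain of absolute convergence.
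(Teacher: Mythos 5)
Your proof is correct and follows essentially the same route as the paper: insert the $q$-expansions, integrate termwise, and pass between the simplex and the positive orthant via the two mutually inverse substitutions (consecutive differences for $\Lambda\to L^*$, partial sums for $L^*\to\Lambda$), with the multinomial expansion of $\prod_j(u_1+\cdots+u_j)^{m_j-1}$ producing $A$ and the binomial expansion of $\prod_j(w_j-w_{j-1})^{m_j-1}$ producing $B$. The only difference is that you carry the second computation one step further than the paper does, explicitly matching the iterated binomial sum against the closed form of $B$ (the telescoping determination $\beta_j=\sum_{a<j}(M_a-m_a)$ and the sign check using $\sum_j(M_j-m_j)=0$), which the paper leaves implicit.
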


\begin{proof}
Pour des entiers $m_1,...,m_n$ comme dans l'énoncé, on a:
\begin{align*}
&L^*(f_1,...,f_n;m_1,...,m_n)=(2\pi)^{-m_1-...-m_n}\Gamma(m_1)...\Gamma(m_n)L(f_1,...,f_n;m_1,...,m_n)\\
&=\int_0^{\infty}e^{-2\pi u_1}u_1^{m_1-1}\d u_1...\int_0^{\infty}e^{-2\pi u_n}u_n^{m_n-1}\d u_n\sum_{l_1,...,l_n>0}\frac{a_{l_1}(f_1)...a_{l_n}(f_n)}{(l_1+...+l_n)^{m_1}...l_n^{m_n}}\\
&=\sum_{l_1,...,l_n>0}a_{l_1}(f_1)...a_{l_n}(f_n)\int_0^{\infty}e^{-2\pi(l_1+...+l_n) u_1}u_1^{m_1-1}\d u_1...\int_0^{\infty}e^{-2\pi l_nu_n}u_n^{m_n-1}\d u_n\\
&=\sum_{l_1,...,l_n>0}a_{l_1}(f_1)...a_{l_n}(f_n)\int_{0<u_1,...,u_n}e^{-2\pi l_1 u_1}u_1^{m_1-1}...e^{-2\pi l_n(u_1+...+u_n)}u_n^{m_n-1}\d u_1...\d u_n\\
&=\sum_{l_1,...,l_n>0}a_{l_1}(f_1)...a_{l_n}(f_n)\int_{0<t_1<...<t_n}e^{-2\pi l_1 t_1}t_1^{m_1-1}...e^{-2\pi l_nt_n}(t_n-t_{n-1})^{m_n-1}\d t_1...\d t_n\\
&=\int_{0<t_1<...<t_n} f_1(it_1)...f_n(it_n)t_1^{m_1-1}\sum_{\substack{a_j=0\\ j=2..n}}^{m_j-1}\binom{m_j-1}{a_j}t_j^{m_j-1-a_j}(-t_{j-1})^{a_j}\d t_1...\d t_n\\
&=\sum_{a_2=0}^{m_2-1}\binom{m_2-1}{a_2}(-1)^{a_2}...\sum_{a_n=0}^{m_n-1}\binom{m_n-1}{a_n}(-1)^{a_n}\Lambda(f_1,...,f_n;m_1+a_2,m_2-a_2+a_3,...,m_n-a_n).
\end{align*}
Et d'autre part:
\begin{align*}
\Lambda&(f_1,...,f_n;m_1,...,m_n)\\
=&\int_{0<t_1<...<t_n} f_1(it_1)t_1^{m_1-1}...f_n(it_n)t_n^{m_n-1}\d t_1...\d t_n\\
=&\int_{0<u_1,...,u_n} f_1(it_1)u_1^{m_1-1}...f_n(i(u_1+...+u_n)(u_1+...+u_n)^{m_n-1}\d u_1...\d u_n\\
=&\sum_{l_1,...,l_n>0}a_{l_1}(f_1)...a_{l_n}(f_n)\int_{0<u_1,...,u_n} e^{-2\pi l_1 u_1}u_1^{m_1-1}...e^{-2\pi l_n (u_1+...+u_n)}(u_1+...+u_n)^{m_n-1}\d u_1...\d u_n\\
=&\sum_{l_1,...,l_n>0}\sum_{\substack{\alpha_{a,b}\geq 0\\ \sum_{a=1}^b\alpha_{a,b}=m_b-1}}\frac{(m_1-1)!...(m_n-1)!a_{l_1}(f_1)...a_{l_n}(f_n)}{\prod_{1\leq a\leq b\leq n} \alpha_{a,b}!}\\
&\int_0^{\infty} e^{-2\pi (l_1+...+l_n) u_1}u_1^{\sum_{b=1}^n \alpha_{1,b}}\d u_1...\int_0^{\infty} e^{-2\pi l_n u_n}u_n^{\alpha_{n,n}}\d u_n\\
=&\sum_{\substack{\alpha_{a,b}\geq 0\\ \sum_{a=1}^b\alpha_{a,b}=m_b-1}}\frac{(m_1-1)!...(m_n-1)!}{\prod_{1\leq a\leq b\leq n} \alpha_{a,b}!}
L^*(f_1,...,f_n;\sum_{b=1}^n \alpha_{1,b}+1,...,\alpha_{n,n}+1).
\end{align*}
\end{proof}

\begin{ex}
Pour $n=1$, nous retrouvons la relation: $\Lambda(f;m)=L^*(f;m)=\frac{(m-1)!}{(2\pi)^m}L(f,m)$.\par
Pour $n=2$, nous obtenons les relations suivantes:
\begin{align}
\frac{\Lambda(f_1,f_2;m_1,m_2)}{(m_1-1)!(m_2-1)!}&=\sum_{\alpha=0}^{m_2-1}\binom{m_1-1+\alpha}{\alpha}\frac{L(f_1,f_2;m_1+\alpha,m_2-\alpha)}{(2\pi)^{m_1+m_2}},\\
\text{et }L^*(f_1,f_2;m_1,m_2)&=\sum_{\alpha=0}^{m_2-1}(-1)^{\alpha}\binom{m_2-1}{\alpha}\Lambda(f_1,f_2;m_1+\alpha,m_2-\alpha).
\end{align}
\end{ex}

\section{Intégrales itérées et séries génératrices (d'après Manin)}

Manin introduit dans \cite{Ma1}, un formalisme de série génératrice qui permet notamment de rassembler les valeurs multiples de zêta. Il donne ensuite un certain nombre de propriétés liées à la structure. Nous les rappelons ici brièvement.

\begin{defi}
Soit $V$ un ensemble fini. Soit $X_V=(X_v)_{v\in V}$ une famille d'indéterminées indexées par $V$.\\
Posons $\Ser(V)=\C\left\langle\langle X_v\right\rangle\rangle_{v\in V}$ \nomenclature{$\Ser(V)$}{Espace des séries entières non commutatives}
 l'algèbre sur $\C$ des séries entières non commutatives en les indéterminées $(X_v)_{v\in V}$ et de coefficient constant égal à $1$.\\
Soit $E$ une surface de Riemann connexe. Soit $\mathcal{O}_E$ son faisceau de fonctions holomorphes et $\Omega^1_E$ son faisceau de $1$-formes holomorphes.\\
\nomenclature{$\mathcal{O}_E$}{Faisceau des fonctions holomorphes sur $E$}
\nomenclature{$\Omega^1_E$}{Faisceau des $1$-formes holomorphes sur $E$}
Pour toute famille $\omega_V=(\omega_v)_{v\in V}$ de $1$-formes de $\Omega_E^1$ et tout chemin $\varphi: (0,1)\to E$ continue, on définit la série entière de variables non commutatives:
\begin{equation}
J_{\varphi}(\omega_V,X_V)=\sum_{n\geq 0} \sum_{v_1,...,v_n\in V} \int_0^1 \varphi^*\omega_{v_1}(t_1)\int_0^{t_1} \varphi^*\omega_{v_2}(t_2)...\int_0^{t_{n-1}} \varphi^*\omega_{v_n}(t_n)\d t_1...\d t_nX_{v_1}...X_{v_n}
\end{equation}
Pour simplifier les notations, on écrit: $\Omega=\langle \omega_V,X_V\rangle=\sum_{v\in V} \omega_v X_v\in \Omega_E^1\langle\langle X_V\rangle\rangle$ et $J_{\varphi}(\Omega)=J_{\varphi}(\omega_V,X_V)$.
\end{defi}

Si $\varphi$ et $\varphi'$ sont deux chemins homotopes alors $J_{\varphi}(\Omega)=J_{\varphi'}(\Omega)$. Soient $U\subset E$ un ouvert connexe et simplement connexe et $a\in U$ un point base alors on définit une fonction de $\mathcal{O}_U\langle\langle X_V\rangle\rangle$ par:
\begin{equation}
z\mapsto J_a^z(\Omega)=J_{\varphi_z}(\Omega),
\end{equation}
pour tout $\varphi_z$ chemin dans la classe d'homotopie vérifiant $\varphi_z(0)=a$ et $\varphi_z(1)=z$.

\begin{ex}
1) On dispose de l'exemple fondamental suivant. Soient $r\geq 1$ un entier et $m_1,...,m_r$ une famille d'entiers. Pour assurer la convergence on suppose ces entiers $m_j\geq 1$ et $m_1>1$. Les valeurs multiples de zêta sont définies par:
$$\zeta(m_1,...,m_r)=\sum_{n_1>...>n_r}\prod_{i=1}^r n_i^{-m_i}.$$
On peut les regrouper dans la série $J_0^1(\Omega)$ où $U=\R\subset\C$ et $\Omega=\frac{dt}{t}X_0+\frac{dt}{1-t}X_1$.\par
En effet, on dispose de l'expression suivante en intégrale itérée :
\begin{multline}
\zeta(m_1,...,m_r)=\int_0^1\frac{\d t_{1,1}}{t_{1,1}}...\int_0^{t_{m_1-2,1}}\frac{\d t_{m_1-1,1}}{t_{m_1-1,1}}\int_0^{t_{m_1-1,1}}\frac{\d t_{m_1,1}}{1-t_{m_1,1}}\int_0^{t_{m_1,1}}\frac{\d t_{1,2}}{t_{1,2}}...\\
...\int_0^{t_{m_{r-1},r-1}}\frac{\d t_{1,r}}{t_{1,r}}...\int_0^{t_{m_r-2,r}}\frac{\d t_{m_r-1,r}}{t_{m_r-1,r}}\int_0^{t_{m_r-1,r}}\frac{\d t_{m_r,r}}{1-t_{m_r,r}}.
\end{multline}
Nous permettant d'obtenir:
\begin{equation}
J_0^1(\Omega)=1+\sum_{r\geq 1}\sum_{\substack{m_1>1\\ m_2,...,m_r\geq 1}}\zeta(m_1,...,m_r)X_0^{m_1-1}X_1...X_0^{m_r-1}X_1+\text{termes non convergents}.
\end{equation}
En effet, les termes qui nous intéressent sont les coefficients des monômes $X_0...X_1$. Commencer par $X_0$ donne $m_1>1$ et finir par $X_1$ donne les types d'intégrales attendues. On se restreint donc dans ce cas à un idéal de $\Ser(V)$ vérifiant de bonnes propriétés. Par exemple, le changement de variable $t\mapsto 1-t$ échange le sens d'intégration et les indéterminées $X_0$ et $X_1$ donnant les relations sur les valeurs multiples de zêta:
\begin{equation}
\zeta(m_1,...,m_r)=\zeta(a_1,...,a_l)\text{ tel que }X_0^{a_1-1}X_1...X_0^{a_l-1}X_1=X_0X_1^{m_r-1}...X_0X_1^{m_1-1}.
\end{equation}
Ceci donne par exemple $\zeta(m)=\zeta(2,1,...,1)$, pour tout $m\geq 2$.\par
2) Soit $k\geq 2$ un entier pair. Posons $E=\H$ et $V=\{1,...,k-1\}$. Pour une forme modulaire parabolique $f\in S_k(SL_2(\Z))$ de poids $k$ et de niveau $1$, posons :
\begin{equation}\label{defl1}
\Omega_f=\sum_{m=1}^{k-1} f(z) z^{m-1} \d z X_{m}\text{ et }J(f)=J_{i\infty}^{0}(\Omega_f).
\end{equation}
La série obtenue contient notamment à l'ordre $1$ les périodes $\Lambda(f,m)$ :
\begin{equation}
J(f)=1+\sum_{m=1}^{k-1} \Lambda(f,m)X_m+\text{ termes de degré }\geq 2.
\end{equation}
L'application $S_k(SL_2(\Z))\to \C^{k-1},f\mapsto(\Lambda(f,m))_{1\leq m\leq k-1}$ est injective donc $J:S_k(Sl_2(\Z))\to \Ser(V)$ aussi.\par
3) Soit $\Gamma\subset SL_2(\Z)$ un groupe de congruence. Posons $E=\H$ et $V=\Gamma\backslash SL_2(\Z)$. Pour une forme modulaire $f\in S_2(\Gamma)$, définissons:
\begin{equation}\label{defw2}
\Omega_f=\sum_{g\in \Gamma\backslash SL_2(\Z)} f|_g(z) \d z X_g\text{ et }J(f)=J_{i\infty}^{0}\left(\Omega_f\right).
\end{equation}
On obtient à nouveau le début de développement:
\begin{equation}
J(f)=1+\sum_{g\in V}\Lambda(f|_g,1)X_g+\text{ termes de degré }\geq 2,
\end{equation}
permettant de déduire l'injectivité de $J:S_2(\Gamma)\to \Ser(V)$ à partir de celle de l'application:
$$S_2(\Gamma)\to \C^V,f\mapsto \left(\int_0^{i\infty}f|_g(z)\d z\right)_{g\in V}.$$
\end{ex}

On rappel des propriétés liées à la structure démontrées par Manin dans \cite{Ma1}.

\begin{prop}Soit $\Omega=\sum_{v\in V} \omega_v X_v$ une famille de $1$-formes holomorphes sur une surface de Riemann $E$ connexe et simplement connexe.
On a:
\begin{equation}
J_a^b(\Omega)=\sum_{n\geq 0} \int_a^b \Omega(z_1) \int_a^{z_1} \Omega(z_2)... \int_a^{z_{n-1}} \Omega(z_n).
\end{equation}
On dispose alors des deux relations:
\begin{align}
\partial_z J_a^z(\Omega)&=\Omega(z)J_a^z(\Omega)\in \Omega^1_E\langle\langle A_V\rangle\rangle,\\ 
J_{a}^{c}(\Omega)&=J_{b}^{c}(\Omega)J_{a}^{b}(\Omega)\in \Ser(V),\label{chasles}
\end{align}
où $a,b,c$ et $z$ sont des points de $E$.
\end{prop}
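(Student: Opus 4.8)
The plan is to handle the three assertions in turn: the first is a matter of unwinding the definition, while the differential equation and the Chasles relation both flow from a single first-order linear equation. First I would derive the displayed formula for $J_a^b(\Omega)$ directly from the definition of $J_\varphi(\omega_V,X_V)$. Expanding $\Omega(z_i)=\sum_{v_i\in V}\omega_{v_i}(z_i)X_{v_i}$ and multiplying out the ordered product $\Omega(z_1)\cdots\Omega(z_n)$ reproduces, monomial $X_{v_1}\cdots X_{v_n}$ by monomial, exactly the iterated integrals appearing in the definition. The only substantive point is that these integrals, a priori defined through a chosen parametrization $\varphi:(0,1)\to E$, depend only on the endpoints $a=\varphi(0)$ and $b=\varphi(1)$: this is reparametrization invariance of iterated integrals together with the homotopy invariance recalled just above the statement (available because $E$ is simply connected). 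I would therefore fix one path, check that each term is unchanged under orientation-preserving reparametrization, and thereby write the simplex integrals intrinsically as $\int_a^b\Omega(z_1)\int_a^{z_1}\Omega(z_2)\cdots\int_a^{z_{n-1}}\Omega(z_n)$.

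For the differential equation I would differentiate the series for $J_a^z(\Omega)$ in the upper endpoint $z$, term by term. Since we work in the formal series ring $\Ser(V)$, each homogeneous degree is a finite sum and differentiation is legitimate degree by degree, with no convergence issue. In the degree-$n$ term, $z$ occurs only as the upper limit of the outermost integral over $z_1$; the fundamental theorem of calculus pulls out a factor $\Omega(z)$ on the left and leaves the degree-$(n-1)$ term with base point $a$ and top $z$. Summing over $n\geq 0$ yields $\partial_z J_a^z(\Omega)=\Omega(z)J_a^z(\Omega)$, the product landing in $\Omega^1_E\langle\langle X_V\rangle\rangle$ as claimed.

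Finally, the Chasles relation I would obtain by uniqueness for this linear equation rather than by directly subdividing simplices. I fix $a,b$ and treat $c$ as the variable. By the second part, $c\mapsto J_a^c(\Omega)$ solves $\partial_c F=\Omega(c)F$; and since $J_a^b(\Omega)$ is constant in $c$, the product $c\mapsto J_b^c(\Omega)\,J_a^b(\Omega)$ solves the same equation, using $\partial_c J_b^c(\Omega)=\Omega(c)J_b^c(\Omega)$. The two solutions agree at $c=b$, because $J_b^b(\Omega)=1$ (all iterated integrals over the degenerate simplex vanish except the constant term), so both equal $J_a^b(\Omega)$ there. Reading the equation one word $X_{v_1}\cdots X_{v_n}$ at a time turns it into a triangular system of integral recursions whose solution is pinned down by its value at $c=b$, giving $J_a^c(\Omega)=J_b^c(\Omega)J_a^b(\Omega)$.

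The main obstacle here is bookkeeping rather than depth: one must keep the noncommutative ordering straight at every step, in particular that the new factor $\Omega(z)$ enters on the \emph{left} in the differential equation and that the segments compose in the order $J_b^c\,J_a^b$ and not the reverse. One must also make precise that ``the same equation with the same initial value'' forces equality; I would do this coefficient by coefficient in $\Ser(V)$, where the triangular recursion makes existence and uniqueness immediate.
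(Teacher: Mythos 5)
Your proposal is correct and follows essentially the same route as the paper: the series formula by unwinding the definition, the differential equation by term-by-term differentiation (with $\Omega(z)$ entering on the left), and the Chasles relation by observing that $c\mapsto J_a^c(\Omega)$ and $c\mapsto J_b^c(\Omega)J_a^b(\Omega)$ solve the same Cauchy problem with the same value at $c=b$. The only cosmetic difference is that the paper invokes Cauchy--Lipschitz for uniqueness where you justify it degree by degree in $\Ser(V)$, which is if anything the more self-contained formulation.
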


\begin{proof} L'équation différentielle se vérifie par une simple dérivation et permet de déduire la deuxième relation. En effet, les applications $z\mapsto J_a^z(\Omega)$ et $z\mapsto J_b^z(\Omega)J_a^b(\Omega)$ sont deux solutions du problème de Cauchy:
$$\partial_z \varphi(z)=\Omega(z)\varphi(z)\text{ et }\varphi(b)=J_a^b(\Omega)\text{ pour }\varphi\in\mathcal{O}_E\langle\langle X_V\rangle\rangle.$$
Donc d'après le Théorème de Cauchy-Lipschitz, il existe bien une unique solution. Et l'égalité des fonctions nous donnent la relation (\ref{chasles}).
\end{proof}

\begin{prop}
On a une comultiplication $\Delta:\Ser(V)\to \Ser(V)\otimes \Ser(V)$. C'est l'homomorphisme d'algèbre graduée déterminé par les formules:
\begin{equation}
\Delta X_v=X_v\otimes 1+1\otimes X_v,\text{ pour tout }v\in V.
\end{equation}
Soit $\Omega=\sum_{v\in V}\omega_v X_v$ une famille de $1$-formes holomorphes sur $E$. Soient $a,b\in E$. On a alors une relation de mélange:
\begin{equation}\label{relmel}
\Delta(J_a^b(\Omega))=J_a^b(\Omega)\otimes J_a^b(\Omega).
\end{equation}
\end{prop}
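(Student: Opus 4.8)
Le plan est de montrer que les deux membres, consid\'er\'es comme fonctions holomorphes du point terminal $b$ (le point base $a$ restant fix\'e), satisfont un m\^eme probl\`eme de Cauchy \`a valeurs dans $\Ser(V)\otimes\Ser(V)$, puis de conclure par unicit\'e, exactement comme on l'a fait pour la relation (\ref{chasles}). Je poserais donc $\Phi(z)=\Delta(J_a^z(\Omega))$ et $\Psi(z)=J_a^z(\Omega)\otimes J_a^z(\Omega)$. Comme $J_a^a(\Omega)=1$ (seul subsiste le terme d'ordre $0$, \'egal au produit vide) et que $\Delta(1)=1\otimes 1$, on obtient imm\'ediatement $\Phi(a)=\Psi(a)=1\otimes 1$, ce qui fournit la condition initiale commune.

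Il s'agit ensuite de v\'erifier que $\Phi$ et $\Psi$ ob\'eissent \`a la m\^eme \'equation diff\'erentielle. Introduisons la $1$-forme \`a coefficients dans $\Ser(V)\otimes\Ser(V)$ donn\'ee par $\Delta\Omega(z)=\sum_{v\in V}\omega_v(z)(X_v\otimes 1+1\otimes X_v)$, image de $\Omega$ par $\Delta$. Pour $\Psi$, la r\`egle de Leibniz et l'\'equation $\partial_z J_a^z(\Omega)=\Omega(z)J_a^z(\Omega)$ donnent
$$\partial_z\Psi(z)=(\Omega(z)J_a^z(\Omega))\otimes J_a^z(\Omega)+J_a^z(\Omega)\otimes(\Omega(z)J_a^z(\Omega)),$$
et l'on reconna\^it dans ce membre de droite exactement $\Delta\Omega(z)\cdot\Psi(z)$, la multiplication \'etant prise dans $\Ser(V)\otimes\Ser(V)$. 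Pour $\Phi$, comme $\Delta$ est un homomorphisme d'alg\`ebres gradu\'ees qui commute \`a la d\'erivation $\partial_z$ (celle-ci n'agissant que sur les coefficients holomorphes et non sur la partie en les $X_v$), on a
$$\partial_z\Phi(z)=\Delta(\partial_z J_a^z(\Omega))=\Delta(\Omega(z)J_a^z(\Omega))=\Delta\Omega(z)\cdot\Phi(z),$$
la derni\`ere \'egalit\'e venant de $\Delta(X_v\,J_a^z(\Omega))=\Delta(X_v)\,\Delta(J_a^z(\Omega))$. Ainsi $\Phi$ et $\Psi$ sont deux solutions du m\^eme probl\`eme de Cauchy lin\'eaire $\partial_z Y=\Delta\Omega(z)\cdot Y$ avec $Y(a)=1\otimes 1$.

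Il resterait \`a conclure par unicit\'e. Plut\^ot que d'invoquer directement un th\'eor\`eme de Cauchy--Lipschitz en dimension infinie, je raisonnerais degr\'e par degr\'e : la graduation de $\Ser(V)\otimes\Ser(V)$ ram\`ene l'\'equation, en chaque degr\'e total $n$, \`a une \'equation diff\'erentielle lin\'eaire dont le terme source ne fait intervenir que les composantes de degr\'e $<n$. Une r\'ecurrence sur $n$ montre alors que les composantes homog\`enes de $\Phi$ et $\Psi$ co\"incident, d'o\`u $\Phi=\Psi$ ; l'\'evaluation en $z=b$ donne la relation (\ref{relmel}) cherch\'ee.

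La principale difficult\'e sera de justifier proprement le cadre analytique : s'assurer que $\Delta$ est bien d\'efini et continu au niveau des s\'eries (ce qui r\'esulte de ce que chaque coefficient de $\Delta(J_a^z(\Omega))$ est un polyn\^ome universel en un nombre fini de coefficients de $J_a^z(\Omega)$), que $\partial_z$ s'effectue terme \`a terme, et que l'unicit\'e vaut dans ce contexte gradu\'e. Une fois ce formalisme en place, l'essentiel de la preuve se r\'eduit \`a l'identit\'e alg\'ebrique $\Delta\Omega(z)\cdot(J_a^z\otimes J_a^z)=(\Omega(z) J_a^z)\otimes J_a^z+J_a^z\otimes(\Omega(z) J_a^z)$, qui n'est que la traduction directe de la d\'efinition de $\Delta$ sur les g\'en\'erateurs $X_v$.
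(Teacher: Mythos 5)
Votre démonstration est correcte et suit exactement la même stratégie que celle du texte : les deux membres sont identifiés comme solutions du même problème de Cauchy $\partial_z\varphi=\Delta(\Omega)\varphi$, $\varphi(a)=1\otimes 1$, dans $\mathcal{O}_E\langle\langle X_V\otimes X_V\rangle\rangle$, puis on conclut par unicité. Vous explicitez davantage le calcul de Leibniz et l'argument d'unicité degré par degré, ce qui est un complément utile mais ne change pas la nature de la preuve.
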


\begin{proof}
L'équation différentielle est à nouveau la clé de la démonstration. En effet les applications $z\mapsto \Delta(J_a^z(\Omega))$ et $z\mapsto J_a^z(\Omega)\otimes J_a^z(\Omega)$ sont tous les deux solutions du problème de Cauchy:
$$\partial_z\varphi(z)=\Delta(\Omega)\varphi(z)\text{ et }\varphi(a)=1\otimes 1\text{ pour }\varphi\in\mathcal{O}_E\langle\langle X_V\otimes X_V\rangle\rangle.$$
Les deux fonctions sont donc identiques et on obtient la relation de mélange.
\end{proof}

Lorsque la surface $E=\H$, on peut transporter l'action de $SL_2(\Z)$ en une action à gauche sur les séries construites:\\
Soient $a,b\in\H$, $\gamma\in SL_2(\Z)$ et $\Omega=\sum_{v\in V} \omega_v X_v$ une famille de $1$-formes holomorphes sur $\H$.
On définit une action à gauche de $SL_2(\Z)$ par:
\begin{equation}\label{defact}
\gamma.J_a^b(\Omega)=J_{\gamma.a}^{\gamma.b}(\Omega)\text{ et }J_{\gamma}(\Omega)=J_{\gamma.i\infty}^{\gamma.0}(\Omega).
\end{equation}

\begin{prop}
Pour toute famille de $1$-formes holomorphes $\Omega$, on a des relations de Manin non commutatives:
\begin{align}
J(\Omega)J_S(\Omega)&=J_S(\Omega)J(\Omega)=1,\\
J_{U^2}(\Omega)J_U(\Omega)J(\Omega)=J_U(\Omega)&J(\Omega)J_{U^2}(\Omega)=J(\Omega)J_{U^2}(\Omega)J_U(\Omega)=1.
\end{align}
\end{prop}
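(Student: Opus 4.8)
The plan is to read these five identities as the \emph{multiplicative} counterpart of the classical additive Manin relations. Under the map $\Theta_1$ of the previous section, $1+S$ and $1+U+U^2$ lie in its kernel precisely because the corresponding chains telescope to $0$ in $\Z[\pte]^0$; the iterated integral $J$ turns such telescoping sums of symbols into telescoping \emph{products}, the dictionary being exactly the concatenation formula (\ref{chasles}), namely $J_a^c(\Omega)=J_b^c(\Omega)J_a^b(\Omega)$, together with the degenerate case $J_a^a(\Omega)=1$. The latter follows from (\ref{chasles}) with $a=b=c$, since $J_a^a(\Omega)$ is then idempotent and, having constant term $1$, is invertible in $\Ser(V)$.

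First I would make the endpoints of each factor explicit by means of the action (\ref{defact}), for which $J(\Omega)=J_{i\infty}^0(\Omega)$ is the case $\gamma=\mathrm{id}$. From $S.z=-1/z$ one gets $S.i\infty=0$ and $S.0=i\infty$, whence $J_S(\Omega)=J_0^{i\infty}(\Omega)$. From $U.z=1/(1-z)$ and $U^2.z=(z-1)/z$ one computes $U.i\infty=0$, $U.0=1$, $U^2.i\infty=1$ and $U^2.0=i\infty$, so that
\begin{equation*}
J_U(\Omega)=J_0^1(\Omega)\qquad\text{and}\qquad J_{U^2}(\Omega)=J_1^{i\infty}(\Omega).
\end{equation*}

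The two relations attached to $S$ are then immediate: by (\ref{chasles}),
\begin{align*}
J(\Omega)J_S(\Omega)&=J_{i\infty}^0(\Omega)\,J_0^{i\infty}(\Omega)=J_0^0(\Omega)=1,\\
J_S(\Omega)J(\Omega)&=J_0^{i\infty}(\Omega)\,J_{i\infty}^0(\Omega)=J_{i\infty}^{i\infty}(\Omega)=1.
\end{align*}
For the three relations attached to $U$, each triple product runs once around the triangle with vertices $i\infty$, $0$ and $1$, and collapses by (\ref{chasles}) to $J_p^p(\Omega)=1$ at the relevant base vertex $p$. Concretely, grouping the two adjacent factors that share an endpoint,
\begin{align*}
J_{U^2}(\Omega)J_U(\Omega)J(\Omega)&=J_1^{i\infty}(\Omega)\,J_0^1(\Omega)\,J_{i\infty}^0(\Omega)=J_1^{i\infty}(\Omega)\,J_{i\infty}^1(\Omega)=1,\\
J_U(\Omega)J(\Omega)J_{U^2}(\Omega)&=J_0^1(\Omega)\,J_{i\infty}^0(\Omega)\,J_1^{i\infty}(\Omega)=J_0^1(\Omega)\,J_1^0(\Omega)=1,\\
J(\Omega)J_{U^2}(\Omega)J_U(\Omega)&=J_{i\infty}^0(\Omega)\,J_1^{i\infty}(\Omega)\,J_0^1(\Omega)=J_{i\infty}^0(\Omega)\,J_0^{i\infty}(\Omega)=1.
\end{align*}

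The only point demanding care is the order of multiplication: since (\ref{chasles}) concatenates paths from right to left, I must check in each cyclic product that adjacent factors genuinely share an endpoint, so that the intermediate vertex telescopes away; the three groupings above record exactly this. No analytic difficulty arises, because the underlying surface $\mathfrak{H}$ is simply connected, so (\ref{chasles}) holds verbatim and each $J$ depends only on its two endpoints.
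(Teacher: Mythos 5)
Your proof is correct and follows essentially the same route as the paper: compute the action of $S$ and $U$ on the cusps $i\infty$, $0$, $1$, then telescope each product via the concatenation relation (\ref{chasles}) down to $J_p^p(\Omega)=1$. You simply make explicit two points the paper leaves implicit (the identification $J_\gamma(\Omega)=J_{\gamma.i\infty}^{\gamma.0}(\Omega)$ of the endpoints of each factor, and the justification that $J_a^a(\Omega)=1$ as an idempotent with constant term $1$), which is a welcome but not substantively different elaboration.
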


\begin{proof}
Pour cela il suffit de regarder l'action de $S$ et $U$ sur les pointes $i\infty$ et $0$:
$$i\infty\stackrel{S}{\to} 0\stackrel{S}{\to} i\infty\text{ et }i\infty\stackrel{U}{\to} 0 \stackrel{U}{\to} 1 \stackrel{U}{\to} i\infty.$$
Le relation (\ref{chasles}) donne alors le résultat car $J_{i\infty}^{i\infty}(\Omega)=1$.
\end{proof}

\section{Séries génératrices des formes modulaires paraboliques de niveau $1$}

\subsection{Le cas d'une forme modulaire parabolique de niveau $1$}

On reprend plus précisément l'exemple (\ref{defl1}).\\
Soient $k$ un entier pair et $f\in S_k(SL_2(\Z))$ une forme modulaire parabolique de poids $k$ et de niveau $1$. On s'intéresse à l'action de $SL_2(\Z)$ sur $J(f)$. On va construire une action sur les séries formelles de $\Ser(V)$ qui sera duale de celle définie en (\ref{defact}).\par

Posons $\omega_i(z)=f(z)z^{i-1}\d z$ pour $1\leq i\leq k-1$.\\ 
On commence par remarquer que l'action $J_{\gamma}(f)=J_{\gamma i\infty}^{\gamma 0}(\Omega_f)$ se traduit en une action sur la famille des formes $\left(\omega_i\right)_{1\leq i\leq k-1}$. 

\begin{prop}
Pour toute matrice $\gamma\in SL_2(\Z)$, il existe une matrice $M_k(\gamma)\in SL_{k-1}(\Z)$ ne dépendant pas de $f$ telle que pour tout $1\leq i \leq k-1$:
\begin{equation}
\gamma^*(\omega_i(z))=\omega_i(\gamma z)=\sum_{j=1}^{k-1} M_k(\gamma)_{i,j} \omega_j(z).
\end{equation}
\end{prop}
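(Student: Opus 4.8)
The plan is to compute the pullback $\gamma^{*}\omega_i$ by hand and read off the coefficients. Write $\gamma=\mat{a}{b}{c}{d}$ with $ad-bc=1$. Since $f$ has weight $k$ we have $f(\gamma z)=(cz+d)^{k}f(z)$, while the chain rule and the definition of the action give
\[
\d(\gamma z)=\frac{\d z}{(cz+d)^{2}},\qquad (\gamma z)^{i-1}=\frac{(az+b)^{i-1}}{(cz+d)^{i-1}}.
\]
Multiplying these three factors, the powers of $(cz+d)$ combine to $k-(i-1)-2=k-i-1$, so that
\[
\gamma^{*}\omega_i=f(z)\,(az+b)^{i-1}(cz+d)^{k-i-1}\,\d z.
\]
First I would record this identity and observe that the factor $f(z)\,\d z$ has come out intact; this is precisely what will make $M_k(\gamma)$ independent of $f$.

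Next I would expand the polynomial $(az+b)^{i-1}(cz+d)^{k-i-1}$, which has degree exactly $(i-1)+(k-i-1)=k-2$, in the monomial basis $1,z,\dots,z^{k-2}$. Its coefficients are integer polynomials in $a,b,c,d$ (sums of products of binomial coefficients with powers of the entries), so defining $M_k(\gamma)_{i,j}$ to be the coefficient of $z^{j-1}$ yields $\gamma^{*}\omega_i=\sum_{j=1}^{k-1}M_k(\gamma)_{i,j}\,\omega_j$ with $M_k(\gamma)$ an integer matrix, manifestly not depending on $f$.

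It then remains to check $M_k(\gamma)\in SL_{k-1}(\Z)$. I would first note that $M_k$ is a homomorphism: applying $\delta^{*}$ to the relation $\gamma^{*}\omega_i=\sum_j M_k(\gamma)_{i,j}\omega_j$ and using the contravariance $(\gamma\delta)^{*}=\delta^{*}\gamma^{*}$ gives $M_k(\gamma\delta)=M_k(\gamma)M_k(\delta)$. In particular $M_k(\gamma)M_k(\gamma^{-1})=M_k(\mathrm{Id})=\mathrm{Id}$, so $M_k(\gamma)$ is invertible over $\Z$ and $\det M_k(\gamma)=\pm1$. Since $\det\circ M_k$ is therefore a homomorphism $\G\to\{\pm1\}$, and $\G$ is generated by $S$ and $T$, it suffices to evaluate it on these two matrices.

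The only slightly delicate point — the main obstacle — is to confirm the sign is $+1$ rather than $-1$. For $T=\mat{1}{1}{0}{1}$ the identity gives $\gamma^{*}\omega_i=f(z)(z+1)^{i-1}\d z$, so $M_k(T)_{i,j}=\binom{i-1}{j-1}$ is triangular with unit diagonal and $\det M_k(T)=1$. For $S=\mat{\phantom{-}0}{-1}{1}{\phantom{-}0}$ the identity gives $\gamma^{*}\omega_i=(-1)^{i-1}\omega_{k-i}$, so $M_k(S)$ is the antidiagonal matrix with entries $(-1)^{i-1}$; its determinant is the signature of the reversal permutation times the product of the antidiagonal entries, namely $(-1)^{(k-1)(k-2)/2}\cdot(-1)^{(k-2)(k-1)/2}=(-1)^{(k-1)(k-2)}=1$, since $(k-1)(k-2)$ is a product of consecutive integers and hence even. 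Thus $\det M_k\equiv1$ on $\G$ and $M_k(\gamma)\in SL_{k-1}(\Z)$. (Alternatively, one could extend the polynomial formula for $M_k$ to the connected group $SL_2(\R)$, which is perfect, forcing the character $\det\circ M_k$ to be trivial and avoiding the parity bookkeeping entirely.)
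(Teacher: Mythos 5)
Your proof is correct and follows the same core computation as the paper: pull back $f(z)z^{i-1}\d z$ by $\gamma=\mat{a}{b}{c}{d}$ to get $f(z)(az+b)^{i-1}(cz+d)^{k-i-1}\d z$, expand in the basis $z^{j-1}$ to read off integer coefficients independent of $f$, and use $(\gamma_1\gamma_2)^*=\gamma_2^*\gamma_1^*$ to get the (anti)homomorphism property and hence invertibility of $M_k(\gamma)$ over $\Z$. Where you go beyond the paper is the final step: the paper stops at the morphism property, which only yields $M_k(\gamma)\in GL_{k-1}(\Z)$, i.e.\ $\det M_k(\gamma)=\pm1$, and never actually pins down the sign; you complete the claim $M_k(\gamma)\in SL_{k-1}(\Z)$ by noting that $\det\circ M_k$ is a character of $\G$ and evaluating it on the generators $T$ (unitriangular, determinant $1$) and $S$ (antidiagonal with entries $(-1)^{i-1}$, determinant $(-1)^{(k-1)(k-2)}=1$). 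This parity check is exactly what the paper's argument is missing if one insists on $SL$ rather than $GL$, so your version is the more complete one; the cost is only the small bookkeeping with the reversal permutation, which you could also avoid via your remark that the same formula defines $M_k$ on the connected group $SL_2(\R)$.
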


\begin{proof}
Soit $\gamma=\mat{a}{b}{c}{d}\in SL_2(\Z)$, on a: 
\begin{multline*}
\mat{a}{b}{c}{d}^*(f(z)z^{i-1}\d z)=f\left(\frac{az+b}{cz+d}\right)\left(\frac{az+b}{cz+d}\right)^{i-1}\d\gamma z\\
=(cz+d)^kf(z)(az+b)^{i-1}(cz+d)^{1-i}\frac{\d z}{(cz+d)^2}=f(z)(az+b)^{i-1}(cz+d)^{k-i-1}\d z.
\end{multline*}
Ainsi posons $M_k(\gamma)_{i,j}=\sum_{\alpha+\beta=j-1}\binom{i-1}{\alpha}\binom{k-i-1}{\beta}a^{\alpha}b^{i-1-\alpha}c^{\beta}d^{k-i-1-\beta}$ pour $1\leq i,j \leq k-1$. Ces coefficients sont entiers et constituent bien une matrice indépendante de la famille des $(\omega_i)_{1\leq i\leq k-1}$. Pour démontrer son appartenance à $SL_{k-1}(\Z)$, nous allons montrer une propriété de morphisme donnant $M_k(\gamma^{-1})$ comme inverse de $M_k(\gamma)$.
\end{proof}

\begin{prop}\label{mkmorph}
L'application $\gamma\mapsto M_k(\gamma)$ est un homomorphisme de groupe, c'est à dire:
$$M_k(\gamma_1\gamma_2)=M_k(\gamma_1)M_k(\gamma_2)\text{ pour tout }\gamma_1,\gamma_2\in SL_2(\Z).$$
\end{prop}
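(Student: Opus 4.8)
The plan is to deduce the homomorphism property from the functoriality (contravariance) of the pullback of differential forms, combined with the relation established in the previous proposition. Recall that for every $\gamma\in SL_2(\Z)$ we have $\gamma^*\omega_i=\sum_{j=1}^{k-1}M_k(\gamma)_{i,j}\,\omega_j$, where $\omega_i(z)=f(z)z^{i-1}\,\d z$. The crucial structural fact is that $\gamma\mapsto\gamma^*$ is contravariant: since the map $z\mapsto(\gamma_1\gamma_2)z$ is the composition of $z\mapsto\gamma_2 z$ followed by $w\mapsto\gamma_1 w$, one has $(\gamma_1\gamma_2)^*=\gamma_2^*\circ\gamma_1^*$ on $\Omega^1_{\mathfrak{H}}$. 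First I would apply both sides to $\omega_i$ and use that $\gamma_2^*$ is $\C$-linear, so that it passes through the scalar constants $M_k(\gamma_1)_{i,j}$.

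Carrying this out, on the one hand the relation applied directly to the product gives $(\gamma_1\gamma_2)^*\omega_i=\sum_{l}M_k(\gamma_1\gamma_2)_{i,l}\,\omega_l$. On the other hand, applying $\gamma_1^*$ first and then $\gamma_2^*$ yields
\begin{equation*}
(\gamma_1\gamma_2)^*\omega_i=\gamma_2^*\Big(\sum_j M_k(\gamma_1)_{i,j}\,\omega_j\Big)=\sum_j M_k(\gamma_1)_{i,j}\,\gamma_2^*\omega_j=\sum_{l}\Big(\sum_j M_k(\gamma_1)_{i,j}M_k(\gamma_2)_{j,l}\Big)\omega_l,
\end{equation*}
whose coefficient of $\omega_l$ is precisely $\big(M_k(\gamma_1)M_k(\gamma_2)\big)_{i,l}$. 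Comparing the two expressions term by term gives the desired identity.

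To finish I would justify reading off the matrix equality from the equality of $1$-forms, which is legitimate as soon as $\omega_1,\dots,\omega_{k-1}$ are linearly independent over $\C$; this holds whenever $f\not\equiv 0$, since $\sum_i c_i\,\omega_i=f(z)\big(\sum_i c_i z^{i-1}\big)\,\d z=0$ forces $\sum_i c_i z^{i-1}\equiv 0$ and hence all $c_i=0$. As $M_k(\gamma)$ is defined by a formula independent of $f$, it suffices to run the argument for one nonzero weight-$k$ form, for instance the Eisenstein series $E_k$ when $k\ge 4$ (the case $k=2$ being trivial, since then $M_2(\gamma)=(1)$). The one point that must be handled with care is the order of composition dictated by contravariance, $(\gamma_1\gamma_2)^*=\gamma_2^*\circ\gamma_1^*$: with the index convention $\gamma^*\omega_i=\sum_j M_k(\gamma)_{i,j}\,\omega_j$, applying $\gamma_1^*$ on the inside and $\gamma_2^*$ on the outside makes the indices contract so as to produce the product in the correct order $M_k(\gamma_1)M_k(\gamma_2)$. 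Equivalently, and perhaps more transparently, I would note that $M_k(\gamma)$ is nothing but the matrix, in the monomial basis of $\C_{k-2}[z]$, of the weight $2-k$ slash action $P\mapsto P(\gamma z)(cz+d)^{k-2}$, so that the statement is just the associativity $(P|_{\gamma_1})|_{\gamma_2}=P|_{\gamma_1\gamma_2}$ of that action, itself a consequence of the cocycle identity for the automorphy factor $cz+d$.
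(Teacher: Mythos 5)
Your proof is correct and is essentially the paper's own argument: both compute $(\gamma_1\gamma_2)^*\omega_i=\gamma_2^*\gamma_1^*\omega_i$, push the scalars $M_k(\gamma_1)_{i,j}$ through the linear map $\gamma_2^*$, and identify coefficients in the basis $(\omega_j)$. Your additional justification that the $\omega_j$ are linearly independent for $f\not\equiv 0$ (and that one nonzero form suffices since $M_k(\gamma)$ is independent of $f$) fills in a step the paper leaves implicit, but it is the same route.
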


\begin{proof}
Soit $1\leq i\leq k-1$ une entier. On a la série d'égalité: 
$$\sum_{l=1}^{k-1}M_k(\gamma_1\gamma_2)_{i,l}\omega_l=(\gamma_1\gamma_2)^*\omega_i=\gamma_2^*\gamma_1^*\omega_i=\gamma_2^*\left(\sum_{j=1}^{k-1} M_k(\gamma_1)_{i,j}\omega_j\right)$$
$$=\sum_{j=1}^{k-1} M_k(\gamma_1)_{i,j}\gamma_2^*\omega_j=\sum_{j,l=1}^{k-1}M_k(\gamma_1)_{i,j}M_k(\gamma_2)_{j,l}\omega_l.$$
Ceci donne l'égalité des coefficients des matrices par identification.
\end{proof}

Posons $S=\left(\begin{smallmatrix} 0 & -1 \\1&\phantom{-}0\end{smallmatrix}\right)$, $T=\left(\begin{smallmatrix} 1 & 1 \\0&1\end{smallmatrix}\right)$ et $U=ST^{-1}=\left(\begin{smallmatrix} \phantom{-}0 & 1 \\-1&1\end{smallmatrix}\right).$ 
Le groupe $SL_2(\Z)$ est engendré par deux de ces trois éléments.\par

$M_k(\gamma)$ est ainsi entièrement déterminé par deux des trois valeurs en $S$, $T$ ou $U$:
\begin{align}
M_k(S)_{i,j}&=(-1)^{i-1}\delta_{(i+j=k)},\\
M_k(T)_{i,j}&=\binom{i-1}{j-1}\\
\text{ et }M_k(U)_{i,j}&=(-1)^{j-1}\binom{k-i-1}{j-1}.
\end{align}

On étend la notation classique en posant:
\begin{align}
\gamma^*\Omega_f&=\sum_{i=1}^{k-1}(\gamma^*\omega_i) X_{i}=\sum_{i,j=1}^{k-1} M_k(\gamma)_{i,j}\omega_jX_{i}\\
&=\langle M_k(\gamma)\omega_V,X_V\rangle=\langle\omega_V,^tM_k(\gamma)X_V\rangle.
\end{align}
Donc on voit que si $M_k(\gamma)$ agit sur la famille des $(\omega_i)_{1\leq i\leq k-1}$ alors $^tM_k(\gamma)$ agit de manière duale sur la famille des $(X_j)_{1\leq j\leq k-1}$:
\begin{equation}
J_{\gamma}(f)=J_{\gamma i\infty}^{\gamma 0}(\Omega_f)
=J_{i\infty}^0\left(\langle M_k(\gamma)\omega_V,X_V\rangle\right)
=J_{i\infty}^0\left(\langle\omega_V,^tM_k(\gamma)X_V\rangle\right).
\end{equation}
On a ainsi définit une action de $SL_2(\Z)$-module sur l'espace des séries formelles $J\in \Ser(V)$ par:
\begin{equation}
J(X_V)|_{\gamma}=J({}^tM_k(\gamma)X_V),
\end{equation}
On a bien une action de groupe, d'après la Proposition \ref{mkmorph}

\subsection{Le cas de $n$ formes modulaires paraboliques de niveau $1$}

Soit $k=(k_1,...,k_n)$ une famille d'entiers naturels pairs. 
Soit $f_1,...,f_n$ une famille de formes modulaires paraboliques de niveau $1$ et de poids respectifs $k_1,...,k_n$. Posons:
$$V=\{(i,j);1\leq j \leq n,1\leq i \leq k_j-1\}.$$
Définissons la famille de $1$-formes de $\Omega_{\H}^1$ indexée par $V$ associée à $(f_1,...,f_n)$ par:
\begin{align}
\Omega_{f_1,...,f_n}(z)&=\sum_{j=1}^n\Omega_{f_j}(z)Y_j\\
&=\sum_{j=1}^n\sum_{i=1}^{k_j-1}f_j(z)z^{i-1}\d zX_iY_j.
\end{align}
On définit une série formelle en les indéterminées non commutatives $A_{i,j}=X_iY_j$ pour $(i,j)\in V$ par:
\begin{equation}
J(f_1,...,f_n)=J_{i\infty}^0\left(\Omega_{f_1,...,f_n}\right).
\end{equation}

On définit une action à gauche de $\gamma\in\Gamma$ sur $J(f_1,...,f_n)$ :
\begin{equation}
J_{\gamma}(f_1,...,f_n)=J_{\gamma i\infty}^{\gamma 0}\left(\Omega_{f_1,...,f_n}\right).
\end{equation}
Il est naturel de chercher à étendre cette action à toute série de $\Ser(V)$,
 
Ici encore cette action se transcrit en une action sur la famille $\omega_V$ des :
$$\omega_{i,j}(z)=f_j(z)z^{i-1}\d z\text{ indexé par les }(i,j)\in V.$$

Cette action est diagonale par bloc et est donnée par les actions précédentes:
$$\gamma^*\omega_{i,j}=\sum_{\alpha=1}^{k_j-1}M_{k_j}(\gamma)_{i,\alpha}\omega_{\alpha,j}\text{ pour tout }1\leq j\leq n.$$

Ainsi la matrice diagonale par bloc $M_{(k_1,...,k_n)}(\gamma)$ décrit l'action de $\gamma$ sur $\omega_V$:
\begin{equation}
M_{(k_1,...,k_n)}(\gamma)=\left(\begin{smallmatrix}M_{k_1}(\gamma) & & 0\\ & \ddots & \\ 0 & & M_{k_n}(\gamma)\end{smallmatrix}\right).
\end{equation}
On en déduit que $\gamma\mapsto M_{(k_1,...,k_n)}(\gamma)$ est un morphisme de groupe. De plus la transposée construit une action compatible sur $\Ser(V)$:
\begin{equation}
J(A_V)|_{\gamma}=J({}^tM_{(k_1,...,k_n)}(\gamma)A_V),\text{ pour tout }J\in \Ser(V).
\end{equation}
En effet, cette action est compatible à la précédente sur les éléments $J(f_1,...,f_n)\in \Ser(V)$:
$$J_{\gamma}(f_1,...,f_n)(A_V)=J_{i\infty}^0\left(\sum_{j=1}^n\gamma^*(\Omega_{f_j})Y_j\right)
=J_{i\infty}^0\left(\langle M_{(k_1,...,k_n)}(\gamma)\omega_V,A_V\rangle\right)$$
$$=J_{i\infty}^0\left(\langle \omega_V,{}^tM_{(k_1,...,k_n)}(\gamma)A_V\rangle\right)=J(f_1,...,f_n)({}^tM_{(k_1,...,k_n)}(\gamma)A_V)$$

\begin{thm}[Relations de Manin non commutatives]
Pour toute famille de formes modulaires $f=(f_1,...,f_n)$ de niveau $1$, on dispose des relations:
$$J(f)J_S(f)=J_S(f)J(f)=1,$$
$$J_{U^2}(f)J_U(f)J(f)=J_U(f)J(f)J_{U^2}(f)=J(f)J_{U^2}(f)J_U(f)=1.$$
\end{thm}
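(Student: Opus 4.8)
Le plan est de constater que cet énoncé est exactement la proposition des relations de Manin non commutatives, établie plus haut pour une famille quelconque de $1$-formes holomorphes, appliquée à la famille particulière $\Omega = \Omega_{f_1,\ldots,f_n}$. En effet, par définition on a $J(f_1,\ldots,f_n) = J_{i\infty}^0(\Omega_{f_1,\ldots,f_n})$ et $J_\gamma(f_1,\ldots,f_n) = J_{\gamma i\infty}^{\gamma 0}(\Omega_{f_1,\ldots,f_n})$ ; il suffit donc de vérifier que $\Omega_{f_1,\ldots,f_n} = \sum_{(i,j)\in V} f_j(z)z^{i-1}\d z\, A_{i,j}$ est bien une famille de $1$-formes holomorphes sur $\H$ à coefficients dans les indéterminées non commutatives $A_{i,j} = X_iY_j$ indexées par l'ensemble fini $V$. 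Comme chaque $f_j$ est holomorphe sur le demi-plan et que $\H$ est connexe et simplement connexe, on est précisément dans le cadre de la proposition générale, le rôle des indéterminées $X_v$ y étant joué par les $A_{i,j}$.

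Il ne reste alors qu'à rappeler le mécanisme. D'abord je calculerais les orbites des pointes sous $S$ et $U$, à savoir $i\infty \stackrel{S}{\to} 0 \stackrel{S}{\to} i\infty$ et $i\infty \stackrel{U}{\to} 0 \stackrel{U}{\to} 1 \stackrel{U}{\to} i\infty$, de sorte que $J(f) = J_{i\infty}^0(\Omega)$, $J_S(f) = J_0^{i\infty}(\Omega)$, $J_U(f) = J_0^1(\Omega)$ et $J_{U^2}(f) = J_1^{i\infty}(\Omega)$. Ensuite j'appliquerais la relation de Chasles (\ref{chasles}) ainsi que l'identité $J_a^a(\Omega) = 1$ pour faire télescoper les produits : par exemple $J(f)J_S(f) = J_{i\infty}^0(\Omega)J_0^{i\infty}(\Omega) = J_0^0(\Omega) = 1$, et symétriquement $J_S(f)J(f) = 1$ ; pour la longueur trois, $J_{U^2}(f)J_U(f)J(f) = J_1^{i\infty}(\Omega)J_0^1(\Omega)J_{i\infty}^0(\Omega) = 1$, les deux autres ordres cycliques se traitant de manière identique.

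L'obstacle principal n'est pas calculatoire : la spécialisation est immédiate une fois la proposition générale acquise. Le seul point demandant de l'attention est de s'assurer que le formalisme des intégrales itérées, énoncé pour des $1$-formes sur une surface de Riemann, s'applique bien alors que les bornes $i\infty$, $0$ et $1$ sont des pointes situées au bord du demi-plan et non des points intérieurs. La convergence des intégrales est garantie par la décroissance exponentielle des formes paraboliques $f_j$ en $i\infty$ et par la modularité, qui ramène le comportement en $0$ et en $1$ à celui en $i\infty$ ; cette convergence a déjà été assurée lors de l'étude de la transformée de Mellin $\Lambda$. Cela acquis, l'invariance par homotopie et la relation de Chasles demeurent valables, et la démonstration se réduit entièrement à la combinatoire des pointes décrite ci-dessus.
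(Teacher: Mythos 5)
Votre démonstration est correcte et suit exactement la voie du texte : le théorème est l'application directe de la proposition générale sur les relations de Manin non commutatives pour une famille de $1$-formes holomorphes, dont la preuve repose précisément sur les orbites $i\infty\stackrel{S}{\to}0\stackrel{S}{\to}i\infty$ et $i\infty\stackrel{U}{\to}0\stackrel{U}{\to}1\stackrel{U}{\to}i\infty$, la relation de Chasles et l'identité $J_{i\infty}^{i\infty}(\Omega)=1$. Votre remarque sur la convergence aux pointes, assurée par le caractère parabolique des $f_j$ et la modularité, explicite utilement un point que le texte laisse implicite.
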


\section{Polynômes des multipériodes des formes de niveau $1$}

Soient $k=(k_1,...,k_n)$ une famille d'entiers naturels pairs et $f_1,...,f_n$ des formes modulaires paraboliques de niveau $1$ et de poids respectifs $k_1,...,k_n$. Soit $N$ une entier positif. On pose à nouveau $V=\{(i,j);1\leq j\leq n,1\leq i\leq k_j-1\}$ et $A_V$ la famille des indéterminées.\par
On va regarder de manière itérée les termes homogènes de degré $N$ de $J(f_1,...,f_n)$:
$$J(f_1,...,f_n)_N=\int_{i\infty}^0\sum_{j=1}^n\sum_{i=1}^{k_j-1} f_j(z_1)z_1^{i-1}\d z_1A_{i,j}\int_{i\infty}^{z_1}...\int_{i\infty}^{z_{n-1}}\sum_{j=1}^n\sum_{i=1}^{k_j-1} f_j(z_N)z_N^{i-1}\d z_N A_{i,j}.$$
Nous allons les transformer en polynômes en plusieurs variables commutatives.\par
Par construction le degré $0$ est toujours égale à $1$, on regarde alors par degré croissant.

\subsection{Polynôme des périodes}

Soit $f$ une forme modulaire parabolique de niveau $1$ et de poids $k$ un entier pair. Le terme de degré $1$ de $J(f)$ est donné par:
\begin{equation}
J(f)_1=\sum_{m=1}^{k-1}\int_{i\infty}^0f(z)z^{m-1}\d z X_{m}.
\end{equation}
On reconnait, à proportionnalité près, les périodes de la forme modulaire.
En effet, ce sont les valeurs aux entiers critiques de sa fonction $L$:
\begin{equation}
L(f,m)=\sum_{n>0} a_n(f) n^{-m}=\frac{(2\pi)^m}{\Gamma(m)}\int_0^{\infty} f(it) t^{m-1}dt,
\end{equation}
pour tout entier $m$ tel que $1\leq m\leq k-1.$\par
Après renormalisation, nous les regroupons dans un polynôme de $V_k=\C_{k-2}[X]$ via la formule:
\begin{equation}
P_f(X)=\int_0^{\infty} f(it)(X-it)^{k-2}dt.
\end{equation}
On peut voir ce polynôme comme étant l'image de $J(f)_1$ par l'application $\Ser(\llbracket 1,k-1\rrbracket)_1\to V_k$ définie par:
$$X_m\mapsto \binom{k-2}{m-1}(-X)^{k-m-1},\text{ pour tout }1\leq m\leq k-1.$$

Lorsque nous avons plusieurs formes modulaires $f_j$ de poids $k_j$, on peut définir l'application $\Psi_1:\Ser(V)_1\to\prod_{j=1}^n V_{k_j}$ définit par:
$$A_{m,a}=X_mY_a\mapsto \left(\delta_j(a)\binom{k_j-2}{m-1}(-X)^{k_j-m-1}\right)_{1\leq j\leq n},$$
pour tout $1\leq a\leq n$ et $1\leq m\leq k_a-1$.\par
Elle nous donne que $J(f_1,...,f_n)_1$ est déterminé par les $n$-uplets de polynômes:
$$(P_{f_1}(X),...,P_{f_n}(X)).$$
Ce sont les polynômes des périodes des $f_j$ pour $1\leq j\leq n$.

\subsection{Polynôme des bipériodes}

Pour une famille de formes modulaires paraboliques de niveau $1$, $f_j$ de poids $k_j$ pour $1\leq j\leq n$. Le terme de degré $2$ de $J(f_1,...,f_n)$ est donné par:
\begin{equation}
J(f_1,...,f_n)_2=\sum_{(a,b)\in\llbracket 1,n\rrbracket^2}\sum_{m_1=1}^{k_a-1}\sum_{m_2=1}^{k_b-1}\int_{i\infty}^0\int_{i\infty}^{z_1}f_a(z_1)z_1^{m_1-1}\d z_1f_b(z_2)z_2^{m_2-1}\d z_2 X_{m_1}Y_aX_{m_2}Y_b.
\end{equation}
Ceci peut être vu comme la somme de $n^2$ polynômes des bipériodes selon les valeurs de $(a,b)\in\llbracket 1,n\rrbracket^2$.\par
En effet, on peut introduire une fonction $\Lambda$ à plusieurs variables complexes $(s_1,s_2)\in\C^2$ pour tout couple de formes modulaires $(f_a,f_b)$ via la définition:
\begin{equation}
\Lambda(f_a,f_b;s_1,s_2)=\int_{\infty}^0f_a(it_1)t_1^{s_1-1}\d t_1\int_{\infty}^{t_1}f_b(it_2)t_2^{s_2-1}\d t_2,
\end{equation}
La convergence de l'intégrale est assurée pour $\Re(s_1)>k_1$ et $\Re(s_2)>k_2$ par les mêmes théorèmes de comparaison que dans le cas d'une intégrale de Mellin d'une forme parabolique. On peut alors prolonger à $\C^2$ par méromorphie.\par
Et ainsi on peut considérer les bipériodes comme étant les valeurs aux entiers de la bande critique et de les regrouper dans un polynôme des bipériodes par:
\begin{equation}
P_{f_a,f_b}(X,Y)=\int_{0<t_1<t_2} f_a(it_1)f_b(it_2)(X-it_1)^{k_1-2}(Y-it_2)^{k_2-2}\d t_1\d t_2.
\end{equation}
\nomenclature{$V_{k_1,k_2}^A$}{Espaces des polynômes en deux variables sur un anneau $A$ de degré au plus $(k_1-2,k_2-2)$}
\nomenclature{$P_{f_1,f_2}$}{Polynôme des bipériodes des formes $f_1$ et $f_2$}
Ceci peut être vu comme étant l'image de $J(f_1,...,f_n)_2$ par l'application :
\begin{align*}
\Psi_2:\Ser(V)_2&\to\prod_{(a,b)\in\llbracket 1,n\rrbracket^2}\C_{(k_a-2,k_b-2)}[X,Y]\text{ définit par:}\\
A_{m_1,c}A_{m_2,d}&\mapsto \left(\delta_{(a,b)=(c,d)}\binom{k_a-2}{m_1}(-X)^{k_a-m_1-2}\binom{k_b-2}{m_2}(-Y)^{k_b-m_2-2}\right)_{(a,b)\in\llbracket 1,n\rrbracket^2},
\end{align*}
pour tout $c,d\in\llbracket 1,n\rrbracket$, $0\leq m_1\leq c$ et $0\leq m_2\leq d$.\\
On peut ainsi réécrire de manière plus compact cette application en utilisant l'isomorphisme : 
$$\C_{(k_a-2,k_b-2)}[X,Y]\cong\C_{k_a-2}[X]\otimes\C_{k_b-2}[Y].$$
On remarque alors que l'isomorphisme $\Psi_2:\Ser(V)_2\to \left(\prod_{j=1}^n V_k\right)^{\otimes 2}$ peut aussi être définit comme le carré tensoriel de l'application précédente $\Psi_1:\Ser(V)_1\to\prod_{j=1}^n V_k$:
\begin{equation}
\Psi_2(A_{i_1,j_1}A_{i_2,j_2})(X,Y)=\Psi_1(A_{i_1,j_1})(X)\Psi_1(A_{i_2,j_2})(Y),\text{ pour tout }(i_1,j_1),(i_2,j_2)\in V.
\end{equation}

\subsection{Polynôme des multipériodes}

Soient $k_1,...,k_N$ des entiers positifs pairs et soient $f_1,...,f_N$ des formes modulaires paraboliques de poids respectifs $k_1,...,k_N$ et de niveau $1$.\par

La définition de fonction $\Lambda$ comme transformée de Mellin s'étend en:
\begin{equation}
\Lambda(f_1,...,f_N;s_1,...,s_N)=\int_{0<t_1<...<t_N}f_1(it_1)t_1^{s_1-1}\d t_1...f_N(it_N)t_N^{s_N-1}\d t_N,
\end{equation}
pour des variables complexes $(s_1,...,s_N)\in\C^N$.\par
Cette définition est à nouveau valide pour $s_j>k_j,1\leq j\leq N$ puis on étend par méromorphie la fonction à $\C^N$.\par
Le terme de degré $N$ de $J(f_1,...,f_n)$ est donnée par :
\begin{equation}
J(f_1,...,f_n)_N=\sum_{(a_1,...,a_N)\in\llbracket 1,n\rrbracket^N}\sum_{m_i=0}^{k_{a_i}-2}\Lambda(f_{a_1},...,f_{a_N};m_1,...,m_N)i^{\sum m_i} X_{m_1}Y_{a_1}...X_{m_N}Y_{a_N}.
\end{equation}

Les valeurs obtenues aux points de coordonnées entières $1\leq m_i \leq k_i-1$ pour $1\leq i\leq N$ sont ainsi appelées \textit{multipériodes} des $N$ formes modulaires. On les regroupe dans le \textsl{polynôme des multipériodes}:
\begin{equation}
P_{f_1,...,f_N}(X_1,...,X_N)=\int_{0<t_1<...<t_N} f_1(it_1)...f_N(it_N)(X_1-it_1)^{k_1-2}...(X_N-it_N)^{k_N-2}\d t_1...\d t_N.
\end{equation}
\nomenclature{$V_{k_1,...,k_n}^A$}{Espaces des polynômes en $n$ variables sur un anneau $A$ de degré respectifs bornée par $k_1-2,...,k_n-2$}
\nomenclature{$P_{f_1,...,f_n}$}{Polynôme des multipériodes des formes $f_1,...,f_n$}

\begin{prop}
L'application induite par $\Psi_1:\Ser(V)_1\to\prod_{j=1}^n V_{k_j}$:
\begin{equation}
\Psi_N=\left(\Psi_1\right)^{\otimes N}:\Ser(V)_N\to\left(\prod_{j=1}^n V_{k_j}\right)^{\otimes N}
\end{equation}
est un isomorphisme de $\C$-espaces vectoriels. Et on a:
\begin{equation}
\Psi_N(J(f_1,...,f_n)_N)=(P_{f_{a_1},...,f_{a_N}})_{(a_1,...,a_N)\in\llbracket 1,n\rrbracket^N}.
\end{equation}
\end{prop}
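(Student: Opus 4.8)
The plan is to treat the two assertions---that $\Psi_N$ is an isomorphism and that it carries $J(f_1,\ldots,f_n)_N$ to the tuple of multiperiod polynomials---separately, reducing both to the case $N=1$ by exploiting the tensor structure of $\Ser(V)$. Since $\Ser(V)$ is the completed tensor algebra on the span $W=\Ser(V)_1=\bigoplus_{(i,j)\in V}\C A_{i,j}$, its homogeneous component of degree $N$ is canonically $W^{\otimes N}$, the length-$N$ words $A_{i_1,j_1}\cdots A_{i_N,j_N}$ corresponding to the pure tensors $A_{i_1,j_1}\otimes\cdots\otimes A_{i_N,j_N}$. Through this identification $\Psi_N=(\Psi_1)^{\otimes N}$, so it suffices to understand $\Psi_1$ and then pass to tensor powers.

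First I would check that $\Psi_1$ is an isomorphism. By definition $\Psi_1(A_{m,a})$ is supported only in the $a$-th factor of $\prod_{j=1}^n V_{k_j}$, where it equals $\binom{k_a-2}{m-1}(-X)^{k_a-m-1}$; hence $\Psi_1$ is ``block diagonal''. Within the $a$-th block, as $m$ runs over $1,\ldots,k_a-1$ the images are nonzero scalar multiples of the monomials $X^{k_a-m-1}$ of the distinct degrees $0,1,\ldots,k_a-2$, which form a basis of $V_{k_a}=\C_{k_a-2}[X]$. Thus $\Psi_1$ sends the basis $(A_{m,a})_{(m,a)\in V}$ to a basis of the target (both spaces having dimension $\sum_{j=1}^n(k_j-1)$), so it is an isomorphism. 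The $N$-th tensor power of an isomorphism is again an isomorphism, which settles the first assertion.

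For the image formula I would expand both sides and match them word by word. On one side, by the definition of the iterated integral $J_{i\infty}^0$, the degree-$N$ term of $J(f_1,\ldots,f_n)$ is a sum over words $A_{i_1,a_1}\cdots A_{i_N,a_N}$ whose coefficient is the integral of $f_{a_1}(z_1)z_1^{i_1-1}\cdots f_{a_N}(z_N)z_N^{i_N-1}$ over the ordered simplex; after the substitution $z_\ell=it_\ell$ this is a multiperiod integral of $\Lambda$-type at the integer point $(i_1,\ldots,i_N)$, up to an explicit power of $i$. On the other side, expanding each factor $(X_\ell-it_\ell)^{k_{a_\ell}-2}$ of $P_{f_{a_1},\ldots,f_{a_N}}$ by the binomial theorem produces exactly the same integrals, each multiplied by the corresponding $\binom{k_{a_\ell}-2}{\cdot}(-i)^{\cdot}$ and a monomial in $X_\ell$. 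Since $\Psi_N$ acts slot by slot as $\Psi_1$, and $\Psi_1$ encodes precisely this binomial-plus-sign rule, applying $\Psi_N$ to the word $A_{i_1,a_1}\cdots A_{i_N,a_N}$ reproduces the matching summand of $P_{f_{a_1},\ldots,f_{a_N}}$. Moreover the block-diagonal nature of $\Psi_1$ forces the image of that word to lie entirely in the $(a_1,\ldots,a_N)$-component of $\bigl(\prod_j V_{k_j}\bigr)^{\otimes N}\cong\prod_{(a_1,\ldots,a_N)}V_{k_{a_1}}\otimes\cdots\otimes V_{k_{a_N}}$, so the components are indexed exactly by the sequences $(a_1,\ldots,a_N)$ and each equals the claimed polynomial.

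The conceptual content lies entirely in the case $N=1$ (the classical period-polynomial correspondence) together with the multiplicativity of iterated integrals; the main obstacle is therefore purely bookkeeping. One must track the factors $i^{\,\sum(i_\ell-1)}$, the signs $(-1)^{\cdots}$ (where the hypothesis that each $k_j$ is even is used to reconcile them), and the binomial coefficients carefully enough to confirm that the per-slot rule is exactly $\Psi_1$. Once that single-variable identity is pinned down, the identification of $\Ser(V)_N$ with $W^{\otimes N}$ propagates it to all $N$ with no further analysis.
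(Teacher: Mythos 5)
Your proposal is correct and follows essentially the same route as the paper, which states the proposition without a separate proof as an immediate consequence of the preceding explicit treatments of the cases $N=1$ and $N=2$: namely that $\Psi_1$ carries the basis $(A_{m,a})_{(m,a)\in V}$ onto a basis of $\prod_{j=1}^n V_{k_j}$ (block by block, nonzero binomial multiples of the monomials $X^{k_a-m-1}$), that a tensor power of an isomorphism is an isomorphism, and that the binomial expansion of $(X_\ell-it_\ell)^{k_{a_\ell}-2}$ matches the displayed degree-$N$ coefficients of $J(f_1,\ldots,f_n)$ word by word. Your closing remark correctly locates the only delicate point in the bookkeeping of the powers of $i$ and signs (reconciled using the evenness of the $k_j$), which is exactly where the paper's normalization conventions do the work.
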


\subsection{Ecriture uniformisée de ces polynômes}

\begin{prop}
On peut réécrire la définition des polynômes des multipériodes grâce à l'accouplement:
$$\Omega^n(\H^n,\C)\times M_n(\H^n,\pte^n,\Z)\to\C,$$
$$(\omega,C)\mapsto \left\langle \omega,C \right\rangle=\int_C \omega.$$
Soit $f\in S_k(SL_2(\Z))$. Posons $\omega_f(z,X)=f(z)(z-X)^{k-2}\d z\in\Omega^1(\H,\C)\otimes V_k$.\\
\nomenclature{$\omega_f(z,X)$}{Famille des $1$-formes différentielles des périodes de $f$}
Il existe un $n$-simplexe $\tau_n\in M_n(\H^n,\pte^n,\Z)$ tel que:
$$P_{f_1,...,f_n}(X_1,...,X_n)=\left\langle \omega_{f_1}(z_1,X_1)\wedge ...\wedge \omega_{f_n}(z_n,X_n), \tau_n\right\rangle.$$
\end{prop}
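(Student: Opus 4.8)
Le plan est d'exhiber explicitement $\tau_n$ comme le simplexe ordonné porté par l'axe imaginaire, puis de constater que l'identité résulte d'un simple changement de variables suivi du théorème de Fubini, l'ordre d'intégration des intégrales itérées étant désormais encodé par la géométrie de $\tau_n$ plutôt que par la forme. Je poserais
$$\sigma_n:\{(t_1,\dots,t_n)\in\R^n;\ 0\le t_1\le\dots\le t_n\le\infty\}\to\overline{\mathfrak{H}}^n,\quad (t_1,\dots,t_n)\mapsto(it_1,\dots,it_n),$$
et noterais $\tau_n$ la chaîne singulière associée. Les sommets du simplexe ordonné des $t$ sont les points dont chaque coordonnée vaut $0$ ou $\infty$ en respectant l'ordre croissant; leurs images $(\underbrace{0,\dots,0}_{n-j},\underbrace{i\infty,\dots,i\infty}_{j})$ sont des pointes. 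Ainsi $\tau_n$ est un $n$-simplexe à sommets dans $\pte$, donc un élément de $M_n(\H^n,\pte^n,\Z)$.

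Je calculerais ensuite le produit extérieur. Comme chaque $k_j$ est pair, on a $(z_j-X_j)^{k_j-2}=(X_j-z_j)^{k_j-2}$, de sorte que
$$\omega_{f_1}(z_1,X_1)\wedge\dots\wedge\omega_{f_n}(z_n,X_n)=\prod_{j=1}^n f_j(z_j)(X_j-z_j)^{k_j-2}\ \d z_1\wedge\dots\wedge\d z_n,$$
forme holomorphe de degré maximal sur $\mathfrak{H}^n$ à valeurs dans $V_{k_1}\otimes\dots\otimes V_{k_n}$. Le c\oe ur de la proposition est que toute l'information d'ordonnancement des intégrales définissant $\Lambda$ s'est transférée dans le domaine $\tau_n$, l'intégrande devenant un produit symétrique. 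En tirant cette forme en arrière par $\sigma_n$ (avec $z_j=it_j$, $\d z_j=\d(it_j)$) et en intégrant la forme de degré maximal sur la chaîne $\tau_n$, le théorème de Fubini identifie l'intégrale sur la région ordonnée $\{0<t_1<\dots<t_n\}$ à l'intégrale itérée; on retrouve exactement la définition de $P_{f_1,\dots,f_n}(X_1,\dots,X_n)$, le jacobien et l'orientation étant absorbés dans la normalisation de $\tau_n$.

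La difficulté principale ne sera pas cette identification géométrique, qui est routinière, mais le traitement analytique au bord: $\tau_n$ est non compact et atteint les pointes, et sa frontière comporte en outre les faces diagonales $t_j=t_{j+1}$, qui ne sont pas dans $\pte^n$. La convergence de $\langle\omega_{f_1}\wedge\dots\wedge\omega_{f_n},\tau_n\rangle$ est toutefois déjà acquise: elle coïncide avec celle de $\Lambda(f_1,\dots,f_n;s_1,\dots,s_n)$ aux entiers critiques, assurée par la décroissance exponentielle de $f_j(it)$ en $i\infty$ et, via la modularité par $S$ (l'équation fonctionnelle de $\Lambda$), par le contrôle en $0$ issu de l'estimée de Hecke $f_j(it)=O(t^{-k_j/2-1})$. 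Enfin, $\omega$ étant une forme holomorphe de degré maximal, elle est fermée; le théorème de Stokes rend alors l'accouplement $\langle\,\cdot\,,\tau_n\rangle$ insensible aux déformations de $\tau_n$ conservant ses sommets aux pointes, exactement comme dans le cas $n=1$ des symboles modulaires, ce qui légitime la reformulation.
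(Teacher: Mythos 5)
Votre d\'emonstration est correcte et suit essentiellement la d\'emarche du texte : celui-ci d\'efinit $\tau_n$ comme le simplexe port\'e par l'axe imaginaire \`a sommets aux pointes $(0,\dots,0,i\infty,\dots,i\infty)$ (param\'etr\'e plus loin par $\Delta_n\ni(u_0,\dots,u_n)\mapsto\bigl(-i\log(\textstyle\sum_{j<l}u_j)\bigr)_{1\le l\le n}$, ce qui co\"incide avec votre $\sigma_n$ \`a reparam\'etrage pr\`es), et l'identit\'e r\'esulte du m\^eme changement de variables ramenant l'int\'egrale sur le domaine ordonn\'e \`a l'int\'egrale it\'er\'ee d\'efinissant $\Lambda$ aux entiers critiques. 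Vos observations sur la convergence aux pointes (d\'ecroissance exponentielle en $i\infty$, estim\'ee de Hecke en $0$) et sur l'invariance par homotopie via Stokes explicitent des points que le texte laisse implicites, sans s'en \'ecarter.
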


On précisera ensuite la définition de ce $n$-cycle. On peut toutefois préciser son support:
$$Supp(\tau_n)=\{(it_1,...,it_n)\text{ tel que }0<t_1...<t_n\}\subset\H^n.$$

Et par ailleurs on remarque la propriété liée à la modularité de $f\in S_k(SL_2(\Z))$. Pour tout $\gamma=\mat{a}{b}{c}{d}\in SL_2(\Z)$, on a:
\begin{multline}\label{modl1}
\gamma^*\omega_f(z,X)=f(\gamma z)(\gamma z-X)^{k-2}d(\gamma z)\\
=f(z)(z-\gamma^{-1}X)^{k-2}\d z(-cX+a)^{k-2}\d z=\omega_f(z,X|_{\gamma^{-1}}).
\end{multline}

\section{Relations élémentaires vérifiées par les multipériodes}

Soient $n\geq 1$ et $k_1,...,k_n$ des entiers. Considérons $f_1,...,f_n$ une famille de formes modulaires paraboliques de poids respectifs $k_1,...,k_n$ et de niveau $1$. Posons $V=\{(i,j);1\leq j\leq n,1\leq i \leq k_j-1\}$, $A_V$ une famille d'indéterminées indexée par $V$ et $V_{k_j}=\C_{k_j-2}[X]$.\par

Dans la partie précédente, on a construit un isomorphisme d'algèbres graduées:
$$\Psi:\Ser(V)=\bigoplus_{N\geq 0} \Ser(V)_N\to\bigoplus_{N\geq 0}\left(\prod_{j=1}^nV_{k_j}\right)^{\otimes N}.$$
Les projections de l'image de $J(f_1,...,f_n)$ sont par définition les polynômes des multipériodes.\par
Nous présentons ici les relations induites par celle vérifiée par $J(f_1,...,f_n)$.

\subsection{Relations de mélange}\label{secmel}

Manin a démontré dans un cadre plus général (voir \cite{Ma1}) que :
\begin{equation}\label{seriemelange}
\Delta(J(f_1,...,f_n))=J(f_1,...,f_n)\otimes J(f_1,...,f_n).
\end{equation}
Pour décrire les relations induites sur les polynômes des multipériodes on introduit la famille des \textit{permutées}:
\begin{equation}
P^{\sigma}_{f_1,...,f_n}(X_1,...,X_n)=P_{f_{\sigma(1)},...,f_{\sigma(n)}}(X_{\sigma(1)},...,X_{\sigma(n)})\text{ pour tout }\sigma\in\S_n.
\end{equation}
\nomenclature{$P^{\sigma}_{f_1,...,f_n}$}{Polynôme des permutés des multipériodes des formes $f_1,...,f_n$}
Définissons les sous-ensembles de permutations de $\mathfrak{S}_n$ de battage. Soit $a$ et $b$ des entiers positifs tel que $a+b=n$, on définit $\mathfrak{S}_{a,b}$ par:
$$\sigma\in\mathfrak{S}_{a,b}\subset\mathfrak{S}_{a+b}$$
$$\Longleftrightarrow\sigma(1)<...<\sigma(a)\text{ et }\sigma(a+1)<...<\sigma(a+b).$$ 
La relation (\ref{seriemelange}) se traduit alors en des relations de mélange.
\begin{prop}
Pour tout couple d'entiers naturels $(a,b)$ vérifiant $a+b=n$, on a:
\begin{equation}
\sum_{\sigma\in\mathfrak{S}_{a,b}}P^{\sigma}_{f_1,...,f_{n}}(X_{1},...,X_{n})
=P_{f_1,...,f_a}(X_1,...,X_a)P_{f_{a+1},...,f_{a+b}}(X_{a+1},...,X_{n}).
\end{equation}
De plus, notons $(n,...,1)\in\S_n$ la permutation totale alors:
\begin{equation}
P_{f_1,...,f_n}^{(n,...,1)}=P_{f_1,...,f_n}|_{(S,...,S)}.
\end{equation}
\end{prop}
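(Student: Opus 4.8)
The plan is to deduce both identities from the structural properties of the generating series $J(f_1,\dots,f_n)=J_{i\infty}^0(\Omega_{f_1,\dots,f_n})$ already established, transported to the polynomial side through the graded algebra isomorphism $\Psi=\bigoplus_N\Psi_N$. For the first (shuffle) relation the starting point is the comultiplication identity $\Delta(J(f_1,\dots,f_n))=J(f_1,\dots,f_n)\otimes J(f_1,\dots,f_n)$ of (\ref{seriemelange}), that is, the fact that $J$ is group-like for the coproduct making each $A_{i,j}$ primitive. As is classical for iterated integrals along a fixed path, this group-like property is equivalent to the shuffle product formula on coefficients: writing $J=\sum_w c_w\,w$, one has $c_u\,c_v=\sum_{w\in u\,\shuffle\,v}c_w$ for any two words $u,v$. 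First I would specialize this to $u=A_{i_1,1}\cdots A_{i_a,a}$, built from the forms $f_1,\dots,f_a$ in order, and $v=A_{i_{a+1},a+1}\cdots A_{i_n,n}$, built from $f_{a+1},\dots,f_n$, then multiply by the monomial weights $\prod_\ell\binom{k_\ell-2}{i_\ell-1}(-X_\ell)^{k_\ell-i_\ell-1}$ defining $\Psi_1$ and sum over all exponents $i_\ell$. Since $u$ and $v$ carry the disjoint blocks of variables $X_1,\dots,X_a$ and $X_{a+1},\dots,X_n$, the left-hand side collapses to the product $P_{f_1,\dots,f_a}(X_1,\dots,X_a)\,P_{f_{a+1},\dots,f_{a+b}}(X_{a+1},\dots,X_n)$.

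The crux, and the step I expect to require the most care, is the combinatorial identification of the right-hand side. Each word $w$ occurring in $u\,\shuffle\,v$ is obtained by interleaving the two blocks while preserving the internal order of each, so the sequence of form-labels read off $w$ is precisely a permutation $\sigma\in\mathfrak{S}_{a,b}$ (those $\sigma$ with $\sigma(1)<\dots<\sigma(a)$ and $\sigma(a+1)<\dots<\sigma(n)$). I would then check that applying $\Psi$ and keeping track of which variable $X_{\sigma(\ell)}$ is attached to each position reproduces exactly $P^{\sigma}_{f_1,\dots,f_n}(X_1,\dots,X_n)=P_{f_{\sigma(1)},\dots,f_{\sigma(n)}}(X_{\sigma(1)},\dots,X_{\sigma(n)})$; summing over $\sigma$ yields the claimed identity. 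The bookkeeping of indices — distinguishing the position in $w$, the form attached to it, and the commutative variable it feeds — is where an error is easiest to make, and it is worth verifying on the case $n=2$ against $P_{f_1}(X_1)P_{f_2}(X_2)=P_{f_1,f_2}(X_1,X_2)+P_{f_2,f_1}(X_2,X_1)$.

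For the second identity I would argue directly from the definition rather than through $\Psi$. Acting by $S=\mat{0}{-1}{1}{\phantom{-}0}$ on the variable $X_j$ sends $(X_j-it_j)^{k_j-2}$ to $(-1/X_j-it_j)^{k_j-2}X_j^{k_j-2}=(1+it_jX_j)^{k_j-2}$, using that $k_j$ is even. Performing the change of variables $t_j\mapsto 1/t_j$ in $P_{f_1,\dots,f_n}|_{(S,\dots,S)}$, invoking the modularity relation $f_j(i/t)=(it)^{k_j}f_j(it)$, and collecting the powers of $i$ and of $t_j$ — which cancel because every weight is even, so that $-\,i^{2k_j-2}=+1$ and the $t_j$-exponents sum to zero — the integrand over the reversed simplex $0<t_n<\dots<t_1$ becomes $\prod_j f_j(it_j)(X_j-it_j)^{k_j-2}\,dt_j$. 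Relabelling the variables by the reversal $\ell\mapsto n+1-\ell$ to restore the increasing order identifies this with $P_{f_n,\dots,f_1}(X_n,\dots,X_1)$, which is exactly $P^{(n,\dots,1)}_{f_1,\dots,f_n}$. This last identity is nothing but the polynomial avatar of the functional equation (\ref{equafonctio}), so the computation may alternatively be read off from it; in either form it presents no real obstacle once the $S$-action on the monomial is computed correctly.
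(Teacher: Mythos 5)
Your argument is essentially the paper's own: the shuffle identity is deduced from Manin's comultiplication formula (\ref{seriemelange}) on the group-like series $J(f_1,\dots,f_n)$ and transported through the graded isomorphism $\Psi$ (the paper gives no more detail than ``la relation (\ref{seriemelange}) se traduit alors en des relations de m\'elange''), while your change of variables $t_j\mapsto 1/t_j$ for the second identity is exactly the support computation $\varphi_{(n,\dots,1)}(\tau_n)=\varphi_{(S,\dots,S)}(\tau_n)$ used elsewhere in the text. The only point deserving the care you yourself flag is the $\sigma$ versus $\sigma^{-1}$ bookkeeping in identifying shuffle words with elements of $\mathfrak{S}_{a,b}$ --- an ambiguity already present in the paper's own formulation --- and it is resolved by the $n=2$ check you propose.
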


\subsection{Relations de Manin}

On s'intéresse ici à l'action de $SL_2(\Z)$ sur $\bigoplus_{N\geq 0}\left(\prod_{j=1}^nV_{k_j}\right)^{\otimes N}$.\\
On sait munir $V_k$ de l'action $|_{2-k}$ de $SL_2(\Z)$-module. Elle induit bien une action diagonale sur $\bigoplus_{N\geq 0}\left(\prod_{j=1}^nV_{k_j}\right)^{\otimes N}$.\par
Et on a construit une structure de $SL_2(\Z)$-module sur $\Ser(V)$.\par
On dispose alors de la propriété suivante:

\begin{prop}
L'application $\Psi:\Ser(V)\to\bigoplus_{N\geq 0}\left(\prod_{j=1}^nV_{k_j}\right)^{\otimes N}$ est un morphisme de $SL_2(\Z)$-module.
\end{prop}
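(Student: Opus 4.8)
Le plan est de ramener l'\'equivariance de $\Psi$ \`a celle de sa seule composante de degr\'e $1$, en exploitant le caract\`ere multiplicatif des deux actions, puis de d\'eduire le cas $N=1$ de la loi de transformation des formes $\omega_m$.

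Je commencerais par observer que les deux actions sont des automorphismes d'alg\`ebre gradu\'ee. C\^ot\'e source, $J\mapsto J|_{\gamma}$ est donn\'ee par la substitution lin\'eaire des ind\'etermin\'ees $A_v\mapsto ({}^tM_{(k_1,\dots,k_n)}(\gamma)A_V)_v$: elle pr\'eserve le degr\'e et la concat\'enation, c'est donc un automorphisme de l'alg\`ebre libre $\Ser(V)$. C\^ot\'e but, l'action diagonale $\gamma\cdot(w_1\otimes\dots\otimes w_N)=(\gamma\cdot w_1)\otimes\dots\otimes(\gamma\cdot w_N)$ est multiplicative pour le produit tensoriel. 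Comme $\Psi=\bigoplus_N \Psi_1^{\otimes N}$ est un isomorphisme d'alg\`ebres gradu\'ees (\'etabli plus haut) et que les deux op\'erateurs $|_{\gamma}$ sont des morphismes d'alg\`ebres, l'\'egalit\'e $\Psi(J|_{\gamma})=\Psi(J)|_{\gamma}$ pour tout $J$ se ram\`ene \`a sa v\'erification sur les g\'en\'erateurs $A_{m,j}$, soit \`a l'\'equivariance de $\Psi_1$.

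Il resterait \`a traiter le degr\'e $1$. Puisque $M_{(k_1,\dots,k_n)}(\gamma)$ est diagonale par blocs, on a $A_{m,j}|_{\gamma}=\sum_i M_{k_j}(\gamma)_{i,m}A_{i,j}$, qui ne fait intervenir que le $j$-\`eme bloc. En posant $\pi_m(X)=\binom{k_j-2}{m-1}(-X)^{k_j-m-1}$ pour la $j$-\`eme composante de $\Psi_1(A_{m,j})$, tout revient \`a montrer, pour chaque poids $k=k_j$,
\begin{equation}\label{etoile}
\sum_i M_k(\gamma)_{i,m}\,\pi_i(X)=\pi_m(X)|_{2-k}\gamma.
\end{equation}
Pour \'etablir \eqref{etoile}, j'utiliserais l'\'ecriture g\'en\'eratrice $\omega_f(z,X)=f(z)(X-z)^{k-2}\d z=\sum_m \omega_m(z)\pi_m(X)$, o\`u $\omega_m(z)=f(z)z^{m-1}\d z$ (l'identit\'e binomiale sous-jacente se v\'erifie directement, les signes se recollant car $k$ est pair). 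En appliquant $\gamma^*$ et la loi $\gamma^*\omega_m=\sum_i M_k(\gamma)_{m,i}\omega_i$ d'un c\^ot\'e, la modularit\'e \eqref{modl1} $\gamma^*\omega_f(z,X)=\omega_f(z,X|_{\gamma^{-1}})$ de l'autre, puis en d\'eveloppant les deux membres dans la famille $\{\omega_i(z)\}_i$ (libre car $f\neq 0$), l'identification des coefficients donnerait \eqref{etoile}. On peut aussi voir \eqref{etoile} par un calcul direct: gr\^ace \`a la Proposition \ref{mkmorph} et \`a la multiplicativit\'e de $|_{2-k}$, il suffit de la contr\^oler sur les g\'en\'erateurs $S$ et $T$, dont les matrices $M_k(S)$ et $M_k(T)$ sont explicites, ce qui se ram\`ene \`a des identit\'es binomiales.

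La principale difficult\'e ne sera pas conceptuelle mais tiendra \`a l'accord des conventions: distinguer action \`a gauche et \`a droite, suivre l'\'echange $\gamma\leftrightarrow\gamma^{-1}$ induit par la transpos\'ee ${}^tM_k$ et par la modularit\'e, et v\'erifier que les signes se compensent (ce qui repose sur la parit\'e des $k_j$). L'action sur $\Ser(V)$ ayant justement \'et\'e d\'efinie via ${}^tM_k$ pour rendre $\Psi_1$ \'equivariante, ces v\'erifications de conventions une fois men\'ees, \eqref{etoile} et la r\'eduction au degr\'e $1$ sont de routine.
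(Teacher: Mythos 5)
Votre démonstration suit essentiellement la même route que celle du texte : réduction au degré $1$ via le caractère multiplicatif des deux actions et $\Psi_N=\Psi_1^{\otimes N}$, réduction à un seul poids par la structure diagonale par blocs, puis vérification au degré $1$ par l'identité génératrice $\sum_m \omega_m(z)\pi_m(X)=\omega_f(z,X)$ combinée à la modularité (\ref{modl1}). Votre rédaction est simplement un peu plus explicite sur l'identification des coefficients (liberté de la famille $\{\omega_i\}$ pour $f\neq 0$) et sur le suivi des conventions de transposée, mais l'argument est le même.
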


\begin{proof}
L'expression $\Psi_N=\Psi_1^{\otimes N}$ permet de réduire la démonstration à montrer que $\Psi_1:\C^V\to\prod_{j=1}^nV_{k_j}$ est un morphisme de $SL_2(\Z)$-module. En effet, les actions sont compatibles aux deux structures d'algèbres graduées et $\Psi$ est un morphisme d'algèbres graduées.\par
On sait que l'action est diagonale suivant les $1\leq j\leq n$ donc il suffit de le démontrer pour un seul poids $k$, c'est à dire l'application suivante est un morphisme de $SL_2(\Z)$-module:
$$\Psi:\quad X_m\mapsto \binom{k-2}{m-1}(-X)^{k-m-1},\text{ pour tout }1\leq m \leq k-1.$$
Or on peut calculer l'image par $\Psi$ de la forme $\Omega_f=\sum_{m=1}^{k-1}f(z)z^{m-1}X_{m}$:
$$\Psi(\Omega_f)=\sum_{m=1}^{k-1}f(z)\binom{k-2}{m}z^{m-1}(-X)^{k-1-m}=f(z)(X-z)^{k-2}=\omega_f(z,X).$$
Et (\ref{modl1}) montre que l'action d'un élément de $SL_2(\Z)$ sur la variable $X$ est duale dans les deux cas à celle sur $\H$:
$$\Psi(\Omega_f)(z)|_{\gamma}=\Psi(\Omega_f)(\gamma^{-1}z)=\omega_f(\gamma^{-1}z,X)=\omega_f(z,X|_{\gamma}).$$
\end{proof}

Ceci permet de déduire des relations de Manin généralisées sur les polynômes des multipériodes:

\begin{coro}
Soient $n$ un entier et $f_1,...,f_n$ une famille de formes modulaires paraboliques de niveau $1$. On dispose des relations:
$$\sum_{\substack{a,b\geq 0\\ a+b=n}}P_{f_1,...,f_a}|_{(S,...,S)}P_{f_{a+1},...,f_{a+b}}=0\text{ et }$$
$$\sum_{\substack{a,b,c\geq 0\\ a+b+c=n}}P_{f_1,...,f_a}|_{(U^2,...,U^2)}P_{f_{a+1},...,f_{a+b}}|_{(U,...,U)}P_{f_{a+b+1},...,f_{a+b+c}}=0.$$
où on pose par convention $P_{\emptyset}=1$ pour simplifier les notations.
\end{coro}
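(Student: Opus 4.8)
\emph{Proof proposal.} The plan is to deduce both polynomial identities from the non-commutative Manin relations $J_S(f)J(f)=1$ and $J_{U^2}(f)J_U(f)J(f)=1$ (with $f=(f_1,\dots,f_n)$), established above, by applying the graded algebra isomorphism $\Psi$, which is moreover a morphism of $\G$-modules. The choice $J_S(f)J(f)=1$ rather than $J(f)J_S(f)=1$ is dictated by the placement of $|_S$ in the statement.

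First I record that both sides of each relation lie in $\Ser(V)$, the right-hand side being $1\in\Ser(V)_0$. Since the concatenation product is graded, extracting the homogeneous component of degree $n\geq 1$ makes the right-hand side vanish, giving
$$\sum_{\substack{a,b\geq 0\\ a+b=n}}(J_S(f))_a\,(J(f))_b=0\quad\text{and}\quad\sum_{\substack{a,b,c\geq 0\\ a+b+c=n}}(J_{U^2}(f))_a\,(J_U(f))_b\,(J(f))_c=0$$
in $\Ser(V)_n$, where the degree-$0$ piece is $(J(f))_0=1$, which will match the convention $P_{\emptyset}=1$. Next I use the compatibility $J_{\gamma}(f)=J(f)|_{\gamma}$ shown earlier, together with the fact that the module action is degree-preserving, so that $(J_{\gamma}(f))_a=(J(f))_a|_{\gamma}$.

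I then apply $\Psi$. Because $\Psi$ is a morphism of graded algebras, it sends concatenation to the tensor product of multiperiod polynomials over disjoint blocks of variables; because it is a morphism of $\G$-modules with diagonal action on the tensor factors, one has $\Psi\big((J(f))_a|_{\gamma}\big)=\Psi\big((J(f))_a\big)|_{(\gamma,\dots,\gamma)}$. Recalling that $\Psi\big((J(f))_a\big)=\big(P_{f_{j_1},\dots,f_{j_a}}\big)_{(j_1,\dots,j_a)}$, the two displayed identities become relations among the families of multiperiod polynomials. Finally I extract the component multilinear in the forms whose $Y$-pattern is $Y_1Y_2\cdots Y_n$ in increasing order (equivalently, the coefficient attached to $f_1\otimes\cdots\otimes f_n$): since concatenation preserves the left-to-right order of the $Y$-indices, each factor of a product is forced to carry a consecutive block of forms, the first receiving $f_1,\dots,f_a$ and carrying $|_{(S,\dots,S)}$, the next $f_{a+1},\dots,f_{a+b}$, and so on. This produces exactly the two asserted relations.

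The main obstacle is the bookkeeping of the last two steps: one must check that reading the increasing $Y$-pattern genuinely forces the consecutive-block decomposition with the correct variable sets, and that the diagonal $\G$-action transfers the single $|_{\gamma}$ on $\Ser(V)$ into the tensorised $|_{(\gamma,\dots,\gamma)}$ block by block, so that the non-commutative ordering of $J_S(f)$, $J_U(f)$, $J_{U^2}(f)$, $J(f)$ matches the placement of $|_{(S,\dots,S)}$ and $|_{(U^2,\dots,U^2)}$, $|_{(U,\dots,U)}$ in the final identities. The remaining verifications are routine once these correspondences are fixed.
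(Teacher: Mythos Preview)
Your proposal is correct and follows exactly the route the paper intends: the corollary is stated immediately after establishing that $\Psi$ is a morphism of both graded algebras and $\G$-modules, and the proof amounts to pushing the non-commutative Manin relations $J_S(f)J(f)=1$ and $J_{U^2}(f)J_U(f)J(f)=1$ through $\Psi$, extracting the degree-$n$ piece, and then reading off the component indexed by $(1,2,\dots,n)$. Your bookkeeping of the concatenation-to-tensor correspondence and of the diagonal $|_{(\gamma,\dots,\gamma)}$ action is exactly what is needed, and the choice of orderings $J_S J$ and $J_{U^2}J_U J$ is the right one to match the placement of the actions in the statement.
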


\begin{rem}
1) Dans le cas $n=1$, des polynômes des périodes, les relations de mélanges sont triviales: $P_{f_j}=P_{f_j}$ et on retrouve les relations de Manin classiques:
$$P_f|_{1+S}=P_f|_{1+U+U^2}=0.$$
2) Dès le cas $n=2$, des polynômes des bipériodes, les relations obtenues sont assez riches.\\
Les relations de mélanges sont:
\begin{align*}
P_{f_1,f_2}(X_1,X_2)+P_{f_2,f_1}(X_2,X_1)&=P_{f_1}(X_1)P_{f_2}(X_2),\\
P_{f_1,f_1}(X_1,X_2)+P_{f_1,f_1}(X_2,X_1)&=P_{f_1}(X_1)P_{f_1}(X_2),\\
\text{ et }P_{f_2,f_2}(X_1,X_2)+P_{f_2,f_2}(X_2,X_1)&=P_{f_2}(X_1)P_{f_2}(X_2).
\end{align*}
Et les relations de Manin s'écrivent:
$$P_{f_1,f_2}|_{(S,S)}+P_{f_1}|_SP_{f_2}+P_{f_1,f_2}=0,$$
$$P_{f_1,f_2}|_{(U^2,U^2)}+P_{f_1}|_{U^2}P_{f_2}|_U+P_{f_1,f_2}|_{(U,U)}+P_{f_1}|_{U}P_{f_2}+P_{f_1,f_2}=0.$$
Dans le chapitre suivant, on étudie plus en détail cette famille de relations.\\
3) On remarque qu'on peut déduire des relations du type:
$$P_{f,f}(X,X)=\frac{1}{2}P_f(X)^2.$$
Ceci se généralise pour tout entier $n\geq 1$ en :
\begin{equation}
P_{f,...,f}(X,...,X)=\frac{1}{n!}P_f(X)^n,
\end{equation}
et ainsi donnant la série génératrice en les indéterminées $X$ et $Z$:
$$\sum_{n\geq 0} P_{f,...,f}(X,...,X)Z^n=\exp\left(ZP_f(X)\right).$$
\end{rem}

\section{Généralisation aux formes non paraboliques de niveau $1$}

Soient $n\geq 1$ et $k_1,...,k_n\geq 2$ des entiers.
Soient $(f_1,...,f_n)\in \prod_{j=1}^n M_{k_j}(SL_2(\Z))$ une famille de formes modulaires holomorphes de niveau $1$. On dispose des \textit{formes modulaires épointées} définies par:
\begin{equation}
f_j^*(z)=f_j(z)-a_0(f_j)(1+z^{-k})\text{ pour }1\leq j\leq n.
\end{equation}
\nomenclature{$f^*$}{Forme modulaires épointées associée à une forme $f$ non parabolique}
On pose alors:
\begin{equation}
\Lambda(f_1,...,f_n;s_1,...,s_n)=\int_{0<t_1<...<t_n}f^*_1(it_1)t_1^{s_1-1}\d t_1...f_n^*(it_n)t_n^{s_n-1}\d t_n
\end{equation}
Cette transformée de Mellin converge pour les $(s_1,...,s_n)$ tels que $0<\Re(s_j)<k_j$ pour tout $1\leq j\leq n$.
De plus, elle admet un prolongement méromorphe à $\C^n$ qui vérifie à nouveau l'équation fonctionnelle suivante car $f_j^*|_S=f_j^*$ pour tout $1\leq j\leq n$:
\begin{equation}
\Lambda(f_1,...,f_n;s_1,...,s_n)=i^{\sum k_j}\Lambda(f_n,...,f_1;k_n-s_n,...,k_1-s_1),\text{ pour tout }(s_1,...,s_n)\in\C^n.
\end{equation}

\subsection{Calcul des pôles}

Dans le cas $n=1$, les pôles de $s\mapsto \Lambda(f;s)$ se situe en $0$ et $k$ dès lors que $a_0(f)\neq 0$. On dispose de la généralisation suivante:
\begin{prop}\label{propres}
Les pôles de $\Lambda(f_1,...,f_n;s_1,...,s_n)$ se situent dans les hyperplans de coordonnées $s_j\in\{0,k_j\}$ pour tout $1\leq j\leq n$ tel que $a_0(f_j)\neq 0$. De plus, les résidus simples non nuls sont données par:
\begin{equation}
\Res_{s_1= 0}\Lambda(f_1,...,f_n;s_1,...,s_n)=a_0(f_1)\Lambda(f_2,...,f_n;s_2,...,s_n),
\end{equation}
pour toute famille de paramètres $(s_2,...,s_n)\in\C^{n-1}$ vérifiant $s_2\neq 0$ et $s_n\neq k_n$.
\begin{equation}
\Res_{s_n= k_n}\Lambda(f_1,...,f_n;s_1,...,s_n)=\Lambda(f_1,...,f_n;s_1,...,s_{n-1})a_0(f_n),
\end{equation}
pour toute famille de paramètres $(s_1,...,s_{n-1})\in\C^{n-1}$ vérifiant $s_1\neq 0$ et $s_{n-1}\neq k_{n-1}$.\par
Et on a l'équivalent au voisinage de $(s_1,...,s_n)=(0,..,0,u_{a+1},...,u_b,k_{b+1},...,k_n)$ pour toute famille de paramètres complexes $(u_{a+1},...,u_b)$ vérifiant $u_{a+1}\neq 0$ et $u_{b}\neq k_{b}$:
\begin{equation}
\Lambda(f_1,...,f_n;s_1,...,s_n)\sim \frac{a_0(f_1)}{s_1}...\frac{a_0(f_a)}{s_a}\Lambda(f_{a+1},...,f_b;u_{a+1},...,u_b)\frac{a_0(f_{b+1})}{s_{b+1}-k_{b+1}}...\frac{a_0(f_{n})}{s_{n}-k_{n}}.
\end{equation}
\end{prop}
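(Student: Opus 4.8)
The plan is to localize each claimed pole at the appropriate corner of the simplex $0<t_1<\dots<t_n$ and to read off the residue from the boundary behaviour of the pointed forms $f_j^*$. The two inputs I will use are, first, the meromorphic continuation of $\Lambda$ already constructed through the threshold decomposition $\Lambda=\sum_{j=0}^n i^{\sum_{a\le j}k_a}\Lambda_{1/T}(\dots)\,\Lambda_T(\dots)$, which exhibits $\Lambda$ as a \emph{finite} sum of products of tail integrals with explicitly separated singular loci; and second, the asymptotics of $f_j^*(it)$ coming from modularity $f_j^*|_S=f_j^*$ and the $q$-expansion, namely $f_j^*(it)\to -a_0(f_j)$ as $t\to 0^+$ with exponentially small error, while $f_j^*(it)\sim -a_0(f_j)\,i^{-k_j}t^{-k_j}$ as $t\to\infty$. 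I would first record a one-variable lemma: for $g$ continuous on $(0,A]$ with $g(0^+)=c$ and $g(t)-c$ rapidly decaying, the incomplete Mellin transform $\int_0^A g(t)t^{s-1}\d t$ is holomorphic for $\Re(s)>0$ and extends with a single simple pole at $s=0$ of residue $c$; dually, for $g(t)\sim c't^{-k}$ the tail $\int_B^\infty g(t)t^{s-1}\d t$ has a simple pole at $s=k$.

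To compute $\Res_{s_1=0}$ I would isolate the innermost variable, writing $\Lambda=\int_{0<t_2<\dots<t_n}\big[\int_0^{t_2}f_1^*(it_1)t_1^{s_1-1}\d t_1\big]\prod_{j\ge 2}f_j^*(it_j)t_j^{s_j-1}\,\d t_2\cdots\d t_n$. By the lemma the inner bracket is meromorphic in $s_1$ with a single simple pole at $s_1=0$ of residue $f_1^*(i0^+)$, independent of $t_2$; evaluating this boundary constant by modularity yields (up to sign) the factor $a_0(f_1)$, and pulling it out of the outer integral — which is precisely $\Lambda(f_2,\dots,f_n;s_2,\dots,s_n)$ — gives the stated residue. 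Its validity exactly where $\Lambda(f_2,\dots,f_n)$ is holomorphic, that is $s_2\neq 0$ and $s_n\neq k_n$, matches the hypotheses. The residue at $s_n=k_n$ I would not compute directly but deduce from the functional equation $\Lambda(f_1,\dots,f_n;s_1,\dots,s_n)=i^{\sum k_j}\Lambda(f_n,\dots,f_1;k_n-s_n,\dots,k_1-s_1)$: the hyperplane $s_n=k_n$ is carried to $k_n-s_n=0$, and the reflection $s_n\mapsto k_n-s_n$ reverses the residue, converting the $s_1=0$ formula into the $s_n=k_n$ formula with the factor $a_0(f_n)$ appearing on the right.

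The mixed equivalent near $(0,\dots,0,u_{a+1},\dots,u_b,k_{b+1},\dots,k_n)$ then follows by iterating the two mechanisms: the first $a$ variables are forced to the cusp $0$ and the last $n-b$ to $i\infty$, each contributing an independent simple pole $a_0(f_j)/s_j$ respectively $a_0(f_j)/(s_j-k_j)$, while the middle block remains in the bulk where $\Lambda(f_{a+1},\dots,f_b;u_{a+1},\dots,u_b)$ is holomorphic (using $u_{a+1}\neq 0$ and $u_b\neq k_b$). The threshold decomposition guarantees both that these singular contributions factor as a product and that no further poles arise, which also yields the pole-location claim that the only singularities sit in the hyperplanes $s_j\in\{0,k_j\}$ with $a_0(f_j)\neq 0$.

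The main obstacle will be justifying the interchange of residue extraction (a contour operation in $s_1$) with the outer integration over $t_2,\dots,t_n$, uniformly as those variables approach the faces of the simplex — in particular controlling the error $f_1^*(it_1)+a_0(f_1)$ uniformly in $t_2$ and ensuring that extracting the $s_1=0$ pole does not collide with the incipient poles at $s_2=0$ or $s_n=k_n$. This is precisely what the finite threshold decomposition into $\Lambda_T$ and $\Lambda_{1/T}$ pieces is designed to handle, since each piece has an explicit product form with disjoint pole hyperplanes; reading the residues off piece by piece replaces the delicate uniformity argument by bookkeeping over the finitely many terms singular at the relevant hyperplane.
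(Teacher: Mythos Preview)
Your approach is correct and close in spirit to the paper's: both hinge on the threshold decomposition
\[
\Lambda=\sum_{j=0}^{n}\Lambda_1(f_j^*,\dots,f_1^*;k_j-s_j,\dots,k_1-s_1)\,\Lambda_1(f_{j+1}^*,\dots,f_n^*;s_{j+1},\dots,s_n),
\]
which separates the two cusps. The paper's key technical step differs slightly from your one-variable Mellin lemma: it shows that if a single slot $g_a$ in a tail integral $\Lambda_1(g_1,\dots,g_m)$ is the pure power $z^{-k}$, then integrating $t_a^{s_a-k-1}$ explicitly from $t_{a-1}$ to $t_{a+1}$ gives residue $\Lambda_1(g_1,\dots,g_{m-1})$ at $s_a=k$ when $a=m$ and \emph{zero} otherwise (the two boundary terms telescope). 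This interior-cancellation lemma delivers both residues at once, since in the decomposition $f_1^*$ always occupies the outermost position of the first $\Lambda_1$ factor and $f_n^*$ the outermost position of the second; the mixed equivalent and the pole-location claim then follow by the same bookkeeping. Your route---peeling off the innermost $t_1$-integral for $\Res_{s_1=0}$ and then invoking the functional equation for $\Res_{s_n=k_n}$---is a legitimate shortcut that sidesteps the interior-cancellation argument; the paper does not use the functional equation here. The interchange issue you flag is exactly what the paper's lemma dissolves, piece by piece, without any uniformity estimate.
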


\begin{proof}
L'intégrale considérée dans la définition de $\Lambda$ ne converge pas normalement en $0$ et en $\infty$. Pour y remédier, on découpe le domaine d'intégration pour obtenir la formule:
\begin{equation}
\Lambda(f_1,...,f_n;s_1,...,s_n)=\sum_{j=0}^n\Lambda_1(f_j^*,...,f_1^*;k_j-s_j,...,k_1-s_1)\Lambda_1(f_{j+1}^*,...,f_n^*;s_{j+1},...,s_n),
\end{equation}
où on a noté pour toute famille $(g_1,...,g_m)$ de fonctions continues de $\H$ vers $\C$:
\begin{equation}
\Lambda_1(g_1,...,g_m;s_1,...,s_m)=\int_{1<t_1<...<t_m}g_1(it_1)t_1^{s_1-1}\d t_1...g_m(it_m)t_m^{s_m-1}\d t_m.
\end{equation}

On utilise alors le résultat intermédiaire suivant:\\

Soient $1\leq a\leq m$ des entiers. Soit $g_1,...,g_m$ des applications continues $\H\to\C$. Soit $(s_1,...,s_m)\in\C^m$. On suppose que $g_a(z)=z^{-k}$ alors l'application:
$$s_a\mapsto\Lambda_1(g_1,...,g_{a-1},z^{-k},g_{a+1},...,g_m;s_1,...,s_m),$$
admet au plus un pôle en $s_a=k$ et de plus le résidu est:
$$\Res_{s_a=k}\left(\Lambda_1(g_1,...,g_{a-1},z^{-k},g_{a+1},...,g_m;s_1,...,s_m)\right)=
\begin{cases}
\Lambda_1(g_1,...,g_{m-1},s_1,...,s_{m-1})&\text{ lorsque }a=m\\
0&\text{ sinon.}
\end{cases}$$

Pour cela, il suffit d'intégrer la forme différentielle $t_a^{s_a-k-1}\d t_a$ entre $t_{a-1}$ et $t_{a+1}$. Pour ne pas perdre en généralité, on note ici $t_0=1$ et $t_{m+1}=\infty$. On obtient:
$$\int_{t_{a-1}}^{t_{a+1}}t_a^{s_a-k-1}\d t_a=\frac{t_{a+1}^{s-k}-t_{a-1}^{s_a-k}}{s_a-k}.$$
Ainsi lorsque $1\leq a<m$ le résidu obtenu en $s_a=k$ est:
$$\Lambda_1(...,g_{a-1},g_{a+1},...;...,s_{a-1}+0,s_{a+1},...)-\Lambda_1(...,g_{a-1},g_{a+1},...;...,s_{a-1},s_{a+1}+0,...)=0,$$
et lorsque $m=a$ on trouve bien $\Lambda_1(g_1,...,g_{m-1},s_1,...,s_{m-1})-0$.\par

Ceci permet de déduire les pôles de la fonction $\Lambda$ ainsi que les résidus obtenus dans la proposition.
\end{proof}

\subsection{Polynôme des multipériodes étendu}

Zagier propose dans \cite{Za1} d'étendre la définition de polynôme des périodes aux formes non paraboliques. Nous considèrerons ainsi \textit{le 'polynôme' des multipériodes} de longueur $n$ d'une famille de formes modulaires non nécessairement paraboliques:
\begin{equation}
\frac{P_{f_1,...,f_n}(X_1,...,X_n)}{(k_1-2)!...(k_n-2)!}
=\sum_{\substack{0\leq m_j\leq k_j\\1\leq j\leq n}} \lim_{s\to 0}\frac{\Lambda(f_1,...,f_n,m_1+s,...,m_n+s)X_1^{k_1-m_1-1}/i^{m_1}...X_n^{k_n-m_n-1}/i^{m_n}}{\Gamma(m_1+s)\Gamma(k_1-m_1-s)...\Gamma(m_n+s)\Gamma(k_n-m_n-s)}.
\end{equation}

Cette définition étend bien celle donnée lorsque les formes $f_1,...,f_n$ sont paraboliques. C'est un élément de $\tilde{V}_k=\bigotimes_{j=1}^n\tilde{V}_{k_j}$ où $\tilde{V}_{k_j}=\frac{1}{X_j}\C_{k_j}[X_j]$. Cette espace n'est plus stable par l'action de $\Gamma$. Pourtant dans le cas $n=1$ Zagier \cite{Za1} montre que l'on peut vérifier les relations Manin:
$$P_f|_{1+S}=P_f|_{1+U+U^2}=0,\text{ pour tout }f\in M_{k}(SL_2(\Z)).$$\par
La proposition \ref{propres} permet d'écrire:
\begin{equation}\label{defnopara}
P_{f_1,...,f_n}(X_1,...,X_n)=\sum_{\substack{a,b,c\geq 0\\ a+b+c=n}}\frac{1}{a!c!} \prod_{j=1}^a \frac{a_0(f_j)}{(k_j-1)X_j}P_{f_{a+1}^*,...,f_{a+b}^*}(X_{a+1},...,X_{a+b})\prod_{j=a+b+1}^n a_0(f_j)\frac{X_j^{k_j-1}}{(k_j-1)}.
\end{equation}
Les expressions de cette décomposition de la forme $P_{f_{a+1}^*,...,f_{a+b}^*}(X_{a+1},...,X_{a+b})$ sont alors des polynômes. 

\begin{ex}
Dans le cas $n=1$, on retrouve la formule usuelle pour tout $f\in M_{k}(\G)$:
$$P_f(X)=\frac{a_0(f)}{(k-1)X}+P_{f^*}(X)+\frac{a_0(f)X^{k-1}}{(k-1)}.$$
Dans le cas $n=2$, nous obtenons pour $(f_1,f_2)\in M_{k_1}(\G)\times M_{k_2}(\G)$:
\begin{align*}
P_{f_1,f_2}(X_1,X_2)=\frac{a_0(f_1)}{2(k_1-1)X_1}\frac{a_0(f_2)}{(k_2-1)X_2}&+\frac{a_0(f_1)}{(k_1-1)X_1}P_{f_2^*}(X_2)&+\frac{a_0(f_1)}{(k_1-1)X_1}\frac{a_0(f_2)X_2^{k_2-1}}{(k_2-1)}\\
&+P_{f_1^*,f_2^*}(X_1,X_2)&+P_{f_1^*}(X_1)\frac{a_0(f_2)X_2^{k_2-1}}{(k_2-1)}\\
&&+\frac{a_0(f_1)X_1^{k_1-1}}{2(k_1-1)}\frac{a_0(f_2)X_2^{k_2-1}}{(k_2-1)}.
\end{align*}
On obtient notamment une formule simple, analogue à celle Rankin \ref{eqrank}, pour obtenir le polynôme des périodes à partir du polynôme des bipériodes. Pour tout $(f_1,f_2)\in M_{k_1}(\G)\times M_{k_2}(\G)$:
\begin{align*}
\lim_{x_1\to 0} x_1P_{f_1,f_2}(x_1,X_2)&=\frac{a_0(f_1)}{(k_1-1)}\left(P_{f_2}(X_2)-\frac{a_0(f_2)}{2(k_2-1)X_2}\right),\\
\text{et }\lim_{x_2\to \infty} x_2^{1-k_2}P_{f_1,f_2}(X_1,x_2)&=\frac{a_0(f_2)}{(k_2-1)}\left(P_{f_1}(X_1)-\frac{a_0(f_1)X_1^{k_1-1}}{2(k_1-1)}\right).
\end{align*}
Ainsi dès lors qu'une des deux formes n'est pas paraboliques nous avons un moyen simple de déduire le polynôme des périodes de la seconde. Dans le cas général, ceci nécessite d'effectuer une symétrisation et l'utilisation de formules de mélange.
\end{ex}

Considérons la famille des permutés comme définie dans la section \ref{secmel} par:
\begin{equation}
P_{f_1,...,f_n}^{\sigma}(X_1,...,X_n)=P_{f_{\sigma(1)},...,f_{\sigma(n)}}(X_{\sigma(1)},...,X_{\sigma(n)}),\text{ pour }\sigma\in\S_n.
\end{equation}
Elle est à valeur dans $\tilde{V}_k$. Et on peut vérifier les relations de mélange:

\begin{prop} Pour tout couple d'entiers $a,b$ tel que $a+b=n$, on a:
\begin{equation}
\sum_{\sigma\in\S_{a,b}}P^{\sigma}_{f_1,...,f_n}(X_1,...,X_n)=P_{f_1,...,f_a}(X_1,...,X_a)P_{f_{a+1},...,f_n}(X_{a+1},...,X_n).
\end{equation} 
De plus, considérons la permutation $(n,...,1)\in\S_n$ alors:
\begin{equation}
P_{f_1,...,f_n}|_{(S,...,S)}=P^{(n,...,1)}_{f_1,..,f_n}.
\end{equation}
\end{prop}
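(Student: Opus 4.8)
The plan is to reduce both identities to the already-established parabolic case applied to the \emph{dotted} forms $f_j^*$, and to transport the conclusion through the decomposition \eqref{defnopara}. The point of departure is that each $f_j^*$ satisfies $f_j^*|_S=f_j^*$ and vanishes at the two ends of the imaginary axis, so that for the dotted family the iterated Mellin integral over the ordered simplex $\{0<t_1<\dots<t_m\}$ converges on the strips $0<\Re(s_j)<k_j$ exactly as for cusp forms. Hence $P_{f_1^*,\dots,f_m^*}$ is a genuine simplicial integral, and the product-of-simplices identity used in Section~\ref{secmel} applies verbatim: the product of two ordered simplices in disjoint variables is, up to a null set, the disjoint union of the ordered simplices indexed by $\S_{p,q}$, and since the integrand $\prod_j f_j^*(it_j)(X_j-it_j)^{k_j-2}\,\d t_j$ factors over the coordinates, integrating over each piece produces one permuted polynomial $P^\sigma$. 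Equivalently, I would invoke \eqref{seriemelange} for the generating series attached to the $1$-forms $f_j^*(z)(X_j-z)^{k_j-2}\,\d z$. This gives the shuffle relation, and the reversal identity $P_{f_1^*,\dots,f_m^*}|_{(S,\dots,S)}=P^{(m,\dots,1)}_{f_1^*,\dots,f_m^*}$, for the dotted polynomials.

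Next I would feed the decomposition \eqref{defnopara} into both sides of the asserted shuffle relation. Read off \eqref{defnopara}, the extended polynomial is the degree-$n$ part of a product of three factors: an ``exponential'' in the boundary letters $\dfrac{a_0(f_j)}{(k_j-1)X_j}$ (the residues at $s_j=0$ of Proposition~\ref{propres}, weighted $1/a!$), the group-like middle series built from the dotted polynomials, and an ``exponential'' in the boundary letters $a_0(f_j)\dfrac{X_j^{k_j-1}}{k_j-1}$ (the residues at $s_j=k_j$, weighted $1/c!$). The boundary letters are single-variable, hence primitive for the shuffle coproduct, so the two exponential factors are automatically group-like; the middle factor is group-like by the dotted shuffle relation just proved. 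The product of group-like elements is group-like, and group-likeness is exactly the family of shuffle relations \eqref{seriemelange}; extracting the degree-$n$, bidegree-$(p,q)$ component of the coproduct identity yields precisely $\sum_{\sigma\in\S_{p,q}}P^\sigma_{f_1,\dots,f_n}=P_{f_1,\dots,f_p}\,P_{f_{p+1},\dots,f_n}$.

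For the order-reversal identity $P_{f_1,\dots,f_n}|_{(S,\dots,S)}=P^{(n,\dots,1)}_{f_1,\dots,f_n}$ I would argue directly on the generating-function definition of the extended polynomial. Applying $|_S$ in each variable sends the monomial $X_j^{k_j-m_j-1}$ to $(-1)^{k_j-m_j-1}X_j^{m_j-1}$, i.e. it exchanges the index $m_j$ with $\tilde m_j=k_j-m_j$; after relabelling this matches the substitution $s_j\mapsto k_j-s_j$ in the $\Lambda$-values. Because every $f_j^*|_S=f_j^*$, the extended $\Lambda$ still obeys the functional equation $\Lambda(f_1,\dots,f_n;s_1,\dots,s_n)=i^{\sum k_j}\Lambda(f_n,\dots,f_1;k_n-s_n,\dots,k_1-s_1)$, which converts the reversed $s$-arguments into the reversed form-order. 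Tracking the normalizing factors $i^{m_j}$, the $\Gamma$-quotients and the signs $(-1)^{k_j-m_j-1}$ then identifies the result with $P_{f_n,\dots,f_1}(X_n,\dots,X_1)=P^{(n,\dots,1)}_{f_1,\dots,f_n}$.

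The main obstacle is the regularized bookkeeping in the second paragraph: one must check that the factorial weights $1/a!$ and $1/c!$ in \eqref{defnopara} are exactly the symmetrization factors produced when the shuffles in $\S_{p,q}$ are grouped according to which indices land in the residue-at-$0$, middle, and residue-at-$k$ blocks, and that no spurious cross-terms arise pairing a residue-at-$0$ index of one factor with a residue-at-$k$ index of the other. I would organize this by partitioning each shuffle in $\S_{p,q}$ into its three induced sub-shuffles, using that the two boundary blocks contribute commuting monomials (so their shuffles collapse to the symmetric products matching the exponentials) while only the middle block requires the genuine simplicial shuffle of the dotted forms. A secondary technical point, handled by the residue computation of Proposition~\ref{propres} and the $\Gamma$-factor normalization, is that the limits $\lim_{s\to 0}$ defining the extended polynomial commute with these manipulations despite the poles of $\Lambda$.
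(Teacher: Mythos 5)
Your proof is correct and follows essentially the same route as the paper: the shuffle relation is obtained by expanding both sides through the decomposition (\ref{defnopara}), reducing the middle block to the already-proved parabolic case for the dotted forms $f_j^*$, and observing that the two boundary blocks are commutative so that their induced sub-shuffles collapse exactly onto the factorial weights $1/a!$ and $1/c!$ --- which is precisely the regrouping $\S_{a_1,a_2}\times\S_{b_1,b_2}\times\S_{c_1,c_2}\to\S_{\alpha_1,\alpha_2}$ carried out in the paper. For the reversal identity the paper likewise relies on $f_j^*|_S=f_j^*$ together with the exchange of $1/X_j$ and $X_{n-j+1}^{k_{n-j+1}-1}$ under $(S,\dots,S)$ and $(n,\dots,1)$, applied term by term in (\ref{defnopara}); your functional-equation phrasing at the level of the $\Lambda$-coefficients is an equivalent repackaging of that same observation.
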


\begin{proof} Notons $a_0^*(f)=\frac{a_0(f)}{k-1}$ pour les formes $f\in M_k(SL_2(\Z))$. On a:
\begin{align*}
&P_{f_1,...,f_{\alpha_1}}(X_1,...,X_{\alpha_1})P_{f_{{\alpha_1}+1},...,f_{\alpha_1+\alpha_2}}(X_{{\alpha_1}+1},...,X_{\alpha_1+\alpha_2})\\
&=\sum_{a_1+b_1+c_1=\alpha_1}\frac{1}{a_1!c_1!} \prod_{j=1}^{a_1} \frac{a_0^*(f_j)}{X_j}P_{f_{a_1+1}^*,...,f_{a_1+b_1}^*}(X_{a_1+1},...,X_{a_1+b_1})\prod_{j=a_1+b_1+1}^n a_0^*(f_j)X_j^{k_j-1}\\
&\sum_{a_2+b_2+c_2=\alpha_2}\frac{1}{a_2!c_2!} \prod_{j=\alpha_1+1}^{\alpha_1+a_2} \frac{a_0^*(f_j)}{X_j}P_{f_{\alpha_1+a_2+1}^*,...,f_{\alpha_1+a_2+b_2}^*}(X_{\alpha_1+a_2+1},...,X_{\alpha_1+a_2+b_2})\prod_{j=\alpha_1+a_2+b_2+1}^n a_0^*(f_j)X_j^{k_j-1}\\
&=\sum_{\substack{a_1+b_1+c_1=\alpha_1\\ a_2+b_2+c_2=\alpha_2}} 
\sum_{\sigma_1\in\S_{a_1,a_2}}\frac{1}{(a_1+a_2)!}\prod_{j=1}^{a_1+a_2}\frac{a_0^*(f_{\sigma_1(j)})}{X_{\sigma_1(j)}}\\
&\sum_{\sigma_2\in\S_{b_1,b_2}}P^{\sigma_2}_{f_{a_1+1},...,f_{a_1+b_1},f_{\alpha_1+a_2+1},...,f_{\alpha_1+a_2+b_2}}
\sum_{\sigma_3\in\S_{c_1,c_2}}\frac{1}{(c_1+c_2)!}\prod_{j=1}^{c_1+c_2}\frac{a_0^*(f_{\sigma_3(j)})}{X_{\sigma_3(j)}}\\
&=\sum_{\sigma\in\S_{\alpha_1,\alpha_2}}P^{\sigma}_{f_1,...,f_n}(X_1,...,X_n).
\end{align*}
Pour la deuxième relation, on peut identifier certain partie stable. En effet, $f_j^*|_S=f_j^*$ donne:
$$P_{f_{a+1}^*,...,f_{a+b}^*}|_{(S,...,S)}=P_{f_{a+1}^*,...,f_{a+b}^*}^{(b,...,1)}.$$
Puis il reste à remarquer que $1/X_j$ et $X_{n-j+1}^{k_{n-j+1}-1}$ sont échangés par $S$ et $(n,...,1)$.\\
Ainsi chaque terme de la somme (\ref{defnopara}) vérifie la relation donc le polynôme étendu des multipériodes aussi.
\end{proof}

%
%

\section{Calculs de bipériodes des formes génératrices de Cohen}

Nous déterminons certaines bipériodes d'un couple de formes modulaires $(F_{m_1}^{k_1},F_{m_2}^{k_2})$ de niveau $1$ et de poids $(k_1,k_2)$ quelconques. 
Nous nous limitons ici au cas $n=2$ ainsi qu'au niveau $1$ pour obtenir un énoncé de complexité raisonnable. Notre méthode s'adapte pour $n>2$ et pour des formes modulaires de niveaux quelconques.

On rappelle brièvement la définition de la familles des formes génératrices de Cohen.
Soient $k_1,k_2\geq 4$ des entiers pairs et $m_1,n_1,m_2$ et $n_2$ des entiers des bandes critiques respectives. C'est à dire:
$$1\leq m_j\leq k_j-1\text{ et }1\leq n_j\leq k_j-1\text{ pour }j=1,2.$$
Nous noterons $\tilde{m_j}=k_j-m_j$ et $\tilde{n_j}=k_j-n_j$ pour $j=1,2$.\par
On rappelle l'existence et l'unicité d'application $F_m^k\in S_k(\G)$ pour tout entier $1\leq m \leq k-1$ représentant les périodes, c'est-à-dire vérifiant:
$$L(f;m)=\langle f, F_m^k \rangle,\text{ pour tout }f\in S_k(\G).$$
Notons $\Gamma=P\G$ et rappelons la notation pour $\pm\mat{a}{b}{c}{d}\in\Gamma$: 
$$F_m[\pm\mat{a}{b}{c}{d}](z)=(az+b)^{-m}(cz+d)^{-\tilde{m}}.$$
Elle permet d'écrire la formule de Cohen, voir \cite{Cohen81}, de la Proposition \ref{propcoh}:
$$F_m=C_m\sum_{\gamma\in\Gamma}F_m[\gamma]\text{ avec }C_m=-\frac{(2i)^{k-2}i^m}{\pi\binom{k-2}{m-1}}.$$

\subsection{Enoncé et démonstration}

Le théorème ci-dessous est inspiré par une formule de Kohnen et Zagier \cite{KZ} du calcul de certaines périodes de ces formes génératrices (voir chapitre $1$).

\begin{thm}\label{calculprincipal}
Supposons que les entiers $m_1$, $n_1$, $m_2$ et $n_2$ vérifient les propriétés suivantes:
\begin{itemize}
\item Les entiers $m_1$ et $n_1$ sont de parités différentes.
\item Les entiers $m_2$ et $n_2$ sont de parités différentes.
\item Soit $n_1<m_1$, $n_1<\tilde{m_1}$, $n_2<m_2$ et $n_2<\tilde{m_2}$,\\
soit $m_1<n_1$, $\tilde{m_1}<n_1$, $m_2<n_2$ et $\tilde{m_2}<n_2$.
\end{itemize}
Alors on a:
\begin{align*}
\Lambda(F_{m_1}^{k_1},&F_{m_2}^{k_2};n_1,n_2)=\frac{i^{k_1+k_2}2^{k_1+k_2-2}}{(k_1-2)!(k_2-2)!}(Q(m_1,n_1,m_2,n_2)+i^{k_1}Q(\tilde{m_1},n_1,m_2,n_2)\\
&+i^{k_2}Q(m_1,n_1,\tilde{m_2},n_2)+i^{k_1+k_2}Q(\tilde{m_1},n_1,\tilde{m_2},n_2)+i^{k_1+k_2}Q(m_2,\tilde{n_2},m_1,\tilde{n_1})\\
&+i^{k_1}Q(\tilde{m_2},\tilde{n_2},m_1,\tilde{n_1})+i^{k_2}Q(m_2,\tilde{n_2},\tilde{m_1},\tilde{n_1})+Q(\tilde{m_2},\tilde{n_2},\tilde{m_1},\tilde{n_1})+R_2),
\end{align*}
où: $$\frac{Q(m_1,n_1,m_2,n_2)}{(\tilde{m_1}-1)!(n_1-1)!(\tilde{m_2}-1)!(n_2-1)!}$$
$$=\begin{cases}
\sum_{a=0}^{n_1-1}\sum_{b=0}^{m_2-1}(-1)^b\binom{m_2-1}{b}\binom{n_2-1+a}{a}\frac{\zeta(n_2-m_2+1+a+b,n_1-m_1+1-a-b)}{(2\pi)^{n_1+n_2-m_1-m_2+2}} &\text{si } n_1+n_2>m_1+m_2\\
C_{m_1}C_{m_2}J(m_2,m_1)&\text{si } n_1+n_2=m_1+m_2-2\\
 0&\text{sinon,}
 \end{cases}$$
avec :
$$\zeta(A,B)=\sum_{0<l_2<l_1} l_1^{-A}l_2^{-B},$$
$$J(\alpha,\beta)=\int_0^1 \d u\int_{-\infty}^{\infty}(1+iux)^{-\alpha}\d x\int_0^u\d v \int_{-\infty}^{\infty}(1+ivy)^{-\beta}\d y\in\frac{\pi^2}{2(\alpha-1)(\beta-1)}+\Q,$$
et enfin le nombre rationnel:
\begin{align*}
R_2&=1/2(\delta_{(n_1+n_2=2)}+i^{k_1}i^{k_2}\delta_{(\tilde{n_1}+\tilde{n_2}=2)})\frac{i^{m_1}\zeta(m_1)\zeta(\tilde{m_1})}{(k_1-1)\zeta(k_1)}\frac{i^{m_2}\zeta(m_2)\zeta(\tilde{m_2})}{(k_2-1)\zeta(k_2)}\\
&+L(F_{m_1},n_1)\delta_{(n_2=1)}\frac{i^{m_2}\zeta(m_2)\zeta(\tilde{m_2})}{(k_2-1)\zeta(k_2)}\\
&+L(F_{m_2},n_2)\delta_{(\tilde{n_1}=1)}\frac{i^{\tilde{m_1}}\zeta(m_1)\zeta(\tilde{m_1})}{(k_1-1)\zeta(k_1)}\\
&+\delta_{(\tilde{n_1}=1)}\frac{i^{\tilde{m_1}}\zeta(m_1)\zeta(\tilde{m_1})}{(k_1-1)\zeta(k_1)}\delta_{(n_2=1)}\frac{\zeta(m_2)\zeta(\tilde{m_2})}{(k_2-1)\zeta(k_2)}.
\end{align*}
\end{thm}

\begin{proof}
Notre démonstration s'inspire de celle de Kohnen et Zagier \cite{KZ}. Les hypothèses de parités et d'inégalités interviennent de manière essentielle dans le Lemme \ref{lemess}.
Commençons par remarquer qu'au vu des hypothèses et du résultat attendu, nous pouvons nous limiter au cas:
$$1\leq n_1<m_1\leq k_1/2\text{ et }1\leq n_2<m_2\leq k_2/2.$$
Pour cela, il suffit d'utiliser la formule:
$\Lambda(f,g;n_1,n_2)=i^{k_1}i^{k_2}\Lambda(g,f;\tilde{n_2},\tilde{n_1}),$
puis le fait que: $F_m^k=(-1)^mF_{\tilde{m}}^k$.\par

Commençons par décomposer $F_m$ suivant :
\begin{align*}
F_m(z)/C_m&=\sum_{bd\neq 0}F_m[\mat{a}{b}{c}{d}](z)+\sum_{N\in\Z} F_m[\mat{1}{0}{N}{1}](z)+\sum_{N\in\Z}F_m[\mat{N}{1}{-1}{0}](z)\\
&=\sum_{bd\neq 0}F_m[\mat{a}{b}{c}{d}](z)+F_{\tilde{m}}^*(z)+(-1)^mF_m^*(z),
\end{align*}
où nous définissons $F_m^*(z)=\sum_{N\in\Z}(Nz+1)^{-m}z^{-\tilde{m}}$.
Nous disposons des équivalents suivants en $0$ pour une variable réelle $u>0$. D'une part, on a:
$$F_m[\mat{a}{b}{c}{d}](iu)\longrightarrow_{u\to 0} b^{-m}d^{-\tilde{m}},\text{ pour }bd\neq 0.$$
D'autre part, la formule d'Hurwitz, (voir par exemple Lang \cite{Lang}):
$$\sum_{N\in\mathbb{Z}} (z+N)^{-m}=\frac{(-2i\pi)^m}{(m-1)!}\sum_{\beta>0} \beta^{m-1} \exp(2i\pi\beta z),$$
permet d'obtenir :
$$F_m^*(iu)\sim_{u\to 0}\frac{(-2\pi)^m}{(m-1)!}u^{-k}\exp(-2\pi/u).$$

Nous cherchons à calculer:
$$I=\int_0^{\infty} \int_0^{u_1}\sum_{\gamma_1\in\Gamma} \sum_{\gamma_2\in\Gamma} F_{m_1}[\gamma_1](iu_1)u_1^{n_1-1}F_{m_2}[\gamma_2](iu_2)u_2^{n_2-1}\d u_1\d u_2.$$
Soient $\gamma_1=\mat{a_1}{b_1}{c_1}{d_1}$ et $\gamma_2=\mat{a_2}{b_2}{c_2}{d_2}$ dans $\Gamma$ vérifiant $b_1d_1b_2d_2\neq 0$. Posons:
\begin{align*}
I(\gamma_1,\gamma_2)&=\int_0^{\infty}\int_0^{u_1} F_{m_1}[\gamma_1](iu_1)u_1^{n_1-1}F_{m_2}[\gamma_2](iu_2)u_2^{n_2-1}\d u_1\d u_2,\\
I_{m_1}(\gamma_2)&=\int_0^{\infty}\int_0^{u_1} F_{m_1}^*(iu_1)u_1^{n_1-1}F_{m_2}[\gamma_2](iu_2)u_2^{n_2-1}\d u_1\d u_2,\\
\text{et }I_{m_2}'(\gamma_1)&=\int_0^{\infty}\int_0^{u_1} F_{m_1}[\gamma_1](iu_1)u_1^{n_1-1}F_{m_2}^*(iu_2)u_2^{n_2-1}\d u_1\d u_2.
\end{align*}
Notons $\gamma_j^-=\mat{\phantom{-}a_j}{-b_j}{-c_j}{\phantom{-}d_j}$ pour $j=1,2$.

\begin{lem}\label{lemess}
Supposons que les entiers $n_1, m_1, n_2$ et $m_2$ vérifient les hypothèses suivantes:
\begin{itemize}
\item Les entiers $m_1+n_1$ et $m_2+n_2$ sont impairs,
\item les inégalités $1\leq n_1<m_1\leq \tilde{m_1}\leq k_1-1$,
\item et $1\leq n_2< m_2\leq \tilde{m_2} \leq k_2-1$. 
\end{itemize}
Alors, on a les annulations: 
$$I(\gamma_1,\gamma_2)+I(\gamma_1,\gamma_2^{-})=0,$$
$$I_{m_1}(\gamma_2)+I_{m_1}(\gamma_2^-)=0\text{ et }I_{m_2}'(\gamma_1)+I_{m_2}'(\gamma_1^-)=0.$$
\end{lem}

\begin{proof}
On dispose des relations $F_m[\gamma^-](z)=(-1)^mF_m[\gamma](-z)$ et $F_m^*(z)=(-1)^mF_m^*(-z)$. Ainsi on peut écrire d'une part:
\begin{align*}
I(\gamma_1,\gamma_2)+I(\gamma_1,\gamma_2^{-})&=\big(\int_0^{\infty}\int_0^1+(-1)^{m_2+n_2-1}\int_0^{\infty}\int_{-1}^0\big)F_{m_1}[\gamma_1](iu)u^{n_1+n_2-1}F_{m_2}[\gamma_2](iut)t^{n_2-1}\d u\d t\\
&=\int_{0}^{\infty}\int_{-1}^1F_{m_1}[\gamma_1](iu)F_{m_2}[\gamma_2](iut)u^{n_1+n_2-1}t^{n_2-1}\d u\d t.
\end{align*}

Soit $\varepsilon >0$ un paramètre réel, nous découpons l'intégrale précédente suivant la variable $u$:
$$I(\gamma_1,\gamma_2)+I(\gamma_1,\gamma_2^{-})=\left(\int_{0}^{\varepsilon}+\int_{\varepsilon}^{\infty}\right)\int_{-1}^1 F_{m_1}[\gamma_1](iu)F_{m_2}[\gamma_2](iut)u^{n_1+n_2-1}t^{n_2-1}\d u\d t=T_1^{\varepsilon}+T_2^{\varepsilon},$$
où on a, d'une part:
\begin{align*}
T_1^{\varepsilon}&=\int_0^{\varepsilon}\int_{-1}^1 F_{m_1}[\gamma_1](iu)F_{m_2}[\gamma_2](iut)u^{n_1+n_2-1}t^{n_2-1}\d u\d t\\
&=\int_0^1\int_{-1}^1 \varepsilon^{n_1+n_2}F_{m_1}[\gamma_1](i\varepsilon u)F_{m_2}[\gamma_2](i\varepsilon ut)u^{n_1+n_2-1}t^{n_2-1}\d u\d t\\
&\sim_{\varepsilon\to 0}\int_0^1\int_{-1}^1 \varepsilon^{n_1+n_2}b_1^{-m_1}d_1^{-\tilde{m_1}}b_2^{-m_2}d_2^{-\tilde{m_2}}u^{n_1+n_2-1}t^{n_2-1}\d u\d t\\
&\stackrel{\varepsilon\to 0}{\longrightarrow} 0\text{ car }n_1+n_2>0\text{ et }n_2>0,
\end{align*}

et d'autre part, après changement de variables:
\begin{align*}
T_2^{\varepsilon}&=\int_{\varepsilon}^{\infty}\int_{-1}^1 F_{m_1}[\gamma_1](iu)F_{m_2}[\gamma_2](iut)u^{n_1+n_2-1}t^{n_2-1}\d u\d t\\
&=\int_1^{\infty}\int_{-1/\varepsilon}^{1/\varepsilon} \varepsilon^{n_1}F_{m_1}[\gamma_1](i\varepsilon u)F_{m_2}[\gamma_2](iut)u^{n_1+n_2-1}t^{n_2-1}\d u\d t.
\end{align*}
L'hypothèse $b_2d_2\neq 0$ permet d'observer que pour tout réel $u>1$ les pôles de $t\mapsto F_{m_2}[\gamma_2](iut)$ sont $ib_2/ua_2$ et $id_2/uc_2$. Ils sont dans la même composante connexe de $\C-\R$ car $a_2d_2-b_2c_2=1$ impose $a_2b_2c_2d_2\geq 0$. Ainsi le théorème des résidus permet de substituer l'intégrale sur le segment $[-1/\varepsilon,1/\varepsilon]$ par l'intégrale le long d'un arc de cercle de rayon $1/\varepsilon$:
$$T_2^{\varepsilon}=\int_1^{\infty}\int_{0}^{\pi} \varepsilon^{n_1-n_2}F_{m_1}[\gamma_1](i\varepsilon u)F_{m_2}[\gamma_2](iue^{i\theta}/\varepsilon)u^{n_1+n_2-1}ie^{n_2 i\theta}\d u\d \theta.$$
Lorsque $\varepsilon\to 0$, nous obtenons:
$$T_2^{\varepsilon}\sim_{\varepsilon\to 0} \int_1^{\infty}\int_{0}^{\pi} \varepsilon^{n_1-n_2}b_1^{-m_1}d_1^{-\tilde{m_1}}C_2(iue^{i\theta}/\varepsilon)^{-\alpha_2}u^{n_1+n_2-1}ie^{n_2 i\theta}\d u\d \theta,$$
où $\alpha_2\in\{k_2,m_2,\tilde{m_2}\}$ suivant l'annulation de $a_2$ et $c_2$ et où $C_2$ est une constante non nulle.
On a bien convergence de l'intégrale suivant les deux variables car $n_1+n_2<m_2\leq\tilde{m_2}<k_2$ et l'exposant de $\varepsilon$ est positif car $m_2-n_2+n_1>0$ donc la limite pour $\varepsilon\to 0$ est $0$. Ceci montre bien que: $I(\gamma_1,\gamma_2)+I(\gamma_1,\gamma_2^{-})=0$.

On calcule de façon analogue :
\begin{align*}
I_{m_1}(\gamma_2)+I_{m_1}(\gamma_2^{-})&=\big(\int_0^{\infty}\int_0^1+(-1)^{m_2+n_2-1}\int_0^{\infty}\int_{-1}^0\big)F_{m_1}^*(iu)u^{n_1+n_2-1}F_{m_2}[\gamma_2](iut)t^{n_2-1}\d u\d t\\
&=\big(\int_{0}^{\varepsilon}+\int_{\varepsilon}^{\infty}\big)\int_{-1}^1F_{m_1}^*(iu)F_{m_2}[\gamma_2](iut)u^{n_1+n_2-1}t^{n_2-1}\d u\d t=T_3^{\varepsilon}+T_4^{\varepsilon},
\end{align*}
pour un paramètre réel $\varepsilon >0$. D'une part, on calcule
\begin{align*}
T_3^{\varepsilon}&=\int_0^1\int_{-1}^1\varepsilon^{n_1+n_2}F_{m_1}^*(i\varepsilon u)F_{m_2}[\gamma_2](i\varepsilon ut)u^{n_1+n_2-1}t^{n_2-1}\d u\d t\\
&\sim_{\varepsilon\to 0}\varepsilon^{n_1+n_2}\int_0^1\int_{-1}^1\frac{(-2\pi)^{m_1}}{(m_1-1)!}(u\varepsilon)^{-k_1}\exp(-2\pi/u\varepsilon)b_2^{-m_2}d_2^{-\tilde{m_2}}u^{n_1+n_2-1}t^{n_2-1}\d u\d t\\
&\stackrel{\varepsilon\to 0}{\longrightarrow} 0\text{ sans condition car }\exp(-2\pi/u\varepsilon)<\exp(-2\pi/\varepsilon)\text{ pour }0<u<1.
\end{align*}
Et d'autre part, on a:
\begin{align*}
T_4^{\varepsilon}&=\int_{\varepsilon}^{\infty}\int_{-1}^1 F_{m_1}^*(iu)F_{m_2}[\gamma_2](iut)u^{n_1+n_2-1}t^{n_2-1}\d u\d t\\
&=\int_1^{\infty}\int_{-1/\varepsilon}^{1/\varepsilon} \varepsilon^{n_1}F_{m_1}^*(i\varepsilon u)F_{m_2}[\gamma_2](iut)u^{n_1+n_2-1}t^{n_2-1}\d u\d t\\
&=\int_1^{\infty}\int_{0}^{\pi} \varepsilon^{n_1-n_2}F_{m_1}^*(i\varepsilon u)F_{m_2}[\gamma_2](iue^{i\theta}/\varepsilon)u^{n_1+n_2-1}ie^{n_2 i\theta}\d u\d \theta\\
&\sim_{\varepsilon\to 0} \int_1^{\infty}\int_{0}^{\pi} \varepsilon^{n_1-n_2}\frac{(-2\pi)^{m_1}}{(m_1-1)!}(u\varepsilon)^{-k_1}\exp(-2\pi/u\varepsilon)C_2(iue^{i\theta}/\varepsilon)^{-\alpha_2}u^{n_1+n_2-1}ie^{n_2 i\theta}\d u\d \theta,
\end{align*}
où on a, de même, $\alpha_2\in\{k_2,m_2,\tilde{m_2}\}$ et $C_2>0$ une constante et tend vers $0$ car on a supposé: $n_1+n_2-k_1-m_2<0.$

Enfin la dernière annulation peut se déduire en échangeant les couples $(m_1,n_1)$ et $(m_2,n_2)$ dans ce qui précède et en notant que lorsque $m_1$ et $n_1$ sont de parités opposées et vérifiant $1\leq n_1<m_1\leq\tilde{m_1}\leq k_1-1$, Kohnen et Zagier montrent dans \cite{KZ} de manière analogue que:
$$\int_0^{\infty}F_{m_1}[\gamma_1](iu)u^{n_1-1}du+\int_0^{\infty}F_{m_1}[\gamma_1^-](iu)u^{n_1-1}du=0.$$
\end{proof}

Pour démontrer le théorème nous allons comparer $I$ à $\mathcal{S}=(-1)^{m_1+m_2}S(m_1,m_2)+(-1)^{m_1}S(m_1,\tilde{m_2})+(-1)^{m_2}S(\tilde{m_1},m_2)+S(\tilde{m_1},\tilde{m_2})$ où:
$$S(m_1,m_2)=\int_0^{\infty} \int_0^{u_1} F_{m_1}^*(iu_1)u_1^{n_1-1}F_{m_2}^*(iu_2)u_2^{n_2-1}\d u_1\d u_2.$$
En effet, on a d'une part:
\begin{multline*}
I=\int_0^{\infty}\int_0^{u_1} \left(\sum_{b_1d_1\neq 0} F_{m_1}[\mat{a_1}{b_1}{c_1}{d_1}](iu_1)+(-1)^{m_1}F_{m_1}^*(iu_1)+F_{\tilde{m_1}}^*(iu_1)\right)u_1^{n_1-1}\\
\left(\sum_{b_2d_2\neq 0} F_{m_2}[\mat{a_2}{b_2}{c_2}{d_2}](iu_2)+(-1)^{m_2}F_{m_2}^*(iu_2)+F_{\tilde{m_2}}^*(iu_2)\right)u_2^{n_2-1}\d u_1\d u_2.
\end{multline*}
Et d'autre part, d'après le Lemme \ref{lemess}:
\begin{multline*}
\mathcal{S}=\sum_{b_1d_1b_2d_2\neq 0} I(\mat{a_1}{b_1}{c_1}{d_1},\mat{a_2}{b_2}{c_2}{d_2})+\sum_{b_2d_2\neq 0}\left((-1)^{m_1}I_{m_1}(\mat{a_2}{b_2}{c_2}{d_2})+I_{\tilde{m_1}}(\mat{a_2}{b_2}{c_2}{d_2})\right)+\sum_{b_1d_1\neq 0}\left((-1)^{m_2}I_{m_2}'(\mat{a_1}{b_1}{c_1}{d_1})+I_{\tilde{m_2}}'(\mat{a_1}{b_1}{c_1}{d_1})\right)\\
+(-1)^{m_1+m_2}S(m_1,m_2)+(-1)^{m_1}S(m_1,\tilde{m_2})+(-1)^{m_2}S(\tilde{m_1},m_2)+S(\tilde{m_1},\tilde{m_2}).
\end{multline*}

Il s'agit ainsi d'échanger l'intégration et la sommation dans $I$. Soit un paramètre réel $\varepsilon>0$, posons:
$$I_{\varepsilon}= \int_{\varepsilon}^{1/\varepsilon} \int_{\varepsilon}^{u_1}\sum_{\gamma_1\in\Gamma} \sum_{\gamma_2\in\Gamma} F_{m_1}[\gamma_1](iu_1)u_1^{n_1-1}F_{m_2}[\gamma_2](iu_2)u_2^{n_2-1}\d u_1\d u_2.$$
Nous pouvons représenter ce domaine d'intégration sur la figure suivante:\\ \\
\begin{tabular}{ll}
\multirow{10}*{
\setlength{\unitlength}{0.8cm}
\begin{picture}(8,6)(-1,-1)
\put(0,-1){\vector(0,1){6}}
\put(-1,0){\vector(1,0){6.5}}
\put(0,0){\line(1,1){5}}
\put(1,1){\line(1,0){4}}
\put(1,1){\line(0,-1){1}}
\put(4,4){\line(0,-1){4}}
\put(4,4){\line(1,0){1}}
\put(-0.5,-0.5){0}
\put(-0.5,1){$\varepsilon$}
\put(-0.8,4){$1/\varepsilon$}
\put(-0.7,4.8){$u_2$}
\put(1,-0.5){$\varepsilon$}
\put(4,-0.5){$1/\varepsilon$}
\put(5,-0.5){$u_1$}
\put(0.3,0.2){$D_1^{\varepsilon}$}
\put(2.2,0.2){$D_2^{\varepsilon}$}
\put(4.5,0.2){$D_3^{\varepsilon}$}
\put(4.5,2.2){$D_4^{\varepsilon}$}
\put(4.5,4.2){$D_5^{\varepsilon}$}
\put(1.5,1.5){\line(1,-1){0.5}}
\put(2,2){\line(1,-1){1}}
\put(2.5,2.5){\line(1,-1){1.5}}
\put(3,3){\line(1,-1){1}}
\put(3.5,3.5){\line(1,-1){0.5}}
\end{picture}
} & \\
& où nous notons les domaines:\\
& $D_1^{\varepsilon}=\{0<u_2<u_1<\varepsilon\},$\\
& $D_2^{\varepsilon}=\{\varepsilon<u_1<1/\varepsilon;0<u_2<\varepsilon\},$\\
& $D_3^{\varepsilon}=\{1/\varepsilon<u_1;0<u_2<\varepsilon\},$\\
& $D_4^{\varepsilon}=\{1/\varepsilon<u_1;\varepsilon<u_2<1/\varepsilon\},$\\
& et $D_5^{\varepsilon}=\{1/\varepsilon<u_2<u_1\}.$\\
& \\
& \\
& \\
\end{tabular}

La convergence normale nous permet d'intervertir dans ce cas:
$$I_{\varepsilon}= \sum_{\gamma_1\in\Gamma} \sum_{\gamma_2\in\Gamma}\int_{\varepsilon}^{1/\varepsilon} \int_{\varepsilon}^{u_1} F_{m_1}[\gamma_1](iu_1)u_1^{n_1-1}F_{m_2}[\gamma_2](iu_2)u_2^{n_2-1}\d u_1\d u_2.$$
Le terme d'erreur $I-\mathcal{S}$ est donc la limite pour $\varepsilon$ tendant vers $0$ de:
$$I-I_{\varepsilon}=\int\int_{D_1^{\varepsilon}\cup D_2^{\varepsilon}\cup D_3^{\varepsilon}\cup D_4^{\varepsilon}\cup D_5^{\varepsilon}}\sum_{\gamma_1\in\Gamma} \sum_{\gamma_2\in\Gamma} F_{m_1}[\gamma_1](iu_1)u_1^{n_1-1}F_{m_2}[\gamma_2](iu_2)u_2^{n_2-1}\d u_1\d u_2.$$
Notons pour $j=1,...5$, ces limites:
$$S_j=\lim_{\varepsilon\to 0}S_j^{\varepsilon}\text{ avec }S_j^{\varepsilon}=\int\int_{D_j^{\varepsilon}}\sum_{\gamma_1\in\Gamma} \sum_{\gamma_2\in\Gamma} F_{m_1}[\gamma_1](iu_1)u_1^{n_1-1}F_{m_2}[\gamma_2](iu_2)u_2^{n_2-1}\d u_1\d u_2.$$
Les intégrales $S_2^{\varepsilon}$, $S_3^{\varepsilon}$ et $S_4^{\varepsilon}$ peuvent se calculer en scindant suivant les deux variables d'intégration. Le résultat obtenu est ainsi une conséquence directe du calcul effectué dans le cas du calcul des périodes d'une forme:
$$\lim_{\varepsilon\to 0}\left(\int_0^{\varepsilon}\sum_{bd\neq 0}F_m[\mat{a}{b}{c}{d}](iu)u^{n-1}du\right)=\frac{\zeta(m)\zeta(k-m)}{\zeta(k)}\frac{\delta_{(n=1)}\pi}{k-1},$$
$$\text{et }\lim_{\varepsilon\to 0}\left(\int_0^{\varepsilon}F_m^*(iu)u^{n-1}du\right)=\frac{\delta_{(n-\tilde{m}=1)}\pi}{m-1}.$$
Ceci permet d'obtenir après multiplication par $C_{m_1}C_{m_2}$, on obtient le nombre rationnel:
$$C_{m_1}C_{m_2}(S_2+S_3+S_4)=R_2.$$\par

Les intégrales sur $D_1$ et $D_5$  se calculent toutes les deux de la même manière après changement de variables $(u_1,u_2)\to(1/u_2,1/u_1)$ et utilisation de la modularité. En effet:
\begin{align*}
S_5^{\varepsilon}&=\int_{1/\varepsilon}^{\infty}\int_{1/\varepsilon}^{u_1}F_{m_1}(iu_1)u_1^{n_1-1}F_{m_2}(iu_2)u_2^{n_2-1}\d u_1\d u_2\\
&=i^{k_1+k_2}\int_{0}^{\varepsilon}\int_{u_1}^{\varepsilon}F_{m_1}(iu_1)u_1^{\tilde{n_1}-1}F_{m_2}(iu_2)u_2^{\tilde{n_2}-1}\d u_1\d u_2\\
&=i^{k_1+k_2}\int_{0}^{\varepsilon}\int_{0}^{u_2}F_{m_2}(iu_2)u_2^{\tilde{n_2}-1}F_{m_1}(iu_1)u_1^{\tilde{n_1}-1}\d u_2\d u_1.
\end{align*}\par
Le calcul de l'intégrale $S_1^{\varepsilon}$ se fait par une disjonction suivant l'annulation des produits $b_1d_1$ et $b_2d_2$.
En effet, on dispose du résultat général pour des entiers $m$ et $n$ du segment $\llbracket 1,k-1\rrbracket$ et tout réel $u>0$:
\begin{align*}
u^{n-1}\varepsilon^{n}\sum_{bd=0} F_m[\gamma](iu\varepsilon)&=u^{n-1}\varepsilon^n\sum_{N\in\Z}(Niu\varepsilon+1)^{-m}(-iu\varepsilon)^{-\tilde{m}}+\sum_{N\in\Z}(iu\varepsilon)^{-m}(Niu\varepsilon+1)^{-\tilde{m}}\\
&\stackrel{\varepsilon\to 0}{\longrightarrow} i^{-m}\int_{-\infty}^{\infty}\delta_{(n-\tilde{m}=1)}i^k(1+iux)^{-m}+\delta_{(n-m=1)}(1+iux)^{-\tilde{m}}\d x.
\end{align*}
La limite provenant de l'expression en somme de Riemann de l'intégrale. Et d'autre part:
\begin{align*}
\varepsilon^{n}\sum_{bd\neq 0} F_m[\gamma](iu\varepsilon)
&=\varepsilon^n\sum_{b\wedge d=1}\sum_{N\in\mathbb{Z}}((a+Nb)iu\varepsilon+b)^{-m}((c+Nd)iu\varepsilon+d)^{-\tilde{m}}\\
&=\varepsilon^{n}\sum_{b\wedge d=1}b^{-m}d^{-\tilde{m}}\sum_{N\in\mathbb{Z}}((a/b+N)iu\varepsilon+1)^{-m}((c/d+N)iu\varepsilon+1)^{-\tilde{m}}\\
&\stackrel{\varepsilon\to 0}{\longrightarrow} \delta_{(n=1)}\frac{\zeta(m)\zeta(k-m)}{\zeta(k)}\int_{-\infty}^{\infty}(1+iux)^{-k}\d x.
\end{align*}
Ici on a commencé par décomposer l'espace $P\G$ suivant ses classes à droite modulo $\Gamma_{\infty}=\{\mat{1}{N}{0}{1};N\in\Z\}$. Puis on a à nouveau mis en évidence une somme de Riemann.
Ceci nous donne ainsi les neuf termes suivants. On a:
\begin{align*}
S_1&=\delta_{(n_1-\tilde{m_1}=1)}\delta_{(n_2-\tilde{m_2}=1)}i^{\tilde{m_1}+\tilde{m_2}}J(m_1,m_2)
+\delta_{(n_1-\tilde{m_1}=1)}\delta_{(n_2-m_2=1)}i^{\tilde{m_1}-m_2}J(m_1,\tilde{m_2})\\
&+\delta_{(n_1-m_1=1)}\delta_{(n_2-\tilde{m_2}=1)}i^{-m_1+\tilde{m_2}}J(\tilde{m_1},m_2)
+\delta_{(n_1-m_1=1)}\delta_{(n_2-m_2=1)}i^{-m_1-m_2}J(\tilde{m_1},\tilde{m_2})\\
&+\delta_{(n_1=1)}Z'(m_1)\left[\delta_{(n_2-\tilde{m_2}=1)}i^{\tilde{m_2}}J(k_1,m_2)+\delta_{(n_2-m_2=1)}i^{-m_2}J(k_1,\tilde{m_2})\right]\\
&+\delta_{(n_2=1)}Z'(m_2)\left[\delta_{(n_1-\tilde{m_1}=1)}i^{\tilde{m_1}}J(m_1,k_2)+\delta_{(n_1-m_1=1)}i^{-m_1}J(\tilde{m_1},k_2)\right]\\
&+\delta_{(n_1=1)}\delta_{(n_2=1)}Z'(m_1)Z'(m_2)J(k_1,k_2),
\end{align*}

où nous avons noté: $Z'(m)=\frac{\zeta(m)\zeta(k-m)}{\zeta(k)}$ (rationnel si $m$ est un entier pair).
\end{proof}

Il reste ainsi à calculer $\mathcal{S}$ ou plus simplement après utilisation des symétries:
$$S(m_1,m_2)=\int_0^{\infty} \int_0^{u_1} F_{m_1}^*(iu_1)u_1^{n_1-1}F_{m_2}^*(iu_2)u_2^{n_2-1}\d u_1\d u_2=\Lambda(F_{m_1}^*,F_{m_2}^*;n_1,n_2).$$
Afin d'utiliser plus simplement la formule d'Hurwitz, à savoir:
$$\sum_{N\in\mathbb{Z}} (z+N)^{-m}=\frac{(-2i\pi)^m}{(m-1)!}\sum_{\beta>0} \beta^{m-1} \exp(2i\pi\beta z).$$
Notons $F_m^0=\sum_{N\in\Z}F_m[\mat{1}{N}{0}{1}]$. On a $F_m^*=(-1)^m\sum_{N\in\Z}F_m[\mat{N}{1}{-1}{0}]$ et ainsi on remarque que :
$$F_m^0(-1/(iu))=(iu)^kF_m^*(iu).$$

\begin{lem}
On a:
$$\frac{S(m_1,m_2)}{(\tilde{n_1}-1)!(\tilde{n_2}-1)!}=\sum_{j=0}^{\tilde{n_2}-1}\binom{\tilde{n_1}-1+j}{j} \frac{L(F_{m_2}^0,F_{m_1}^0;\tilde{n_2}-j,\tilde{n_1}+j)}{(2\pi)^{\tilde{n_1}+\tilde{n_2}}},$$
et pour tout entier $M_1$ et $M_2$, on a:
$$L(F_{m_2}^0,F_{m_1}^0;M_2,M_1)=\frac{(-2i\pi)^{m_1+{m_2}}}{({m_1}-1)!({m_2}-1)!} Z(M_2-{m_2}+1,1-{m_1},M_1),$$
où : $Z(a,b,c)=\sum_{\alpha>0}\sum_{\beta>0}\alpha^{-a}\beta^{-b}(\alpha+\beta)^{-c}$.
\end{lem}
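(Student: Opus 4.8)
The plan is to establish the two displayed identities by separate and essentially elementary computations: the first reduces $S(m_1,m_2)$ to completed double Dirichlet values through an inversion of the two integration variables, while the second is a direct substitution of the Hurwitz expansion into the defining series. Both rest on the observation, recalled just before the statement, that $F_m^0(-1/(iu))=(iu)^kF_m^*(iu)$, equivalently $F_m^*(iu)=i^{-k}u^{-k}F_m^0(i/u)$, which exchanges the damped form $F_m^*$ (controlled at $0$) with the Hurwitz form $F_m^0$ (decaying at $i\infty$).

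For the first identity I would start from $S(m_1,m_2)=\int_{0<u_2<u_1}F_{m_1}^*(iu_1)u_1^{n_1-1}F_{m_2}^*(iu_2)u_2^{n_2-1}\d u_1\d u_2$ and apply the change of variables $(u_1,u_2)\mapsto(1/u_1,1/u_2)$. This preserves the simplex (turning $0<u_2<u_1$ into $0<w_1<w_2$ with $w_j=1/u_j$), and after collecting the weight factors $(iu_j)^{-k_j}$ and the Jacobian it replaces each $F_{m_j}^*$ by $F_{m_j}^0$ and converts each exponent $n_j-1$ into $\tilde n_j-1$, since $k_j+1-n_j-2=\tilde n_j-1$. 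Up to the constant $i^{-k_1-k_2}$, the outcome is the iterated Mellin transform $\Lambda(F_{m_1}^0,F_{m_2}^0;\tilde n_1,\tilde n_2)$. I would then invoke the Mellin-to-Dirichlet conversion of Proposition \ref{prop24} in the case $n=2$, which applies verbatim to $F_m^0$ since that proof only uses an absolutely convergent $q$-expansion with vanishing constant term; this produces exactly the binomial coefficients $\binom{\tilde n_1-1+j}{j}$ and the denominators $(\tilde n_1-1)!(\tilde n_2-1)!(2\pi)^{\tilde n_1+\tilde n_2}$ of the statement. The two forms appear in the reversed reading order $L(F_{m_2}^0,F_{m_1}^0;\tilde n_2-j,\tilde n_1+j)$, which is the labelling convention induced by the inversion and must be kept consistent with the second formula.

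For the second identity I would insert the Hurwitz coefficients $a_l(F_m^0)=\frac{(-2i\pi)^m}{(m-1)!}\,l^{m-1}$ into the double Dirichlet series, factor out $\frac{(-2i\pi)^{m_1+m_2}}{(m_1-1)!(m_2-1)!}$, and read off the remaining sum over $l_1,l_2>0$ as a Tornheim-type series by matching the three exponents attached to $l_1$, $l_2$ and $l_1+l_2$; this yields $Z(M_2-m_2+1,1-m_1,M_1)$. I would emphasize that the arguments $1-m_1$ and $1-m_2$ are nonpositive, so those factors are genuine monomials and convergence is dictated by the single large exponent, which is precisely why only the indicated ranges of $(M_1,M_2)$ contribute.

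The hard part will not be the algebra but the analytic justification. The inversion $u\mapsto1/u$ and the interchange of the conditionally convergent $N$-summation of the Hurwitz formula with the integration must be legitimated, and the integer arguments $(n_1,n_2)$, $(\tilde n_1,\tilde n_2)$ generally lie outside the domain of absolute convergence, so both identities are to be understood after meromorphic continuation. Here the parity hypotheses and the inequalities among $n_j$, $m_j$ and $\tilde m_j$ inherited from Lemma \ref{lemess} are decisive: they force the boundary contributions created by the inversion to vanish, so no residual terms survive. A last point of vigilance is the bookkeeping of the constant $i^{\pm(k_1+k_2)}$ together with the signs $(-1)^{m_j}$ separating $F_{m_j}^*$ from $F_{m_j}^0$, which must combine to the trivial constant appearing in the statement.
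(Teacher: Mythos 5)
Votre démonstration suit essentiellement la même voie que celle du texte : inversion $u\mapsto 1/u$ pour remplacer $F_m^*$ par $F_m^0$ et ramener $S(m_1,m_2)$ à une transformée de Mellin itérée, conversion en valeurs $L$ par la Proposition~\ref{prop24}, puis substitution de la formule d'Hurwitz pour identifier la série de Tornheim $Z(a,b,c)$ (le texte effectue cette dernière étape via la représentation intégrale itérée de $L^*$ plutôt que directement sur la série de Dirichlet, mais c'est le même calcul). Les points de vigilance que vous signalez --- ordre des deux formes après inversion, constante $i^{\pm(k_1+k_2)}$ et signes $(-1)^{m_j}$ --- sont précisément ceux que la preuve du texte traite.
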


\begin{proof}
La définition de $F_m^0$ permet d'obtenir la réduction après changement des variables $(u_1,u_2)\to (1/u_2,1/u_1)$: 
$$S(m_1,m_2)=\Lambda(F_{m_1}^*,F_{m_2}^*;n_1,n_2)=(-1)^{m_1+m_2}i^{k_1+k_2}\Lambda(F_{m_2}^0,F_{m_1}^0;\tilde{n_2},\tilde{n_1}).$$\par
Pour simplifier les calculs nous rappelons que la connaissance des fonctions $L$ est équivalente à celle des fonctions $\Lambda$ d'après la Proposition \ref{prop24}. Puis en substituant par la formule d'Hurwitz, on obtient par calcul direct:
\begin{align*}
\frac{L(F_{m_2}^0,F_{m_1}^0;M_2,M_1)}{(2\pi)^{M_1+M_2}}&=\sum_{\alpha,\beta>0}\int_0^{\infty}\d u_1...\int_{u_{M_1-1}}^{\infty}\frac{(-2i\pi)^{m_1}}{(m_1-1)!}\alpha^{{m_1}-1}\exp(-2\pi\alpha u_{M_1})\d u_{M_1}\\
&\int_{u_{M_1}}^{\infty}dv_1...\int_{v_{M_2-1}}^{\infty}\frac{(-2i\pi)^{{m_2}}}{({m_2}-1)!}\beta^{{m_2}-1}\exp(-2\pi\beta v_{M_2})dv_{M_2}\\
&=\frac{(-2i\pi)^{{m_1}+{m_2}}}{(2\pi)^{M_1+M_2}({m_1}-1)!({m_2}-1)!} Z(M_2-{m_2}+1,1-{m_1},M_1).
\end{align*}
Les séries $Z(a,b,c)$ convergent pour tous les entiers vérifiant $a+b+c>2$. 
\end{proof}

Zagier introduit les nombres $Z(a,b,c)$ dans \cite{Za94} et s'intéresse à leurs algébricités lorsque le poids $a+b+c$ est impair. Nous avons ici au contraire un poids $\tilde{n_1}+\tilde{n_2}-m_1-m_2+2$ pair. Pourtant étant donné que $b=1-{m_1}\leq 0$, on dispose d'une expression en fonction des valeurs multiples de zêta.

\begin{prop}
Soit $n\in\mathbb{Z}_{\geq 0}$ et $a,b\in\mathbb{Z}$ vérifiant $a+b-n>2$ alors:
$$Z(a,-n,b)=\sum_{j=0}^n (-1)^j \binom{n}{j} \zeta(b-n+j,a-j).$$
\end{prop}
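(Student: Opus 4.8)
The plan is to reduce the identity to the elementary expansion of $\beta^{n}$ by the binomial theorem, applied after writing $\beta=(\alpha+\beta)-\alpha$, followed by a change of summation index. First I would record the definition
\[
Z(a,-n,b)=\sum_{\alpha,\beta>0}\alpha^{-a}\beta^{n}(\alpha+\beta)^{-b},
\]
and observe that this series has positive terms, so that its convergence is absolute and controlled by the hypothesis $a+b-n>2$ (together with the requirement $b-n>1$, which is forced by summing over $\beta$ at fixed $\alpha$ and is implicit in the convergence of the left-hand side). This absolute convergence is what will license the rearrangement below.

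The key algebraic step is the identity $\beta^{n}=\bigl((\alpha+\beta)-\alpha\bigr)^{n}=\sum_{j=0}^{n}(-1)^{j}\binom{n}{j}\alpha^{j}(\alpha+\beta)^{n-j}$. Substituting this into the summand gives
\[
\alpha^{-a}\beta^{n}(\alpha+\beta)^{-b}=\sum_{j=0}^{n}(-1)^{j}\binom{n}{j}\,\alpha^{-(a-j)}(\alpha+\beta)^{-(b-n+j)}.
\]
Since the sum over $j$ is finite, I would interchange it with the double summation over $(\alpha,\beta)$ and then reindex by $l_{1}=\alpha+\beta$ and $l_{2}=\alpha$. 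As $\alpha,\beta$ run over the positive integers, $(l_{1},l_{2})$ runs bijectively over the pairs with $0<l_{2}<l_{1}$, and the $j$-th inner sum becomes $\sum_{0<l_{2}<l_{1}}l_{1}^{-(b-n+j)}l_{2}^{-(a-j)}=\zeta(b-n+j,a-j)$ by the very definition of the double zeta value. This yields exactly $Z(a,-n,b)=\sum_{j=0}^{n}(-1)^{j}\binom{n}{j}\zeta(b-n+j,a-j)$, with the signs and binomial coefficients matching on the nose.

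The only genuine point requiring care — and the main obstacle — is the legitimacy of interchanging the finite $j$-sum with the infinite double sum, i.e. the convergence of each of the $n+1$ double zeta values $\zeta(b-n+j,a-j)$ individually. Each has weight $(b-n+j)+(a-j)=a+b-n>2$, so the weight condition is uniform in $j$; what must be checked is that the first argument exceeds $1$, the worst case being $j=0$, which demands $b-n>1$. This is precisely the condition already needed for the series $Z(a,-n,b)$ on the left to converge (summation over $\beta$ for fixed $\alpha$), so that under the standing hypotheses all the terms on the right are convergent and the term-by-term rearrangement is justified, completing the proof.
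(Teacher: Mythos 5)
Your proof is correct and follows exactly the paper's argument: write $\beta=(\alpha+\beta)-\alpha$, expand by the binomial theorem, interchange the finite sum with the double sum, and reindex via $l_1=\alpha+\beta$, $l_2=\alpha$ to recognize the double zeta values. The extra care you take with convergence (in particular the remark that $b-n>1$ is needed for both sides) is a sensible supplement to the paper's terser computation, but the route is the same.
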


\begin{proof}
Il suffit d'écrire $\beta=(\alpha+\beta)-\alpha$ et de développer:
\begin{align*}
Z(a,-n,b)&=\sum_{\alpha,\beta>0} ((\alpha+\beta)-\alpha)^n \alpha^{-a} (\alpha+\beta)^{-b}\\
&=\sum_{\alpha,\beta>0}\sum_{j=0}^n (-1)^j \binom{n}{j} \alpha^j(\alpha+\beta)^{n-j} \alpha^{-a} (\alpha+\beta)^{-b}\\
&=\sum_{j=0}^n (-1)^j \binom{n}{j} \zeta(b-n+j,a-j).
\end{align*}
\end{proof}

\begin{coro}
Sous les hypothèses précédentes sur les entiers $n_1,n_2,m_1$ et $m_2$, on déduit l'expression de $S(m_1,m_2)=\Lambda(F_{m_1}^*,F_{m_2}^*;n_1,n_2)$ en fonction des bizêtas classiques:
\begin{multline}\label{eqpartiel}
C_{m_1}C_{m_2}\Lambda(F_{m_1}^*,F_{m_2}^*;n_1,n_2)=i^{k_1+k_2}\frac{2^{k_1+k_2-2}(\tilde{m_1}-1)!(\tilde{m_2}-1)!(\tilde{n_1}-1)!(\tilde{n_2}-1)!}{(k_1-2)!(k_2-2)!}\\
\times\sum_{a=0}^{\tilde{n_2}-1}\sum_{b=0}^{{m_1}-1}(-1)^b\binom{{m_1}-1}{b}\binom{\tilde{n_1}-1+a}{a} 
\frac{\zeta(\tilde{n_1}-{m_1}+1+a+b,\tilde{n_2}-{m_2}+1-a-b)}{(2\pi)^{\tilde{n_1}+\tilde{n_2}-{m_1}-{m_2}+2}}.
\end{multline}
\end{coro}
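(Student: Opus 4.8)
Le plan est de d\'eduire cette expression en composant directement les deux formules du Lemme pr\'ec\'edent avec la Proposition exprimant $Z(a,-n,b)$, puis de regrouper toutes les constantes. Aucune id\'ee nouvelle n'intervient: il s'agit d'une substitution formelle suivie d'une v\'erification soigneuse des coefficients.

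Premi\`ere \'etape: je partirais de la premi\`ere formule du Lemme, en renommant son indice de sommation $j$ en $a$ afin de retrouver le bin\^ome $\binom{\tilde{n_1}-1+a}{a}$ du membre de droite. J'y injecterais ensuite la seconde formule du Lemme, \'evalu\'ee en $(M_2,M_1)=(\tilde{n_2}-a,\tilde{n_1}+a)$, qui remplace chaque terme $L(F_{m_2}^0,F_{m_1}^0;\tilde{n_2}-a,\tilde{n_1}+a)$ par $\frac{(-2i\pi)^{m_1+m_2}}{(m_1-1)!(m_2-1)!}Z(\tilde{n_2}-a-m_2+1,1-m_1,\tilde{n_1}+a)$.

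Deuxi\`eme \'etape: puisque $1-m_1=-(m_1-1)\leq 0$, la Proposition s'applique avec $n=m_1-1$ et fournit $Z(\tilde{n_2}-a-m_2+1,-(m_1-1),\tilde{n_1}+a)=\sum_{b=0}^{m_1-1}(-1)^b\binom{m_1-1}{b}\zeta(\tilde{n_1}-m_1+1+a+b,\tilde{n_2}-m_2+1-a-b)$, dont les deux arguments sont exactement ceux attendus apr\`es le renommage de l'indice de sommation de la Proposition en $b$. Il faut contr\^oler la condition de convergence $A+C-N>2$ de la Proposition, qui vaut ici $\tilde{n_1}+\tilde{n_2}-m_1-m_2+2$; elle est assur\'ee par les hypoth\`eses, car apr\`es r\'eduction au cas $n_1<m_1\leq k_1/2$ et $n_2<m_2\leq k_2/2$ on a $\tilde{n_j}>\tilde{m_j}\geq m_j$, d'o\`u $\tilde{n_1}+\tilde{n_2}>m_1+m_2$.

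La partie la plus d\'elicate, et le seul vrai travail, est le suivi des constantes. Je multiplierais par $C_{m_1}C_{m_2}$ et utiliserais $i^{m_1+m_2}(-2i\pi)^{m_1+m_2}=2^{m_1+m_2}\pi^{m_1+m_2}$, les puissances de $i$ se simplifiant gr\^ace \`a $i^{2(m_1+m_2)}=(-1)^{m_1+m_2}$. Le rapport des puissances de $2\pi$ s'\'ecrit $(2\pi)^{m_1+m_2-\tilde{n_1}-\tilde{n_2}}=(2\pi)^2/(2\pi)^{\tilde{n_1}+\tilde{n_2}-m_1-m_2+2}$, et le facteur $(2\pi)^2$ compense le $1/\pi^2$ provenant de $C_{m_1}C_{m_2}$ en laissant un $2^2$ qui change le $2^{k_1+k_2-4}$ de $C_{m_1}C_{m_2}$ en $2^{k_1+k_2-2}$, tandis que le $i^{k_1+k_2}$ de $C_{m_1}C_{m_2}$ survit tel quel. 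Enfin l'identit\'e $\frac{1}{(m_j-1)!\binom{k_j-2}{m_j-1}}=\frac{(\tilde{m_j}-1)!}{(k_j-2)!}$ fournit le pr\'efacteur $(\tilde{m_1}-1)!(\tilde{m_2}-1)!/((k_1-2)!(k_2-2)!)$, les $(\tilde{n_1}-1)!(\tilde{n_2}-1)!$ venant d\'ej\`a de la premi\`ere formule du Lemme. Le seul obstacle est donc purement arithm\'etique: g\'erer simultan\'ement, sans erreur, les signes, les puissances de $i$ et de $2\pi$, et les r\'eductions de factorielles et de bin\^omes.
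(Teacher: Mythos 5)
Votre démarche est exactement celle que le texte sous-entend (le corollaire y est laissé comme conséquence immédiate du lemme et de la proposition qui le précèdent) : substitution de la seconde formule du lemme dans la première, application de la proposition avec $n=m_1-1$, puis regroupement des constantes via $i^{m_1+m_2}(-2i\pi)^{m_1+m_2}=(2\pi)^{m_1+m_2}$ et $\frac{1}{(m_j-1)!\binom{k_j-2}{m_j-1}}=\frac{(\tilde{m_j}-1)!}{(k_j-2)!}$. Le suivi des puissances de $2$, de $\pi$ et de $i$, ainsi que la vérification de la condition de convergence $\tilde{n_1}+\tilde{n_2}>m_1+m_2$, sont corrects.
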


Ceci conclut la démonstration du théorème.\par
\vspace*{.5cm}
Dans l'expression qui figure dans (\ref{eqpartiel}), on peut distinguer deux types de bizêtas $\zeta(b,a)$ suivant le signe de $a$.
Lorsque $a$ est négatif, on a la formule suivante pour laquelle nous n'avons pas rencontré de référence étendant la suivante:
$$\zeta(b,0)=\sum_{0<m<n}n^{-b}=\sum_{0<n}(n-1)n^{-b}=\zeta(b-1)-\zeta(b).$$

\begin{lem}\label{lemneg}
Soit $a,b\in\Z_{>0}$ tels que $b-a>2$.
Les valeurs zeta multiples avec un argument négatif s'expriment grâce aux valeurs zêta:
$$\zeta(b,-a)=\frac{1}{a+1}\sum_{n=0}^{a}\binom{a+1}{n}B_n\zeta(b-a-1+n)$$ 
\end{lem}

\begin{proof}
On dispose de la formule bien connue donnant la somme des puissances des entiers consécutifs:
$$\sum_{k=1}^{n-1} k^m=\frac{1}{m+1}\sum_{\alpha=0}^{m} \binom{m+1}{\alpha}B_{\alpha}n^{m+1-\alpha}.$$
On peut alors appliquer cette formule à notre définition de bizêtas:
\begin{align*}
\zeta(b,-a)&=\sum_{\alpha<\beta} \frac{\alpha^a}{\beta^b}=\sum_{\beta=1}^{\infty}\frac{\sum_{\alpha=1}^{\beta-1}\alpha^a}{\beta^b}\\
&=\sum_{\beta=1}^{\infty}\frac{\sum_{n=0}^{a}\binom{a+1}{n}B_n\beta^{a+1-n}}{(a+1)\beta^b}\\
&=\frac{1}{a+1}\sum_{n=0}^{a}\binom{a+1}{n}B_n\zeta(b-a-1+n).
\end{align*}
\end{proof}

Dans la formule du théorème \ref{calculprincipal}, les nombres intervenants sont bien compris. Il s'agit de rationnels ou de valeurs multiples de zêta. Seuls les quantités $J(\alpha,\beta)$ restent encore indéterminées. Pourtant on dispose tout de même de l'équation :
\begin{equation}\label{Jsym}
J(\alpha,\beta)+J(\beta,\alpha)=J(\alpha)J(\beta),
\end{equation}
où $J(\alpha)=\int_0^1\int_{-\infty}^{\infty}(1+iux)^{-\alpha}\d u\d x=\frac{\pi}{\alpha-1}$. Ceci permet notamment de calculer les $J(\alpha,\alpha)=\frac{\pi^2}{2(\alpha-1)^2}$. Nous démontrons tout de même le résultat suivant:

\begin{prop}
Soit $\alpha,\beta\geq 2$ des entiers, on a:
\begin{equation}
(\alpha-1)(\beta-1)J(\alpha,\beta)=\frac{\pi^2}{2}+B_{\alpha,\beta},
\end{equation}
avec les $B_{\alpha,\beta}\in\Q$ et vérifiant pour les entiers $a,N\geq 0$ par:
$$B_{a,a+N}=\sum_{k=0}^{N-1} \frac{A_{a,a+k}}{a+k+1}\text{ et }B_{a,a-N}=-\sum_{k=0}^{N-1}\frac{A_{a,a-k}}{a-k+1},$$
et où les rationnels $A_{\alpha,\beta}$ sont donnés par la série génératrice:
$$\sum_{\alpha,\beta\geq 0}A_{\alpha,\beta}X^{\alpha}Y^{\beta}=\frac{4}{(2-X-Y)(Y-X)}\log\left(\frac{1-X}{1-Y}\right)\in\Q[[X,Y]].$$
\end{prop}

\begin{proof}
Considérons la série génératrice des $J(\alpha)$ suivante:
\begin{align*}
h(X)&=\sum_{\alpha\geq 2} (\alpha-1)J(\alpha)X^{\alpha-2}=\frac{\d}{\d X}\left(\int_{\R}\int_0^1\sum_{a\geq 0}\frac{X^{a+1}}{(1+iux)^{a+2}}\d x\d u\right)\\
&=\int_{\R}\int_0^1\frac{\d}{\d X}\left(\frac{X\d u\d x}{(1+iux)(1+iux-X)}\right)=\int_{\R}\int_0^1\frac{\d u\d x}{(1-X+iux)^2}\\
&=\int_{\R}\frac{\d x}{(1-X)(1-X+ix)}=\int_0^{\infty}\frac{2dx}{(1-X)^2+x^2}=\frac{\pi}{1-X}.
\end{align*}
Les calculs sont valides car la série a un rayon de converge de $1$.
De manière analogue, considérons la série génératrice des $J(\alpha,\beta)$:
$$H(X,Y)=\sum_{\alpha,\beta\geq 2}(\alpha-1)(\beta-1)J(\alpha,\beta)X^{\alpha-2}Y^{\beta-2}.$$

D'une part, la relation (\ref{Jsym}) donne :
$$H(X,Y)+H(Y,X)=h(X)h(Y).$$

D'autre part, on obtient par le calcul:
\begin{align*}
H(X,Y)&=\int_{\R^2}\int_0^1\int_0^u\frac{\d^2}{\d X\d Y}\left(\sum_{a\geq 0}\frac{X^{a+1}\d x\d u}{(1+iux)^{a+2}}\sum_{b\geq 0}\frac{Y^{b+1}\d x\d u}{(1+ivy)^{b+2}}\right)\\
&=\int_{\R^2}\int_0^1\int_0^u\frac{\d^2}{\d X\d Y}\left(\frac{X\d u\d x}{(1+iux)(1+iux-X)}\frac{Y\d v\d y}{(1+ivy)(1+ivy-Y)}\right)\\
&=\int_{\R^2}\int_0^1\int_0^u\frac{\d u\d x}{(1-X+iux)^2}\frac{\d v\d y}{(1-Y+ivy)^2}\\
&=\int_{\R^2}\int_0^1\frac{u\d u\d x\d y}{(1-X+iux)^2(1-Y)(1-Y+iuy)}.
\end{align*}

On remarque alors que cette expression peut être symétrisée par:
\begin{align*}
\frac{\d}{\d Y}\left[(1-Y)H(X,Y)\right]&=\int_{\R}\int_0^1\frac{u\d u\d x \d y}{(1-X+iux)^2(1-Y+iuy)^2}\\
&=\lim_{M\to\infty}\int_{-M}^M\int_{-M}^M\int_0^1\frac{u\d u\d x\d y}{(1-X+iux)^2(1-Y+iuy)^2}\\
&=\lim_{M\to\infty}\int_0^1\frac{4M^2u\d u}{((1-X)^2+u^2M^2)((1-Y)^2+u^2M^2)}\\
&=\int_0^{\infty}\frac{4udu}{((1-X)^2+u^2)((1-Y)^2+u^2)}=\int_0^{\infty}\frac{2\d w}{((1-X)^2+w)((1-Y)^2+w)}\\
&=\left[\frac{2}{(2-X-Y)(X-Y)}\log\left(\frac{(1-X)^2+w}{(1-Y)^2+w}\right)\right]_{w=0}^{\infty}\\
&=\frac{4}{(2-X-Y)(Y-X)}\log\left(\frac{1-X}{1-Y}\right)\\
&=2\sum_{k\geq 2}\frac{(X+Y)^k}{2^k}\sum_{a,b\geq 0}\frac{X^aY^b}{a+b+1}=\sum_{a,b\geq 0}A_{a,b}X^aY^b.
\end{align*}

D'autre part, notons $B_{a,b}=(a-1)(b-1)J(a,b)$ les coefficients de $H$, on a:
$$\frac{\d}{\d Y}\left[(1-Y)H(X,Y)\right]=\sum_{a,b\geq 0} (b+1)(B_{a,b+1}-B_{a,b})X^aY^b.$$
Cette dernière donne une relation de récurrence simple donnant pour tout les entiers $a\geq 0$ et $N>0$:
$$B_{a,a+N}=\frac{\pi^2}{2}+\sum_{k=0}^{N-1} \frac{A_{a,a+k}}{a+k+1}\text{ et }B_{a,a-N}=\frac{\pi^2}{2}-\sum_{k=0}^{N-1}\frac{A_{a,a-k}}{a-k+1}.$$
\end{proof}

\subsection{Discussion et application du théorème \ref{calculprincipal}}

a) Les termes $Q(m_1,m_2,n_1,n_2)$ se décomposent comme suit:
$$Q(m_1,m_2,n_1,n_2)=Q_1(m_1,m_2,n_1,n_2)+Q_2(m_1,m_2,n_1,n_2),$$
où le terme dépendant de bizêtas classiques homogènes en poids est:
$$Q_1(m_1,m_2,n_1,n_2)=\sum_{\gamma=0}^{\tilde{n_2}-\tilde{m_2}} \binom{\tilde{n_1}+\tilde{n_2}-\tilde{m_1}-\tilde{m_2}+1-\gamma}{\tilde{n_1}-\tilde{m_1}} \zeta(\tilde{n_1}+\tilde{n_2}-\tilde{m_1}-\tilde{m_2}+1-\gamma,\gamma+1),$$
et le terme en zêtas simples est:
$$Q_2(m_1,m_2,n_1,n_2)=\sum_{n=0}^{\tilde{m_1}+\tilde{m_2}-2}A_n\zeta(\tilde{n_1}+\tilde{n_2}-\tilde{m_1}-\tilde{m_2}+1+n),$$
dans lequel les $A_n$ sont des coefficients rationnels obtenus grâce au lemme \ref{lemneg} par réécriture des $\zeta(b,-a)$ pour $a\geq 0$.
Ces coefficients vérifient des propriétés de symétrie, en effet, on observe que, d'une part:
$$Q_2(m_1,m_2,n_1,n_2)+Q_2(m_2,m_1,n_2,n_1)=0.$$
Et d'autre part, la formule sur les bizêtas, voir \cite{DZV} par exemple:
\begin{equation}\label{bizetform}
\sum_{r=2}^{k-1}\left[\binom{r-1}{j-1}+\binom{r-1}{k-j-1}\right]\zeta(r,k-r)=\zeta(j)\zeta(k-j),\text{ pour tout }2\leq j\leq k-2,
\end{equation}
permet d'obtenir:
$$Q_2(m_1,m_2,n_1,n_2)+Q_2(m_2,m_1,n_2,n_1)=\zeta(\tilde{n_1}-\tilde{m_1}+1)\zeta(\tilde{n_2}-\tilde{m_2}+1).$$

On peut alors notamment vérifier la relation de mélange:
\begin{equation}\label{mel2}
\Lambda(F_{m_1},F_{m_2};n_1,n_2)+\Lambda(F_{m_2},F_{m_1};n_2,n_1)=\Lambda(F_{m_1},n_1)\Lambda(F_{m_2},n_2).
\end{equation}
Le membre de droite est obtenu par la formule de Kohnen-Zagier \cite{KZ}. Le membre de gauche est donné par le théorème \ref{calculprincipal}.\par
L'expression proposée dans le théorème nous semble ainsi irréductible. En effet, les termes $Q_1$ et $Q_2$ ne semble pas être dans le même $\Q$-espace vectoriel puisque le poids des valeurs multiples de zêta en jeu sont étrangers. Une version simplifiée de $Q_1$ donnerait une version simplifiée de la formule (\ref{bizetform}). L'indépendance supposée des $\zeta(2k+1)$ pour $k\in\Z_+^*$ suggère la minimalité de l'écriture de $Q_2$.

b) \textit{Hypothèses de parité :} Nous n'espérons pas que les hypothèses du théorème:
$$m_1+n_1\text{ et }m_2+n_2 \text{ impairs}, $$
puissent être levées. En effet, si on disposait d'une formule pour $\Lambda(F_{m_1},F_{m_2};n_1,n_2)$ avec $m_1$ et $n_1$ de même parité alors la formule (\ref{mel2}) permettrait de trouver une formule fermée pour $\Lambda(F_{m_1},n_1)$ avec $n_1+m_1$ pair. Ce calcul n'ayant à l'heure actuelle pas été résolu, ils semblent donc que ces hypothèses de parités soient essentielles.\par

c) \textit{Hypothèses d'inégalités :} Il n'est pas clair qu'on ne puisse obtenir une formule analogue lorsque les hypothèses:
$$\text{soit }n_1<m_1,\quad n_1<\tilde{m_1},\quad n_2<m_2\text{ et }n_2<\tilde{m_2},$$
$$\text{soit }m_1<n_1,\quad \tilde{m_1}<n_1,\quad m_2<n_2\text{ et }\tilde{m_2}<n_2,$$
ne sont pas vérifiées. Ces conditions apparaissent comment une limite technique dans la démonstration mais nous ne percevons pas d'obstruction conceptuelle. Voici une piste pour résoudre cette indétermination. La complexité de ce procédé augmente avec les poids $k_1$ et $k_2$ mais semble bien se généraliser. Le polynôme $\sum_{m=0}^{k-2} \binom{k-2}{m} F_{m+1} X^m\in S_k\otimes \C_{k-2}[X]$ vérifie les relations de Manin, si bien qu'on dispose de relations linéaires reliant les nombres:
$$\Lambda(F_{m_1},F_{m_2};n_1,n_2)\text{ pour tout }m_1+n_1\text{ et }m_2+n_2\text{ impairs}.$$
Certains de ces nombres sont donnés par le théorème \ref{calculprincipal}. On observe dans des cas de petits poids ($k_1,k_2\in\{12,16\}$) que les relations suffisent pour obtenir les valeurs:
$$\Lambda(F_{m_1},F_{m_2};n_1,n_2)\text{ lorsque }(n_1,n_2)\in \llbracket 1,k_1/2-1\rrbracket\times \llbracket 1,k_2/2-1\rrbracket\cup \llbracket k_1/2+1,k_1-1\rrbracket\times \llbracket k_2/2+1,k_2-1\rrbracket,$$
sans condition supplémentaire que celles de parités sur le couple $(m_1,m_2)$. Ceci nous conduit à la remarque suivante.\par

d) Lorsque $k_1=k_2=12$, le théorème \ref{calculprincipal} donne une forme explicite des nombres:
$$\frac{\Lambda(\Delta,\Delta;n_1,n_2)}{\Lambda(\Delta;n_1)\Lambda(\Delta;n_2)},\text{ pour }(n_1,n_2)\in \llbracket 1,5\rrbracket^2\cup \llbracket 7,11\rrbracket^2. $$
En effet, la quantité $\Lambda(f,g;n_1,n_2)/\Lambda(f;n_1)\Lambda(g;n_2)$ est indépendante du choix de $f,g\in S_{12}\setminus\{0\}$ car $\dim_{\C} S_{12}=1$. On peut ainsi trouver des fonctions $F_{m_1}^{12}$ et $F_{m_2}^{12}$ vérifiant les hypothèses du théorème en vu d'effectuer le calcul. On retrouve notamment que pour tout entier $1\leq n\leq 12$, $\Lambda(f,g;n,n)/\Lambda(f;n)\Lambda(g;n)=1/2$. Or les nombres $\Lambda(\Delta;n)$ sont assez bien compris. En effet, il existe des périodes ( Voir \cite{KZ}):
$$\Omega^-_{\Delta}=0.0214460667...\text{ et }\Omega^+_{\Delta}=0.0000482774800..,$$
tels que $\Lambda(\Delta,n)=\frac{(n-1)!}{(2\pi)^n}L(\Delta,n)=\alpha_n \Omega^{(-1)^n}_{\Delta}$ avec les rationnels $\alpha_n\in\Q$ données par:
$$\alpha_1=-\alpha_{11}=\frac{192}{691},\alpha_2=\alpha_{10}=\frac{384}{5},\alpha_3=-\alpha_9=\frac{16}{135},\alpha_4=\alpha_8=40,\alpha_5=-\alpha_7=\frac{8}{105}\text{ et }\alpha_6=32.$$
Ceci démontre le résultat suivant de manière constructive:

\begin{thm}\label{thm37}
Soit $(n_1,n_2)\in \llbracket 1,6\rrbracket^2\cup \llbracket 6,11\rrbracket^2$. Alors on a:
$$\Lambda(\Delta,\Delta;n_1,n_2)\in \Omega^{(-1)^{n_1}}_{\Delta}\Omega^{(-1)^{n_2}}_{\Delta}\sum_{\substack{2<a+b\text{ pair}\\1<a}}\Q\frac{\zeta(a,b)}{\pi^{a+b}}.$$
\end{thm}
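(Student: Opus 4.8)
Le th�or�me \ref{thm37} se d�duit constructivement du th�or�me \ref{calculprincipal} combin� avec la connaissance explicite des p�riodes simples $\Lambda(\Delta,n)$ de Kohnen-Zagier. D'abord je rappelle que $\dim_{\C}S_{12}=1$, de sorte que le quotient $\Lambda(f,g;n_1,n_2)/\Lambda(f,n_1)\Lambda(g,n_2)$ ne d�pend pas du choix de $f,g\in S_{12}\setminus\{0\}$. On peut donc remplacer $(\Delta,\Delta)$ par un couple de formes g�n�ratrices de Cohen $(F_{m_1}^{12},F_{m_2}^{12})$ bien choisi, pour lequel le th�or�me \ref{calculprincipal} s'applique.

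\textbf{�tape 1 : choix des param�tres.} Pour $(n_1,n_2)$ fix� dans $\llbracket 1,6\rrbracket^2\cup\llbracket 6,11\rrbracket^2$, il faut exhiber des entiers critiques $m_1,m_2$ v�rifiant les hypoth�ses du th�or�me \ref{calculprincipal} : parit�s oppos�es de $(m_1,n_1)$ et de $(m_2,n_2)$, et la double in�galit� ($n_j<m_j,\tilde{m_j}$ pour tous $j$, ou bien $m_j,\tilde{m_j}<n_j$ pour tous $j$). Comme $k_1=k_2=12$, le centre de la bande critique est $6$; pour $(n_1,n_2)\in\llbracket 1,5\rrbracket^2$ on prend les $m_j$ dans $\llbracket n_j+1,6\rrbracket$ de parit� convenable, et sym�triquement pour $\llbracket 7,11\rrbracket^2$ via l'�quation fonctionnelle. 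Les cas diagonaux $n_1=n_2=6$ se traitent par continuit�/sym�trie ($\Lambda(f,g;n,n)/\Lambda(f,n)\Lambda(g,n)=1/2$). C'est un v�rification finie, au cas par cas.

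\textbf{�tape 2 : analyse de la formule.} J'applique ensuite le th�or�me \ref{calculprincipal} : le membre de droite est une combinaison des huit termes $Q(\cdot)$ et du reste rationnel $R_2$. D'apr�s la d�finition de $Q$, chaque terme non nul est une combinaison $\Q$-lin�aire de biz�tas $\zeta(A,B)/(2\pi)^{A+B}$ avec $A+B$ pair, $A>1$, auxquels s'ajoutent via le lemme \ref{lemneg} et la relation (\ref{bizetform}) des valeurs z�ta simples $\zeta$ que l'on r�exprime �galement en biz�tas de poids pair. Le reste $R_2$ et les facteurs $\zeta(m_j)\zeta(\tilde m_j)/(k_j-1)\zeta(k_j)$ sont rationnels. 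Ainsi $\Lambda(F_{m_1}^{12},F_{m_2}^{12};n_1,n_2)$ appartient � $\sum_{2<a+b\text{ pair},\,1<a}\Q\,\zeta(a,b)/\pi^{a+b}$.

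\textbf{�tape 3 : normalisation par les p�riodes.} Enfin, par Kohnen-Zagier on a $\Lambda(\Delta,n)=\alpha_n\Omega_\Delta^{(-1)^n}$ avec $\alpha_n\in\Q$ explicites, donc $\Lambda(\Delta,n_1)\Lambda(\Delta,n_2)\in\Q\,\Omega_\Delta^{(-1)^{n_1}}\Omega_\Delta^{(-1)^{n_2}}$. En multipliant le quotient (qui vit dans le $\Q$-espace des biz�tas normalis�s) par ce facteur de p�riodes, on obtient exactement l'appartenance annonc�e. \emph{L'obstacle principal} n'est pas conceptuel mais combinatoire : il faut s'assurer que pour \emph{tout} couple $(n_1,n_2)$ de l'�nonc� il existe effectivement un choix de $(m_1,m_2)$ satisfaisant simultan�ment les contraintes de parit� et d'in�galit� du th�or�me \ref{calculprincipal}; c'est pr�cis�ment la raison pour laquelle l'�nonc� se restreint aux carr�s $\llbracket 1,6\rrbracket^2$ et $\llbracket 6,11\rrbracket^2$ plut�t qu'� tout le rectangle critique, et v�rifier l'exhaustivit� de ce choix est le point d�licat.
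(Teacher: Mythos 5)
Votre proposition suit pour l'essentiel la m\^eme route que le texte : r\'eduction aux formes de Cohen $F_{m}^{12}$ via $\dim_{\C}S_{12}=1$, application du th\'eor\`eme \ref{calculprincipal} pour un choix admissible de $(m_1,m_2)$, constat que tous les termes tombent dans le $\Q$-espace engendr\'e par les $\zeta(a,b)/\pi^{a+b}$ (les seconds arguments n\'egatifs ou nuls \'etant d\'ej\`a autoris\'es dans cette somme, ce qui rend superflue la r\'e\'ecriture via (\ref{bizetform}) que vous invoquez), puis normalisation par les p\'eriodes de Kohnen--Zagier $\Lambda(\Delta,n)=\alpha_n\Omega_{\Delta}^{(-1)^n}$. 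Le point combinatoire que vous signalez --- la coordonn\'ee $n_j=6$, pour laquelle aucun $m_j$ admissible n'existe --- est trait\'e dans le texte exactement comme vous le proposez, c'est-\`a-dire uniquement par l'identit\'e diagonale $\Lambda(f,g;n,n)=\tfrac{1}{2}\Lambda(f,n)\Lambda(g,n)$.
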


\textbf{Application explicite}\\
On va utiliser le théorème \ref{calculprincipal} pour obtenir une formule fermée pour $\Lambda(\Delta,\Delta;2,3)$. Pour cela on choisit de calculer d'une part:
\begin{align*}
\Lambda(F_5,F_6;2,3)&=\frac{2^{22}}{10!^2}\left(2Q(5,10,6,9)+2Q(7,10,6,9)\right)\\
&=2^{26}(4862Z(-10, 18)-(342628/5)Z(-7, 15)+60203Z(-8, 16)-26884Z(-9, 17)\\
&-9680Z(-5, 13)+(199628/5)Z(-6, 14)+7Z(0, 8)+(56/5)Z(-1, 9)+(84/5)Z(-2, 10)\\
&+24Z(-3, 11)+33Z(-4, 12)+(1/5)Z(4, 4)+(4/5)Z(3, 5)+2Z(2, 6)+4Z(1, 7)\\
&-514800Z(-7, 17)+145860Z(-8, 18)+4752Z(-3, 13)+7722Z(-4, 14)\\
&-279708Z(-5, 15)+630630Z(-6, 16)+36Z(3, 7)+126Z(2, 8)+336Z(1, 9)\\
&+756Z(0, 10)+1512Z(-1, 11)+2772Z(-2, 12)+6Z(4, 6)),
\end{align*}
où $Z(a,b)=\zeta(b,a)/(2\pi)^{a+b}$.
D'autre part, nous disposons de:
$$\Lambda(F_5;2)=2^{11}\left(72\frac{\zeta(6)}{(2\pi)^6}+\frac{12}{5}\frac{\zeta(4)}{(2\pi)^4}\right)=\frac{1024}{175}\text{ et }\Lambda(F_6:3)=\frac{2^{14}\zeta(4)}{3(2\pi)^4}=\frac{512}{135}.$$
Ceci permet d'obtenir:
\begin{align*}
\Lambda(\Delta,\Delta&;2,3)=2^7 \Lambda(\Delta;2)\Lambda(\Delta;3)\big[-635134500Z(-9, 17)+114864750Z(-10, 18)\\
&-1618917300Z(-7, 15)+1422295875Z(-8, 16)-228690000Z(-5, 13)+943242300Z(-6, 14)\\
&+567000Z(-3, 11)+779625Z(-4, 12)+4725Z(4, 4)+18900Z(3, 5)+47250Z(2, 6)\\
&+94500Z(1, 7)+165375Z(0, 8)+264600Z(-1, 9)+396900Z(-2, 10)\\
&+3445942500Z(-8, 18)-6608101500Z(-5, 15)+14898633750Z(-6, 16)\\
&-12162150000Z(-7, 17)+112266000Z(-3, 13)+182432250Z(-4, 14)\\
&+850500Z(3, 7)+2976750Z(2, 8)+7938000Z(1, 9)+17860500Z(0, 10)\\
&+35721000Z(-1, 11)+65488500Z(-2, 12)+141750Z(4, 6)\big].
\end{align*}
Puis en appliquant le lemme \ref{lemneg}, on obtient:
\begin{align}\nonumber
\Lambda(\Delta,\Delta&;2,3)=2^73^35^27 \Lambda(\Delta;2)\Lambda(\Delta;3)\Big[30Z(4, 6)+180Z(3, 7)+630Z(2, 8)+1680Z(1, 9)\\ \nonumber
&+Z(4, 4)+4Z(3, 5)+10Z(2, 6)+20Z(1, 7)-630\frac{\zeta(10)}{(2\pi)^{10}}\\ \nonumber
&-\frac{1105}{126}\frac{\zeta(7)}{(2\pi)^{8}}+\frac{12155}{63}\frac{\zeta(13)}{(2\pi)^{8}}-\frac{12155}{6}\frac{\zeta(15)}{(2\pi)^{8}}+\frac{5525}{3}\frac{\zeta(17)}{(2\pi)^{8}}\\ 
&+\frac{7456}{3}\frac{\zeta(9)}{(2\pi)^{10}}-330\frac{\zeta(11)}{(2\pi)^{10}}+143\frac{\zeta(13)}{(2\pi)^{10}}+\frac{67925}{3}\frac{\zeta(15)}{(2\pi)^{10}}-24310\frac{\zeta(17)}{(2\pi)^{10}}\Big]. \label{formzeta23}
\end{align}
Dans le membre de droite de l'expression ci-dessus la sous-somme des termes de dénominateurs $(2\pi)^8$ (respectivement $(2\pi)^{10}$) est nulle lorsqu'on remplace $\zeta(k)$ par $1$ pour $7\leq k \leq 17$ entier. On peut alors récrire:
\begin{align}\label{formzeta23bis}
&\frac{\Lambda(\Delta,\Delta;2,3)}{\Lambda(\Delta;2)\Lambda(\Delta;3)}=\frac{2^73^35^27}{(2\pi)^{10}} \Big[30\zeta(6,4)+180\zeta(7,3)+630\zeta(8,2)+1680\zeta(9,1)\\ \nonumber
&+(2\pi)^2\left(\zeta(4,4)+4\zeta(5,3)+10\zeta(6,2)+20\zeta(7,1)\right)-630(\zeta(10)-1)\\ \nonumber
&+(2\pi)^2\left(-\frac{1105}{126}(\zeta(7)-1)+\frac{12155}{63}(\zeta(13)-1)-\frac{12155}{6}(\zeta(15)-1)+\frac{5525}{3}(\zeta(17)-1)\right)\\ \nonumber
&+\frac{7456}{3}(\zeta(9)-1)-330(\zeta(11)-1)+143(\zeta(13)-1)+\frac{67925}{3}(\zeta(15)-1)-24310(\zeta(17)-1)\Big].
\end{align}

\begin{rem}
1) On a vérifié cette dernière formule par ordinateur avec $15$ chiffres après la virgule. Pour calculer $\Lambda(\Delta,\Delta;2,3)$ nous transformons la série de Dirichlet double associée à la fonction $\tau$ de Ramanujan. Pour rendre convergent la série on choisit de découper son expression d'intégrale itérée suivant les domaines:
$$\{0<t_1<t_2\}=\{0<t_1<t_2<1\}\cup \{0<t_1<1\}\times\{1<t_2\}\cup\{1<t_1<t_2\}.$$
On a une convergence rapide sur le domaine $\{1<t_1<t_2\}$ et l'équation fonctionnelle permet de s'y ramener sans difficulté.\par
D'autre part, une expression tronquée des valeurs multiples de zêta donne une bonne approximation. Chacun des dix-huit termes qui interviennent dans le membre de droite de l'expression (\ref{formzeta23bis}) sont minorés en valeurs absolue par $10^{-3}$. Le calcul numérique de chacun des deux membres donnent à $10^{-15}$ près:
$$\frac{\Lambda(\Delta,\Delta;2,3)}{\Lambda(\Delta;2)\Lambda(\Delta;3)}=0.588554464393778...$$
2) Le théorème \ref{thm37} peut naturellement être étendu aux valeurs de $\Lambda(f_1,f_2;n_1,n_2)$ pour $f_1\otimes f_2\in S_{k_1}\otimes S_{k_2}$ avec $\dim_{\C}\left(S_{k_1}\otimes S_{k_2}\right)=1$.\\
3) Il parait raisonnable d'attendre un théorème analogue pour $f_1$ et $f_2$ des formes primitives de poids et de niveaux quelconques.
\end{rem}

\section{Polynôme des bipériodes de deux séries d'Eisenstein}

Nous calculons le polynôme étendu des bipériodes d'un couple quelconque de série d'Eisenstein de niveau $1$.

Soit $(f_1,f_2)\in M_{k_1}\times M_{k_2}$.
Le polynôme des bipériodes du couple $(f_1,f_2)$ est donné par les formes épointées:
$$\frac{P_{f_1,f_2}(X_1,X_2)}{(k_1-2)!(k_2-2)!}
=\sum_{m_1,m_2} \lim_{(s_1,s_2)\to(m_1,m_2)}\frac{\Lambda(f_1,f_2;s_1,s_2)}{\Gamma(s_1)\Gamma(k_1-s_1)\Gamma(s_2)\Gamma(k_2-s_2)}\frac{X_1^{k_1-m_1-1}}{i^{m_1}}\frac{X_2^{k_2-m_2-1}}{i^{m_2}},$$
$$\text{où }\Lambda(f_1,f_2;s_1,s_2)=\int_{0<t_2<t_1} f_1^*(t_1)t_1^{s_1-1}\d t_1f_2^*(t_2)t_2^{s_2-1}\d t_2.$$
Rappelons la notation des séries d'Eisenstein $E_k\in M_k$ pour un entier pair $k\geq 4$:
$$E_k(z)=\sum_{(m,n)\in\Z^2\setminus\{(0,0)\}} (mz+n)^{-k},\text{ pour }z\in\C.$$

Introduisons des nombres particuliers dépendant de paramètres entiers $e_1,e_2,f_1,f_2$ et $g$ tels que $e_1+e_2+f_1+f_2+g\geq 3$ par:
\begin{equation}
Z_g\mat{e_1}{f_1}{e_2}{f_2}=\sum'_{a_1,b_1,a_2,b_2\in\Z}\frac{a_1^{-e_1}b_1^{-f_1}a_2^{-e_2}b_2^{-f_2}}{(a_1b_2-a_2b_1)^{g}},
\end{equation}
où la somme est prise sur les entiers relatifs $(a_1,b,a_2,b_2)$ vérifiant $a_1b_1a_2b_2\neq 0$ et $a_1b_2\neq a_2b_1$.

\begin{prop}\label{biperEk}
La partie impaire du polynôme des bipériodes de deux séries d'Eisenstein est donnée par:
$\frac{1}{2}\left(P_{E_{k_1},E_{k_2}}(X_1,X_2)-P_{E_{k_1},E_{k_2}}(-X_1,-X_2)\right)=$
\begin{equation}
\frac{i\pi}{(k_1-1)(k_2-1)}\sum_{\substack{m\geq 0\\ m\text{ impair}}} \sum_{e_1,e_2\geq 0} 
\frac{(m+1)!Z_{m+2}\mat{e_1}{m-k_1-e_1+2}{e_2}{m-k_2-e_2+2}X_1^{e_1}X_2^{e_2}(X_2-X_1)^{k_1+k_2-4-m}}{e_1!e_2!(m-k_1+2-e_1)!(m-k_2+2-e_2)!(k_1+k_2-4-m)!}.
\end{equation}
\end{prop}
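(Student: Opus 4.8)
The plan is to compute the iterated Mellin transform $\Lambda(E_{k_1}^*,E_{k_2}^*;s_1,s_2)$ directly, substitute its value at the critical integer points into the defining formula for the extended biperiod polynomial, and then read off the odd part. The starting point is the explicit formula for the pointed Eisenstein series established earlier in the excerpt, namely $E_k^*(z)=\sum_{ab\neq 0}(az+b)^{-k}$. First I would write
\begin{equation*}
\Lambda(E_{k_1}^*,E_{k_2}^*;s_1,s_2)=\int_{0<t_2<t_1}\left(\sum_{a_1b_1\neq 0}(a_1it_1+b_1)^{-k_1}\right)t_1^{s_1-1}\left(\sum_{a_2b_2\neq 0}(a_2it_2+b_2)^{-k_2}\right)t_2^{s_2-1}\d t_1\d t_2,
\end{equation*}
and interchange the (absolutely convergent, by the Hecke estimate argument already used for convergence of $\Lambda$) sum and integral. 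The inner double integral over the simplex $0<t_2<t_1$ of a product of two translated rational functions against $t_1^{s_1-1}t_2^{s_2-1}$ is the same type of iterated residue computation carried out in the proof of Theorem~\ref{calculprincipal}: after a change of variables the pole structure collects into factors $(a_1b_2-a_2b_1)^{-g}$, which is exactly how the four-parameter sums $Z_g\mat{e_1}{f_1}{e_2}{f_2}$ arise.

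**Next I would** pass from the Mellin transform to the polynomial. The extended biperiod polynomial is defined by
\begin{equation*}
\frac{P_{E_{k_1},E_{k_2}}(X_1,X_2)}{(k_1-2)!(k_2-2)!}=\sum_{m_1,m_2}\lim_{(s_1,s_2)\to(m_1,m_2)}\frac{\Lambda(E_{k_1}^*,E_{k_2}^*;s_1,s_2)}{\Gamma(s_1)\Gamma(k_1-s_1)\Gamma(s_2)\Gamma(k_2-s_2)}\frac{X_1^{k_1-m_1-1}}{i^{m_1}}\frac{X_2^{k_2-m_2-1}}{i^{m_2}},
\end{equation*}
so the residues $a_0(E_k)=2\zeta(k)$ from Proposition~\ref{propres} contribute the factors $1/(k_j-1)$ and the overall $i\pi$. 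The key algebraic manipulation is the binomial identity that repackages the monomials $X_1^{k_1-m_1-1}X_2^{k_2-m_2-1}$ as $X_1^{e_1}X_2^{e_2}(X_2-X_1)^{k_1+k_2-4-m}$; this substitution, combined with the reindexing $f_j=m-k_j-e_j+2$, is what forces the exponents inside $Z_{m+2}$ and produces the stated denominators $e_1!e_2!(m-k_1+2-e_1)!(m-k_2+2-e_2)!(k_1+k_2-4-m)!$. Finally, to extract the odd part $\tfrac12\bigl(P(X_1,X_2)-P(-X_1,-X_2)\bigr)$ I would use the parity of $Z_g$ under $(a_j,b_j)\mapsto(-a_j,-b_j)$ to see that only the terms with $m$ odd survive, which yields the stated restriction $m$ impair.

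**The hard part will be** the bookkeeping in the contour/residue step that converts the iterated integral $\int_{0<t_2<t_1}$ into the rational sum indexed by $a_1b_2-a_2b_1$: one must track carefully which poles of the $t_2$-integrand lie in which half-plane (as in Lemme~\ref{lemess}, where the sign of $a_jb_jc_jd_j$ decided the residue), and ensure the simplex condition $t_2<t_1$ is correctly implemented so that the cross term $(a_1b_2-a_2b_1)^{-(m+2)}$ appears with the right multiplicity rather than the "diagonal" terms $a_1b_2=a_2b_1$ (which are excluded from the sum defining $Z_g$). A secondary technical point is justifying that the limit defining each Taylor coefficient is finite and that the poles of $\Lambda$ located by Proposition~\ref{propres} cancel against the zeros of $1/\Gamma(k_j-s_j)$ at the boundary integers $m_j\in\{0,k_j\}$; this is the mechanism by which the extended polynomial remains well-defined even though $\tilde V_{k_j}$ is not $\Gamma$-stable. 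Once these two points are settled the remaining steps are the routine binomial and parity identities sketched above.
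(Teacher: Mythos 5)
Your proposal takes a genuinely different route from the paper --- a direct fixed-weight computation of $\Lambda(E_{k_1}^*,E_{k_2}^*;s_1,s_2)$ --- but the central step is not supplied. You assert that the iterated integral over $0<t_2<t_1$ is ``the same type of iterated residue computation'' as in the proof of Th\'eor\`eme~\ref{calculprincipal} and that this is ``exactly how'' the sums $Z_g\mat{e_1}{f_1}{e_2}{f_2}$ arise. That analogy fails: the computation for the Cohen forms produces bizetas $\zeta(A,B)$ (via the Hurwitz formula applied to the $bd=0$ terms) and never produces determinant denominators $(a_1b_2-a_2b_1)^{-g}$; moreover it depends essentially on the parity and inequality hypotheses of Lemme~\ref{lemess}, which are not available here. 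In the paper the determinant appears only after summing the generating series $P_2(X_1,X_2,Y_1,Y_2)=\sum_{k_1,k_2}P_{E_{k_1},E_{k_2}}Y_1^{k_1-2}Y_2^{k_2-2}$ over \emph{all} weights: the geometric resummation turns $(a\,it+b)^{-k}$ into $\bigl[(a\,it+b)(a\,it+b-Y(it-X))\bigr]^{-1}$, the operator $\frac{\d^2}{\d Y_1\d Y_2}(Y_1Y_2\,\cdot\,)$ removes the first factors, and the partial-fraction decomposition of $\bigl[(A_1it+B_1)^2A_2(A_2it+B_2)\bigr]^{-1}$ with $A_i=a_i-Y_i$, $B_i=b_i+X_iY_i$ is what makes $A_1B_2-A_2B_1$ appear; the factor $(X_2-X_1)^{k_1+k_2-4-m}$ and the multinomial denominators then come from expanding $\bigl[Y_1(a_2X_1+b_2)-Y_2(a_1X_2+b_1)+Y_1Y_2(X_1-X_2)\bigr]^m$. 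At fixed $(k_1,k_2)$ and fixed critical integers none of this structure is visible, and your sketch gives no substitute mechanism; the ``binomial identity'' repackaging $X_1^{k_1-m_1-1}X_2^{k_2-m_2-1}$ into $X_1^{e_1}X_2^{e_2}(X_2-X_1)^{k_1+k_2-4-m}$ is precisely the hard part, not a routine identity.

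Two secondary points are also off. The factor $\frac{1}{(k_1-1)(k_2-1)}$ does not come from the residues $a_0(E_{k_j})$ of Proposition~\ref{propres}; it comes from inverting the differentiation $\frac{\d^2}{\d Y_1\d Y_2}(Y_1Y_2\,\cdot\,)$, which multiplies the coefficient of $Y_1^{k_1-2}Y_2^{k_2-2}$ by $(k_1-1)(k_2-1)$. And the restriction to $m$ odd is not obtained from a parity of $Z_g$ alone: in the paper the odd part of the polynomial is identified with the imaginary part of the expansion, which isolates the term $\frac{i\pi}{2}\,\mathrm{sgn}\!\left(\frac{b_2}{a_2}-\frac{b_1}{a_1}\right)\cdot(A_1B_2-A_2B_1)^{-2}$ produced by the explicit integration, and the surviving parities are then read off from the behaviour of this sign factor together with $Z_{m+2}$ under $(a_j,b_j)\mapsto(-a_j,-b_j)$. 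Without the explicit evaluation of the integral (log plus signed $i\pi/2$ term) you cannot even see where the overall $i\pi$ in the statement comes from.
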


Commençons par rappeler comment Zagier propose de faire un calcul analogue pour les polynômes des périodes des séries d'Eisenstein dans \cite{Za91}. Il considère la série génératrice suivante:
$$P_1(X,Y)=\sum_{k\geq 4} P_{E_k}(X)Y^{k-2}.$$
On s'intéresse à la partie impaire en $X$ c'est à dire $Q_1(X,Y)=1/2(P_1(X,Y)-P_1(-X,Y))$. On peut ainsi ajouter de manière artificielle le terme pour $k=2$ et ceux pour $k$ impair car ces termes s'annulent:
$$P_1(X,Y)=\sum_{k\geq 2}\int_0^{\infty}\sum_{mn\neq 0}\frac{(X-it)^{k-2}Y^{k-2}idt}{(mit+n)^k}=\int_0^{\infty}\sum_{mn\neq 0}\frac{idt}{(mit+n)(mit+n-Y(X-it))}.$$
On calcul alors:
$$\frac{\d}{\d Y}(YP_1(X,Y))=\int_0^{\infty}\sum_{mn\neq 0}\frac{idt}{(mit+n-Y(X-it))^2}$$
Ceci permet d'avoir après intervention de l'intégrale valide par convergence normale de la série:
$$\frac{\d}{\d Y}(YP_1(X,Y))==\sum_{mn\neq 0}\frac{1}{(m+Y)(n-XY)}.$$
Considérons alors la partie impaire suivant $X$ puis la partie paire suivant $Y$ donne:
\begin{multline*}
\frac{\d}{\d Y}(YQ_1(X,Y))=\sum_{\substack{k\geq 4\\k\text{ pair}}} P^-_{E_k}(X)(k-1)Y^{k-2}=\sum_{mn\neq 0}\frac{XY^2}{(m^2-Y^2)(n^2-(XY)^2)}\\
=4XY^2\sum_{m,n>0}\sum_{\alpha,\beta\geq 0} \frac{Y}{m}^{2\alpha}\frac{XY}{n}^{2\beta}=\sum_{\alpha,\beta\geq 0}\zeta(2\alpha+2)\zeta(2\beta+2)X^{2\beta+1}Y^{2\alpha+2\beta+2}
\end{multline*}
Ceci permet d'obtenir le résultat bien connu:
$$P_{E_k}^-(X)=\sum_{m\text{ impair}}4\frac{\zeta(k-m+1)\zeta(m+1)}{k-1}X^m.$$

Venons en à la démonstration de la proposition \ref{biperEk}.

\begin{proof}
Pour faciliter le calcul on regroupe, par analogie avec ce qui précède, sous forme de série génératrice l'ensemble des polynômes $P_{E_{k_1},E_{k_2}}$ pour les couples d'entiers $(k_1,k_2)$. 
Le polynôme des bipériodes $P_{E_{k_1},E_{k_2}}(X_1,X_2)$ est défini par:
$$P_{E_{k_1},E_{k_2}}(X_1,X_2)=\int_0^{\infty}E_{k_1}^*(it_1)(it_1-X_1)^{k_1-2}i\d t_1\int_{t_1}^{\infty}E_{k_2}^*(it_2)(it_2-X_2)^{k_2-2}i\d t_2.$$
Introduisons ainsi la série génératrice pour les polynômes des bipériodes:
$$P_2(X_1,X_2,Y_1,Y_2)=\sum_{k_1,k_2\geq 4}P_{E_{k_1},E_{k_2}}(X_1,X_2)Y_1^{k_1-2}Y_2^{k_2-2}.$$
On peut ajouter les termes nuls obtenus pour $k_i=2$ et $k_i$ impair donnant:
\begin{align*}
&\sum_{k_1,k_2\geq 2}\int_0^{\infty}E_{k_1}^*(it_1)(it_1-X_1)^{k_1-2}i\d t_1\int_{t_1}^{\infty}E_{k_2}^*(it_2)(it_2-X_2)^{k_2-2}i\d t_2 Y_1^{k_1-2}Y_2^{k_2-2}\\
=&\sum_{a_1b_1a_2b_2\neq 0}\int_0^{\infty}\sum_{k_1\geq 2}\frac{(Y_1(it_1-X_1))^{k_1-2}}{(a_1it_1+b_1)^{k_1}}i\d t_1\int_{t_1}^{\infty}\sum_{k_2\geq 2}\frac{(Y_2(it_2-X_2))^{k_2-2}}{(a_2it_2+b_2)^{k_2}}i\d t_2\\
=&\sum_{a_1b_1a_2b_2\neq 0}\int_0^{\infty}\frac{i\d t_1}{(a_1it_1+b_1)(a_1it_1+b_1-Y_1(it_1-X_1))}\int_{t_1}^{\infty}\frac{i\d t_2}{(a_2it_2+b_2)(a_2it_2+b_2-Y_2(it_2-X_2))}.
\end{align*}

Pour simplifier la suite du calcul, nous appliquons l'opération suivante:
\begin{align*}
\frac{\d^2}{\d Y_1\d Y_2}&(Y_1Y_2P_2(X_1,X_2,Y_1,Y_2))\\
&=\sum_{a_1b_1a_2b_2\neq 0}\int_0^{\infty}\frac{i\d t_1}{(a_1it_1+b_1-Y_1(it_1-X_1))^2}\int_{t_1}^{\infty}\frac{i\d t_2}{(a_2it_2+b_2-Y_2(it_2-X_2))^2}\\
&=\sum_{a_1b_1a_2b_2\neq 0}\int_0^{\infty}\frac{i\d t_1}{(a_1it_1+b_1-Y_1(it_1-X_1))^2}\frac{1}{(a_2-Y_2)(a_2it_1+b_2-Y_2(it_1-X_2))}\\
&=\sum_{a_1b_1a_2b_2\neq 0}\int_0^{\infty}\frac{idt}{(A_1it+B_1)^2A_2(A_2it+B_2)}.
\end{align*}
où on définit $A_i=a_i-Y_i$ et $B_i=b_i+X_iY_i$ pour $i=1,2$.\par
On dispose de la décomposition en éléments simples suivante:
\begin{multline*}
\frac{1}{(it+B_1/A_1)^2(it+B_2/A_2)}=\frac{1}{it+B_1/A_1}\left(\frac{1}{it+B_1/A_1}-\frac{1}{it+B_2/A_2}\right)\frac{1}{B_2/A_2-B_1/A_1}\\
=\frac{1}{(B_2/A_2-B_1/A_1)(it+B_1/A_1)^2}-\frac{1}{(B_2/A_2-B_1/A_1)^2}\left(\frac{1}{it+B_1/A_1}-\frac{1}{it+B_2/A_2}\right).
\end{multline*}
Elle nous permet d'effectuer le calcule suivant des paramètre $(a_1,b_1,a_2,b_2)$ non nuls et vérifiant $a_1b_2\neq a_2b_2$:
\begin{multline*}
\int_0^{\infty}\frac{idt}{(A_1it+B_1)^2A_2(A_2it+B_2)}=\frac{1}{(A_1B_2-A_2B_1)A_2B_1}\\
+\frac{1}{(A_1B_2-A_2B_1)^2}\left(ln\left|\frac{A_2B_1}{A_1B_2}\right|+i\frac{\pi}{2} sgn\left(\frac{b_2}{a_2}-\frac{b_1}{a_1}\right)\right).
\end{multline*}

Pour retrouver les polynômes des bipériodes il suffit de développer par rapport à $Y_1$ et $Y_2$.

La partie impaire, correspond à la partie imaginaire en effet:
$$Q_2(X_1,X_2,Y_1,Y_2)=\frac{1}{2}\left(P_2(X_1,X_2,Y_1,Y_2)-P_2(-X_1,-X_2,-Y_1,-Y_2)\right)=\Im(P_2(X_1,X_2,Y_1,Y_2)).$$
et est donnée par les termes de la forme:
\begin{equation*}
\frac{1}{(A_1B_2-A_2B_1)^2}=\sum_{m\geq 0}(m+1)\frac{\left[ Y_1(a_2X_1+b_2)-Y_2(a_1X_2+b_1)+Y_1Y_2(X_1-X_2)\right]^m}{(a_1b_2-a_2b_1)^{m+2}}
\end{equation*}
Car on peut écrire:
\begin{align*}
A_1B_2-A_2B_1&=(a_1-Y_1)(b_2+X_2Y_2)-(a_2-Y_2)(b_1+X_1Y_1)\\
&=(a_1b_2-a_2b_1)-Y_1(a_2X_1+b_2)+Y_2(a_1X_2+b_1)-Y_1Y_2(X_1-X_2).
\end{align*}

Et ainsi il ne reste plus qu'à sommer pour obtenir:
\begin{equation*}
Q_2=\sum_{e_1+e_2+f_1+f_2+\alpha=m\geq 0} \frac{(m+1)!}{e_1!e_2!f_1!f_2!\alpha!}Z_{m+2}\mat{e_1}{f_1}{e_2}{f_2} X_1^{e_2}Y_1^{e_2+f_2}X_2^{e_1}(-Y_2)^{e_1+f_1}[Y_1Y_2(X_1-X_2)]^{\alpha}.
\end{equation*}

\end{proof}

\begin{rem}
On peut faire un calcul similaire pour la partie paire malgré l'apparition de termes plus complexe. Celle-ci correspond à la partie réelle obtenue. On a, d'une part:
\begin{align*}
&\frac{1}{(A_1B_2-A_2B_1)A_2B_1}\\
&=\sum_{\alpha=0}^{\infty}\frac{(Y_1(a_2X_1+b_2)-Y_2(a_1X_2+b_1)+Y_1Y_2(X_1-X_2))^{\alpha}}{(a_1b_2-a_2b_1)^{\alpha+1}}\frac{1}{(a_2-Y_2)(b_1+X_1Y_1)}\\
&=\sum_{k_1,k_2}\sum_{\alpha_1,\alpha_2,\alpha_3}\frac{(\alpha_1+\alpha_2+\alpha_3)!}{\alpha_1!\alpha_2!\alpha_3!}\frac{(a_2X_1+b_2)^{\alpha_1}(-a_1X_2-b_1)^{\alpha_2}(X_1-X_2)^{\alpha_3}}{(a_1b_2-a_2b_1)^{\alpha_1+\alpha_2+\alpha_3+1}}\frac{(-X_1)^{k_1-2-\alpha_1-\alpha_3}Y_1^{k_1-2}Y_2^{k_2-2}}{b_1^{k_1-1-\alpha_1-\alpha_3}a_2^{k_2-1-\alpha_2-\alpha_3}}
\end{align*}
En développant selon $X_1$ et $X_2$, on trouve des coefficients en fonction de $Z_g\mat{e_1}{f_1}{e_2}{f_2}$ donnée par
\begin{equation}
\frac{1}{(k_1-1)(k_2-1)}\sum_{\substack{m\geq 0\\ m\text{ impair}}} \sum_{e_1,e_2,e_3,e_4\geq 0} \frac{m!Z_{m+1}\mat{-e_3}{k_1-1+e_3-m}{k_2-1+e_2-m}{-e_2}X_1^{k_1-2+e_1+e_3+e_4-m}X_2^{e_3}}{e_1!e_2!e_3!e_4!(m-e_1-e_2-e_3-e_4)!}.
\end{equation}

Et d'autre part, on a:
\begin{align*}
&\frac{1}{(A_1B_2-A_2B_1)^2}ln\left|\frac{A_2B_1}{A_1B_2}\right|\\
&=\sum_{m\geq 0}(m+1)\frac{\left[ Y_1(a_2X_1+b_2)-Y_2(a_1X_2+b_1)+Y_1Y_2(X_1-X_2)\right]^m}{(a_1b_2-a_2b_1)^{m+1}}\\
&\times\left(ln\left|\frac{a_2b_1}{a_1b_2}\right|+\sum_{\alpha_2>0}\frac{(Y_2/a_2)^{\alpha_2}}{\alpha_2}+\sum_{\beta_1>0}\frac{(-X_1Y_1/b_1)^{\beta_1}}{\beta_1}
-\sum_{\alpha_1>0}\frac{(Y_1/a_1)^{\alpha_1}}{\alpha_1}-\sum_{\beta_2>0}\frac{(-X_2Y_2/b_2)^{\beta_2}}{\beta_2}\right).
\end{align*}
Elle fait apparaitre une partie avec une somme sur de logarithmes ainsi que termes du type $Z_g\mat{e_1}{f_1}{e_2}{f_2}$.\par
\end{rem}

La famille des nombres $Z_g\mat{e_1}{f_1}{e_2}{f_2}$ est une généralisation particulière des valeurs multiples de zêta. Bien que la détermination de ces nombres ne semble pas se ramener directement aux valeurs $\zeta(n)$ ou $\zeta(a,b)$, un certain nombre de relations assez simple les relient.

\begin{prop}
On peut calculer explicitement les $Z_g\mat{e_1}{f_1}{e_2}{f_2}$ pour $g=0$.\\
Soient $e_1,e_2,f_1$ et $f_2$ des entiers naturels. On a:
$$Z_0\left(\begin{smallmatrix}e_1 & f_1\\ e_2 & f_2\end{smallmatrix}\right)=
\begin{cases}
16\zeta(e_1)\zeta(f_1)\zeta(e_2)\zeta(f_2)-\frac{8\zeta(e_1+f_1)\zeta(e_2+f_2)\zeta(e_1+e_2)\zeta(f_1+f_2)}{\zeta(e_1+e_2+f_1+f_2)}&\text{si }e_1,e_2,f_1\text{ et }f_2\text{ pairs},\\
-\frac{8\zeta(e_1+f_1)\zeta(e_2+f_2)\zeta(e_1+e_2)\zeta(f_1+f_2)}{\zeta(e_1+e_2+f_1+f_2)}&\text{si }e_1,e_2,f_1\text{ et }f_2\text{ impairs},\\
0&\text{sinon.}
\end{cases}$$
De plus, on dispose des relations suivantes:
$$Z_g\left(\begin{smallmatrix}e_1 & f_1\\ e_2 & f_2\end{smallmatrix}\right)=
Z_{g+1}\left(\begin{smallmatrix}e_1-1 & f_1\\ e_2 & f_2-1\end{smallmatrix}\right)
-Z_{g+1}\left(\begin{smallmatrix}e_1 & f_1-1\\ e_2-1 & f_2\end{smallmatrix}\right).$$
et:
$$Z_g\left(\begin{smallmatrix}e_1 & f_1\\ e_2 & f_2\end{smallmatrix}\right)
=Z_g\left(\begin{smallmatrix}f_2 & f_1\\ e_2 & e_1\end{smallmatrix}\right)
=Z_g\left(\begin{smallmatrix}e_1 & e_2\\ f_1 & f_2\end{smallmatrix}\right)
=(-1)^gZ_g\left(\begin{smallmatrix} e_2 & f_2\\e_1 & f_1\end{smallmatrix}\right).$$
\end{prop}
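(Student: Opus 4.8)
Le plan est d'établir les quatre assertions séparément, en partant de la formule du cas $g=0$, puis des relations de récurrence et de symétrie. Pour le calcul explicite de $Z_0\mat{e_1}{f_1}{e_2}{f_2}$, je partirais de la définition
$$Z_0\mat{e_1}{f_1}{e_2}{f_2}=\sum'_{a_1,b_1,a_2,b_2}a_1^{-e_1}b_1^{-f_1}a_2^{-e_2}b_2^{-f_2},$$
où la somme porte sur les entiers non nuls vérifiant $a_1b_2\neq a_2b_1$. L'idée clé est que la somme \emph{sans contrainte} se factorise en $\prod 2\zeta(e_j)\cdots=16\zeta(e_1)\zeta(f_1)\zeta(e_2)\zeta(f_2)$ lorsque tous les exposants sont pairs (et vaut $0$ dès qu'un exposant est impair, par antisymétrie $n\mapsto -n$). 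Il faut alors soustraire la contribution des termes \emph{exclus} $a_1b_2=a_2b_1$. Sur ce sous-réseau on pose $\lambda=\gcd$ convenable et on paramètre $(a_1,b_1,a_2,b_2)=(\alpha c,\beta c,\alpha d,\beta d)$ avec $\alpha\wedge\beta=1$; la sommation se scinde alors en un produit de quatre séries $\zeta$ aux arguments croisés $e_1+e_2$, $f_1+f_2$, $e_1+f_1$, $e_2+f_2$, divisé par $\zeta(e_1+e_2+f_1+f_2)$ pour corriger le facteur multiplicatif commun. C'est exactement le terme correctif $-8\,\zeta(e_1+f_1)\zeta(e_2+f_2)\zeta(e_1+e_2)\zeta(f_1+f_2)/\zeta(e_1+e_2+f_1+f_2)$ annoncé. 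Le cas « tous impairs » résulte du fait que la somme libre s'annule mais pas la contribution du sous-réseau diagonal.

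Pour la relation de récurrence en $g$, j'écrirais simplement l'identité algébrique
$$\frac{1}{a_1b_2-a_2b_1}=\frac{a_1b_2-a_2b_1}{(a_1b_2-a_2b_1)^{2}},$$
appliquée après avoir multiplié et divisé l'intégrande par $a_1b_2-a_2b_1$ : en développant le numérateur en $a_1b_2-a_2b_1$ on relie $Z_g$ aux deux termes $Z_{g+1}$ avec un exposant abaissé d'une unité sur une ligne et sur une colonne croisées, le signe négatif provenant du $-a_2b_1$. Concrètement, $Z_{g}\mat{e_1}{f_1}{e_2}{f_2}$ correspond à l'insertion du facteur $(a_1b_2-a_2b_1)=a_1b_2-a_2b_1$ dans une somme à $g+1$ au dénominateur, d'où
$$Z_g\mat{e_1}{f_1}{e_2}{f_2}=Z_{g+1}\mat{e_1-1}{f_1}{e_2}{f_2-1}-Z_{g+1}\mat{e_1}{f_1-1}{e_2-1}{f_2},$$
qui est bien l'identité voulue.

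Pour les trois symétries, il s'agit de changements de variables dans la somme définissant $Z_g$. La permutation $(a_1,b_1,a_2,b_2)\mapsto(b_2,b_1,a_2,a_1)$ échange les rôles qui donnent $Z_g\mat{f_2}{f_1}{e_2}{e_1}$, en observant que le déterminant $a_1b_2-a_2b_1$ est préservé à l'ordre près ; l'échange lignes/colonnes $(a_1,b_1,a_2,b_2)\mapsto(a_1,a_2,b_1,b_2)$ donne $Z_g\mat{e_1}{e_2}{f_1}{f_2}$, le déterminant devenant $a_1b_2-b_1a_2$ inchangé ; enfin la transposition des deux indices $(a_1,b_1)\leftrightarrow(a_2,b_2)$ renverse le signe du déterminant, d'où le facteur $(-1)^g$. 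Chaque fois il faut vérifier que la contrainte $a_1b_2\neq a_2b_1$ et la condition de non-nullité sont stables sous le changement de variables, ce qui est immédiat.

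L'obstacle principal sera le calcul de $Z_0$, plus précisément l'identification correcte de la contribution du sous-réseau $\{a_1b_2=a_2b_1\}$ et la justification des manipulations de séries : la somme libre et la somme exclue ne convergent absolument que pour $e_1+e_2+f_1+f_2$ assez grand, et il faut s'assurer que la paramétrisation primitive $(a_1,a_2)=\alpha(c,d)$, $(b_1,b_2)=\beta(c,d)$ couvre exactement une fois chaque quadruplet de la diagonale, ce qui produit le quotient par $\zeta(e_1+e_2+f_1+f_2)$. Les relations de récurrence et de symétrie, en revanche, sont de pures réécritures formelles et ne devraient poser aucune difficulté une fois la convergence acquise.
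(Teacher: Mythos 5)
Le texte de la th\`ese \'enonce cette proposition sans en donner de d\'emonstration (elle cl\^ot la section sur les bip\'eriodes des s\'eries d'Eisenstein), il n'y a donc pas de preuve de r\'ef\'erence \`a laquelle comparer la v\^otre. Cela dit, votre plan est correct et constitue bien l'argument attendu : pour $g=0$, la somme libre se factorise en $16\zeta(e_1)\zeta(f_1)\zeta(e_2)\zeta(f_2)$ (nulle d\`es qu'un exposant est impair), et la contribution du lieu d\'eg\'en\'er\'e $a_1b_2=a_2b_1$ se param\`etre par une direction primitive et deux scalaires, ce qui donne le terme en $\zeta(e_1+e_2)\zeta(f_1+f_2)\zeta(e_1+f_1)\zeta(e_2+f_2)/\zeta(e_1+e_2+f_1+f_2)$ ; la r\'ecurrence en $g$ est l'identit\'e t\'elescopique obtenue en ins\'erant $a_1b_2-a_2b_1$ au num\'erateur ; les sym\'etries sont les changements de variables $a_1\leftrightarrow b_2$, $b_1\leftrightarrow a_2$ et $(a_1,b_1)\leftrightarrow(a_2,b_2)$, ce dernier renversant le signe du d\'eterminant. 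Deux points m\'eritent d'\^etre explicit\'es dans la r\'edaction finale : d'abord, la param\'etrisation $(a_1,b_1,a_2,b_2)=(\alpha c,\beta c,\alpha d,\beta d)$ avec $(\alpha,\beta)$ primitif recouvre chaque quadruplet exactement \emph{deux} fois (via $((\alpha,\beta),c,d)$ et $((-\alpha,-\beta),-c,-d)$), et c'est ce facteur $\tfrac12$ qui ram\`ene le produit $2\cdot 2\cdot 4$ au coefficient $8$ de l'\'enonc\'e -- il faut le dire, sinon on obtient $16$ ; ensuite, le cas \og sinon \fg{} demande de v\'erifier que si les quatre parit\'es ne sont pas toutes \'egales, l'une au moins des quatre sommes crois\'ees $e_1+f_1$, $e_2+f_2$, $e_1+e_2$, $f_1+f_2$ est impaire (ce qui r\'esulte de la transitivit\'e des congruences modulo $2$), de sorte que la contribution diagonale s'annule aussi. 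La r\'eserve de convergence que vous soulevez est pertinente : la factorisation exige en pratique que chaque exposant soit $\geq 2$, condition plus forte que le seul $e_1+e_2+f_1+f_2+g\geq 3$ de la d\'efinition.
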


\chapter{Polynôme des bipériodes}
\minitoc
\section*{Enoncé des résultats}
Soient $k_1,k_2\geq 2$ des entiers pairs.\par
Soit $V_{k_1,k_2}^{\Z}$ l'espace des polynômes en deux indéterminées $X_1$ et $X_2$, à coefficients entiers et de degrés bornés par $deg(X_j)\leq k_j-2$. On l'identifiera librement à $V_{k_1}^{\Z}\otimes V_{k_2}^{\Z}$, où l'on a posé: $V_{k_j}^{\Z}=\Z_{k_j-2}[X_j]$.\par
Pour tout anneau commutatif $A$, définissons le $A$-module $V_{k_1,k_2}^A=V_{k_1,k_2}^{\Z}\otimes A$. Lorsque $A=\C$, nous noterons plus simplement $V_{k_1,k_2}^{\C}=V_{k_1,k_2}$. Il hérite d'une $\R$-structure donnée par $V_{k_1,k_2}^{\R}$.\par
Notons $\Gamma=\G/\{\pm id\}$ le groupe modulaire. Le groupe $\Gamma^2$ opère diagonalement sur $V_{k_1,k_2}^{\Z}$. Soient $S_{k_1}$ et $S_{k_2}$ les espaces des formes holomorphes modulaires paraboliques pour $\Gamma$ et de poids $k_1$ et $k_2$ respectivement. Pour tout anneaux $A\subset \C$, notons $S_{k_1}^{A}\subset S_{k_1}$ et $S_{k_2}^{A}\subset S_{k_2}$ les sous-espaces des formes à coefficients de Fourier dans l'anneau $A$. On a vu notamment que pour tout corps de nombre $K\subset\C$, on a: $S_{k_j}^K=S_{k_j}^{\Z}\otimes_{\Z} K$ pour $j=1,2$.\par
Soit $(f_1,f_2)\in S_{k_1}\times S_{k_2}$, le \textit{polynôme des bipériodes} $P_{f_1,f_2}\in V_{k_1,k_2}$ a été défini dans le Chapitre $2$ section $2.4.2$ par:
\begin{align}
P_{f_1,f_2}(X_1,X_2)&=\int_{0<t_1<t_2} f_1(it_1)f_2(it_2)(X_1-it_1)^{k_1-2}(X_2-it_2)^{k_2-2}\d t_1\d t_2\\
&=\sum_{m_1=1}^{k_1-1}\sum_{m_2=1}^{k_2-1}\binom{k_1-2}{m_1-1}\binom{k_2-2}{m_2-1}\Lambda(f_1,f_2;m_1,m_2)\frac{X_1^{k_1-m_1-1}X_2^{k_2-m_2-1}}{i^{m_1+m_2}},
\end{align}
où les \textit{bipériodes} sont les valeurs aux bi-entiers critiques (\textit{i.e.} $1\leq m_1\leq k_1-1$ et $1\leq m_2\leq k_2-1$) du prolongement analytique à $\C^2$ de l'application définie pour $\Re(s_j)>k_j$ par:
\begin{equation}
\Lambda(f_1,f_2;s_1,s_2)=\int_{0<t_1<t_2} f_1(it_1)f_2(it_2)t_1^{s_1-1}t_2^{s_2-1}\d t_1\d t_2.
\end{equation}

\begin{prop}
1) Le polynôme de bipériodes vérifie les deux systèmes d'équations :\\
De type récursive, c'est-à-dire dépendant des polynômes des périodes:
\begin{align}
P_{f_1,f_2}|&_{(1,1)+(S,S)}=P_{f_1}\otimes P_{f_2},\label{relSS}\\
P_{f_1,f_2}|&_{[(1,1)-(S,S)][(1,1)+(U,U)+(U^2,U^2)]}=P_{f_1}\otimes P_{f_2}|_{(U^2,U)-(U,U^2)}.\label{relUU}
\end{align}
De type linéaire:
\begin{align}
P_{f_1,f_2}|&_{(1+S,1+S)}=P_{f_1,f_2}|_{(1+U+U^2,1+U+U^2)}=0,\label{intral1}\\
P_{f_1,f_2}|&_{[(1,1)+(S,S)](1+U+U^2,1)}=P_{f_1,f_2}|_{[(1,1)+(S,S)](1,1+U+U^2)}=0,\nonumber\\
P_{f_1,f_2}|&_{(S,S)+(S,SU^2)+(SU^2,SU^2)+(1,U^2)-(U,U)}=0,\nonumber
\end{align}
$$\text{où }S=\left(\begin{smallmatrix} 0 & -1\\ 1 & \phantom{-}0\end{smallmatrix}\right)
\text{ et }U=\left(\begin{smallmatrix} \phantom{-}0 & 1\\-1 & 1\end{smallmatrix}\right).$$
2) Toute conjonction de deux des trois assertions (\ref{relSS}), (\ref{relUU}) et (\ref{intral1}) implique la troisième. En particulier, la minimalité du système d'équation (\ref{intral1}) implique la minimalité des relations de Manin généralisés. 
\end{prop}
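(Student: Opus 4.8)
The proposition has two distinct parts. Part 1) asks us to verify that $P_{f_1,f_2}$ satisfies a collection of functional equations under the action of $\Z[P\Gamma^2]$, split into "recursive" relations (coupling bi-periods to products of periods) and "linear" relations (homogeneous in the bi-period polynomial). Part 2) is a purely algebraic statement that any two of the three equations \eqref{relSS}, \eqref{relUU}, \eqref{intral1} imply the third, together with a minimality consequence.

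\textbf{Strategy for Part 1).} The relations of type \eqref{relSS} and \eqref{relUU} should be obtained by combining the shuffle relations and the generalized Manin relations already established in Chapter 2, section 5 of the excerpt. Concretely, the shuffle relation for $(a,b)=(1,1)$ reads $P_{f_1,f_2}+P_{f_2,f_1}(X_2,X_1)=P_{f_1}\otimes P_{f_2}$, and by the stated identity $P^{(2,1)}_{f_1,f_2}=P_{f_1,f_2}|_{(S,S)}$ (the reversal permutation acting as $(S,\dots,S)$), the permuted term $P_{f_2,f_1}(X_2,X_1)=P^{(2,1)}_{f_1,f_2}$ is exactly $P_{f_1,f_2}|_{(S,S)}$. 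Adding the identity operator gives precisely \eqref{relSS}. For \eqref{relUU} I would likewise start from the generalized Manin relation in length $2$ (the corollary giving the two sums over $a+b=n$ and $a+b+c=n$), expressing the $U$-type relation, and then apply the operator $[(1,1)-(S,S)]$ to isolate the antisymmetric part; the right-hand side $P_{f_1}\otimes P_{f_2}|_{(U^2,U)-(U,U^2)}$ should emerge from the degree-$1$ pieces. The linear relations \eqref{intral1} are the genuinely new constraints: the first line $P_{f_1,f_2}|_{(1+S,1+S)}=0$ and $P_{f_1,f_2}|_{(1+U+U^2,1+U+U^2)}=0$ follow directly from the non-commutative Manin relations for $J(f_1,f_2)$ by passing to the degree-$2$ homogeneous component under $\Psi_2=\Psi_1^{\otimes 2}$, which is a morphism of $SL_2(\Z)$-modules. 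The remaining two linear relations should be derived by combining these with the recursive relations—that is, they are consequences of the interplay that Part 2) formalizes.

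\textbf{Strategy for Part 2).} This is the conceptual core and I expect it to be the main obstacle. The claim is an identity of ideals (or of operators acting on $V_{k_1,k_2}$): working in the group ring $\Z[P\Gamma^2]$, one shows that the three operator-relations \eqref{relSS}, \eqref{relUU}, \eqref{intral1} are linearly dependent modulo the annihilator structure, so that any two force the third. The plan is to manipulate the defining elements at the level of $\Z[P\Gamma^2]$: expand $(1+S,1+S)$ and $(1+U+U^2,1+U+U^2)$ and rewrite them using the factorizations $(1,1)+(S,S)$ and $(1,1)-(S,S)$ together with the single-variable Manin idempotent-like elements $1+S$, $1+U+U^2$. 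The key algebraic fact to exploit is that $S^2=1$ and $U^3=1$ in $P\Gamma$, so the symmetrizer $(1,1)+(S,S)$ and the antisymmetrizer $(1,1)-(S,S)$ decompose the diagonal action; projecting \eqref{intral1} onto the $\pm$ eigenspaces of $(S,S)$ should recover \eqref{relSS} (the $+$ part, matching the product of periods via the shuffle) and \eqref{relUU} (the $-$ part). The hard part will be tracking the mixed terms $(U^2,U)-(U,U^2)$ and verifying that the "cross" relations in \eqref{intral1} (those involving $(1+U+U^2,1)$ and the five-term element $(S,S)+(S,SU^2)+\dots-(U,U)$) are exactly the closure conditions making this projection consistent. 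Once the equivalence of "any two imply the third" is in hand, the minimality statement follows formally: if \eqref{intral1} were redundant, then \eqref{relSS} and \eqref{relUU} alone would over-determine $P_{f_1,f_2}$, contradicting that the classical Manin relations $1+S,\,1+U+U^2$ are already minimal in length $1$; thus the length-$2$ system inherits minimality.

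\textbf{Anticipated difficulty.} The genuine obstacle is the bookkeeping of the non-abelian relations in $P\Gamma^2$: the five-term relation and the coupling between the $S$- and $U$-symmetrizations do not simplify by any single substitution, and one must be careful that the element $(U^2,U)-(U,U^2)$ appearing on the right of \eqref{relUU} is handled consistently when passing between the recursive and linear forms. I would organize the computation by fixing a basis of expressions and verifying the three-term dependence as an explicit identity in $\Z[P\Gamma^2]$, reducing Part 2) to a finite check independent of $f_1,f_2$ and of the weights $k_1,k_2$.
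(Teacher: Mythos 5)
Your overall plan is workable, and it is worth noting that the paper never gives a single self-contained proof of this proposition: the recursive relations are the $n=2$ case of the shuffle and generalized Manin relations of Chapter~2 (exactly as you say for \eqref{relSS}), while the linear relations \eqref{intral1} are obtained only much later, as the corollary of the theorem computing generators of $\mathcal{I}_2$ via the relative homology of $\H^2$ (the chain $\tau_2$, the transverse spaces $H,V,D$, and the inclusion $\mathcal{I}_2\subset\widetilde{\mathcal{J}(k_1,k_2)}$). Your route is purely formal and bypasses that homological machinery; it converges on the same explicit identities in $\Z[P\G^2]$ that the paper uses to show each generator lies in $I_H\cap I_V\cap I_D$, e.g. $(1+S,1+S)=[(1,1)+(S,S)](1,1+S)$ and $(1+U+U^2,1+U+U^2)=[(1,1)+(U,U)+(U^2,U^2)](1,1+U+U^2)$. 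The one imprecision to fix: the degree-$2$ component of the non-commutative Manin relations only produces the \emph{diagonal} operators $(1,1)+(S,S)$ and $(1,1)+(U,U)+(U^2,U^2)$ together with lower-degree products, so $P_{f_1,f_2}|_{(1+S,1+S)}=0$ does not follow ``directly'' from them; you must first factor $(1+S,1+S)$ through the diagonal symmetrizer as above and then invoke the length-one relation $P_{f_2}|_{1+S}=0$ to kill the right-hand side $P_{f_1}\otimes P_{f_2}|_{(1,1+S)}$. The same caveat applies to your claim that projecting \eqref{intral1} onto the $\pm$ eigenspaces of $(S,S)$ ``recovers'' \eqref{relSS} and \eqref{relUU}: those two relations have non-trivial right-hand sides, so the implication $(\eqref{relUU}\text{ and }\eqref{intral1})\Rightarrow\eqref{relSS}$ necessarily feeds in the length-one period polynomials and their Manin relations; make that dependence explicit when you organize Part~2) as a finite identity check in $\Z[P\G^2]$. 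With that adjustment your argument closes, and for Part~2) you are in fact supplying a proof the paper only sketches implicitly through the $I_D$-memberships in its generators theorem.
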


Les relations de type linéaire permettent de définir un sous-module de $V_{k_1,k_2}^{\Z}$:
\begin{equation}
W_{k_1,k_2}^{\Z}=\{P\in V_{k_1,k_2}^{\Z}\text{ vérifiant les relations linéaires (\ref{intral1}) }\}.
\end{equation}
Posons $W_{k_1,k_2}=W_{k_1,k_2}^{\Z}\otimes\C$. Cet espace contient le $\Q$-espace vectoriel des polynômes des bipériodes à coefficients de Fourier rationnelles:
\begin{equation}
\Per_{k_1,k_2}^{\Q}=\{P_{f_1,f_2}\text{ tel que }(f_1,f_2)\in S_{k_1}^{\Q}\times S_{k_2}^{\Q}\}\subset W_{k_1,k_2}.
\end{equation}
\nomenclature{$\Per_{k_1,k_2}^{\Q}$}{Espace des polynômes des bipériodes des formes de $S_{k_1}^{\Q}\otimes S_{k_2}^{\Q}$}
\nomenclature{$W_{k_1,k_2}^{A}$}{Sous-module de $V_{k_1,k_2}^A$ annulé par $\mathcal{I}_2$}

La $\Q$-structure de $S_{k_1}\times S_{k_2}$ donnée par $S_{k_1}^{\Q}\times S_{k_2}^{\Q}$ permet de considérer, pour tout corps de nombres $K$, l'ensemble des polynômes des bipériodes à coefficients de Fourier dans $K$, noté $\Per_{k_1,k_2}^K$ et défini par:
\begin{equation}
\Per_{k_1,k_2}^K=\Per_{k_1,k_2}^{\Q}\otimes_{\Q} K=\{P_{f_1,f_2}\text{ tel que }(f_1,f_2)\in S_{k_1}^{K}\times S_{k_2}^{K}\}.
\end{equation}
Nous noterons simplement $\Per_{k_1,k_2}=\Per_{k_1,k_2}^{\C}$ l'ensemble des polynômes des périodes et on a ainsi $\Per_{k_1,k_2}\subset W_{k_1,k_2}$.\par

Lorsque $n=1$, on dispose pour tout entier pair $k\geq 4$ du sous-groupe de $V_k^{\Z}$:
$$W_{k}^{\Z}=\{P\in\Z_{k-2}[X]\text{ tel que }P|_{1+S}=P|_{1+U+U^2}=0\}.$$
Le groupe $W_{k}^{\Z}$ contient un élément distingué $1-X^{k-2}$ proportionnel sur $\C$ à $P_{G_k}^+$ la partie paire du polynôme des périodes de la $k$-ième série d'Eisenstein. Posons $E_k^{\Z}=(1-X^{k-2})\Z\subset W_k^{\Z}$. De plus, comme vu le Chapitre $1$ section $1.6$, le $\Z$-module $W_k^{\Z}$ se scinde suivant les parties paires et impaires des polynômes: 
$$W_k^{\pm ,\Z}=\{P\in W_k^{\Z}\text{ tel que }P(-X)=\pm P(X)\}\text{ et }W_k^{\Z}=W_k^{+,\Z}\oplus W_k^{-,\Z}.$$
\nomenclature{$\Per_{k}^{\Q}$}{Espace des polynômes des bipériodes des formes de $S_{k}^{\Q}$}
\nomenclature{$W_k^{A}$}{Sous-module de $V_k^A$ annulé par $\mathcal{I}_1$}
\nomenclature{$E_k^{A}$}{Sous-module de rang $1$ de $W_k^A$ engendré par $1-X^{k-2}$}

Définissons le sous-groupe de $V_{k_1,k_2}^{\Z}$ annulé par les relations de Manin diagonales par:
\begin{equation}
V_{k_1,k_2}^{\Z}[I_D]=\{P\in V_{k_1,k_2}^{\Z}\text{ tel que }P|_{(1,1)+(S,S)}=P|_{(1,1)+(U,U)+(U^2,U^2)}=0\}.
\end{equation}
Par analogie avec le cas où $n=1$ nécessitant les formes non paraboliques, posons dans $V_{k_1,k_2}^{\Z}$ :
\begin{equation}
E_{k_1,k_2}^{\Z}=\left(W_{k_1}^{\Z}\otimes 1\right)+ V_{k_1,k_2}^{\Z}[I_D]+ \left(X_1^{k_1-2}\otimes W_{k_2}^{\Z}\right)\subset W_{k_1,k_2}^{\Z}.
\end{equation}
\nomenclature{$E_{k_1,k_2}^{A}$}{Partie Eisenstein d'ordre $2$ de $W_{k_1,k_2}^A$}

\begin{thm}
L'espace vectoriel $W_{k_1,k_2}^{\Q}$ est le plus petit $\Q$-sous-espace-vectoriel de $V_{k_1,k_2}^{\Q}$ contenant $E_{k_1,k_2}^{\Q}$ tel que son extension au corps des complexes contient $\Per_{k_1,k_2}$.
\end{thm}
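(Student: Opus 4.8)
The plan is to reduce the statement to the explicit $\C$-linear decomposition of $W_{k_1,k_2}$ furnished by Theorem \ref{thmchap3}, and then to handle the passage from $\C$ to $\Q$ by a short descent argument. The theorem asserts two things: that $W_{k_1,k_2}^{\Q}$ enjoys the two required properties, and that it is smallest among $\Q$-subspaces enjoying them. I will treat these in turn.

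First I would check that $W_{k_1,k_2}^{\Q}$ works. The inclusion $E_{k_1,k_2}^{\Q}\subset W_{k_1,k_2}^{\Q}$ is immediate from the definition of $E_{k_1,k_2}^{\Z}$ as a submodule of $W_{k_1,k_2}^{\Z}$, tensored with $\Q$. For $\Per_{k_1,k_2}\subset W_{k_1,k_2}^{\Q}\otimes\C$ I would invoke the linear relations (\ref{intral1}), satisfied by every $P_{f_1,f_2}$ by the Proposition above, which say precisely that $\Per_{k_1,k_2}\subset W_{k_1,k_2}$.

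The substance is minimality. Here I would use the decomposition $W_{k_1,k_2}=\Per_{k_1,k_2}\oplus\overline{\Per_{k_1,k_2}}\oplus E_{k_1,k_2}$ of Theorem \ref{thmchap3}(2), recalling $MP(k_1,k_2)=\Per_{k_1,k_2}$ and that the bracketed summand there is $E_{k_1,k_2}^{\Q}\otimes\C$. Given any $\Q$-subspace $W'$ with $E_{k_1,k_2}^{\Q}\subset W'$ and $\Per_{k_1,k_2}\subset W'\otimes\C$, the rationality of $W'$ makes $W'\otimes\C$ stable under the complex conjugation attached to the $\R$-structure $V_{k_1,k_2}^{\R}$; conjugating the second inclusion then yields $\overline{\Per_{k_1,k_2}}\subset W'\otimes\C$ as well. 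Adding the three pieces and using the decomposition gives $W_{k_1,k_2}=W_{k_1,k_2}^{\Q}\otimes\C\subset W'\otimes\C$. Finally I would descend: since a $\Q$-subspace $A$ satisfies $A=(A\otimes\C)\cap V_{k_1,k_2}^{\Q}$, an inclusion of complexifications inside $V_{k_1,k_2}^{\Q}\otimes\C$ descends to the $\Q$-points, giving $W_{k_1,k_2}^{\Q}\subset W'$.

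The main obstacle is not in this deduction, which is formal once the decomposition is in hand, but in the decomposition itself (proved in the body of the chapter). There the directness of the sum is the delicate point: it rests on the injectivity of the biperiod map (Proposition \ref{ninj}), which pins down $\dim_{\C}\Per_{k_1,k_2}=\dim_{\C}(S_{k_1}\otimes S_{k_2})$ and separates the period piece from the conjugate and Eisenstein pieces, while the assertion that the three summands span $W_{k_1,k_2}$ uses the explicit generators of the ideal $\mathcal{I}_2$ from Theorem \ref{thmchap3}(1) together with the isomorphism $V_{k_1,k_2}[I_D]\cong\bigoplus_{l}W_{k_1+k_2-2l}(X_1-X_2)^l$. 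One point to watch in the descent step is that $\Per_{k_1,k_2}$ itself is not rational, only the combined span $\Per_{k_1,k_2}+\overline{\Per_{k_1,k_2}}$ is conjugation-stable, which is exactly why the argument must be phrased through $W'\otimes\C$ rather than through $W'$ directly.
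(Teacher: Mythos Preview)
Your argument is correct, but it takes a different route from the paper's. In the paper the minimality is established in Section~3.2.3 (``Ind\'ependance du poids de l'id\'eal annulateur'') \emph{before} the decomposition theorem is proved: one uses the surjectivity of $\Q[\Gamma^2]\to\mathrm{End}_{\Q}(V_{k_1,k_2}^{\Q})$ (irreducibility of the representation) to write any candidate $W'$ as $V_{k_1,k_2}^{\Q}[g]$ for a single $g\in\Q[\Gamma^2]$, so that $\Per_{k_1,k_2}\subset W'\otimes\C$ forces $g\in\mathcal{J}(k_1,k_2)$ and hence $V_{k_1,k_2}^{\Q}[\mathcal{J}(k_1,k_2)]\subset W'$; the equality $W_{k_1,k_2}^{\Q}=V_{k_1,k_2}^{\Q}[\mathcal{J}(k_1,k_2)]+E_{k_1,k_2}^{\Q}$ is then obtained by a dimension count coming from tensoring the exact sequence $0\to\mathcal{I}_2\to\Z[\Gamma^2]\to H_2(P_2)^0\to 0$ with $V_{k_1,k_2}^{\Q}$ and invoking the $\Gamma^2$-invariant pairing on $V_{k_1,k_2}^{\Q}$.

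Your approach instead inverts the logical order: you take the decomposition $W_{k_1,k_2}=\Per_{k_1,k_2}\oplus\overline{\Per_{k_1,k_2}}\oplus E_{k_1,k_2}$ (proved later in the chapter via the map $\varphi_S$) as input and deduce minimality by a conjugation-plus-descent argument. This is legitimate---the proof of the decomposition does not rely on the minimality statement, so there is no circularity---and it is arguably cleaner once the decomposition is in hand. What the paper's route buys is that it isolates the purely algebraic content (the ideal $\mathcal{J}(k_1,k_2)$ and the exact sequence), making explicit that the minimality is a statement about annihilators rather than about the analytic decomposition; your route bypasses the annihilator ideal $\mathcal{J}(k_1,k_2)$ entirely.
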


\begin{rem}
Pourtant la dépendance linéaire sur $\Q$ des bipériodes d'un couple de formes paraboliques holomorphes est lié à la minimalité de $W_{k_1,k_2}^{\Q}$ parmi les $\Q$-espaces vectoriels $W$ tel que $\Per_{k_1,k_2}\subset W\otimes\C$. Comme dans le cas $n=1$, il y a donc un terme d'erreur provenant des séries d'Eisenstein. Nous pouvons préciser à nouveau l'écart entre $\Per_{k_1,k_2}$ et $W_{k_1,k_2}$.\par
\end{rem}

On peut décomposer toute partie $U^{\Z}\subset V_{k_1,k_2}^{\Z}$ en parties paire et impaire, $U^{\pm,\Z}=U^{\Z}\cap V_{k_1,k_2}^{\pm,\Z}$ où:
\begin{equation}
V_{k_1,k_2}^{\pm,\Z}=\{P\in V_{k_1,k_2}^{\Z}\text{ tel que }P(-X_1,-X_2)=\pm P(X_1,X_2)\}.
\end{equation}

\begin{thm}
On a une suite exacte de groupes abéliens:
$$0\to\Z\to W_{k_1}^{\Z} \times V_{k_1,k_2}^{\Z}[I_D]\times W_{k_2}^{\Z}\to W_{k_1,k_2}^{\Z} \stackrel{\Phi_{S}}{\longrightarrow} W_{k_1}^{\Z}/E_{k_1}^{\Z} \otimes W_{k_2}^{\Z}/E_{k_2}^{\Z}\to 0,$$
où les applications non triviales sont données dans l'ordre par les applications:
$$1\mapsto \left(1-X_1^{k_1-2},1-X_1^{k_1-2}X_2^{k_2-2},X_1^{k_1-2}(1-X_2^{k_2-2})\right),$$
$$(p_1,p_2,p_3)\mapsto p_1-p_2+p_3\quad\text{ et }\quad \Phi_{S} : P\mapsto\left[P|_{(1,1)+(S,S)}\right].$$
De plus, l'application $\Phi_{S}\otimes \C$ peut être restreinte aux $\C$-espaces vectoriels $W_{k_1,k_2}^{\pm}$ et pour tout couple de signes $(\epsilon_1,\epsilon_2)\in\{\pm 1\}^2$, les applications suivantes sont des bijections:
$$[\Phi_S]_{\epsilon_1,\epsilon_2}: W_{k_1,k_2}^{\epsilon_1\epsilon_2}/E_{k_1,k_2}^{\epsilon_1\epsilon_2}\to \left(W_{k_1}^{\epsilon_1}/E_{k_1}^{\epsilon_1}\right)\otimes \left(W_{k_2}^{\epsilon_2}/E_{k_2}^{\epsilon_2}\right).$$
\end{thm}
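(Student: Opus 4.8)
The plan is to prove the exactness of the sequence
$$0\to\Z\to W_{k_1}^{\Z} \times V_{k_1,k_2}^{\Z}[I_D]\times W_{k_2}^{\Z}\to W_{k_1,k_2}^{\Z} \stackrel{\Phi_{S}}{\longrightarrow} W_{k_1}^{\Z}/E_{k_1}^{\Z} \otimes W_{k_2}^{\Z}/E_{k_2}^{\Z}\to 0$$
by verifying the four local exactness conditions in turn, then tackling the bijectivity of $[\Phi_S]_{\epsilon_1,\epsilon_2}$ separately over $\C$. First I would check that the composite of the two leftmost maps vanishes and that the leftmost map is injective: the element $1$ maps to a triple whose signed sum $p_1-p_2+p_3$ telescopes to $0$ once one writes out $(1-X_1^{k_1-2})-(1-X_1^{k_1-2}X_2^{k_2-2})+X_1^{k_1-2}(1-X_2^{k_2-2})$, and injectivity is immediate since the image of $1$ is nonzero. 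Exactness at the middle term $W_{k_1}^{\Z} \times V_{k_1,k_2}^{\Z}[I_D]\times W_{k_2}^{\Z}$ amounts to showing the only relations $p_1-p_2+p_3=0$ with $p_1\in W_{k_1}^{\Z}\otimes 1$, $p_3\in X_1^{k_1-2}\otimes W_{k_2}^{\Z}$, $p_2\in V_{k_1,k_2}^{\Z}[I_D]$ come from the distinguished Eisenstein element; here one separates variables, using that a polynomial lying in $W_{k_1}^{\Z}\otimes 1$ has no dependence on $X_2$ while one in $X_1^{k_1-2}\otimes W_{k_2}^{\Z}$ carries the top power of $X_1$, so matching coefficients forces $p_1$ and $p_3$ to be multiples of $1-X_1^{k_1-2}$ and $X_1^{k_1-2}(1-X_2^{k_2-2})$ respectively, pinning the relation down to $E_{k_1,k_2}^{\Z}$.

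**The map $\Phi_S$ and its image.** That the sum map $E_{k_1,k_2}^{\Z}=(W_{k_1}^{\Z}\otimes 1)+V_{k_1,k_2}^{\Z}[I_D]+(X_1^{k_1-2}\otimes W_{k_2}^{\Z})$ is killed by $\Phi_S$, and conversely that $\ker\Phi_S=E_{k_1,k_2}^{\Z}$, is the crux of exactness at $W_{k_1,k_2}^{\Z}$. For the first inclusion one checks each summand: polynomials in $V_{k_1,k_2}^{\Z}[I_D]$ satisfy $P|_{(1,1)+(S,S)}=0$ by definition, while for $W_{k_1}^{\Z}\otimes 1$ one uses $P_{f_1}|_{1+S}=0$ tensored against the action on the second factor, and symmetrically for $X_1^{k_1-2}\otimes W_{k_2}^{\Z}$. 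The reverse inclusion $\ker\Phi_S\subset E_{k_1,k_2}^{\Z}$ is where I expect the real work: given $P\in W_{k_1,k_2}^{\Z}$ with $P|_{(1,1)+(S,S)}=0$, I would use the generators of $\mathcal{I}_2$ from Theorem~\ref{thmchap3} to decompose $P$, exploiting the relations $P|_{[(1,1)+(S,S)](1+U+U^2,1)}=0$ and its transpose to separate the $X_1$-Eisenstein and $X_2$-Eisenstein contributions from the genuinely diagonal part lying in $V_{k_1,k_2}^{\Z}[I_D]$. Surjectivity of $\Phi_S$ onto $W_{k_1}^{\Z}/E_{k_1}^{\Z} \otimes W_{k_2}^{\Z}/E_{k_2}^{\Z}$ would follow by exhibiting lifts: given representatives $Q_1\in W_{k_1}^{\Z}$ and $Q_2\in W_{k_2}^{\Z}$, the tensor $Q_1\otimes Q_2$ lies in $W_{k_1,k_2}^{\Z}$ after the $S$-symmetrization forced by relation (\ref{relSS}), and $\Phi_S(Q_1\otimes Q_2)$ recovers the class $[Q_1]\otimes[Q_2]$ modulo the Eisenstein submodules.

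**The graded bijections over $\C$.** For the second half I would extend scalars to $\C$ and restrict $\Phi_S\otimes\C$ to the eigenspaces $V_{k_1,k_2}^{\pm}$ under $(X_1,X_2)\mapsto(-X_1,-X_2)$. Since $S$ acts compatibly with this $\Z/2$-grading, $\Phi_S$ respects the decomposition into parity sectors, giving the four maps $[\Phi_S]_{\epsilon_1,\epsilon_2}$ between the quotients. Bijectivity of each reduces, via the $n=1$ Eichler-Shimura theory recalled in the excerpt, to a dimension count: the decomposition $W_{k_j}=MP(k_j)\oplus\overline{MP(k_j)}\oplus E_{k_j}$ (the refined form of (\ref{decomp29})) identifies $W_{k_j}^{\epsilon_j}/E_{k_j}^{\epsilon_j}$ with the appropriate piece of cusp-form periods, and the explicit isomorphism
$$V_{k_1,k_2}[I_D]\cong\bigoplus_{l=0}^{\min(k_1,k_2)} W_{k_1+k_2-2l}\,(X_1-X_2)^l$$
from Theorem~\ref{thmchap3} lets me compute $\dim W_{k_1,k_2}^{\epsilon_1\epsilon_2}/E_{k_1,k_2}^{\epsilon_1\epsilon_2}$ and match it against $\dim(W_{k_1}^{\epsilon_1}/E_{k_1}^{\epsilon_1})\otimes(W_{k_2}^{\epsilon_2}/E_{k_2}^{\epsilon_2})$.

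The main obstacle will be the reverse inclusion $\ker\Phi_S\subset E_{k_1,k_2}^{\Z}$: isolating, inside an arbitrary element of $W_{k_1,k_2}^{\Z}$ annihilated by $(1,1)+(S,S)$, the three Eisenstein pieces requires careful bookkeeping of the mixed generators of $\mathcal{I}_2$ and control of the integral structure, since a naive decomposition over $\C$ may not descend to $\Z$. I would address this by working coefficient-by-coefficient in the monomial basis, using the explicit coefficient relations (\ref{eqcoefwk}) adapted to two variables, and checking that the denominators introduced by inverting the parity-projection operators $\tfrac12(1\pm S)$ are cancelled by the constraints already built into $W_{k_1,k_2}^{\Z}$.
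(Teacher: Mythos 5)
Le point qui fait échouer la proposition est votre argument de surjectivité de $\Phi_S$. Vous affirmez que, pour des représentants $Q_1\in W_{k_1}^{\Z}$ et $Q_2\in W_{k_2}^{\Z}$, le tenseur $Q_1\otimes Q_2$ appartient à $W_{k_1,k_2}^{\Z}$ «~après $S$-symétrisation~». C'est faux : $W_{k_1,k_2}^{\Z}$ est annulé par tout $\widetilde{\mathcal{I}_2}$, et si un tenseur pur vérifie bien les générateurs $(1+S,1+S)$, $[(1,1)+(S,S)](1+U+U^2,1)$, etc., il ne vérifie pas le générateur mixte $(S,S)+(S,SU^2)+(SU^2,SU^2)+(1,U^2)-(U,U)$. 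En effet, avec $Q_i|_{S}=-Q_i$ et $Q_i|_{U^2}=-Q_i-Q_i|_U$, un calcul direct donne
$$\left(Q_1\otimes Q_2\right)\big|_{(S,S)+(S,SU^2)+(SU^2,SU^2)+(1,U^2)-(U,U)}=Q_1|_U\otimes Q_2-Q_1\otimes Q_2|_U,$$
qui est non nul dès que $Q_1\neq 0$ et $Q_2\neq 0$ (car $Q|_U=\lambda Q$ avec $U^3=1$ et $Q|_{1+U+U^2}=0$ forcerait $\lambda$ racine primitive cubique de l'unité, impossible pour $Q$ à coefficients entiers). Aucune symétrisation algébrique de $Q_1\otimes Q_2$ ne corrige cela : l'antécédent construit dans le texte est le polynôme des bipériodes $\Pi_{k_1,k_2}(\omega_1\wedge\omega_2)$ des formes harmoniques $\omega_j\in\Omega_{k_j}^+\oplus\Omega_{k_j}^-$ associées aux classes $[Q_j]$ par Eichler--Shimura, via l'identité $[(1,1)+(S,S)]\tau_2=\tau_1\times\tau_1$ ; c'est un objet transcendant, non une combinaison polynomiale de $Q_1$ et $Q_2$, et c'est précisément pourquoi la section n'existe que sur $\C$.

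Cette lacune contamine aussi votre preuve des bijections $[\Phi_S]_{\epsilon_1,\epsilon_2}$ par comptage de dimensions : la connaissance de $\Ker\Phi_S=E_{k_1,k_2}$ donne l'injectivité de l'application induite, mais pour conclure il vous faudrait $\dim_{\C}W_{k_1,k_2}^{\epsilon_1\epsilon_2}$ de façon indépendante, et la décomposition $V_{k_1,k_2}[I_D]\cong\bigoplus_l W_{k_1+k_2-2l}(X_1-X_2)^l$ que vous invoquez ne contrôle que la partie Eisenstein $E_{k_1,k_2}$, pas $W_{k_1,k_2}$ lui-même. Dans le texte, cette dimension sort justement de la surjectivité de $\Phi_S$ combinée à la suite exacte $0\to\mathcal{I}_2\to\Z[\Gamma^2]\to H_2(P_2)^0\to 0$ et à l'accouplement de dualité sur $V_{k_1,k_2}^{\Q}$ ; le comptage ne permet donc pas de contourner le rel\`evement d'Eichler--Shimura. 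Signalons enfin que pour le noyau, votre approche «~coefficient par coefficient~» est beaucoup plus lourde que l'identité d'idéaux $\mathcal{I}_2+[(1,1)+(S,S)]\Z[\Gamma^2]=I_D$ utilisée dans le texte, qui donne immédiatement $\{P\in W_{k_1,k_2}^{\Z}: P|_{(1,1)+(S,S)}=0\}=V_{k_1,k_2}^{\Z}[I_D]$, puis le traitement des préimages de $W_{k_1}\otimes E_{k_2}+E_{k_1}\otimes W_{k_2}$ par $I_H$ et $I_V$ ; mais c'est une question d'efficacité, alors que la surjectivité est le point où la proposition est réellement en défaut.
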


\begin{prop}
On a les décompositions en sommes directes de $\C$-espaces vectoriels:
\begin{equation}
W_{k_1,k_2}=\Per_{k_1,k_2}\oplus\overline{\Per_{k_1,k_2}}\oplus E_{k_1,k_2},
\end{equation}
\begin{equation}
W_{k_1,k_2}^+=\Per_{k_1,k_2}^{+}\oplus E_{k_1,k_2}^{+}\text{ et }
W_{k_1,k_2}^-=\Per_{k_1,k_2}^{-}\oplus E_{k_1,k_2}^{-}.
\end{equation}
\end{prop}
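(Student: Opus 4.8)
The plan is to deduce these direct-sum decompositions from the exact sequence of the preceding theorem, using the splitting of $W_{k_1,k_2}$ into parts on which $\Phi_S$ is either an isomorphism or zero. First I would establish the decomposition $W_{k_1,k_2}=\Per_{k_1,k_2}\oplus\overline{\Per_{k_1,k_2}}\oplus E_{k_1,k_2}$ by a dimension count combined with a linear-independence argument. The key input is that $\Phi_S:P\mapsto [P|_{(1,1)+(S,S)}]$ has kernel exactly $E_{k_1,k_2}=\left(W_{k_1}\otimes 1\right)+V_{k_1,k_2}[I_D]+\left(X_1^{k_1-2}\otimes W_{k_2}\right)$, as recorded in the exact sequence; indeed the first relation $P_{f_1,f_2}|_{(1,1)+(S,S)}=P_{f_1}\otimes P_{f_2}$ shows that $\Phi_S$ sends $\Per_{k_1,k_2}$ onto the image of $(f_1,f_2)\mapsto [P_{f_1}]\otimes[P_{f_2}]$ inside $\left(W_{k_1}/E_{k_1}\right)\otimes\left(W_{k_2}/E_{k_2}\right)$.

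\textbf{Reduction to the cuspidal case.} By Eichler--Shimura, $f\mapsto P_f^-$ identifies $S_{k_j}$ with $W_{k_j}^-$, and $M_{k_j}\to W_{k_j}^+$ is onto with Eisenstein part $E_{k_j}$. Hence $S_{k_j}\xrightarrow{\sim} W_{k_j}/E_{k_j}$ as complex vector spaces, and one has $W_{k_j}=MP(k_j)\oplus\overline{MP(k_j)}\oplus E_{k_j}$ from (\ref{decomp29}). The plan is to show the image $\Phi_S(\Per_{k_1,k_2})$ equals all of $\left(W_{k_1}/E_{k_1}\right)\otimes\left(W_{k_2}/E_{k_2}\right)$: since $\Phi_S(P_{f_1,f_2})=[P_{f_1}]\otimes[P_{f_2}]$ and the simple tensors span, surjectivity follows, and combined with $\dim S_{k_1}\cdot\dim S_{k_2}=\dim\left(W_{k_1}/E_{k_1}\right)\otimes\left(W_{k_2}/E_{k_2}\right)$ this forces $\Phi_S|_{\Per_{k_1,k_2}}$ to be injective modulo anything in $\ker\Phi_S\cap\Per_{k_1,k_2}$. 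The conjugate $\overline{\Per_{k_1,k_2}}$ maps under $\Phi_S$ to the complex-conjugate tensors, so I would use the Petersson-pairing rationality (the bilinear form $[\,,\,]$ of Remark \ref{prodvk} extended to two variables) to separate $\Per_{k_1,k_2}$ from $\overline{\Per_{k_1,k_2}}$ and from $E_{k_1,k_2}$, exactly as the one-variable splitting $W_k=MP(k)\oplus\overline{MP(k)}\oplus\langle 1-X^{k-2}\rangle$ is obtained.

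\textbf{The graded $\pm$ decompositions} then follow by restricting to the eigenspaces of $(X_1,X_2)\mapsto(-X_1,-X_2)$. Because $S$ anticommutes appropriately with this involution on each factor, $\Phi_S$ respects the $\pm$ grading, and the last theorem gives that $[\Phi_S]_{\epsilon_1,\epsilon_2}:W_{k_1,k_2}^{\epsilon_1\epsilon_2}/E_{k_1,k_2}^{\epsilon_1\epsilon_2}\to\left(W_{k_1}^{\epsilon_1}/E_{k_1}^{\epsilon_1}\right)\otimes\left(W_{k_2}^{\epsilon_2}/E_{k_2}^{\epsilon_2}\right)$ is a bijection. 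I would note that on the even part $W_{k_1,k_2}^+$ the conjugate periods no longer contribute a third independent summand: parity forces $\Per^+_{k_1,k_2}$ and $\overline{\Per^+_{k_1,k_2}}$ to coincide modulo $E^+_{k_1,k_2}$, collapsing the triple sum to $W_{k_1,k_2}^+=\Per_{k_1,k_2}^{+}\oplus E_{k_1,k_2}^{+}$, and symmetrically for the odd part.

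\textbf{The main obstacle} I expect is not the surjectivity of $\Phi_S$, which is formal, but verifying that the three pieces $\Per_{k_1,k_2}$, $\overline{\Per_{k_1,k_2}}$, and $E_{k_1,k_2}$ are genuinely in direct sum rather than merely spanning---that is, controlling $\Per_{k_1,k_2}\cap\left(\overline{\Per_{k_1,k_2}}+E_{k_1,k_2}\right)$. This requires knowing that the natural pairing distinguishes holomorphic from antiholomorphic period polynomials over $\C$, for which I would invoke the nondegeneracy and $P\G$-invariance of $[\,,\,]$ together with the fact that $\langle f,g\rangle$ is a positive-definite Hermitian form on $S_{k_j}$, ruling out any nonzero real combination of a period polynomial and its conjugate lying in the Eisenstein part. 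The parity bookkeeping in the final step, matching $E_{k_1,k_2}^\pm$ correctly against the kernel of $[\Phi_S]_{\epsilon_1,\epsilon_2}$, is delicate but routine once the ungraded decomposition is in hand.
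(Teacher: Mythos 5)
There is a genuine gap, and it sits exactly where you declare the argument to be ``formal'': the surjectivity of $\Phi_S$ restricted to $\Per_{k_1,k_2}$ and the accompanying dimension count. By Eichler--Shimura, $W_{k_j}=\Per_{k_j}\oplus\overline{\Per_{k_j}}\oplus E_{k_j}$, so $\dim_{\C}\left(W_{k_j}/E_{k_j}\right)=2\dim_{\C}S_{k_j}$, not $\dim_{\C}S_{k_j}$: your identification $S_{k_j}\cong W_{k_j}/E_{k_j}$ conflates the $\pm$-grading (where indeed $W_{k_j}^-\cong S_{k_j}$ and $W_{k_j}^+/E_{k_j}\cong S_{k_j}$) with the holomorphic/antiholomorphic splitting. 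Hence $\dim\left((W_{k_1}/E_{k_1})\otimes(W_{k_2}/E_{k_2})\right)=4\dim S_{k_1}\dim S_{k_2}$, and the simple tensors $[P_{f_1}]\otimes[P_{f_2}]$ span only the block $[MP(k_1)]\otimes[MP(k_2)]$; the mixed blocks $[MP(k_1)]\otimes[\overline{MP(k_2)}]$ and $[\overline{MP(k_1)}]\otimes[MP(k_2)]$ are hit neither by $\Per_{k_1,k_2}$ nor by $\overline{\Per_{k_1,k_2}}$. Your surjectivity claim is moreover inconsistent with the statement you are proving: if $\Phi_S|_{\Per_{k_1,k_2}}$ were onto the whole target while $\Ker\Phi_S\cap W_{k_1,k_2}=E_{k_1,k_2}$, one would get $W_{k_1,k_2}=\Per_{k_1,k_2}\oplus E_{k_1,k_2}$, leaving no room for the summand $\overline{\Per_{k_1,k_2}}$. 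The Petersson-positivity argument you invoke at the end cannot repair this, because the missing half of the target is simply not in the image of $W_{k_1,k_2}$.

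The paper's proof handles precisely this point by enlarging the source of $\varphi_S$ to $W_{k_1,k_2}+\delta(W_{k_1,k_2})=V_{k_1,k_2}[\mathcal{I}_2]+V_{k_1,k_2}[\mathcal{I}_2^{-,+}]$, where $\delta:P(X_1,X_2)\mapsto P(-X_1,X_2)$. The section of $\varphi_S$ is built from \emph{harmonic} forms: given $[P_1\otimes P_2]$, Eichler--Shimura furnishes a unique $\omega_j\in\Omega_{k_j}^+\oplus\Omega_{k_j}^-$ with $\Pi_{k_j}(\omega_j)=[P_j]$, and $\Pi_{k_1,k_2}(\omega_1\wedge\omega_2)$ is a preimage; the four blocks $\Per_{k_1,k_2}^{\epsilon_1,\epsilon_2}$ so obtained are separated by the injectivity of $\Pi_{k_1,k_2}^{\epsilon_1,\epsilon_2}=\Pi_{k_1}^{\epsilon_1}\otimes\Pi_{k_2}^{\epsilon_2}$ (no positivity of the Petersson product is needed). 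The kernel is computed as $E_{k_1,k_2}+\delta(E_{k_1,k_2})$, and one then sorts the four blocks: $\Per_{k_1,k_2}^{+,+}=\Per_{k_1,k_2}$ and $\Per_{k_1,k_2}^{-,-}=\overline{\Per_{k_1,k_2}}$ land in $W_{k_1,k_2}$, the mixed ones in $\delta(W_{k_1,k_2})$, yielding the announced decomposition. If you insist on staying inside $W_{k_1,k_2}$, the statement you actually have to prove is that the image of $\Phi_S|_{W_{k_1,k_2}}$ equals $[MP(k_1)]\otimes[MP(k_2)]\oplus[\overline{MP(k_1)}]\otimes[\overline{MP(k_2)}]$ --- and that is the non-formal step your plan omits. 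Your final passage to the $\pm$-graded statements is indeed routine once the ungraded decomposition is in hand.
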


\section{Relations vérifiées par les polynômes des bipériodes}

Nous nous proposons d'examiner si les relations (\ref{intral1}) définissant $W_{k_1,k_2}$ sont optimales. C'est-à-dire nous allons étudier l'idéal à droite de $\Z[\Gamma^2]$:
\begin{equation}
\mathcal{J}(k_1,k_2)=\{g\in\Z[\Gamma^2]\text{ tel que }P_{f_1,f_2}|_{g}=0,\text{ pour tout }(f_1,f_2)\in S_{k_1}^{\Q}\times S_{k_2}^{\Q}\}
\end{equation}
\nomenclature{$\mathcal{J}(k_1,k_2)$}{Idéal de $\Z[\Gamma^2]$ annulateur de $Per_{k_1,k_2}^{\Q}$}
Pour tout idéal $I$ à droite (resp. à gauche) de $\Z[\Gamma^2]$, notons $\widetilde{I}\subset\Z[\Gamma^2]$ l'idéal à gauche (resp. à droite) image par l'antiautomorphisme: $\sum_{\gamma_1,\gamma_2} \lambda_{\gamma_1,\gamma_2}[\gamma_1,\gamma_2]\mapsto \sum_{\gamma_1,\gamma_2} \lambda_{\gamma_1,\gamma_2}[\gamma_1^{-1},\gamma_2^{-1}]$.\par
On va construire un idéal à gauche $\mathcal{I}_2\subset\widetilde{\mathcal{J}(k_1,k_2)}$, indépendant des poids $k_1$ et $k_2$, défini entièrement par les propriétés topologiques de $\H^2$ sous l'action de $\Gamma^2$.\par
\nomenclature{$\widetilde{I}$}{Idéal à droite (resp. à gauche) de $\Z[\Gamma^n]$ associé à l'idéal à gauche (resp. à droite) $I$}
Notre démarche peut être comprise comme la généralisation à $n=2$ de l'étude accomplie par Manin \cite{Ma2} pour $n=1$. Nous redécrirons brièvement dans le cadre mis en place cette recherche des relations vérifiées par les polynômes des périodes.

\subsection{Homologie singulière relative aux pointes}

Nous introduisons une homologie singulière. Pour référence, on pourra se rapporter au livre de Hatcher \cite{Hat}.
Soit $X$ un espace topologique et $X_0$ une partie fermée de $X$. Soit $m\geq 0$ un entier. On définit le simplexe fondamental de dimension $m$ par:
$$\Delta_m=\{(t_0,...,t_m)\in [0,1]^{m+1}\text{ tel que }\sum_{j=0}^m t_j =1\}.$$
Et l'ensemble de ses sommets par: $\Delta_m^0=\Delta_m\cap \Z^{m+1}=\{e_j^m=(...,0,1,0,...),j=0...n\}$.\par
Définissons $M_m(X,X_0,\Z)$ le $\Z$-module libre engendré par les $m$-cycles de $X$ aux sommets dans $X_0$ à homotopie près. C'est-à-dire la classes des applications continues:
$$C:\Delta_m\to X,\text{ telle que }C(\Delta_m^0)\subset X_0.$$
Et $C_0\equiv C_1$ si et seulement si il existe une application continue:
$$h:\Delta_m\times [0,1]\to X\text{ tel que }h(u,0)=C_0(u)\text{ et }h(u,1)=C_1(u)\text{ pour tout }u\in\Delta_m,$$
et pour tout $t\in [0,1]$ et $p_0\in\Delta_m^0$, on a $h(p_0,t)\in X_0$.\par
Ceci permet de considérer les applications de bord:
$$\delta_m: M_{m}(X,X_0,\Z)\to M_{m-1}(X,X_0,\Z),\quad [C] \mapsto \sum_{j=0}^{m} (-1)^j [C\circ\delta_j^m],$$
où $\delta_j^m:\Delta_{m-1}\to\Delta_{m}, (t_0,...,t_{m-1})\mapsto (t_0,...,t_{j-1},0,t_{j},...,t_{m-1})$.\par
Elles vérifient pour tout entier $m\geq 0$, $\delta_m\circ\delta_{m+1}=0$. Ceci permet de considérer les groupes d'homologie singulière:
$$H_{m}(X,X_0,\Z)=\Ker\left(\delta_m|M_{m}(X,X_0,\Z)\right)/Im\left(\delta_{m+1}|M_{m}(X,X_0,\Z)\right).$$
\nomenclature{$M_m(X,X_0,\Z)$}{Module des $m$-chaînes de $X$ relatives à $X_0$}
\nomenclature{$H_m(X,X_0,\Z)$}{Groupe d'homologie singulières de $X$ relatives à $X_0$}
Dans notre cadre, nous étudierons des parties $X\subset\H^n$ pour un entier $n\geq 1$ donné et elles seront associées à des ensembles de sommets $X_0=\pte^n\cap X$. Ainsi pour toute partie de ce type nous noterons simplement:
$$M_m^{pte}(X,\Z)=M_m(X,X_0,\Z)\text{ et }H_m^{pte}(X,\Z)=H_m(X,X_0,\Z).$$\par
De plus, nous remarquons que ces espaces de sommets $X_0$ sont discrets ainsi la continuité des restrictions $h_0:\Delta_m^0\times [0,1]\to X_0$ démontre qu'elles sont constantes suivants la seconde variable. Ainsi dans le cadre de notre étude, les images des sommets ne dépend pas du représentant mais seulement de la classes d'homotopie.

\subsection{Cas du polynôme des périodes (Rappel)}

Nous rappelons les résultats des travaux de Manin \cite{Ma2}, Eichler \cite{Eich57} et Shimura \cite{Sh59} que nous généralisons pour $n=2$. Ils sont traités plus en détails dans le Chapitre $1$.\par

Soit $k\geq 4$ un entier pair. Définissons l'idéal à droite de $\Z[\Gamma]$:
\begin{equation*}
\mathcal{J}(k)=\{g\in\Z[\Gamma]\text{ tel que }P_f|_{g}=0\text{ pour tout }f\in S_k^{\Q}
\text{ et }(1-X^{k-2})|_{g}=0\}.
\end{equation*}
Notons $\Per_k=\{P_f;f\in S_k \}$ l'ensemble des polynômes des périodes.
Notons $\tau_1=\{0,i\infty\}\in M_1^{pte}(\H,\Z)$ la classe d'homotopie d'un chemin de $\H$ reliant les pointes $0$ et $i\infty$. 
\nomenclature{$\mathcal{J}(k)$}{Idéal annulateur de $1-X^{k-2}$ et $Per_k^{\Q}$}
\begin{thm}[Relations de Manin]
Définissons $\mathcal{I}_1=\{g\in\Z[\Gamma]\text{ tel que }g.\delta_1\tau_1=0\}$. Alors pour tout entier $k$:
\begin{equation}
\Per_k\subset V_k[\mathcal{I}_1]\text{ et }V_k^{\Q}[\mathcal{I}_1]=V_k^{\Q}[\mathcal{J}(k)].
\end{equation}
De plus, l'idéal $\mathcal{I}_1$ est de type fini et est donné par:
\begin{equation}
\mathcal{I}_1=\Z[\Gamma](1+S,1+U+U^2).
\end{equation}
Les relations de Manin sur les polynômes des périodes sont ainsi:
$$P_f|_{1+S}=P_f|_{1+U+U^2}=0,\text{ pour tout }f\in S_k.$$
\end{thm}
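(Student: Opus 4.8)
The whole statement reduces to the explicit determination $\mathcal{I}_1 = \Z[\Gamma](1+S,1+U+U^2)$, the inclusion $\supseteq$ of which is exactly the point left open after the surjectivity of $\Theta_1$ was obtained by continued fractions. The plan is to read $\Theta_1$ off the Farey tessellation of $\H$ by ideal unimodular triangles. Writing $\tau_1=\{0,i\infty\}$, let $\Gamma\cdot\tau_1$ be the oriented edges, $C_0=\Z[\pte]$ the vertices and $C_2$ the triangles; then $\gamma\mapsto\gamma\tau_1$ sends $\Z[\Gamma]$ onto the $1$-chains supported on unimodular edges and $\Theta_1=\delta_1\circ(\gamma\mapsto\gamma\tau_1)$, so that $\mathcal{I}_1=\ker\Theta_1$. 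First I would dispatch the easy inclusion $\Z[\Gamma](1+S,1+U+U^2)\subseteq\ker\Theta_1$ by the telescoping already used for the classical Manin relations: $S$ reverses the edge $\tau_1$ and $U$ cyclically permutes the three edges of the fundamental triangle $\{0,1,i\infty\}$, so $\delta_1$ annihilates both $(1+S)\tau_1$ and $(1+U+U^2)\tau_1$.

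For the reverse inclusion I would run two reductions. First, the oriented edge $(\gamma0,\gamma i\infty)$ determines $\gamma\in\Gamma=P\G$ uniquely, since the oriented‑edge stabiliser is trivial, while $\gamma$ and $\gamma S$ give the same unoriented edge with opposite orientation; hence the kernel of $\gamma\mapsto\gamma\tau_1$ on $M_1^{pte}(\H,\Z)$ is precisely $\Z[\Gamma](1+S)$. Second, because $\H$ is contractible the tessellation is an acyclic $2$‑dimensional $\Gamma$‑CW‑complex, so every $1$‑cycle supported on unimodular edges is a boundary, $\ker\delta_1\cap\mathrm{im}(\gamma\mapsto\gamma\tau_1)=\mathrm{im}\,\delta_2$; the $2$‑cells form a single orbit $\Gamma\cdot\Delta$ with stabiliser $\langle U\rangle$ (order $3$, orientation‑preserving) and $\delta_2(\gamma\Delta)=\pm\,\gamma(1+U+U^2)\tau_1$. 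Combining the two: for $g\in\ker\Theta_1$ the cycle $g\tau_1$ equals $\delta_2$ of some $\sum_\gamma\mu_\gamma\gamma\Delta$, whence $g-\sum_\gamma\mu_\gamma\gamma(1+U+U^2)\in\Z[\Gamma](1+S)$ and $g$ lies in the claimed ideal; this simultaneously shows $\mathcal{I}_1$ is finitely generated.

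With $\mathcal{I}_1$ identified, the assertion $\Per_k\subset V_k[\mathcal{I}_1]$ is the homological repackaging of the Manin relations of Chapter $1$. The $\Gamma$‑equivariant pairing $c\mapsto\int_c\omega_f$ on $M_1^{pte}(\H,\Z)$ satisfies $\int_{\gamma\tau_1}\omega_f=P_f|_{\gamma^{-1}}$ by (\ref{modl1}), and it kills every element of $\ker\delta_1$, since a closed holomorphic form is exact on the simply connected $\H$ and therefore integrates to $0$ over any $1$‑cycle. Applying this to $(1+S)\tau_1$ and $(1+U+U^2)\tau_1$, and using that $\{1,S\}$ and $\{1,U,U^2\}$ are stable under inversion, gives $P_f|_{1+S}=P_f|_{1+U+U^2}=0$, i.e. $\Per_k\subset V_k[\mathcal{I}_1]$.

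Finally, for $V_k^{\Q}[\mathcal{I}_1]=V_k^{\Q}[\mathcal{J}(k)]$, the inclusion $\mathcal{I}_1\subseteq\mathcal{J}(k)$ yields $V_k^{\Q}[\mathcal{J}(k)]\subseteq V_k^{\Q}[\mathcal{I}_1]=W_k^{\Q}$ immediately. For the converse I would invoke Eichler–Shimura in the form $W_k=\Per_k\oplus\overline{\Per_k}\oplus\langle 1-X^{k-2}\rangle$: any $g\in\mathcal{J}(k)$ annihilates $\Per_k$ by definition, annihilates $\overline{\Per_k}$ because $g$ has rational coefficients so that $\overline{P_f}|_g=\overline{P_f|_g}=0$, and annihilates $1-X^{k-2}$ by definition, hence kills all of $W_k=V_k[\mathcal{I}_1]$ and $W_k^{\Q}\subseteq V_k^{\Q}[\mathcal{J}(k)]$. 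The delicate step of the whole argument is the second reduction above, namely that the only cuspidal relations among unimodular modular symbols come from the triangles; this is where the topology of $\H$ and the free‑product structure $\Gamma\cong\Z/2*\Z/3$, with $S$ and $U$ generating the two factors, are genuinely used.
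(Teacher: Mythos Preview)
Your argument is sound and takes a genuinely different route from the paper for the key inclusion $\ker\Theta_1 \subseteq \Z[\Gamma](1+S,1+U+U^2)$. You use the cellular homology of the Farey tessellation: contractibility of $\H$ kills $H_1$, so any cycle of unimodular edges bounds a combination of Farey triangles, and the single-$\Gamma$-orbit structure of edges (stabiliser $\langle S\rangle$) and of triangles (stabiliser $\langle U\rangle$) yields the two generators directly. The paper (Chapter~3, \S3.2.4) instead introduces a $\Gamma$-invariant distance $d$ on $\pte$ and a height $h(p)=\max(d(p,i\infty),d(p,0))$, then performs a combinatorial descent: every element of $\ker\Theta_1$ is reduced, via translates of the two basic relations, to one supported on $B_0=\{i\infty,0,1,-1\}$, which is handled by inspection. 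Your approach is conceptually cleaner; the paper's buys an explicit reduction algorithm and, more importantly, the same distance/height/partition machinery is reused verbatim to show that $\mathcal{I}_2$ is finitely generated and to enumerate its generators.

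One small slip to flag: you write ``the kernel of $\gamma\mapsto\gamma\tau_1$ on $M_1^{pte}(\H,\Z)$ is precisely $\Z[\Gamma](1+S)$'', but in the paper's $M_1^{pte}$ (homotopy classes of paths rel endpoints, hence $\cong\Z[\pte\times\pte]$ since $\H$ is simply connected) this map is actually \emph{injective}: the ordered pair $(\gamma\cdot 0,\gamma\cdot i\infty)$ determines $\gamma\in P\G$. The relation $(1+S)\mapsto 0$ holds in the \emph{cellular} $1$-chains of the tessellation, where reversing an edge negates it. Your argument is the cellular one and is correct there; just say explicitly that you pass from $M^{pte}$ to the CW chain complex. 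Your treatment of $V_k^{\Q}[\mathcal{I}_1]=V_k^{\Q}[\mathcal{J}(k)]$ via the Eichler--Shimura decomposition $W_k=\Per_k\oplus\overline{\Per_k}\oplus\langle 1-X^{k-2}\rangle$ is the paper's dimension count made explicit.
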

\nomenclature{$\tau_1$}{La $1$-chaine reliant $i\infty$ à $0$}
\begin{proof}
On a $\tau_1=\{i\infty,0\}$ et donc $\delta_1 \tau_1=(i\infty)-(0)$. Or les actions de $S$ et $U$ sur ces pointes sont données par:
$$i\infty\stackrel{S}{\longrightarrow}0\stackrel{S}{\longrightarrow}i\infty\text{ et }i\infty\stackrel{U}{\longrightarrow}0\stackrel{U}{\longrightarrow}1\stackrel{U}{\longrightarrow}i\infty.$$
On obtient ainsi $\Z[\Gamma](1+S,1+U+U^2)= \mathcal{I}_1$. Comme $\tilde{S}=S, \tilde{U}=U^2$ et $\tilde{U^2}=U$ alors on a : $\widetilde{\mathcal{I}_1}=(1+S,1+U+U^2)\Z[\Gamma]$. On peut vérifier que ces relations annulent les polynômes des périodes ainsi que le polynôme $1-X^{k-2}$, donc $\widetilde{\mathcal{I}_1}\subset\mathcal{J}(k)$ et ainsi $V_{k}^{\Q}[\mathcal{J}(k)]\subset V_{k}^{\Q}[\mathcal{I}_1]$. L'analyse des dimensions de ces sous-espaces vectoriels de $V_k^{\Q}$ donne alors leurs égalités.
\end{proof}

\begin{rem}
a) La représentation $\Gamma\to GL(V_k^{\Q})$ est irréductible. En effet, la famille $(X^{k-2}|_{T^j})_{0\leq j\leq k-2}$ est une base de $V_k^{\Q}$. Ainsi on déduit la surjectivité du morphisme de $\Q[\Gamma]$-modules obtenue par $\Q$-linéarité: $\Q[\Gamma]\to End_{\Q}(V_k^{\Q})$. Ceci permet notamment de déduire que tout sous-$\Q$-espace vectoriel $W\subset V_{k}^{\Q}$ peut s'écrire comme l'annulateur $W=V_k^{\Q}[g]$ d'un élément de $g\in\Q[\Gamma]$. Puis si l'on suppose qu'un tel espace vérifie $\Per_k\subset W\otimes\C$ et $1-X^{k-2}\in W$ alors on a $\gamma\in \mathcal{J}(k)$ puis $W_k^{\Q}\subset W$. Ceci démontre que $W_k^{\Q}$ est le plus petit sous-$\Q$-espace vectoriel vérifiant ces propriétés.\par
b) Posons $\varepsilon=\left(\begin{smallmatrix}-1 & 0\\ \phantom{-}0 & 1 \end{smallmatrix}\right)$. Alors $V_k^+=V_k[1-\varepsilon]$, $V_k^-=V_k[1+\varepsilon]$ et $W_k=V_k[1+S,1+U+U^2]$. Ce sont des éléments de $\Z[\Gamma]$ qui représentent des opérateurs sur $V_k$ qui commutent, en effet:
$$\varepsilon(1+S)\varepsilon=1+S\in\mathcal{I}_1$$
$$\text{et }\varepsilon(1+U+U^2)\varepsilon=S(1+U+U^2)(1+S)-S(1+U+U^2)\in\mathcal{I}_1.$$
Ainsi $\varepsilon\mathcal{I}_1\varepsilon=\mathcal{I}_1$. Ceci nous permet de considérer les parties paires et impaires des polynômes tout en conservant la structure des relations de Manin. Et ainsi le $\Z$-module $W_k^{\Z}=\{P\in V_k^{\Z}\text{ tel que }P|_{1+S}=P|_{1+U+U^2}=0\}$ se scinde bien en parties paire et impaire.\par
\end{rem}

\begin{thm}[Eichler-Shimura]
On dispose des décompositions en somme directe de $\C$-espaces vectoriels:
\begin{align}
W_k&=\Per_k\oplus \overline{\Per_k}\oplus E_k,\\
W_k^+&=\Per_k^{+}\oplus E_k\text{ et }W_k^-=\Per_k^{-}.
\end{align}
\end{thm}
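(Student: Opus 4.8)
Le plan est de d\'eduire ces d\'ecompositions des isomorphismes d'Eichler-Shimura \'etablis au Chapitre $1$ --- \`a savoir que $f\mapsto P_f^-$ induit un isomorphisme $S_k\to W_k^-$ et $f\mapsto P_f^+$ un isomorphisme $M_k\to W_k^+$ --- compl\'et\'es par un simple d\'ecompte de dimensions. Je traiterais d'abord les d\'ecompositions des parties paire et impaire, qui servent de briques \'el\'ementaires. Pour la partie impaire, l'\'egalit\'e $W_k^-=\Per_k^-$ n'est qu'une reformulation de la surjectivit\'e de $f\mapsto P_f^-$ : par d\'efinition $\Per_k^-=\{P_f^-;\,f\in S_k\}$ est exactement l'image de cette application, donc co\"incide avec $W_k^-$.

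Pour la partie paire, j'utiliserais l'isomorphisme $f\mapsto P_f^+$ de $M_k$ sur $W_k^+$ joint \`a la d\'ecomposition $M_k=S_k\oplus\C E_k=S_k\oplus\C G_k$ du Chapitre $1$, ce qui donne $W_k^+=\Per_k^++\C\,P_{G_k}^+$. Comme rappel\'e dans la mise en place de ce chapitre, $1-X^{k-2}$ est proportionnel sur $\C$ \`a $P_{G_k}^+$, donc $\C\,P_{G_k}^+=E_k$ ; l'injectivit\'e de $f\mapsto P_f^+$ sur $M_k$ assure que les images de $S_k$ et de la droite $\C G_k$ sont en somme directe, d'o\`u $W_k^+=\Per_k^+\oplus E_k$. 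Ceci \'etablit d\'ej\`a les deux formules scind\'ees de l'\'enonc\'e.

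Il reste \`a recoller ces r\'esultats pour obtenir la d\'ecomposition globale. Le scindage $W_k=W_k^+\oplus W_k^-$, valide car $\varepsilon$ normalise $\mathcal{I}_1$ (remarque pr\'ec\'edente), fournit imm\'ediatement $W_k=\Per_k^+\oplus E_k\oplus\Per_k^-$. Le point d\'elicat, c\oe{}ur de la preuve, est d'identifier $\Per_k^+\oplus\Per_k^-$ avec $\Per_k\oplus\overline{\Per_k}$. Pour cela je m'appuierais sur la r\'ealit\'e des coefficients du polyn\^ome des p\'eriodes d'une forme parabolique, $a_m\in i^{m-1}\R$ (Chapitre $1$) : elle entra\^ine $\overline{P_f}=-P_f^++P_f^-$, de sorte que $P_f^+=\tfrac12(P_f-\overline{P_f})$ et $P_f^-=\tfrac12(P_f+\overline{P_f})$. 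On en d\'eduit l'\'egalit\'e de sous-espaces $\Per_k+\overline{\Per_k}=\Per_k^+\oplus\Per_k^-$, la somme de droite \'etant directe puisque parties paire et impaire vivent dans des facteurs compl\'ementaires de $V_k=V_k^+\oplus V_k^-$.

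Enfin, un d\'ecompte par la formule de Grassmann ach\`eve l'argument : comme $f\mapsto P_f$ est injective, $\dim_\C\Per_k=\dim_\C\overline{\Per_k}=\dim_\C S_k$, tandis que $\dim_\C(\Per_k+\overline{\Per_k})=\dim_\C\Per_k^++\dim_\C\Per_k^-=2\dim_\C S_k$ ; d'o\`u $\dim_\C(\Per_k\cap\overline{\Per_k})=0$ et la somme $\Per_k\oplus\overline{\Per_k}$ est directe. En regroupant, $W_k=\Per_k\oplus\overline{\Per_k}\oplus E_k$. L'essentiel du contenu \'etant port\'e par les isomorphismes d'Eichler-Shimura du Chapitre $1$, la seule vraie subtilit\'e ici est l'identit\'e de conjugaison ci-dessus et le contr\^ole des dimensions garantissant que toutes les sommes sont directes.
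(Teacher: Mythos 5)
Votre d\'emonstration est correcte et suit exactement la voie que la th\`ese sous-entend : l'\'enonc\'e y est pr\'esent\'e comme un \og rappel \fg{} sans preuve, d\'eduit des isomorphismes $S_k\to W_k^-$ et $M_k\to W_k^+$ du chapitre $1$, et c'est pr\'ecis\'ement ce que vous faites, en explicitant au passage les deux points laiss\'es implicites (l'identit\'e $\overline{P_f}=-P_f^++P_f^-$ issue de $a_m\in i^{m-1}\R$, qui donne $\Per_k+\overline{\Per_k}=\Per_k^+\oplus\Per_k^-$, et le d\'ecompte de dimensions assurant $\Per_k\cap\overline{\Per_k}=0$). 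Rien \`a redire.
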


\subsection{Dualité des actions de $\Z[\Gamma^2]$ sur $V_{k_1,k_2}$ et sur $M_2^{pte}(\H^2,\Z)$}

Le groupe $M_2^{pte}(\H^2,\Z)$ est le $\Z$-module librement engendré par les $2$-cycles de $\H^2$ aux sommets dans $\pte^2$. Rappelons qu'un tel $2$-cycle est une classe d'équivalence aux homotopies fixant les sommets près des applications continues :
$$C:\Delta_2=\{(t_0,t_1,t_2)\in[0,1]^3\text{ tel que }t_0+t_1+t_2=1\}\to \H^2,$$
vérifiant $C(1,0,0),C(0,1,0),C(0,0,1)\in\pte^2$. L'application:
\begin{equation}
\Delta_2\to\H^2,\quad(t_0,t_1,t_2)\mapsto \left(-i\log(t_0),-i\log(t_0+t_1)\right),
\end{equation}
définit un élément $\tau_2$ de $M_2^{pte}(\H^2,\Z)$ car on a dans $\pte^2$ :
$$\tau_2(1,0,0)=(0,0),\quad \tau_2(0,1,0)=(i\infty,0)\text{ et }\tau_2(0,0,1)=(i\infty,i\infty).$$
L'action diagonale de $\Gamma^2$ sur $\H^2$ produit une action à gauche de $\Z[\Gamma^2]$ sur $M_2^{pte}(\H^2,\Z)$.\par

\nomenclature{$\tau_2$}{La $2$-chaîne associée au simplexe de $\H^2$}

Soit $\Omega_{par}^2(\H^2,\C)$ le $\C$-espace vectoriel des $2$-formes différentielles harmoniques de $\H^2$ et nulles sur le bord $\partial\H^2=\pte\times\H\cup\H\times\pte$. Cette propriété permet notamment d'obtenir la convergence de l'intégration d'une telle forme le long de $\tau_2$.\par
Pour tout couple $(f_1,f_2)\in S_{k_1}\times S_{k_2}$ et tout couple d'entiers critiques $(m_1,m_2)$ (i.e. vérifiant $1\leq m_j\leq k_j-1$), considérons la $2$-forme :
$$f_1(z_1)z_1^{m_1-1}f_2(z_2)z_2^{m_2-1}dz_1\wedge dz_2\in\Omega_{par}^2(\H^2,\C).$$
Cette famille peut être indexée par $X_1^{k_1-m_1-1}X_2^{k_2-m_2-1}$ et après renormalisation, posons:
\begin{equation}
\omega_{f_1,f_2}(z_1,z_2;X_1,X_2)=\omega_{f_1}(z_1,X_1)\wedge\omega_{f_2}(z_2,X_2)\in\Omega_{par}^2(\H^2,\C)\otimes_{\C} V_{k_1,k_2}=\Omega_{par}^2(\H^2,V_{k_1,k_2}),
\end{equation} 
où l'on rappelle la notation pour $f\in S_k $ de la forme : $\omega_f(z,X)=f(z)(X-z)^{k-2}dz.$\par

Considérons l'accouplement défini par:
\begin{align*}
\Omega_{par}^2(\H^2,\C)\times M_2^{pte}(\H^2,\Z)&\to\C,\\
(\omega,C)\mapsto \langle\omega, C\rangle=\int_{C}\omega.
\end{align*}

On rappelle l'écriture du polynôme des bipériodes introduit dans le chapitre $2$ à la section $2.4.4$ en fonction de la $2$-forme associée $\omega_{f_1,f_2}$ et du $2$-cycle $\tau_2$:
\begin{equation}
P_{f_1,f_2}(X_1,X_2)=\langle \omega_{f_1,f_2},\tau_2\rangle.
\end{equation}\par
Le groupe $\Gamma^2$ agit à la fois sur $V_{k_1,k_2}$ et sur $M_2^{pte}(\H^2,\Z)$. Ces actions peuvent être reliées grâce aux propriétés d'invariances suivantes.\par

Pour tout $\gamma\in\Gamma$ et toute forme $f\in S_k $, la forme différentielle associée $\omega_f$ vérifie:
\begin{equation}
\omega_f(\gamma.z,X|_{\gamma})=\omega_f(z,X).\label{relmod31}
\end{equation}\par
De plus, pour toute $2$-forme différentielle $\omega\in\Omega^2_{par}(\H^2,\C)$ et toute $2$-chaîne $C\in M_2^{pte}(\H^2,\Z)$, on dispose de la formule:
\begin{equation}
\langle\omega(\gamma.z),C \rangle=\int_{C}\omega(\gamma.z)=\int_{\gamma.C}\omega(z)=\langle\omega(z),\gamma.C\rangle.\label{relacc31}
\end{equation}

\begin{prop}\label{prop37}
Les actions de $\Gamma^2$ sur $V_{k_1,k_2}$ et $ M_2^{pte}(\H^2,\Z)$ sont duales:
\begin{equation}
P_{f_1,f_2}(X_1|_{\gamma_1},X_2|_{\gamma_2})=\langle\omega_{f_1,f_2}, (\gamma_1^{-1},\gamma_2^{-1})\tau_2\rangle,
\end{equation}
pour tout couple $(\gamma_1,\gamma_2)\in\Gamma^2$.
\end{prop}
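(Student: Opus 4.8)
The plan is to derive the stated duality directly from the two invariance properties (\ref{relmod31}) and (\ref{relacc31}) recorded just before the statement, together with the integral description $P_{f_1,f_2}(X_1,X_2)=\langle \omega_{f_1,f_2},\tau_2\rangle$. The computation is short and essentially formal; the only real care needed is keeping track of where the inverse appears.

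First I would reformulate the modularity relation so that the action on the polynomial variable becomes an action on the base point. Substituting $z\mapsto\gamma^{-1}.z$ into (\ref{relmod31}) gives, for each $f\in S_k$,
\[
\omega_f(z,X|_{\gamma})=\omega_f(\gamma^{-1}.z,X).
\]
Applying this in each variable separately, and using that $\omega_{f_1,f_2}=\omega_{f_1}\wedge\omega_{f_2}$ is an exterior product in the two independent factors of $\H^2$, I obtain
\[
\omega_{f_1,f_2}(z_1,z_2;X_1|_{\gamma_1},X_2|_{\gamma_2})=\omega_{f_1,f_2}\big((\gamma_1^{-1},\gamma_2^{-1}).(z_1,z_2);X_1,X_2\big).
\]

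Next I would insert this into the definition of the bi-period polynomial. Since $P_{f_1,f_2}(X_1|_{\gamma_1},X_2|_{\gamma_2})=\langle\omega_{f_1,f_2}(\cdot;X_1|_{\gamma_1},X_2|_{\gamma_2}),\tau_2\rangle$, the identity above turns this into the pairing of the pulled-back form $\omega_{f_1,f_2}((\gamma_1^{-1},\gamma_2^{-1}).z;X)$ against $\tau_2$. Finally I would invoke (\ref{relacc31}) with $\gamma=(\gamma_1^{-1},\gamma_2^{-1})$ and $C=\tau_2$, which transports the diagonal action from the form onto the chain, yielding $\langle\omega_{f_1,f_2},(\gamma_1^{-1},\gamma_2^{-1})\tau_2\rangle$, exactly the right-hand side claimed in the statement.

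There is no deep obstacle here; the points that warrant attention are twofold. First, I must check that convergence of $\langle\omega_{f_1,f_2},\tau_2\rangle$ and of its pulled-back and transported versions is preserved — this holds because $\omega_{f_1,f_2}\in\Omega^2_{par}(\H^2,\C)\otimes V_{k_1,k_2}$ vanishes on $\partial\H^2$ while $\tau_2$ and its $\Gamma^2$-translates keep their vertices in $\pte^2$. Second, I must verify that the substitution $z\mapsto\gamma^{-1}.z$ is legitimate and that the two one-variable invariances combine correctly into the diagonal action, so that precisely the inverse pair $(\gamma_1^{-1},\gamma_2^{-1})$ appears and no spurious transpose or double inverse creeps in.
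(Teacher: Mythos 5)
Your argument is correct and is essentially identical to the paper's own proof: both rewrite the action on the polynomial variables as an action $z\mapsto\gamma^{-1}.z$ on the base via (\ref{relmod31}), then transport $(\gamma_1^{-1},\gamma_2^{-1})$ from the form onto the chain $\tau_2$ via (\ref{relacc31}). The bookkeeping of the inverse is handled correctly, so nothing further is needed.
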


\begin{proof}
Soit $(\gamma_1,\gamma_2)\in\Gamma^2$. Son action sur le polynôme des bipériodes devient:
\begin{align*}
P_{f_1,f_2}(X_1,X_2)|_{(\gamma_1,\gamma_2)}
&=\langle \omega_{f_1}(z_1,X_1|_{\gamma_1})\wedge\omega_{f_2}(z_2,X_2|_{\gamma_2}),\tau_2\rangle\\
&=\langle \omega_{f_1}((\gamma_1)^{-1}.z_1,X_1)\wedge\omega_{f_2}((\gamma_2)^{-1}.z_2,X_2),\tau_2\rangle\text{ par (\ref{relmod31})}\\
&=\langle\omega_{f_1}(z_1,X_1)\wedge\omega_{f_2}(z_2,X_2),(\gamma_1^{-1},\gamma_2^{-1}).\tau_2\rangle\text{ par (\ref{relacc31})}.
\end{align*}
La proposition s'étend par linéarité à $\Z[\Gamma^2]$.
\end{proof}

Définissons le sous-espace $\Omega_{k_1,k_2}^{\holo}$ de $\Omega_{par}^2(\H^2,V_{k_1,k_2})$ comme l'image de l'application $\Q$-linéaire et injective suivante:
\nomenclature{$\Omega_{k_1,k_2}^{\holo}$}{Sous-espace des $2$-formes modulaires biholomorphes de poids $(k_1,k_2)$}
\nomenclature{$\Omega_{k_1,k_2}^{\Q}$}{Sous-espace des $2$-formes modulaires biholomorphes et bi-anti-holomorphes de poids $(k_1,k_2)$}
\begin{equation*}
S_{k_1}^{\Q} \otimes S_{k_2}^{\Q} \to\Omega_{par}^2(\H^2,V_{k_1,k_2}),\quad f_1\otimes f_2\mapsto \omega_{f_1}\wedge\omega_{f_2}.
\end{equation*}
Nous noterons $\overline{\Omega_{k_1,k_2}^{\holo}}\subset \Omega_{par}^2(\H^2,V_{k_1,k_2})$ son conjugué complexe, image de l'application $\Q$-linéaire et injective:
\begin{equation*}
S_{k_1}^{\Q} \otimes S_{k_2}^{\Q} \to\Omega_{par}^2(\H^2,V_{k_1,k_2}),\quad f_1\otimes f_2\mapsto \overline{\omega_{f_1}}\wedge\overline{\omega_{f_2}}.
\end{equation*}
Considérons ainsi la somme directe stable par conjugaison complexe suivante:
\begin{equation}
\Omega_{k_1,k_2}^{\Q}=\Omega_{k_1,k_2}^{\holo}\oplus\overline{\Omega_{k_1,k_2}^{\holo}}.\label{sumomega2}
\end{equation}\par

Définissons pour tout ensemble $\Omega$ de $2$-formes harmoniques sur $\H^2$, son orthogonal dans $M_2^{pte}(\H^2,\Z)$ par:
\begin{equation}
\Omega^{\bot}=\{C\in M_2^{pte}(\H^2,\Z)\text{ tel que }\langle \omega,C\rangle=0,\text{ pour tout }\omega\in\Omega\}.
\end{equation}
On remarquera que $\overline{\Omega}^{\bot}=\Omega^{\bot}$ d'après le simple calcul:
$$\forall\omega\in\Omega^2,\forall C\in M_2,\langle \omega,C\rangle=0 \Leftrightarrow \langle \overline{\omega},C\rangle=\overline{\langle \omega, C\rangle}=0.$$
 Ceci permet de récrire l'idéal $\mathcal{J}(k_1,k_2)$:

\begin{prop}
On a $\widetilde{\mathcal{J}(k_1,k_2)}=\{g\in \Z[\Gamma^2]\text{ tel que }g.\tau_2\in \left(\Omega_{k_1,k_2}^{\Q}\right)^{\bot}\}$.
\end{prop}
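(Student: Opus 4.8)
The plan is to deduce the asserted identity purely formally from the duality of Proposition \ref{prop37} together with the elementary fact, recorded just above its statement, that $\overline{\Omega}^{\bot}=\Omega^{\bot}$. I fix an arbitrary element $g=\sum_{\gamma_1,\gamma_2}\lambda_{\gamma_1,\gamma_2}[\gamma_1,\gamma_2]\in\Z[\Gamma^2]$ and unwind membership in $\widetilde{\mathcal{J}(k_1,k_2)}$. Since $h\mapsto\widetilde{h}$ is an involution of $\Z[\Gamma^2]$, one has $g\in\widetilde{\mathcal{J}(k_1,k_2)}$ if and only if $\widetilde{g}\in\mathcal{J}(k_1,k_2)$, that is, if and only if $P_{f_1,f_2}|_{\widetilde{g}}=0$ for every $(f_1,f_2)\in S_{k_1}^{\Q}\times S_{k_2}^{\Q}$. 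Everything then reduces to transporting this annihilation condition from the polynomial side to the chain side.

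The decisive step is to apply Proposition \ref{prop37}, extended by $\Z$-linearity, to $\widetilde{g}=\sum_{\gamma_1,\gamma_2}\lambda_{\gamma_1,\gamma_2}[\gamma_1^{-1},\gamma_2^{-1}]$. Applying the proposition to each pair $(\gamma_1^{-1},\gamma_2^{-1})$ turns the inverse appearing in its right-hand side into a double inverse, which cancels, so that
\begin{equation*}
P_{f_1,f_2}|_{\widetilde{g}}=\sum_{\gamma_1,\gamma_2}\lambda_{\gamma_1,\gamma_2}\,\langle\omega_{f_1,f_2},(\gamma_1,\gamma_2)\tau_2\rangle=\langle\omega_{f_1,f_2},\,g.\tau_2\rangle .
\end{equation*}
The point worth stressing is that it is $g.\tau_2$, and not $\widetilde{g}.\tau_2$, that surfaces on the right; this is exactly the bookkeeping of inverses that the tilde is designed to absorb. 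Consequently $\widetilde{g}\in\mathcal{J}(k_1,k_2)$ holds precisely when $\langle\omega_{f_1,f_2},g.\tau_2\rangle=0$ for all $(f_1,f_2)\in S_{k_1}^{\Q}\times S_{k_2}^{\Q}$.

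To finish I would identify this last condition with $g.\tau_2\in(\Omega_{k_1,k_2}^{\Q})^{\bot}$. As $(f_1,f_2)$ runs over $S_{k_1}^{\Q}\times S_{k_2}^{\Q}$, the forms $\omega_{f_1,f_2}=\omega_{f_1}\wedge\omega_{f_2}$ are exactly the elementary tensors spanning the $\Q$-vector space $\Omega_{k_1,k_2}^{\holo}$; since $\langle\,\cdot\,,g.\tau_2\rangle$ is linear in its first argument, vanishing on this spanning family is equivalent to $g.\tau_2\in(\Omega_{k_1,k_2}^{\holo})^{\bot}$. It then remains to note that, by the direct sum $(\ref{sumomega2})$ and linearity of the pairing, $(\Omega_{k_1,k_2}^{\Q})^{\bot}=(\Omega_{k_1,k_2}^{\holo})^{\bot}\cap(\overline{\Omega_{k_1,k_2}^{\holo}})^{\bot}$, while the identity $\overline{\Omega}^{\bot}=\Omega^{\bot}$ forces the second intersectand to equal the first, whence $(\Omega_{k_1,k_2}^{\Q})^{\bot}=(\Omega_{k_1,k_2}^{\holo})^{\bot}$. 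Chaining the equivalences yields $g\in\widetilde{\mathcal{J}(k_1,k_2)}$ if and only if $g.\tau_2\in(\Omega_{k_1,k_2}^{\Q})^{\bot}$, as claimed. The argument is essentially formal once Proposition \ref{prop37} is in hand; there is no genuine analytic obstacle, and the only delicate points are the cancellation of the inverses in the central display and the reduction from orthogonality against the antiholomorphic part to orthogonality against the holomorphic part, for which the relation $\overline{\Omega}^{\bot}=\Omega^{\bot}$ is exactly what is needed.
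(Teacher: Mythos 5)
Your argument is correct and follows essentially the same route as the paper: both proofs consist of unwinding the definition of $\mathcal{J}(k_1,k_2)$, applying Proposition \ref{prop37} extended by $\Z$-linearity so that the inverses produced by the tilde cancel against those in the duality formula, and then replacing $\bigl(\Omega_{k_1,k_2}^{\holo}\bigr)^{\bot}$ by $\bigl(\Omega_{k_1,k_2}^{\Q}\bigr)^{\bot}$ via the identity $\overline{\Omega}^{\bot}=\Omega^{\bot}$ and the decomposition (\ref{sumomega2}). The only difference is cosmetic (you start from $g\in\widetilde{\mathcal{J}(k_1,k_2)}$ rather than from an element of $\mathcal{J}(k_1,k_2)$), so there is nothing to add.
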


\begin{proof}
La Proposition \ref{prop37} donne pour tout élément $\sum \lambda_{\gamma_1,\gamma_2}(\gamma_1,\gamma_2)\in\Z[\Gamma^2]$:
\begin{align*}
&\forall (f_1,f_2)\in S_{k_1}^{\Q} \times S_{k_2}^{\Q} , P_{f_1,f_2}|_{\sum \lambda_{\gamma_1,\gamma_2}(\gamma_1,\gamma_2)}=0\\
&\Leftrightarrow \forall (f_1,f_2)\in S_{k_1}^{\Q} \times S_{k_2}^{\Q} , \sum \lambda_{\gamma_1,\gamma_2}\langle \omega_{f_1}\wedge\omega_{f_2}, (\gamma_1^{-1},\gamma_2^{-1}).\tau_2\rangle=0\\
&\Leftrightarrow \forall \omega\in \Omega_{k_1,k_2}^{\holo}, \sum \lambda_{\gamma_1,\gamma_2}\langle \omega, (\gamma_1^{-1},\gamma_2^{-1}).\tau_2\rangle=0\\
&\Leftrightarrow \sum \lambda_{\gamma_1,\gamma_2}(\gamma_1^{-1},\gamma_2^{-1})\tau_2\in \left(\Omega_{k_1,k_2}^{\holo}\right)^{\bot}.
\end{align*}
Or on a remarqué que: $\left(\Omega_{k_1,k_2}^{\holo}\right)^{\bot}=\left(\overline{\Omega_{k_1,k_2}^{\holo}}\right)^{\bot}.$ Permettant de déduire:
$$\left(\Omega_{k_1,k_2}^{\holo}\right)^{\bot}=\left(\Omega_{k_1,k_2}^{\holo}\right)^{\bot}\cap\left(\overline{\Omega_{k_1,k_2}^{\holo}}\right)^{\bot}=\left(\Omega_{k_1,k_2}^{\Q}\right)^{\bot}.$$
La dernière égalité provenant de la définition (\ref{sumomega2}).
\end{proof}

Il nous reste alors à déterminer $\left(\Omega_{k_1,k_2}^{\Q}\right)^{\bot}$.

\subsection{Espaces transverses de $ M_1^{pte}(\H^2,\Z)$}

Pour calculer $(\Omega_{k_1,k_2}^{\Q})^{\bot}$, nous introduisons des \textit{espaces transverses} de $M_1^{pte}(\H^2,\Z)$. Soient $g\in\Gamma$ et $c\in M_1^{pte}(\H,\Z)$. 
Définissons $H_g$, $V_g$ et $D_g$ des applications de $M_1^{pte}(\H,\Z)$ dans $M_1^{pte}(\H^2,\Z)$ données respectivement par les classes des applications:
\begin{align}
H_g(c):\Delta_1\to\H^2,\quad& (t_0,t_1) \mapsto (c(t_0,t_1),g.i\infty),\\
V_g(c):\Delta_1\to\H^2,\quad& (t_0,t_1) \mapsto (g.0,c(t_0,t_1)),\\
\text{et }D_g(c):\Delta_1\to\H^2,\quad& (t_0,t_1) \mapsto (c(t_0,t_1),g.c(t_0,t_1)).
\end{align}
Considérons leurs images:
\begin{align}
H=&\{H_g(c)\text{ pour }c\in M_1^{pte}(\H,\Z)\text{ et }g\in\Gamma\},\\
V=&\{V_g(c)\text{ pour }c\in M_1^{pte}(\H,\Z)\text{ et }g\in\Gamma\},\\
\text{et }D=&\{D_g(c)\text{ pour }c\in M_1^{pte}(\H,\Z)\text{ et }g\in\Gamma\}.
\end{align}
On qualifiera de \textit{transverses} les chaînes de $H+V+D\subset M_1^{pte}(\H^2,\Z)$.\\
Notons $H^0$, $V^0$ et $D^0$ les sous-groupe constitués par les chaînes fermées, c'est-à-dire de bord nul, de $H$, $V$ et $D$ respectivement.

Posons:
\begin{equation}
\mathcal{I}_2=\{g\in\Z[\Gamma^2]\text{ tel que }g.\delta_2\tau_2\in H^0+V^0+D^0\}.
\end{equation}

Pour $0\leq j\leq 2$, posons $\varphi_j:\H\to\H^2$ défini pour $z\in\H$ par:
\begin{equation}
\varphi_0(z)=(i\infty,z),\quad\varphi_1(z)=(z,z)\text{ et}\quad\varphi_2(z)=(z,0).
\end{equation}
Et pour $(\gamma_1,\gamma_2)\in\Gamma^2, \psi_{\gamma_1,\gamma_2}:\H^2\to\H^2$ défini pour $(z_1,z_2)\in\H^2$ par :
\begin{equation}
\psi_{\gamma_1,\gamma_2}(z_1,z_2)=(\gamma_1 z_1,\gamma_2 z_2).
\end{equation}

Définissons le sous-espace de $\Omega_{par}^2(\H^2,V_{k_1,k_2})$ par:
\begin{multline*}
\Omega^2_{par}(\H^2,V_{k_1,k_2})^{\Gamma^2}=\Omega^2_{par}(\H^2,\C)\otimes_{\C[\Gamma^2]}V_{k_1,k_2}\\
=\{\omega\in\Omega_{par}^2(\H^2,V_{k_1,k_2})\text{ tel que }(\psi_{\gamma}^*\omega)|_{\gamma}=\omega\text{ pour tout }\gamma\in\Gamma^2\}\\
=\{\omega\in\Omega_{par}^2(\H^2,V_{k_1,k_2});\omega(\gamma_1.z_1,\gamma_2.z_2,X_1|_{\gamma_1},X_2|_{\gamma_2})=\omega(z_1,z_2,X_1,X_2),\forall(\gamma_1,\gamma_2)\in\Gamma^2\}.
\end{multline*}
Alors on a le lemme essentiel suivant:

\begin{lem}
Soit $\omega\in\Omega_{par}^2(\H^2,V_{k_1,k_2})^{\Gamma^2}$ vérifiant $d\omega=\varphi_0^*\omega=\varphi_1^*\omega=\varphi_2^*\omega=0$ alors, on a :
\begin{equation}
\omega\in\Omega_{k_1,k_2}=\Omega_{k_1,k_2}^{\Q}\otimes \C.
\end{equation}
\end{lem}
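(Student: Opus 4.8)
Le lemme affirme qu'une 2-forme $\omega\in\Omega_{par}^2(\H^2,V_{k_1,k_2})^{\Gamma^2}$ qui est fermée ($d\omega=0$) et dont les restrictions aux trois plongements diagonaux $\varphi_0,\varphi_1,\varphi_2$ s'annulent appartient nécessairement à $\Omega_{k_1,k_2}=\Omega_{k_1,k_2}^{\Q}\otimes\C$, c'est-à-dire se décompose comme somme d'une partie biholomorphe et d'une partie bi-antiholomorphe. Je dois donc reconstruire la structure harmonique complète à partir d'invariance et d'annulations sur des sous-variétés de codimension $1$.

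<paragraph>

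Laissez-moi réfléchir à la stratégie globale.

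Une 2-forme harmonique sur $\H^2$ (donc produit de demi-plans) se décompose naturellement selon les types de Hodge. En chaque variable $z_j = x_j + iy_j$, l'harmonicité plus la nullité au bord force une structure précise : c'est essentiellement la théorie d'Eichler-Shimura sur $\H$ appliquée variable par variable. Une 2-forme générale s'écrit comme combinaison de $dz_1\wedge dz_2$, $dz_1\wedge d\bar z_2$, $d\bar z_1\wedge dz_2$, $d\bar z_1\wedge d\bar z_2$ (les termes mixtes $dz_j\wedge d\bar z_j$ disparaissant par harmonicité/type parabolique). L'espace $\Omega_{k_1,k_2}$ correspond exactement aux deux types "purs" $dz_1\wedge dz_2$ (biholomorphe) et $d\bar z_1\wedge d\bar z_2$ (bi-antiholomorphe). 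L'objectif est donc d'éliminer les deux types "croisés" $dz_1\wedge d\bar z_2$ et $d\bar z_1\wedge dz_2$.

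Donc la vraie substance : les trois annulations $\varphi_j^*\omega=0$ servent à tuer les composantes de type mixte.

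La condition $d\omega=0$ combinée au caractère harmonique et parabolique décompose $\omega$ en ses quatre composantes de type $(p,q)$ ; je veux montrer que seules les composantes $(2,0)$ et $(0,2)$ survivent. Les deux composantes mixtes, disons $\alpha\, dz_1\wedge d\bar z_2$ et $\beta\, d\bar z_1\wedge dz_2$ avec $\alpha,\beta$ holomorphes en une variable et antiholomorphes en l'autre, se comportent différemment sous conjugaison complexe et sous l'action diagonale.

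<paragraph>

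**Plan détaillé.** Je procéderais ainsi. D'abord, j'utilise l'invariance $\omega\in\Omega_{par}^2(\H^2,V_{k_1,k_2})^{\Gamma^2}$ et l'harmonicité pour écrire la décomposition de type de Hodge
$$\omega = \omega^{2,0}+\omega^{1,1}+\omega^{0,2},$$
où $\omega^{1,1}$ rassemble les deux types mixtes. La nullité au bord $\partial\H^2$ et l'harmonicité (invoquant variable par variable le résultat d'Eichler-Shimura rappelé au Chapitre~1) forcent chaque composante à provenir de formes modulaires paraboliques ou de leurs conjuguées : $\omega^{2,0}$ provient de $S_{k_1}\otimes S_{k_2}$, $\omega^{0,2}$ de $\overline{S_{k_1}}\otimes\overline{S_{k_2}}$, et $\omega^{1,1}$ des termes croisés $S_{k_1}\otimes\overline{S_{k_2}}$ et $\overline{S_{k_1}}\otimes S_{k_2}$. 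Par construction, $\omega^{2,0}+\omega^{0,2}\in\Omega_{k_1,k_2}$, de sorte qu'il suffit de démontrer $\omega^{1,1}=0$. Ensuite, j'exploite les pullbacks diagonaux : sur chaque plongement $\varphi_j$, la forme $dz_1\wedge dz_2$ et la forme $d\bar z_1\wedge d\bar z_2$ se restreignent à des formes impliquant $(f_1 f_2)$-type intégrandes, tandis que les termes croisés donnent des intégrandes du type $f_1\overline{f_2}$ (non holomorphes). Le choix des trois plongements $\varphi_0(z)=(i\infty,z)$, $\varphi_1(z)=(z,z)$, $\varphi_2(z)=(z,0)$ est précisément calibré pour séparer ces contributions : $\varphi_0$ et $\varphi_2$ "gèlent" une variable à une pointe (où les formes paraboliques s'annulent), tandis que $\varphi_1$ teste la coïncidence diagonale. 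Les équations $\varphi_0^*\omega=\varphi_2^*\omega=0$ donnent des contraintes sur les coefficients aux bords, et $\varphi_1^*\omega=0$ couple les deux variables ; la combinaison de ces contraintes avec l'invariance diagonale sous $\Gamma$ (qui agit transitivement sur les pointes) force les coefficients croisés à être identiquement nuls.

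<paragraph>

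**Obstacle principal.** Le point délicat est l'étape où l'on montre que $\varphi_1^*\omega=0$ élimine effectivement $\omega^{1,1}$. La restriction de $\omega$ à la diagonale $\{z_1=z_2=z\}$ mélange les quatre types de Hodge en une seule 1-forme sur $\H$ (car $\varphi_1^*(dz_1\wedge dz_2)=0$ trivialement, les deux facteurs devenant proportionnels !). Autrement dit, le pullback de la composante pure $(2,0)$ par $\varphi_1$ s'annule automatiquement, et de même pour $(0,2)$ ; seule la composante mixte $\omega^{1,1}$ contribue potentiellement à $\varphi_1^*\omega$, puisque $\varphi_1^*(dz\wedge d\bar z)\neq 0$. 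C'est exactement pourquoi la condition $\varphi_1^*\omega=0$ est l'outil qui détecte $\omega^{1,1}$ : elle affirme que l'intégrande diagonale $\sum_j (\alpha_j+\beta_j)\, dz\wedge d\bar z$ s'annule, donnant une relation entre les parties holomorphe et antiholomorphe. Le travail technique consiste à vérifier que cette relation, jointe aux annulations aux pointes par $\varphi_0,\varphi_2$ et à l'invariance complète sous $\Gamma^2$ (permettant d'utiliser l'irréductibilité de $V_{k_j}^{\Q}$ rappelée en remarque), force $\alpha_j=\beta_j=0$ et non pas seulement $\alpha_j=-\beta_j$. Je m'attends à ce que la distinction entre "$\omega^{1,1}=0$" et "$\omega^{1,1}$ est dans le noyau de $\varphi_1^*$ mais non nulle" soit le cœur de l'argument, résolue en observant que les coefficients mixtes, étant produits d'une forme parabolique holomorphe et d'une antiholomorphe de poids distincts, ne peuvent satisfaire la relation de liaison diagonale qu'en étant nuls — l'harmonicité de type parabolique en chaque variable interdisant toute compensation non triviale entre $dz_1\wedge d\bar z_2$ et $d\bar z_1\wedge dz_2$ sous l'action diagonale.
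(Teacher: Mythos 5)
Votre proposition suit la m\^eme route que la d\'emonstration du texte : d\'ecomposition de $\omega$ selon les types de Hodge, observation que $\varphi_1^*$ annule automatiquement les composantes pures $dz_1\wedge dz_2$ et $d\bar{z_1}\wedge d\bar{z_2}$ (les deux facteurs devenant \'egaux sur la diagonale) et ne d\'etecte donc que les composantes crois\'ees, puis parabolicit\'e des composantes restantes par annulation au bord. Mais l'\'etape que vous d\'esignez vous-m\^eme comme le c\oe ur de l'argument --- passer de la relation diagonale fournie par $\varphi_1^*\omega=0$ \`a l'annulation s\'epar\'ee des deux termes crois\'es --- n'est pas r\'esolue. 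La justification que vous esquissez ne tient pas : les poids ne sont pas n\'ecessairement distincts (le cas $k_1=k_2=12$ est central dans ce travail), et rien dans l'harmonicit\'e n'interdit a priori une compensation $F_3=-F_4$ le long de la diagonale. Le m\'ecanisme r\'eel est polynomial : les termes crois\'es invariants par $\Gamma^2$ s'\'ecrivent
$$f_1(z_1)(X_1-z_1)^{k_1-2}g_2(\bar{z_2})(X_2-\bar{z_2})^{k_2-2}dz_1\wedge d\bar{z_2}+g_1(\bar{z_1})(X_1-\bar{z_1})^{k_1-2}f_2(z_2)(X_2-z_2)^{k_2-2}d\bar{z_1}\wedge dz_2,$$
et leur restriction \`a la diagonale est une identit\'e de polyn\^omes en $(X_1,X_2)$ ; l'identification des coefficients de deux mon\^omes distincts (possible d\`es que $k_1+k_2>4$) donne simultan\'ement $f_1(z)g_2(\bar z)=g_1(\bar z)f_2(z)$ et $z\,f_1(z)g_2(\bar z)=\bar z\, g_1(\bar z)f_2(z)$, d'o\`u $(z-\bar z)f_1(z)g_2(\bar z)=0$ et l'annulation de chacun des deux produits. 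Sans cet argument de s\'eparation des mon\^omes, la conclusion sur la composante mixte reste non d\'emontr\'ee.

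Par ailleurs, les composantes $F_1\,dz_1\wedge d\bar{z_1}$ et $F_6\,dz_2\wedge d\bar{z_2}$ ne disparaissent pas, comme vous l'affirmez, par harmonicit\'e ou par le caract\`ere parabolique : une fonction harmonique multipli\'ee par $dz\wedge d\bar z$ est une $2$-forme harmonique parfaitement l\'egitime. Dans le texte, ce sont les conditions $\varphi_0^*\omega=\varphi_2^*\omega=0$ jointes \`a l'invariance par $\Gamma^2$ qui les \'eliminent : un tel terme ne d\'ependant que d'une des deux variables, sa partie en l'autre ind\'etermin\'ee doit \^etre un \'el\'ement de $V_{k_{j}}^{\Gamma}$, nul puisque $k_{j}>2$. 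C'est un point mineur compar\'e au pr\'ec\'edent, mais il doit \^etre trait\'e, car c'est pr\'ecis\'ement l\`a que servent les deux plongements $\varphi_0$ et $\varphi_2$ que votre argument r\'eserve \`a d'autres fins.
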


\begin{proof}
En effet, une $2$-forme harmonique et exacte sur $\H^2$ s'écrit comme somme de six termes:
\begin{align*}
&F_1(z_1,\bar{z_1})dz_1\wedge d\bar{z_1}+F_2(z_1,z_2)dz_1\wedge dz_2+F_3(z_1,\bar{z_2})dz_1\wedge d\bar{z_2}\\
&F_4(\bar{z_1},z_2)d\bar{z_1}\wedge dz_2+F_5(\bar{z_1},\bar{z_2})d\bar{z_1}\wedge d\bar{z_2}+F_6(z_2,\bar{z_2})dz_2\wedge d\bar{z_2},
\end{align*}
où $F_j:\H^2\to V_{k_1,k_2}$, pour $j=1...6$, sont des applications holomorphes en les deux variables de sortes à obtenir des $2$-formes exactes.\par
La condition $0=\varphi_0^*\omega=F_1(z,\bar{z})dz\wedge d\bar{z}$ donne l'annulation des $2$-formes invariantes par $\Gamma^2$ :
$$f_{l_1}(z_1)g_{l_2}(\bar{z_1})(X_1-z_1)^{a}(X_1-\bar{z_1})^{b}(z_1-\bar{z_1})^c dz_1\wedge d\bar{z_1}P(X_2),$$
où $a+b=k_1-2$, $l_1=a+c+2$ et $l_2=b+c+2$ est une décomposition en entiers, $(f_{l_1},g_{l_2})\in M_{l_1} \times M_{l_2} $ et $P\in V_{k_2}^{\Gamma}$. Ce dernier nul car $k_2>2$.\\
De même, la condition $0=\varphi_2^*\omega=F_6(z,\bar{z})dz\wedge d\bar{z}$ donne l'annulation des $2$-formes invariantes par $\Gamma^2$ :
$$P(X_1)f_{l_1}(z_2)g_{l_2}(\bar{z_2})(X_2-z_2)^{a}(X_2-\bar{z_2})^{b}(z_2-\bar{z_2})^c dz_2\wedge d\bar{z_2},$$
où $a+b=k_2-2$, $l_1=a+c+2$ et $l_2=b+c+2$ est une décomposition en entiers, $(f_{l_1},g_{l_2})\in M_{l_1} \times M_{l_2} $ et $P\in V_{k_1}^{\Gamma}$. De la même manière, nul car $k_1>2$.\\
La condition $0=\varphi_1^*\omega=\left[F_3(z,\bar{z})-F_4(\bar{z},z)\right]dz\wedge d\bar{z}$ permet d'obtenir l'annulation des termes croisés invariants par $\Gamma^2$ de la forme:
$$f_1(z_1)(X_1-z_1)^{k_1-2}g_2(\bar{z_2})(X_2-\bar{z_2})^{k_2-2}dz_1\wedge d\bar{z_2}
+g_1(\bar{z_1})(X_1-\bar{z_1})^{k_1-2}f_2(z_2)(X_2-z_2)^{k_2-2}d\bar{z_1}\wedge dz_2,$$
où $f_1,g_1\in M_{k_1} $ et $f_2,g_2\in M_{k_2} $. Pour pouvoir séparer les termes par annulation des coefficients des monômes en $X_1$ et $X_2$ et ainsi déduire l'annulation de $F_3$ et $F_4$, il suffit d'avoir $k_1+k_2>4$ qui est vrai par hypothèse.\\
Il reste donc plus que les termes invariants par $\Gamma^2$ de la forme:
$$f_1(z_1)(X_1-z_1)^{k_1-2}f_2(z_2)(X_2-z_2)^{k_2-2}dz_1\wedge d\bar z_2
+g_1(\bar{z_1})(X_1-\bar{z_1})^{k_1-2}g_2(\bar{z_2})(X_2-\bar{z_2})^{k_2-2}d\bar{z_1}\wedge d\bar{z_2},$$
avec $f_1,g_1\in M_{k_1} $ et $f_2,g_2\in M_{k_2} $. L'annulation sur le bord $\partial\H^2$ donne alors le caractère parabolique des quatre formes modulaires. En effet, en prenant la limite pour $z_1\to i\infty$, on obtient:
$$0=a_0(f_1)f_2(z_2)(X_2-z_2)^{k_2-2}dz_2+a_0(g_1)g_2(\bar{z_2})(X_2-\bar{z_2})^{k_2-2}d\bar{z_2}.$$
C'est à dire $f_1$ et $g_1$ sont paraboliques. Et on obtient ainsi bien toutes les formes différentielles de $\Omega_{k_1,k_2}$.
\end{proof}

\begin{prop}\label{incljkk}
On a les inclusions des idéaux:
\begin{equation}
\mathcal{I}_2\subset\widetilde{\mathcal{J}(k_1,k_2)}.
\end{equation}
\end{prop}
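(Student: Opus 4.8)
The plan is to combine Stokes' theorem with the vanishing of the harmonic forms along the transverse surfaces. By the preceding proposition it suffices to show that $g.\tau_2\in\left(\Omega_{k_1,k_2}^{\Q}\right)^{\bot}$ for every $g\in\mathcal{I}_2$; and since its proof established $\left(\Omega_{k_1,k_2}^{\Q}\right)^{\bot}=\left(\Omega_{k_1,k_2}^{\holo}\right)^{\bot}$, I only need to prove $\langle\omega,g.\tau_2\rangle=0$ for every $\omega=\omega_{f_1}\wedge\omega_{f_2}$ with $(f_1,f_2)\in S_{k_1}^{\Q}\times S_{k_2}^{\Q}$.

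First I would produce a primitive. Such an $\omega$ is a holomorphic $2$-form, so $d\omega=0$; as $\H^2$ is simply connected there is $\eta\in\Omega^1(\H^2,V_{k_1,k_2})$ with $d\eta=\omega$. Since the boundary operator commutes with the action of $\Z[\Gamma^2]$, Stokes' theorem gives
$$\langle\omega,g.\tau_2\rangle=\langle\eta,\delta_2(g.\tau_2)\rangle=\langle\eta,g.\delta_2\tau_2\rangle.$$
By the definition of $\mathcal{I}_2$ we may write $g.\delta_2\tau_2=h+v+d$ with $h\in H^0$, $v\in V^0$ and $d\in D^0$, so the problem is reduced to showing that $\eta$ integrates to zero along every closed transverse chain.

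The geometric heart of the argument is that each transverse chain lies on a surface on which $\omega$ vanishes. Indeed, the images of the maps $\psi_{\gamma_1,\gamma_2}\circ\varphi_j$ are exactly the vertical slices $\{\text{cusp}\}\times\H$ ($j=0$), the graphs $\{(z,\gamma z)\}$ of elements $\gamma\in\Gamma$ ($j=1$) and the horizontal slices $\H\times\{\text{cusp}\}$ ($j=2$), that is to say the surfaces carrying the chains of $V$, $D$ and $H$ respectively. For a holomorphic $\omega=F(z_1,z_2)\,dz_1\wedge dz_2$ one checks immediately that $\varphi_0^*\omega=\varphi_1^*\omega=\varphi_2^*\omega=0$ (a constant coordinate kills $dz_1\wedge dz_2$, and on the diagonal $dz_1\wedge dz_1=0$); combined with the $\Gamma^2$-invariance (\ref{relmod31}) this shows that $\omega$ restricts to $0$ on every transverse surface. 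Consequently $\eta$ restricts to a \emph{closed} $1$-form on each of them.

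It then remains to evaluate $\langle\eta,h\rangle$, $\langle\eta,v\rangle$ and $\langle\eta,d\rangle$ separately. For the horizontal and vertical types this is clean: the slices $\H\times\{p\}$ (resp. $\{p\}\times\H$) attached to distinct cusps $p$ are disjoint, so a closed chain of $H^0$ (resp. $V^0$) splits as a sum of chains each closed inside a single slice; every slice is simply connected, so the chain bounds there, and $\langle\eta,\cdot\rangle=0$ by Stokes inside the slice because $d(\eta|_{\text{slice}})=\omega|_{\text{slice}}=0$. The main obstacle is the diagonal case $D^0$: two graphs $\{(z,\gamma z)\}$ and $\{(z,\gamma' z)\}$ with $\gamma\neq\gamma'$ are not disjoint, meeting at the isolated fixed points of $\gamma^{-1}\gamma'$, so the naive slicing fails. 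Here I would bring in the arithmetic fact that two distinct elements of $PSL_2(\Z)$ cannot agree at two rational points of $\pte$ (equivalently, a hyperbolic element of $\G$ has no rational fixed point, since $t^2-4$ is a square only for $t=\pm2$). This restricts how the boundary $0$-cycles $\delta_1 D_\gamma(C_\gamma)$ can cancel in the relation $\delta_1 d=0$, and lets me reorganise $d$, using the shared cusp vertices, as the boundary of a $2$-chain supported on the diagonal graphs, where $\omega=0$. Granting this, Stokes gives $\langle\eta,d\rangle=0$ as well, and summing the three contributions yields $\langle\omega,g.\tau_2\rangle=0$, i.e. $g\in\widetilde{\mathcal{J}(k_1,k_2)}$.
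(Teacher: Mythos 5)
Your reduction to $\langle\omega,g.\tau_2\rangle=0$ for $\omega\in\Omega_{k_1,k_2}^{\holo}$, the use of a primitive $\eta$ with Stokes, and the treatment of $H^0$ and $V^0$ all match the paper's argument (which phrases the same computation as $\Ker(\delta_2)=\Im(\delta_3)\subset\Omega_{k_1,k_2}^{\bot}$ together with $M_2^{pte}(\varphi_j^g(\H),\Z)\subset\Omega_{k_1,k_2}^{\bot}$). The gap is exactly where you write ``Granting this'': the claim that every $d\in D\cap\Ker\delta_1$ can be reorganised as the boundary of a $2$-chain supported on the diagonal graphs is false. With $US=\mat{1}{0}{1}{1}$ and $TUS=\mat{2}{1}{1}{1}$, take paths $c_1\colon 0\to i\infty$, $c_2\colon i\infty\to 0$, $c_3\colon 0\to -1$, $c_4\colon -1\to 0$ and set
\begin{equation*}
d=D_{1}(c_1)+D_{T}(c_2)+D_{TUS}(c_3)+D_{US}(c_4).
\end{equation*}
Its boundary is $[(i\infty,i\infty)-(0,0)]+[(0,1)-(i\infty,i\infty)]+[(-1,i\infty)-(0,1)]+[(0,0)-(-1,i\infty)]=0$, yet none of the four components is closed: the cancellations occur at cusps $(p,g.p)=(p,g'.p)$ with $g^{-1}g'$ \emph{parabolique}, which your arithmetic fact (no hyperbolic element fixes a cusp) does not exclude --- it only forbids two such coincidences for a single pair $(g,g')$, whereas here four distinct graphs are linked cyclically through four different cusps. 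Since $D=\bigoplus_{g}D_g(M_1^{pte}(\H,\Z))$ (a non-constant path $(c,g.c)$ determines $g$), any $2$-chain inside the union of the graphs splits into $2$-chains on individual graphs, so such a $d$ does not bound there and your argument cannot conclude $\langle\eta,d\rangle=0$.

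The resolution --- and what the paper actually uses when it computes $I_D$ by letting $\mathcal{I}_1$ act diagonally --- is to impose closedness \emph{component by component}: one works with $\sum_g D_g(\Ker\delta_1)=\sum_g\delta_2\bigl(M_2^{pte}(\varphi_1^g(\H),\Z)\bigr)$ rather than with $D\cap\Ker\delta_1$. Under that reading the diagonal case is no harder than the horizontal and vertical ones: each graph is simply connected, each separately closed component bounds a $2$-chain on its own graph, and $(\varphi_1^{g})^*\omega=0$ kills the integral; no fixed-point arithmetic is needed, and the non-disjointness of the graphs as point sets is irrelevant because the direct-sum decomposition already separates the $1$-chains. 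You should either adopt this componentwise definition of $D^0$ (the one consistent with the paper's later determination of the generators of $\mathcal{I}_2$), or, if you insist on $D^0=D\cap\Ker\delta_1$, supply a genuinely new argument showing that the $D$-components of $g.\delta_2\tau_2$ for $g\in\mathcal{I}_2$ never exhibit the cyclic linkage above; your sketch does neither.
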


\begin{proof}
On remarque que pour tout $0\leq j \leq 2$, $\{\varphi_j^*\omega=0\}^{\bot}=\varphi_j( M_2^{pte}(\H,\Z)).$\\
Par ailleurs, on a $\{d\omega=0\}^{\bot}=\Im(\delta_3)=\Ker(\delta_2)$. Et ainsi:
$$\Ker(\delta_2)+\sum_{j=0}^2 \varphi_j( M_2^{pte}(\H,\Z))\subset \{\varphi_0^*\omega=\varphi_1^*\omega=\varphi_2^*\omega=d\omega=0\}^{\bot}\subset\Omega_{k_1,k_2}^{\bot}.$$
On utilise alors l'invariance de $\Omega_{k_1,k_2}^{\bot}$ par $\Gamma^2$ en prenant la somme suivante:
$$\sum_{\gamma\in\Gamma^2}\psi_{\gamma}\left(\Ker(\delta_2)+\sum_{j=0}^2 \varphi_j( M_2^{pte}(\H,\Z))\right)\subset \Omega_{k_1,k_2}^{\bot}.$$
On a $\psi_{\gamma}(\Ker(\delta_2))=\Ker(\delta_2)$ et ainsi pour tout $g\in\Z[\Gamma^2]$:
\begin{align*}
g\tau_2&\in \Ker(\delta_2)+\sum_{\gamma\in\Gamma^2}\sum_{j=0}^2 \psi_{\gamma}\varphi_j( M_2^{pte}(\H,\Z))\\
\Leftrightarrow g\delta_2\tau_2=\delta_2 (g\tau_2)&\in H^0+D^0+V^0.
\end{align*}
Ce qui conclut bien la démonstration par définition des idéaux.
\end{proof}

\section{Calcul de l'idéal des relations $\mathcal{I}_2$}

\subsection{Décomposition en fonction de $\mathcal{I}_1$}

On peut décomposer $\delta_2 \tau_2$ en fonction de $\tau_1$. Ceci permet de lier les calculs de $\mathcal{I}_1$ et $\mathcal{I}_2$.
\begin{prop}\label{dtau2}
La $1$-chaîne $\delta_2 \tau_2$ est la somme alternée de trois $1$-cycles de $M_1^{pte}(\H^2,\Z)$:
\begin{equation}\label{dtau2eq}
\delta_2 \tau_2=H_{Id}(\tau_1)-D_{Id}(\tau_1)+V_{Id}(\tau_1).
\end{equation}
\end{prop}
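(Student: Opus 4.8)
The statement is purely combinatorial, so the plan is to apply the definition of the boundary operator directly to the explicit parametrization of $\tau_2$ and then recognize each resulting face as one of the transverse chains $H$, $V$, $D$ evaluated on $\tau_1$. Concretely, I would start from
\[
\delta_2\tau_2=\sum_{j=0}^2(-1)^j[\tau_2\circ\delta_j^2]=[\tau_2\circ\delta_0^2]-[\tau_2\circ\delta_1^2]+[\tau_2\circ\delta_2^2],
\]
and compute each composite $\tau_2\circ\delta_j^2:\Delta_1\to\H^2$ from the formula $\tau_2(t_0,t_1,t_2)=(-i\log t_0,-i\log(t_0+t_1))$ together with the face inclusions $\delta_j^2$ that insert a $0$ in position $j$.

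Carrying out the three substitutions is the heart of the argument. On the middle face ($j=1$, where the second barycentric coordinate is set to $0$) the two components of $\tau_2$ become equal, namely $(-i\log t_0,-i\log t_0)$, so this face is the diagonal chain $D_{Id}(\tau_1)$. On the two outer faces ($j=0$ and $j=2$) one component degenerates to a constant cusp while the other sweeps out the path $t_0\mapsto -i\log t_0$, which is exactly $\tau_1$; these yield a vertical and a horizontal chain respectively. The signs $-1$ on the middle face and $+1$ on the outer faces then produce the claimed alternating sum.

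The identification of each face with the correct transverse chain is where care is needed, and I would base it on the observation that in $M_1^{pte}(\H^2,\Z)$ a relative $1$-chain is determined by its ordered pair of endpoints in $\pte^2$, since $\H^2$ is contractible and homotopies fix the vertices. Thus it suffices to read off the three vertices $\tau_2(1,0,0)=(0,0)$, $\tau_2(0,1,0)=(i\infty,0)$, $\tau_2(0,0,1)=(i\infty,i\infty)$, to track along which edge of this triangle each face runs, and to match these endpoints with those of $H_{Id}(\tau_1)$, $D_{Id}(\tau_1)$ and $V_{Id}(\tau_1)$. The only genuinely delicate bookkeeping is fixing the orientation of $\tau_1$ so that $\delta_1\tau_1=(i\infty)-(0)$, and determining for each outer face the group element that pins the degenerate coordinate to its cusp through the values $g.0$ and $g.i\infty$; once the orientations and cusps are aligned, the equality is immediate from homotopy invariance. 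I expect no analytic difficulty whatsoever: the entire proposition reduces to this finite, face-by-face check.
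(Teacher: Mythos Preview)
Your approach is essentially the same as the paper's: both compute the three faces $\tau_2\circ\delta_j^2$ directly from the explicit parametrization $\tau_2(t_0,t_1,t_2)=(-i\log t_0,-i\log(t_0+t_1))$ and recognise each face as a transverse chain of type $H$, $D$ or $V$ applied to $\tau_1$. The paper's proof is just these three one-line substitutions followed by ``ce qui donne le r\'esultat apr\`es r\'e\'ecriture''; your extra remarks on endpoint matching and orientation bookkeeping are more careful than what the paper spells out, but the method is identical.
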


\begin{proof}
Les bords d'une $2$-chaine sont donnés par la formule $\sum_{j=0}^2(-1)^j\delta_2^j$ où $\delta_2^j$ consiste à imposer $t_j=0$, ainsi le bord de $\tau_2$ est déterminé par:
\begin{align*}
\tau_2(0,t_1,t_2)&=\left(i\infty,-i\log(t_1)\right)=\left(i\infty,\tau_1(t_1,t_2)\right),\\
\tau_2(t_0,0,t_2)&=\left(-i\log(t_0),-i\log(t_0)\right)=\left(\tau_1(t_0,t_2),\tau_1(t_0,t_2)\right),\\
\text{et }\tau_2(t_0,t_1,0)&=\left(-i\log(t_0),0\right)=\left(\tau_1(t_0,t_1),0\right).
\end{align*}
Ce qui donne le résultat après récriture.
\end{proof}

Notons $\Gamma_{i\infty}$ et $\Gamma_0$ les sous-groupes de $\Gamma$ stabilisant les pointes $i\infty$ et $0$ respectivement.
\begin{lem}
a) L'action de $\Gamma^2$ sur $ M_1^{pte}(\H^2,\Z)$ respecte les ensembles $H$, $D$ et $V$ et on a:
\begin{align}
(\gamma_1,\gamma_2).H_g(c)&=H_{\gamma_2 g}(\gamma_1.c),\label{acth}\\
(\gamma_1,\gamma_2).V_g(c)&=V_{\gamma_1 g}(\gamma_2.c),\\
\text{et }(\gamma_1,\gamma_2).D_g(c)&=D_{\gamma_2 g \gamma_1^{-1}}(\gamma_1.c).
\end{align}
b) L'action de $\Gamma$ scinde chacun des $\C$-espaces vectoriels:
\begin{align}
H&=\bigoplus_{g\in\Gamma/\Gamma_{i\infty}} H_g( M_1^{pte}(\H,\Z)),\label{sumh}\\
V&=\bigoplus_{g\in\Gamma/\Gamma_{0}} V_g( M_1^{pte}(\H,\Z)),\\
\text{et }D&=\bigoplus_{g\in\Gamma} D_g( M_1^{pte}(\H,\Z)).\label{sumd}
\end{align}
c) Ces sous-groupes sont deux à deux disjoints.
\end{lem}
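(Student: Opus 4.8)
The final lemma makes three claims about the action of $\Gamma$ (and $\Gamma^2$) on the transverse chains $H$, $V$, $D$ inside $M_1^{pte}(\H^2,\Z)$:
(a) explicit formulas for how $(\gamma_1,\gamma_2)$ moves each of $H_g(c)$, $V_g(c)$, $D_g(c)$;
(b) that each of $H$, $V$, $D$ decomposes as a direct sum over $\Gamma/\Gamma_{i\infty}$, $\Gamma/\Gamma_0$, and $\Gamma$ respectively;
(c) that $H$, $V$, $D$ are pairwise disjoint (i.e. meet only in $0$).

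Let me sketch how I would prove each part.

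**Part (a): the action formulas.** This part is pure unwinding of definitions and should be the easiest. I would take the defining parametrizations
$$H_g(c):(t_0,t_1)\mapsto (c(t_0,t_1),g.i\infty),\quad V_g(c):(t_0,t_1)\mapsto (g.0,c(t_0,t_1)),$$
$$D_g(c):(t_0,t_1)\mapsto (c(t_0,t_1),g.c(t_0,t_1)),$$
and apply $\psi_{\gamma_1,\gamma_2}(z_1,z_2)=(\gamma_1 z_1,\gamma_2 z_2)$. For $H_g(c)$ the second coordinate becomes $\gamma_2.(g.i\infty)=(\gamma_2 g).i\infty$ and the first becomes $\gamma_1.c$, which is exactly $H_{\gamma_2 g}(\gamma_1.c)$; the $V$ case is symmetric. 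For the diagonal case the first coordinate becomes $\gamma_1.c$ and the second becomes $\gamma_2.(g.c)=(\gamma_2 g \gamma_1^{-1}).(\gamma_1.c)$, so that the new chain is $D_{\gamma_2 g\gamma_1^{-1}}(\gamma_1.c)$. The only subtlety is that these formulas should be read modulo the relevant stabilizers: since $g$ enters only through $g.i\infty$ (resp. $g.0$) for $H$ (resp. $V$), $g$ is well-defined only in $\Gamma/\Gamma_{i\infty}$ (resp. $\Gamma/\Gamma_0$), whereas for $D$ the full $g\in\Gamma$ matters because $g$ acts as a genuine map $z\mapsto g.z$ on the diagonal.

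**Parts (b) and (c): the direct-sum and disjointness statements.** For part (b), the action formulas of (a) show that $\Gamma$ permutes the pieces $H_g(M_1^{pte}(\H,\Z))$ according to the coset $g\Gamma_{i\infty}$ (and similarly for $V$, $D$), so I would first argue that the indexing in the direct sum is exactly by these coset spaces and that the individual summands are disjoint. The key point is that the \emph{image pointe} of each transverse chain is rigid: an element of $H_g(M_1^{pte}(\H,\Z))$ has the constant second coordinate pinned at the cusp $g.i\infty\in\pte$, and since $\pte$ is discrete and, as remarked in the homology subsection, the cusp images depend only on the homotopy class, two chains with distinct pinned cusps cannot be homotopic. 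This gives the internal splitting (b); I would phrase it via the map that records the pinned cusp (for $H$ and $V$) or the slope $g$ (for $D$). Part (c) is then the cross-comparison: I would distinguish the three families by how their coordinate projections behave. A chain in $H$ has second coordinate constant in $\pte$; a chain in $V$ has first coordinate constant in $\pte$; a chain in $D$ has \emph{both} projections nonconstant and linked by a fixed $g\in\Gamma$. A nonzero element common to two of these families would have to satisfy two incompatible rigidity conditions simultaneously (e.g. lying in $H\cap D$ forces a chain with constant second coordinate that is also the $\Gamma$-image of a nonconstant first coordinate, impossible unless the chain is degenerate), so the intersections are trivial.

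**Main obstacle.** The routine formulas in (a) are immediate, so the real care is needed in (b)–(c), where I must work with \emph{homotopy classes} rather than literal maps. The delicate step is justifying that the pinned-cusp and slope invariants are genuinely homotopy invariant and separate the classes: I would lean on the remark already established in the paper that, because $\pte^2$ is discrete, a homotopy fixing the vertices is constant on the vertex set, so the cusp labels $g.i\infty$, $g.0$ are well-defined on classes. The subtlest sub-case is $H\cap D$ (and $V\cap D$): here one must rule out a chain whose second coordinate is simultaneously a constant cusp and the $\Gamma$-translate of a nonconstant path — I would handle this by evaluating the projection to the second $\H$-factor and observing that a constant map and a nonconstant path cannot represent the same relative homology class unless both are nullhomotopic, which forces the chain into the boundary/degenerate locus and hence to be trivial in the free module. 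Once these rigidity arguments are in place, the direct-sum decompositions and disjointness follow formally.
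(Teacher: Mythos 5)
Votre démonstration est correcte et suit essentiellement la même démarche que celle du texte : (a) par réécriture des définitions, (b) et (c) en résolvant les équations sur les sommets/coordonnées épinglées, qui forcent l'appartenance de $g$ aux classes modulo $\Gamma_{i\infty}$ ou $\Gamma_0$ (pour la somme directe) puis la constance des chemins $c_1,c_2$ (pour la disjonction), d'où la nullité de la classe. Votre soin supplémentaire à justifier que les étiquettes de pointes sont des invariants d'homotopie via la discrétude de $\pte$ ne fait qu'expliciter ce que le texte avait déjà établi dans la sous-section sur l'homologie relative.
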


\begin{proof}
La démonstration de a) consiste à récrire la définition des objets.\\
Pour démontrer b), il suffit de montrer que les espaces sont deux à deux disjoints car la famille des espaces est génératrice par définition. Or les équations suivantes se résolvent :
\begin{align*}
(g_1.i\infty,c_1)=(g_2.i\infty,c_2)&\Rightarrow g_1\in g_2\Gamma_{i\infty},\\
(c_1,g_1.0)=(c_2,g_2.0)&\Rightarrow g_1\in g_2\Gamma_{0},\\
\text{et }(c_1,g_1.c_1)=(c_2,g_2.c_2)&\Rightarrow g_1=g_2.
\end{align*}
La démonstration de c) provient de la résolution des systèmes du type :
\begin{align*}
(g_1.i\infty,c_1(t_0,t_1))&=(c_2(t_0,t_1),g_2.0), \forall (t_0,t_1)\in\Delta_1 &\text{pour }V\cap H,\\
(g_1.i\infty,c_1(t_0,t_1))&=(c_2(t_0,t_1),g_2.c_2(t_0,t_1)), \forall (t_0,t_1)\in\Delta_1 &\text{pour }V\cap D,\\
\text{et }(c_1(t_0,t_1),g_1.c_1(t_0,t_1))&=(c_2(t_0,t_1),g_2.0), \forall (t_0,t_1)\in\Delta_1 &\text{pour }D\cap H.
\end{align*}
fixant dans chaque cas les chemins $c_1$ et $c_2$ comme étant constant. Donc la classe de $(c_1,c_2)$ est nulle dans $M_1^{pte}(\H^2,\Z)$.
\end{proof}

On définit les idéaux annulateurs des chemins transverses par:
\begin{align}
I_H&=\{g\in\Z[\Gamma^2]\text{ tel que }g.H_{id}(\{i\infty,0\})\in H^0\},\\ 
I_V&=\{g\in\Z[\Gamma^2]\text{ tel que }g.V_{id}(\{i\infty,0\})\in V^0\},\\
\text{et }I_D&=\{g\in\Z[\Gamma^2]\text{ tel que }g.D_{id}(\{i\infty,0\})\in D^0\}.
\end{align}

\begin{prop}\label{inclhvd}
L'idéal à gauche $\mathcal{I}_2$ de $\Z[\Gamma^2]$ est déterminé par l'intersection des idéaux annulateurs de ces trois segments:
\begin{equation}
\mathcal{I}_2=I_H\cap I_V \cap I_D.
\end{equation}
\end{prop}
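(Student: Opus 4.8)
Le plan est de d\'emontrer l'\'egalit\'e par une d\'ecomposition directe, en exploitant la structure de somme directe $H\oplus V\oplus D$ \'etablie dans le lemme pr\'ec\'edent. D'apr\`es la Proposition \ref{dtau2}, on dispose de l'identit\'e $\delta_2\tau_2=H_{Id}(\tau_1)-D_{Id}(\tau_1)+V_{Id}(\tau_1)$, si bien que pour tout $g\in\Z[\Gamma^2]$ on peut \'ecrire, par $\Z$-lin\'earit\'e,
\begin{equation}
g\cdot\delta_2\tau_2=g\cdot H_{Id}(\tau_1)-g\cdot D_{Id}(\tau_1)+g\cdot V_{Id}(\tau_1).
\end{equation}
La premi\`ere \'etape consiste \`a localiser chacun des trois termes. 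Les formules d'action (\ref{acth}) du lemme pr\'ec\'edent montrent que l'action de $\Gamma^2$ respecte chacun des ensembles $H$, $V$ et $D$; par $\Z$-lin\'earit\'e, l'action de $\Z[\Gamma^2]$ stabilise donc ces trois sous-groupes. On en d\'eduit imm\'ediatement que $g\cdot H_{Id}(\tau_1)\in H$, que $g\cdot D_{Id}(\tau_1)\in D$ et que $g\cdot V_{Id}(\tau_1)\in V$.

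La deuxi\`eme \'etape utilise l'assertion c) du lemme pr\'ec\'edent, c'est-\`a-dire la disjonction deux \`a deux qui fournit la somme directe $H\oplus V\oplus D$. Comme $H^0\subset H$, $V^0\subset V$ et $D^0\subset D$, la somme $H^0+V^0+D^0$ est elle aussi directe et co\"incide avec $H^0\oplus V^0\oplus D^0$ \`a l'int\'erieur de $H\oplus V\oplus D$. Par cons\'equent, un \'el\'ement $x=x_H+x_V+x_D$, avec $x_H\in H$, $x_V\in V$ et $x_D\in D$, appartient \`a $H^0+V^0+D^0$ si et seulement si $x_H\in H^0$, $x_V\in V^0$ et $x_D\in D^0$. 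Appliqu\'e \`a $x=g\cdot\delta_2\tau_2$, dont les composantes selon $H$, $V$ et $D$ sont respectivement $g\cdot H_{Id}(\tau_1)$, $g\cdot V_{Id}(\tau_1)$ et $-g\cdot D_{Id}(\tau_1)$, ceci donne l'\'equivalence
\begin{equation}
g\cdot\delta_2\tau_2\in H^0+V^0+D^0\Longleftrightarrow g\cdot H_{Id}(\tau_1)\in H^0,\ g\cdot V_{Id}(\tau_1)\in V^0,\ g\cdot D_{Id}(\tau_1)\in D^0.
\end{equation}
En rappelant que $\tau_1=\{i\infty,0\}$ et en relisant les d\'efinitions des id\'eaux $I_H$, $I_V$ et $I_D$, le membre de droite s'\'ecrit exactement $g\in I_H\cap I_V\cap I_D$, ce qui \'etablit $\mathcal{I}_2=I_H\cap I_V\cap I_D$.

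La difficult\'e de cette proposition ne r\'eside pas dans la d\'emonstration elle-m\^eme, qui est formelle, mais dans les deux faits structurels qu'elle mobilise et qui sont d\'ej\`a acquis: la stabilit\'e de $H$, $V$, $D$ sous l'action de $\Gamma^2$ (assertion a)) et surtout leur position en somme directe (assertion c)). Le point d\'elicat \`a v\'erifier est justement que l'appartenance \`a $H^0+V^0+D^0$ se teste composante par composante; c'est l\`a que la directe de $H\oplus V\oplus D$ est indispensable, car sans elle on ne pourrait pas conclure que chaque projection de $g\cdot\delta_2\tau_2$ retombe dans le bon sous-groupe de cycles ferm\'es. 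Une fois cette observation faite, la conclusion est purement alg\'ebrique.
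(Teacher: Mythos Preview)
Your proof is correct and takes essentially the same approach as the paper: both decompose $g\cdot\delta_2\tau_2$ into its $H$-, $V$-, $D$-components via Proposition~\ref{dtau2} and the $\Gamma^2$-stability from the lemma, and then invoke the independence of these three subgroups to reduce membership in $H^0+V^0+D^0$ to the three componentwise conditions defining $I_H$, $I_V$, $I_D$. Your direct-sum formulation is slightly cleaner than the paper's explicit computation $H\cap(H^0+V+D)=H^0+(H\cap V)+(H\cap D)=H^0$, but the underlying argument is identical.
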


\begin{proof}
L'inclusion $I_H\cap I_V \cap I_D\subset \mathcal{I}_2$ est une conséquence directe de la décomposition de $\delta_2 \tau_2$ vu en (\ref{dtau2eq}). Réciproquement, soit $g\in\mathcal{I}_2$ alors $g.\delta_2 \tau_2\in (H^0+V^0+D^0)$. On a:
$$g.H_{id}(\{i\infty,0\})=g.\delta_2\tau_2-g.V_{id}(\{i\infty,0\})+g.D_{id}(\{i\infty,0\})\in H\cap\left(H^0+V+D\right).$$
car ils sont stables par les chaînes transverses sont stables par $\Gamma$. Puis on a :
$$H\cap\left(H^0+V+D\right)=H^0+(H\cap V)+(H\cap D)=H^0.$$
En effet, les $\Z$-modules $H$, $V$ et $D$ sont $2$ à deux disjoints. De plus, ils sont sans torsions donc quitte à prendre les $\Q$-espaces vectoriels associés, on obtient bien l'égalité. Et ainsi on obtient $g\in I_H$. Ce raisonnement se symétrise et on obtient bien le résultat.
\end{proof}

\subsection{Calcul de $I_H,I_V$ et $I_D$}

Déterminons $I_H,I_V$ et $I_D$ par calcul direct.\par

\begin{prop}
Ces idéaux sont de type fini et on a:
\begin{align}
I_H&=\Big((1+S,1),(1+U+U^2,1),(1,1-T)\Big),\\
I_V&=\Big((1,1+S),(1,1+U+U^2),(1-US,1)\Big),\\
\text{et }I_D&=\Big((1,1)+(S,S),(1,1)+(U,U)+(U^2,U^2)\Big).
\end{align}
\end{prop}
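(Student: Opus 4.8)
Il s'agit de calculer explicitement les trois id�aux annulateurs $I_H$, $I_V$ et $I_D$. Mon approche est de traiter s�par�ment les trois cas, en exploitant pour chacun la description de l'action de $\Gamma^2$ sur les cha�nes transverses donn�e par le lemme pr�c�dent (formules (\ref{acth}) et suivantes) ainsi que la d�composition en somme directe (\ref{sumh})--(\ref{sumd}). Le fil conducteur est que la condition d'appartenance � chaque id�al se ram�ne � une condition d'annulation de bord dans une copie de $M_1^{pte}(\H,\Z)$, ce qui permet de se reconduire au calcul d�j� effectu� de $\mathcal{I}_1$.

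\textbf{Calcul de $I_D$.} C'est le cas le plus direct car l'action sur les chemins diagonaux est essentiellement diagonale: la formule $(\gamma_1,\gamma_2).D_g(c)=D_{\gamma_2 g\gamma_1^{-1}}(\gamma_1.c)$ montre que $D_{id}(\{i\infty,0\})=D_{id}(\delta_1\tau_1)$ se comporte, quant � l'indice $g$, comme la conjugaison et, quant au chemin, comme l'action de $\Gamma$ sur la premi�re composante. La cha�ne $(\gamma_1,\gamma_2).D_{id}(\{i\infty,0\})$ est de bord nul si et seulement si $(\gamma_1.i\infty - \gamma_1.0)$ et sa translat�e diagonale s'annulent simultan�ment, ce qui par la suite exacte $0\to\mathcal{I}_1\to\Z[\Gamma]\to\Z[\pte]^0\to 0$ revient � demander que l'image dans $\Z[\pte^2]^0$ soit nulle. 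Je montrerai que ceci est exactement engendr� par $(1,1)+(S,S)$ et $(1,1)+(U,U)+(U^2,U^2)$, c'est-�-dire le plongement diagonal de $\mathcal{I}_1$ via $\gamma\mapsto(\gamma,\gamma)$.

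\textbf{Calcul de $I_H$ et $I_V$.} Pour $I_H$, la formule $(\gamma_1,\gamma_2).H_g(c)=H_{\gamma_2 g}(\gamma_1.c)$ d�couple les deux actions: $\gamma_1$ agit sur le chemin horizontal $c$, tandis que $\gamma_2$ agit sur la pointe $g.i\infty$. La condition $g.H_{id}(\{i\infty,0\})\in H^0$ se d�compose alors en deux contraintes ind�pendantes. La premi�re, portant sur la premi�re coordonn�e, redonne les relations de Manin $(1+S,1)$ et $(1+U+U^2,1)$ via $\mathcal{I}_1$ appliqu� � $\tau_1$. La seconde exprime que l'action de $\Gamma$ sur le point $i\infty$ doit respecter son stabilisateur $\Gamma_{i\infty}=\langle T\rangle$, ce qui fournit le g�n�rateur $(1,1-T)$. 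Le cas $I_V$ est enti�rement sym�trique par �change des r�les, avec le stabilisateur $\Gamma_0$ de $0$ qui est conjugu� de $\langle T\rangle$ et donne le g�n�rateur $(1-US,1)$ (car $US$ engendre $\Gamma_0$).

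\textbf{Le point d�licat.} La principale difficult� sera de v�rifier rigoureusement que les g�n�rateurs propos�s engendrent bien l'id�al tout entier et pas seulement un sous-id�al: l'inclusion $\supseteq$ est une v�rification directe, mais l'inclusion $\subseteq$ demande de montrer que toute relation annulant la cha�ne transverse se d�duit des g�n�rateurs list�s. J'utiliserai pour cela la finitude de type fini de $\mathcal{I}_1$ (th�or�me des relations de Manin) combin�e au d�couplage des deux actions, en passant aux $\Q$-espaces vectoriels associ�s pour contourner les probl�mes de torsion comme dans la d�monstration de la Proposition \ref{inclhvd}. Une fois ces trois id�aux d�termin�s, leur intersection donnera $\mathcal{I}_2$ et conduira au syst�me de g�n�rateurs annonc� au th�or�me \ref{thmchap3}.
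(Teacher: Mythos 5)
Votre plan est correct et suit essentiellement la d�monstration du texte : on utilise les formules d'action $(\gamma_1,\gamma_2).H_g(c)=H_{\gamma_2 g}(\gamma_1.c)$, $(\gamma_1,\gamma_2).V_g(c)=V_{\gamma_1 g}(\gamma_2.c)$ et $(\gamma_1,\gamma_2).D_g(c)=D_{\gamma_2 g\gamma_1^{-1}}(\gamma_1.c)$ avec les d�compositions en somme directe index�es par $\Gamma/\Gamma_{i\infty}$, $\Gamma/\Gamma_0$ et $\Gamma$ pour ramener chaque condition � l'appartenance � $\mathcal{I}_1$ coset par coset, puis on identifie les stabilisateurs $\Gamma_{i\infty}=\langle T\rangle$ et $\Gamma_0=\langle US\rangle$ pour extraire les g�n�rateurs $(1,1-T)$ et $(1-US,1)$. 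Le seul d�tail que le texte traite explicitement et que vous laissez en suspens est la r��criture du type $(1,1)+(S,T^a)=(1+S,1)-(S,1-T^a)$ qui �tablit directement la g�n�ration finie sur $\Z$, sans n�cessiter le passage aux $\Q$-espaces vectoriels que vous envisagez.
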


\begin{proof}
Commençons par le calcul de $I_H$. Soit $\sum_i \lambda_i(\gamma_1^i,\gamma_2^i)\in\Z[\Gamma^2]$. Son action sur $H_{id}(\{i\infty,0\})$ s'exprime ainsi:
$$\sum_i \lambda_i(\gamma_1^i,\gamma_2^i). H_{id}(\{i\infty,0\})=\sum_i \lambda_i H_{\gamma_2^i}(\gamma_1^i.\{i\infty,0\})\text{ par (\ref{acth})}.$$
Supposons cet élément dans $I_H$ alors:
$$\sum_{g\in\Gamma/\Gamma_{i\infty}}\sum_{\gamma_2^i\in g\Gamma_{i\infty}}\lambda_i H_{g}(\gamma_1^i.\{i\infty,0\})
=\sum_{g\in\Gamma/\Gamma_{i\infty}}H_{g}\left(\sum_{\gamma_2^i\in g\Gamma_{i\infty}} \lambda_i \gamma_1^i. \{i\infty,0\}\right)\in H^0$$
Les espaces $H_g(M_1(\H,\Z))$ sont en somme directe d'après (\ref{sumh}). Donc pour chaque orbite $g\Gamma_{i\infty}\in\Gamma/\Gamma_{i\infty}$, on a:
$$H_{g}\left(\sum_{\gamma_2^i\in g\Gamma_{i\infty}} \lambda_i \gamma_1^i. \{i\infty,0\}\right)\in H^0,$$
et ainsi, on obtient:
$$\delta_1\sum_{\gamma_2^i\in g\Gamma_{i\infty}} \lambda_i \gamma_1^i. \{i\infty,0\}=\sum_{\gamma_2^i\in g\Gamma_{i\infty}} \lambda_i \gamma_1^i. \delta_1\{i\infty,0\}=0.$$

On déduit alors $\sum_{\gamma_2^i\in g\Gamma_{i\infty}} \lambda_i \gamma_1^i\in\mathcal{I}_1=(1+S,1+U+U^2)$.
De plus, le stabilisateur de la pointe $i\infty$ dans $\Gamma$ est:
$$\Gamma_{i\infty}=<T^a>_{a\in\Z}\text{ où }T=\left(\begin{smallmatrix}1 & 1\\ 0 & 1 \end{smallmatrix}\right)=U^2S.$$
On obtient ainsi:
\begin{equation*}
I_H=\Big((1,1)+(S,T^a),(1,1)+(U,T^a)+(U^2,T^b)\Big)_{a,b\in\Z}
\end{equation*}

Cet idéal est en faite de type fini. Les relations de Manin agissant sur le première coordonnées et la stabilité de la pointe $i\infty$ sur la seconde, on peut les scinder suivant:
$$(1,1)+(S,T^a)=(1+S,1)-(S,1-T^a),$$
$$\text{et }(1,1)+(U,T^a)+(U^2,T^b)=(1+U+U^2,1)-(U,1-T^a)-(U^2,1-T^b).$$
Et donc les éléments $(1+S,1)$, $(1+U+U^2,1$ et $(1,1-T)$ engendre $I_H$.\par

De la même manière, on réduit le calcul de $I_V$ à celui du stabilisateur de $0$:
$$\Gamma_0=<(US)^a>_{a\in\Z}\text{ où }US=\left(\begin{smallmatrix}1 & 0\\ 1 & 1 \end{smallmatrix}\right).$$
On en déduit:
\begin{equation*}
I_V=\Big((1,1)+((US)^a,S),(1,1)+((US)^a,U)+((US)^b,U^2)\Big)_{a,b\in\Z}
\end{equation*}
Une récriture de cette famille de ces générateurs donne de la même manière le résultat.\par

Pour le calcul de $I_D$, on rappel que $(\gamma_1,\gamma_2).D_I(c)=D_{\gamma_2\gamma_1^{-1}}(\gamma_1.c)$.
On peut alors réduire l'action de $\Z[\Gamma^2]$ à celle de $\Z[\Gamma]$ via:
$$\sum \lambda_i (\gamma_1^i,\gamma_2^i).D_I(\{i\infty,0\})=\sum_g D_g\left(\sum_{g=\gamma_2\gamma_1^{-1}}\lambda_i \gamma_1^i.\{i\infty,0\}\right).$$

Il suffit dans ce cas de faire agir de manière diagonale $\mathcal{I}_1$ pour obtenir le résultat.
\end{proof}

Dans la suite, nous considèrerons les parties paire et impaire du polynôme des bipériodes. 
Ainsi on s'intéressera à l'action par conjugaison de $\varepsilon=\mat{-1}{0}{\phantom{-}0}{1}$. Notons $c_+=id_{\Gamma}$ et $c_-:g\mapsto\varepsilon g\varepsilon$. 
Posons pour tout choix de signes $(\epsilon_1,\epsilon_2)\in\{\pm 1\}^2$, et toute partie $I\subset \Z[\Gamma^2]$:
\begin{align}
I^{(\epsilon_1,\epsilon_2)}=\left(c_{\epsilon_1}\times c_{\epsilon_2}\right)(I).
\end{align}

\begin{prop}
Les conjugaisons de $I_H$ et $I_V$ n'apportent pas de nouveaux termes:
\begin{align}
I_H&=I_H^{(+,+)}=I_H^{(-,+)}=I_H^{(+,-)}=I_H^{(-,-)},\\
\text{et }I_V&=I_V^{(+,+)}=I_V^{(-,+)}=I_V^{(+,-)}=I_V^{(-,-)}.
\end{align}
La conjugaison de $I_D$ donne lieu à de nouveaux termes:
\begin{align}
I_D=I_D^{(+,+)}=I_D^{(-,-)}&=\Big((1,1)+(S,S),(1,1)+(U,U)+(U^2,U^2)\Big),\\
\text{ et }I_D^{(-,+)}=I_D^{(+,-)}&=\Big((1,1)+(S,S),(1,1)+(U,V)+(U^2,V^2)\Big),
\end{align}
où $V=\left(\begin{smallmatrix}0 & -1\\ 1 & \phantom{-}1 \end{smallmatrix}\right)=\varepsilon U\varepsilon=SU^2S$.
\end{prop}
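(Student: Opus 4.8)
The plan is to exploit that each $c_\epsilon$ is an involutive automorphism of $\Gamma$: the map $c_+$ is the identity, while $c_-$ is conjugation by $\varepsilon$ and satisfies $c_-^2=\mathrm{id}$ because $\varepsilon^2=\mathrm{Id}$ in $\Gamma$. Consequently $\phi_{\epsilon_1,\epsilon_2}=c_{\epsilon_1}\times c_{\epsilon_2}$ is an involutive ring automorphism of $\mathbb{Z}[\Gamma^2]$, so it carries left ideals to left ideals and sends a generating family of a left ideal $I$ to a generating family of $\phi_{\epsilon_1,\epsilon_2}(I)$. Since $\phi_{\epsilon_1,\epsilon_2}$ is an involution, to prove $\phi_{\epsilon_1,\epsilon_2}(I)=I$ it suffices to check that the image of each generator of $I$ again lies in $I$. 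First I would record the action of $c_-$ by short matrix computations: $c_-(S)=S$, $c_-(T)=T^{-1}$, $c_-(U)=V$, and $c_-(US)=(US)^{-1}$, all equalities in $\Gamma=\G/\{\pm\mathrm{Id}\}$.

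The single arithmetic fact that drives everything is $1+V+V^2\in\mathcal{I}_1$. I would prove it in one line either from the identity $1+V+V^2=S(1+U+U^2)S$ (using $V=SU^2S$, $V^2=SUS$, and $S^2=U^3=1$), or, more geometrically, by noting that $V$ cyclically permutes the cusps $i\infty\to 0\to -1\to i\infty$, whence $(1+V+V^2)\,\delta_1\tau_1=0$. With this in hand, the statements for $I_H$ and $I_V$ follow at once from the explicit generators and from the coset description established in the previous proposition, namely $(h,1)\in I_H\iff h\in\mathcal{I}_1$ and $(1,h)\in I_V\iff h\in\mathcal{I}_1$. Indeed, applying $c_-$ in either coordinate fixes $(1+S,1)$, $(1+U+U^2,1)$, $(1,1+S)$, $(1,1+U+U^2)$ or replaces $U$ by $V$, while the parabolic generators transform as $(1,1-T)\mapsto(1,1-T^{-1})=(1,-T^{-1})(1,1-T)$ and $(1-US,1)\mapsto(1-(US)^{-1},1)=(-(US)^{-1},1)(1-US,1)$, which are left multiples of generators and thus stay in the ideal; the new middle terms $(1+V+V^2,1)$ and $(1,1+V+V^2)$ belong to $I_H$, resp.\ $I_V$, precisely because $1+V+V^2\in\mathcal{I}_1$. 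Hence $I_H$ and $I_V$ are invariant under all four sign choices.

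For $I_D$ the cases $(+,+)$ and $(-,-)$ go the same way: the $(-,-)$-image is generated by $(1,1)+(S,S)$ and $(1,1)+(V,V)+(V^2,V^2)$, both diagonal elements, which by the previous proposition lie in $I_D$ iff $1+S\in\mathcal{I}_1$, resp.\ $1+V+V^2\in\mathcal{I}_1$ — both true. The delicate point, and the step I expect to be the main obstacle, is the equality $I_D^{(-,+)}=I_D^{(+,-)}$, since the two generating families produced by $\phi_{-,+}$ and $\phi_{+,-}$ look genuinely different, carrying the crossed terms $(V,U)+(V^2,U^2)$ in one case and $(U,V)+(U^2,V^2)$ in the other, and there is no evident way to convert one into the other by left multiplication. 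The plan is to bypass the crossed generators and argue by the involution instead: writing $b_2=(1,1)+(U,V)+(U^2,V^2)$, one has $b_2\in I_D^{(-,+)}$ iff $(c_-\times\mathrm{id})(b_2)=(1,1)+(V,V)+(V^2,V^2)\in I_D$, which holds by the diagonal criterion; together with $(1,1)+(S,S)\in I_D^{(-,+)}$ this yields $I_D^{(+,-)}\subseteq I_D^{(-,+)}$. The symmetric computation, applying $\mathrm{id}\times c_-$ to $b_1=(1,1)+(V,U)+(V^2,U^2)$ and again landing on the diagonal element $(1,1)+(V,V)+(V^2,V^2)\in I_D$, gives the reverse inclusion, so the two ideals coincide and equal the family displayed with $(U,V)$. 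In this way the whole proposition collapses to the automorphism principle of the first paragraph together with the single relation $1+V+V^2\in\mathcal{I}_1$.
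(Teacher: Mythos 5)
Your proof establishes all the displayed equalities and is essentially sound; it is close in spirit to the paper's argument but organized differently, in a way that buys some clarity. The paper treats each conjugated generator by an explicit rewriting inside the ideal --- e.g.\ $(\varepsilon,1)(1+U+U^2,1)(\varepsilon,1)=(S,1)(1+U+U^2,1)(1+S,1)-(S,1)(1+U+U^2,1)$ --- whereas you factor everything through the single lemma $1+V+V^2\in\mathcal{I}_1$ together with the coset/diagonal criteria $(h,1)\in I_H\Leftrightarrow h\in\mathcal{I}_1$, $(1,h)\in I_V\Leftrightarrow h\in\mathcal{I}_1$ and $\sum\lambda_i(\gamma_i,\gamma_i)\in I_D\Leftrightarrow\sum\lambda_i\gamma_i\in\mathcal{I}_1$ extracted from the previous proposition. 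One caution on that lemma: the bare identity $1+V+V^2=S(1+U+U^2)S$ exhibits a \emph{right} multiple of an element of the left ideal $\mathcal{I}_1=\Z[\Gamma](1+S,1+U+U^2)$, which is not automatically in $\mathcal{I}_1$; it is your second, geometric argument (the cusp cycle $i\infty\to 0\to -1\to i\infty$ under $V$, hence $(1+V+V^2)\delta_1\tau_1=0$) that actually closes this, so keep that one. Your involution argument for $I_D^{(-,+)}=I_D^{(+,-)}$ --- reducing both crossed generators $(1,1)+(U,V)+(U^2,V^2)$ and $(1,1)+(V,U)+(V^2,U^2)$ to the diagonal element $(1,1)+(V,V)+(V^2,V^2)\in I_D$ --- is a genuine improvement in explicitness: the paper states this equality but does not spell out a proof of it.

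There is one point of the proposition you do not address: the claim that the conjugation of $I_D$ \emph{does} produce new terms, i.e.\ $I_D^{(-,+)}\neq I_D$. The paper proves this by computing
\begin{equation*}
\left[(1,1)+(U,V)+(U^2,V^2)\right]D_{id}(\tau_1)=D_{id}(\tau_1)+D_{VU^{-1}}(U\tau_1)+D_{V^2U^{-2}}(U^2\tau_1)\neq 0,
\end{equation*}
using that the subgroups $D_g(M_1^{pte}(\H,\Z))$ are in direct sum over $g\in\Gamma$. Your argument as written only yields the inclusions and equalities among the conjugated ideals; you should add this non-membership computation (or an equivalent one) to justify the word ``nouveaux'' in the statement.
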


\begin{proof}
Dans le cas de $I_H$, les conjugaisons non triviales des générateurs sont:
\begin{align*}
(\varepsilon,1)(1+S,1)(\varepsilon,1)&=(1+S^{-1},1)=(1+S,1)\in I_H,\\
(\varepsilon,1)(1+U+U^2,1)(\varepsilon,1)&=(1+SU^2S+SUS,1)\\
&=(S,1)(1+U+U^2,1)(1+S,1)-(S,1)(1+U+U^2,1)\in I_H,\\
\text{et }(1,\varepsilon)(1,1-T)(1,\varepsilon)&=(1,1-T^{-1})=-(1,T^{-1})(1,1-T)\in I_H.
\end{align*}
Pour $I_V$, les deux premiers générateurs se traitent de la même manière par symétrie et on a:
$$(\varepsilon,1)(1-US,1)(\varepsilon,1)=(1,1-(US)^{-1})=-(1,(US)^{-1})(1,1-US)\in I_V.$$
Pour $I_D$, $(1,1)+(S,S)$ est stabilisé par les conjugaisons et celles de $(1,1)+(U,U)+(U^2,U^2)$ apporte de nouveaux termes à savoir ceux engendrés par $(1,1)+(U,V)+(U^2,V^2)$. Cet élément n'appartient pas à $I_D$ car:
$$\left[(1,1)+(U,V)+(U^2,V^2)\right]D_{id}(\tau_1)=D_{id}(\tau_1)+D_{VU^{-1}}(U\tau_1)+D_{V^2U^{-2}}(U^2\tau_1).$$
Cet élément est non nul car il est somme de trois éléments non nuls et libres entre eux d'après (\ref{sumd}).
De plus, la conjugaison des deux coordonnées est bien stabilisée:
\begin{multline*}
(\varepsilon,\varepsilon)\left[(1,1)+(U,U)+(U^2,U^2)\right](\varepsilon,\varepsilon)=(S,S)\left[(1,1)+(U^2,U^2)+(U,U)\right](S,S)\\
=(S,S)\left[(1,1)+(U,U)+(U^2,U^2)\right]\left[(1,1)+(S,S)\right]-(S,S)\left[(1,1)+(U,U)+(U^2,U^2)\right]\in I_D.
\end{multline*}
\end{proof}

\subsection{Cohomologie relative de $SL_2(\Z)^2$}

Dans le cas d'une forme modulaire, on pouvait oublier l'espace $\H$ en étudiant uniquement l'action de $\Gamma$ sur les pointes. On rappelle brièvement cette construction pour ensuite la généraliser au cas de l'action de $\Gamma^2$ sur $\H^2$.\par

On définit le groupe $\Z[\pte]^0$ des $1$-chaines fermées comme étant le noyau de l'application:
$$\Z[\pte]\to\Z,\quad \sum_i \lambda_i p_i\mapsto \sum_i \lambda_i.$$
Le Théorème de Manin donne alors la surjectivité de l'application:
$$\Theta_1:\Z[\Gamma]\to\Z[\pte]^0,\quad \gamma\to (\gamma.i\infty)-(\gamma.0).$$
Le noyau de cette application est l'idéal $\mathcal{I}_1$ des relations de Manin.

\subsubsection{Construction du groupe formel des $2$-chaînes}

Notons $P_2=\pte^2$ les sommets de $\H^2$.
On dispose des applications de bord:
$$\delta_{m}:\Z[P_2^{m+1}]\to\Z[P_2^m],\quad ((p_1,q_1),...,(p_{m+1},q_{m+1}))\mapsto\sum_{j=1}^{m+1}(-1)^j(...,(p_{j-1},q_{j-1}),(p_{j+1},q_{j+1}),...).$$
On vérifie simplement que $\delta_{m}\circ\delta_{m+1}=0$ et on a même l'exactitude de la suite longue:
$$...\stackrel{\delta_{m+1}}{\longrightarrow}\Z[P_2^{m+1}]\stackrel{\delta_{m}}{\longrightarrow}\Z[P_2^m]\stackrel{\delta_{m-1}}{\longrightarrow}...
\stackrel{\delta_1}{\longrightarrow}\Z[P_2]\stackrel{\delta_{0}}{\longrightarrow}\Z\longrightarrow 0.$$
Ceci permet de définir pour tout entier $m\geq 1$ le sous-groupe:
\begin{equation}
\Z[P_2^m]^0=\Ker(\delta_{m-1})=\Im(\delta_{m}).
\end{equation}

Pour $0\leq j\leq 2$, on dispose de la famille d'applications $\varphi_j^g:\H\to\H^2$ dépendant d'un paramètre $g\in\Gamma$ par: 
\begin{equation*}
\varphi_0^g(z)=(g.i\infty,z),\quad\varphi_1^g(z)=(z,g.z)\text{ et }\varphi_2^g(z)=(z,g.0).
\end{equation*}

On a vu que les cycles contenus dans les sous-espaces transverses, image de $\varphi_j^g$, amène des annulations. Nous allons donc quotienter par ces cycles.
Ce sont ceux de la forme:
$$\varphi_j^g(c),\text{ pour }c\in\Z[P_1^m]^0,j=0,1,2\text{ et }g\in\Gamma.$$
Posons $\Z[P_2^m]_j^0=\sum_{g\in\Gamma}\varphi_j^g(\Z[P_1^m]^0)\subset\Z[P_2^m]^0$.\par
On définit alors le groupe quotient:
\begin{equation}
H_m(P_2)=\Z[P_2^m]^0/\left(\sum_{j=0}^2 \Z[P_2^m]^0_j\right).
\end{equation}

\begin{prop}
L'action de $\Gamma^2$ sur $P_2$ se transmet diagonalement aux groupes $\Z[P_2^m]$. Cette action passe alors au quotient sur $H_m(P_2)$. \par
\end{prop}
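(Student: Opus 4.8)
The plan is to establish the two assertions separately, the first being essentially a matter of definition and the second carrying the actual content. \textbf{For the diagonal action:} an element $(\gamma_1,\gamma_2)\in\Gamma^2$ acts on a vertex $(p,q)\in P_2=\pte^2$ by $(\gamma_1,\gamma_2).(p,q)=(\gamma_1.p,\gamma_2.q)$, where each $\gamma_j$ acts on $\pte$ in the usual way. I would extend this coordinate-by-coordinate to an $m$-tuple of vertices in $P_2^m$, and then $\Z$-linearly to $\Z[P_2^m]$. The first thing to check is that the boundary maps $\delta_m:\Z[P_2^{m+1}]\to\Z[P_2^m]$ are $\Gamma^2$-equivariant: since $\delta_m$ merely deletes one of the $m+1$ positions with an alternating sign, and the diagonal action operates independently on each position, the two operations commute. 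Consequently $\Gamma^2$ preserves $\Z[P_2^m]^0=\Ker(\delta_{m-1})=\Im(\delta_m)$, which is all that is needed for the numerator of the quotient defining $H_m(P_2)$.

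\textbf{For passing to the quotient}, I must show that $\Gamma^2$ preserves the subgroup $\sum_{j=0}^2\Z[P_2^m]_j^0$, and it suffices to check that each $\Z[P_2^m]_j^0=\sum_{g\in\Gamma}\varphi_j^g(\Z[P_1^m]^0)$ is stable. The computation mirrors exactly the twisted formulas already obtained for the transverse chains $H$, $V$, $D$. Applying $(\gamma_1,\gamma_2)$ to a vertex in the image of $\varphi_0^g$ gives
$$(\gamma_1,\gamma_2).(g.i\infty,p)=(\gamma_1 g.i\infty,\gamma_2.p)=\varphi_0^{\gamma_1 g}(\gamma_2.p),$$
so on cycles $(\gamma_1,\gamma_2).\varphi_0^g(c)=\varphi_0^{\gamma_1 g}(\gamma_2.c)$. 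In the same way one finds $(\gamma_1,\gamma_2).\varphi_1^g(c)=\varphi_1^{\gamma_2 g\gamma_1^{-1}}(\gamma_1.c)$ from $\varphi_1^g(z)=(z,g.z)$, and $(\gamma_1,\gamma_2).\varphi_2^g(c)=\varphi_2^{\gamma_2 g}(\gamma_1.c)$ from $\varphi_2^g(z)=(z,g.0)$. In each case the new parameter ($\gamma_1 g$, $\gamma_2 g\gamma_1^{-1}$, or $\gamma_2 g$) again lies in $\Gamma$, and the varying cycle part is acted on by a single element of $\Gamma$. Since $\Gamma$ acts on $\Z[P_1^m]$ commuting with its own boundary, it preserves $\Z[P_1^m]^0$, so $\gamma_j.c$ stays in $\Z[P_1^m]^0$. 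Hence each $\Z[P_2^m]_j^0$ is stable under $\Gamma^2$, and so is their sum.

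Combining the two stages, $\Gamma^2$ preserves both $\Z[P_2^m]^0$ and the subgroup $\sum_{j=0}^2\Z[P_2^m]_j^0$, so the action descends to a well-defined action on $H_m(P_2)$. This verification is routine; the only point requiring care is the bookkeeping of how the index $g$ transforms under the action, together with checking that the transverse cycles $\varphi_j^g(\Z[P_1^m]^0)$ are permuted rather than carried outside $\sum_j\Z[P_2^m]_j^0$. I expect no genuine obstacle here, the content being exactly the equivariance of $\delta_m$ and the stability of the three families $\varphi_0^g$, $\varphi_1^g$, $\varphi_2^g$ already encountered in the chain-level setting.
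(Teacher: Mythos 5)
Votre démonstration est correcte et suit essentiellement la même démarche que celle du texte : équivariance des applications de bord $\delta_m$ pour la stabilité de $\Z[P_2^m]^0$, puis vérification que l'action permute les images des $\varphi_j^g$ (vos formules $(\gamma_1,\gamma_2).\varphi_0^g(c)=\varphi_0^{\gamma_1 g}(\gamma_2.c)$, $(\gamma_1,\gamma_2).\varphi_1^g(c)=\varphi_1^{\gamma_2 g\gamma_1^{-1}}(\gamma_1.c)$ et $(\gamma_1,\gamma_2).\varphi_2^g(c)=\varphi_2^{\gamma_2 g}(\gamma_1.c)$ sont exactement les relations que le texte écrit sous la forme équivalente $\varphi_j^g(\gamma z)=(\cdot,\cdot).\varphi_j^{Id}(z)$). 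Rien à redire.
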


\begin{proof}
Pour définir l'action de $\Gamma^2$ sur $H_m(P_2)$, il faut vérifier que l'action sur $\Z[P_2^m]$ fixe les espaces $\Z[P_2^m]^0$ et $\Z[P_2^m]^0_j$ pour tout $j$.
Or on peut mettre en évidence les relations:
$$\delta_{m}((\gamma_1,\gamma_2).(p_1,q_1),...,(\gamma_1,\gamma_2)(p_{m+1},q_{m+1}))=(\gamma_1,\gamma_2)\delta_{m}((p_1,q_1),...,(p_{m+1},q_{m+1})),$$
$$\text{ et }\varphi_j^g(\gamma z)=
\begin{cases}
(g i\infty,\gamma z)=(g,\gamma).\varphi_0^{Id}(z)&\text{ si }j=0,\\
(\gamma z,g \gamma z)=(\gamma,g\gamma).\varphi_1^{Id}(z)&\text{ si }j=1,\\
(\gamma z,g 0)=(\gamma,g).\varphi_2^{Id}(z)&\text{ si }j=2.
\end{cases}$$
qui démontrent la stabilité des sous-groupes.
\end{proof}

\subsubsection{Le triangle fondamental}

On construit un élément $T_2$ de $\Z[P^3_2]$ associée à $\tau_2$ donné par:
\begin{align}
T_2&=[(i\infty,i\infty),(i\infty,0),(0,0)]\\
\text{et ainsi: }\delta_2 T_2&=[(i\infty,0),(0,0)]-[(i\infty,i\infty),(0,0)]+[(i\infty,i\infty),(i\infty,0)].\nonumber
\end{align}
On notera $\partial T_2$ la classe de $\delta_2 T_2$ dans $H_2(P_2)$.

Définissons le morphisme de $\Z[\Gamma^2]$-modules:
$$\Theta_2:\Z[\Gamma^2]\to H_2(P_2),\quad(\gamma_1,\gamma_2)\mapsto (\gamma_1,\gamma_2).\partial T_2.$$

\begin{prop}
Le noyau de $\Theta_2$ est l'idéal à gauche $\mathcal{I}_2$.\\
Son image, que nous noterons $H_2(P_2)^0$, est l'ensemble des classes des chaînes engendrées par celles de la forme:
$$\delta_2[(a,g.a),(a,g.b),(b,g.b)]\text{ où }a,b\in\pte\text{ et }g\in\Gamma.$$
\end{prop}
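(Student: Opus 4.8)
The plan is to treat the two assertions separately, the kernel computation being essentially formal and the image computation requiring a two-coordinate subdivision argument.

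\textbf{Kernel.} First I would set up a comparison between the formal cuspidal complex $\Z[P_2^\bullet]$ and the geometric complex $M_\bullet^{pte}(\H^2,\Z)$. Since $\H^2$ is simply connected, a continuous path between two cusps is determined up to homotopy by its endpoints, so sending an edge $[(a,b),(c,d)]$ to the class of a path from $(a,b)$ to $(c,d)$ defines a $\Gamma^2$-equivariant isomorphism $\Phi:\Z[P_2^2]\stackrel{\sim}{\longrightarrow} M_1^{pte}(\H^2,\Z)$. Under $\Phi$ the closed formal chains $\Z[P_2^2]^0=\Ker\delta_1$ correspond to the closed geometric chains, and the transverse subgroups $\Z[P_2^2]^0_0,\Z[P_2^2]^0_1,\Z[P_2^2]^0_2$ correspond respectively to $V^0,D^0,H^0$, because $\varphi_0^g,\varphi_1^g,\varphi_2^g$ parametrize exactly the chains of constant first coordinate, the diagonal chains, and the chains of constant second coordinate. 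Hence $\Phi$ induces an isomorphism $H_2(P_2)\stackrel{\sim}{\longrightarrow}\{\text{closed chains}\}/(H^0+V^0+D^0)$ carrying $\partial T_2$ to the class of $\delta_2\tau_2$ (compare Proposition \ref{dtau2}). As $\Theta_2(g)=g\cdot\partial T_2$ with the action equivariant, $g\in\Ker\Theta_2$ if and only if $g\cdot\delta_2\tau_2\in H^0+V^0+D^0$, which is precisely the defining condition of $\mathcal{I}_2$.

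\textbf{Image, easy inclusion.} Writing $a=\gamma_1.i\infty$, $b=\gamma_1.0$ and $g=\gamma_2\gamma_1^{-1}$, a direct computation of the diagonal $\Gamma^2$-action on the vertices of $T_2$ gives
$$(\gamma_1,\gamma_2).T_2=[(a,g.a),(a,g.b),(b,g.b)]=:T(a,b;g),$$
so $\Theta_2(\gamma_1,\gamma_2)$ is the class of $\delta_2 T(a,b;g)$, with $(a,b)$ a \emph{unimodular} pair and $g$ arbitrary. This already shows $\Im\Theta_2\subset H_2(P_2)^0$ and reduces the reverse inclusion to proving that $[\delta_2 T(a,b;g)]\in\Im\Theta_2$ for \emph{all} $a,b\in\pte$.

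\textbf{Image, hard inclusion.} The main tool is that, modulo the transverse subgroups, the Chasles relations hold separately for vertical, horizontal and diagonal edges, since $\sum_j\Z[P_2^2]^0_j$ is generated by the degenerate one-variable boundaries $\varphi_j^g(\delta_2[\cdots])$. Using these I would establish: (i) an additivity relation in the first coordinate $[\delta_2 T(a,b;g)]=[\delta_2 T(a,c;g)]+[\delta_2 T(c,b;g)]+[R]$, whose defect $R$ is the product square $\{a,c\}\times\{g.c,g.b\}$; (ii) exact bi-additivity of product squares in the second coordinate modulo transverse chains; and (iii) the fact that a product square both of whose pairs are unimodular is a sum of two twisted-diagonal triangle boundaries, obtained by triangulating along the diagonal $\{(z,g_0.z)\}$ for the element $g_0\in\Gamma$ sending the first pair to the second (it exists by transitivity of $\Gamma$ on unimodular pairs), hence lies in $\Im\Theta_2$. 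Combining a continued-fraction subdivision of $\{a,b\}$ in the first coordinate with a continued-fraction subdivision of the resulting second-coordinate intervals then expresses $[\delta_2 T(a,b;g)]$ as a sum of unimodular triangle boundaries and doubly-unimodular squares, all in $\Im\Theta_2$.

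\textbf{Main obstacle.} The delicate point is the bookkeeping of the product-square defects: one must check that the Chasles reductions genuinely take place inside $\sum_j\Z[P_2^2]^0_j$, and that the double subdivision terminates with squares whose \emph{both} pairs are unimodular, so that step (iii) applies. This is where the triple quotient defining $H_2(P_2)$ is used in full, and where Manin's one-variable surjectivity argument for $\Theta_1$ (the continued-fraction algorithm) must be pushed simultaneously to both coordinates.
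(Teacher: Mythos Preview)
Your kernel argument is correct and coincides with the paper's: both identify $H_2(P_2)$ with the geometric quotient via the simple-connectivity of $\H^2$, so that $\Ker\Theta_2$ is exactly the defining condition for $\mathcal{I}_2$.

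For the image, your approach is correct but you miss a simplification that the paper exploits. Since $\Im\Theta_2=\Z[\Gamma^2]\cdot\partial T_2$ is a $\Z[\Gamma^2]$-submodule, and since $\delta_2T(a,b;g)=(1,g)\cdot\delta_2T(a,b;1)$, it suffices to treat the diagonal case $g=1$. Once $g=1$, a \emph{single} continued-fraction chain $a=a_0,a_1,\dots,a_N=b$ serves for both coordinates simultaneously: the big triangle $[(a,a),(a,b),(b,b)]$ decomposes (modulo transverse closed chains) into the $N$ diagonal unit triangles $T(a_i,a_{i+1};1)$ and the $\binom{N}{2}$ unit squares $\{a_i,a_{i+1}\}\times\{a_j,a_{j+1}\}$ for $i<j$, each of which is the sum of $(g_i,g_j)\cdot\partial T_2$ and $(g_iS,g_jS)\cdot\partial T_2$. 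The paper writes this out explicitly only for $N=2$ (four triangles) and then iterates.

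Your route---subdivide fully in the first coordinate, collect the rectangular defects $\{a_i,a_{i+1}\}\times\{g.a_{i+1},g.b\}$, then subdivide those in the second coordinate, and finally triangulate the doubly-unimodular squares---reaches the same endpoint, but carries the extra parameter $g$ throughout and requires tracking rectangles whose second pair is not yet unimodular. This is exactly the ``bookkeeping'' you flag as the main obstacle; the paper's reduction to $g=1$ dissolves it, since then the two axes share the same subdivision and every intermediate piece already has unimodular pairs. Your step (iii), incidentally, is precisely the paper's $(g_1,g_2)\,/\,(g_1S,g_2S)$ triangulation of a unit square.
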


\begin{proof}
Par définition de $\mathcal{I}_2$ et la correspondance donnée pour tout entier $m\geq 0$:
$$M_m^{pte}(\H^2,\Z)\to\Z[P_2^{m+1}],\quad C\mapsto \left(C(e_0),...,C(e_m)\right),$$
permet d'obtenir par simple récriture des objets $\mathcal{I}_2=\Ker(\Theta_2)$.\par
Pour calculer l'image de l'application, on choisit un élément qui admet pour représentant:
$$\delta_2[(a,g.a),(a,g.b),(b,g.b)]\text{ avec }a,b\in\pte,g\in\Gamma.$$
Alors on peut commencer par remarquer qu'il est image de la multiplication par $(1,g)$ de:
$$\delta_2[(a,a),(a,b),(b,b)]$$
Puis on écrit comme dans le cas classique une suite fini de matrice $\gamma_i\in\Gamma$ telle que:
$$a=\gamma_1.i\infty\to\gamma_1.0=\gamma_2.i\infty\to...\to\gamma_{N-1}.0=\gamma_N.i\infty\to\gamma_N.0=b,$$
ceci se fait de manière constructive grâce aux fractions continues.\par
Il nous suffit alors de montrer le résultat de décomposition suivante pour $N=2$:
$$\delta_2[(g_1.i\infty,g_1.i\infty),(g_1.i\infty,g_2.0),(g_2.0,g_2.0)]=[(g_1,g_1)+(g_2,g_2)+(g_1,g_2)+(g_1S,g_2S)].\partial T_2$$
En effet, il suffit de simplifier le terme de droite qui est la classe dans $H_2(P_2)$ du bord d'un triangle qui se décompose simplement en quatre triangles:

\begin{tabular}{ll}
& \multirow{9}*{
\setlength{\unitlength}{0.6cm}
\begin{picture}(9,7)
\put(1,1){\line(1,0){6}}
\put(1,1){\line(1,1){6}}
\put(4,1){\line(0,1){3}}
\put(4,1){\line(1,1){3}}
\put(7,1){\line(0,1){6}}
\put(4,4){\line(1,0){3}}

\put(0.4,0){$g_1i\infty$}
\put(2.7,0){$g_10=g_2i\infty$}
\put(6.7,0){$g_20$}

\put(8,1){$g_1i\infty$}
\put(7.5,4){$g_10=g_2i\infty$}
\put(8,7){$g_20$}

\end{picture}}\\
$\delta_2[(g_1.i\infty,g_1.i\infty),(g_1.i\infty,g_1.0),(g_1.0,g_1.0)]$&\\
&\\
$+\delta_2[(g_2.i\infty,g_2.i\infty),(g_2.i\infty,g_2.0),(g_2.0,g_2.0)]$& \\
&\\
$+\delta_2[(g_1.i\infty,g_2.i\infty),(g_1.i\infty,g_2.0),(g_1.0,g_2.0)]$& \\
&\\
$+\delta_2[(g_1S.i\infty,g_2S.i\infty),(g_1S.i\infty,g_2S.0),(g_1S.0,g_2S.0)]$.& \\
&
\end{tabular}

Ce résultat peut s'itérer sur la famille des $\gamma_1,...,\gamma_N$ et on obtient bien alors un élément de $\Z[\Gamma^2]\partial T_2$ l'image de $\Theta_2$.\par

Pour démontrer que le noyau de $\Theta_2$ est $\mathcal{I}_2$, on utilise la formule de construction: 
$$\mathcal{I}_2=I_H\cap I_D\cap I_V.$$
et on vérifie que chacun de ces idéaux annulent tour à tour les côtés de :
$$\delta_2 T_2=[(i\infty,i\infty),(i\infty,0)]-[(i\infty,i\infty),(0,0)]+[(i\infty,0),(0,0)],$$
modulo $\sum_{j=0}^2 \Z[P_2^m]^0_j$.
\end{proof}

L'application $\Theta_2$ n'est pas surjective et on peut déterminer plus précisément $H_2(P_2)^0$:
\begin{prop}
Le sous-groupe $H_2(P_2)^0$ est celui des $2$-chaînes transverses:
\begin{align}
H_2(P_2)^0&=\left[\left(\sum_{j,g} \varphi_j^g(\Z[P_1^2])\right)\cap \Z[P_2^2]^0\right]/\sum_{j,g} \varphi_j^g(\Z[P_1^2]^0)\\
&\cong (H+D+V)^0/(H^0+D^0+V^0).\nonumber 
\end{align}
\end{prop}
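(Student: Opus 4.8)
The plan is to establish the equality
$H_2(P_2)^0=\big[(\sum_{j,g}\varphi_j^g(\Z[P_1^2]))\cap\Z[P_2^2]^0\big]/\sum_{j,g}\varphi_j^g(\Z[P_1^2]^0)$
first, and then to read off the isomorphism with $(H+D+V)^0/(H^0+D^0+V^0)$ from the vertex dictionary. Write $T=\sum_{j,g}\varphi_j^g(\Z[P_1^2])$ for the module of (not necessarily closed) transverse formal chains. Since each $\varphi_j^g$ commutes with $\delta$, it sends closed chains to closed chains, so $\sum_{j,g}\varphi_j^g(\Z[P_1^2]^0)=\sum_{j}\Z[P_2^2]^0_j$ is exactly the denominator defining $H_2(P_2)$ and is contained in $T\cap\Z[P_2^2]^0$, so the quotient makes sense. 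By the previous proposition $H_2(P_2)^0=\Im(\Theta_2)$ is generated by the classes of $\delta_2[(a,g.a),(a,g.b),(b,g.b)]$. The inclusion $\Im(\Theta_2)\subseteq (T\cap\Z[P_2^2]^0)/\sum_j\Z[P_2^2]^0_j$ is then immediate: $\delta_2[(a,g.a),(a,g.b),(b,g.b)]=[(a,g.b),(b,g.b)]-[(a,g.a),(b,g.b)]+[(a,g.a),(a,g.b)]$ is the sum of a horizontal edge, a transverse diagonal of slope $g$ and a vertical edge, hence lies in $T$ and is closed; as the right-hand side is a subgroup, the inclusion follows.

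For the reverse inclusion I would start from an arbitrary closed transverse chain $c=c_H+c_D+c_V\in T\cap\Z[P_2^2]^0$ and eliminate its diagonal part. For each transverse diagonal $d=[(a,g.a),(b,g.b)]$ occurring in $c_D$, the generator identity above rewrites $d$ as a horizontal edge plus a vertical edge minus the boundary $\delta_2[(a,g.a),(a,g.b),(b,g.b)]$, whose class lies in $\Im(\Theta_2)$. Performing this for every diagonal edge gives $[c]\equiv[c']\pmod{\Im(\Theta_2)}$ with $c'=c'_H+c'_V$ a closed chain built only from horizontal and vertical edges, so the problem reduces to realising such closed "staircase" chains in $\Im(\Theta_2)$. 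Modulo the closed pieces $H^0$ and $V^0$ (which vanish in the quotient) such a chain is an integral combination of elementary rectangles. I would obtain a rectangle from two generators sharing a transverse diagonal: with $\beta_1=g.\alpha_1$ and $\beta'=g.\alpha_2$, the two generators $[(\alpha_1,\beta_1),(\alpha_1,\beta'),(\alpha_2,\beta')]$ and $[(\alpha_2,\beta'),(\alpha_2,\beta_1),(\alpha_1,\beta_1)]$ (same $g$, with $(a,b)$ swapped) have diagonals $[(\alpha_1,\beta_1),(\alpha_2,\beta')]$ and $[(\alpha_2,\beta'),(\alpha_1,\beta_1)]$, whose sum is a closed diagonal chain in $D^0$. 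The sum of the two boundaries is therefore the rectangle with columns $\alpha_1,\alpha_2$ and levels $\beta_1,\beta'$, up to an element of $D^0$; hence every rectangle whose level pair $(\beta_1,\beta')$ lies in the diagonal $\Gamma$-orbit of its column pair lies in $\Im(\Theta_2)$. An arbitrary rectangle is then assembled from these "compatible" ones by telescoping in the level and column variables, using the transitivity of $\Gamma$ on $\pte$ together with the continued-fraction subdivision of the previous proposition.

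The isomorphism $(T\cap\Z[P_2^2]^0)/\sum_j\Z[P_2^2]^0_j\cong (H+D+V)^0/(H^0+D^0+V^0)$ is the vertex dictionary $M_1^{pte}(\H^2,\Z)\to\Z[P_2^2]$, $C\mapsto(C(e_0),C(e_1))$, which is an isomorphism because $\H^2$ is contractible and a singular edge rel cusps is determined up to homotopy by its endpoints. Under it the singular families $H$, $D$, $V$ correspond respectively to the formal edges of constant second coordinate, transverse diagonal, and constant first coordinate, so that $H+D+V\cong T$; since each $\varphi_j^g$ is a chain map, closed singular transverse chains correspond to $T\cap\Z[P_2^2]^0$ and the sums $H^0+D^0+V^0$ correspond to $\sum_j\Z[P_2^2]^0_j$, whence the claimed isomorphism of quotients.

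The main obstacle is the penultimate step: showing that the compatible rectangles generate all rectangles modulo closed transverse chains. Because $\Gamma$ does not act transitively on pairs of distinct cusps (for instance $(i\infty,0)$ can only be sent to $(i\infty,n)$, $n\in\Z$), a given level pair need not be compatible with a given column pair, so the telescoping cannot proceed naively; one must genuinely invoke the four-triangle identity and the continued-fraction subdivision of the previous proposition to connect arbitrary configurations. Controlling this reduction, and checking that the leftover diagonal chains produced along the way always close up into $D^0$, is where the real work lies.
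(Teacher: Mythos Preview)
Your approach is sound and can be completed, but it differs substantially from the paper's, and the paper's route avoids the rectangle detour entirely.

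The paper does not eliminate diagonals and then realize rectangles. Instead, it proves (in the proposition immediately following) a universal triangulation lemma: \emph{every} class in $H_2(P_2)$ --- transverse or not --- decomposes into ``simple triangles'' of the shape $\delta_2[(a,g.a),(a,g.b),(c,g.b)]$. This is obtained by repeatedly adjoining auxiliary vertices and using $\delta_2\circ\delta_3=0$: first insert $(a_1,b_2)$ to peel off one simple triangle and two five-point triangles, then insert $(c_1,b_2)$ into the latter to peel off two more simple triangles plus a purely horizontal $\delta_2$-boundary that dies in the quotient. Once everything is in simple-triangle form, the paper observes that a simple triangle already has two transverse sides (one horizontal, one vertical); if the whole chain is transverse, the remaining side $[(a,ga),(c,gb)]$ is forced to be transverse as well, which pins it down as a generator.

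What each approach buys: the paper's triangulation is more uniform --- it gives generators for all of $H_2(P_2)$ in one stroke and then reads off the transverse case as a corollary. Your diagonal-elimination/rectangle strategy is more hands-on but perfectly workable; in fact the ``obstacle'' you flag dissolves once you subdivide both the column pair and the level pair down to Farey-adjacent cusps (distance $1$), since if $(a_1,a_2)=(g.i\infty,g.0)$ and $(b_1,b_2)=(h.i\infty,h.0)$ then $hg^{-1}$ carries $(a_1,a_2)$ to $(b_1,b_2)$ and the unit rectangle is automatically compatible. (Incidentally, your parenthetical example understates the orbit of $(i\infty,0)$: it is sent to every Farey-adjacent pair $(p/q,r/s)$ with $ps-qr=\pm 1$, not just $(i\infty,n)$.) So the continued-fraction subdivision you invoke really does finish the argument, with no residual diagonal bookkeeping needed.
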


\begin{proof}
On commence par montrer que les générateurs sont dans l'espace transverse:
$$\delta_2[(a,g.a),(a,g.b),(b,g.b)]=[(a,ga),(a,gb)]-[(a,ga),(b,bg)]+[(a,gb),(b,gb)]\in \Z[P_2^2]^0$$
vérifie les appartenance:
\begin{align*}
[(a,ga),(a,gb)]&=\varphi_0^a(\{ga,gb\})\in\varphi_0^a(\Z[P_1^2])\\
[(a,ga),(b,gb)]&=\varphi_1^g(\{a,b\})\in\varphi_1^g(\Z[P_1^2])\\
[(a,gb),(b,gb)]&=\varphi_2^{gb}(\{a,b\})\in\varphi_2^{gb}(\Z[P_1^2])
\end{align*}
Pour démontrer, l'inclusion réciproque on donne un algorithme de décomposition de l'espace $H_2(P_2)$ en triangle simple.
\end{proof}

\begin{prop}
Les cycles de $H_2(P_2)$ peuvent être décomposés comme combinaison d'une famille plus simple les cycles de la forme:
$$\delta_2[(a,g.a),(a,g.b),(c,g.b)]\text{ où }a,b,c\in\pte\text{ et }g\in\Gamma.$$
\end{prop}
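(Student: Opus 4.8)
The plan is to reduce the statement to the decomposition of a single triangle and then to subdivide it by a short sequence of cone decompositions, the cone vertices being chosen among the \emph{corners} $(p_i,q_j)$ formed from the coordinates of the given vertices. Since $\Z[P_2^2]^0=\mathrm{Im}(\delta_2)$, every cycle of $H_2(P_2)$ is a $\Z$-combination of classes $\delta_2[v_1,v_2,v_3]$ with $v_j=(p_j,q_j)\in\pte^2$, so it suffices to treat one such triangle. The basic tool will be the cone identity obtained from $\delta_2\circ\delta_3=0$: for any auxiliary vertex $w\in\pte^2$, applying $\delta_2$ to $\delta_3[w,v_1,v_2,v_3]$ gives
\[
\delta_2[v_1,v_2,v_3]=\delta_2[w,v_2,v_3]-\delta_2[w,v_1,v_3]+\delta_2[w,v_1,v_2],
\]
an exact identity in $\Z[P_2^2]$ that lets me insert an arbitrary cone vertex.

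I would first apply this with $w=(p_1,q_2)$, which shares the first coordinate of $v_1$ and the second coordinate of $v_2$. After reordering its vertices, the term $\delta_2[w,v_1,v_2]$ is the boundary of a triangle having a vertical edge (constant first coordinate $p_1$) adjacent to a horizontal edge (constant second coordinate $q_2$). For the two remaining pieces $[w,v_2,v_3]$ and $[w,v_1,v_3]$ I would perform a second cone subdivision with the corners $(p_3,q_2)$ and $(p_1,q_3)$ respectively. A direct check shows that each of these produces two further triangles of the same ``$L$-shape'' (a vertical edge adjacent to a horizontal one), plus one \emph{degenerate} triangle whose three vertices share a common coordinate.

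The degenerate pieces are exactly the transverse cycles that vanish in the quotient: a triangle whose three vertices share a second coordinate $q=g.i\infty$ has boundary equal to $H_g$ applied to a closed $1$-chain of $\pte$, hence lies in $H^0$, and a triangle with constant first coordinate lands in $V^0$; both are trivial in $H_2(P_2)$. It then remains to recognize each $L$-shaped triangle as a cycle of the required family. Reordered as $[(\alpha,\beta),(\alpha,\beta'),(\gamma,\beta')]$, such a triangle is precisely $\delta_2[(a,g.a),(a,g.b),(c,g.b)]$ once one picks $g\in\Gamma$ with $g.\alpha=\beta$ — which exists because $\Gamma$ acts transitively on $\pte$ — and then sets $a=\alpha$, $b=g^{-1}.\beta'$ and $c=\gamma$.

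The main obstacle is not conceptual but combinatorial: one must carry out the sign bookkeeping of the iterated cone subdivision and verify that every auxiliary edge introduced either cancels in the alternating sums (shared vertices) or belongs to a transverse closed chain in $H^0+V^0$ annihilated in $H_2(P_2)$. The genuinely new input, compared with the earlier description of $H_2(P_2)^0$, is the freedom in the choice of $g$ supplied by transitivity of the action, which allows the third vertex $(c,g.b)$ to have an arbitrary first coordinate $c$ and thereby enlarges the generating family from the case $c=b$ to the full family claimed here.
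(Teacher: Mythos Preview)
Your proposal is correct and follows essentially the same approach as the paper: both start from the cone identity $\delta_2\circ\delta_3=0$, first adjoin the corner $(p_1,q_2)$, then subdivide the two remaining triangles with the corners $(p_3,q_2)$ and $(p_1,q_3)$, obtaining five $L$-shaped triangles plus two degenerate (transverse) ones, and finally identify each $L$-shaped piece with the target form via the transitivity of $\Gamma$ on $\pte$. The only minor slip is terminological: the degenerate triangles are zero in $H_2(P_2)$ because their $\delta_2$ lies in $\varphi_0^g(\Z[P_1^2]^0)$ or $\varphi_2^g(\Z[P_1^2]^0)$, not literally in $H^0$ or $V^0$ (which are subgroups of $M_1^{pte}(\H^2,\Z)$), but the identification is clear.
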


\begin{proof}
On choisi un représentant d'une classe de $H_2(P_2)$:
$$\delta_2[(a_1,a_2),(b_1,b_2),(c_1,c_3)]\in\Z[P_2^2]^0.$$
Pour le réduire, on va trianguler pour obtenir une combinaison d'élément de la forme:
$$\delta_2[(\alpha_1,\alpha_2),(\alpha_1,\beta_2),(\beta_1,\beta_2)].$$
Ils sont bien de la forme voulue car si on écrit $\alpha_1=g_1.i\infty$ et $\alpha_2=g_2i\infty$ alors on obtient:
$$=\delta_2[(\alpha_1,g_2g_1^{-1}.\alpha_1),(\alpha_1,g_2g_1^{-1}.b),(g_2g_1^{-1}.c,g_2g_1^{-1}.b)].$$
Pour trianguler, il nous suffit d'utiliser le fait que $\delta_2\circ\delta_3=0$ en ajoutant une quatrième pointe.
Si on adjoint le point $(a_1,b_2)$, on obtient:
\begin{multline*}
0=\delta_2\circ\delta_3[(a_1,a_2),(a_1,b_2),(b_1,b_2),(c_1,c_2)]\\
=\delta_2[(a_1,b_2),(b_1,b_2),(c_1,c_2)]-\delta_2[(a_1,a_2),(b_1,b_2),(c_1,c_2)]\\
+\delta_2[(a_1,a_2),(a_1,b_2),(c_1,c_2)]-\delta_2[(a_1,a_2),(a_1,b_2),(b_1,b_2)].
\end{multline*}
Ceci permet d'obtenir une réécriture de $\delta_2[(a_1,a_2),(b_1,b_2),(c_1,c_2)]$:
$$=\delta_2[(a_1,b_2),(b_1,b_2),(c_1,c_2)]+\delta_2[(a_1,a_2),(a_1,b_2),(c_1,c_2)]-\delta_2[(a_1,a_2),(a_1,b_2),(b_1,b_2)].$$
Le troisième terme est de la forme voulue et les deux premiers ne dépendent plus que de cinq pointes.\par 
Ils se réduisent tout les deux de la même manière. Par exemple, pour calculer $\delta_2[(a_1,b_2),(b_1,b_2),(c_1,c_2)]$ on peut ajouter $(c_1,b_2)$ donnant:
$$=\delta_2[(c_1,b_2),(b_1,b_2),(c_1,c_2)]+\delta_2[(a_1,b_2),(c_1,b_2),(c_1,c_2)]-\delta_2[(a_1,b_2),(c_1,b_2),(b_1,b_2)].$$
Le premier et deuxième termes sont de la forme attendue. Le dernier est en faite un triangle à bord transverse:
$$\delta_2[(a_1,b_2),(c_1,b_2),(b_1,b_2)]=\varphi_2^{b_2}(\delta_2[a_1,c_1,b_1])\in\Z[P_2^2]^0_2,$$
et donc est nulle dans $H_2(P_2)$.\par
On peut désormais compléter la démonstration précédente. Supposons que la $2$-chaîne obtenue appartient à $H_2(P_2^2)^0$ alors 
$\delta_2[(a,ga),(a,gb),(c,gb)]$ a deux de ces côtés dans l'espace transverse imposant le troisième $[(a,ga),(c,gb)]\in\sum_{j,g}\varphi_j^g(\Z[P_1^2])$ c'est à dire $c\in\{a,b\}$. Ceci donne bien l'inclusion manquante.\par
\end{proof}

\begin{rem}
On dispose alors d'une suite exacte décomposant $\Z[\Gamma^2]$ :
\begin{equation}\label{suiexact2}
0\to\mathcal{I}_2\to\Z[\Gamma^2]\stackrel{\Theta_2}{\to}H_2(P_2^2)^0\to 0.
\end{equation}
\end{rem}

\subsubsection{Indépendance du poids de l'idéal annulateur}

On dispose désormais de tous les outils pour démontrer le résultat suivant:

\begin{thm}
Pour tout poids $k_1,k_2$, on a:
\begin{equation}
V_{k_1,k_2}^{\Q}[\mathcal{I}_2]=V_{k_1,k_2}^{\Q}[\mathcal{J}(k_1,k_2)]+E_{k_1,k_2}^{\Q}.
\end{equation}
Dans la suite, nous noterons $W_{k_1,k_2}^{\Q}$ ce sous-espace de $V_{k_1,k_2}^{\Q}$ dont l'extension au corps des complexes contient $\Per_{k_1,k_2}$.
\end{thm}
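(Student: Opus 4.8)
The plan is to first read the left-hand side $V_{k_1,k_2}^{\Q}[\mathcal{I}_2]$ as the space $W_{k_1,k_2}^{\Q}$ cut out by the explicit linear relations. Indeed, by Theorem \ref{thmchap3} the right ideal $\widetilde{\mathcal{I}_2}$ is generated by the elements appearing in (\ref{intral1}), and since $P\mapsto P|_g$ is a right action, annihilation by all of $\widetilde{\mathcal{I}_2}$ is equivalent to annihilation by those generators; thus $V_{k_1,k_2}^{\Q}[\mathcal{I}_2]=V_{k_1,k_2}^{\Q}[\widetilde{\mathcal{I}_2}]=W_{k_1,k_2}^{\Q}$. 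Both sides of the claimed equality will then be compared inside $W_{k_1,k_2}^{\Q}$.

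Next I would dispatch the easy inclusion $V_{k_1,k_2}^{\Q}[\mathcal{J}(k_1,k_2)]+E_{k_1,k_2}^{\Q}\subseteq W_{k_1,k_2}^{\Q}$. For the first summand, Proposition \ref{incljkk} gives $\mathcal{I}_2\subseteq\widetilde{\mathcal{J}(k_1,k_2)}$, hence $\widetilde{\mathcal{I}_2}\subseteq\mathcal{J}(k_1,k_2)$ after applying the involution, so $V_{k_1,k_2}^{\Q}[\mathcal{J}(k_1,k_2)]\subseteq V_{k_1,k_2}^{\Q}[\widetilde{\mathcal{I}_2}]=W_{k_1,k_2}^{\Q}$. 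For the second summand, $E_{k_1,k_2}^{\Q}\subseteq W_{k_1,k_2}^{\Q}$ is a direct verification that each of $W_{k_1}^{\Q}\otimes 1$, $V_{k_1,k_2}^{\Q}[I_D]$ and $X_1^{k_1-2}\otimes W_{k_2}^{\Q}$ is killed by the generators of (\ref{intral1}), using the one-variable Manin relations and the defining relations of $V_{k_1,k_2}^{\Q}[I_D]$.

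The heart of the argument is the reverse inclusion, which I would prove after extension of scalars to $\C$ and then descend. Over $\C$ I first identify $V_{k_1,k_2}[\mathcal{J}(k_1,k_2)]\supseteq\Per_{k_1,k_2}\oplus\overline{\Per_{k_1,k_2}}$: the defining property of $\mathcal{J}$ annihilates the holomorphic biperiod polynomials by $\C$-bilinearity in $(f_1,f_2)$, while the identity $\overline{\Omega}^{\bot}=\Omega^{\bot}$ together with the orthogonal description of $\widetilde{\mathcal{J}(k_1,k_2)}$ forces annihilation of the antiholomorphic ones as well. Then I would prove the complex decomposition $W_{k_1,k_2}=\Per_{k_1,k_2}\oplus\overline{\Per_{k_1,k_2}}\oplus E_{k_1,k_2}$: this is where the key Lemma on $\Gamma^2$-invariant harmonic forms enters, characterizing the forms killed by $d,\varphi_0^*,\varphi_1^*,\varphi_2^*$ as exactly $\Omega_{k_1,k_2}$, which pair against $\tau_2$ to yield $\Per_{k_1,k_2}\oplus\overline{\Per_{k_1,k_2}}$, while the transverse-chain decomposition $\mathcal{I}_2=I_H\cap I_V\cap I_D$ and the one-variable Eichler--Shimura theory recalled in Chapter 1 account for the complementary Eisenstein part $E_{k_1,k_2}$. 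Combining the two facts gives $W_{k_1,k_2}=\big(\Per_{k_1,k_2}\oplus\overline{\Per_{k_1,k_2}}\big)+E_{k_1,k_2}\subseteq V_{k_1,k_2}[\mathcal{J}(k_1,k_2)]+E_{k_1,k_2}\subseteq W_{k_1,k_2}$, so equality holds over $\C$.

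Finally I would descend to $\Q$. Since $\mathcal{J}(k_1,k_2)$, $\widetilde{\mathcal{I}_2}$ and $E_{k_1,k_2}^{\Z}$ are all defined over $\Z$, the formation of $V_{k_1,k_2}[\mathcal{J}(k_1,k_2)]$, of $W_{k_1,k_2}$ and of $E_{k_1,k_2}$ each commutes with $-\otimes_{\Q}\C$; hence $\big(V_{k_1,k_2}^{\Q}[\mathcal{J}(k_1,k_2)]+E_{k_1,k_2}^{\Q}\big)\otimes_{\Q}\C=W_{k_1,k_2}^{\Q}\otimes_{\Q}\C$. A $\Q$-subspace contained in $W_{k_1,k_2}^{\Q}$ whose complexification fills $W_{k_1,k_2}^{\Q}\otimes_{\Q}\C$ must equal $W_{k_1,k_2}^{\Q}$, which yields the theorem. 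The main obstacle is the complex decomposition step, namely proving that the forms failing at least one of the restrictions $\varphi_j^*\omega=0$ contribute exactly $E_{k_1,k_2}$ and nothing more; this requires the transversality (pairwise disjointness of $H$, $V$, $D$) and the full strength of the essential Lemma, after which the rationality descent is purely formal.
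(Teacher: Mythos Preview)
Your approach is correct but differs from the paper's. The paper does not pass through the complex decomposition $W_{k_1,k_2}=\Per_{k_1,k_2}\oplus\overline{\Per_{k_1,k_2}}\oplus E_{k_1,k_2}$ at all; that decomposition is proved \emph{afterwards}, by a separate argument involving the map $\varphi_S:P\mapsto[P|_{(1,1)+(S,S)}]$. For the present theorem the paper gives a direct dimension count over $\Q$: it tensors the exact sequence $0\to\mathcal{I}_2\to\Z[\Gamma^2]\xrightarrow{\Theta_2} H_2(P_2)^0\to 0$ by the simple $\Q[\Gamma^2]$-module $V_{k_1,k_2}^{\Q}$ (irreducibility of $\Gamma\to GL(V_{k_j})$), obtaining $0\to V_{k_1,k_2}^{\Q}|_{\widetilde{\mathcal{I}_2}}\to V_{k_1,k_2}^{\Q}\to H_2(P_2)^0\otimes_{\Q[\Gamma^2]}V_{k_1,k_2}^{\Q}\to 0$. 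The $\Gamma^2$-invariant pairing on $V_{k_1,k_2}^{\Q}$ gives $\dim V_{k_1,k_2}^{\Q}[\mathcal{I}_2]+\dim V_{k_1,k_2}^{\Q}|_{\widetilde{\mathcal{I}_2}}=\dim V_{k_1,k_2}^{\Q}$, and the description of $H_2(P_2)^0$ as transverse chains modulo closed transverse chains identifies $H_2(P_2)^0\otimes_{\Q[\Gamma^2]}V_{k_1,k_2}^{\Q}$ with $V_{k_1,k_2}^{\Q}[\mathcal{J}(k_1,k_2)]+E_{k_1,k_2}^{\Q}$; combined with the easy inclusion this forces equality.

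Your route has the merit of yielding the complex decomposition as a byproduct, but it is more work: the step you flag as the main obstacle---showing $E_{k_1,k_2}$ is exactly complementary to $\Per_{k_1,k_2}\oplus\overline{\Per_{k_1,k_2}}$ inside $W_{k_1,k_2}$---is not settled by the essential Lemma and transversality alone; in the paper it requires the explicit generators of $\mathcal{I}_2$, the kernel computation for $\varphi_S$, and the one-variable Eichler--Shimura theorem for surjectivity. The paper's dimension-count proof sidesteps this entirely and stays over $\Q$ throughout.
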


\begin{proof}
Rappelons la définition de $E_{k_1,k_2}^{\Z}$:
$$E_{k_1,k_2}^{\Z}=V_{k_1,k_2}^{\Z}[I_H]+V_{k_1,k_2}^{\Z}[I_D]+V_{k_1,k_2}^{\Z}[I_V].$$
On a démontrer dans la Proposition \ref{incljkk} que l'idéal à gauche $\mathcal{I}_2$ est inclus dans $\widetilde{\mathcal{J}(k_1,k_2)}$. D'autre la Proposition \ref{inclhvd}, montre que $\mathcal{I}_2$ est inclus dans les idéaux $I_H,I_D$ et $I_V$ par construction. On en déduit l'inclusion des groupes:
$$V_{k_1,k_2}^{\Z}[\mathcal{J}(k_1,k_2)]+E_{k_1,k_2}^{\Z}\subset V_{k_1,k_2}^{\Z}[\mathcal{I}_2].$$
Comme dans le cas $n=1$, on va montrer que ces groupes engendrent des $\Q$-espaces vectoriels de même dimension. Considérons l'application:
$$\Theta_2\otimes_{\Q[\Gamma^2]}V_{k_1,k_2}^{\Q}: V_{k_1,k_2}^{\Q}\to \left(H_2(P_2)^0 \otimes_{\Q[\Gamma^2]} V_{k_1,k_2}^{\Q}\right),\quad P\mapsto (\partial T_2)\otimes P,$$
où l'invariance par $\gamma\in\Q[\Gamma^2]$ est donnée par $(\gamma C)\otimes P|_{\gamma}=C\otimes P$.\par
Or les représentations $\Gamma\to GL(V_{k_j})$ étant irréductible, on déduit la surjectivité de $\Q[\Gamma]\to End_{\Q}(V_{k_j}^{\Q})$ puis celle de $\Q[\Gamma^2]\to End_{\Q}(V_{k_1,k_2}^{\Q})$. Donc $V_{k_1,k_2}^{\Q}$ est un $\Q[\Gamma^2]$-module simple. Ainsi la suite exacte (\ref{suiexact2}) reste exacte après tensorisation par $V_{k_1,k_2}^{\Q}$:
\begin{equation}
0\to V_{k_1,k_2}^{\Q}|_{\widetilde{\mathcal{I}_2}} \to V_{k_1,k_2}^{\Q} \to \left(H_2(P_2)^0 \otimes_{\Q[\Gamma^2]} V_{k_1,k_2}^{\Q}\right) \to 0.
\end{equation}

D'une part, on dispose de l'application bilinéaire symétrique, non dégénérée et invariante par $\Gamma^2$:
$$V_{k_1,k_2}^{\Q}\times V_{k_1,k_2}^{\Q}\to \Q,$$
induite par celles introduites dans la Remarque \ref{prodvk} du chapitre $1$ : $V_{k_j}^{\Q}\times V_{k_j}^{\Q}\to\Q$ invariante par $\Gamma$. Elle donne la dualité des $\Q$-espaces $V_{k_1,k_2}^{\Q}|_{\widetilde{\mathcal{I}_2}}$ et $V_{k_1,k_2}^{\Q}[\mathcal{I}_2]$ et ainsi:
$$\dim_{\Q}\left(V_{k_1,k_2}^{\Q}[\mathcal{I}_2]\right)+\dim_{\Q}(V_{k_1,k_2}^{\Q}|_{\widetilde{\mathcal{I}_{2}}})=\dim_{\Q}(V_{k_1,k_2}^{\Q}).$$

D'autre part, on a:
\begin{align*}
H_2(P_2)^0&\cong (H+D+V)^0/(H^0+D^0+V^0)\\
&=\{d\omega=\varphi_0^*\omega=\varphi_1^*\omega=\varphi_2^*\omega=0\}^{\bot}/\left(\sum_{j=0}^2\{d\omega=\varphi_j^*\omega=0\}^{\bot}\right).
\end{align*}
Or l'accouplement, $\Omega_{par}^2\times M_2^{pte}\to\C$ permet d'avoir les résultats de dualités:
\begin{equation*}
\mathcal{J}(k_1,k_2)=\{g\in\Z[\Gamma^2];\tilde{g}\tau_2\in\Omega_{k_1,k_2}^{\bot}\}\text{ et }\{d\omega=\varphi_0^*\omega=\varphi_1^*\omega=\varphi_2^*\omega=0\}\otimes_{\Q[\Gamma^2]} V_{k_1,k_2}^{\Q}=\Omega_{k_1,k_2}.
\end{equation*}
Et d'autre part:
\begin{align*}
I_H=\{g\in\Z[\Gamma^2];\tilde{g}\tau_2\in\Omega_{k_1,k_2}(I_H)^{\bot}\}&\text{ où }\Omega_{k_1,k_2}(I_H)=\{d\omega=\varphi_0^*\omega=0\}\otimes_{\Q[\Gamma^2]} V_{k_1,k_2}^{\Q},\\
I_D=\{g\in\Z[\Gamma^2];\tilde{g}\tau_2\in\Omega_{k_1,k_2}(I_D)^{\bot}\}&\text{ où }\Omega_{k_1,k_2}(I_D)=\{d\omega=\varphi_1^*\omega=0\}\otimes_{\Q[\Gamma^2]} V_{k_1,k_2}^{\Q},\\
I_V=\{g\in\Z[\Gamma^2];\tilde{g}\tau_2\in\Omega_{k_1,k_2}(I_V)^{\bot}\}&\text{ où }\Omega_{k_1,k_2}(I_V)=\{d\omega=\varphi_2^*\omega=0\}\otimes_{\Q[\Gamma^2]} V_{k_1,k_2}^{\Q}.
\end{align*}
Ainsi en tensorisant $H_2(P_2)^0$ par $V_{k_1,k_2}^{\Q}$ comme un $\Q[\Gamma^2]$-module, on obtient:
\begin{equation}
H_2(P_2)^0\otimes_{\Q[\Gamma^2]} V_{k_1,k_2}^{\Q}=V_{k_1,k_2}^{\Q}[\mathcal{J}(k_1,k_2)]+E_{k_1,k_2}^{\Q}.
\end{equation}
L'exactitude de la suite donne $\dim_{\Q}\left(V_{k_1,k_2}^{\Q}[\mathcal{J}(k_1,k_2)]+E_{k_1,k_2}^{\Q}\right)+\dim_{\Q}(V_{k_1,k_2}^{\Q}|_{\widetilde{\mathcal{I}_{2}}})=\dim_{\Q}(V_{k_1,k_2}^{\Q})$ et ainsi démontre le théorème.
\end{proof}

\subsection{Une famille finie de générateurs de $\mathcal{I}_2$}

\subsubsection{$\mathcal{I}_2$ est de type fini}

Nous montrons que l'idéal $\mathcal{I}_2$ est engendré par des combinaisons linéaires à support dans l'ensemble fini:
$$(U^{i_1},U^{i_2})(S,S)^{\delta}\text{ pour }(i_1,i_2)\in (\Z/3\Z)^2\text{ et }\delta\in \Z/2\Z.$$
Pour cela, nous allons introduire une hauteur sur $H_2(P_2^2)^0$ associée à une distance de $\pte$ invariante par $\G$.\par

Définissons l'application $d:\pte\times\pte\to \Z_{\geq 0}$ en posant pour $a\neq b\in\pte$:
\begin{equation}
d(a,b)=\min\{N\in\mathbb{N}\text{ tel qu'il existe }\gamma_1,...,\gamma_N\in\G\text{ vérifiant }a=\gamma_1 i\infty,\gamma_1 0=\gamma_2 i\infty,...,\gamma_N 0=b\},
\end{equation}
et $d(a,a)=0$.

\begin{prop}
L'application $d$ est une distance invariante par $P\G$, au sens où pour tout $a,b,c\in\pte$ et $\gamma\in P\G$:
\begin{align*}
d(a,b)=d(b,a),\quad &d(a,b)=0 \Leftrightarrow a=b,\\ 
d(a,c)\leq d(a,b)+d(b,c)\quad\text{ et }\quad &d(\gamma a,\gamma b)=d(a,b).
\end{align*}
\end{prop}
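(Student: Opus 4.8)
The plan is to verify the four asserted properties in turn, treating invariance and the triangle inequality as formal consequences of how admissible chains compose, and reducing the only genuinely substantive point—that $d$ takes finite values—to the continued-fraction telescoping already used to prove the surjectivity of $\Theta_1$. First I would settle well-definedness: for $a\neq b$ the set of admissible lengths is nonempty because, by the surjectivity of $\Theta_1$ (that is, the telescoping $(p/q)-(0)=\sum_{i=1}^N\Theta_1(\gamma_i)$ obtained from the continued fraction expansion), any cusp can be joined to $0$ by a finite chain $a=\gamma_1 i\infty,\ \gamma_1 0=\gamma_2 i\infty,\ \dots,\ \gamma_N 0=0$; composing such a chain for $a$ with the $S$-reversal (constructed below) of one for $b$ then connects $a$ to $b$. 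Hence the minimum defining $d(a,b)$ is a well-defined nonnegative integer. Separation is immediate: $d(a,a)=0$ by convention, while for $a\neq b$ no empty chain realises the endpoint conditions, so the minimal admissible $N$ is at least $1$, giving $d(a,b)=0\iff a=b$.

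For the invariance, suppose $\gamma_1,\dots,\gamma_N$ realises $d(a,b)=N$ and let $\gamma\in P\G$. Then $\gamma\gamma_1,\dots,\gamma\gamma_N$ is again admissible: $\gamma a=\gamma\gamma_1 i\infty$, the matching conditions become $\gamma\gamma_i 0=\gamma(\gamma_i 0)=\gamma(\gamma_{i+1}i\infty)=\gamma\gamma_{i+1}i\infty$, and $\gamma\gamma_N 0=\gamma b$. This gives $d(\gamma a,\gamma b)\leq d(a,b)$, and applying the same to $\gamma^{-1}$ yields equality. For the symmetry I would use that $S$ exchanges the two cusps, $S i\infty=0$ and $S 0=i\infty$. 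Given a chain $\gamma_1,\dots,\gamma_N$ from $a$ to $b$, set $\gamma_i'=\gamma_{N+1-i}S$. Then $\gamma_1' i\infty=\gamma_N S i\infty=\gamma_N 0=b$, the matching condition $\gamma_i' 0=\gamma_{N+1-i}i\infty=\gamma_{N-i}0=\gamma_{i+1}'i\infty$ holds precisely because the original chain matched, and $\gamma_N' 0=\gamma_1 S 0=\gamma_1 i\infty=a$. Thus $\gamma_1',\dots,\gamma_N'$ is an admissible chain from $b$ to $a$ of the same length, so $d(b,a)\leq d(a,b)$, and the symmetric argument gives equality. Note that this reversal is a purely formal chain manipulation, so invoking it in the finiteness argument above is not circular.

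The triangle inequality is the concatenation of chains. If $\gamma_1,\dots,\gamma_M$ joins $a$ to $b$ and $\gamma_1',\dots,\gamma_N'$ joins $b$ to $c$, then since $\gamma_M 0=b=\gamma_1' i\infty$ the juxtaposition $\gamma_1,\dots,\gamma_M,\gamma_1',\dots,\gamma_N'$ satisfies all matching conditions and joins $a$ to $c$ in $M+N$ steps; the degenerate cases $a=b$ or $b=c$ are absorbed by the empty chain. Minimising over both chains gives $d(a,c)\leq d(a,b)+d(b,c)$. The only step carrying real content is the finiteness of $d$, and I expect the main obstacle to be phrasing the reduction to $\Theta_1$ cleanly: one must check that the continued-fraction chains joining an arbitrary cusp to $0$, combined with invariance and the $S$-reversal, indeed produce an admissible chain between any prescribed pair of cusps, after which the metric axioms follow formally.
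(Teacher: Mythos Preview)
Your proof is correct and follows essentially the same approach as the paper: continued fractions for finiteness, the $S$-reversal $\gamma_i'=\gamma_{N+1-i}S$ for symmetry, left-translation by $\gamma$ and $\gamma^{-1}$ for invariance, and concatenation for the triangle inequality. You are slightly more careful than the paper in explicitly building a chain between two arbitrary cusps (rather than just from $0$ to any cusp), but the content is the same.
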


\begin{rem}
1) Pour la suite, on appellera \textit{chaîne} une telle suite de matrices et on notera:
\begin{equation}
a=\gamma_1 i\infty \stackrel{\gamma_1}{\to}\gamma_1 0\stackrel{\gamma_2}{\to}...\gamma_N i\infty\stackrel{\gamma_N}{\to} \gamma_N 0=b.
\end{equation}
On appellera \textit{longueur} de la chaîne le nombre $N$ de matrices. Les chaînes de longueurs minimales reliant deux points $a$ et $b$ déterminent ainsi la distance $d(a,b)$.\par
2) L'application $d$ est la distance associée au graphe orienté $\mathcal{G}_1$ dont l'ensemble des sommets est $\pte$ et l'ensemble des arrêtes est $PSL_2(\Z)$. En effet, pour tout $a/b,c/d\in\pte$, on a:
\begin{equation}
d(a/c,b/d)=1 \Leftrightarrow \mat{a}{b}{c}{d}\in PSL_2(\Z)\Leftrightarrow \mat{b}{-a}{d}{-c}\in PSL_2(\Z).
\end{equation}
Ainsi toute paire de sommet de ce type est relié par les matrices $\mat{a}{b}{c}{d}$ et $\mat{b}{-a}{d}{-c}$. Nous représentons ceci sur le schéma suivant où chaque trait représentent ainsi deux arrêtes orientées aller et retour.

\setlength{\unitlength}{0.6cm}
\begin{picture}(24,7)

\put(12.1,4.5){$id$}
\put(10,4.2){\line(1,0){4}}
\put(12.3,4.2){$_>$}

\put(11.9,3.2){$S$}
\put(10,4){\line(1,0){4}}
\put(11.7,4){$_<$}

\put(8.7,4){${1/0}$}
\put(14.5,4){${0/1}$}
\put(11.2,6.6){${-1/1}$}
\put(11.5,1.3){${1/1}$}
\put(9.5,4.7){\line(1,1){1.8}}
\put(14.5,4.7){\line(-1,1){1.8}}
\put(9.5,3.5){\line(1,-1){1.8}}
\put(14.5,3.5){\line(-1,-1){1.8}}

\put(15.5,3.5){\line(1,-2){0.8}}
\put(8.5,3.5){\line(-1,-2){0.8}}
\put(7.2,1.3){$2/1$}
\put(16.1,1.3){$1/2$}
\put(12.7,1.5){\line(1,0){3}}
\put(8.3,1.5){\line(1,0){2.9}}

\put(7.1,6.6){$-2/1$}
\put(15.8,6.6){$-1/2$}
\put(15.5,4.6){\line(1,2){0.8}}
\put(8.5,4.6){\line(-1,2){0.8}}
\put(12.9,6.8){\line(1,0){2.2}}
\put(9,6.8){\line(1,0){2}}

\put(14,0.2){$2/3$}
\put(13.5,0.7){\line(-2,1){0.7}}
\put(15.2,0.7){\line(2,1){0.7}}
\put(9.3,0.2){$3/2$}
\put(8.8,0.7){\line(-2,1){0.7}}
\put(10.5,0.7){\line(2,1){0.7}}

\put(17,2.8){$1/3$}
\put(17.3,2.5){\line(-1,-1){0.6}}
\put(16.8,3.3){\line(-2,1){0.8}}
\put(6,2.8){$3/1$}
\put(6.6,2.5){\line(1,-1){0.6}}
\put(6.8,3.4){\line(2,1){0.9}}

\put(17.2,4){$1/n$}
\put(17,4.2){\line(-1,0){0.8}}

\end{picture}

3) La valence de chaque sommet est infinie. Les propriétés de la distance données par la Proposition se traduisent sur le graphe par sa connexité et son invariance sous l'action de $\Gamma$.
\end{rem}

\begin{proof}
Tout d'abord cette application est bien définie car le développement en fraction continue donne l'existence d'une chaîne de longueur finie liant $0$ à tout point de $\pte$.\\
La condition de séparation est purement formelle, les chaînes de longueur $0$ sont $a=b$.\\
Soient $a,b\in\pte$. Si $d(a,b)=N$ alors il existe une chaîne minimale: $a\stackrel{\gamma_1}{\to}...\stackrel{\gamma_N}{\to}b$. Elle nous permet de construire la chaîne:
$b=\gamma_N S i\infty\stackrel{\gamma_N S}{\to}...\stackrel{\gamma_1 S}{\to} \gamma_1 S 0=a.$\\
Ceci démontre que $d(b,a)\leq d(a,b)$ et donc on obtient l'égalité en symétrisant.\\
Soient $a,b,c\in\pte$. On peut concaténer deux chaînes: 
$a\stackrel{\gamma_1}{\to}...\stackrel{\gamma_{d(a,b)}}{\to}b\stackrel{\gamma'_{1}}{\to}...\stackrel{\gamma'_{d(b,c)}}{\to}c.$\\
On obtient une chaîne de longueur $d(a,b)+d(b,c)$ et ainsi $d(a,c)\leq d(a,b)+d(b,c)$.\\
Soit $g\in\G$. Les chaînes: $a\stackrel{\gamma_1}{\to}...\stackrel{\gamma_{N}}{\to}b$ se translate en:
$g a\stackrel{g \gamma_1}{\to}...\stackrel{g \gamma_{d(a,b)}}{\to}g b.$\\
Ceci donne $d(g a,g b)\leq d(a ,b)$. Puis on applique ceci à $g^{-1}\in\Gamma$ et $ga,gb\in\pte$ pour obtenir l'inégalité inverse.
\end{proof}

Ceci permet de décomposer l'espace image de $\Theta_1:\Z[\Gamma]\to \Z[\pte]^0$ suivant une hauteur:
\begin{equation}
h:\pte\to\Z_{\geq 0}, p\mapsto max(d(p,i\infty),d(p,0)),
\end{equation}
en posant, pour tout entier $M>0$:
\begin{equation}
\Z[\pte]^0_M=\left\{\sum_p \lambda_p (p)\in\Z[\pte]^0\text{ tel que }\lambda_p\neq 0\Rightarrow h(p)\leq M\right\}.
\end{equation}

Définissons désormais les parties de $\pte$:
\begin{align*}
B_0&=\{i\infty,0,1,-1\}=\{p\in\pte\text{ tel que }h(p)\leq 1\},\\
B_1&=\{a/b;a< b\text{ et } ab> 0\},
B_2=\{a/b;a> b\text{ et } ab> 0\},\\
B_3&=\{a/b;a<b\text{ et } ab<0\}
\text{ et }B_4=\{a/b;a>b\text{ et } ab<0\}.
\end{align*}

Ceci nous permet d'obtenir la partition suivante de $\pte$:
\begin{equation}
\pte=B_0\sqcup B_1\sqcup B_2\sqcup B_3\sqcup B_4.\label{partpte}
\end{equation}

De plus, les actions de $S=\mat{0}{-1}{1}{0}$ et $\varepsilon=\mat{-1}{0}{0}{1}$ stabilisent $B_0$ et échangent les espaces $B_1,B_2,B_3$ et $B_4$ selon:
$$B_1=\varepsilon SB_2=\varepsilon B_3=SB_4.$$

L'ensemble $B_0$ est l'ensemble des points de hauteur $\leq 1$. Pour tout $j\in\{1,2,3,4\}$, l'ensemble des voisins des sommets de $B_j$ sur le graphe $\mathcal{G}_1$ est $B_j\cup B_0$. On peut ainsi réduire par symétrie l'étude du graphe à l'étude de $B_1\cup B_0$.

\begin{lem}\label{lemtri}
Soit $p_1/q_1,p_2/q_2\in B_1$ vérifiant $d\left(\frac{p_1}{q_1},\frac{p_2}{q_2}\right)=1$. Alors il existe exactement deux pointes à distance $1$ de $p_1/q_1$ et $p_2/q_2$ qui sont:
$$\frac{p_1-p_2}{q_1-q_2}\quad\text{ et }\quad\frac{p_1+p_2}{q_1+q_2}.$$
De plus, leurs hauteurs sont données par:
$$h\left(\frac{p_1-p_2}{q_1-q_2}\right)\leq min\left[h\left(\frac{p_1}{q_1}\right),h\left(\frac{p_2}{q_2}\right)\right]
\text{ et }h\left(\frac{p_1+p_2}{q_1+q_2}\right)= min\left[h\left(\frac{p_1}{q_1}\right),h\left(\frac{p_2}{q_2}\right)\right]+1.$$
\end{lem}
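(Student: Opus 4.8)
Le point de départ sera la caractérisation du graphe $\mathcal{G}_1$ donnée dans la remarque précédente : deux pointes $p_1/q_1$ et $p_2/q_2$ sont à distance $1$ si et seulement si $\mat{p_1}{p_2}{q_1}{q_2}\in PSL_2(\Z)$, c'est-à-dire si et seulement si $|p_1q_2-p_2q_1|=1$. J'utiliserai cette condition sous la forme $p_1q_2-p_2q_1=\pm 1$ comme fil conducteur de tout le calcul. D'abord je vérifierai que les deux pointes candidates sont bien voisines de $p_1/q_1$ et de $p_2/q_2$ : pour $\frac{p_1-p_2}{q_1-q_2}$ on calcule $p_1(q_1-q_2)-q_1(p_1-p_2)=p_1q_2-p_2q_1=\pm 1$ (et symétriquement avec l'indice $2$), et pour $\frac{p_1+p_2}{q_1+q_2}$ de même $p_1(q_1+q_2)-q_1(p_1+p_2)=p_1q_2-p_2q_1=\pm 1$. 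Ceci montre l'existence. Il faudra au passage s'assurer que ces deux fractions sont bien écrites sous forme réduite, ce qui résulte automatiquement du fait que les déterminants valent $\pm 1$.

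Ensuite je prouverai l'unicité, c'est-à-dire qu'il n'existe exactement que ces deux voisins communs. Par invariance de $d$ sous $P\G$ (Proposition précédente), je peux me ramener au cas $p_1/q_1=i\infty=1/0$, quitte à appliquer une matrice de $P\G$ envoyant $p_1/q_1$ sur $i\infty$. La condition $d(i\infty,p_2/q_2)=1$ force alors $q_2=\pm 1$, donc on peut supposer $p_2/q_2=p/1$ avec $p\in\Z$. Un voisin commun $r/s$ doit vérifier à la fois $d(i\infty,r/s)=1$, soit $s=\pm 1$, et $d(p/1,r/s)=1$, soit $|r\cdot 1-p\cdot s|=1$. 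Avec $s=\pm 1$ cela donne $r=p\pm 1$ : on obtient exactement deux pointes, qui sont les images de $\frac{p_1-p_2}{q_1-q_2}$ et $\frac{p_1+p_2}{q_1+q_2}$ par la transformation inverse. L'unicité est donc de nature purement arithmétique et ne présente pas de difficulté conceptuelle.

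Reste le calcul des hauteurs, qui sera la partie la plus délicate. Pour $h\!\left(\frac{p_1+p_2}{q_1+q_2}\right)=\min\!\left[h\!\left(\frac{p_1}{q_1}\right),h\!\left(\frac{p_2}{q_2}\right)\right]+1$, l'idée est de se placer dans le domaine $B_1$ où l'on a une bonne description des géodésiques : le passage de $p_j/q_j$ à la fraction médiante $\frac{p_1+p_2}{q_1+q_2}$ correspond à descendre d'un cran vers $B_0$ dans l'arbre de Stern-Brocot sous-jacent, donc augmente la hauteur d'exactement $1$ par rapport au minimum des deux hauteurs de départ. Pour $\frac{p_1-p_2}{q_1-q_2}$, c'est au contraire l'opération inverse (remonter vers $i\infty$ ou $0$), d'où l'inégalité $h\!\left(\frac{p_1-p_2}{q_1-q_2}\right)\leq \min\!\left[h\!\left(\frac{p_1}{q_1}\right),h\!\left(\frac{p_2}{q_2}\right)\right]$.

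La difficulté principale sera d'établir rigoureusement ces deux formules de hauteur, et non l'existence ou l'unicité. Le point sensible est que $h(p)=\max(d(p,i\infty),d(p,0))$ fait intervenir simultanément deux pointes de base, alors que la structure combinatoire naturelle (fractions médiantes, arbre de Stern-Brocot) privilégie une origine. Je compte gérer cela en utilisant la partition $\pte=B_0\sqcup B_1\sqcup B_2\sqcup B_3\sqcup B_4$ établie précédemment et l'observation que les voisins de $B_1$ restent dans $B_1\cup B_0$ : sur $B_1$, la distance à $i\infty$ et la distance à $0$ se contrôlent l'une l'autre, de sorte que $h$ se calcule via une seule des deux. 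Je procéderai par récurrence sur $\max(q_1,q_2)$ (ou sur $\min$ des hauteurs), en traitant d'abord le cas de base $p_j/q_j\in B_0$, puis en montrant que l'opération de médiante augmente strictement le dénominateur et fait décroître la récurrence. C'est cette analyse combinatoire fine de la hauteur qui constituera le cœur technique du lemme.
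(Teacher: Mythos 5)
Your treatment of existence and uniqueness is correct and is essentially the paper's argument seen in coordinates: the paper puts $\gamma=\mat{p_1}{p_2}{q_1}{q_2}\in\Gamma$ and uses the invariance of $d$ to say that the common neighbours of $\gamma i\infty$ and $\gamma 0$ are exactly $\gamma(1)=\frac{p_1+p_2}{q_1+q_2}$ and $\gamma(-1)=\frac{p_1-p_2}{q_1-q_2}$, which is what your determinant computation plus the reduction to the pair $(i\infty,p/1)$ establishes. No issue there.

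The height formulas are where your proposal has a genuine gap, and you have only a plan, not a proof. Two concrete problems. First, the mechanism you invoke is not correct as stated: Stern--Brocot depth is \emph{not} the height $h$. For instance the two Stern--Brocot children of $1/2$, namely $1/3$ and $2/3$, satisfy $h(1/3)=h(2/3)=h(1/2)=2$, so ``descending one level in the tree increases $h$ by one'' is false; what is true is precisely the statement to be proved, $h(\text{m\'ediante})=\min(h_1,h_2)+1$, and the tree picture does not deliver it. Second, the only nontrivial content of that equality is the lower bound $h\bigl(\frac{p_1+p_2}{q_1+q_2}\bigr)\geq \min(h_1,h_2)+1$ (the upper bound is the trivial path through the closer parent): you must exclude a geodesic from the mediant to $i\infty$ or to $0$ that bypasses \emph{both} $p_1/q_1$ and $p_2/q_2$. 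Nothing in your sketch addresses this; it requires a separation-type fact about the Farey edge $\{p_1/q_1,p_2/q_2\}$, or an equivalent combinatorial input, and your proposed induction on $\max(q_1,q_2)$ is not set up to produce it. For comparison, the paper avoids the issue by pulling everything back through $\gamma^{-1}$: using $h(\alpha)=d(i\infty,\alpha)$ on $B_1$ and $\gamma(-q_2/q_1)=i\infty$, all four heights become distances from the single point $-q_2/q_1\in B_3\cup B_4\cup\{-1\}$ to the four points $-1,0,i\infty,1$ of $B_0$, and one only has to order these four distances, the needed strict inequality being obtained by contradiction from the first part of the lemma. You should either adopt such a reduction or supply the missing no-shortcut argument; as it stands the core of the lemma is unproved.
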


\begin{proof}
Posons $\gamma=\mat{p_1}{p_2}{q_1}{q_2}$, afin d'obtenir $p_1/q_1=\gamma i\infty$ et $p_2/q_2=\gamma 0$. L'hypothèse $d\left(\frac{p_1}{q_1},\frac{p_2}{q_2}\right)=1$ se traduit par $\gamma\in\Gamma$ et ainsi l'invariance de $d$ par $\Gamma$ se traduit par:
\begin{center}
Les pointes à distance $1$ de $\gamma i\infty$ et $\gamma 0$ sont $\gamma (1)=\frac{p_1+p_2}{q_1+q_2}$ et $\gamma (-1)=\frac{p_1-p_2}{q_1-q_2}$.
\end{center}\par
Pour calculer leurs hauteurs, on commence par remarquer que pour tout $\alpha \in B_1,h(\alpha)=d(i\infty,\alpha)$ et ainsi la pointe qui nous concerne après translation par $\gamma$ provient de $\gamma (\frac{-q_2}{q_1})=i\infty$. Or on sait que $\frac{-q_2}{q_1}\in B_3\cup B_4\cup \{-1\}$. Lorsque $\frac{-q_2}{q_1}=-1$ le résultat est vérifiable simplement car $d(-1,i\infty)=d(-1,0)=d(-1,1)-1$. Sinon, par symétrie entre $p_1/q_1$ et $p_2/q_2$, on peut supposer $\frac{-q_2}{q_1}\in B_3$ et on obtient:
$$d(\frac{-q_2}{q_1},-1)\leq d(\frac{-q_2}{q_1},0)\leq d(\frac{-q_2}{q_1},i\infty)\leq d(\frac{-q_2}{q_1},1).$$
La première inégalité se traduit par $h\left(\frac{p_1-p_2}{q_1-q_2}\right)\leq h\left(\frac{p_1}{q_1}\right)$ après translation par $\gamma$. Par l'absurde, on montre que parmi la deuxième et la troisième inégalité au moins une est strict faute de contredire le début du Lemme. Ainsi on obtient:
$$d(\frac{-q_2}{q_1},0)<d(\frac{-q_2}{q_1},1)\leq d(\frac{-q_2}{q_1},0) + d(0,1)=d(\frac{-q_2}{q_1},0)+1.$$
Ceci se translate par $\gamma$ en $h\left(\frac{p_1+p_2}{q_1+q_2}\right)= h\left(\frac{p_1}{q_1}\right)+1$.
\end{proof}

Nous avons désormais les outils pour montrer que $\mathcal{I}_1$ et $\mathcal{I}_2$ sont de type fini.

L'idéal $\mathcal{I}_1$ est le noyau de l'application $\Theta_1:\Z[\Gamma]\to \Z[\pte]^0$. Un élément du groupe $\Gamma$ est une arrête orientée de $\mathcal{G}_1$ dont les extrémités dans $\pte$ est donnée par l'application $\Theta_1$. Ainsi un élément du groupe $\Z[\Gamma]$ correspond à un $1$-cycle du graphe $\mathcal{G}_1$ et son image par $\Theta_1$ correspond à son bord. Les chemins de $\mathcal{I}_1$, c'est-à-dire annulant $\Theta_1$, sont ainsi les chemins fermées de $\mathcal{G}_1$.\par

Pour un élément $\sum \lambda_{\gamma} [\gamma]\in\Z[\Gamma]$, définissons son support la partie de $\pte$ donnée par: 
$$Supp(\sum \lambda_{\gamma} [\gamma])=\bigcup_{\lambda_{\gamma}\neq 0} \{\gamma i\infty,\gamma 0\}.$$
Notons $C^0(X)$ pour les $1$-cycles fermées de $\mathcal{G}_1$ à support dans $X$ une partie de $\pte$:
$$C^0(X)=\{g\in\mathcal{I}_1\text{ tel que }Supp(g)\subset X\}.$$
La partition de $\pte$ (\ref{partpte}) et les propriétés d'invariances élémentaires montrent que:
\begin{align*}
\mathcal{I}_1\cong C^0(\pte)&= C^0(B_0\cup B_1) + C^0(B_0\cup B_2) + C^0(B_0\cup B_3) + C^0(B_0\cup B_4)\\
&=C^0(B_0\cup B_1) + \varepsilon S.C^0(B_0\cup B_1) + \varepsilon .C^0(B_0\cup B_1) + S.C^0(B_0\cup B_1).
\end{align*}

Une méthode de descente sur la hauteur donne alors:

\begin{prop}
L'idéal $\mathcal{I}_1$ est engendré comme $\Z[\Gamma]$-module par les chemins fermés de $\mathcal{G}_1$ à support dans $B_0$:
\begin{equation}
\mathcal{I}_1 = \Z[\Gamma] . C^0(B_0)=\Z[\Gamma] (1+S,1+U+U^2).
\end{equation}
\end{prop}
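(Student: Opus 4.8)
The claim is that the left ideal $\mathcal{I}_1 = \Ker(\Theta_1)$ is generated as a $\Z[\Gamma]$-module by the closed chains supported on $B_0$, and that this group $C^0(B_0)$ is precisely $\Z[\Gamma](1+S, 1+U+U^2)$. Let me sketch my plan.

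The plan is to read $\mathcal{I}_1 = \Ker(\Theta_1) \cong C^0(\pte)$ as the directed $1$-cycle space of the Farey graph $\mathcal{G}_1$, and to prove the nontrivial inclusion $\mathcal{I}_1 \subseteq \Z[\Gamma](1+S, 1+U+U^2)$ by descent on the height $h$; the other inclusions are immediate. Indeed $1+S$ (the backtracking loop $i\infty \to 0 \to i\infty$) and $1+U+U^2$ (the fundamental triangle $i\infty\to 0\to 1\to i\infty$) already lie in $C^0(B_0)\subseteq\mathcal{I}_1$, so $\mathcal{I}_1 \supseteq \Z[\Gamma]\,C^0(B_0) \supseteq \Z[\Gamma](1+S,1+U+U^2)$; once the reverse inclusion is known, the three nested sets coincide. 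First I would use the decomposition $C^0(\pte) = C^0(B_0\cup B_1) + \varepsilon S\cdot C^0(B_0\cup B_1) + \varepsilon\cdot C^0(B_0\cup B_1) + S\cdot C^0(B_0\cup B_1)$ obtained above, so that, $S\in\Gamma$ acting by left multiplication and the symmetries $B_1 = \varepsilon S B_2 = \varepsilon B_3 = S B_4$ transporting the argument, everything reduces to proving $C^0(B_0\cup B_1) \subseteq \Z[\Gamma](1+S, 1+U+U^2)$.

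For the descent I would induct on the pair $(M, N)$ ordered lexicographically, where $M$ is the maximal height of a vertex occurring in the support of a cycle $g\in C^0(B_0\cup B_1)$ and $N$ the number of such maximal vertices. Fix a vertex $p$ of height $M\ge 2$ in the support. By Lemma \ref{lemtri}, $p$ is the upper mediant $\frac{p_1+p_2}{q_1+q_2}$ of a unique Farey edge $\{u,v\}$ with $h(u)=h(v) = M-1$, the three points $\{u,v,p\} = \gamma\{i\infty, 0, 1\}$ form a Farey triangle for $\gamma = \mat{p_1}{p_2}{q_1}{q_2}\in\Gamma$, and, since every mediant raises the height by exactly one, every other Farey neighbour of $p$ has height $> M$ and is absent from the support. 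Hence the only edges of $g$ incident to $p$ are $\gamma$-translates of $i\infty \leftrightarrow 1$ and $0 \leftrightarrow 1$; flow conservation at $p$ lets me rewrite each passage through $p$ as a path along the opposite side $u\leftrightarrow v$, at the cost of subtracting integer multiples of the triangle relation $\gamma(1+U+U^2)\in\Z[\Gamma](1+U+U^2)$ and of backtracks. The crucial bookkeeping point is that reversing the orientation of a cycle costs only backtracks that remain \emph{left} multiples of $1+S$: if $c = \sum_i \gamma_i$ is a closed path then $c + \bar c = \sum_i \gamma_i(1+S) \in \Z[\Gamma](1+S)$, so every orientation correction stays inside the target module. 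After this fold $p$ no longer appears, $(M,N)$ strictly decreases, and the induction proceeds.

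The base case $M\le 1$ has support contained in $B_0 = \{i\infty, 0, 1, -1\}$, on which $\mathcal{G}_1$ is $K_4$ minus the edge $\{1,-1\}$; its cycle space is spanned by backtracks $\gamma(1+S)$ together with the two triangles $\{i\infty,0,1\}$ and $\{i\infty,0,-1\}$, with boundaries $1+U+U^2$ and $1+V+V^2$ where $V = SU^2S = \mat{0}{-1}{1}{1}$. The first is a generator; for the second I would use $\{i\infty,0,-1\} = T^{-1}\{i\infty,0,1\}$ together with the reversal identity to obtain the explicit membership
\[
1+V+V^2 = (1+V+V^2)(1+S) - T^{-1}(1+U+U^2) \in \Z[\Gamma](1+S) + \Z[\Gamma](1+U+U^2),
\]
since $(1+V+V^2)S = T^{-1}(1+U+U^2)$ as an equality of the three edges of the reversed triangle in $\Z[\Gamma]$. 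This closes the base case, hence the inclusion $\subseteq$ and the proposition.

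I expect the main obstacle to be the inductive fold at a maximal vertex: one must verify that flow conservation genuinely permits the rerouting through the unique Farey triangle above $\{u,v\}$, which rests on the geometric fact — extracted from the height formula of Lemma \ref{lemtri} — that a maximal vertex has exactly its two parents as lower neighbours (the tree-of-triangles structure of $\mathcal{G}_1$), and that every correction is a \emph{left} $\Z[\Gamma]$-multiple of the two generators rather than a two-sided conjugate. Confining the orientation corrections to $\Z[\Gamma](1+S)$ through the reversal identity is precisely what yields the left-module statement rather than merely the two-sided one.
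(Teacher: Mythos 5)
Votre strat�gie globale co�ncide avec celle du texte (r�duction � $C^0(B_0\cup B_1)$ par les sym�tries $S,\varepsilon$, descente sur la hauteur via le Lemme \ref{lemtri}, cas de base sur $B_0$, et gestion des renversements d'orientation par $\gamma(1+S)$), et votre cas de base ainsi que l'identit� $(1+V+V^2)S=T^{-1}(1+U+U^2)$ sont corrects. Mais l'�tape inductive repose sur une affirmation g�om�trique fausse : vous pr�tendez qu'un sommet $p$ de hauteur maximale $M$ est le m�diant d'une ar�te $\{u,v\}$ avec $h(u)=h(v)=M-1$ et que tout autre voisin de Farey de $p$ est de hauteur $>M$, de sorte que les seules ar�tes du cycle incidentes � $p$ vont vers $u$ et $v$. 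Le Lemme \ref{lemtri} ne donne que $h(u\oplus v)=\min\bigl(h(u),h(v)\bigr)+1$ : le m�diant ne d�passe que le \emph{minimum} des deux parents d'une unit�. Ainsi $2/3=1/1\oplus 1/2$ a pour parents $1$ et $1/2$ de hauteurs $1$ et $2=h(2/3)$ ; de m�me $1/2$ et $1/3$ sont adjacents et tous deux de hauteur $2$, et $1/4=1/3\oplus 0/1$ est un \emph{descendant} de $1/3$ de m�me hauteur $2$. Le $4$-cycle $0\to 1/2\to 1/3\to 1/4\to 0$, � support dans $B_0\cup B_1$, a trois sommets de hauteur maximale, et au sommet $1/3$ les deux ar�tes incidentes aboutissent � $1/2$ et $1/4$, toutes deux de hauteur $M=2$ : aucune ne va vers un parent de hauteur $M-1$, et le pliage par l'unique triangle au-dessus de $\{u,v\}$ tel que vous le d�crivez ne s'applique pas.

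C'est pr�cis�ment l� que la d�monstration du texte fait un travail suppl�mentaire que votre plan omet : on consid�re une \emph{paire} d'ar�tes incidentes � $p$, d'extr�mit�s oppos�es $a_1\neq a_2$, et on distingue $d(a_1,a_2)=1$ (o� $p$ et un second point $q$ sont les deux voisins communs de l'ar�te $\{a_1,a_2\}$, avec $h(q)\le\min[h(a_1),h(a_2)]<h(p)$) de $d(a_1,a_2)=2$ (o� $a_1,a_2$ sont les images de $1$ et $-1$ par une matrice $\gamma$ et $p,q$ sont $\gamma\,i\infty,\gamma\,0$, avec $h(q)<h(p)$) ; dans les deux cas la r��criture $a_1\to p\to a_2\rightsquigarrow a_1\to q\to a_2$ se fait modulo des translat�s de $C^0(B_0)$ et la conservation du flot en $p$ (la somme des coefficients y est nulle) permet d'appairer toutes les ar�tes incidentes. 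Dans votre exemple $0\to 1/2\to 1/3\to 1/4\to 0$, c'est le cas $d(1/2,1/4)=2$ avec $q=0/1$ qui �limine $1/3$. Sans cette dichotomie, votre descente lexicographique sur $(M,N)$ ne d�marre pas sur des configurations pourtant �l�mentaires ; il faut donc remplacer votre � fait g�om�trique � par l'analyse des paires d'ar�tes du texte (ou d�montrer s�par�ment, ce que vous ne faites pas, que l'on peut toujours \emph{choisir} un sommet maximal dont toutes les ar�tes incidentes vont vers ses deux parents).
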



\begin{proof}
Soit $g=\sum \mu_{\gamma} [\gamma] \in \Ker(\Theta_1)$ tel que $Supp(g)\subset B_0\cup B_1$.\\
Si $g\neq 0$, il existe un point du support $p\in Supp(g)$ de hauteur maximale. Posons:
$$R_p=\sum_{\gamma;p\in Supp([\gamma])} \mu_{\gamma} \Theta_1(\gamma)\text{ et }\Theta_1(\gamma)=\pm[(p)-(a_{\gamma})].$$
On va remplacer $p$ pour réduire la hauteur maximale du support. Pour $\gamma_1,\gamma_2$ distincts vérifiant $p\in Supp(\gamma_j)$, on peut supposer que $\Theta_1(\gamma_1)=p-a_1$ et $\Theta_1(\gamma_2)=p-a_2$ quitte à changer $\gamma$ par $\gamma S$ et $\mu_{\gamma S}=-\mu_{\gamma}$. On a $a_1\neq a_2$ donc $d(a_1,a_2)\in\{1,2\}$ et on distingue alors deux cas.\par
Si $d(a_1,a_2)=1$ alors d'après le Lemme \ref{lemtri}, il existe une unique pointe $q$ tel que $p$ et $q$ soit à distance $1$ de $a_1$ et $a_2$. De plus, la hauteur de $p$ étant maximal on obtient $h(q)\leq min\left[h(a_1),h(a_2)\right]<h(p)$.\par
Si $d(a_1,a_2)=2$ alors la situation est analogue et $a_1=\alpha_1/\beta_1$ est l'image de $1$ et $a_2=\alpha_2/\beta_2$ l'image de $-1$ par la matrice $\gamma=\mat{\alpha_1+\alpha_2}{\alpha_1-\alpha_2}{\beta_1+\beta_2}{\beta_1-\beta_2}$. On obtient alors $p\in\{\gamma 0,\gamma i\infty\}$ car seules deux pointes sont à distance $1$ des pointes $1$ et $-1$ et notons $q$ la seconde différente de $p$. On obtient à nouveau une configuration du type du Lemme \ref{lemtri}. Et ainsi on a: $h(q)+1=max[h(a_1),h(a_2)]\leq h(p)$.\par
Ainsi dans les deux cas, on a:
$$\Theta_1\left([\gamma_1]-[\gamma_2]\right)=(p-a_1)-(p-a_2)=(q-a_1)-(q-a_2)=\Theta_1\left([\gamma_1']-[\gamma_2']\right).$$
Les relations reliant les chemins ces quatre matrices sont à support dans un même translaté de $B_0$. Et comme on a $\sum_{p\in Supp[\gamma]}\mu_{\gamma}=0$ alors on peut construire un élément $R_p'$ à support de hauteur inférieure tel que: $R_p-R_p'\in \Z[\Gamma] C^0(B_0)$ et décomposer:
$$g=\sum_{\gamma;p\notin Supp[\gamma]}\mu_{\gamma}[\gamma]+R_p'+\left(R_p-R_p'\right).$$
Le support de $g$ étant fini on réduit bien celui-ci à une somme d'éléments de $\Gamma C^0(B_0)$ par récurrence sur la hauteur maximale. On remarquera notamment que dans le déroulement de la démonstration $p$ n'est pas nécessairement l'unique élément de hauteur maximale. Mais cet ensemble est fini et son cardinal diminue strictement car le seul point introduit vérifie $h(q)<h(p)$.
\end{proof}

Par analogie, il est alors naturel d'introduire le graphe $\mathcal{G}_2$ dont les sommets sont $P_2=\pte^2$ et les arêtes sont $H\cup D\cup V$. Un élément $(\gamma_1,\gamma_2)\in\Gamma^2$ est identifié dans ce graphe par le triangle orienté:
$$[(\gamma_1.i\infty,\gamma_2.i\infty),(\gamma_1 i\infty,\gamma_2 0),(\gamma_1 0,\gamma_2 0)].$$
Son bord est l'image par $\Theta_2$ c'est à dire la somme des trois arêtes orientées dans $H_2(P_2)^0$:
$$[(\gamma_1 i\infty,\gamma_2 i\infty),(\gamma_1 i\infty,\gamma_2 0)]+
[(\gamma_1 i\infty,\gamma_2 0),(\gamma_1 0,\gamma_2 0)]+
[(\gamma_1 0,\gamma_2 0),(\gamma_1 i\infty,\gamma_2 i\infty)].$$\par
Le support d'un élément de $g=\sum \lambda_{\gamma_1,\gamma_2}(\gamma_1,\gamma_2)\in\Z[\Gamma^2]$ est la partie de $\pte^2$ définie par:
$$Supp(g)=\bigcup_{\lambda_{\gamma_1,\gamma_2}\neq 0} \{(\gamma_1 i\infty,\gamma_2 i\infty),(\gamma_1 i\infty,\gamma_2 0),(\gamma_1 0,\gamma_2 0)\}.$$
Le schéma suivant fourni une représentation de la partie de $\mathcal{G}_2$ à support dans $\{i\infty,0,1\}^2\subset P_2$. Pour une question de lisibilité nous avons représenté ici huit fois les mêmes neuf points de $P_2$ pour illustrer l'ensemble des recouvrement possibles de cette partie.

\setlength{\unitlength}{0.8cm}
\begin{picture}(24,5)
\put(0,0){\line(1,0){4}}
\put(0,2){\line(1,0){4}}
\put(0,4){\line(1,0){4}}
\put(0,0){\line(0,1){4}}
\put(2,0){\line(0,1){4}}
\put(4,0){\line(0,1){4}}
\put(0,2){\line(1,1){2}}
\put(0,0){\line(1,1){4}}
\put(2,0){\line(1,1){2}}
\put(1,0.5){$_{(1,1)}$}
\put(0.2,1.5){$_{(S,S)}$}
\put(3,0.5){$_{(U,1)}$}
\put(2.2,1.5){$_{(SU,S)}$}
\put(1,2.5){$_{(1,U)}$}
\put(0.0,3.5){$_{(S,SU)}$}
\put(3,2.5){$_{(U,U)}$}
\put(2.0,3.5){$_{(SU,SU)}$}
\put(5,0){\line(0,1){4}}
\put(9,0){\line(0,1){4}}
\put(5,0){\line(1,0){4}}
\put(5,2){\line(1,0){4}}
\put(5,4){\line(1,0){4}}
\put(5,2){\line(2,-1){4}}
\put(5,4){\line(2,-1){4}}
\put(10,0){\line(0,1){4}}
\put(14,0){\line(0,1){4}}
\put(10,0){\line(1,0){4}}
\put(12,0){\line(0,1){4}}
\put(10,4){\line(1,0){4}}
\put(10,4){\line(1,-2){2}}
\put(12,4){\line(1,-2){2}}
\put(15,0){\line(0,1){4}}
\put(19,0){\line(0,1){4}}
\put(15,0){\line(1,0){4}}
\put(15,4){\line(1,0){4}}
\put(15,0){\line(1,1){4}}
\put(5.5,0.5){$_{(U^2,1)}$}
\put(5.5,2.5){$_{(U^2,U)}$}
\put(7,1.2){$_{(SU^2,S)}$}
\put(7,3.5){$_{(SU^2,SU)}$}
\put(10.4,3.5){$_{(S,SU^2)}$}
\put(12.4,3.5){$_{(SU,SU^2)}$}
\put(15.1,3.5){$_{(U^2,U^2)}$}
\put(10,0.5){$_{(1,U^2)}$}
\put(12.1,0.5){$_{(U,U^2)}$}
\put(17,0.5){$_{(SU^2,SU^2)}$}
\put(0,-0.4){$_{i\infty}$}
\put(1.8,-0.4){$_{0}$}
\put(3.9,-0.4){$_{1}$}
\put(4.8,-0.4){$_{i\infty}$}
\put(6.8,-0.4){$_{0}$}
\put(8.9,-0.4){$_{1}$}
\put(9.8,-0.4){$_{i\infty}$}
\put(11.8,-0.4){$_{0}$}
\put(13.9,-0.4){$_{1}$}
\put(14.8,-0.4){$_{i\infty}$}
\put(16.8,-0.4){$_{0}$}
\put(18.9,-0.4){$_{1}$}
\put(-0.8,0){$_{i\infty}$}
\put(-0.6,2){$_{0}$}
\put(-0.6,4){$_{1}$}

\end{picture}

\setlength{\unitlength}{0.8cm}
\begin{picture}(24,4)
\put(0,0){\line(1,0){4}}
\put(0,2){\line(1,0){4}}
\put(0,4){\line(1,0){4}}
\put(0,0){\line(0,1){4}}
\put(2,0){\line(0,1){4}}
\put(4,0){\line(0,1){4}}

\put(0,4){\line(1,-1){4}}
\put(0,2){\line(1,-1){2}}
\put(2,4){\line(1,-1){2}}

\put(0.2,0.5){$_{(1,S)}$}
\put(1,1.5){$_{(S,1)}$}
\put(2.2,0.5){$_{(U,S)}$}
\put(2.9,1.5){$_{(SU,1)}$}
\put(0.1,2.5){$_{(1,SU)}$}
\put(1,3.5){$_{(S,U)}$}
\put(2.1,2.5){$_{(U,SU)}$}
\put(2.7,3.5){$_{(SU,U)}$}

\put(5,0){\line(0,1){4}}
\put(9,0){\line(0,1){4}}
\put(5,0){\line(1,0){4}}
\put(5,2){\line(1,0){4}}
\put(5,4){\line(1,0){4}}
\put(5,2){\line(2,1){4}}
\put(5,0){\line(2,1){4}}

\put(10,0){\line(0,1){4}}
\put(14,0){\line(0,1){4}}
\put(10,0){\line(1,0){4}}
\put(12,0){\line(0,1){4}}
\put(10,4){\line(1,0){4}}
\put(10,0){\line(1,2){2}}
\put(12,0){\line(1,2){2}}

\put(15,0){\line(0,1){4}}
\put(19,0){\line(0,1){4}}
\put(15,0){\line(1,0){4}}
\put(15,4){\line(1,0){4}}
\put(15,4){\line(1,-1){4}}

\put(7,0.5){$_{(U^2,S)}$}
\put(7,2.5){$_{(U^2,SU)}$}
\put(5.5,1.2){$_{(SU^2,1)}$}
\put(5.5,3.5){$_{(SU^2,U)}$}

\put(10,3.5){$_{(S,U^2)}$}
\put(12.1,3.5){$_{(SU,U^2)}$}
\put(17,3.5){$_{(U^2,SU^2)}$}
\put(10.5,0.5){$_{(1,SU^2)}$}
\put(12.4,0.5){$_{(U,SU^2)}$}
\put(15.1,0.5){$_{(SU^2,U^2)}$}
\put(0,-0.4){$_{i\infty}$}
\put(1.8,-0.4){$_{0}$}
\put(3.9,-0.4){$_{1}$}
\put(4.8,-0.4){$_{i\infty}$}
\put(6.8,-0.4){$_{0}$}
\put(8.9,-0.4){$_{1}$}
\put(9.8,-0.4){$_{i\infty}$}
\put(11.8,-0.4){$_{0}$}
\put(13.9,-0.4){$_{1}$}
\put(14.8,-0.4){$_{i\infty}$}
\put(16.8,-0.4){$_{0}$}
\put(18.9,-0.4){$_{1}$}
\put(-0.8,0){$_{i\infty}$}
\put(-0.6,2){$_{0}$}
\put(-0.6,4){$_{1}$}

\end{picture}

L'idéal $\mathcal{I}_2$ est le noyau de l'application $\Theta_2:\Z[\Gamma^2]\to H_2(P_2)^0$. Un élément de $\mathcal{I}_2$ correspond donc à une combinaison linéaire de triangles orientés dont les bords sont nuls. On notera $C_2^0(X)\subset\Z[\Gamma^2]$ les tels recouvrements de $2$-chaînes fermées à support dans une partie $X\subset P_2$. La décomposition de $\pte$ nous permet d'obtenir:
$$C_2^0(P_2)=\sum_{i_1,i_2=1}^4 C_2^0\left((B_0\cup B_{i_1})\times (B_0\cup B_{i_2})\right)=\Z[\Gamma^2].C_2^0\left((B_0\cup B_1)\times(B_0\cup B_1)\right).$$\par
On réduit alors ceci par descente sur la hauteur. En effet, les projections suivants les coordonnées respectent la structure du graphe $\mathcal{G}_1$.

\begin{prop}
L'idéal $\mathcal{I}_2$ est engendré comme $\Z[\Gamma^2]$-module par les $2$-chaînes fermées de $\mathcal{G}_2$ à support dans $B_0^2$:
\begin{equation}
\mathcal{I}_2 = \Z[\Gamma^2] . C_2^0(B_0^2).
\end{equation}
\end{prop}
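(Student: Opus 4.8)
The plan is to reproduce, one dimension higher, the descent argument just carried out for $\mathcal{I}_1$, transporting it from the graph $\mathcal{G}_1$ to $\mathcal{G}_2$. Recall the identification of $\mathcal{I}_2$ with the group $C_2^0(P_2)$ of closed $2$-chains of $\mathcal{G}_2$, together with the reduction already established, $C_2^0(P_2) = \Z[\Gamma^2] \cdot C_2^0((B_0 \cup B_1)^2)$. Hence it suffices to show that every closed $2$-chain $g$ supported in $(B_0 \cup B_1)^2$ lies in $\Z[\Gamma^2] \cdot C_2^0(B_0^2)$. To run the induction I would equip $\pte^2$ with the height $\max(h(p),h(q))$, so that $B_0^2$ is exactly the set of vertices of height $\le 1$, and argue by descent on the pair $(M,\nu)$, where $M$ is the maximal height attained on $\mathrm{Supp}(g)$ and $\nu$ the number of vertices attaining it. Since $C_2^0(B_0^2) \subset \mathcal{I}_2$ and $\mathcal{I}_2$ is a left ideal, subtracting an element of $\Z[\Gamma^2]\cdot C_2^0(B_0^2)$ from $g$ keeps it a closed chain; the goal of each step is therefore to produce such an element whose subtraction strictly lowers $(M,\nu)$, until the support has descended into $B_0^2$, where the chain is trivially of the required form.

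First I would take a nonzero $g$ supported in $(B_0 \cup B_1)^2$ with $M > 1$ and isolate a vertex $v = (p,q)$ of maximal height, say with $h(p) = M$ (the other case being handled in the same way with the roles of the coordinates exchanged). Because $g$ is closed, $\Theta_2(g) = 0$ in $H_2(P_2)^0$, so the edges of $\mathcal{G}_2$ incident to $v$ cancel modulo the transverse subgroup $H^0 + V^0 + D^0$. The structural input is that the coordinate projections $\pte^2 \to \pte$ carry $\mathcal{G}_2$ onto $\mathcal{G}_1$: freezing the second coordinate, the first-coordinate edges meeting $v$ form exactly a configuration in $\mathcal{G}_1$ of the type treated in the proof for $\mathcal{I}_1$. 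After the same reduction to neighbouring pairs as there, for two $2$-simplices of $g$ meeting at $v$ I may apply Lemma \ref{lemtri} in the first coordinate (distinguishing $d(a_1,a_2) = 1$ and $d(a_1,a_2) = 2$) to obtain a vertex $p'$ with $h(p') < M$ lying at distance $1$ from the relevant neighbours, thereby replacing $p$ by $p'$ while the second coordinate stays fixed.

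The delicate point is to lift this edge-level replacement to the level of $2$-chains, and this is where I expect the main obstacle to lie. I would use the tetrahedral relations $\delta_2 \circ \delta_3 = 0$, precisely of the shape exploited in the triangulation propositions above: adjoining a fourth vertex to the two offending $2$-simplices rewrites their difference as a closed $2$-chain whose first-coordinate support is the $\Gamma$-translate $\{p,p',a_1,a_2\}$ of $B_0$ and whose second-coordinate support sits inside $\gamma_2\{i\infty,0\} \subset \gamma_2 B_0$, hence inside a single $\Gamma^2$-translate of $B_0^2$, plus terms whose first coordinate has strictly smaller height. The transverse faces $\varphi_j^g(\cdot)$ created along the way must be checked to vanish in $H_2(P_2)^0$, so that the whole modification genuinely belongs to $\Z[\Gamma^2]\cdot C_2^0(B_0^2)$; this bookkeeping in the quotient by $H^0 + V^0 + D^0$ is the real content. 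Iterating lowers the first-coordinate heights to $1$ without ever touching the second coordinate, and the identical argument applied to the second coordinate then removes the remaining heights, so the double induction on $(M,\nu)$ terminates with the support contained in $B_0^2$, which yields $\mathcal{I}_2 = \Z[\Gamma^2]\cdot C_2^0(B_0^2)$.
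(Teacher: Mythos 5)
Your proposal follows essentially the same route as the paper: a descent on the height, treated one coordinate at a time via the projections onto $\mathcal{G}_1$, reducing the support of a closed $2$-chain to $B_0^2$. The paper phrases this more compactly through the bi-filtration $H_2(P_2)^0_{(M_1,M_2)}$ and $(\Gamma,1)$-translation, while you spell out the descent step explicitly (Lemme \ref{lemtri}, relations t�tra�driques $\delta_2\circ\delta_3=0$), but the underlying argument is the same.
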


\begin{proof}
Comme $H_2(P_2^2)^0$ est le sous-groupe des chaînes transverses alors on va traiter une coordonnée puis l'autre par projection. Définissons pour tout couple d'entiers $M_1,M_2>0$:
\begin{equation*}
H_2(P_2)^0_{(M_1,M_2)}=\left\{C\in H_2(P_2)^0\text{ tel que }Supp(C)\subset (h\times h)^{-1}([0,M_1]\times[0,M_2])\right\}.
\end{equation*}
Donc les relations vérifiées par $H_2(P_2)^0_{(M_1,M_2)}$ sont les translatés par $(\Gamma,1)\subset\Gamma^2$, agissant sur la première coordonnées, de celles vérifiées par $H_2(P_2)^0_{(1,M_2)}$. Puis le même raisonnement sur la seconde coordonnée réduit l'étude de $H_2(P_2)^0_{(1,M_2)}$ à celle de $H_2(P_2)^0_{(1,1)}$. Ce dernier correspond bien à $C_2^0(B_0^2)$.
\end{proof}

\subsubsection{Générateurs de $\mathcal{I}_2$}

\begin{thm}
L'idéal annulateur dans $\Z[\Gamma^2]$ de $\partial T_2$ est:
\begin{multline*}
\mathcal{I}_2=\Big((1+S,1+S),(1+U+U^2,1)((1,1)+(S,S)),(1,1+U+U^2)((1,1)+(S,S)),\\
(1+U+U^2,1+U+U^2),(S,S)+(S,US)+(US,US)+(1,U)-(U^2,U^2)\Big)
\end{multline*}
\end{thm}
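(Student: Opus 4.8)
Le but est d'expliciter une famille finie de générateurs de l'idéal à gauche $\mathcal{I}_2=\Ker(\Theta_2)$. La proposition précédente réduit le problème à la détermination de $C_2^0(B_0^2)$, c'est-à-dire aux $2$-chaînes fermées à support dans $B_0^2=\{i\infty,0,1,-1\}^2$, puisque $\mathcal{I}_2=\Z[\Gamma^2]\cdot C_2^0(B_0^2)$. D'abord, j'utiliserais la décomposition $\mathcal{I}_2=I_H\cap I_V\cap I_D$ de la Proposition \ref{inclhvd}, combinée aux systèmes de générateurs déjà calculés pour $I_H$, $I_V$ et $I_D$. Ces trois idéaux sont de type fini et engendrés respectivement par les relations de Manin agissant sur chaque coordonnée plus les relations de stabilité des pointes. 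L'intersection de ces trois idéaux donnera les générateurs annoncés.

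**Vérification de l'appartenance et de l'engendrement.**
Ensuite, je vérifierais que chacun des cinq éléments proposés appartient effectivement à $\mathcal{I}_2$ en calculant directement leur image par $\Theta_2$, c'est-à-dire en contrôlant que leur action sur $\partial T_2$ est nulle dans $H_2(P_2)^0$. Les trois premiers éléments s'interprètent géométriquement: $(1+S,1+S)$ referme un contour le long des deux relations $S$ sur chaque facteur, tandis que $(1+U+U^2,1)((1,1)+(S,S))$ et $(1,1+U+U^2)((1,1)+(S,S))$ combinent une relation de Manin sur un facteur avec la symétrie diagonale $(1,1)+(S,S)$. Le quatrième, $(1+U+U^2,1+U+U^2)$, est la relation $U$ diagonale sur les deux coordonnées. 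Le dernier terme $(S,S)+(S,US)+(US,US)+(1,U)-(U^2,U^2)$ est le plus subtil: il encode le recollement triangulaire du Lemme \ref{lemtri} appliqué simultanément dans les deux facteurs et provient essentiellement des termes supplémentaires apparaissant lors des conjugaisons de $I_D$.

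**L'obstacle principal.**
Le point difficile sera de démontrer que cette famille \emph{engendre} bien tout $\mathcal{I}_2$, et non seulement qu'elle est contenue dedans. Pour cela je procéderais par une double descente sur la hauteur $h$ définie dans la section précédente, en exploitant que les projections sur chaque coordonnée respectent la structure du graphe $\mathcal{G}_1$. Concrètement, il faut montrer que tout $2$-cycle fermé à support dans $B_0^2$ se décompose comme combinaison $\Z[\Gamma^2]$-linéaire de ces cinq générateurs; ceci revient à un calcul combinatoire fini sur les triangles orientés du graphe $\mathcal{G}_2$ restreint à $B_0^2$, en utilisant les deux schémas de recouvrement de $\{i\infty,0,1\}^2$ déjà dessinés. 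La difficulté technique vient de ce que les relations de Manin $1+U+U^2$ font intervenir la pointe $1\notin\{i\infty,0\}$, ce qui oblige à tenir compte des recollements diagonaux $D_g$ dont les paramètres $g$ varient; c'est précisément ce phénomène qui force l'apparition du cinquième générateur non symétrique. Une fois cette décomposition établie, l'égalité résulte de l'inclusion dans les deux sens.
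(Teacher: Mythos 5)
Votre proposition est correcte et suit pour l'essentiel la même démarche que le texte : on vérifie d'abord que chacun des cinq éléments appartient à $I_H\cap I_V\cap I_D$ (le texte le fait par décompositions explicites dans chacun des trois idéaux), puis on utilise la réduction $\mathcal{I}_2=\Z[\Gamma^2]\cdot C_2^0(B_0^2)$ pour ramener l'engendrement à un calcul combinatoire fini sur les triangles à sommets dans $B_0^2$. Seule nuance : la descente sur la hauteur est déjà acquise par la proposition précédente, et le texte conclut l'étape finale par une énumération exhaustive (à la main, confirmée par ordinateur) des $18$ triangles obtenus plutôt que par un nouvel argument de descente.
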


Le schéma précédent permet d'observer ces annulations des images de $\partial T_2$ dans le graphe $\mathcal{G}_2$. Il nous permet de décerner les décompositions utiles pour la démonstration. En effet, les recouvrements s'annulent si les segments horizontaux, verticaux et diagonaux s'annulent entre-eux respectivement. Ces annulations se traduisent par l'appartenance aux idéaux $I_H$, $I_V$ et $I_D$ respectivement.

\begin{proof}
On commence par démontrer que chacun des générateurs appartient bien à $I_H\cap I_D\cap I_V$.
Pour cela, on donne une écriture explicite:
\begin{align*}
(1+S,1+S)&=(1,1+S)(1+S,1)\in I_H\\
&=(1+S,1)(1,1+S)\in I_V\\
&=(1,1+S)[(1,1)+(S,S)]\in I_D 
\end{align*}
\begin{align*}
\text{et }(1+U+U^2,1+U+U^2)&=(1,1+U+U^2)(1+U+U^2,1)\in I_H\\
&=(1+U+U^2,1)(1,1+U+U^2)\in I_V\\
&=(1,1+U+U^2)[(1,1)+(U,U)+(U^2,U^2)]\in I_D.
\end{align*}

Les deux suivants sont dans $I_D$ comme multiple de $(1,1)+(S,S)$ et on a:
\begin{align*}
(1,1+U+U^2)&[(1,1)+(S,S)]=(1,S+US+U^2S)(1+S,1)+(1,1+U+U^2)(1,1-U^2S)\in I_H\\
&=(1,1+U+U^2)+(S,1)(1,1+U+U^2)(1,1+S)-(S,1)(1,1+U+U^2)\in I_V
\end{align*}
et
\begin{align*}
(1+U+U^2,1)&[(1,1)+(S,S)]=(S+US+U^2S,1)(1,1+S)+(1+U+U^2,1)(1,1-US)\in I_V\\
&=(1+U+U^2,1)+(1,S)(1+U+U^2,1)(1+S,1)-(1,S)(1+U+U^2,1)\in I_H.
\end{align*}
Finalement, on décompose $(S,S)+(S,US)+(US,US)+(1,U)-(U^2,U^2)$ en somme de deux éléments de $I_H$ en le découpant selon: 
\begin{align*}
(S,S)+(1,U)&=(1+S,1)+(1,U)(1,1-U^2S)\in I_H,\\
(S,US)+(US,US)-(U^2,U^2)&=(1+U,US)(1+S,1)\\
&-(1,US)(1+U+U^2,1)-(U^2,U^2)(1,1-U^2S)\in I_H.
\end{align*}
C'est aussi une somme d'éléments de $I_V$:
\begin{align*}
(US,US)+(1,U)&=(1,1+S)+(1,U)(1-US,1)\in I_V,\\
(S,S)+(S,US)-(U^2,U^2)&=(S,1+U)(1,1+S)-(S,1)(1,1+U+U^2)-(U^2,U^2)(1-US,1)\in I_V.
\end{align*}
et enfin une somme d'éléments de $I_D$:
\begin{align*}
(S,US)+(1,U)&=(1,U)[(1,1)+(S,S)]\in I_D,\\
(S,S)+(US,US)-(U^2,U^2)&=[(1,1)+(U,U)][(1,1)+(S,S)]-[(1,1)+(U,U)+(U^2,U^2)]\in I_D.
\end{align*}
Or on a réduit $\mathcal{I}_2$ à l'étude des relations vérifiées par les triangles à coordonnées parmi $\{0,i\infty,1,-1\}$. Une étude exhaustive à la main des combinaisons des $18$ triangles obtenus donne bien l'égalité des idéaux. Cette étude a été confirmée par ordinateur par un calcul exhaustif des combinaisons possibles.
\end{proof}

\begin{coro}
Le polynôme des bipériodes vérifie les équations:
\begin{align*}
P_{f_1,f_2}|&_{(1+S,1+S)}=P_{f_1,f_2}|_{[(1,1)+(S,S)](1+U+U^2,1)}=P_{f_1,f_2}|_{[(1,1)+(S,S)](1,1+U+U^2)}=0\\
P_{f_1,f_2}|&_{(1+U+U^2,1+U+U^2)}=P_{f_1,f_2}|_{(S,S)+(S,SU^2)+(SU^2,SU^2)+(1,U^2)-(U,U)}=0.
\end{align*}
\end{coro}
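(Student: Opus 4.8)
Le plan est de d�duire ce corollaire de mani�re purement formelle du th�or�me pr�c�dent, qui fournit la liste explicite des cinq g�n�rateurs de $\mathcal{I}_2$, combin� avec l'inclusion $\mathcal{I}_2\subset\widetilde{\mathcal{J}(k_1,k_2)}$ �tablie � la Proposition \ref{incljkk}. D'abord je remarque que l'antiautomorphisme $\widetilde{(\cdot)}:(\gamma_1,\gamma_2)\mapsto(\gamma_1^{-1},\gamma_2^{-1})$ est une involution de $\Z[\Gamma^2]$. En l'appliquant � l'inclusion de la Proposition \ref{incljkk}, j'obtiens
\[
\widetilde{\mathcal{I}_2}\subset\widetilde{\widetilde{\mathcal{J}(k_1,k_2)}}=\mathcal{J}(k_1,k_2).
\]
Par d�finition m�me de $\mathcal{J}(k_1,k_2)$ comme id�al annulateur de $\Per_{k_1,k_2}^{\Q}$, tout �l�ment $h\in\widetilde{\mathcal{I}_2}$ v�rifiera $P_{f_1,f_2}|_h=0$ pour tout $(f_1,f_2)\in S_{k_1}^{\Q}\times S_{k_2}^{\Q}$, donc par extension des scalaires � $\C$ pour tout $(f_1,f_2)\in S_{k_1}\times S_{k_2}$.

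Ensuite, puisque $\widetilde{(\cdot)}$ est bijective, l'id�al � droite $\widetilde{\mathcal{I}_2}$ est engendr� par les images des cinq g�n�rateurs de $\mathcal{I}_2$ fournis par le th�or�me. Il restera donc uniquement � calculer ces images � l'aide des identit�s $\widetilde{S}=S$, $\widetilde{U}=U^2$, $\widetilde{U^2}=U$ dans $\Gamma$, et de la r�gle $\widetilde{ab}=\widetilde{b}\,\widetilde{a}$ qui renverse l'ordre des produits. Les �l�ments $(1+S,1+S)$ et $(1+U+U^2,1+U+U^2)$ sont invariants sous $\widetilde{(\cdot)}$; pour les deux g�n�rateurs suivants, le renversement des produits donnera
\[
\widetilde{(1+U+U^2,1)\big((1,1)+(S,S)\big)}=\big((1,1)+(S,S)\big)(1+U+U^2,1),
\]
et de m�me pour le troisi�me; enfin le dernier g�n�rateur se transformera en
\[
(S,S)+(S,SU^2)+(SU^2,SU^2)+(1,U^2)-(U,U).
\]
Je v�rifierai alors que ces cinq images co�ncident terme � terme avec les cinq �l�ments figurant dans les �quations de l'�nonc�, ce qui conclura. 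On notera que ce corollaire ach�ve ainsi la justification des relations de type lin�aire (\ref{intral1}) pos�es sans preuve en t�te du chapitre et d�finissant $W_{k_1,k_2}$.

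La seule difficult�, strictement calculatoire, sera de suivre correctement l'involution $\widetilde{(\cdot)}$: il faut �tre attentif � la fois au renversement de l'ordre des produits et � l'inversion de chaque coordonn�e, transformation qui change par exemple $US$ en $SU^2$ et $U^2$ en $U$. Je n'attends aucun obstacle conceptuel, le r�sultat n'�tant qu'une cons�quence de la dualit� entre les actions de $\Z[\Gamma^2]$ sur $V_{k_1,k_2}$ et sur $M_2^{pte}(\H^2,\Z)$ �tablie � la Proposition \ref{prop37}, appliqu�e aux g�n�rateurs explicites de $\mathcal{I}_2$.
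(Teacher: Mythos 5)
Votre preuve est correcte et suit exactement la d�duction que le texte sous-entend : l'involution $\widetilde{(\cdot)}$ appliqu�e � l'inclusion $\mathcal{I}_2\subset\widetilde{\mathcal{J}(k_1,k_2)}$ de la Proposition \ref{incljkk} donne $\widetilde{\mathcal{I}_2}\subset\mathcal{J}(k_1,k_2)$, et les images des cinq g�n�rateurs du th�or�me sous $\widetilde{S}=S$, $\widetilde{U}=U^2$, $\widetilde{US}=SU^2$ et le renversement des produits sont bien les cinq �l�ments de l'�nonc�. Rien � redire.
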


\begin{rem}
Le calcul de $\mathcal{I}_2^{(\epsilon_1,\epsilon_2)}$ est dû à la conjugaison de $\mathcal{I}_2=I_H\cap I_D\cap I_V$ par $\varepsilon$. Or $I_H$ et $I_V$ sont stables par cette conjugaison. Donc il résulte que :
$$\mathcal{I}_2^{(\epsilon_1,\epsilon_2)}=I_H\cap I_D^{(\epsilon_1,\epsilon_2)}\cap I_V.$$
Ainsi $\mathcal{I}_2^{(+,+)}=\mathcal{I}_2^{(-,-)}=\mathcal{I}_2$ et :
\begin{multline*}
\mathcal{I}_2^{(+,-)}=\mathcal{I}_2^{(-,+)}=\Big((1+S,1+S),(1+U+U^2,1)((1,1)+(S,S)),(1,1+U+U^2)((1,1)+(S,S)),\\
(1+U+U^2,1+V+V^2),(S,S)+(S,VS)+(US,VS)+(1,V)-(U^2,V^2)\Big).
\end{multline*}
Ceci permet de considérer les parties paires et impaires globales des polynômes des bipériodes définies comme:
\begin{align}
P^+(X_1,X_2)&=1/2\left(P(X_1,X_2)+P(-X_1,-X_2)\right)=\sum_{m_1+m_2\text{ pair}}A_{m_1,m_2}X_1^{m_1}X_2^{m_2},\\
P^-(X_1,X_2)&=1/2\left(P(X_1,X_2)-P(-X_1,-X_2)\right)=\sum_{m_1+m_2\text{ impair}}A_{m_1,m_2}X_1^{m_1}X_2^{m_2}.
\end{align}
Si $P\in V_{k_1,k_2}^{\Z}[\mathcal{I}_2]$ alors $P^+$ et $P^-$ sont aussi des éléments de $V_{k_1,k_2}^{\Z}[\mathcal{I}_2]$.
\end{rem}

\section{Contrôle de l'espace des bipériodes}

\subsection{Décomposition de $W_k$ (Rappel)}

Dans le cas classique, on sait que les polynômes des périodes vérifient les relations de Manin et ainsi appartiennent à:
$W_k=\{P\in V_k;P|_{1+S}=P|_{1+U+U^2}=0\}$.
Il devient alors naturel de définir les applications linéaires:
\begin{align*}
R_{k}:S_{k} &\to W_k,\quad
f\mapsto P_{f}(X),\\
R^{\pm}_{k}:S_{k} &\to W_k^{\pm},\quad
f\mapsto P^{\pm}_{f}(X).
\end{align*}
Posons $\Per_k=\Im R_k$ et $\Per_k^{\pm}=\Im R_k^{\pm}$. Le Théorème de Eichler-Shimura donne l'injectivité de ces trois applications et leur image est donnée par:
\begin{align}
W_k&=\Per_k\oplus \overline{\Per_k} \oplus \C P_{G_k}^{+},\\
W_k^+&=\Per_k^+\oplus \C P_{G_k}^{+}\text{ et }W_k^-=\Per_k^-.
\end{align}

Dans la suite, on préfèrera l'écriture suivante qui permet une généralisation plus simple:
\begin{equation}
P_f^{\pm}(X)=P_f(X)\pm P_f(-X)=\int_{\tau_1}\omega_f \pm \int_{\tau_1} \overline{\omega_f},\text{ pour tout }f\in S_k^{\R}.
\end{equation}
Définissons $\Omega_k^+\subset\left(\Omega^1_{par}(\H,\C)\otimes_{\C} V_k\right)^{\Gamma}$ le $\R$-espace vectoriel des formes différentielles holomorphes sur $\H$, nulles en $i\infty$ à valeurs dans $V_k$ et stable par l'action de $\Gamma$ et à coefficients de Fourier réels.
L'espace $\Omega_k^+$ est un isomorphe à $S_k^{\R}$ via l'application:
\begin{equation}
f\mapsto \omega_f(z,X)=f(z)(X-z)^{k-2}dz.
\end{equation}
L'action de la conjugaison sur $\Omega_k^+$ est donnée par la formule:
\begin{equation}
\overline{\omega_f(z,X)}=\omega_f(-\bar{z},-X).
\end{equation}
Cette propriété en faite équivalente à la condition $\{a_n(f);n>0\}\subset\R$. En effet, on a pour tout $f\in S_k$:
\begin{align*}
\overline{\omega_f(z,X)}&=\sum_{n>0} \overline{a_n(f)} exp(-2i\pi n\bar{z})(X-\bar{z})^{k-2}(-d\bar{z}),\\
\text{et }\omega_f(-\bar{z},-X)&=\sum_{n>0} a_n(f) exp(-2i\pi n\bar{z})(-X+\bar{z})^{k-2}(-d\bar{z}).
\end{align*}
Le conjugué complexe $\Omega_k^-=\overline{\Omega_k^+}$ est donc le $\R$-espace vectoriel des formes différentielles antiholomorphes sur $\H$, nulles en $i\infty$ à valeurs dans $V_k$ et stable par l'action de $\Gamma$ et à coefficients de Fourier réels.

Posons plus généralement, pour toute famille de poids $k_1,...,k_n\geq 2$ et de signes $\epsilon_1,...,\epsilon_n\in\{\pm 1\}^n$:
\begin{equation}
\Omega_{k_1,...,k_n}^{\epsilon_1,...,\epsilon_n}=\otimes_{j=1}^n\Omega_{k_n}^{\epsilon_n}\subset \Omega^n_{par}(\H^n,\C)\otimes_{\C[\Gamma^n]} V_{k_1,...,k_n}.
\end{equation}

La chaine $\tau_1$ est stable par l'involution de $\H$, $z\mapsto -\bar{z}$, donc pour tout $\gamma\in \Gamma$,
\begin{equation}
\langle\overline{\omega_f},\gamma. \tau_1\rangle=\langle \omega_f,(\varepsilon\gamma\varepsilon). \tau_1\rangle.
\end{equation}

Ceci nous permet de considérer les applications:
\begin{equation}
\Pi^{\epsilon}_k:\Omega_k^{\epsilon}\to V_k[\mathcal{I}_1^{\epsilon}],\quad
\omega\mapsto \int_{\tau_1}\omega.
\end{equation}
Et on a notamment $R_k^{\pm}(f)=\Pi_k^+(\omega_f)\pm\Pi_k^-(\overline{\omega_f})$ pour tout $f\in S_k^{\R}$. Ce système est clairement inversible et on obtient:
\begin{equation*}
\Pi_k^+(\omega_f)=1/2(R^+_k(f)+R^-_k(f))\text{ et }\Pi_k^-(\overline{\omega_f})=1/2(R^+_k(f)-R^-_k(f)).
\end{equation*}
Or on a démontré que $\mathcal{I}_1^+=\mathcal{I}_1^-$ et donc $V_k[\mathcal{I}_1^{\pm}]=W_k$.\par

\subsection{Décomposition de $W_{k_1,k_2}$}

Soit $(\epsilon_1,\epsilon_2)\in\{\pm 1\}^2$. Posons:
\begin{equation}
V_{k_1,k_2}^{\Q}[\mathcal{I}_2^{\epsilon_1,\epsilon_2}]=\{P\in V_{k_1,k_2}^{\Q}\text{ tel que }P|_{\gamma}=0\text{ pour } \gamma\in\widetilde{\mathcal{I}_2^{\epsilon_1,\epsilon_2}}\}.
\end{equation}
Nous avons ici adapté la notation d'annulateur à un idéal à gauche.\par
Définissons alors les applications:
\begin{equation}
\Pi_{k_1,k_2}^{\epsilon_1,\epsilon_2}:\Omega_{k_1,k_2}^{\epsilon_1,\epsilon_2}\to V_{k_1,k_2}[\mathcal{I}_2^{\epsilon_1,\epsilon_2}],\quad
\omega_1\wedge\omega_2\mapsto \int_{\tau_2}\omega_1(z_1,X_1)\wedge\omega_2(z_2,X_2).
\end{equation}
Comme $\tau_2$ est stable par les involutions $z_j\mapsto -\bar{z_j}$, pour $j=1,2$. Alors l'application,
$$P(X_1,X_2)\mapsto P(\epsilon_1X_1,\epsilon_2X_2),$$
est une involution respectant les espaces $V_{k_1,k_2}[\mathcal{I}_2^{\pm,\pm}]\to V_{k_1,k_2}[\mathcal{I}_2^{\pm\epsilon_1,\pm\epsilon_2}]$.\par
On rappel que:
$$\mathcal{I}_2^{+,+}=\mathcal{I}_2^{-,-}\text{ et }\mathcal{I}_2^{+,-}=\mathcal{I}_2^{-,+}.$$
Cette stabilité par conjugaison double permet de définir les espaces:
\begin{align}
W_{k_1,k_2}^{+,\Q}&=\{P(X_1,X_2)+ P(-X_1,-X_2);P\in V_{k_1,k_2}^{\Q}[\mathcal{I}_2]\}=V_{k_1,k_2}^{+,\Q}[\mathcal{I}_2]\subset V_{k_1,k_2}^{+,\Q},\\
\text{et }W_{k_1,k_2}^{-,\Q}&=\{P(X_1,X_2)-P(-X_1,-X_2);P\in V_{k_1,k_2}^{\Q}[\mathcal{I}_2]\}=V_{k_1,k_2}^{-,\Q}[\mathcal{I}_2]\subset V_{k_1,k_2}^{-,\Q}.
\end{align}
Puis de définir les applications indexées par un signe:
\begin{align}
&R_{k_1,k_2}^{\pm} : S_{k_1}^{\R} \otimes S_{k_2}^{\R} \to W_{k_1,k_2}^{\pm},\nonumber\\
&f_1\otimes f_2\mapsto 1/2\left(\Pi_{k_1,k_2}^{+,+}(\omega_{f_1}\wedge\omega_{f_2})\pm\Pi_{k_1,k_2}^{-,-}(\overline{\omega_{f_1}}\wedge\overline{\omega_{f_2}})\right).
\end{align}
On notera respectivement $Per^{+,\R}_{k_1,k_2}$ et $Per^{-,\R}_{k_1,k_2}$ les images de $R_{k_1,k_2}^{+}$ et $R_{k_1,k_2}^{-}$.

\begin{prop}Soient $(\epsilon_1,\epsilon_2)\in\{\pm 1\}^2$, $\omega_1\in\Omega_{k_1}^{\epsilon_1}$ et $\omega_2\in\Omega_{k_2}^{\epsilon_2}$. Soit $(f_1,f_2)\in S_{k_1}^{\R}\times S_{k_2}^{\R}$.\\
1) On dispose des formules liant les différentes applications:
\begin{align}
\Pi_{k_1,k_2}^{\epsilon_1,\epsilon_2}(\omega_1\wedge\omega_2)(X_1,X_2)|_{(1,1)+(S,S)}&=\Pi_{k_1}^{\epsilon_1}(\omega_1)(X_1)\Pi_{k_2}^{\epsilon_2}(\omega_2)(X_2),\label{pi2SS}\\
R_{k_1,k_2}^+(f_1\otimes f_2)(X_1,X_2)|_{(1,1)+(S,S)}&=R_{k_1}^+(f_1)(X_1)R_{k_2}^+(f_2)(X_2)+R_{k_1}^-(f_1)(X_1)R_{k_2}^-(f_2)(X_2),\\
R_{k_1,k_2}^-(f_1\otimes f_2)(X_1,X_2)|_{(1,1)+(S,S)}&=R_{k_1}^+(f_1)(X_1)R_{k_2}^-(f_2)(X_2)+R_{k_1}^-(f_1)(X_1)R_{k_2}^+(f_2)(X_2).
\end{align}
2) Les applications $\Pi_{k_1}^{\varepsilon_1}\otimes\Pi_{k_2}^{\varepsilon_2}$ et $\Pi_{k_1,k_2}^{\varepsilon_1,\varepsilon_2}$ sont injectives.\\
3) Les applications $R^+_{k_1,k_2}$ et $R_{k_1,k_2}^-$ sont injectives.
\end{prop}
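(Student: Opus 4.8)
The plan is to prove the central identity (\ref{pi2SS}) first, since the two $R^{\pm}$-formulas of part 1) are formal consequences of it and parts 2) and 3) reduce to it together with the $n=1$ Eichler--Shimura injectivity. Using the duality of the actions of $\Z[\Gamma^2]$ on $V_{k_1,k_2}$ and on $M_2^{pte}(\H^2,\Z)$ (Proposition \ref{prop37}) and the fact that $S^{-1}=S$ in $\Gamma$, I would rewrite
\[
\Pi_{k_1,k_2}^{\epsilon_1,\epsilon_2}(\omega_1\wedge\omega_2)(X_1,X_2)|_{(1,1)+(S,S)}=\int_{[(1,1)+(S,S)]\tau_2}\omega_1(z_1,X_1)\wedge\omega_2(z_2,X_2).
\]
The support of $\tau_2$ is the triangle $\{0<t_1<t_2\}$ in the $(it_1,it_2)$-plane, and $(S,S)$ sends $(it_1,it_2)$ to $(i/t_1,i/t_2)$, so $(S,S)\tau_2$ covers $\{0<t_2<t_1\}$; transporting the integral over $(S,S)\tau_2$ back by the $\Gamma^2$-invariance built into $\Omega_{k_1}^{\epsilon_1}\otimes\Omega_{k_2}^{\epsilon_2}$, the chain $[(1,1)+(S,S)]\tau_2$ becomes the product $\tau_1\times\tau_1$ over the full quadrant $\{0<t_1,\,0<t_2\}$. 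On a product chain the integral of a wedge factorizes, yielding $\left(\int_{\tau_1}\omega_1\right)\left(\int_{\tau_1}\omega_2\right)=\Pi_{k_1}^{\epsilon_1}(\omega_1)\,\Pi_{k_2}^{\epsilon_2}(\omega_2)$. This is precisely the shuffle relation (\ref{relSS}) rewritten for the $\Pi$-maps, and since the argument only uses the geometry of the chains and not the holomorphy type of $\omega_1,\omega_2$, it applies uniformly to all four sign choices $(\epsilon_1,\epsilon_2)$.

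The two $R^{\pm}$-formulas then follow by substitution. By definition $R_{k_1,k_2}^{\pm}(f_1\otimes f_2)=\tfrac12\big(\Pi_{k_1,k_2}^{+,+}(\omega_{f_1}\wedge\omega_{f_2})\pm\Pi_{k_1,k_2}^{-,-}(\overline{\omega_{f_1}}\wedge\overline{\omega_{f_2}})\big)$; applying $|_{(1,1)+(S,S)}$ and (\ref{pi2SS}) to each term turns it into a sum of products $\Pi_{k_1}^{\pm}(\cdot)\,\Pi_{k_2}^{\pm}(\cdot)$, and inserting the inversion relations $\Pi_k^{+}(\omega_f)=\tfrac12(R_k^{+}(f)+R_k^{-}(f))$ and $\Pi_k^{-}(\overline{\omega_f})=\tfrac12(R_k^{+}(f)-R_k^{-}(f))$ and expanding leaves exactly the announced even--even/odd--odd combination for $R^{+}$ and even--odd/odd--even combination for $R^{-}$; the numerical constants I would check by direct substitution during the write-up.

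For part 2), each $\Pi_k^{\epsilon}$ is injective: $\Pi_k^{+}(\omega_f)(X)=P_f(X)$ and $\Pi_k^{-}(\overline{\omega_f})(X)=P_f(-X)$, and $f\mapsto P_f$ is injective by Eichler--Shimura (the Corollaire of Section 1.4). Because a tensor product of injective $\C$-linear maps is injective, $\Pi_{k_1}^{\epsilon_1}\otimes\Pi_{k_2}^{\epsilon_2}$ is injective. For $\Pi_{k_1,k_2}^{\epsilon_1,\epsilon_2}$, I observe that under the identification $V_{k_1}\otimes V_{k_2}\cong V_{k_1,k_2}$ the identity (\ref{pi2SS}), extended bilinearly to all of $\Omega_{k_1}^{\epsilon_1}\otimes\Omega_{k_2}^{\epsilon_2}$, says exactly that $\Phi_{S}\circ\Pi_{k_1,k_2}^{\epsilon_1,\epsilon_2}=\Pi_{k_1}^{\epsilon_1}\otimes\Pi_{k_2}^{\epsilon_2}$, where $\Phi_{S}:P\mapsto P|_{(1,1)+(S,S)}$. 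As the right-hand side is injective, so is $\Pi_{k_1,k_2}^{\epsilon_1,\epsilon_2}$.

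Part 3) follows the same template. If $R_{k_1,k_2}^{+}(\xi)=0$ for $\xi=\sum_i f_{1,i}\otimes f_{2,i}$, then applying $\Phi_{S}$ and the first $R^{+}$-formula gives $(R_{k_1}^{+}\otimes R_{k_2}^{+})(\xi)+(R_{k_1}^{-}\otimes R_{k_2}^{-})(\xi)=0$; since $R_k^{+}$ is even and $R_k^{-}$ odd in its variable, the two summands lie in the complementary even--even and odd--odd parity subspaces of $V_{k_1,k_2}$, so each vanishes, and as $R_{k_1}^{-}\otimes R_{k_2}^{-}$ is a tensor product of the injective maps $f\mapsto P_f^{-}$ it is injective, whence $\xi=0$; the case of $R_{k_1,k_2}^{-}$ is identical using the injectivity of $R_{k_1}^{-}\otimes R_{k_2}^{+}$. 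The main obstacle is the geometric heart of part 1), namely the rigorous identification of $[(1,1)+(S,S)]\tau_2$ with the product chain $\tau_1\times\tau_1$: one must track orientations, justify convergence near $\partial\H^2$ (ensured because the forms lie in $\Omega_{par}^2$), and verify that transporting the $(S,S)$-part via the invariance of $\Omega_{k_j}^{\epsilon_j}$ recovers precisely the missing half-quadrant. Once (\ref{pi2SS}) is secured, everything else is formal linear algebra over the $n=1$ Eichler--Shimura injectivity.
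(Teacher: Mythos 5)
Your proposal is correct and follows essentially the same route as the paper: formula (\ref{pi2SS}) is reduced to the chain identity $[(1,1)+(S,S)]\tau_2=\tau_1\times\tau_1$, the two $R^{\pm}$-identities follow by unwinding the definitions, and the injectivity statements in 2) and 3) are deduced from the $n=1$ Eichler--Shimura injectivity via the kernel formula for a tensor product of linear maps together with the factorization through $P\mapsto P|_{(1,1)+(S,S)}$. Your extra detail on the geometry of the chains and on the parity separation in part 3) only makes explicit what the paper leaves implicit.
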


\begin{proof}
La formule (\ref{pi2SS}) repose sur l'identité: $[(1,1)+(S,S)]\tau_2=\tau_1\times\tau_1$. De plus, $S$ et $\varepsilon$ commutent dans $PSL_2(\Z)$ donc $(1,1)+(S,S)$ envoie bien les différents espaces $V_{k_1,k_2}[\mathcal{I}_2^{\epsilon_1,\epsilon_2}]$ dans $V_{k_1}[\mathcal{I}_1^{\epsilon_1}]\otimes V_{k_2}[\mathcal{I}_1^{\epsilon_2}]$. Cette formule induit les deux autres par récriture des définitions.\par
L'injectivité de $\Pi_{k_1}^{\epsilon_1}\otimes\Pi_{k_2}^{\epsilon_2}$ se déduit de celle de $\Pi_{k_1}^{\varepsilon_1}$ et $\Pi_{k_2}^{\varepsilon_2}$, due au Théorème d'Eichler-Shimura. En effet, pour deux applications linéaires $f_1:E_1\to F_1$ et $f_2:E_2\to F_2$, on a: 
$$\Ker(f_1\otimes f_2)=\Ker(f_1)\otimes E_2 + E_1\otimes \Ker(f_2).$$
On en déduit ensuite l'injectivité de $\Pi_{k_1,k_2}^{\varepsilon_1,\varepsilon_2}$ et de $R^{\pm}_{k_1,k_2}$ d'après les formules du $1)$.
\end{proof}

Les résultats de cette proposition sont encore valides sur $\C$ car on a $S_{k_1}\times S_{k_2}=\left(S_{k_1}^{\R}\times S_{k_2}^{\R}\right)\otimes_{\R}\C$.

Les équations données par $\mathcal{I}_2$ permettent de définir le sous-groupe de $V_{k_1,k_2}^{\Z}$ :
\begin{equation}
W_{k_1,k_2}^{\Z}=\left\{P\in V_{k_1,k_2}^{\Z}\text{ tel que }P|_{g}=0,\text{ pour tout }g\in \widetilde{\mathcal{I}_2}\right\},
\end{equation}
qui se scinde à nouveau en parties paire et impaire.\par
L'image de $W_{k_1,k_2}^{\Z}$ par l'application $V_{k_1,k_2}^{\Z}\to V_{k_1,k_2}^{\Z}, P\mapsto P|_{(1,1)+(S,S)}$ est contenue dans:
\begin{equation}
V_{k_1}^{\Z}[\mathcal{I}_1]\otimes V_{k_2}^{\Z}[\mathcal{I}_1]=V_{k_1,k_2}^{\Z}\big[(1,1+S),(1,1+U+U^2),(1+S,1),(1+U+U^2,1)\big].\label{corowtw}
\end{equation}\par

De même, pour les idéaux $I_H,I_V$ et $I_D$ définissant $\mathcal{I}_2$ comme leurs intersections, on peut définir les sous-groupes de $V_{k_1,k_2}^{\Z}$ par:
\begin{align}
V_{k_1,k_2}^{\Z}[I_H]&=\{P\in V_{k_1,k_2}^{\Z};P|_{(1+S,1)}=P|_{(1+U+U^2,1)}=P|_{(1,1-T)}=0\},\\
V_{k_1,k_2}^{\Z}[I_V]&=\{P\in V_{k_1,k_2}^{\Z};P|_{(1,1+S)}=P|_{(1,1+U+U^2)}=P|_{(1-US,1)}=0\},\\
\text{et }V_{k_1,k_2}^{\Z}[I_D]&=\{P\in V_{k_1,k_2}^{\Z};P|_{(1,1)+(S,S)}=P|_{(1,1)+(U,U)+(U^2,U^2)}=0\}.
\end{align}
On peut à nouveau scinder ces sous-groupes suivants: $V_{k_1,k_2}^{\Z}=V_{k_1,k_2}^{+,\Z}\oplus V_{k_1,k_2}^{-,\Z}$ car la conjugaison par $(\varepsilon,\varepsilon)$ laisse stable ces idéaux de $\Z[\Gamma^2]$.

\begin{prop}
On peut décrire explicitement les sous-groupes $V_{k_1,k_2}^{\Z}[I_H]$ et $V_{k_1,k_2}^{\Z}[I_V]$:
\begin{equation}
V_{k_1,k_2}^{\Z}[I_H]=W_{k_1}^{\Z}\otimes 1\text{ et }V_{k_1,k_2}^{\Z}[I_V]=X_1^{k_1-2}\otimes W_{k_2}^{\Z}.
\end{equation}
\end{prop}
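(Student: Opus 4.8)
On veut décrire explicitement les sous-groupes
$$V_{k_1,k_2}^{\Z}[I_H]=W_{k_1}^{\Z}\otimes 1\text{ et }V_{k_1,k_2}^{\Z}[I_V]=X_1^{k_1-2}\otimes W_{k_2}^{\Z}.$$

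Voyons comment je m'y prendrais.

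Le idéal $I_H$ est engendré par $(1+S,1)$, $(1+U+U^2,1)$ et $(1,1-T)$. L'idée est que ces trois générateurs se séparent en deux familles qui agissent chacune sur une seule des deux variables.

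Décortiquons les contraintes une par une.

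D'abord, la condition $P|_{(1,1-T)}=0$ signifie $P(X_1,X_2)=P(X_1,X_2)|_{(1,T)}$.

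Le plan est le suivant. D'abord, je traiterais la contrainte $P|_{(1,1-T)}=0$, qui ne porte que sur la seconde variable $X_2$. Puisque $T=\mat{1}{1}{0}{1}$ agit sur $\C_{k_2-2}[X_2]$ par $X_2\mapsto X_2+1$ (après application de la règle $P(X_2|_{\mat{a}{b}{c}{d}})=P((aX_2+b)/(cX_2+d))(cX_2+d)^{k_2-2}$ avec $c=0,d=1$), la condition $P|_{(1,T)}=P$ force $P$ à être invariant par la translation $X_2\mapsto X_2+1$. Or le seul invariant d'un polynôme de degré $\leq k_2-2$ sous cette translation est la constante en $X_2$ : en effet, si $Q(X_2+1)=Q(X_2)$ pour tout $X_2$, alors $Q$ est constant, car son terme de plus haut degré ne peut se reproduire qu'à degré $0$. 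J'écris donc $P(X_1,X_2)=R(X_1)\cdot 1$, c'est-à-dire $P\in V_{k_1}^{\Z}\otimes 1$.

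Ensuite, sur ce sous-espace $V_{k_1}^{\Z}\otimes 1$, les deux autres générateurs $(1+S,1)$ et $(1+U+U^2,1)$ n'agissent que sur la première variable et se réduisent exactement aux relations de Manin usuelles $R|_{1+S}=R|_{1+U+U^2}=0$. Par définition de $W_{k_1}^{\Z}$, ceci donne précisément $R\in W_{k_1}^{\Z}$, d'où $V_{k_1,k_2}^{\Z}[I_H]=W_{k_1}^{\Z}\otimes 1$. Le calcul pour $I_V$ est entièrement symétrique : le générateur $(1-US,1)$ force cette fois une invariance par $US=\mat{1}{0}{1}{1}$ sur la première variable $X_1$ ; l'action de $US$ sur $X_1^{k_1-2}$ laisse ce monôme fixe (car $X_1^{k_1-2}$ est le vecteur de plus haut poids pour le stabilisateur de la pointe $0$), et plus généralement l'espace des invariants par $US$ dans $\C_{k_1-2}[X_1]$ est la droite $\C\cdot X_1^{k_1-2}$, ce qui ramène $P$ à la forme $X_1^{k_1-2}\otimes S(X_2)$. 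Les deux autres générateurs $(1,1+S)$ et $(1,1+U+U^2)$ imposent alors $S\in W_{k_2}^{\Z}$.

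Le point qui demande un peu de soin n'est pas le principe de séparation mais la vérification que l'espace des invariants par une transformation parabolique (translation par $T$ ou par $US$) dans $V_{k_j}^{\Z}$ est exactement de rang un, et qu'il correspond au bon monôme. Concrètement, le seul obstacle est de bien traduire l'action normalisée $|_{2-k}$ : pour $T$ agissant sur la variable $X_2$ on a littéralement $X_2\mapsto X_2+1$, donc les invariants sont les constantes ; pour $US$ agissant sur $X_1$, la présence du facteur $(cX_1+d)^{k_1-2}=(X_1+1)^{k_1-2}$ change la nature du calcul, et il faut vérifier que l'unique droite fixe est engendrée par $X_1^{k_1-2}$ (ce qu'on voit en examinant le point fixe $X_1=0$ de $US$, ou en diagonalisant l'action unipotente). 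Une fois cette identification faite, tout le reste est une réécriture immédiate des relations de Manin en une variable, que l'on peut invoquer directement via la définition de $W_{k_j}^{\Z}$.
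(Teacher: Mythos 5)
Votre démonstration est correcte et suit essentiellement la même démarche que celle du texte : séparer les générateurs de $I_H$ (resp. $I_V$) selon la variable sur laquelle ils agissent, utiliser $V_{k}^{\Z}[1-T]=\Z$ et $V_k^{\Z}[1-US]=\Z X^{k-2}$ pour réduire un facteur tensoriel à une droite, puis identifier l'autre facteur à $W_{k_j}^{\Z}$ via les relations de Manin en une variable. La seule différence est de présentation (réduction séquentielle chez vous, intersection d'annulateurs calculés séparément dans le texte), sans incidence mathématique.
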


\begin{proof}
Les idéaux $I_H$ et $I_V$ sont engendrés par des éléments de la forme $(\gamma_1,1)$ et $(1,\gamma_2)$ qui commutent entre eux. Or dans ce cas, on a:
$$V_{k_1,k_2}^{\Z}[(I_1,1)+(1,I_2)]=V_{k_1,k_2}^{\Z}[(I_1,1)]\cap V_{k_1,k_2}^{\Z}[(1,I_2)].$$
Le calcul des quatre espaces obtenus se déduit d'une part de $W_k^{\Z}=V_k^{\Z}[(1+S,1+U+U^2)]$:
$$V_{k_1,k_2}^{\Z}\big[((1+S,1),(1+U+U^2,1))\big]=W_{k_1}^{\Z}\otimes V_{k_2}^{\Z},$$
$$V_{k_1,k_2}^{\Z}\big[((1,1+S),(1,1+U+U^2))\big]=V_{k_1}^{\Z}\otimes W_{k_2}^{\Z}.$$
Et d'autre part de $V_{k}^{\Z}[1-T]=\Z$ et $V_k^{\Z}[1-US]=\Z X^{k-2}$ donnant:
$$V_{k_1,k_2}^{\Z}[(1,1-T)]=V_{k_1}^{\Z}\otimes 1,$$
$$V_{k_1,k_2}^{\Z}[(1-US,1)]=X_1^{k_1-2}\otimes V_{k_2}^{\Z}.$$
Les intersections nous donne bien les sous-groupes attendus.
\end{proof}

\begin{thm}[Décomposition de $W_{k_1,k_2}$]
On a la décomposition sur $\C$:
\begin{equation}
W_{k_1,k_2}=\Per_{k_1,k_2}\oplus \overline{\Per_{k_1,k_2}}\oplus E_{k_1,k_2}^{\C},
\end{equation}
où $E_{k_1,k_2}^{\Z}=V_{k_1,k_2}^{\Z}[I_H]+ V_{k_1,k_2}^{\Z}[I_D]+ V_{k_1,k_2}^{\Z}[I_V]$. Ils sont deux à deux disjoints et l'unique relation de dépendance est donnée par:
$$1-X_1^{k_1-2}X_2^{k_2-2}\in V_{k_1,k_2}^{\Z}[I_D]\cap\left(V_{k_1,k_2}^{\Z}[I_H]\oplus V_{k_1,k_2}^{\Z}[I_V]\right).$$
Ceci se spécialise suivant les parités en:
\begin{equation}
W_{k_1,k_2}^{\pm}=\Per_{k_1,k_2}^{\pm}\oplus E_{k_1,k_2}^{\pm,\C}.
\end{equation}
Pour tout $(\epsilon_1,\epsilon_2)\in\{\pm 1\}^2$, on dispose de la suite exacte de $\Z$-modules:
$$0\rightarrow\Z\rightarrow V_{k_1,k_2}^{\Z}[I_H]\times V_{k_1,k_2}^{\Z}[I_D]\times V_{k_1,k_2}^{\Z}[I_V]
\stackrel{\sum}{\longrightarrow} W_{k_1,k_2}^{\Z}
\stackrel{(1,1)+(S,S)}{\longrightarrow} (W_{k_1}^{\Z}/E_{k_1}^{\Z})^{\epsilon_1}\otimes (W_{k_2}^{\Z}/E_{k_2}^{\Z})^{\epsilon_2}\rightarrow 0,$$
où on rappel $E_{k_j}^{\Z}=<1-X_j^{k_j-2}>_{\Z}$ pour $j=1,2$. 
\end{thm}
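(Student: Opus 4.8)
The plan is to route the entire proof through the single operator $\Phi_S\colon P\mapsto P|_{(1,1)+(S,S)}$, whose factorizing effect on biperiods comes from $[(1,1)+(S,S)]\tau_2=\tau_1\times\tau_1$. By (\ref{corowtw}) it carries $W_{k_1,k_2}^{\Z}$ into $W_{k_1}^{\Z}\otimes W_{k_2}^{\Z}$, and composing with the projection $\pi\colon W_{k_1}\otimes W_{k_2}\to (W_{k_1}/E_{k_1})\otimes(W_{k_2}/E_{k_2})$ produces the right-hand arrow $\bar\Phi_S$ of the exact sequence. Since $\varepsilon$ and $S$ commute in $PSL_2(\Z)$, the operator $\Phi_S$ respects the $(\varepsilon,\varepsilon)$-bigrading, so I would run the whole analysis one global parity at a time. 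The first, purely computational, step places $E_{k_1,k_2}^{\Z}$ in $\ker\bar\Phi_S$: for $Q\in W_{k_1}$ one has $Q|_S=-Q$, whence $\Phi_S(Q\otimes 1)=Q\otimes(1-X_2^{k_2-2})\in W_{k_1}\otimes E_{k_2}$; symmetrically $\Phi_S(X_1^{k_1-2}\otimes R)=-(1-X_1^{k_1-2})\otimes R\in E_{k_1}\otimes W_{k_2}$; and $\Phi_S$ annihilates $V_{k_1,k_2}^{\Z}[I_D]$ since $(1,1)+(S,S)\in I_D$. All three images die under $\pi$, so $E_{k_1,k_2}^{\Z}=\Im(\textstyle\sum)\subseteq\ker\bar\Phi_S$, which is one half of exactness at $W_{k_1,k_2}^{\Z}$.

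Next I would prove surjectivity and the reverse kernel inclusion using Eichler--Shimura in each variable. Relation (\ref{relSS}) gives $\Phi_S(P_{f_1,f_2})=P_{f_1}\otimes P_{f_2}$, and more finely the formulas for $R^{\pm}_{k_1,k_2}(f_1\otimes f_2)|_{(1,1)+(S,S)}$ express $\bar\Phi_S$ on the period classes through the products $R^{\epsilon_1}_{k_1}(f_1)\otimes R^{\epsilon_2}_{k_2}(f_2)=P^{\epsilon_1}_{f_1}\otimes P^{\epsilon_2}_{f_2}$. Since $f\mapsto P_f^{\epsilon}$ is, by Eichler--Shimura, an isomorphism onto $\Per_{k_j}^{\epsilon}=(W_{k_j}/E_{k_j})^{\epsilon}$, the block projection $[\Phi_S]_{\epsilon_1,\epsilon_2}$ already sends the period polynomials onto $(W_{k_1}^{\epsilon_1}/E_{k_1}^{\epsilon_1})\otimes(W_{k_2}^{\epsilon_2}/E_{k_2}^{\epsilon_2})$, so $\bar\Phi_S$ is surjective. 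Comparing this surjectivity with the size of $W_{k_1,k_2}$ determined in the previous section (its annihilator description by $\mathcal{I}_2$) and with the containment $E_{k_1,k_2}\subseteq\ker\bar\Phi_S$ forces $\ker\bar\Phi_S=E_{k_1,k_2}$ and makes each $[\Phi_S]_{\epsilon_1,\epsilon_2}$ a bijection $W^{\epsilon_1\epsilon_2}_{k_1,k_2}/E^{\epsilon_1\epsilon_2}_{k_1,k_2}\xrightarrow{\sim}(W_{k_1}^{\epsilon_1}/E_{k_1}^{\epsilon_1})\otimes(W_{k_2}^{\epsilon_2}/E_{k_2}^{\epsilon_2})$; this yields exactness at $W_{k_1,k_2}^{\Z}$ and the right-exactness.

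For exactness at the triple product I would pin down the relations among the three Eisenstein pieces. One sees at once that $V_{k_1,k_2}^{\Z}[I_H]\cap V_{k_1,k_2}^{\Z}[I_V]=0$: an element both constant in $X_2$ and divisible by $X_1^{k_1-2}$ is $c\,X_1^{k_1-2}$, and membership in $W_{k_1}\otimes 1$ then forces $c=0$ because $X_1^{k_1-2}\notin W_{k_1}$. The identity $1-X_1^{k_1-2}X_2^{k_2-2}=(1-X_1^{k_1-2})+X_1^{k_1-2}(1-X_2^{k_2-2})$, together with the verification that $1-X_1^{k_1-2}X_2^{k_2-2}$ is killed by $(1,1)+(S,S)$ and by $(1,1)+(U,U)+(U^2,U^2)$, exhibits it in $V_{k_1,k_2}^{\Z}[I_D]\cap\bigl(V_{k_1,k_2}^{\Z}[I_H]\oplus V_{k_1,k_2}^{\Z}[I_V]\bigr)$, giving the stated relation and the injectivity of the leftmost map. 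The hard part, which I expect to be the main obstacle, is the reverse inclusion, namely that this intersection is no larger than $\Z(1-X_1^{k_1-2}X_2^{k_2-2})$. I would extract it from the graded description $V_{k_1,k_2}^{\Z}[I_D]\cong\bigoplus_{l}W_{k_1+k_2-2l}\,(X_1-X_2)^l$ of Theorem \ref{thmchap3}, intersecting this $(X_1-X_2)$-filtration against the boundary spaces $W_{k_1}\otimes 1$ and $X_1^{k_1-2}\otimes W_{k_2}$; this linear-algebra computation is where the argument is most delicate.

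Finally, Parts 1 and 2 follow from the exact sequence. For a fixed global parity, the identification $\ker\bar\Phi_S=E_{k_1,k_2}$ together with the bijectivity of $[\Phi_S]_{\epsilon_1,\epsilon_2}$ shows $W_{k_1,k_2}^{\pm}=\Per_{k_1,k_2}^{\pm}\oplus E_{k_1,k_2}^{\pm,\C}$, since the period classes surject onto $W^{\pm}/E^{\pm}$ while $E^{\pm}$ is precisely the kernel; this is Part 2. Summing the two parities gives $W_{k_1,k_2}=\Per_{k_1,k_2}\oplus\overline{\Per_{k_1,k_2}}\oplus E_{k_1,k_2}^{\C}$ of Part 1, the splitting of $\Per_{k_1,k_2}$ from its complex conjugate being forced because $\bar\Phi_S$ sends the holomorphic biperiods into the $P_{f_1}\otimes P_{f_2}$-type blocks and the antiholomorphic ones into the conjugate blocks, so that no nonzero element of $\Per_{k_1,k_2}\oplus\overline{\Per_{k_1,k_2}}$ can lie in $\ker\bar\Phi_S=E_{k_1,k_2}$.
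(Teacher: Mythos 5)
Your overall strategy coincides with the paper's: everything is organized around the operator $P\mapsto P|_{(1,1)+(S,S)}$, surjectivity onto $(W_{k_1}/E_{k_1})\otimes(W_{k_2}/E_{k_2})$ is extracted from Eichler--Shimura in each variable via $[(1,1)+(S,S)]\tau_2=\tau_1\times\tau_1$, and the Eisenstein part is exhibited inside the kernel by the same three computations you give. The genuine gap is the reverse inclusion $\Ker\overline{\Phi_S}\subseteq E_{k_1,k_2}$. You assert that surjectivity together with ``the size of $W_{k_1,k_2}$ determined in the previous section'' forces equality, but the previous section only identifies $W_{k_1,k_2}^{\Q}$ as $V_{k_1,k_2}^{\Q}[\mathcal{J}(k_1,k_2)]+E_{k_1,k_2}^{\Q}$; turning that into a numerical dimension requires knowing $\dim V_{k_1,k_2}[\mathcal{J}(k_1,k_2)]$ and its intersection with $E_{k_1,k_2}$, which is essentially the decomposition you are trying to prove. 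The paper closes this by computing the kernel directly, in two steps you would need to supply: (i) the elements of $W_{k_1,k_2}^{\Z}$ actually killed by $(1,1)+(S,S)$ are exactly $V_{k_1,k_2}^{\Z}[I_D]$, because of the ideal identity $\mathcal{I}_2+[(1,1)+(S,S)]\Z[\Gamma^2]=I_D$ (and its $(-,+)$-conjugate, obtained via the involution $\delta:P(X_1,X_2)\mapsto P(-X_1,X_2)$); (ii) the elements whose image lands in $W_{k_1}^{\Z}\otimes E_{k_2}^{\Z}+E_{k_1}^{\Z}\otimes W_{k_2}^{\Z}$ are exactly $V_{k_1,k_2}^{\Z}[I_H]+V_{k_1,k_2}^{\Z}[I_V]$, which the paper gets from the factorization $W_{k_1}^{\Z}\otimes E_{k_2}^{\Z}=\left(W_{k_1}^{\Z}\otimes V_{k_2}^{\Z}[1-US]\right)|_{(1,1-S)}$ and its symmetric counterpart. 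Without (i) and (ii) your argument does not close.

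Two smaller points. First, the paper defines $\varphi_S$ on the enlarged domain $V_{k_1,k_2}^{\Z}[\mathcal{I}_2]+V_{k_1,k_2}^{\Z}[\mathcal{I}_2^{-,+}]$ precisely so that the Eichler--Shimura section hits all four parity blocks of $(W_{k_1}/E_{k_1})\otimes(W_{k_2}/E_{k_2})$; if you work only on $W_{k_1,k_2}$ one global parity at a time you must keep track of the fact that $\Phi_S(P_{f_1,f_2})=P_{f_1}\otimes P_{f_2}$ spreads over all four blocks, and that the separation of $\Per_{k_1,k_2}$ from $\overline{\Per_{k_1,k_2}}$ comes from the invertible $2\times 2$ parity mixing in $R^{\pm}_{k_1,k_2}(f_1\otimes f_2)|_{(1,1)+(S,S)}=R^{+}_{k_1}(f_1)R^{\pm}_{k_2}(f_2)+R^{-}_{k_1}(f_1)R^{\mp}_{k_2}(f_2)$, not from the holomorphic and antiholomorphic classes landing in disjoint blocks. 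Second, your route to $V_{k_1,k_2}^{\Z}[I_D]\cap\left(V_{k_1,k_2}^{\Z}[I_H]\oplus V_{k_1,k_2}^{\Z}[I_V]\right)=\Z\left(1-X_1^{k_1-2}X_2^{k_2-2}\right)$ through the graded description $V_{k_1,k_2}^{\Q}[I_D]\cong\bigoplus_l W_{k_1+k_2-2l}^{\Q}$ is workable but heavier than needed: once $V_{k_1,k_2}^{\Z}[I_H]=W_{k_1}^{\Z}\otimes 1$ and $V_{k_1,k_2}^{\Z}[I_V]=X_1^{k_1-2}\otimes W_{k_2}^{\Z}$ are established, imposing the two diagonal relations on $Q_1(X_1)+X_1^{k_1-2}Q_2(X_2)$ is a short direct computation.
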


\begin{proof}
On introduit l'application:
$$\varphi_S^{\Z}: V_{k_1,k_2}^{\Z}[\mathcal{I}_2]+V_{k_1,k_2}^{\Z}[\mathcal{I}_2^{-,+}]\to \left(W_{k_1}^{\Z}/E_{k_1}^{\Z}\right)\otimes \left(W_{k_2}^{\Z}/E_{k_2}^{\Z}\right),P\mapsto \left[P|_{(1,1)+(S,S)}\right].$$
Elle est bien définie car la détermination de $\mathcal{I}_2$ montre que l'image par $(1,1)+(S,S)$ est dans:
$$V_{k_1,k_2}^{\Z}[(1+S,1),(1+U+U^2,1),(1,1+S),(1,1+U+U^2)]=W_{k_1}^{\Z}\otimes W_{k_2}^{\Z}\text{ d'après (\ref{corowtw})}.$$
Cet espace se projette bien dans $\left(W_{k_1}^{\Z}/E_{k_1}^{\Z}\right)\otimes \left(W_{k_2}^{\Z}/E_{k_2}^{\Z}\right)$ en prenant la classe de l'image $P|_{(1,1)+(S,S)}$ dans cet espace quotient.\par

On peut étendre l'application au corps des complexe et noter : $\varphi_S=\varphi_S^{\Z}\otimes \C$. On montre alors que $\varphi_S$ est surjective. 
Soit $P_1\otimes P_2\in W_{k_1}\otimes W_{k_2}$. 
Alors d'après le Théorème d'Eichler-Shimura les applications:
$$\Pi_{k_j}:\Omega_{k_j}^+\oplus \Omega_{k_j}^-\to W_{k_j}/E_{k_j}, \omega \mapsto \langle \omega,\tau_1 \rangle,$$
sont des bijections. Ainsi la classe de $P_j$ dans $W_{k_j}/E_{k_j}$ admet un unique antécédent $\omega_j\in\Omega_{k_j}^+\oplus \Omega_{k_j}^-$ pour $j=1,2$.
Donc posons $\omega=\omega_1\wedge\omega_2\in\bigoplus_{\epsilon_1,\epsilon_2}\Omega_{k_1,k_2}^{\epsilon_1,\epsilon_2}$, on a $\Pi_{k_1}\otimes\Pi_{k_2}(\omega)=\left[P_1\otimes P_2\right]$. Posons :
$$P=\Pi_{k_1,k_2}(\omega)\in V_{k_1,k_2}[\mathcal{I}_2]+V_{k_1,k_2}[\mathcal{I}_2^{-,+}].$$
On obtient :
$$\varphi_S(P)=\Pi_{k_1,k_2}(\omega)|_{(1,1)+(S,S)}=\Pi_{k_1}\otimes\Pi_{k_2}(\omega)=\left[P_1\otimes P_2\right].$$
Ceci nous donne une section uniquement valable sur le corps des complexes. On remarque de plus que l'image de cette section est l'image de $\Pi_{k_1,k_2}$, les polynômes des bipériodes de formes modulaires harmoniques.\par

Calculons désormais le noyau de l'application $\Z$-linéaire $\varphi_S^{\Z}$. 
On commence par regarder les éléments de $V_{k_1,k_2}^{\Z}[\mathcal{I}_2]$ annulés par $(1,1)+(S,S)$ dans $W_{k_1}^{\Z}\otimes W_{k_2}^{\Z}$. Ce sont les éléments annulés par l'idéal:
$$\mathcal{I}_2+[(1,1)+(S,S)]\Z[\Gamma^2]=\left[(1,1)+(S,S);(1,1)+(U,U)+(U^2,U^2)\right]\Z[\Gamma^2]=I_D.$$
Ceci démontre que $\{P\in V_{k_1,k_2}^{\Z}[\mathcal{I}_2]\text{ tel que }P|_{(1,1)+(S,S)}=0\}=V_{k_1,k_2}^{\Z}[I_D].$\par

Puis introduisons l'involution de $V_{k_1,k_2}^{\Z}$: $\delta:P(X_1,X_2)\mapsto P(-X_1,X_2)$. Pour tout idéal $I$ de $\Z[\Gamma^2]$, elle échange les espaces $V_{k_1,k_2}[I]$ et $V_{k_1,k_2}[I^{-,+}]$ . Comme $\varepsilon$ et $S$ commutent alors $\delta$ et $\varphi_S$ aussi et on en déduit que:
$$\{P\in V_{k_1,k_2}^{\Z}[\mathcal{I}_2^{-,+}]\text{ tel que }P|_{(1,1)+(S,S)}=0\}=\delta(V_{k_1,k_2}^{\Z}[I_D])=V_{k_1,k_2}^{\Z}[I_D^{-,+}].$$\par

Il nous reste à calculer les éléments qui s'envoient sur $W_{k_1}\otimes E_{k_2}+E_{k_1}\otimes W_{k_2}$. Pour cela, on décompose:
$$W_{k_1}^{\Z}\otimes E_{k_2}^{\Z}=W_{k_1}^{\Z}\otimes \left(V_{k_2}^{\Z}[1-US]|_{(1-S)}\right)=\left(W_{k_1}^{\Z}\otimes V_{k_2}^{\Z}[1-US]\right)|_{(1,1-S)}.$$
Et on obtient bien, pour $P\in V_{k_1,k_2}^{\Z}$ la chaine d'équivalence:
\begin{align*}
P\in V_{k_1,k_2}^{\Z}[I_H]&\Leftrightarrow P|_{(1+S,1)}=P_{(1+U+U^2,1)}=P|_{(1,1-US)}=0\\
&\Leftrightarrow P|_{(1,1)+(S,S)}=P|_{(1,1-S)}\text{ et }P|_{(1+S,1)}=P_{(1+U+U^2,1)}=P|_{(1,1-US)}=0\\
&\Leftrightarrow P|_{(1,1)+(S,S)}\in \left(V_{k_1}^{\Z}[1+S,1+U+U^2]\otimes V_{k_2}^{\Z}[1-US]\right)|_{(1,1-S)}.
\end{align*}
Ceci se symétrise pour $I_V$ sans difficulté car on a aussi $E_{k_1}^{\Z}=V_{k_1}^{\Z}[1-T]|_{(1-S)}$. 

On obtient ainsi:
$$\Ker \left(\varphi_S^{\Z}\right)=V_{k_1,k_2}^{\Z}[I_H]+ V_{k_1,k_2}^{\Z}[I_V]+ V_{k_1,k_2}^{\Z}[I_D]+ V_{k_1,k_2}^{\Z}[I_D^{-,+}]=E_{k_1,k_2}^{\Z}+\delta(E_{k_1,k_2}^{\Z}).$$\par

Le calcul de $V_{k_1,k_2}^{\Z}[I_H]$ et $V_{k_1,k_2}^{\Z}[I_D]$ permet d'observer qu'ils sont disjoints. Or on a:
$$1-X_1^{k_1-2}X_2^{k_2-2}=(1-X_1^{k_1-2})+X_1^{k_1-2}(1-X_2^{k_2-2}).$$
Ceci donne la première flèche de la suite exacte du théorème:
\begin{align*}
\Z\to &V_{k_1,k_2}^{\Z}[I_H]\times V_{k_1,k_2}^{\Z}[I_D]\times V_{k_1,k_2}^{\Z}[I_V],\\
\alpha\mapsto &\alpha\left((1-X_1^{k_1-2}),-1+X_1^{k_1-2}X_2^{k_2-2},X_1^{k_1-2}(1-X_2^{k_2-2})\right).
\end{align*}
Puis on a:
$$V_{k_1,k_2}^{\Z}[I_D]\cap\left(V_{k_1,k_2}^{\Z}[I_H]\oplus V_{k_1,k_2}^{\Z}[I_V]\right)=\langle 1-X_1^{k_1-2}X_2^{k_2-2}\rangle_{\Z}.$$
Ce dernier est stable par $\delta$ et on obtient aussi:
$$V_{k_1,k_2}^{\Z}[I_D^{-,+}]\cap\left(V_{k_1,k_2}^{\Z}[I_H]\oplus V_{k_1,k_2}^{\Z}[I_V]\right)=\langle 1-X_1^{k_1-2}X_2^{k_2-2}\rangle_{\Z}.$$\par
 
Le Théorème des noyaux appliqué à $\varphi_S$ sur $\C$ donne alors:
$$V_{k_1,k_2}[\mathcal{I}_2]+V_{k_1,k_2}[\mathcal{I}_2^{-,+}]
=\left(\Ker(\varphi_S^{\Z})\otimes\C\right)\oplus \bigoplus_{\epsilon_1,\epsilon_2} Per^{\epsilon_1,\epsilon_2}_{k_1,k_2}.$$
On obtient ainsi les égalités de $\C$-espaces vectoriels:
\begin{align*}
W_{k_1,k_2}&=\left(E_{k_1,k_2}^{\Z}\otimes\C\right)\oplus \Per_{k_1,k_2}\oplus \Per_{k_1,k_2}^{-,-}\\
\delta(W_{k_1,k_2})&=\left(\delta(E_{k_1,k_2}^{\Z})\otimes\C\right)\oplus \Per^{-,+}_{k_1,k_2}\oplus \Per_{k_1,k_2}^{+,-}.
\end{align*}
Et on observe que $\Per_{k_1,k_2}^{-,-}=\overline{\Per_{k_1,k_2}}$.
\end{proof}

\subsection{Calcul de $V_{k_1,k_2}^{\Q}[I_D]$}

Pour déterminer $E_{k_1,k_2}^{\Q}$, il nous reste à préciser le calcul de $V_{k_1,k_2}^{\Q}[I_D]$. Il peut se faire par une récurrence sur la somme des poids.

\begin{prop}
On dispose d'un isomorphisme :
$$V_{k_1,k_2}^{\Q}[I_D]\cong\bigoplus_{i=0}^{\min(k_1,k_2)} W_{k_1+k_2-2i}^{\Q}.$$
On peut spécialiser ce résultat selon les parités et on obtient:
$$V_{k_1,k_2}^{\Q}[I_D] \cap V_{k_1,k_2}^{\epsilon}\cong\bigoplus_{i=0}^{\min(k_1,k_2)} W_{k_1+k_2-2i}^{\Q,(-1)^i\varepsilon}.$$
\end{prop}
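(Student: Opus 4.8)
On veut établir l'isomorphisme
$$V_{k_1,k_2}^{\Q}[I_D]\cong\bigoplus_{i=0}^{\min(k_1,k_2)} W_{k_1+k_2-2i}^{\Q},$$
puis sa version raffinée selon les parités. Rappelons que $V_{k_1,k_2}^{\Q}[I_D]$ est l'espace des polynômes $P\in V_{k_1,k_2}^{\Q}$ vérifiant les relations de Manin diagonales $P|_{(1,1)+(S,S)}=P|_{(1,1)+(U,U)+(U^2,U^2)}=0$.

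**Le plan.** L'idée centrale est que l'action diagonale de $\Gamma$ sur $V_{k_1,k_2}^{\Q}=V_{k_1}^{\Q}\otimes V_{k_2}^{\Q}$ se décompose via la théorie classique de Clebsch-Gordan pour $SL_2$. D'abord je décomposerais le $\Q[\Gamma]$-module $V_{k_1}^{\Q}\otimes V_{k_2}^{\Q}$, pour l'action diagonale, en somme directe de représentations irréductibles. Pour $SL_2$, le produit tensoriel $V_{k_1}\otimes V_{k_2}$ (où $V_{k_j}$ est l'espace des polynômes de degré $\leq k_j-2$, de dimension $k_j-1$) se scinde en
$$V_{k_1}^{\Q}\otimes V_{k_2}^{\Q}\cong\bigoplus_{i=0}^{\min(k_1,k_2)-1} V_{k_1+k_2-2i}^{\Q},$$
l'application explicite étant donnée par les opérateurs de transvection (crochets de Rankin-Cohen au niveau polynomial), qui envoient $V_{k_1}\otimes V_{k_2}$ vers $V_{k_1+k_2-2i}$ en multipliant par la puissance convenable du covariant $(X_1-X_2)$. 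Concrètement, le facteur d'indice $i$ est engendré par les translatés sous $\Gamma$ de $(X_1-X_2)^i$ fois un polynôme en la variable diagonale; c'est la raison de l'apparition du facteur $(X_1-X_2)^l$ visible dans l'énoncé du théorème \ref{thmchap3}.

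**Étapes et l'obstacle principal.** Une fois cette décomposition $\Gamma$-équivariante établie, l'étape suivante consiste à observer que l'action diagonale de $\Gamma$ sur chaque facteur $V_{k_1+k_2-2i}^{\Q}$ est exactement l'action standard $|_{2-(k_1+k_2-2i)}$. Par conséquent, prendre le sous-espace annulé par $I_D$ revient, facteur par facteur, à imposer les relations de Manin usuelles $1+S$ et $1+U+U^2$ sur chaque $V_{k_1+k_2-2i}^{\Q}$, ce qui produit précisément $W_{k_1+k_2-2i}^{\Q}$ par définition. La difficulté technique est de vérifier que la décomposition de Clebsch-Gordan est bien définie sur $\Q$ (et non seulement sur $\C$) et qu'elle est compatible avec l'action $|_{2-k}$ via les facteurs d'automorphie $(cX_j+d)^{k_j-2}$: il faut contrôler que les opérateurs de transvection, qui font intervenir des dérivations et des coefficients binomiaux, respectent bien la structure rationnelle et transforment l'action diagonale sur le produit en l'action standard sur chaque composante. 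C'est ici que réside le cœur calculatoire, que je traiterais en exhibant explicitement l'isomorphisme $\Gamma$-équivariant et en vérifiant la transformation de $(X_1-X_2)^i$ sous $\mat{a}{b}{c}{d}$ agissant diagonalement, à savoir $(X_1-X_2)\mapsto (X_1-X_2)(cX_1+d)^{-1}(cX_2+d)^{-1}$ au sens projectif, ce qui donne bien le poids $k_1+k_2-2i$ après renormalisation.

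**Raffinement selon les parités.** Pour la version graduée par les signes, je remarquerais que l'involution $\varepsilon=\mat{-1}{0}{0}{1}$ agit par $(X_1,X_2)\mapsto(-X_1,-X_2)$ sur $V_{k_1,k_2}$, et que sur le covariant $(X_1-X_2)^i$ elle agit par le signe $(-1)^i$. Comme cette involution commute avec l'action diagonale de $\Gamma$ (car $\varepsilon$ normalise $\Gamma$), elle respecte la décomposition de Clebsch-Gordan et agit sur le facteur d'indice $i$ en composant le signe $(-1)^i$ provenant de $(X_1-X_2)^i$ avec le signe de parité naturel sur $V_{k_1+k_2-2i}^{\Q}$. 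Ainsi la restriction au sous-espace de signe $\epsilon$ global impose sur le $i$-ème facteur la condition de parité $(-1)^i\epsilon$, d'où
$$V_{k_1,k_2}^{\Q}[I_D] \cap V_{k_1,k_2}^{\epsilon}\cong\bigoplus_{i=0}^{\min(k_1,k_2)} W_{k_1+k_2-2i}^{\Q,(-1)^i\epsilon}.$$
J'anticipe que la vérification de la rationalité et de la compatibilité des signes sera la partie la plus délicate, mais entièrement explicite une fois l'isomorphisme de transvection écrit.
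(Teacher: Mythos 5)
Your proof is correct, but it takes a genuinely different route from the paper's. The paper argues by induction on $k_1+k_2$ using the exact sequence
$$0 \longrightarrow V_{k_1-1,k_2-1}^{\Q}[I_D] \stackrel{\phi}{\longrightarrow} V_{k_1,k_2}^{\Q}[I_D] \stackrel{\psi}{\longrightarrow} W_{k_1+k_2-2}^{\Q} \longrightarrow 0,$$
with $\phi$ the multiplication by $X_1-X_2$ and $\psi$ the restriction to the diagonal; the delicate point there is the surjectivity of $\psi$ after passing to the $I_D$-annihilators, which the paper establishes via Eichler--Shimura, realizing an element of $W_{k_1+k_2-2}^{\Q}$ as the period polynomial of a cusp form $h$ (up to a multiple of $1-Z^{k_1+k_2-4}$, treated by hand) and exhibiting the explicit antecedent $Q_h(X_1,X_2)=\int_0^{\infty}h(it)(X_1-it)^{k_1-2}(X_2-it)^{k_2-2}\d t$. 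You instead invoke the Clebsch--Gordan decomposition of $V_{k_1}^{\Q}\otimes V_{k_2}^{\Q}$ under the diagonal $\Gamma$-action, which splits the filtration by powers of $(X_1-X_2)$ into a direct sum of copies of the standard modules $V_{k_1+k_2-2l}^{\Q}$; taking the $I_D$-annihilator factor by factor then yields the $W_{k_1+k_2-2l}^{\Q}$ directly, surjectivity being automatic by complete reducibility in characteristic zero. Your approach is purely representation-theoretic and bypasses Eichler--Shimura entirely; the paper's approach, while heavier, produces explicit modular antecedents that are reused afterwards for the computational description of generators of $V_{k_1,k_2}^{\Z}[I_D]$. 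Your treatment of the parity refinement via the sign $(-1)^l$ carried by $(X_1-X_2)^l$ is correct and is in fact the only argument offered for that part, since the paper's proof does not address it. Two minor caveats: the Clebsch--Gordan summands you write down are mis-indexed (the dimension count forces the components $V_{k_1+k_2-2l}$ for $1\le l\le\min(k_1,k_2)-1$, not $0\le l\le\min(k_1,k_2)-1$) --- though the statement of the proposition is itself loose on this range --- and the image of the $l$-th embedding is not literally $(X_1-X_2)^l$ times polynomials in the diagonal variable, but rather the transvectant image splitting the quotient map; neither point affects the validity of the argument.
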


\begin{proof}
Par définition, on a : 
$$V_{k_1,k_2}^{\Q}[I_D]=\{P\in V_{k_1,k_2}(\Q)|P|_{(1,1)+(S,S)}=P|_{(1,1)+(U,U)+(U^2,U^2)}=0\}.$$
Ceci nous permet de construire la suite exacte de $\Q$-espaces vectoriels:
$$0 \longrightarrow V_{k_1-1,k_2-1}^{\Q}[I_D] \stackrel{\phi}{\longrightarrow} V_{k_1,k_2}^{\Q}[I_D] \stackrel{\psi}{\longrightarrow} W_{k_1+k_2-2}^{\Q} \longrightarrow 0$$
où on définit les $\Gamma$-morphismes par $\phi(P)(X,Y)=(X-Y)P(X,Y)$ et $\psi(P)(Z)=P(Z,Z)$.\par
L'application $\phi$ est bien un $\Gamma$-morphisme car $(\gamma X-\gamma Y)(cX+d)(cY+d)=(ad-bc)(X-Y)$ pour tout $\mat{a}{b}{c}{d}\in\Gamma$. De plus, elle est bien injective car nous pouvons l'inverser sur l'image en divisant par $X-Y$.\par
L'image de $\phi$ est bien l'ensemble des polynômes s'annulant diagonalement, c'est à dire le noyau de $\psi$.\par
Pour démontrer la surjectivité de $\psi$, nous construisons un antécédent de tout polynôme $P(Z)\in W_{k_1+k_2-2}^{\Z}$. On commence par le cas du polynôme particulier $1-Z^{k_1+k_2-4}$. Il est l'antécédent par $\psi$ de $1-X_1^{k_1-2}X_2^{k_2-2}\in V_{k_1,k_2}^{\Z}$. Cet élément appartient à $V_{k_1,k_2}^{\Z}[I_D]$. En effet, on a:
\begin{align*}
(1-X_1^{k_1-2}X_2^{k_2-2})|_{(1,1)+(S,S)}&=\left[1-X_1^{k_1-2}X_2^{k_2-2}\right]+\left[X_1^{k_1-2}X_2^{k_2-2}-1\right]=0\\
\text{et }(1-X_1^{k_1-2}X_2^{k_2-2})|_{(1,1)+(U,U)+(U^2,U^2)}&=\left[1-X_1^{k_1-2}X_2^{k_2-2}\right]+\left[(-X_1+1)^{k_1-2}(-X_2+1)^{k_2-2}-1\right]\\
&+\left[X_1^{k_1-2}X_2^{k_2-2}-(-X_1+1)^{k_1-2}(-X_2+1)^{k_2-2}\right]=0.
\end{align*}
Lorsque $P(Z)$ est quelconque, le Théorème d'Eichler-Shimura donne l'existence et l'unicité d'application $f,g\in S_{k_1+k_2-2}$ ainsi que d'un scalaire $\lambda\in\C$ telles que:
$$P_f^-(Z)=P^-(Z)\text{ et }P_g^+(Z)=P^+(Z)+\lambda( 1-Z^{k_1+k_2-4}).$$
On peut définir les polynômes suivant de $V_{k_1,k_2}$, pour $h\in S_{k_1+k_2-2}$:
$$Q_h(X_1,X_2)=\int_0^{\infty}h(it)(X_1-it)^{k_1-2}(X_2-it)^{k_2-2}dt.$$
Les polynômes $Q_f$ et $Q_g$ sont bien définis car $f$ et $g$ sont paraboliques. De plus, ils vérifient bien les relations de Manin diagonales car pour $h\in S_{k_1+k_2-2}$:
$$Q_h(X_1,X_2)|_{(\gamma,\gamma)}=i\int_{\gamma^{-1}i\infty}^{\gamma^{-1}0}h(z)(X_1-z)^{k_1-2}(X_2-z)^{k_2-2}dz.$$
Et ainsi on déduit : $Q_h|_{(1,1)+(S,S)}=Q_h|_{(1,1)+(U,U)+(U^2,U^2)}=0$ puis $Q_h\in V_{k_1,k_2}[I_D]$.\par
Les relations $Q_f(Z,Z)=P_f(Z)$ et $Q_g(Z,Z)=P_g(Z)$ se déduisent de cette construction.
Et ainsi: $P(Z)=\psi(Q_f^-(X_1,X_2)+Q_g^+(X_1,X_2))+\lambda\psi(1-X_1^{k_1-2}X_2^{k_2-2})$.
\end{proof}

\begin{ex}
On peut calculer explicitement ces espaces pour les petites dimensions:\par
1) Lorsque $k_1=2$ ou $k_2=2$, il n'y a qu'une variable et on a:
$$V_{2,k}^{\Q}[I_D]\cong V_{k,2}^{\Q}[I_D]\cong W_k^{\Q},\quad\text{pour tout } k\geq 2.$$\par
2) Pour tout $k\leq 10$, $W_k^{\Z}=<1-Z^{k-2}>_{\Z}$ donc on a pour $k_1+k_2\leq 12$:
\begin{align*}
V_{4,4}^{\Z}[I_D]&=<1-X_1^2X_2^2,(X_1-X_2)(1-X_1X_2)>,\\
V_{4,6}^{\Z}[I_D]&=<1-X_1^2X_2^4,(X_1-X_2)(1-X_1X_2^3),(X_1-X_2)^2(1-X_2^2)>,\\
V_{4,6}^{\Z}[I_D]&=<1-X_1^4X_2^2,(X_1-X_2)(1-X_1^3X_2),(X_1-X_2)^2(1-X_1^2)>,\\
V_{4,8}^{\Z}[I_D]&=<1-X_1^2X_2^6,(X_1-X_2)(1-X_1X_2^5),(X_1-X_2)^2(1-X_2^4)>,\\
V_{8,4}^{\Z}[I_D]&=<1-X_1^6X_2^2,(X_1-X_2)(1-X_1^5X_2),(X_1-X_2)^2(1-X_1^4)>,\\
V_{6,6}^{\Z}[I_D]&=<1-X_1^4X_2^4,(X_1-X_2)(1-X_3X_2^3),(X_1-X_2)^2(1-X_1^2X_2^2),(X_1-X_2)^3(1-X_1X_2)>.
\end{align*}
\end{ex}

Ce résultat de décomposition nous permet notamment de calculer la dimension des $\Q$-espaces vectoriels $V_{k_1,k_2}^{\Q}[I_D]$.\\
Pour tout poids $k$, définissons $a_k$ et $b_k$ les entiers tels que:
$$k=12 a_k+2b_k,\text{ avec }b_k\in\{0,1,2,3,4,5\}.$$ 
Notons le symbole de Kronecker d'une assertion $A$ par:
$$\delta_A=\begin{cases} 1 &\text{ si $A$ est vraie}\\0 &\text{ sinon.}\end{cases}$$

\begin{coro}
La dimension de $V_{k_1,k_2}^{\Q}[I_D]$ est donnée par:
\begin{equation}
\dim_{\Q}(V_{k_1,k_2}^{\Q}[I_D])=6 a_{k_1+k_2-2}^2 + 1 + (2a_{k_1+k_2-2}+1)b_{k_1+k_2-2}\delta_{b_{k_1+k_2-2}\neq 0}.
\end{equation}
\end{coro}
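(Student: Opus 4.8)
Le plan est de ramener le calcul de la dimension \`a une simple somme finie de dimensions de polyn\^omes des p\'eriodes en une variable. La Proposition pr\'ec\'edente fournit en effet un isomorphisme de $\Q$-espaces vectoriels d\'ecomposant $V_{k_1,k_2}^{\Q}[I_D]$ en somme directe d'espaces $W_m^{\Q}$, d'o\`u
$$\dim_{\Q}\left(V_{k_1,k_2}^{\Q}[I_D]\right)=\sum_{m}\dim_{\Q}\left(W_m^{\Q}\right),$$
la somme portant sur les poids $m$ intervenant dans la d\'ecomposition. Il ne reste alors qu'\`a \'evaluer cette somme arithm\'etique.

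La deuxi\`eme \'etape consiste \`a ins\'erer les dimensions d\'ej\`a connues. Le Th\'eor\`eme d'Eichler-Shimura du chapitre $1$ donne $W_m=\Per_m\oplus\overline{\Per_m}\oplus E_m$ avec $\dim_{\C} E_m=1$, donc $\dim_{\Q}(W_m^{\Q})=2\dim_{\C}(S_m)+1$ pour $m\geq 4$, et $\dim_{\Q}(W_m^{\Q})=0$ pour $m\in\{0,2\}$. Le Th\'eor\`eme de dimension de $M_k$ fournit ensuite $\dim(S_m)=\lfloor m/12\rfloor-\delta_{(m\equiv 2\bmod 12)}$, de sorte que
$$\dim_{\Q}\left(W_m^{\Q}\right)=2\left\lfloor \frac{m}{12}\right\rfloor+1-2\,\delta_{(m\equiv 2\bmod 12)}\qquad(m\geq 4).$$
La somme se scinde ainsi en une partie principale $\sum_m\left(2\lfloor m/12\rfloor+1\right)$ et un terme correctif d\'enombrant les poids exceptionnels $m\equiv 2\pmod{12}$.

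La troisi\`eme \'etape est l'\'evaluation explicite de cette somme de parties enti\`eres. En posant $K=k_1+k_2-2$ et en \'ecrivant la division euclidienne $K=12a_K+2b_K$ avec $b_K\in\{0,\dots,5\}$, on regroupe les poids pairs par blocs de douze. Sur un bloc complet d'indice $j$ la quantit\'e $\lfloor m/12\rfloor$ vaut $j$ pour les six poids pairs concern\'es, et la sommation $\sum_j 6j$ produit le terme quadratique $6a_K^2$; le bloc incomplet de taille $b_K$ fournit une contribution lin\'eaire de la forme $(2a_K+1)b_K$; les termes lin\'eaires en $a_K$ issus de la partie principale sont ensuite r\'esorb\'es par le d\'enombrement des poids exceptionnels $m\equiv 2\pmod{12}$, et les ajustements aux poids $m\in\{0,2\}$ donnent le terme constant $+1$ ainsi que le facteur $\delta_{(b_K\neq 0)}$.

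La principale difficult\'e sera la comptabilit\'e fine de ces contributions de bord: il faudra traiter s\'epar\'ement les poids exceptionnels $m\equiv 2\pmod{12}$ (o\`u $\dim S_m$ chute d'une unit\'e), les poids de base $m\in\{0,2\}$ (o\`u $\dim W_m=0$ au lieu de la valeur g\'en\'erique), et v\'erifier que l'effondrement de la somme de parties enti\`eres redonne exactement la forme close index\'ee par $(a_K,b_K)$. Je contr\^olerais chaque \'etape sur les espaces de petite dimension explicit\'es apr\`es la Proposition, tels que $V_{4,4}^{\Q}[I_D]$, $V_{4,6}^{\Q}[I_D]$ et $V_{6,6}^{\Q}[I_D]$, dont les dimensions fournissent un test direct des \'etapes interm\'ediaires de la sommation.
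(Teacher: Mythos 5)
Your strategy is exactly the paper's: invoke the preceding Proposition to write $V_{k_1,k_2}^{\Q}[I_D]$ as a direct sum of spaces $W_m^{\Q}$, insert $\dim_{\Q}W_m^{\Q}=2\dim_{\C}S_m+1=2\lfloor m/12\rfloor+1-2\delta_{(m\equiv 2\bmod 12)}$ (your version of this dimension formula is the correct one; the paper's quoted $d_k=2a_k+1-2\delta_{(b_k=2)}$ should read $\delta_{(b_k=1)}$), and evaluate the resulting arithmetic sum. The paper compresses the last step into ``un calcul imm\'ediat''; you spell it out, and that is where the argument cannot be completed as announced.

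The final bookkeeping does not close on the stated formula. Writing $K=k_1+k_2-2=12a_K+2b_K$, the sum $\sum_{4\leq m\leq K,\ m\ \mathrm{pair}}\bigl(2\lfloor m/12\rfloor+1-2\delta_{(m\equiv 2\bmod 12)}\bigr)$ evaluates to $6a_K^2+(2a_K+1)b_K-1+2\delta_{(b_K=0)}$, which agrees with the corollary only when $b_K=0$ and falls short of it by $2$ whenever $b_K\neq 0$. Your own proposed sanity check detects this: for $V_{4,4}^{\Q}[I_D]$ one has $K=6$, $a_K=0$, $b_K=3$, the explicit generators give dimension $2$, while the stated closed form gives $4$ (similarly $V_{6,6}^{\Q}[I_D]$ has dimension $4$, not $6$). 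A second gap, inherited from the paper, concerns the range of summation: the recursion $0\to V_{k_1-1,k_2-1}^{\Q}[I_D]\to V_{k_1,k_2}^{\Q}[I_D]\to W_{k_1+k_2-2}^{\Q}\to 0$ terminates when one of the degrees reaches $1$, so the direct sum only descends to $W_{|k_1-k_2|+2}^{\Q}$; summing all the way down to $m=4$ is legitimate only for $|k_1-k_2|\leq 2$ (compare $V_{4,10}^{\Z}[I_D]\cong W_{12}^{\Z}\oplus W_{10}^{\Z}\oplus W_{8}^{\Z}$, of dimension $5$, whereas $\sum_{4\leq m\leq 12}\dim_{\Q}W_m^{\Q}=7$). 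So your method is the right one and matches the paper's, but the terms do not ``se r\'esorber'' into the asserted expression: no correct execution of this plan can reach the statement as written.
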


\begin{proof}
Si l'on note $d_k=\dim_{\Q} (W_k^{\Q})$, alors la proposition donne:
$$\dim_{\Q}(V_{k_1,k_2}^{\Q}[I_D])=\sum_{4\leq k\leq k_1+k_2-2} d_k.$$
D'autre part, un résultat classique donne: $d_k=2 a_k+1-2\delta_{b_k=2}$ (voir chapitre $1$ ou Serre, cours d'arithmétiques \cite{Se70}). Un calcul immédiat donne la formule du corollaire.
\end{proof}

\subsection{Description calculatoire de $V_{k_1,k_2}^{\Z}[I_D]$}

Pour implémenter sur un ordinateur l'appartenance d'un polynôme à $V_{k_1,k_2}^{\Z}[I_D]$, on dispose du résultat suivant sur les coefficients:
\begin{prop}
Soit $P=\sum A_{j_1,j_2} X_1^{j_1}X_2^{j_2}\in V_{k_1,k_2}$. Alors on a $P\in V_{k_1,k_2}[I_D]$ 
si et seulement si pour tout $0\leq j_1\leq k_1-2$ et $0\leq j_2 \leq k_2-2$, on a:
\begin{align*}
A_{j_1,j_2}+(-1)^{j_1+j_2}A_{w_1-j_1,w_2-j_2}&=0\\
\sum_{(i_1-j_1)(i_2-j_2)\geq 0} C_{k_1-2}(i_1,j_1)C_{k_2-2}(i_2,j_2)A_{i_1,i_2}&=0
\end{align*}
où pour tout triplet d'entiers naturels $(w,i,j)\in\Z_{\geq 0}^3$, on a posé : $C_w(i,j)=\begin{cases}\binom{i}{j}&\text{ si }i>j\\\binom{w-i}{w-j}&\text{ sinon.}\end{cases}$.
\end{prop}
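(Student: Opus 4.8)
The plan is to translate each of the two defining conditions of $V_{k_1,k_2}[I_D]$, namely $P|_{(1,1)+(S,S)}=0$ and $P|_{(1,1)+(U,U)+(U^2,U^2)}=0$, into linear relations on the coefficients $A_{i_1,i_2}$, exactly mirroring the one-variable computation of $W_k$ carried out in Chapter~1. Writing $w_j=k_j-2$ (both even), I first record the action on monomials in each variable: $X^m|_S=(-1)^mX^{w-m}$, $X^m|_U=(1-X)^{w-m}$ and $X^m|_{U^2}=(X-1)^mX^{w-m}$, as already used in the proof of the coefficient relations for $W_k$.

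For the first condition, applying the diagonal $(S,S)$ to $P=\sum A_{i_1,i_2}X_1^{i_1}X_2^{i_2}$ gives $P|_{(S,S)}=\sum A_{i_1,i_2}(-1)^{i_1+i_2}X_1^{w_1-i_1}X_2^{w_2-i_2}$, and collecting the coefficient of $X_1^{j_1}X_2^{j_2}$ in $P+P|_{(S,S)}$ yields directly the antidiagonal relation $A_{j_1,j_2}+(-1)^{j_1+j_2}A_{w_1-j_1,w_2-j_2}=0$. Each step is an equivalence, so this family is exactly the content of the first condition.

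For the second condition, I expand $P|_{(U,U)}$ and $P|_{(U^2,U^2)}$ using $(1-X)^{w-i}=\sum_j\binom{w-i}{w-j}(-X)^{w-j}$ and $(X-1)^iX^{w-i}=\sum_j\binom{i}{j}(-X)^{w-j}$ in each variable, then collect the coefficient of $(-X_1)^{w_1-j_1}(-X_2)^{w_2-j_2}$ in $P|_{(1,1)+(U,U)+(U^2,U^2)}$. The $(U,U)$-part contributes $\binom{w_1-i_1}{w_1-j_1}\binom{w_2-i_2}{w_2-j_2}$, supported on $i_1\le j_1,\ i_2\le j_2$; the $(U^2,U^2)$-part contributes $\binom{i_1}{j_1}\binom{i_2}{j_2}$, supported on $i_1\ge j_1,\ i_2\ge j_2$; and the identity part contributes the single term $A_{w_1-j_1,w_2-j_2}(-1)^{j_1+j_2}$, which by the antidiagonal relation already established equals $-A_{j_1,j_2}$. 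The key combinatorial observation, to be checked case by case on the position of $(i_1,i_2)$ relative to $(j_1,j_2)$, is that the union of the two binomial supports is precisely the region $(i_1-j_1)(i_2-j_2)\ge0$, that off this region both binomial products vanish, and that on each open quadrant and each coordinate axis the surviving product equals $C_{w_1}(i_1,j_1)C_{w_2}(i_2,j_2)$ in the conventions of the statement.

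The only point requiring the identity term is the corner $(i_1,i_2)=(j_1,j_2)$: there both binomial products equal $1$ while $C_{w_1}(j_1,j_1)C_{w_2}(j_2,j_2)=1$, so the extra $-A_{j_1,j_2}$ absorbs the double count. Assembling these contributions gives the second family $\sum_{(i_1-j_1)(i_2-j_2)\ge0}C_{w_1}(i_1,j_1)C_{w_2}(i_2,j_2)A_{i_1,i_2}=0$, and the converse is immediate since every manipulation is reversible. The computation is entirely elementary; I expect the bookkeeping along the axes $i_1=j_1$ and $i_2=j_2$ and at the corner — where one must check that the $C_w$ conventions together with the single identity contribution reproduce exactly the claimed coefficients — to be the main, though routine, obstacle.
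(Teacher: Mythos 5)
Your proposal is correct and follows essentially the same route as the paper's own proof: expand the diagonal actions of $(S,S)$ and of $(1,1)+(U,U)+(U^2,U^2)$ on monomials, observe that the two binomial supports are the componentwise quadrants $i\le j$ and $i\ge j$ whose union is $(i_1-j_1)(i_2-j_2)\ge 0$, and absorb the identity term at the corner $(i_1,i_2)=(j_1,j_2)$ via the antidiagonal relation, exactly as the paper does (mirroring its one-variable computation of $W_k$). The bookkeeping you flag on the axes and at the corner works out as you describe, so there is no gap.
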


\begin{proof}
Calcul de $W_{k_1,k_2}^0$. Posons $w_i=k_i-2$.\\
Soit $P(X_1,X_2)=\sum_{i_1=0}^{w_1}\sum_{i_2=0}^{w_2} A_{i_1,i_2} X_1^{i_1}X_2^{i_2}\in V_{k_1,k_2}[I_D].$ 
Alors on dispose des deux annulations:
\begin{footnotesize}
\begin{align*}
0=P_{(1,1)+(S,S)}&=\sum_{i_1=0}^{w_1}\sum_{i_2=0}^{w_2} A_{i_1,i_2} \left(X_1^{i_1}X_2^{i_2}+(-X_1)^{w_1-i_1}(-X_2)^{w_2-i_2}\right)\\
&=\sum_{i_1=0}^{w_1}\sum_{i_2=0}^{w_2} \left(A_{i_1,i_2}+(-1)^{i_1+i_2}A_{w_i-i_1,w_2-i_2}\right) X_1^{i_1}X_2^{i_2}.
\end{align*}

{\normalsize Puis } 
\begin{align*}
0&=P_{(1,1)+(U,U)+(U^2,U^2)}(X_1,X_2)\\
&=\sum_{i_1,i_2} A_{i_1,i_2} \left(X_1^{i_1}X_2^{i_2}+(-X_1+1)^{w_1-i_1}(-X_2+1)^{w_2-i_2}+(-X_1+1)^{i_1}(-X_1)^{w_1-i_1}(-X_2+1)^{i_2}(-X_2)^{w_2-i_2}\right)\\
&=\sum_{i_1,i_2}\sum_{j_1,j_2} A_{i_1,i_2} \left(X_1^{i_1}X_2^{i_2}
+\binom{w_1-i_1}{w_1-j_1}(-X_1)^{w_1-j_1}\binom{w_2-i_2}{w_2-j_2}(-X_2)^{w_2-j_2}
+\binom{i_1}{j_1}(-X_1)^{w_1-j_1}\binom{i_2}{j_2}(-X_2)^{w_2-j_2}\right)\\
&=\sum_{(j_1-i_1)(j_2-i_2)\geq 0}C_{w_1}(i_1,j_1)C_{w_2}(i_2,j_2)A_{i_1,i_2}(-X_1)^{w_1-j_1}(-X_2)^{w_2-j_2}.
\end{align*}
\end{footnotesize}
\end{proof}

On a vu que l'on pouvait décomposer:
$$V_{k_1,k_2}^{\Q}[I_D]\cong\bigoplus_{i=0}^{\min(k_1,k_2)} W_{k_1+k_2-2i}^{\Q}.$$
Cette décomposition a un intérêt calculatoire. Au cours de la démonstration, on a obtenu une méthode pour construire une famille de générateurs de ces différents espaces.

\begin{prop}
On dispose d'une formule explicite ne dépendant que des coefficients d'un antécédent par $\psi: V_{k_1,k_2}^{\Q}[I_D]\to W_{k_1+k_2-2}^{\Q}$ d'un élément:
$P(Z)=\sum_{m=0}^{k-2}\binom{k-2}{m}a_{m}(-Z)^{k-2-m}$ où $k=k_1+k_2-2$ donnée par:
\begin{equation}
Q(X_1,X_2)=\sum_{m_1=0}^{k_1-2}\sum_{m_2=0}^{k_2-2} \binom{k_1-2}{m_1}\binom{k_2-2}{m_2}a_{m_1+m_2}(-X_1)^{k_1-2-m_1}(-X_2)^{k_2-2-m_2}.
\end{equation}
\end{prop}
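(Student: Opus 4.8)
The plan is to exhibit $Q$ as the image of $P$ under a single $\Gamma$-equivariant map and to read off both required properties from that equivariance. Write $w_1=k_1-2$, $w_2=k_2-2$ and $w=w_1+w_2=k-2$. First I would record that $Q$ genuinely lies in $V_{k_1,k_2}^{\Q}$: in each monomial $(-X_1)^{w_1-m_1}(-X_2)^{w_2-m_2}$ the partial degrees satisfy $w_1-m_1\le w_1$ and $w_2-m_2\le w_2$. Next, the identity $\psi(Q)=P$ is a one-line Vandermonde computation: setting $X_1=X_2=Z$ collapses the double sum to $\sum_m\big(\sum_{m_1+m_2=m}\binom{w_1}{m_1}\binom{w_2}{m_2}\big)a_m(-Z)^{w-m}$, and the inner sum equals $\binom{w}{m}$, which recovers $P(Z)$.

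The substantial point is that $Q\in V_{k_1,k_2}^{\Q}[I_D]$. I would introduce the polynomial multiplication map
$$\mu:V_{k_1}^{\Q}\otimes V_{k_2}^{\Q}\to V_k^{\Q},\qquad P_1\otimes P_2\mapsto P_1P_2,$$
and observe that it is $\Gamma$-equivariant for the diagonal action on the source and the $|_{2-k}$ action on the target, since $(P_1P_2)|_\gamma=(P_1|_\gamma)(P_2|_\gamma)$ because $(cX+d)^{w_1}(cX+d)^{w_2}=(cX+d)^{w}$. Using the non-degenerate $\Gamma$-invariant pairings $[\cdot,\cdot]$ of la Remarque \ref{prodvk} on $V_{k_1}^{\Q}$ and $V_{k_2}^{\Q}$ (hence on their tensor product) and on $V_k^{\Q}$, the adjoint $\mu^{\ast}:V_k^{\Q}\to V_{k_1,k_2}^{\Q}$ is again $\Gamma$-equivariant, now intertwining $|_{2-k}$ on $V_k$ with the diagonal action $|_{(\gamma,\gamma)}$ on $V_{k_1,k_2}$.

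The key verification is then that $P\mapsto Q$ coincides with $\mu^{\ast}$. Using $[X^a,X^b]=\delta_{a+b=w}(-1)^b\binom{w}{a}^{-1}$ I would compute on the monomial basis
$$\mu^{\ast}(X^c)=\binom{w}{c}^{-1}\sum_{a+b=c}\binom{w_1}{a}\binom{w_2}{b}\,X_1^aX_2^b,$$
expand $P(Z)=\sum_c\binom{w}{c}a_{w-c}(-1)^cZ^c$, and check that $\mu^{\ast}(P)$ agrees term-by-term with $Q$ after the reindexing $a=w_1-m_1,\ b=w_2-m_2$. Granting this, since $P\in W_k^{\Q}$ satisfies $P|_{1+S}=P|_{1+U+U^2}=0$, equivariance yields
$$Q|_{(1,1)+(S,S)}=\mu^{\ast}(P|_{1+S})=0,\qquad Q|_{(1,1)+(U,U)+(U^2,U^2)}=\mu^{\ast}(P|_{1+U+U^2})=0,$$
which are precisely the two defining relations of $V_{k_1,k_2}^{\Q}[I_D]$.

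The main obstacle is the identification of $P\mapsto Q$ with $\mu^{\ast}$: it is the one step requiring an explicit finite computation, and the bookkeeping between the coefficients $a_m$ of $P$ and the monomial coefficients of $\mu^{\ast}$ must be tracked with care. As a fallback one can bypass the adjoint and test the two coefficient relations of the preceding proposition directly on the coefficients $A_{j_1,j_2}=(-1)^{j_1+j_2}\binom{w_1}{j_1}\binom{w_2}{j_2}a_{w-j_1-j_2}$ of $Q$: the first relation $A_{j_1,j_2}+(-1)^{j_1+j_2}A_{w_1-j_1,w_2-j_2}=0$ drops out at once from the univariate symmetry $a_{w-n}+(-1)^wa_n=0$ encoded in $P|_{1+S}=0$, while the second follows from $P|_{1+U+U^2}=0$ together with a Vandermonde-type rearrangement of the products $C_{w_1}(i_1,j_1)C_{w_2}(i_2,j_2)$. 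This second route is elementary but computationally heavier, which is why I expect the $\mu^{\ast}$ argument to be the cleaner path.
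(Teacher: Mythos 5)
Your proof is correct, but it follows a genuinely different route from the paper's. The paper motivates and justifies the formula through the integral representation $Q_f(X_1,X_2)=\int_{i\infty}^{0}f(z)(X_1-z)^{k_1-2}(X_2-z)^{k_2-2}dz$ for a cusp form $f$ of weight $k_1+k_2-2$: expanding the binomials gives exactly the shape of $Q$ with $a_{m_1+m_2}$ replaced by periods, membership in $V_{k_1,k_2}[I_D]$ is inherited from the contour argument already used for the surjectivity of $\psi$, and one concludes by matching coefficients with those of $P_f$; the purely formal verification is relegated to checking the two coefficient relations of the preceding proposition. You instead exhibit $P\mapsto Q$ as the adjoint $\mu^{\ast}$ of the multiplication map $\mu:V_{k_1}^{\Q}\otimes V_{k_2}^{\Q}\to V_{k}^{\Q}$ for the invariant pairings of the Remarque du chapitre 1; your formula $\mu^{\ast}(X^c)=\binom{w}{c}^{-1}\sum_{a+b=c}\binom{w_1}{a}\binom{w_2}{b}X_1^{a}X_2^{b}$ (with $w_j=k_j-2$ and $w=w_1+w_2$) is correct and does send $P$ to $Q$ after the reindexing $a=w_1-m_1$, $b=w_2-m_2$. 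What this buys is a complete, purely algebraic argument valid directly over $\Q$ that never passes through cusp forms or the Eichler--Shimura decomposition: the diagonal Manin relations for $Q$ follow at once from the $\Gamma$-equivariance of $\mu^{\ast}$ and the hypothesis $P\in W_{k}^{\Q}$, whereas the paper's version, read literally, still defers the formal coefficient check. One minor slip in your fallback paragraph: the symmetry forced on the $a_n$ by $P|_{1+S}=0$ is $a_{w-n}+(-1)^{w-n}a_n=0$ rather than $a_{w-n}+(-1)^{w}a_n=0$; this does not affect your main argument.
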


\begin{proof}
Pour obtenir une démonstration formelle, il suffit de calculer $Q(Z,Z)$ et de retrouver $P(Z)$ puis de vérifier les conditions sur les coefficients donnée par la proposition précédente. Pourtant, c'est bien la construction de l'antécédent de $P_f(Z)=\int_{i\infty}^0f(z)(X-z)^{k-2}dz\in W_{k}$ pour $f\in S_k$, donnant la surjectivité de $\psi$, qui nous inspire ce résultat:
\begin{align*}
Q_f(X_1,X_2)&=\int_{i\infty}^0f(z)(X_1-z)^{k_1-2}(X_2-z)^{k_2-2}dz.\\
&=\sum_{m_1=0}^{k_1-2}\sum_{m_2=0}^{k_2-2} \binom{k_1-2}{m_1}\binom{k_2-2}{m_2}\int_{i\infty}^0 f(z)z^{m_1+m_2}(-X_1)^{k_1-2-m_1}(-X_2)^{k_2-2-m_2}.
\end{align*}
Cet élément est bien dans $V_{k_1,k_2}[I_D]$. En développant de la même manière $P_f(Z)$ on peut identifier les coefficients et on obtient un antécédent dans $V_{k_1,k_2}^{\Q}[I_D]$.
\end{proof}

Ceci permet d'obtenir une méthode construction de générateurs de $V_{k_1,k_2}^{\Z}[I_D]$. On va l'appliquer sur $k=12$ où on a:
$$W_{12}=<1-X^{10},X^2-3X^4+3X^6-X^8,4X-25X^3+42X^5-25X^7+4X^9>_{\Q}.$$
Et ainsi on obtient par exemple:
$$V_{4,10}^{\Z}[I_D]=<1-X_1^2X_2^8,(X_1-X_2)(1-X_1X_2^7),(X_1-X_2)(1-X_2^6),P_{4,10}^+(X_1,X_2),P_{4,10}^-(X_1,X_2)>_{\Q}$$
où:
\begin{align*}
P_{4,10}^+(X_1,X_2)&=
\left(\frac{28}{45} X_2^2-\frac{70}{70}X_2^4+\frac{28}{70}X_2^6-\frac{1}{45}X_2^8\right)
+2X_1\left(\frac{8}{45}X_2-\frac{56}{70}X_2^3+\frac{56}{70}X_2^5-\frac{8}{45}X_2^7\right)\\
&+X_1^2\left(\frac{1}{45}-\frac{28}{70}X_2^2+\frac{70}{70}X_2^4-\frac{28}{45}X_2^6\right),\\
P_{4,10}^-(X_1,X_2)&=\left(\frac{16}{5} X_2-\frac{35}{3}X_2^3+\frac{28}{3}X_2^5-\frac{5}{3}X_2^7\right)
+2X_1\left(\frac{2}{5}-\frac{35}{6}X_2^2+\frac{35}{3}X_2^4-\frac{35}{6}X_2^6+\frac{2}{5}X_2^8\right)\\
&+X_1^2\left(-\frac{5}{3}X_2+\frac{28}{3}X_2^3-\frac{35}{3}X_2^5+\frac{16}{5}X_2^7\right).
\end{align*}

\chapter{Relations vérifiées par le polynôme des multipériodes}
\minitoc
\section*{Introduction}

Soit $n\geq 1$ un entier. Soient $k_1,...,k_n\geq 4$ des entiers pairs. Notons à nouveau $\Gamma=P\G$, $S_{k_j}^{\Q}$ l'espace des formes modulaires holomorphes paraboliques pour $\Gamma$ de poids $k_j$dont les coefficients de Fourier sont rationnels $(1\leq j\leq n)$. Pour tout corps de $K$ posons $S_{k_j}^{K}=S_{k_j}^{\Q}\otimes_{\Q} K$ et $S_{k_j}=S_{k_j}^{\C}$.\par
Soit $(f_j)_{1\leq j \leq n}\in\prod_{j=1}^n S_{k_j}$. Notons $k$ le multi-entier $(k_1,...,k_n)$. C'est le \textit{poids} de la famille $(f_j)_{1\leq j \leq n}$.\par
Le \textit{polynôme des multipériodes de longueur} $n$ de la famille $(f_j)_{1\leq j \leq n}$ est défini par:
\begin{align}
P_{f_1,...,f_n}(X_1,...,X_n)&=\int_{0<t_1<...<t_n}f_1(it_1)(X_1-it_1)^{k_1-2}...f_n(it_n)(X_n-it_n)^{k_n-2}\d t_1...\d t_n\\
&=\sum_{\substack{m_j=1\\ 1\leq j\leq n}}^{k_j-1} \prod_{j=1}^n \binom{k_j-2}{m_j-1} \Lambda(f_1,...,f_n;m_1,...,m_n) \frac{X_1^{k_1-m_1-1}}{i^{m_1}}...\frac{X_n^{k_n-m_n-1}}{i^{m_n}}.
\end{align}\par
C'est un polynôme de l'espace $V_k=V_k^{\Z}\otimes\C$ où $V_k^{\Z}=\Z_{k-2}[X_1,...,X_n]$ est le $\Z$-module engendré librement par les $X_1^{i_1}...X_n^{i_n}$ avec $0\leq i_j\leq k_j-2$ pour tout $1\leq j\leq n$. On dispose notamment de l'identification $V_k^{\Z}\cong\otimes_{j=1}^nV^{\Z}_{k_j}$ où $V_{k_j}^{\Z}=\Z_{k_j-2}[X_j]$.\par
Les \textit{multipériodes de longueur} $n$ de la famille sont les valeurs aux multi-entiers critiques, les $(m_1,...,m_n)$ tels que $1\leq m_j \leq k_j-1$ pour tout $1\leq j\leq n$, du prolongement analytique à $\C^n$ de la fonction définie pour $\Re(s_j)>k_j$ par:
\begin{equation}
\Lambda(f_1,...,f_n;s_1,...,s_n)=\int_{0<t_1<...<t_n} f_1(it_1)t_1^{s_1-1}\d t_1...f_n(it_n)t_n^{s_n-1}\d t_n.
\end{equation}\par
Nous cherchons à mettre en équation l'espace des polynômes des multipériodes. Dans le chapitre $2$, on a exhibé des relations vérifiées par ces polynômes. Pour mémoire, on a les relations de Manin généralisées:
\begin{align}
\sum_{\substack{a,b\geq 0\\ a+b=n}}P_{f_1,...,f_a}|_{(S,...,S)}&\otimes P_{f_{a+1},...,f_{a+b}}=0,\\
\sum_{\substack{a,b,c\geq 0\\ a+b+c=n}}P_{f_1,...,f_a}|_{(U^2,...,U^2)}&\otimes P_{f_{a+1},...,f_{a+b}}|_{(U,...,U)}\otimes P_{f_{a+b+1},...,f_{a+b+c}}=0,
\end{align}
et les relations de mélange, pour tout couple $(a,b)$ vérifiant $a+b=n$:
\begin{equation}
\sum_{\sigma\in\mathfrak{S}_{a,b}}P_{f_{\sigma(1)},...,f_{\sigma(n)}}=P_{f_1,...,f_a}\otimes P_{f_{a+1},...,f_{a+b}}.\label{relmelab}
\end{equation}\par
Toutefois, ces relations nous donnent des équations récursives $n$ reliant ces polynômes de différentes longueurs. 
On s'intéresse ici aux relations satisfaites purement par les polynômes des multipériodes de longueur $n$ donnée sans faire intervenir les multipériodes de longueurs inférieures.
Ces relations que nous mettons en évidence sont linéaires et définissent un sous-groupe $W_k^{\Z}\subset V_k^{\Z}$. Nous démontrons ensuite qu'il est le plus petit possible, dans un sens que nous préciserons, tel que son extension au corps des complexes contiennent l'ensemble des polynômes des multipériodes.

\section{Définitions d'analogues multidimensionnelles}

\subsection{La famille des permutés du polynôme des multipériodes}

%

Introduisons la \textit{famille des permutés} $(P_{f_1,...,f_n}^{\sigma})_{\sigma\in\S_n}$, où pour tout $\sigma\in\S_n$ on définit:
\begin{equation}
P^{\sigma}_{f_1,...,f_n}(X_1,...,X_n)=P_{f_{\sigma(1)},...,f_{\sigma(n)}}(X_{\sigma(1)},...,X_{\sigma(n)})\in V_k.
\end{equation}
Définissons alors l'application $\C$-linéaire:
\begin{equation}
R_k:\otimes_{j=1}^n S_{k_j} \to V_k^{\S_n},\quad f_1\otimes...\otimes f_n\mapsto (P^{\sigma}_{f_1,...,f_n})_{\sigma\in\S_n}.
\end{equation}

\begin{prop}
L'application $R_k$ est injective.
\end{prop}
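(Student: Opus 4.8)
The plan is to reduce the injectivity of $R_k$ to the case $n=1$, namely the Eichler--Shimura injectivity of the period-polynomial map (formula (5) of the introduction), by exploiting the total symmetrization of the multiperiods. First I would record the polynomial form of the symmetrization identity
$$\sum_{\sigma\in\S_n}\Lambda^{\sigma}(f_1,\dots,f_n;s_1,\dots,s_n)=\Lambda(f_1,s_1)\cdots\Lambda(f_n,s_n),$$
which is the full shuffle relation obtained by iterating (\ref{relmelab}). Under the normalization defining the multiperiod polynomial, this translates into
$$\sum_{\sigma\in\S_n}P^{\sigma}_{f_1,\dots,f_n}(X_1,\dots,X_n)=P_{f_1}(X_1)\cdots P_{f_n}(X_n),$$
valid for every pure tensor $f_1\otimes\cdots\otimes f_n$, where the right-hand side is read inside $V_k\cong\bigotimes_{j=1}^nV_{k_j}$. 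The verification amounts to matching the binomial factors $\binom{k_j-2}{m_j-1}$ and the factors $i^{-m_j}$ on both sides and invoking the symmetrization formula coefficientwise; this short but careful bookkeeping of constants is the one genuinely delicate step of the argument.

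Next I introduce the auxiliary $\C$-linear map
$$T=\bigotimes_{j=1}^n R_{k_j}\colon\ \bigotimes_{j=1}^n S_{k_j}\ \longrightarrow\ \bigotimes_{j=1}^n V_{k_j}=V_k,\qquad f_1\otimes\cdots\otimes f_n\longmapsto P_{f_1}\otimes\cdots\otimes P_{f_n},$$
the tensor product of the $n=1$ period maps $R_{k_j}\colon f\mapsto P_f$. Each $R_{k_j}$ is injective by Eichler--Shimura, and the tensor product of injective linear maps of finite-dimensional $\C$-vector spaces is again injective: choosing a complement $V_{k_j}=\operatorname{Im}R_{k_j}\oplus C_j$, the map $T$ sends $\bigotimes_j S_{k_j}$ isomorphically onto the direct summand $\bigotimes_j\operatorname{Im}R_{k_j}$ of $V_k$. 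Hence $T$ is injective.

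Finally I would combine the two facts. The identity of the first paragraph says precisely that, extending by $\C$-linearity from pure tensors, the composition of $R_k$ with the summation map $(\varphi_\sigma)_{\sigma}\mapsto\sum_{\sigma}\varphi_\sigma$ equals $T$; that is, $\sum_{\sigma\in\S_n}R_k(\xi)_\sigma=T(\xi)$ for every $\xi\in\bigotimes_j S_{k_j}$. If $R_k(\xi)=0$, all components vanish, so $T(\xi)=\sum_\sigma R_k(\xi)_\sigma=0$, whence $\xi=0$ by injectivity of $T$. This proves that $R_k$ is injective. No induction on $n$ is needed once the full shuffle relation is available; the whole difficulty is concentrated in the normalized symmetrization identity, everything else being formal.
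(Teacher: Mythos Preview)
Your argument is correct, and it takes a somewhat different route from the paper's. The paper proceeds by induction on $n$: at each step it uses only the partial shuffle relation with $(a,b)=(n,1)$ to factor off a single $P_{f_{n+1}}(X_{n+1})$, then appeals to the linear independence of these last-variable polynomials (Eichler--Shimura in one factor) together with the induction hypothesis to kill $F_\alpha$ one tensor slot at a time. You instead apply the full symmetrization $\sum_{\sigma\in\S_n}$ in one stroke and reduce directly to the injectivity of $T=\bigotimes_j R_{k_j}$. The two proofs rest on exactly the same ingredients (shuffle relations and Eichler--Shimura for $n=1$); the paper's induction is in effect a hands-on proof that a tensor product of injective maps is injective, which you invoke as a general fact. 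Your version is shorter and more conceptual; the paper's displays the inductive mechanism explicitly. One small simplification on your side: the ``delicate bookkeeping of constants'' you anticipate is not needed. Working from the integral definition $P_{f_1,\dots,f_n}(X_1,\dots,X_n)=\int_{0<t_1<\dots<t_n}\prod_j f_j(it_j)(X_j-it_j)^{k_j-2}\,dt_j$ and the change of variables $t_i=u_{\sigma(i)}$, the sum $\sum_{\sigma\in\S_n}P^{\sigma}$ becomes the integral over all of $(0,\infty)^n$, which factors on the nose as $\prod_j P_{f_j}(X_j)$; no coefficient matching is required.
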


\begin{proof}
Démontrons ceci par récurrence sur $n\geq 1$. L'application $R_{k_1}$ est injective d'après le Théorème d'Eichler-Shimura-Manin. Supposons les applications $R_k$ injectives pour la longueur $n$. Montrons l'injectivité de $R_k$, pour $k=(k_1,...,k_{n+1})$ de longueur $n+1$. Pour cela, écrivons la relation de mélange (\ref{relmelab}) pour $a=n$ et $b=1$:
$$\sum_{\sigma\in\S_{n,1}} P^{\sigma}_{f_1,...,f_{n+1}}(X_1,...,X_{n+1})=P_{f_1,...,f_n}(X_1,...,X_n)P_{f_{n+1}}(X_{n+1}),$$
pour tout $f_j\in S_{k_j} ,1\leq j\leq (n+1)$. Ainsi pour $\rho\in\S_n$, on obtient:
$$\sum_{\sigma\in\S_{n,1}} P^{\sigma}_{f_{\rho(1)},...,f_{\rho(n)},f_{n+1}}(X_{\rho(1)},...,X_{\rho(n)},X_{n+1})
=P^{\rho}_{f_1,...,f_n}(X_1,...,X_n)P_{f_{n+1}}(X_{n+1}).$$
Soit $F\in \Ker(R_{k_1,...,k_{n+1}})$. Posons $F=\sum_{\alpha\in A} F_{\alpha}\otimes f_{\alpha}$ où $F_{\alpha}\in\otimes_{j=1}^n S_{k_j} $ et $f_{\alpha}\in S_{k_{n+1}} $ sont des familles indexées par un ensemble fini $A$. On peut supposer que la famille $(f_{\alpha})_{\alpha\in A}$ est libre. Ainsi le Théorème d'Eichler-Shimura donne l'indépendance des polynômes $(P_{f_{\alpha}})_{\alpha\in A}$ de $V_{k_{n+1}}$.\par
Soit $\rho\in\S_n$. Notons $\tilde{\rho}$ l'unique élément de $\S_{n,1}$ tel que $\tilde{\rho}|_{[1,n]}=\rho$ et $\tilde{\rho}(n+1)=n+1$. Alors:
$$0=\sum_{\alpha\in A} \sum_{\sigma\in\S_{n,1}}R_{k_1,...,k_{n+1}}(F_{\alpha}\otimes f_{\alpha})(\sigma\circ\tilde{\rho})
=\sum_{\alpha\in A}R_{k_1,...,k_n}(F_{\alpha})(\rho)R_{k_{n+1}}(f_{\alpha}).$$
Puisque la famille des $R_{k_{n+1}}(f_{\alpha})=P_{f_{\alpha}}(X_{n+1})$ est libre, alors pour tout $\rho\in \S_n$ et tout $\alpha\in A,$ on a $R_{k_1,...,k_n}(F_{\alpha})(\rho)=0$. C'est à dire:
$$R_{k_1,...,k_n}(F_{\alpha})=0,\text{ pour tout }\alpha\in A,$$
et ainsi $F_{\alpha}=0$ par hypothèse de récurrence. On en déduit bien $F=0$.
\end{proof}

Notons $MP(k)$ l'image de $R_k$. Comme précédemment on peut exploiter la $\Z$-structure de $\otimes_{j=1}^n S_{k_j}$ et considérons pour tout corps de nombre $K$:
\begin{equation}
MP^{K}(k)=MP^{\Q}(k)\otimes K=\{R_k(f_1\otimes...\otimes f_n)\text{ pour }f_1\otimes...\otimes f_n\in\otimes_{j=1}^n S_{k_j}^{K}\}.
\end{equation}

\subsubsection{Action de $P\G^n\rtimes\S_n$ sur $(V_k^{\Z})^{\S_n}$}

Notons $G_n=\Gamma^n\rtimes\S_n$ le produit semi-direct défini par:
\begin{equation}
[\gamma,\rho][\gamma',\rho']=[\gamma^{\rho'}\gamma',\rho\rho'],
\end{equation}
où l'action de $\sigma\in\S_n$ sur $(\gamma_1,...,\gamma_n)\in\Gamma^n$ est donnée par la formule:
\begin{equation}
(\gamma_1,...,\gamma_n)^{\sigma}=(\gamma_{\sigma(1)},...,\gamma_{\sigma(n)}).
\end{equation}
Elle vérifie, pour tout $\gamma,\gamma'\in\Gamma^n$ et $\rho,\rho'\in\S_n$, les propriétés:
\begin{equation}
\left(\gamma^{\rho}\right)^{\rho'}=\gamma^{\rho\rho'}\text{ et }\gamma^{\rho}\gamma'^{\rho}=(\gamma\gamma')^{\rho}.
\end{equation}

\begin{prop}
On a une action à droite de $[\gamma,\rho]\in G_n$ sur $(P^{\sigma})_{\sigma\in\S_n}\in (V_k^{\Z})^{\S_n}$ définie par:
\begin{equation}
\left(\sigma\mapsto P^{\sigma}(X_1,...,X_n)\right)|_{[\gamma,\rho]}=\left(\sigma\mapsto P^{\rho\sigma}(X_1|_{\gamma_{\sigma(1)}},...,X_n|_{\gamma_{\sigma(n)}})\right).\label{defactgn}
\end{equation}
\end{prop}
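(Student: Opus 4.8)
The plan is to verify directly that formula (\ref{defactgn}) satisfies the two axioms of a right action: that the neutral element $[1,\mathrm{id}]$ (with $1=(1,\dots,1)$) acts trivially, and that $\left(P|_{[\gamma,\rho]}\right)|_{[\gamma',\rho']}=P|_{[\gamma,\rho][\gamma',\rho']}$ for all $[\gamma,\rho],[\gamma',\rho']\in G_n$. First I would record the more compact rewriting of (\ref{defactgn}): since by definition $(\gamma_{\sigma(1)},\dots,\gamma_{\sigma(n)})=\gamma^{\sigma}$, the formula reads
\begin{equation}
\left(P|_{[\gamma,\rho]}\right)^{\sigma}=P^{\rho\sigma}\big|_{\gamma^{\sigma}},
\end{equation}
where $|_{\gamma^{\sigma}}$ denotes the diagonal slash action of the tuple $\gamma^{\sigma}\in\Gamma^n$ on $V_k\cong\otimes_{j=1}^nV_{k_j}$. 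In passing one checks well-definedness: each $|_{\gamma_{\sigma(i)}}$ preserves $\Z_{k_i-2}[X_i]$, so $|_{\gamma^{\sigma}}$ maps $V_k^{\Z}$ into itself, and $\rho\sigma$ runs over $\S_n$ as $\sigma$ does; hence $P|_{[\gamma,\rho]}$ is again a family indexed by $\S_n$ with integer coefficients.

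The key preliminary step is the lemma that the diagonal slash is itself a right action of $\Gamma^n$, that is $\left(Q|_{\delta}\right)|_{\delta'}=Q|_{\delta\delta'}$ for $\delta,\delta'\in\Gamma^n$ with $\delta\delta'$ the componentwise product. This reduces to the one-variable identity $(q|_A)|_B=q|_{AB}$ for $q\in\C_{w}[X]$ and $A,B\in\Gamma$, which follows from the cocycle relation for the automorphy factors together with $A(BX)=(AB)X$; applying it coordinate by coordinate gives the claim on $\Gamma^n$. The neutral axiom is then immediate: with $\rho=\mathrm{id}$ and $\gamma=1$ one has $\gamma^{\sigma}=(1,\dots,1)$, so $\left(P|_{[1,\mathrm{id}]}\right)^{\sigma}=P^{\sigma}|_{(1,\dots,1)}=P^{\sigma}$.

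For the compatibility axiom I would set $Q=P|_{[\gamma,\rho]}$ and compute both sides. Using the compact formula twice,
\begin{equation}
\left(Q|_{[\gamma',\rho']}\right)^{\sigma}=Q^{\rho'\sigma}\big|_{\gamma'^{\sigma}}=\left(P^{\rho\rho'\sigma}\big|_{\gamma^{\rho'\sigma}}\right)\big|_{\gamma'^{\sigma}}=P^{\rho\rho'\sigma}\big|_{\gamma^{\rho'\sigma}\gamma'^{\sigma}},
\end{equation}
the last equality by the diagonal-slash lemma. On the other hand, since $[\gamma,\rho][\gamma',\rho']=[\gamma^{\rho'}\gamma',\rho\rho']$, the definition gives $\left(P|_{[\gamma^{\rho'}\gamma',\rho\rho']}\right)^{\sigma}=P^{\rho\rho'\sigma}|_{(\gamma^{\rho'}\gamma')^{\sigma}}$. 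The two expressions agree provided $\gamma^{\rho'\sigma}\gamma'^{\sigma}=(\gamma^{\rho'}\gamma')^{\sigma}$, which is exactly the combination of the two stated properties of the $\S_n$-action on $\Gamma^n$: first $\gamma^{\rho'\sigma}=(\gamma^{\rho'})^{\sigma}$ by $(\gamma^{\rho})^{\rho'}=\gamma^{\rho\rho'}$, then $(\gamma^{\rho'})^{\sigma}(\gamma')^{\sigma}=(\gamma^{\rho'}\gamma')^{\sigma}$ by $\delta^{\sigma}\delta'^{\sigma}=(\delta\delta')^{\sigma}$.

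The computation is entirely formal once the compact rewriting is in place; the only point requiring care, and the step I expect to be the genuine source of error rather than difficulty, is the bookkeeping of how the permutation $\rho'$ and the twist by $\sigma$ interact, namely that the matrix acting on the $i$-th variable after two applications is $\gamma_{(\rho'\sigma)(i)}$ composed with $\gamma'_{\sigma(i)}$ and not some other pairing. Tracking this consistently is precisely what forces the appearance of $\gamma^{\rho'\sigma}\gamma'^{\sigma}$, and hence pins down the semidirect-product rule $[\gamma^{\rho'}\gamma',\rho\rho']$ as the unique multiplication making (\ref{defactgn}) associative.
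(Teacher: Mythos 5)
Your proof is correct and follows essentially the same route as the paper: both reduce the verification to the single computation $\bigl(P|_{[\gamma,\rho]}\bigr)|_{[\gamma',\rho']}{}^{\sigma}=P^{\rho\rho'\sigma}|_{\gamma^{\rho'\sigma}\gamma'^{\sigma}}$ and then match it with $P|_{[\gamma^{\rho'}\gamma',\rho\rho']}$ via the identities $(\gamma^{\rho})^{\rho'}=\gamma^{\rho\rho'}$ and $\gamma^{\rho}\gamma'^{\rho}=(\gamma\gamma')^{\rho}$. The extra checks you include (neutral element, well-definedness, the one-variable slash cocycle) are harmless additions that the paper leaves implicit.
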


\begin{proof}
Deux actions à droite sont naturelles sur $(P^{\sigma})_{\sigma\in\S_n}\in (V_k^{\Z})^{\S_n}$:\\
Celle de $\rho\in\S_n$ donnée par:
\begin{equation*}
\left(\sigma\mapsto P^{\sigma}(X_1,...,X_n)\right)|_{\rho}=\left(\sigma\mapsto P^{\rho\sigma}(X_1,...,X_n)\right).
\end{equation*}
Et l'action de $(\gamma_1,...,\gamma_n)\in\Gamma^n$ définie par:
\begin{equation*}
\left(\sigma\mapsto P^{\sigma}(X_1,...,X_n)\right)|_{(\gamma_1,...,\gamma_n)}=\left(\sigma\mapsto P^{\sigma}(X_1|_{\gamma_{\sigma(1)}},...,X_n|_{\gamma_{\sigma(n)}})\right).
\end{equation*}
Ces actions se combinent bien et sont conformes au produit de $G_n$. En effet, on vérifie:
\begin{multline*}
\left(\sigma\mapsto P^{\sigma}(X)\right)|_{[\gamma,\rho][\gamma',\rho']}=\left(\sigma\mapsto P^{\rho\sigma}(X|_{\gamma^{\sigma}})\right)|_{[\gamma',\rho']}\\
=\left(\sigma\mapsto P^{\rho\rho'\sigma}(X|_{\gamma^{\rho'\sigma}\gamma'^{\sigma}})\right)=\left(\sigma\mapsto P^{\sigma}(X)\right)|_{[\gamma^{\rho'}\gamma',\rho\rho']}.
\end{multline*}
\end{proof}

Nous nous proposons de caractériser $MP(k)$ à l'aide de la structure de $G_n$-module de $(V_k^{\Z})^{\S_n}$ ainsi construite.\par
Lorsque $n=2$, on a construit un idéal à gauche $\mathcal{I}_2\subset \Z[\Gamma^2]$ et ainsi un sous-$\Q$-espace vectoriel $W_{k_1,k_2}^{\Q}=V_{k_1,k_2}^{\Q}[\mathcal{I}_2]$ minimal tel que :
$$\Per_{k_1,k_2}\subset W_{k_1,k_2}^{\C}\text{ et }E_{k_1,k_2}^{\Q}\subset W_{k_1,k_2}^{\Q}.$$\par
Voyons comment reformuler ce résultat dans le contexte d'une famille de permutés. Soit $\sigma=(2,1)\in \S_2$ la transposition. Alors la relation $P_{f_1,f_2}(X_1,X_2)|_{(S,S)}=P_{f_2,f_1}(X_2,X_1)=P_{f_1,f_2}^{\sigma}(X_1,X_2)$ permet d'établir une bijection entre les polynômes des bipériodes et la famille des permutés: 
$$\Phi:V_{k_1,k_2}^{\Z}\to (V_{k_1,k_2}^{\Z})^{\S_2},P\mapsto (P,P|_{(S,S)}).$$
Cette application est injective et envoie $\Per_{k_1,k_2}$ sur $MP(k_1,k_2)$. Définissons alors un idéal à gauche de $\Z[G_2]$ par:
$$\mathcal{A}_2=\big([\mathcal{I}_2,id],[(S,S),id]-[(1,1),\sigma]\big).$$
Alors $\left(V_{k_1,k_2}^{\Q}\right)^{\S_2}[\mathcal{A}_2]$ est le plus petit sous-$\Q$-espace vectoriel de $\left(V_{k_1,k_2}^{\Q}\right)^{\S_2}$ vérifiant:
\begin{equation*}
MP(k_1,k_2)\subset \left(V_{k_1,k_2}^{\C}\right)^{\S_2}[\mathcal{A}_2]
\text{ et }\Phi(E_{k_1,k_2}^{\Q})\subset \left(V_{k_1,k_2}^{\Q}\right)^{\S_2}[\mathcal{A}_2].
\end{equation*}

Adaptons ce point de vue lorsque $n\geq 3$. Pour cela, il nous faut déterminer l'idéal à droite de $\Z[G_n]$ annulateur de ces polynômes. Notons ainsi $\mathcal{A}(k)\subset\Z[G_n]$ l'idéal dépendant à priori de $k=(k_1,...,k_n)$ la famille des poids et défini par:
\begin{equation}\label{Akpoly}
\mathcal{A}(k)=\big\{a\in\Z[G_n]\text{ tel que }R_k(f_1\otimes...\otimes f_n)|_a=0,\text{ pour tout }(f_j)_{1\leq j\leq n}\in \prod_{j=1}^n S_{k_j}^{\Q}\}.
\end{equation}\par
Nous allons construire explicitement un idéal $\mathcal{A}_n\subset\Z[G_n]$ tel que pour tout multi-entier $k$ de longueur $n$, 
$\left(V_k^{\Q}\right)^{\S_n}[\mathcal{A}_n]\supset\left(V_k^{\Q}\right)^{\S_n}[\mathcal{A}(k)]$. Puis nous préciserons dans quelle mesure ce $\Q$-espace vectoriel est minimal voir le Corollaire \ref{corofin}. Ainsi relations données par l'idéal $\mathcal{A}_n$ sont en ce sens optimales.

%

\subsection{Homologie singulière de $\H^n$ relative aux pointes}

Reprenons les définitions d'homologie introduites dans la section $3.1.1$.\par 

Pour toute partie $X\subset \H^n$ et $X_0=\pte^n\cap X$ et tout entier $m$, le groupe des \textit{$m$-chaîne de $X$ relative aux pointes} $M_m^{pte}(X,\Z)$ est le $\Z$-module libre engendré par les classes d'homotopie des applications:
$$C:\Delta_m\to X\text{ continues telles que }C(\Delta_m^0)\subset X_0,$$
L'application de bord $\delta_m: M_m^{pte}(X,\Z)\to M_{m-1}^{pte}(X,\Z)$ permet de définir les \textit{groupes d'homologie relative}:
$$H_m^{pte}(X,\Z)=\Ker(\delta_m)/\Im(\delta_{m+1}).$$

\begin{prop}
On peut calculer les groupes d'homologie relative:
$$H_m^{pte}(\H^n,\Z)=\begin{cases}\Z[\pte^n]^0&\text{ si }m=0\\ \Z\text{ si }m=2n\\0&\text{ sinon.}\end{cases}$$
\end{prop}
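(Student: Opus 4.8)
The plan is to argue by induction on $n$, computing the single factor $\H=\mathfrak{H}\sqcup\pte$ first and then propagating the answer through the product $\H^{n}=\H^{n-1}\times\H$ by a K\"unneth-type argument on the complexes $M_*^{pte}$. For the base case $n=1$ the relevant input is the topology of the completed half-plane $\H$ with its horoball topology: the open part $\mathfrak{H}$ is contractible, the cusps $\pte$ form a countable discrete subset of the boundary, and the top dimension carries a distinguished orientation (fundamental) class relative to the cusps. Writing the long exact sequence of the pair $(\H,\pte)$ for the chain complex $M_*^{pte}$, I would read off that the only surviving homology sits at the two extremes: the connectivity of $\H$ makes the intermediate groups vanish, the connectivity data of the discrete set $\pte$ produces the module $\Z[\pte]^{0}$ at the bottom, and the orientation class produces the copy of $\Z$ in degree $2$. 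This is exactly the statement for $n=1$, and the bottom identification reuses the surjectivity of $\Theta_1$ onto $\Z[\pte]^{0}$ already recorded in the exact sequence $0\to\mathcal{I}_1\to\Z[\G]\to\Z[\pte]^{0}\to 0$.

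For the inductive step I would exploit the compatibility of $M_*^{pte}$ with products. An $m$-chain of $\H^{n}$ with vertices in $\pte^{n}$ decomposes, via an Eilenberg--Zilber/shuffle argument, in terms of chains of the two factors with vertices in $\pte^{n-1}$ and $\pte$ respectively; this identifies $M_*^{pte}(\H^{n})$ up to quasi-isomorphism with the total tensor product $M_*^{pte}(\H^{n-1})\otimes M_*^{pte}(\H)$. Since all the groups involved are free over $\Z$, the torsion-free K\"unneth formula gives $H_*^{pte}(\H^{n})\cong\bigoplus_{a+b=*}H_a^{pte}(\H^{n-1})\otimes H_b^{pte}(\H)$. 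By the inductive hypothesis the only contributions come from pairing the bottom piece of $\H^{n-1}$ with the bottom piece of $\H$, and the top piece with the top piece: the top term is $\Z\otimes\Z=\Z$ in degree $2n$, represented by the product cycle whose explicit model is $\tau_n$, while the bottom term assembles to the degree-zero divisors $\Z[\pte^{n}]^{0}$ on the product of cusps, the identification again following from the path-connectivity of $\H^{n}$ (any two cusp-tuples are joined by a $pte$-path, exactly as in the $n=1$ connectivity argument).

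The hard part will be the two genuinely topological inputs underlying the base case and the vanishing of the intermediate degrees. First, one must pin down the homotopy behaviour of the completed factor in the horoball topology, in particular the existence and uniqueness (up to the allowed homotopies fixing vertices) of the top-dimensional fundamental class and the asphericity needed to control simplices by their cusp-vertices; the horoball topology at the cusps is precisely where naive contractibility arguments must be handled with care. Second, because $\pte$ is infinite and $\H$ is non-compact, one has to be attentive to whether finite or locally finite chains are used for the top class, and it is this non-compactness that is responsible for the bottom group being the large module $\Z[\pte^{n}]^{0}$ rather than a single $\Z$. Third, I must verify that all the K\"unneth cross-terms land in intermediate degrees $0<m<2n$ and genuinely cancel, so that the homology is concentrated at the two ends. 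The product bookkeeping and the bottom connectivity identification are routine once the $n=1$ computation and the analysis of $\Theta_1$ are in hand; the main effort goes into the base-case topology and the intermediate-degree vanishing.
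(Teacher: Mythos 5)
Your base case ($n=1$, via the long exact sequence of the pair) is exactly the mechanism the paper uses — except that the paper applies it directly to $(\H^n,\pte^n)$ for every $n$ at once, quoting the absolute homology of $\H^n$ (namely $\Z$ in degrees $0$ and $2n$, zero elsewhere) and of the discrete set $\pte^n$, and reading the answer off the long exact sequence. No product decomposition or induction on $n$ appears in the paper, and for good reason: your inductive step is where the argument breaks.

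The K\"unneth step cannot produce the stated answer. Take $n=2$ and feed in the $n=1$ result as you state it. Since all the groups involved are free over $\Z$, the K\"unneth formula is an \emph{isomorphism} onto the full direct sum $\bigoplus_{a+b=m}H_a^{pte}(\H)\otimes H_b^{pte}(\H)$ — there is no cancellation mechanism for cross-terms. In degree $2$ this forces a contribution $\left(\Z[\pte]^0\otimes\Z\right)\oplus\left(\Z\otimes\Z[\pte]^0\right)\neq 0$, whereas the proposition requires $H_2^{pte}(\H^2,\Z)=0$; and in degree $0$ it produces $\Z[\pte]^0\otimes\Z[\pte]^0$, which is a \emph{proper} submodule of $\Z[\pte^2]^0$ (the element $(p,q)-(p,q')=(p)\otimes\bigl((q)-(q')\bigr)$ lies in $\Z[\pte^2]^0$ but not in $\Z[\pte]^0\otimes\Z[\pte]^0$, as it survives the augmentation in the first factor). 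So either the Eilenberg--Zilber identification of $M_*^{pte}(\H^{n})$ with $M_*^{pte}(\H^{n-1})\otimes M_*^{pte}(\H)$ fails for these cusp-vertex complexes — which is quite possible, since they are complexes of homotopy classes rather than singular chains, and you give no argument that the shuffle maps are quasi-isomorphisms here — or the inductive step yields a conclusion incompatible with the statement. Either way the induction does not close. The repair is to abandon the product decomposition and argue as the paper does: establish the absolute homology of $\H^n$ and of $\pte^n$ directly and conclude by the long exact sequence of the pair, which is precisely your $n=1$ argument carried out in all dimensions at once.
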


\begin{proof}
En effet, on dispose des groupes d'homologie singulière suivant :
$$H_m(\H^n,\Z)=\begin{cases}\Z&\text{ si }m=0\text{ ou }2n\\0&\text{ sinon}\end{cases}\text{ et }
H_m(\pte^n,\Z)=\begin{cases}\Z\left[\pte^n\right]&\text{ si }m=0\\0&\text{ sinon}\end{cases},$$\par
Et d'autre part, on peut mettre en évidence la suite exacte longue:
$$\ldots\to H_m(\pte^n,\Z)\to H_m^{pte}(\H^n,\Z)\to H_m(\H^n,\Z)\to\ldots$$ 
Ceci donnant les groupes $H_m^{pte}(\H^n,\Z)$ attendus.
\end{proof}

Le polynôme des multipériodes est lié au $n$-cycle $\tau_n$ suivant défini comme la classe dans $M_n^{pte}(\H^n,\Z)$ de l'application:
\begin{equation}
\Delta_n\to\H^n,(u_0,...,u_n)\mapsto \left(-i\log\left(\sum_{j=0}^{l-1}u_j\right)\right)_{l=1...n}.
\end{equation}\par

Notons $\Omega^n_{par}(\H^n,\C)$ le $\C$-espace vectoriel des $n$-formes différentielles harmoniques sur $\H^n$ et à valeurs dans $\C$ et nulles sur le bord $\partial\H^n=\bigcup_{i=0}^n \H^i\times \pte \times \H^{n-i}$. En particulier, cette dernière condition permet de considérer l'intégrale le long de $\tau_n$. Notons $\Omega^n_{par}(\H^n,V_{k})^{\Gamma^n}=\Omega^n_{par}(\H^n,\C)\otimes_{\C[\Gamma^n]} V_k$ l'ensemble des formes $\omega$ invariante par $\Gamma^n$ au sens où:
$$\omega(\gamma_1z_1,...,\gamma_nz_n;X_1|_{\gamma_1},...,X_n|_{\gamma_n})=\omega(z_1,...,z_n;X_1,...,X_n)\text{ pour tout }(\gamma_1,...,\gamma_n)\in\Gamma^n.$$\par
En particulier, ces formes disposent de coefficients de Fourier en chacune des variables car on dispose de l'homomorphisme injectif $\Z^n\to\Gamma^n$ induit diagonalement par l'injection $\Z\to\Gamma, \alpha\mapsto\mat{1}{\alpha}{0}{1}$.
Définissons ainsi l'espace des formes différentielles holomorphes en chaque variables, nulles en l'infini et invariantes par $\Gamma^n$ et à coefficients de Fourier réels:
\begin{equation}
\Omega_{k}^{\holo}=\otimes_{j=1}^n\Omega_{k_j}^+\subset\Omega^n_{par}(\H^n,V_{k})^{\Gamma^n}.
\end{equation}
C'est le $\R$-espace vectoriel isomorphe à $\otimes_{j=1}^nS_{k_j}^{\R}$ par l'application:
\begin{equation}
\otimes_{j=1}^nS_{k_j}^{\R} \to \Omega^n_{par}(\H^n,V_{k}),\quad\otimes_{j=1}^n f_j\mapsto\wedge_{j=1}^n \omega_{f_j}(z_j,X_j)=\wedge_{j=1}^n f_j(z_j)(X_j-z_j)^{k_j-2}dz_j.
\end{equation}\par
Son conjugué complexe est alors l'ensemble des formes antiholomorphes donné par:
\begin{equation}
\overline{\Omega_{k}^{\holo}}=\otimes_{j=1}^n\Omega_{k_j}^-\subset\Omega^n_{par}(\H^n,V_{k})^{\Gamma^n}.
\end{equation}
Posons ainsi le sous-espace de $\Omega_{par}^n(\H^n,V_k)^{\Gamma^n}$ stable par $\Gamma^n$ et par conjugaison complexe:
\begin{equation}
\Omega_k^{\R}=\Omega_k^+\oplus\Omega_k^-.
\end{equation}\par

D'autre part, introduisons l'accouplement :
\begin{equation}
\Omega^n_{par}(\H^n,\C)\times M_n^{pte}(\H^n,\Z)\to\C,\quad (\omega,C)\mapsto\langle\omega, C\rangle=\int_C \omega.
\end{equation}
Ce formalisme nous permet d'obtenir les polynômes des multipériodes en intégrant une forme différentielle adéquate le long de $\tau_n$:
\begin{equation}
P_{f_1,...,f_n}(X_1,...,X_n)=\left\langle \omega_{f_1}(z_1,X_1)\wedge...\wedge\omega_{f_n}(z_n,X_n),\tau_n\right\rangle,
\end{equation}

On peut ainsi récrire la définition de $\mathcal{A}(k)$ en:
\begin{equation}\label{Akholo}
\mathcal{A}(k)=\{a\in\Z[G_n]\text{ tel que pour tout }\omega\in\Omega_k^{\R},\langle\omega|_{a},\tau_n\rangle=0\}.
\end{equation}\par

Nous allons désormais transporter l'action de $G_n$ sur $M_n^{pte}(\H^n,\Z)^{\S_n}$. Et ainsi démontrer que l'idéal $\mathcal{A}(k)$, donné par les relations vérifiées par le polynôme des multipériodes, reflète les propriétés topologiques de $\tau_n$.

Par exemple, lorsque $n=1$, la $1$-chaîne $\tau_1$ correspond au symbole modulaire $\{i\infty,0\}$ dont le bord $(0)-(i\infty)$ admet $\mathcal{I}_1$ comme annulateur.

\section{Dualité entre formes modulaires et homologie relative}

Rappelons que $G_n=\Gamma^n\rtimes\S_n$.

\subsection{Actions de $\Z[G_n]$ sur $\Omega^m(\H^n,\C)$ et $ M_m^{pte}(\H^n,\Z)$}

Soit $m\geq 1$ un entier.
Rappelons qu'on a une action à droite de $\gamma\in\Gamma^n$ sur les formes différentielles $\omega\in\Omega^m(\H^n,\C)$ donnée par:
 \begin{equation}
\omega|_{(\gamma_1,...,\gamma_n)}\big(z_1,...,z_n\big)=\omega\big(\gamma_1.z_1,...,\gamma_n.z_n\big),
\end{equation}
et une action à gauche sur les $m$-chaînes de $\H^n$ relatives à $\pte^n$ via: 
\begin{equation}
((\gamma_1,...,\gamma_n).C)(t_0,...,t_m)=(\gamma_1.[C_1(t_0,...,t_m)],...,\gamma_n.[C_n(t_0,...,t_m)]).
\end{equation}
Un élément du groupe $\rho\in\S_n$ sur $\H^n$ par permutation des coordonnées. Ceci donne une action à droite sur les formes différentielles $\omega\in\Omega^m(\H^n,\C)$ par:
 \begin{equation}
\omega|_{\rho}\big(z_1,...,z_n\big)=\omega\big(z_{\rho(1)},...,z_{\rho(n)}\big),
\end{equation}
et une action à gauche sur les $m$-chaînes de $\H^n$ à bord dans $\pte^n$ via: 
\begin{equation}
{\rho}.C(t_0,...,t_m)=([C_{\rho^{-1}(1)}(t_0,...,t_m)],...,[C_{\rho^{-1}(n)}(t_0,...,t_m)]).
\end{equation}
On notera l'action à droite opposée par: $(\sigma,C)\mapsto C^{\sigma}=\sigma^{-1}.C$.

\begin{prop}
On a une action à droite de $[\gamma,\rho]\in G_n$ sur $\omega\in\Omega^m(\H^n,\C)$ définie par:
\begin{equation}
\omega|_{[\gamma,\rho]}=(\omega|_{\rho})|_{\gamma},
\end{equation}
et une action à gauche de $[\gamma,\rho]\in G_n$ sur $C\in  M_m^{pte}(\H^n,\Z)$ donnée par:
\begin{equation}
[\gamma,\rho].C={\rho}(\gamma C).
\end{equation}
Par $\Z$-linéarité, on obtient une action de $\Z[G_n]$ à droite sur $\Omega^m(\H^n,\C)$ et une action à gauche sur $M_m^{pte}(\H^n,\Z)$.
\end{prop}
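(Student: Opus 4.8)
```latex
La strat�gie est de v�rifier que les deux formules propos�es d�finissent bien des actions de groupe, c'est-�-dire qu'elles respectent la loi du produit semi-direct, puis d'�tendre par $\Z$-lin�arit�. Le point de d�part est que les actions de $\Gamma^n$ et de $\S_n$ ont d�j� �t� d�crites s�par�ment, et il ne reste qu'� contr�ler leur compatibilit� via la loi $[\gamma,\rho][\gamma',\rho']=[\gamma^{\rho'}\gamma',\rho\rho']$. La difficult� conceptuelle est minime, mais il faut �tre soigneux sur l'ordre des op�rations : c�t� formes diff�rentielles l'action est � droite, c�t� cha�nes elle est � gauche, et ces deux v�rifications sont en quelque sorte transpos�es l'une de l'autre.

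Pour l'action � droite sur $\Omega^m(\H^n,\C)$, je calculerais directement:
\begin{align*}
\omega|_{[\gamma,\rho][\gamma',\rho']}&=\omega|_{[\gamma^{\rho'}\gamma',\rho\rho']}=(\omega|_{\rho\rho'})|_{\gamma^{\rho'}\gamma'}=((\omega|_{\rho\rho'})|_{\gamma^{\rho'}})|_{\gamma'},\\
(\omega|_{[\gamma,\rho]})|_{[\gamma',\rho']}&=((\omega|_{\rho})|_{\gamma})|_{[\gamma',\rho']}=(((\omega|_{\rho})|_{\gamma})|_{\rho'})|_{\gamma'}.
\end{align*}
Il suffit alors d'�tablir l'identit� cl� $(\omega|_{\gamma})|_{\rho'}=(\omega|_{\rho'})|_{\gamma^{\rho'}}$ traduisant le fait que permuter les coordonn�es apr�s avoir appliqu� $\gamma$ revient � appliquer la matrice permut�e $\gamma^{\rho'}$ apr�s permutation. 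Cette commutation r�sulte imm�diatement des d�finitions: en �valuant en $(z_1,\dots,z_n)$, le membre de gauche donne $\omega(\gamma_{\rho'(1)}z_{\rho'(1)},\dots,\gamma_{\rho'(n)}z_{\rho'(n)})$, tandis que $\gamma^{\rho'}=(\gamma_{\rho'(1)},\dots,\gamma_{\rho'(n)})$ par la convention d'action de $\S_n$ sur $\Gamma^n$, d'o� l'�galit�. En combinant avec l'associativit� des actions de $\Gamma^n$ et de $\S_n$ s�par�ment, on retrouve bien $(\omega|_{\rho\rho'})|_{\gamma^{\rho'}\gamma'}$.

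L'action � gauche sur les cha�nes se traite de fa�on parfaitement sym�trique, en v�rifiant $[\gamma,\rho][\gamma',\rho'].C=[\gamma,\rho].([\gamma',\rho'].C)$, c'est-�-dire ${\rho\rho'}((\gamma^{\rho'}\gamma')C)={\rho}(\gamma({\rho'}(\gamma'C)))$. Ici l'identit� duale � �tablir est $\gamma({\rho'}.D)={\rho'}(\gamma^{\rho'}.D)$ pour toute cha�ne $D$, qui exprime que l'action des matrices commute � la permutation des facteurs modulo le r�indexage $\gamma\mapsto\gamma^{\rho'}$; elle se lit directement sur la d�finition de ${\rho'}.C$ par permutation des composantes $C_j$. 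Le seul obstacle r�el, mais purement notationnel, est de ne pas se tromper dans les inverses: l'action � gauche de $\S_n$ fait intervenir $\rho^{-1}$ dans le r�indexage, ce qui est coh�rent avec le fait que l'accouplement $\langle\omega,C\rangle=\int_C\omega$ doit rendre les deux actions adjointes. Une fois ces deux lemmes de commutation �tablis, l'extension � $\Z[G_n]$ par lin�arit� est automatique et ach�ve la d�monstration.
```
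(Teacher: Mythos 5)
Votre démonstration est correcte et suit essentiellement la même voie que celle du texte : une vérification directe de la compatibilité avec la loi du produit semi-direct $[\gamma,\rho][\gamma',\rho']=[\gamma^{\rho'}\gamma',\rho\rho']$, des deux côtés (formes différentielles à droite, chaînes à gauche), suivie de l'extension par $\Z$-linéarité. L'isolement explicite de l'identité de commutation $(\omega|_{\gamma})|_{\rho'}=(\omega|_{\rho'})|_{\gamma^{\rho'}}$ est une organisation légèrement plus lisible du même calcul que le texte effectue en une seule chaîne d'égalités évaluées aux coordonnées.
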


\begin{proof}
On vérifie par le calcul que l'action est bien conforme au produit du groupe. Soient $[\gamma,\sigma],[\gamma',\sigma']\in G_n$. D'une part, on a:
$$\left(\omega|_{[\gamma,\sigma]}\right)|_{[\gamma',\sigma']}(z_j)=\omega|_{[\gamma',\sigma']}(\gamma_{j}z_{\rho(j)})
=\omega(\gamma_{\rho'(j)}\gamma'_{j}z_{\rho(\rho'(j))})
=(\omega|_{\rho\rho'})|_{\gamma^{\rho'}\gamma'}(z_j)=\omega|_{[\gamma^{\rho'}\gamma',\rho\rho']}(z_j).$$
D'autre part, on a:
$$[\gamma,\rho].\left([\gamma',\rho'].(C_j)\right)=[\gamma,\rho].(\gamma'_{\rho'^{-1}(j)} C_{\rho'^{-1}(j)})=(\gamma_{\rho^{-1}(j)}\gamma'_{(\rho\rho')^{-1}(j)}C_{(\rho\rho')^{-1}(j)})=[\gamma^{\rho}\gamma',\rho\rho'](C_j).$$
\end{proof}

Rappelons qu'on a muni $(V_k^{\Z}){\S_n}$ d'une action à droite de $\Z[G_n]$ en (\ref{defactgn}).\\
Ces différentes actions peuvent être reliées par le résultat suivant:

\begin{prop}
Soient $f\in \bigotimes_{j=1}^n S_{k_j}$ et $\sigma\in\S_n$, on a: 
\begin{equation}\label{polytau}
R_k(f)(\sigma)=\left\langle \omega_f(z,X),\tau_n^{\sigma} \right\rangle.
\end{equation}
De plus, on a les propriétés d'invariance suivantes:\\
1) Pour tout $\gamma\in\Gamma^n$ et tout $\rho\in\S_n$, on a:
\begin{align}
\omega_f(\gamma.z,X|_{\gamma})&=\omega_f(z,X),\label{mod}\\
\text{ et }\omega_{\rho(f)}(\rho.z,\rho.X)&=\omega_f(z,X).
\end{align}
2) Pour tout $[\gamma,\rho]\in G_n$, $\omega\in\Omega^m_{par}(\H^n,\C)$ et $C\in  M_m^{pte}(\H^n,\Z)$, on a:
\begin{equation}
\langle \omega|_{[\gamma,\rho]},C\rangle=\langle \omega,[\gamma,\rho].C\rangle.\label{acc}
\end{equation}
\end{prop}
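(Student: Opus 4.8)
The statement to prove gathers three facts: the integral formula \eqref{polytau} expressing $R_k(f)(\sigma)$ as a pairing of $\omega_f$ with the permuted cycle $\tau_n^\sigma$, and two invariance properties \eqref{mod} and \eqref{acc}. All three are essentially unwindings of definitions, and I would treat them in this order: first the two invariance formulas of part (1), which are purely local computations on differential forms; then the pairing formula \eqref{acc} of part (2), which is the change-of-variables statement for the integration pairing; and finally \eqref{polytau}, which combines the definition of $\tau_n^\sigma$ with the definition of the permuted polynomial $P^\sigma_{f_1,\dots,f_n}$.

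\textbf{Invariance of $\omega_f$ (part 1).} The first formula \eqref{mod} is the $n$-fold wedge of the one-variable modularity relation already recorded in \eqref{modl1}: for each factor, $\gamma_j^*\bigl(f_j(z_j)(X_j-z_j)^{k_j-2}dz_j\bigr)=f_j(z_j)(X_j-z_j|_{\gamma_j})^{k_j-2}dz_j$, so wedging over $j$ gives $\omega_f(\gamma.z,X|_\gamma)=\omega_f(z,X)$. First I would state that $\omega_f=\wedge_{j=1}^n\omega_{f_j}(z_j,X_j)$ and invoke \eqref{modl1} factorwise; the sign bookkeeping from the wedge is trivial since each factor is fixed separately. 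The second equality, $\omega_{\rho(f)}(\rho.z,\rho.X)=\omega_f(z,X)$, is the observation that permuting the data $(f_j)$, the points $(z_j)$ and the variables $(X_j)$ simultaneously by $\rho$ merely reorders the factors of the wedge product by the same permutation, and reordering a wedge of one-forms followed by the inverse reordering of the labels leaves the product unchanged. Here I would be slightly careful that the wedge $dz_{\rho(1)}\wedge\cdots\wedge dz_{\rho(n)}$ picks up the sign $\mathrm{sgn}(\rho)$, but this sign is exactly compensated by the convention defining $\tau_n^\sigma=(\sigma^{-1}).\tau_n$ as the \emph{opposite} right action, so that when both sides are paired with cycles the signs cancel; I would flag this as the one place deserving an explicit remark.

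\textbf{The pairing formula \eqref{acc}.} For a single $\gamma\in\Gamma^n$ this is the ordinary change of variables $\int_C\omega|_\gamma=\int_C\omega(\gamma.z)=\int_{\gamma.C}\omega=\langle\omega,\gamma.C\rangle$, which is the $n$-dimensional analogue of \eqref{relacc31} and follows because $\gamma$ acts by a diffeomorphism of $\H^n$. For $\rho\in\S_n$ the same holds: $\omega|_\rho$ is the pullback of $\omega$ under the coordinate permutation $z\mapsto(z_{\rho(1)},\dots,z_{\rho(n)})$, and pulling back a form and integrating over $C$ equals integrating the original form over the image chain $\rho.C$. Composing the two, and using that the $G_n$-action on forms was \emph{defined} as $\omega|_{[\gamma,\rho]}=(\omega|_\rho)|_\gamma$ while the action on chains is $[\gamma,\rho].C=\rho(\gamma C)$, the two change-of-variables steps chain together compatibly with the semidirect product law — this compatibility is precisely what the preceding proposition (showing both are genuine $G_n$-actions) guarantees. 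So I would prove \eqref{acc} for the generators $(\gamma,id)$ and $(1,\rho)$ separately and then extend by the multiplicativity already established, finally passing to $\Z[G_n]$ by $\Z$-linearity.

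\textbf{The formula \eqref{polytau}.} For $\sigma=id$ this is the definition of the multiperiod polynomial as the pairing $P_{f_1,\dots,f_n}=\langle\omega_f,\tau_n\rangle$ recorded earlier. For general $\sigma$, I would write $\tau_n^\sigma=\sigma^{-1}.\tau_n$ and apply \eqref{acc} together with the second invariance of part (1) to move the permutation from the cycle onto the data: $\langle\omega_f,\tau_n^\sigma\rangle=\langle\omega_f|_{\sigma},\tau_n\rangle=\langle\omega_{\sigma^{-1}(f)}(\sigma^{-1}.z,\sigma^{-1}.X),\tau_n\rangle$, which upon relabelling is exactly $P_{f_{\sigma(1)},\dots,f_{\sigma(n)}}(X_{\sigma(1)},\dots,X_{\sigma(n)})=P^\sigma_{f_1,\dots,f_n}(X)=R_k(f)(\sigma)$. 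The genuine content of the whole proposition is thus concentrated in part (2): establishing \eqref{acc} cleanly requires the $n$-forms to be $\partial\H^n$-vanishing so that integration over $\tau_n$ converges and commutes with the group action, and the main obstacle I anticipate is not analytic but bookkeeping — keeping the left/right and opposite conventions for the $\S_n$-action on chains versus forms consistent so that the signs from reordering wedge products cancel rather than accumulate. Once the generators are handled and the semidirect compatibility invoked, the rest is formal.
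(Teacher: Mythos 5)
Votre démonstration est correcte et suit essentiellement la même démarche que celle du texte : réduction facteur par facteur de \eqref{mod} au calcul en une variable \eqref{modl1}, changement de variables pour \eqref{acc}, puis déduction de \eqref{polytau} en transportant la permutation des données vers le cycle $\tau_n$. Vous êtes même plus explicite que le texte (qui expédie l'invariance par $\S_n$ d'un \og se vérifie par écriture des définitions \fg) sur le point délicat des signes issus de la réorganisation du produit extérieur et des conventions d'action à droite sur les chaînes.
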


\begin{proof}
Toutes les formules à démontrer proviennent de calculs directs.
Pour démontrer la formule (\ref{mod}), on remarque qu'étant donné que $\omega_f(z,X)=\omega_{f_1}(z_1,X_1)\wedge...\wedge\omega_{f_n}(z_n,X_n)$, il suffit alors de la démontrer dans le cas d'une seule forme:
\begin{multline*}
\omega_f(\gamma.z,X|\gamma)=f(\gamma.z)(\gamma.z-\gamma.X)^{k-2}(cX+d)^{k-2}d(\gamma.z)\\
=f(z)(cz+d)^k\left(\frac{z-X}{(cz+d)(cX+d)}\right)^{k-2}(cX+d)^{k-2}\frac{dz}{(cz+d)^2}\\
=f(z)(z-X)^{k-2}dz=\omega_f(z,X).
\end{multline*}
La propriété d'invariance par $\S_n$ se vérifie par écriture des définitions des actions.\par
Pour démontrer la propriété (\ref{acc}), il suffit de procéder par changements de variables:
$$\left\langle \omega|_{[\gamma,\rho]},C\right\rangle=\int_{z\in C} \omega|_{\rho}(\gamma z)
=\int_{z'\in \gamma.C} \omega(\rho z')
=\int_{z''\in \rho.\gamma .C} \omega(z'')=\left\langle \omega,[\gamma,\rho].C\right\rangle.$$
Enfin ces propriétés permettent de déduire la formule (\ref{polytau}). En effet:
$$R_k(f)(\sigma)=\langle \omega_{\sigma(f)}(z,\sigma X),\tau_n\rangle=\langle \omega_{f}(\sigma^{-1}z,X),\tau_n\rangle
=\langle \omega_{f}(z,X),\sigma^{-1}.\tau_n\rangle.$$
\end{proof}

Ceci permet de déduire le corollaire suivant qui transporte l'action de $\Z[G_n]$ sur $(V_k^{\Z})^{\S_n}$ en une action sur $\left(M_n^{pte}(\H^n,\Z)\right)^{\S_n}$ indépendante des poids.

\begin{coro}\label{coro36}
Soit $[\gamma,\rho]\in G_n$. Pour tout $f\in\bigotimes_{j=1}^n S_{k_j} $, on a:
\begin{equation}
R_k(f)|_{[\gamma,\rho]}=\left(\sigma\mapsto \langle \omega_{f}(z,X), (\gamma^{-1}\tau_n^{\rho})^{\sigma}\rangle\right).
\end{equation}
Cette dualité s'étend par linéarité au groupe $\Z[G_n]$.
\end{coro}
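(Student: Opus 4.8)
The plan is to unwind the definition \eqref{defactgn} of the right $\Z[G_n]$-action on $(V_k^{\Z})^{\S_n}$ and reduce the claim to the three facts established in the preceding proposition: the expression \eqref{polytau} of $R_k(f)(\sigma)$ as the pairing $\langle\omega_f(z,X),\tau_n^{\sigma}\rangle$, the modularity relation \eqref{mod}, and the change-of-variables formula \eqref{acc}. Since both sides of the asserted identity are $\Z$-linear in the group element, it suffices to prove it for a single $[\gamma,\rho]\in G_n$ and then extend by linearity, which is the content of the final sentence of the statement.

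First I would compute the left-hand side componentwise. Fix $\sigma\in\S_n$. By \eqref{defactgn}, the $\sigma$-component of $R_k(f)|_{[\gamma,\rho]}$ is $R_k(f)(\rho\sigma)$ evaluated at $(X_1|_{\gamma_{\sigma(1)}},\dots,X_n|_{\gamma_{\sigma(n)}})$. Writing $\gamma^{\sigma}=(\gamma_{\sigma(1)},\dots,\gamma_{\sigma(n)})$ and using \eqref{polytau}, this component equals $\langle\omega_f(z,X),\tau_n^{\rho\sigma}\rangle$ with each $X_j$ replaced by $X_j|_{\gamma_{\sigma(j)}}$. Since the $X$-dependence of $\omega_f$ is polynomial and the weight action on each $V_{k_j}$ is $\C$-linear, this substitution commutes with the integration defining the pairing, so the component is $\langle\omega_f(z,X|_{\gamma^{\sigma}}),\tau_n^{\rho\sigma}\rangle$.

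Next I would transfer the group element from the polynomial variables onto the chain. Reading \eqref{mod} in the multivariable form $\omega_f(z,X|_{g})=\omega_f(g^{-1}.z,X)$ for $g\in\Gamma^n$ rewrites the component as $\langle\omega_f((\gamma^{\sigma})^{-1}.z,X),\tau_n^{\rho\sigma}\rangle$, and then \eqref{acc} specialized to $\Gamma^n\subset G_n$ moves $(\gamma^{\sigma})^{-1}$ across the pairing to give $\langle\omega_f(z,X),(\gamma^{\sigma})^{-1}.\tau_n^{\rho\sigma}\rangle$. It then remains only to identify the chain $(\gamma^{\sigma})^{-1}.\tau_n^{\rho\sigma}$ with $(\gamma^{-1}\tau_n^{\rho})^{\sigma}$.

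The main obstacle is exactly this last step of bookkeeping inside the semidirect product, where the two left actions of $\Gamma^n$ and $\S_n$ do not commute and their order must be tracked precisely. Using $\tau_n^{\rho\sigma}=(\rho\sigma)^{-1}.\tau_n=\sigma^{-1}\rho^{-1}.\tau_n$ together with the conjugation identity $\sigma^{-1}.\gamma.\sigma=\gamma^{\sigma}$ for the coordinatewise action (equivalently $[\mathbf 1,\sigma]^{-1}[\gamma,\mathrm{id}][\mathbf 1,\sigma]=[\gamma^{\sigma},\mathrm{id}]$, read off from the product law of $G_n$), taking inverses gives $(\gamma^{\sigma})^{-1}=\sigma^{-1}.\gamma^{-1}.\sigma$. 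Hence $(\gamma^{\sigma})^{-1}.\tau_n^{\rho\sigma}=\sigma^{-1}.\gamma^{-1}.\sigma.\sigma^{-1}\rho^{-1}.\tau_n=\sigma^{-1}.\big(\gamma^{-1}.(\rho^{-1}.\tau_n)\big)=(\gamma^{-1}\tau_n^{\rho})^{\sigma}$, which is precisely the right-hand side of the asserted formula. Every manipulation above is $\Z$-linear, so the identity propagates from single group elements to all of $\Z[G_n]$, completing the proof.
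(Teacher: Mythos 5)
Your proof is correct and follows essentially the same route as the paper: both apply \eqref{polytau}, then \eqref{mod} to move $\gamma^{\sigma}$ from the polynomial variables to the upper half-plane variables, then \eqref{acc} to transfer it onto the chain, and finish with the semidirect-product bookkeeping identifying $(\gamma^{\sigma})^{-1}.\tau_n^{\rho\sigma}$ with $(\gamma^{-1}\tau_n^{\rho})^{\sigma}$. Your treatment of that last step via the conjugation identity $\sigma^{-1}.\gamma.\sigma=\gamma^{\sigma}$ is a slightly cleaner formulation of the componentwise index computation the paper carries out.
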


\begin{proof}
En effet, pour tout $\gamma=(\gamma_1,...,\gamma_n)\in\Gamma^n$ et $\rho\in\S_n$, on a:
\begin{align*}
R_k(f)|_{[\gamma,\rho]}&=\left(\sigma\mapsto \langle \wedge_{j=1}^n\omega_{f_j}(z_j,X_j|_{\gamma_{\sigma(j)}}), \tau_n^{\rho\sigma}\rangle\right)&\text{par \ref{polytau},}\\
&=\left(\sigma\mapsto \langle \wedge_{j=1}^n\omega_{f_j}({\gamma^{-1}_{\sigma(j)}}z_j,X_j), (\tau_n)_{(\rho\sigma)(j)}\rangle\right)&\text{par \ref{mod},}\\
&=\left(\sigma\mapsto \langle \wedge_{j=1}^n\omega_{f_j}(z_j,X_j), (\gamma^{-1})_{\sigma(j)}(\tau_n)_{\sigma(\rho(j))}\rangle\right)&\text{par \ref{acc},}\\
&=\left(\sigma\mapsto \langle \omega_{f}(z,X), (\gamma^{-1}.\tau_n^{\rho})^{\sigma}\rangle\right).
\end{align*}
\end{proof}

Ce corollaire nous permet de considérer l'action à droite de $[\gamma,\rho]\in G_n$ sur $\left(  M_n^{pte}(\H^n,\Z)\right)^{\S_n}$ définie par:
\begin{equation}
\left(\sigma\mapsto C(\sigma)\right)|_{[\gamma,\rho]}=\left(\sigma\mapsto (\gamma^{-1})^{\sigma}C(\rho\sigma)\right).
\end{equation}
Définissons:
\begin{equation}
\mathcal{T}_n=\left(\sigma\mapsto\tau_n^{\sigma}\right)\in\left(  M_n^{pte}(\H^n,\Z)\right)^{\S_n}.
\end{equation}
On récrit l'action de $a\in\Z[G_n]$ sur $V_k^{\S_n}$ via une action sur $\left(M_n^{pte}(\H^n,\Z)\right)^{\S_n}$ par la formule:
\begin{equation}
R_k(f)|_{a}=\langle \omega_f,\mathcal{T}_n|_{a}\rangle.
\end{equation}
Ainsi le calcul de $\mathcal{A}(k)$ se réduit à l'étude de l'action de $\Z[G_n]$ sur l'élément $\mathcal{T}_n\in\left(M_n^{pte}(\H^n,\Z)\right)^{\S_n}$. Plus précisément, on a:
\begin{equation}\label{Aktopo}
\mathcal{A}(k)=\{a\in\Z[G_n]\text{ tel que }\langle\omega,\mathcal{T}_n|_a\rangle=0,\text{ pour tout }\omega\in\Omega_k^{\R}\}.
\end{equation}

\subsection{Quelques résultats sur l'homologie de $\H^n$}

\subsubsection{Calcul du bord de $\tau_n$}

Dans le cas $n=1$, l'idéal $\mathcal{I}_1$ est défini comme l'annulateur du bord $(0)-(i\infty)$ de la $1$-chaîne $\tau_1$.
Afin de déterminer $\mathcal{I}_2$, on a vu dans la Proposition \ref{dtau2} qu'on pouvait décomposer le bord de $\tau_2$ en fonction de $\tau_1$. Ce procédé se généralise comme suit.\par
Introduisons la famille d'applications $(\varphi_j^g)_{0\leq j\leq n,g\in\Gamma}:\H^{n-1}\to\H^n$ définie par:
$$\varphi_j^g(z_1,...,z_{n-1})=
\begin{cases}
(g.i\infty,z_1,...,z_{n-1})&\text{ si }j=0\\
(z_1,...,z_{n-1},g.0)&\text{ si }j=n\\
(z_1,...,z_j,g.z_j,z_{j+1}...,z_{n-1})&\text{ sinon}.
\end{cases}$$
Lorsque $g=Id$, on notera simplement $\varphi_j=\varphi_j^{Id}$.

\begin{prop}
On peut déterminer le bord de $\tau_n$ en fonction de $\tau_{n-1}$:
\begin{equation}\label{bordtaun}
\delta_{n}\tau_n=\sum_{j=0}^n (-1)^j\varphi_j(\tau_{n-1}).
\end{equation}
De plus, pour tout $g\in\Gamma$ et tous entiers $m,j\geq 0$ avec $j\leq n$, on a :
\begin{equation}\label{mchtrans}
 M_m^{pte}(\varphi_j^g(\H^{n-1}),\Z)=\varphi_j^g\left( M_m^{pte}(\H^{n-1},\Z)\right).
\end{equation}
\end{prop}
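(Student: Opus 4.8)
Le plan consiste à démontrer les deux formules séparément, en traitant d'abord la formule de bord (\ref{bordtaun}) par un calcul direct de la différentielle simpliciale sur le représentant explicite de $\tau_n$, puis la formule d'invariance topologique (\ref{mchtrans}) par un argument de rétraction.

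Pour la formule de bord, je pars de la définition explicite de $\tau_n$ comme classe de l'application $\Delta_n\to\H^n$, $(u_0,\ldots,u_n)\mapsto \left(-i\log\left(\sum_{l'=0}^{l-1}u_{l'}\right)\right)_{l=1\ldots n}$. La stratégie est de calculer $\tau_n\circ\delta_j^n$ pour chaque face $0\leq j\leq n$, c'est-à-dire de poser $u_j=0$ dans la paramétrisation. Le point clé est d'observer que l'annulation de la $j$-ième coordonnée barycentrique induit exactement une dégénérescence des coordonnées de $\H^n$ : lorsque $j=0$, la première coordonnée devient $-i\log(0)=i\infty$ et les suivantes reconstituent $\tau_{n-1}$, ce qui donne $\varphi_0(\tau_{n-1})$ ; lorsque $j=n$, la dernière coordonnée devient $-i\log(1)=0$, donnant $\varphi_n(\tau_{n-1})$ ; et pour $0<j<n$, poser $u_j=0$ fait coïncider la $j$-ième et la $(j+1)$-ième coordonnée de $\H^n$, ce qui produit précisément $\varphi_j(\tau_{n-1})$ puisque $\varphi_j$ double la variable $z_j$. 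En assemblant ces faces avec les signes $(-1)^j$ de la définition de $\delta_n$, j'obtiens la formule annoncée. C'est essentiellement une vérification analogue au cas $n=2$ déjà traité dans la Proposition \ref{dtau2}, qu'il suffit de généraliser en soignant l'indexation.

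Pour la seconde formule (\ref{mchtrans}), l'objectif est de montrer que toute $m$-chaîne de $\H^n$ à support dans le sous-espace transverse $\varphi_j^g(\H^{n-1})$ provient d'une $m$-chaîne de $\H^{n-1}$ via $\varphi_j^g$. L'inclusion $\varphi_j^g\left(M_m^{pte}(\H^{n-1},\Z)\right)\subset M_m^{pte}(\varphi_j^g(\H^{n-1}),\Z)$ est immédiate par composition. Pour la réciproque, je remarque que $\varphi_j^g$ est un plongement fermé de $\H^{n-1}$ dans $\H^n$ (c'est une immersion affine en chaque coordonnée, éventuellement suivie de l'action de $g$ qui est un homéomorphisme de $\H$), admettant une rétraction continue $r:\varphi_j^g(\H^{n-1})\to\H^{n-1}$ qui respecte les pointes, c'est-à-dire envoie $\pte^n\cap\varphi_j^g(\H^{n-1})$ dans $\pte^{n-1}$. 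Une chaîne continue $C:\Delta_m\to\varphi_j^g(\H^{n-1})$ avec $C(\Delta_m^0)\subset\pte^n$ se factorise alors par $\varphi_j^g\circ(r\circ C)$, et $r\circ C$ est bien une $m$-chaîne relative aux pointes de $\H^{n-1}$. Comme $\varphi_j^g$ est injective, cette factorisation est unique au niveau des classes d'homotopie fixant les pointes.

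La principale difficulté sera de vérifier soigneusement que $\varphi_j^g$ envoie les pointes sur des pointes de façon compatible, c'est-à-dire que $\pte^n\cap\varphi_j^g(\H^{n-1})=\varphi_j^g(\pte^{n-1})$, et que la rétraction $r$ préserve cette structure. Ce point nécessite de distinguer les trois cas ($j=0$, $j=n$, et $0<j<n$) : dans les cas extrêmes, une coordonnée est fixée à la pointe $g.i\infty$ ou $g.0$, qui est bien une pointe, tandis que dans le cas intermédiaire la contrainte $z_{j+1}=g.z_j$ impose que si $z_j$ est une pointe alors $g.z_j$ l'est aussi, $g\in\Gamma$ stabilisant $\pte$. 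Le reste de l'argument est de nature purement topologique et ne présente pas d'obstacle conceptuel, la remarque de la section $3.1.1$ assurant déjà que les images des sommets ne dépendent que de la classe d'homotopie dans ce cadre où les espaces de pointes sont discrets.
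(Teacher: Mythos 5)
Votre démonstration est correcte et suit essentiellement la même démarche que celle du texte : le calcul coordonnée par coordonnée de $\tau_n\circ\delta_j^n$ en posant $u_j=0$ pour la formule de bord, puis l'identification $\pte^n\cap\varphi_j^g(\H^{n-1})=\varphi_j^g(\pte^{n-1})$ comme point clé de la seconde formule. Votre argument de rétraction explicite simplement le fait, implicite dans le texte, que $\varphi_j^g$ est un homéomorphisme sur son image compatible aux pointes.
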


\begin{proof}
Il suffit de calculer pour tout $0\leq j\leq n, \tau_n\circ\delta_{n}^{j}:\Delta_{n-1}\to\H^n$.\\
Ainsi la $l$-ième coordonnée de $\tau_n\circ\delta_{n}^{j}(u_0,...,u_{m-1})$ est donnée par :
$$[\tau_n(...,u_{j-1},0,u_{j}...,u_{m-1})]_l=-i\log\left(\sum_{\substack{\alpha=0\\ \alpha<l}}^{j-1} u_{\alpha}+\sum_{\substack{\alpha=j+1\\ \alpha<l}}^{m} u_{\alpha-1}\right)=[\varphi_j(\tau_{n-1}(u_0,...,u_{m-1}))]_l.$$
Pour démontrer l'égalité des ensembles, il suffit de remarquer que:
$$\pte^n\cap\varphi_j^g\left(\H^{n-1}\right)=\varphi_j^g\left(\pte^{n-1}\right).$$
En effet, on a : $h=(p_1,...,p_n)=(z_1,...,z_j,g.z_j,...,z_{n-1})$\\
$\Leftrightarrow \quad h=(p_1,...,p_j,g.p_j,p_{j+2},...,p_n)=\varphi_j^g(p_1,...,\hat{p_{j+1}},...,p_n).$
\end{proof}

\subsubsection{Isométrie de $\H^n$}

Le demi-plan de Poincaré $\H$ est muni de la mesure donnée par $\Im(z)^{-2}dz\wedge d\bar{z}$. On étend diagonalement cette mesure à $\H^n$ et on s'intéresse aux transformations de $\H^n$ fixant cette mesure. Pour cela, on introduit les applications suivantes :\\
i) Pour toute permutation $\sigma\in \S_n$ de $n$ éléments, on définit l'isométrie :
\begin{equation}
\varphi_{\sigma}:\H^n\to\H^n,(z_1,...,z_n)\mapsto (z_{\sigma(1)},...,z_{\sigma(n)}).
\end{equation}
ii) On introduit l'involution de $\H: c_{-1}(z)=-\bar{z}$ 
et pour tout $\epsilon=(\epsilon_1,...,\epsilon_n)\in\{\pm 1\}^n$ :
\begin{equation}
\varphi_{\epsilon}:\H^n\to \H^n,(z_1,...,z_n)\mapsto (c_{\epsilon_1}(z_1),...,c_{\epsilon_n}(z_n)),\text{ où }c_1=id.
\end{equation}
iii) Pour tout $\gamma=(\gamma_1,...,\gamma_n)\in\Gamma^n$, posons:
\begin{equation}
\varphi_{\gamma}:\H^n\to \H^n,(z_1,...,z_n)\mapsto (\gamma_1.z_1,...,\gamma_n.z_n).
\end{equation}

\begin{proof}
On vérifie simplement que ce sont des isométries en remarquant que les applications construites:
\begin{align*}
\S_n\to Iso(\H^n),\quad &\sigma\mapsto\varphi_{\sigma},\\
 \{\pm 1\}^n\to Iso(\H^n),\quad &\epsilon\mapsto\varphi_{\epsilon},\\
\text{ et }\Gamma^n\to Iso(\H^n),\quad &\gamma\mapsto\varphi_{\gamma}
\end{align*} sont des morphismes de groupes.\par
Il suffit alors de vérifier que la permutation de deux variables est une isométrie de $\H^n$ et que $z\mapsto -\bar{z}$ est une isométrie de $\H$. En effet ce sont toutes les deux des involutions de leurs espaces respectifs.\par
Enfin pour toute matrice $g\in\Gamma$, $z\mapsto g.z$ est une isométrie de $\H$ d'après le calcul bien connu:
\begin{align*}
\Im(g.z)^{-2}d(g.z)\wedge d(g.\bar{z})&=\frac{|cz+d|^4}{\Im(z)^2}\frac{dz}{(cz+d)^2}\wedge \frac{d\bar{z}}{(c\bar{z}+d)^2}\\
&=\Im(z)^{-2}dz\wedge d\bar{z}.
\end{align*}
\end{proof}

\subsubsection{Action de la conjugaison complexe}

\begin{prop}\label{conju}
Soit $f\in S_k^{\R} $ alors on dispose de la formule suivante:
\begin{equation}
\overline{\omega_f(z,X)}=\omega_f(-\bar{z},-X).
\end{equation}
Ceci s'étend naturellement par produit tensoriel à une famille de formes modulaires paraboliques de niveau $1$.
\end{prop}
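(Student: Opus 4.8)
The plan is to prove the single-form identity by a direct term-by-term computation on the Fourier expansion of $f$, and then to deduce the $n$-form statement by distributing complex conjugation over the exterior product. Throughout, the complex conjugation on $\Omega^n_{par}(\H^n,V_k)$ is the one induced by the $\R$-structure $V_k^{\R}$ of $V_k$, so that the indeterminates $X_1,\dots,X_n$ are fixed by conjugation, i.e. $\overline{X_j}=X_j$. This is the only input needed beyond the definition $\omega_f(z,X)=f(z)(X-z)^{k-2}\,dz$.

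First I would write $\omega_f(z,X)=\sum_{n>0}a_n(f)\exp(2i\pi nz)(X-z)^{k-2}\,dz$ and conjugate factor by factor. Using $a_n(f)\in\R$ (since $f\in S_k^{\R}$), $\overline{\exp(2i\pi nz)}=\exp(-2i\pi n\bar z)$, $\overline{(X-z)^{k-2}}=(X-\bar z)^{k-2}$, together with the conjugation of the differential $dz$, I would obtain an explicit series in $\exp(-2i\pi n\bar z)(X-\bar z)^{k-2}$ times the conjugated volume factor. Next I would compute the right-hand side $\omega_f(-\bar z,-X)$ directly by substitution: since $f(-\bar z)=\sum_{n>0}a_n(f)\exp(-2i\pi n\bar z)$, the Jacobian is $d(-\bar z)=-d\bar z$, and $(-X+\bar z)^{k-2}=(-1)^{k-2}(X-\bar z)^{k-2}=(X-\bar z)^{k-2}$ because $k$ is even, I would recover exactly the same series. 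Comparing the two expressions term by term then gives the asserted equality. This is in fact the verification already displayed in the decomposition section of the previous chapter, so no new idea is required.

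The delicate point — and really the only one — is the sign and orientation bookkeeping: one must conjugate $dz$ and track the Jacobian $d(-\bar z)=-d\bar z$ consistently, and it is the even parity of the weight $k$ that makes $(-1)^{k-2}=1$, so that the polynomial factor $(-X+\bar z)^{k-2}$ matches $(X-\bar z)^{k-2}$ without an extra sign. Once the single-variable identity is established, the extension to a family $(f_1,\dots,f_n)$ of weights $(k_1,\dots,k_n)$ is immediate: by definition $\omega_{f_1,\dots,f_n}=\omega_{f_1}(z_1,X_1)\wedge\cdots\wedge\omega_{f_n}(z_n,X_n)$, and since complex conjugation is compatible with the wedge product and each $k_j$ is even, applying the single-form identity in each variable yields $\overline{\omega_{f_1,\dots,f_n}}=\omega_{f_1}(-\bar z_1,-X_1)\wedge\cdots\wedge\omega_{f_n}(-\bar z_n,-X_n)$, which is the claim. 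No step presents a genuine obstacle; the substance lies entirely in the correct use of the real structure on $V_k$ and the even parity of the weights.
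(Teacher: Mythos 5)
Your proof is correct and follows essentially the same route as the paper: the paper's own argument reduces the identity to $\overline{f(z)}=f(-\bar z)$ via the $q$-series expansion with $a_n(f)\in\R$, which is exactly your term-by-term computation, and the displayed verification in the previous chapter (including the bookkeeping of $\overline{(X-z)^{k-2}}=(X-\bar z)^{k-2}$, the factor $d(-\bar z)=-d\bar z$, and the even parity of $k$) is the one you reproduce. The extension to $n$ forms by distributing conjugation over the wedge product is likewise how the paper states it, so nothing further is needed.
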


\begin{proof}
En posant le calcul on remarque que la clé de celui-ci provient de l'égalité $\overline{f(z)}=f(-\bar{z})$ qui se vérifie en regardant par exemple la $q$-série:
$$\overline{f(z)}=\sum_{n>0}\overline{a_n}(f)\overline{\exp(2in\pi z)}=\sum_{n>0}a_n(f)\exp(-2in\pi\bar{z})=f(-\bar{z}).$$
En effet $a_n(f)\in\R$ par hypothèse.
\end{proof}

Pour tout choix de signes $\epsilon\in\{\pm 1\}^n$, on peut définir $\Omega_{k}^{\epsilon}$ comme étant l'image de $\otimes_{j=1}^n S_{k_j}^{\R}$ par l'application :
$$f_1\otimes...\otimes f_n\mapsto c_{\epsilon_1}(\omega_{f_1})\wedge...\wedge c_{\epsilon_n}(\omega_{f_n}),$$
où $c_1(\omega)=\omega$ et $c_{-1}(\omega)=\overline{\omega}$ est le conjugué complexe c'est à dire une forme modulaire antiholomorphe de poids $k$.\\

\begin{coro}
Les applications suivantes sont des isomorphismes de $\R$-espaces vectoriels:
\begin{align}
\text{Pour tout }\gamma\in\Gamma^n,\quad &\varphi_{\gamma}^*:\Omega_k^{\epsilon}\to\Omega_k^{\epsilon},\quad \omega(z)\mapsto \omega(\varphi_{\gamma}z),\\
\text{pour tout }\sigma\in\S_n,\quad &\varphi_{\sigma}^*:\Omega_k^{\epsilon}\to\Omega_{k}^{\sigma(\epsilon)},\quad \omega(z)\mapsto \omega(\varphi_{\sigma}z),\\
\text{et pour tout }\epsilon'\in\{\pm 1\}^n,\quad &\varphi_{\epsilon'}^*:\Omega_k^{\epsilon}\to\Omega_{k}^{\epsilon\epsilon'},\quad \omega(z)\mapsto \omega(\varphi_{\epsilon'}z).
\end{align}
\end{coro}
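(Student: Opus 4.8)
The plan is to prove the three isomorphisms in the corollary by showing that each of the maps $\varphi_\gamma^*$, $\varphi_\sigma^*$, $\varphi_{\epsilon'}^*$ sends the source space onto the stated target space, and then observe that each is invertible because the associated map on $\H^n$ is an isometric involution or group element. The key inputs are already in hand: the preceding subsection established that $\gamma\mapsto\varphi_\gamma$, $\sigma\mapsto\varphi_\sigma$, $\epsilon\mapsto\varphi_\epsilon$ are group morphisms into $\mathrm{Iso}(\H^n)$, and Proposition~\ref{conju} gives the crucial formula $\overline{\omega_f(z,X)}=\omega_f(-\bar z,-X)$ for $f\in S_k^{\R}$, which is exactly what links the conjugation action on forms to the involution $c_{-1}$ on $\H$.

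First I would treat $\varphi_\gamma^*$. Since $\Omega_k^\epsilon$ is spanned over $\R$ by elements $c_{\epsilon_1}(\omega_{f_1})\wedge\cdots\wedge c_{\epsilon_n}(\omega_{f_n})$, it suffices to check that pulling back along $\varphi_\gamma$ preserves each factor's type. The modularity relation~(\ref{mod}), namely $\omega_f(\gamma.z,X|_\gamma)=\omega_f(z,X)$, shows that $\Gamma^n$ acts on the holomorphic factors without changing holomorphy; conjugating~(\ref{mod}) and using Proposition~\ref{conju} gives the analogous statement for the antiholomorphic factors. Hence $\varphi_\gamma^*$ maps $\Omega_k^\epsilon$ into $\Omega_k^\epsilon$, and since $\gamma\mapsto\varphi_\gamma$ is a group morphism, $\varphi_{\gamma^{-1}}^*$ is a two-sided inverse, giving the isomorphism.

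Next I would handle $\varphi_\sigma^*$ and $\varphi_{\epsilon'}^*$ together, as they are the two sources of the sign/index bookkeeping. For $\varphi_\sigma^*$, permuting the coordinates permutes the wedge factors, so a form of type $\epsilon=(\epsilon_1,\dots,\epsilon_n)$ becomes one of type $\sigma(\epsilon)=(\epsilon_{\sigma^{-1}(1)},\dots,\epsilon_{\sigma^{-1}(n)})$; the invariance of $\omega_{\rho(f)}(\rho.z,\rho.X)=\omega_f(z,X)$ from the second formula of the invariance proposition guarantees the image still lies in $\Omega_k^{\sigma(\epsilon)}$, and $\varphi_{\sigma^{-1}}^*$ inverts it. For $\varphi_{\epsilon'}^*$, the map applies $c_{\epsilon'_j}$ coordinate-wise; because $\varphi_{-\bar z}$ on $\H$ turns $\omega_f$ into $\overline{\omega_f}$ by Proposition~\ref{conju}, pulling back toggles each factor's type by $\epsilon'_j$, sending type $\epsilon$ to $\epsilon\epsilon'$, with $\varphi_{\epsilon'}^*$ its own inverse since $c_{-1}$ is an involution.

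The main obstacle I anticipate is purely a matter of getting the index conventions exactly right: one must be careful that the permutation $\sigma$ acts on the sign vector by $\sigma^{-1}$ on indices (so that the target really is $\Omega_k^{\sigma(\epsilon)}$ as written and not its inverse), and that $c_{\epsilon'_j}\circ c_{\epsilon_j}=c_{\epsilon_j\epsilon'_j}$ reproduces the multiplicative law $\epsilon\mapsto\epsilon\epsilon'$. Once these bookkeeping checks are aligned with the definitions of $\Omega_k^\epsilon$ and of $\varphi_\sigma,\varphi_{\epsilon'}$, the statements reduce to the already-proven facts that these are isometries (hence well-defined on $\Omega_{par}^n$) and that they compose as group morphisms (hence are bijective), and no genuinely new computation is required beyond invoking~(\ref{mod}) and Proposition~\ref{conju}.
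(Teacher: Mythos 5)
Votre démonstration est correcte et suit essentiellement la même voie que celle du texte : tout repose sur la Proposition donnant $\overline{\omega_f(z,X)}=\omega_f(-\bar z,-X)$ (c'est-à-dire $\Omega_k^{\epsilon}=\varphi_{\epsilon}^*(\Omega_k^{\holo})$) et sur le fait que $\gamma\mapsto\varphi_{\gamma}$, $\sigma\mapsto\varphi_{\sigma}$, $\epsilon\mapsto\varphi_{\epsilon}$ sont des morphismes de groupes, l'inversibilité venant de $\varphi_{\gamma^{-1}}^*$, $\varphi_{\sigma^{-1}}^*$ et de l'involutivité de $\varphi_{\epsilon'}$. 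La vérification type par type sur les éléments $c_{\epsilon_1}(\omega_{f_1})\wedge\cdots\wedge c_{\epsilon_n}(\omega_{f_n})$ que vous effectuez est exactement le contenu des identités de commutation $\varphi_{\gamma}\circ\varphi_{\epsilon}=\varphi_{\epsilon}\circ\varphi_{\varepsilon\gamma\varepsilon}$, $\varphi_{\sigma}\circ\varphi_{\epsilon}=\varphi_{\sigma(\epsilon)}\circ\varphi_{\sigma}$ et $\varphi_{\epsilon}\circ\varphi_{\epsilon'}=\varphi_{\epsilon\epsilon'}$ utilisées dans le texte.
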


\begin{proof}
D'après la Proposition \ref{conju}, il est équivalent de définir $\Omega_k^{\epsilon}$ via l'isomorphisme:
\begin{equation}
\varphi_{\epsilon}^*:\Omega_k^{\holo}\to\Omega_k^{\epsilon},\quad\omega(z)\mapsto \omega(\varphi_{\epsilon} z).
\end{equation}
Ainsi ce corollaire repose alors sur le fait que $\varphi_{\gamma}^*$ et $\varphi_{\sigma}^*$ sont des endomorphismes bijectifs de $\Omega_k^{\holo}$ et des calculs explicites de commutateurs:
\begin{align*}
\varphi_{\gamma}\circ\varphi_{\epsilon}&=\varphi_{\epsilon}\circ\varphi_{\varepsilon\gamma\varepsilon},\\
\varphi_{\sigma}\circ\varphi_{\epsilon}&=\varphi_{\sigma(\epsilon)}\circ\varphi_{\sigma},\\
\text{et }\varphi_{\epsilon}\circ\varphi_{\epsilon'}&=\varphi_{\epsilon\epsilon'}.
\end{align*}
\end{proof}

Pour tout $\epsilon\in\{\pm 1\}^n$, définissons l'application $\Z$-linéaire définie par:
\begin{equation}
\phi_{\epsilon}:\Z[G_n]\to \Z[G_n],\quad [(\gamma_1,...,\gamma_n),\sigma]\mapsto [(c_{\epsilon_{\sigma(1)}}(\gamma_1),...,c_{\epsilon_{\sigma(n)}}(\gamma_n)),\sigma],
\end{equation}
avec $c_1(\gamma)=\gamma$ et $c_{-1}(\gamma)=\varepsilon\gamma\varepsilon$.\par
Pour toute partie $I$ de $\Z[G_n]$, posons $I^{\epsilon}=\phi_{\epsilon}(I)$.


\begin{prop}
On a la caractérisation suivante de $\mathcal{A}(k)^{\epsilon}=\phi_{\epsilon}(\mathcal{A}(k))$:
\begin{equation}
a\in\mathcal{A}(k)^{\epsilon}\Leftrightarrow\forall\omega\in\varphi_{\epsilon}^*(\Omega_k^{\R}),\langle\omega,\mathcal{T}_n|_{a}\rangle=0.
\end{equation}
De plus pour tout $\epsilon\in\{\pm 1\}^n,$ on a : $\mathcal{A}(k)^{-\epsilon}=\mathcal{A}(k)^{\epsilon}$.\\
\end{prop}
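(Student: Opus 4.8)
The plan is to reduce everything to the topological description of $\mathcal{A}(k)$ recorded in (\ref{Aktopo}) together with a single compatibility lemma between the map $\phi_\epsilon$ on $\Z[G_n]$ and the pullback $\varphi_\epsilon^*$ on forms. First I observe that $\phi_\epsilon$ is an involution of $\Z[G_n]$: since $c_1=\mathrm{id}$ and $c_{-1}(\gamma)=\varepsilon\gamma\varepsilon$ with $\varepsilon^2=1$, one gets $\phi_\epsilon\circ\phi_\epsilon=\mathrm{id}$, so that $b\in\mathcal{A}(k)^{\epsilon}=\phi_\epsilon(\mathcal{A}(k))$ is equivalent to $\phi_\epsilon(b)\in\mathcal{A}(k)$, i.e. by (\ref{Aktopo}) to $\langle\omega',\mathcal{T}_n|_{\phi_\epsilon(b)}\rangle=0$ for every $\omega'\in\Omega_k^{\R}$.

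The heart of the matter is the lemma: for all $a\in\Z[G_n]$ and all $\omega'\in\Omega_k^{\R}$,
\begin{equation}
\langle\varphi_\epsilon^*\omega',\mathcal{T}_n|_{\phi_\epsilon(a)}\rangle=\langle\omega',\mathcal{T}_n|_a\rangle
\end{equation}
as elements of $(V_k)^{\S_n}$. By $\Z$-linearity it suffices to treat $a=[\gamma,\rho]$, and since the pairing is evaluated component by component over $\sigma\in\S_n$, the lemma amounts, after the change of variables $\langle\varphi_\epsilon^*\omega',C\rangle=\langle\omega',\varphi_\epsilon(C)\rangle$, to the chain-level identity $\varphi_\epsilon\bigl((\mathcal{T}_n|_{\phi_\epsilon([\gamma,\rho])})(\sigma)\bigr)=(\mathcal{T}_n|_{[\gamma,\rho]})(\sigma)$. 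To prove this chain identity I would combine three ingredients: the elementary commutation $c_{\epsilon}\circ\gamma=c_{\epsilon}(\gamma)\circ c_{\epsilon}$ on $\H$ (the $n=1$ computation $-\overline{\gamma z}=(\varepsilon\gamma\varepsilon)(-\bar z)$), the compatibility of $\varphi_\epsilon$ with the $\S_n$-action on $\H^n$ — where the signs $\epsilon$ get permuted, exactly as in the commutators $\varphi_\sigma\circ\varphi_\epsilon=\varphi_{\sigma(\epsilon)}\circ\varphi_\sigma$ of the preceding corollary — and the invariance $\varphi_\epsilon(\tau_n)=\tau_n$, which holds because $\tau_n$ is supported on the imaginary axes and $c_{-1}(it)=it$. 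It is precisely this permutation-twisting of $\epsilon$ that forces the $\sigma$-dependent conjugation in the definition of $\phi_\epsilon$, and matching it up coordinate by coordinate is the main technical obstacle.

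Granting the lemma, the characterization follows by applying it with $a=\phi_\epsilon(b)$, so that $\phi_\epsilon(a)=b$: then $\langle\varphi_\epsilon^*\omega',\mathcal{T}_n|_b\rangle=\langle\omega',\mathcal{T}_n|_{\phi_\epsilon(b)}\rangle$, whence $\phi_\epsilon(b)\in\mathcal{A}(k)$ is equivalent to the vanishing of $\langle\varphi_\epsilon^*\omega',\mathcal{T}_n|_b\rangle$ for all $\omega'\in\Omega_k^{\R}$. Since $\varphi_\epsilon^*$ is a bijection of $\Omega_k^{\R}$ onto $\varphi_\epsilon^*(\Omega_k^{\R})$, this is exactly the asserted condition that $\langle\omega,\mathcal{T}_n|_a\rangle=0$ for all $\omega\in\varphi_\epsilon^*(\Omega_k^{\R})$.

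Finally, the equality $\mathcal{A}(k)^{-\epsilon}=\mathcal{A}(k)^{\epsilon}$ will be a formal consequence of the characterization once I show $\varphi_\epsilon^*(\Omega_k^{\R})=\varphi_{-\epsilon}^*(\Omega_k^{\R})$. Writing $\Omega_k^{\R}=\Omega_k^{\holo}\oplus\overline{\Omega_k^{\holo}}$ as the sum of the all-holomorphic and all-antiholomorphic pieces, the preceding corollary gives $\varphi_{\epsilon'}^*(\Omega_k^{\delta})=\Omega_k^{\delta\epsilon'}$; applying this with $\delta=(1,\dots,1)$ and $\delta=(-1,\dots,-1)$ yields $\varphi_\epsilon^*(\Omega_k^{\R})=\Omega_k^{\epsilon}\oplus\Omega_k^{-\epsilon}$, and the same computation for $-\epsilon$ produces the identical subspace $\Omega_k^{-\epsilon}\oplus\Omega_k^{\epsilon}$. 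Hence the two defining families of test forms coincide, and the two annihilator ideals are equal.
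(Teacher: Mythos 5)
Your proposal is correct and follows essentially the same route as the paper: both arguments rest on the adjunction $\langle\varphi_\epsilon^*\omega,C\rangle=\langle\omega,\varphi_\epsilon(C)\rangle$, the invariance $\varphi_\epsilon(\tau_n)=\tau_n$, and the commutations $c_{\epsilon}\circ\gamma=c_{\epsilon}(\gamma)\circ c_{\epsilon}$ and $\varphi_\sigma\circ\varphi_\epsilon=\varphi_{\sigma(\epsilon)}\circ\varphi_\sigma$, the only difference being that the paper carries out the index bookkeeping on pullbacks of forms (via $\varphi^*_{\sigma(\epsilon)}\varphi^*_\sigma\varphi^*_\gamma\varphi^*_\epsilon=\varphi^*_{\phi_\epsilon([\gamma,\sigma])}$) where you phrase the same computation at the chain level. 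For the final assertion the paper simply conjugates the pairing value, $\overline{\langle\varphi_\epsilon^*\omega,C\rangle}=\langle\varphi_{-\epsilon}^*\overline{\omega},C\rangle$ with $\overline{\omega}\in\Omega_k^{\R}$, which is equivalent to your observation that $\varphi_\epsilon^*(\Omega_k^{\R})=\Omega_k^{\epsilon}\oplus\Omega_k^{-\epsilon}=\varphi_{-\epsilon}^*(\Omega_k^{\R})$.
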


\begin{proof}
On a : $[\gamma,\sigma]=[1,\sigma][\gamma,1]$ donc: $\varphi_{[\gamma,\sigma]}^*=\varphi_{\sigma}^*\varphi_{\gamma}^*.$\\
On peut aussi remarquer que $\varphi_{\epsilon}(\tau_n^{\sigma})=\tau_n^{\sigma}$ car les imaginaires purs sont stables par $z\mapsto -\bar{z}$. On a alors:
$$\langle \omega,\mathcal{T}_n\rangle=\langle \omega,\varphi_{\epsilon}(\mathcal{T}_n)\rangle=\langle \varphi_{\epsilon}^*\omega,\mathcal{T}_n\rangle.$$
Ceci démontre la première partie car $\varphi_{\epsilon}^*$ échange les espaces $\Omega_k^{\R}$ et $\varphi_{\epsilon}^*(\Omega_k^{\R})$.\par
Ainsi on démontre aisément que pour $\gamma\in\mathcal{A}(k)$, on a pour tout $\omega\in\varphi_{\epsilon}^*(\Omega_k^{\R})$:
$$0=\langle (\varphi_{\epsilon}^*\omega)|_{[\gamma,\sigma]},\varphi_{\sigma(\epsilon)}(\mathcal{T}_n)\rangle=\langle (\varphi_{\sigma(\epsilon)}^*\varphi_{\sigma}^*\varphi_{\gamma}^*\varphi_{\epsilon}^*)\omega,\tau_n\rangle.$$
Il nous reste plus qu'à remarquer d'une part que: 
$\varphi_{\sigma(\epsilon)}^*\varphi_{\sigma}^*=\varphi_{\sigma}^*\varphi_{\epsilon}^*.$
et d'ainsi déduire l'égalité:
$$\varphi_{\sigma(\epsilon)}^*\varphi_{\sigma}^*\varphi_{\gamma}^*\varphi_{\epsilon}^*=\varphi_{\sigma}^*\varphi_{\epsilon}^*\varphi_{\gamma}^*\varphi_{\epsilon}^*=\varphi^*_{\sigma}\varphi^*_{\varepsilon\gamma\varepsilon}=\varphi_{\phi_{\epsilon}([\gamma,\sigma])}^*.$$
Et ainsi $\phi_{\epsilon}([\gamma,\sigma])=[(c_{\epsilon_{\sigma(1)}}(\gamma_1),...,c_{\epsilon_{\sigma(n)}}(\gamma_n)),\sigma]\in\mathcal{A}(k)^{\epsilon}.$\par
Enfin l'égalité des idéaux $\mathcal{A}(k)^{-\epsilon}=\mathcal{A}(k)^{\epsilon}$ correspond à remarquer:
$$\langle \varphi_{\epsilon}^*\omega,C\rangle=0=\overline{\langle \varphi_{\epsilon}^*\omega,C\rangle}=\langle\varphi_{-\epsilon}^*\omega,C\rangle.$$
\end{proof}

Les idéaux $\mathcal{A}(k)^{\epsilon}$ sont ainsi stables par conjugaison par $(\varepsilon,...,\varepsilon)$. De plus, on notera que les définitions:
$$\Omega_k^{\R}=\Omega_k^{\holo}\oplus\overline{\Omega_k^{\holo}}\text{ et }\Omega_k^{\holo}=\Omega_k^{(+,...,+)},$$
permet de donner l'écriture suivante de l'espace de formes:
$$\varphi_{\epsilon}^*(\Omega_k^{\R})=\Omega_k^{\epsilon}\oplus\Omega_k^{-\epsilon}.$$
Il suffit donc de vérifier l'annulation pour les éléments de $\Omega_k^{\epsilon}$.

\subsection{Les $n$-cycles orthogonaux aux formes modulaires}

Au vu de la formule du corollaire \ref{coro36}, pour mémoire:
\begin{equation*}
R_k(f)|_{[\gamma,\rho]}(\sigma)=\langle \omega_{f}(z,X), (\gamma^{-1}.\tau_n^{\rho})^{\sigma} \rangle,
\end{equation*}
définissons l'ensemble des \textsl{chaînes transverses} par:
\begin{equation}
\Omega_k^{\bot}=\{C\in  M_n^{pte}(\H^n,\Z)\text{ tel que }\langle \omega, C\rangle=0,\forall \omega\in\Omega_k^{\R}\}.
\end{equation}
On remarque que cette définition est indépendante du corps de base. On peut remplacer $\Omega_k^{\R}$ par $\Omega_k=\Omega_k^{\R}\otimes_{\R}\C$ ou tout autre extension.\par
On dispose alors de deux annulations essentielles:

\begin{lem}
Soit $\omega\in\Omega_{k}$. On a :
\begin{equation}
\langle \omega, C \rangle=0,\text{ pour tout }C\in \Im(\delta_{n+1}).
\end{equation}
De plus, pour tout entier $0\leq j \leq n$ et toute matrice $g\in\Gamma$, on a :
\begin{equation}
\langle \omega, C \rangle=0,\text{ pour tout }C\in  M_n^{pte}(\varphi_j^g(\H^{n-1}),\Z).
\end{equation}
\end{lem}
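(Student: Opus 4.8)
L'énoncé comporte deux annulations, chacune reposant sur une propriété géométrique de $\tau_n$ et de ses translatés. Le plan est de traiter séparément ces deux cas, en exploitant dans les deux cas que $\omega\in\Omega_k$ est une $n$-forme \emph{harmonique} et que son intégrande se scinde en produit de $1$-formes grâce à l'identification $\Omega_k^{\holo}=\otimes_{j=1}^n\Omega_{k_j}^+$ (et de même pour les conjuguées dans $\overline{\Omega_k^{\holo}}$). Il suffira par $\C$-linéarité de vérifier l'annulation pour $\omega=\omega_{f_1}\wedge\dots\wedge\omega_{f_n}$ avec chaque $\omega_{f_j}$ de type pur (holomorphe ou antiholomorphe), car ces formes engendrent $\Omega_k=\Omega_k^{\R}\otimes_\R\C$.

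\textbf{Première annulation (bords).} Pour $C\in\Im(\delta_{n+1})$, j'écris $C=\delta_{n+1}B$ pour une $(n+1)$-chaîne $B\in M_{n+1}^{pte}(\H^n,\Z)$. La clé est la formule de Stokes :
\begin{equation}
\langle\omega,\delta_{n+1}B\rangle=\int_{\delta_{n+1}B}\omega=\int_{B}d\omega.
\end{equation}
Or $\omega\in\Omega_k\subset\Omega^n_{par}(\H^n,V_k)$ est harmonique, donc en particulier \emph{fermée} ($d\omega=0$) puisqu'une forme de type pur $\omega_{f_1}\wedge\dots\wedge\omega_{f_n}$ a pour composantes des fonctions holomorphes ou antiholomorphes en chaque variable, annulées par la différentielle extérieure totale. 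On conclut donc que $\langle\omega,C\rangle=0$. Le seul point de vigilance est la validité de Stokes pour une chaîne relative aux pointes : c'est assuré par la nullité de $\omega$ sur le bord $\partial\H^n$, qui garantit la convergence de toutes les intégrales le long des faces aboutissant aux pointes, exactement comme dans le cas $n=1$ et $n=2$ déjà traités.

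\textbf{Seconde annulation (chaînes transverses).} Pour $C\in M_n^{pte}(\varphi_j^g(\H^{n-1}),\Z)$, j'utilise l'identité \eqref{mchtrans}, soit $M_n^{pte}(\varphi_j^g(\H^{n-1}),\Z)=\varphi_j^g\big(M_n^{pte}(\H^{n-1},\Z)\big)$, pour écrire $C=\varphi_j^g(C')$ avec $C'\in M_n^{pte}(\H^{n-1},\Z)$. Le support de $C$ est alors contenu dans l'image d'une variété de dimension réelle strictement inférieure, sur laquelle l'une des variables $z_l$ est liée aux autres ($z_l$ constant égal à $g.i\infty$ si $j=0$, égal à $g.0$ si $j=n$, ou $z_{j+1}=g.z_j$ sinon). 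L'argument essentiel est alors que le tiré en arrière $(\varphi_j^g)^*\omega$ d'une $n$-forme par une application dont la source $\H^{n-1}$ est de dimension réelle $2n-2<2n$ fait intervenir moins de différentielles indépendantes qu'il n'en faut : concrètement, pour $0<j<n$ la substitution $z_{j+1}=g.z_j$ force $dz_{j+1}\wedge dz_j$ (et de même pour les conjuguées) à dégénérer, tandis que pour $j\in\{0,n\}$ la différentielle de la coordonnée figée s'annule identiquement. Dans tous les cas $(\varphi_j^g)^*\omega=0$, d'où $\langle\omega,C\rangle=\int_{C'}(\varphi_j^g)^*\omega=0$. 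L'obstacle principal sera de mener proprement ce calcul de tiré en arrière \emph{pour chaque type de $\omega$} (holomorphe pur, mixte, antiholomorphe), en traitant avec soin le cas intermédiaire $0<j<n$ où la relation diagonale $z_{j+1}=g.z_j$ introduit le facteur jacobien $(cz_j+d)^{-2}$ : il faudra vérifier que la $2$-forme $\omega_{f_j}(z_j,X_j)\wedge\omega_{f_{j+1}}(g.z_j,X_{j+1})$ restreinte à la diagonale tordue est bien nulle, ce qui résulte de $dz_j\wedge d(g.z_j)=(cz_j+d)^{-2}\,dz_j\wedge dz_j=0$.
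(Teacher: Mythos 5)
Votre démonstration est correcte et suit essentiellement la même voie que celle du texte : théorème de Stokes combiné à $d\omega=0$ (puisque $\omega$ est un produit extérieur de $1$-formes fermées) pour la première annulation, puis réduction par l'identité $M_n^{pte}(\varphi_j^g(\H^{n-1}),\Z)=\varphi_j^g\bigl(M_n^{pte}(\H^{n-1},\Z)\bigr)$ et annulation du tiré en arrière $(\varphi_j^g)^*\omega$ — différentielle d'une coordonnée figée nulle pour $j\in\{0,n\}$, et $dz_j\wedge d(g.z_j)=(cz_j+d)^{-2}dz_j\wedge dz_j=0$ pour $0<j<n$ — pour la seconde. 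Vos remarques supplémentaires sur la convergence aux pointes et sur le traitement des types purs ne modifient pas l'argument.
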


\begin{proof}
Soit $C\in \Im(\delta_{n+1})$ alors il existe $P\in   M_{n+1}^{pte}(\H^n,\Z)$ tel que $C=\delta_{n+1}(P)$ donnant lieu au calcul:
$$\langle \omega, C \rangle=\langle \omega, \delta_{n+1} (P)\rangle =\langle d\omega, P\rangle.$$
La dernière égalité provenant du Théorème de Stokes.\\
Puis on a :
$$d(\wedge_{j=1}^n\omega_{f_j})=\sum_{j=1}^n\omega_{f_1}\wedge...\wedge d(\omega_{f_j})\wedge...\wedge\omega_{f_n}=0,$$
car $d\omega_f=d\left(f(z)(X-z)^{k-2}dz\right)=0$ pour tout $f\in S_k $.\par
Pour démontrer la seconde annulation, on dispose de $  M_n^{pte}(\varphi_j^g(\H^{n-1}),\Z)=\varphi_j^g\left(M_n^{pte}(\H^{n-1},\Z)\right)$. 
Ainsi pour tout $C\in  M_n^{pte}(\varphi_j^g(\H^{n-1}),\Z)$, 
il existe une $n$-chaîne $c\in  M_n^{pte}(\H^{n-1},\Z)$ tel que $C=\varphi_j^g(c)$ et on obtient :
$$\langle \omega, C\rangle=\langle \omega, \varphi_j^g(c)\rangle=\langle (\varphi_j^g)^*\omega,c\rangle.$$
où $(\varphi_j^g)^*:\Omega^n(\H^n)\to\Omega^n(\H^{n-1}),\omega\mapsto\left[(z_1,...,z_{n-1})\to\omega(\varphi_j^g(z_1,...,z_{n-1}))\right]$.\par

Or pour tout $\omega\in\Omega_{k}$, $(\varphi_j^g)^*\omega=0$.\\
En effet, dans le cas $j=0$ ou $n$, il n'y a pas de variation suivant $dz_j$ donc $dz_j$ s'envoie sur $0$.\\
Dans les autres cas, $dz_j\wedge dz_{j+1}$ s'envoie sur $dz_j\wedge d(g.z_j)=(c z_j +d)^{-2}dz_j\wedge dz_j=0.$\\
Ainsi on obtient donc bien $\langle \omega, C\rangle=0$.
\end{proof}

\begin{thm}\label{decompomegathm}
On dispose de la caractérisation suivante de $\Omega_{k}$:
\begin{equation}
\Omega_{k}=\{\omega\in\Omega^n_{par}(\H^n,V_k)^{\Gamma^n};d\omega=0
\text{ et }\varphi^*_{j}\varphi_{\sigma}^*\omega=0,\text{ pour } 0\leq j\leq n,\sigma\in\S_n\}.
\end{equation}
On obtient alors:
\begin{equation}\label{thmeq:2}
\delta_{n+1}(M_{n+1}^{pte}(\H^n,\Z))+\sum_{j,g,\sigma}M_n^{pte}(\varphi_{\sigma}\varphi_j^g(\H^{n-1}),\Z)\subset\Omega_k^{\bot}=(\Omega_k^{\holo})^{\bot}.
\end{equation}
\end{thm}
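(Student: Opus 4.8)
The plan is to prove the two displayed equations in turn. First I would establish the characterization of $\Omega_{k}$ as the space of $\Gamma^n$-invariant, harmonic, closed $n$-forms annihilated by all the pullbacks $\varphi_j^*\varphi_\sigma^*$, proving it by double inclusion; its hard half generalizes to arbitrary $n$ the essential lemma used for $n=2$ in the chapter on bi-periods. Then the orthogonality inclusion becomes a formal consequence of this characterization together with the lemma immediately preceding the statement.

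For the inclusion $\Omega_{k}\subset\{\dots\}$ I would argue as follows. Every element of $\Omega_{k}=\Omega_{k}^{\holo}\oplus\overline{\Omega_{k}^{\holo}}$ is a $\C$-linear combination of forms $\wedge_{j=1}^n\omega_{f_j}(z_j,X_j)$ and their conjugates, so it suffices to treat a pure holomorphic form $\omega=\wedge_j\omega_{f_j}$. Such $\omega$ is $\Gamma^n$-invariant by \eqref{mod} and closed because $d\omega_{f_j}=0$ for each $j$. For the pullbacks, $\varphi_\sigma^*\omega$ is again a pure holomorphic form in the permuted variables (permuting the all-$+$ signature leaves it of type $+$), and $(\varphi_j^g)^*$ annihilates any pure holomorphic $n$-form: it either sends a coordinate to a cusp, so $dz_1$ or $dz_n$ pulls back to $0$, or identifies two adjacent coordinates $z_j,z_{j+1}$, whence $dz_j\wedge d(g.z_j)=(cz_j+d)^{-2}\,dz_j\wedge dz_j=0$. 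The antiholomorphic part is handled identically by conjugation.

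The reverse inclusion is the heart of the matter. Given a closed $\omega\in\Omega^n_{par}(\H^n,V_k)^{\Gamma^n}$ with $\varphi_j^*\varphi_\sigma^*\omega=0$ for all $j,\sigma$, I would first expand $\omega$ over the monomial $n$-forms in $\{dz_j,d\bar z_j\}_{1\le j\le n}$, with coefficients holomorphic in the variables carrying a $dz$ and antiholomorphic in those carrying a $d\bar z$. Each monomial assigns to every index $j$ one of four types, $dz_j$, $d\bar z_j$, $dz_j\wedge d\bar z_j$, or nothing; the degree count forces the number of ``double'' indices to equal the number of ``empty'' ones. Closedness forbids dependence on an empty variable $\ell$, and then $\Gamma$-invariance in the $\ell$-th factor confines the $X_\ell$-dependence to the invariants $V_{k_\ell}^{\Gamma}=0$ (as $k_\ell\ge 4$); this kills every term possessing an empty index, so only pure signatures $\epsilon\in\{+,-\}^n$ survive. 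If a surviving signature is neither all-$+$ nor all-$-$, it has two indices $a,b$ of opposite type; choosing $\sigma$ making them adjacent and applying the diagonal face $\varphi_j$ produces a $dz\wedge d\bar z$ term whose vanishing, after separating the monomials in $X_a,X_b$ (legitimate since $k_a+k_b>4$), forces the coefficient to be zero. Hence $\omega$ reduces to an all-holomorphic plus an all-antiholomorphic term, and finally the vanishing of $\omega$ on $\partial\H^n$ forces the component modular forms to be cuspidal, placing $\omega$ in $\Omega_{k}^{\holo}\oplus\overline{\Omega_{k}^{\holo}}=\Omega_{k}$.

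Granting the characterization, the second inclusion is immediate from the preceding lemma: every $\omega\in\Omega_{k}$ annihilates $\Im(\delta_{n+1})$ by Stokes (using $d\omega=0$) and every chain of $M_n^{pte}(\varphi_j^g(\H^{n-1}),\Z)$ (using $(\varphi_j^g)^*\omega=0$). For the permuted transverse spaces I would use $\langle\omega,\varphi_\sigma\varphi_j^g(c)\rangle=\langle(\varphi_j^g)^*\varphi_\sigma^*\omega,c\rangle=0$, valid because $\varphi_\sigma^*\omega\in\Omega_{k}$ and the characterization gives $\varphi_j^*\varphi_\sigma^*\omega=0$; summing over $j,g,\sigma$ and adding $\Im(\delta_{n+1})$ yields the asserted inclusion into $\Omega_{k}^{\bot}$, while $\Omega_{k}^{\bot}=(\Omega_{k}^{\holo})^{\bot}$ follows from $\Omega_{k}=\Omega_{k}^{\holo}\oplus\overline{\Omega_{k}^{\holo}}$ and the conjugation symmetry $\overline{\Omega}^{\bot}=\Omega^{\bot}$. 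The main obstacle will be the reverse inclusion of the characterization: the combinatorial bookkeeping of the type decomposition for general $n$, and in particular arranging via the permutations $\sigma$ that every pair of opposite-type indices can be made adjacent so the diagonal faces $\varphi_j$ apply, together with the monomial-separation step that turns the relations $\varphi_j^*\varphi_\sigma^*\omega=0$ into honest vanishing of coefficients.
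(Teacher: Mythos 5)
Your proposal follows the paper's proof very closely in its overall structure: the easy inclusion by direct computation from the preceding lemma; the decomposition of $\omega$ into monomials in the differentials $dz_j,d\bar z_j$ for the converse; the elimination of mixed signatures by choosing $\sigma$ to place two indices of opposite type in positions $1,2$, applying the diagonal face $\varphi_1$ and separating the monomials in $X_a$; cuspidality of the surviving component forms from the vanishing on $\partial\H^n$; and the deduction of \eqref{thmeq:2} from $\{d\omega=0\}^{\bot}=\Im(\delta_{n+1})$, from $\langle\omega,\varphi_{\sigma}\varphi_j^g(c)\rangle=\langle(\varphi_j^g)^*\varphi_{\sigma}^*\omega,c\rangle$ together with $\Gamma^n$-stability, and from $\overline{\Omega}^{\bot}=\Omega^{\bot}$. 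All of that matches the paper.

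The one place where you genuinely deviate is the elimination of the monomials in which some variable $\ell$ carries no differential, and there your argument has a gap. You assert that \emph{closedness forbids dependence on an empty variable} and then conclude via $V_{k_\ell}^{\Gamma}=0$. But $d\omega=0$ is not a termwise condition: the $(n+1)$-form monomial obtained by wedging $dz_\ell$ onto an $n$-monomial $M$ not containing the differentials of $z_\ell$ coincides, up to sign, with the monomials obtained by differentiating the coefficients of the other $n$-monomials $M'$ with $\{dz_\ell\}\cup M=\{v'\}\cup M'$; so the relevant partial derivatives need only cancel in sum, not vanish individually (already for $n=2$, the derivative $\partial_{z_1}$ of the coefficient of $dz_2\wedge d\bar z_2$ is coupled to $\partial_{z_2}$ and $\partial_{\bar z_2}$ of the coefficients of $dz_1\wedge d\bar z_2$ and $dz_1\wedge dz_2$). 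And without independence of $z_\ell$, the $\Gamma$-invariance in the $\ell$-th factor only says that the coefficient is a weight-zero $V_{k_\ell}$-valued invariant function of $z_\ell$, which is a larger space than $V_{k_\ell}^{\Gamma}$. This is exactly what the hypotheses for $j=0$ and $j=n$ --- which your converse never uses --- are there for: the paper kills these terms with the conditions $\varphi_0^*\varphi_{\sigma}^*\omega=0$ for $\sigma$ ranging over $\S_n$, since for each $\sigma$ this condition isolates precisely the sum of the monomials missing the differentials of the variable sent to the cusp. You should route that step through these face conditions rather than through closedness; the rest of your argument then goes through as in the paper.
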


\begin{proof}
L'inclusion $\subset$ est vérifiable par calcul direct et correspond essentiellement au lemme précédent.
Réciproquement, soit $\omega\in\Omega^n_{par}(\H^n,V_k)^{\Gamma^n}$ vérifiant les équations du membre de droite. La forme $\omega$ est une combinaison sur $\C$ de:
$$F(z_1,...,z_n)\d u_1\wedge...\wedge \d u_n,\text{ pour des }u_j\text{ distincts parmi }\{z_1,\bar{z_1},...,z_n,\bar{z_n}\},$$
où $F:\H^n\to V_{k}$ est une fonction harmonique.
On peut supposer les indices croissants pour obtenir des termes linéairement indépendants.\par
Les équations $\varphi_0^*\varphi_{\sigma}^*\omega=0$ pour $\sigma$ parcourant $\S_n$ donne l'annulation des termes ne possédant ni $z_{\sigma(1)}$ ni $\overline{z_{\sigma(1)}}$ parmi l'ensemble des $u_j$. Ainsi il reste uniquement les termes où chacune des variables apparaissent exactement une fois:
$$\omega=\sum_{\epsilon\in\{\pm 1\}^n} F_{\epsilon}(z_1,...,z_n)d(c_{\epsilon_1}(z_1))\wedge...d(c_{\epsilon_n}(z_n)),$$
où $F_{\epsilon}:\H^n\to V_k$ est harmonique et la conjugaison est donnée par $\epsilon\in\{\pm 1\}^n$.\par
La condition $d\omega=0$ et l'invariance par $\Gamma^n$ permet d'obtenir de réécrire ces termes en:
$$F_{\epsilon}(z_1,...,z_n)=f^{\epsilon}_{1}(c_{\epsilon_1}(z_1))(X_1-c_{\epsilon_1}(z_1))^{k_1-2}... f^{\epsilon}_n(c_{\epsilon_n}(z_n))(X_n-c_{\epsilon_n}(z_n))^{k_n-2},$$
où les formes holomorphes $(f^{\epsilon}_1,...,f^{\epsilon}_n)\in\prod_{j=1}^n M_{k_j} $ dépendent de ${\epsilon}\in\{\pm 1\}^n$.\par
La condition $\varphi_1^*\varphi_{\sigma}^*\omega=0$ donne l'annulation de ces termes sauf lorsque $\pm\epsilon=(1,...,1)$. En effet, sinon il existe deux entiers $a$ et $b$ tel que $\epsilon_a=-\epsilon_b$, soit $\sigma\in\S_n$ tel que $\sigma(1)=a$ et $\sigma(2)=b$. L'équation $\varphi_1^*\varphi_{\sigma}^*\omega=0$ donne l'annulation de:
\begin{multline*}
\sum_{\substack{\epsilon_a=1\\ \epsilon_b=-1}} f_a^{\epsilon}(z_1)f_b^{\epsilon}(\bar{z_1})(X_a-z_1)^{k_a-2}(X_b-\bar{z_1})^{k_b-2}G_{\epsilon}^{\sigma}(z_2,...,z_{n-1}) dz_1\wedge d\bar{z_1}\wedge d(c_{\epsilon_{\sigma(3)}}(z_2))\wedge...\wedge d(c_{\epsilon_{\sigma(n)}}(z_{n-1}))\\
+f_b^{-\epsilon}(z_1)f_a^{-\epsilon}(\bar{z_1})(X_a-\bar{z_1})^{k_a-2}(X_b-z_1)^{k_b-2}G_{-\epsilon}^{\sigma}(z_2,...,z_{n-1}) dz_1\wedge d\bar{z_1}\wedge d(c_{-\epsilon_{\sigma(3)}}(z_2))\wedge...\wedge d(c_{-\epsilon_{\sigma(n)}}(z_{n-1})),
\end{multline*}
où $G_{\epsilon}^{\sigma}(z_2,...,z_{n-1})=\prod_{j=3}^n f^{\epsilon}_{\sigma(j)}(c_{\epsilon_{\sigma(j)}}(z_{\sigma(j)}))(X_{\sigma(j)}-c_{\epsilon_{\sigma(j)}}(z_{\sigma(j)}))^{k_{\sigma(j)}-2}.$ La liberté du choix de $\sigma$ offerte par le choix de $\sigma(3),...,\sigma(n)$ réduit ceci à l'annulation de:
$$f_a^{\epsilon}(z_1)f_b^{\epsilon}(\bar{z_1})(X_a-z_1)^{k_a-2}(X_b-\bar{z_1})^{k_b-2}+f_b^{-\epsilon}(z_1)f_a^{-\epsilon}(\bar{z_1})(X_a-\bar{z_1})^{k_a-2}(X_b-z_1)^{k_b-2}=0.$$
Ceci donne l'annulation des deux termes car si $k_a=2$ alors $f_a^{\epsilon}=f_a^{-\epsilon}=0$ car $M_2 =\{0\}$. Sinon on peut identifier les coefficients des monômes en $X_a$ comme nuls et déduire $f_a^{\epsilon}(z_1)f_b^{\epsilon}(\bar{z_1})^=f_b^{-\epsilon}(z_1)f_a^{-\epsilon}(\bar{z_1})=0$.\par
Il ne reste ainsi plus que deux termes non nuls:
\begin{align*}
&f^{(1,..,1)}_{1}(z_1)(X_1-z_1)^{k_1-2}dz_1\wedge...\wedge f^{(1,..,1)}_n(z_n)(X_n-z_n)^{k_n-2}dz_n,\\
\text{et : }&f^{(-1,..,-1)}_{1}(\bar{z_1})(X_1-\bar{z_1})^{k_1-2}d\bar{z_1}\wedge...\wedge f^{(-1,..,-1)}_n(\bar{z_n})(X_n-\bar{z_n})^{k_n-2}d\bar{z_n}.
\end{align*}
La condition d'annulation sur le bord $\partial\H^n$ permet d'obtenir le caractère parabolique des formes modulaires restantes $(f_j^{\epsilon})_{1\leq j\leq n}\in\prod_{j=1}^n S_{k_j}$ pour $\epsilon\in\{(1,...,1),(-1,...,-1)\}$. On obtient ainsi $\omega\in\Omega_k$.\par
Pour démontrer l'inclusion (\ref{thmeq:2}), on remarque à nouveau que pour ensemble de forme $\Omega$ on a: $\overline{\Omega}^{\bot}=\Omega^{\bot}$. En effet, pour $C\in  M_n^{pte}(\H^n,\Z)$ et $\omega\in\Omega$, on a: $\langle \overline{\omega},C\rangle=\overline{\langle\omega, C\rangle}.$ Ainsi:
$$\Omega_k^{\bot}=\left(\Omega_k^{\holo}\oplus\overline{\Omega_k^{\holo}}\right)^{\bot}=(\Omega_k^{\holo})^{\bot}.$$
Puis $\{d\omega=0\}^{\bot}=\Im(\delta_{n+1})$ et pour $0\leq j \leq n$ et $\sigma\in\S_n$ on a:
$$\{\varphi_j^*\varphi_{\sigma}^*\omega=0\}^{\bot}=\varphi_{\sigma}\varphi_j(M_n^{pte}(\H^{n-1},\Z))=M_n^{pte}(\varphi_{\sigma}\varphi_j(\H^{n-1}),\Z).$$
La stabilité par l'action de $\Gamma^n$ de $\Omega_k^{\bot}$ donne alors l'inclusion du théorème.
\end{proof}

\begin{prop}\label{condcycle}
Pour tout $n$-cycle $C\in  M_n^{pte}(\H^n,\Z)$, on a: 
\begin{align}
&\delta_{n}(C)\in \sum_{j,g,\sigma} \delta_{n}\left(M_{n}^{pte}(\varphi_{\sigma}\varphi_j^g(\H^{n-1}),\Z)\right)\\
\Leftrightarrow\quad &C\in \delta_{n+1}(M_{n+1}^{pte}(\H^n,\Z))+\sum_{j,g,\sigma}  M_n^{pte}(\varphi_{\sigma}\varphi_j^g(\H^{n-1}),\Z).
\end{align}
\end{prop}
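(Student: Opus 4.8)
The plan is to read this equivalence as a purely homological statement and reduce it to the vanishing of $H_n^{pte}(\H^n,\Z)$ proved earlier in this chapter. First I would abbreviate the module of transverse $n$-chains by
\[
\mathcal{M}=\sum_{j,g,\sigma} M_n^{pte}(\varphi_{\sigma}\varphi_j^g(\H^{n-1}),\Z)\subset M_n^{pte}(\H^n,\Z).
\]
Since $\delta_n$ is $\Z$-linear, the left-hand condition reads $\delta_n(C)\in\delta_n(\mathcal{M})$ and the right-hand condition reads $C\in\delta_{n+1}(M_{n+1}^{pte}(\H^n,\Z))+\mathcal{M}$. So the whole proposition becomes a bookkeeping statement about the boundary operator together with one exactness input.

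The implication $\Leftarrow$ is immediate: if $C=\delta_{n+1}(P)+M$ with $P\in M_{n+1}^{pte}(\H^n,\Z)$ and $M\in\mathcal{M}$, then applying $\delta_n$ and using $\delta_n\circ\delta_{n+1}=0$ gives $\delta_n(C)=\delta_n(M)\in\delta_n(\mathcal{M})$. For the direction $\Rightarrow$, I would start from $\delta_n(C)=\delta_n(M)$ for some $M\in\mathcal{M}$, so that $\delta_n(C-M)=0$, i.e. $C-M\in\Ker(\delta_n)$. This is the only place where geometry enters: for $n\geq 1$ one has $n\neq 0$ and $n\neq 2n$, so the earlier computation of relative homology yields $H_n^{pte}(\H^n,\Z)=\Ker(\delta_n)/\Im(\delta_{n+1})=0$, whence $\Ker(\delta_n)=\Im(\delta_{n+1})$. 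Therefore $C-M\in\delta_{n+1}(M_{n+1}^{pte}(\H^n,\Z))$, and adding $M$ back gives $C\in\delta_{n+1}(M_{n+1}^{pte}(\H^n,\Z))+\mathcal{M}$, as required.

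The only genuine ingredient is the exactness $\Ker(\delta_n)=\Im(\delta_{n+1})$ in degree $n$; everything else is formal manipulation with the linearity of $\delta_n$ and the identity $\delta_n\circ\delta_{n+1}=0$. Consequently there is no real obstacle once the homology of $\H^n$ relative to $\pte^n$ is in hand. The point to watch is the paper's loose use of the word \emph{cycle}: here $C$ denotes an arbitrary $n$-chain of $M_n^{pte}(\H^n,\Z)$, not a closed one. Were $C$ closed, both conditions would hold trivially (the left since $\delta_n(C)=0$, the right since $\Ker(\delta_n)=\Im(\delta_{n+1})$), so the content of the statement lies exactly in treating the non-closed case through the representative $M\in\mathcal{M}$ of $\delta_n(C)$.
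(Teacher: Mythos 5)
Votre démonstration est correcte et suit essentiellement la même démarche que celle du texte : le sens $\Leftarrow$ par $\delta_n\circ\delta_{n+1}=0$, et le sens $\Rightarrow$ en écrivant $\delta_n(C)=\delta_n(M)$ avec $M$ transverse, puis $C-M\in\Ker(\delta_n)=\Im(\delta_{n+1})$ grâce à l'annulation de $H_n^{pte}(\H^n,\Z)$. Votre remarque sur l'emploi du mot \emph{cycle} pour une chaîne non nécessairement fermée est pertinente mais ne change rien à l'argument.
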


\begin{proof}
On a déjà montrer que $M_m^{pte}(\H^n,\Z)=0$ sauf si $m\in\{0,2n\}$. Ceci nous donne $\Im(\delta_{n+1})=\Ker(\delta_{n})$ dès que $n\geq 2$ donnant:
$$\delta_{n+1}\left( M_{n+1}^{pte}(\H^n,\Z)\right)=\Ker(\delta_{n}| M_{n}^{pte}(\H^n,\Z)).$$
Ainsi pour tout élément $C$ vérifiant :
$$\delta_{n}(C)\in \sum_{j,g,\sigma} \delta_{n}( M_n^{pte}(\varphi_{\sigma}\varphi_j^g(\H^{n-1}),\Z)),$$
s'écrit  $\delta_{n}(C)=\sum_{j,g,\sigma} \delta_{n}(c_{\sigma,j}^{g}),$ où $c_{\sigma,j}^{g}\in M_n^{pte}(\varphi_{\sigma}\varphi_j^g(\H^{n-1}),\Z)$ et:
$$c_0=C-\sum_{j,g,\sigma} c_{\sigma,j}^{g}\in \Ker (\delta_{n})=Im (\delta_{n+1}).$$
Pour montrer la réciproque, il suffit d'appliquer $\delta_{n}$ à la chaîne et d'utiliser $\delta_{n}\circ\delta_{n+1}=0$.
\end{proof}

\begin{rem}
Cherchons désormais à caractériser l'ensemble des chaînes de $\varphi_{\epsilon}^*(\Omega_k)^{\bot}$.\par
Soit $\varphi_{\epsilon}^*\omega\in\varphi_{\epsilon}^*(\Omega_k)$. Pour obtenir, l'annulation du crochet $\langle \varphi^*_{\epsilon}\omega,C\rangle=\langle\omega,\varphi_{\epsilon}(C)\rangle$ il suffit de vérifier la première assertion pour $\varphi_{\epsilon}(C)$. Ainsi on dispose de l'implication:
\begin{equation}
\varphi_{\epsilon}(C)\in\sum_{j,g,\sigma} \delta_{n}\left( M_{n}^{pte}(\varphi_{\sigma}\varphi_j^g(\H^{n-1}),\Z)\right)
\Rightarrow\forall\omega\in\varphi_{\epsilon}^*(\Omega_k),\langle\omega,C\rangle=0.
\end{equation}
\end{rem}

\section{Calcul de l'idéal des relations}

\subsection{Action de $P\G^n\rtimes\S_n$ sur les chaînes transverses}

Pour déterminer l'idéal $\mathcal{A}(k)$, nous allons étudier les $n$-chaînes $C=\gamma.\tau_n$ où $\gamma\in\Z[\Gamma^n]$ et en particulier, celles qui vérifiant les conditions de la Proposition \ref{condcycle}.\par
L'action de $G_n$ sur l'image des applications $\varphi_j^g$, $\varphi_{\sigma}$ et $\varphi_{\epsilon}$ est donnée par les applications suivantes.\\

Pour tout $0\leq j\leq n$ et $g\in\Gamma$, on définit $\phi_j^g:\Gamma^{n-1}\to\Gamma^n$ par:
$$\phi_j^g(\gamma_1,...,\gamma_{n-1})=
\begin{cases}
(g,\gamma_1,...,\gamma_{n-1})&\text{ si }j=0\\
(\gamma_1,...,\gamma_{n-1},g)&\text{ si }j=n\\
(\gamma_1,...,\gamma_j,g\gamma_j,\gamma_{j+1},...)&\text{ sinon}.
\end{cases}$$
De plus, pour tout $\sigma\in\S_n$ et $\epsilon\in\{\pm 1\}^n$, on rappelle les définitions:
$$\phi_{\sigma}(\gamma_1,...,\gamma_n)=(\gamma_{\sigma(1)},...,\gamma_{\sigma(n)})
\text{ et }\phi_{\epsilon}(\gamma_1,...,\gamma_n)=(c_{\epsilon_1}(\gamma_1),...,c_{\epsilon_n}(\gamma_n)),$$
où $c_1(\gamma)=\gamma$, $c_{-1}(\gamma)=\varepsilon\gamma\varepsilon$ et $\varepsilon=\mat{-1}{0}{\phantom{-}0}{1}$.\par

\begin{prop}
Pour tout $z\in\H^{n-1}$, $\gamma\in\Gamma^{n-1}$, $0\leq j \leq n$ et $g\in\Gamma$, on a :
$$\varphi_j^g(\gamma.z)=\phi_j^g(\gamma) .\varphi_j(z).$$
Pour tout $z\in\H^n$, $\gamma\in\Gamma^n$, $\sigma\in\S_n$ et $\epsilon\in\{\pm 1\}^n$, on a:
$$\varphi_{\sigma}(\gamma.z)=\phi_{\sigma}(\gamma).\varphi_{\sigma}(z)
\text{ et }\varphi_{\epsilon}(\gamma.z)=\phi_{\epsilon}(\gamma).\varphi_{\epsilon}(z).$$
\end{prop}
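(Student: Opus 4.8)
Le plan est de d\'emontrer chacune des trois familles d'identit\'es par un calcul direct, en d\'eroulant les d\'efinitions des applications $\varphi$ sur les produits de demi-plans et des applications $\phi$ sur les produits de copies de $\Gamma$. Le fait g\'en\'eral qui sous-tend les trois v\'erifications est que l'action de $\Gamma^n$ sur $\H^n$ est diagonale et \`a gauche : on a $\gamma.z=(\gamma_1.z_1,\dots,\gamma_n.z_n)$ et, pour la multiplication dans $\Gamma$, $g.(\gamma_j.z_j)=(g\gamma_j).z_j$. Toutes les \'egalit\'es de l'\'enonc\'e se ram\`enent coordonn\'ee par coordonn\'ee \`a cette associativit\'e et \`a la commutation de la conjugaison complexe avec l'action.

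Pour l'identit\'e $\varphi_j^g(\gamma.z)=\phi_j^g(\gamma).\varphi_j(z)$, je distinguerais les trois valeurs de $j$. Dans les cas de bord $j=0$ et $j=n$, la coordonn\'ee suppl\'ementaire introduite par $\varphi_j^g$ vaut $g.i\infty$ (resp. $g.0$), qui est exactement la coordonn\'ee correspondante de $\phi_j^g(\gamma).\varphi_j(z)$ puisque $\varphi_j(z)$ contient $Id.i\infty=i\infty$ (resp. $Id.0=0$) \`a cette place ; les autres coordonn\'ees re\c{c}oivent l'action de $\gamma_1,\dots,\gamma_{n-1}$ de part et d'autre, et l'\'egalit\'e est imm\'ediate. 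Dans le cas interm\'ediaire $0<j<n$, seules les positions $j$ et $j+1$ demandent un examen : du c\^ot\'e gauche la coordonn\'ee d\'edoubl\'ee vaut $g.(\gamma_j.z_j)$, du c\^ot\'e droit $(g\gamma_j).z_j$, et l'\'egalit\'e r\'esulte de l'associativit\'e de l'action rappel\'ee ci-dessus.

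L'identit\'e $\varphi_\sigma(\gamma.z)=\phi_\sigma(\gamma).\varphi_\sigma(z)$ est purement combinatoire : les deux membres valent $(\gamma_{\sigma(1)}.z_{\sigma(1)},\dots,\gamma_{\sigma(n)}.z_{\sigma(n)})$. Reste l'identit\'e $\varphi_\epsilon(\gamma.z)=\phi_\epsilon(\gamma).\varphi_\epsilon(z)$, qui constitue le point d\'elicat. Elle se ram\`ene coordonn\'ee par coordonn\'ee \`a $c_{\epsilon_i}(\gamma_i.z_i)=c_{\epsilon_i}(\gamma_i).c_{\epsilon_i}(z_i)$ ; pour $\epsilon_i=1$ c'est trivial, et pour $\epsilon_i=-1$ il faut \'etablir la relation de commutation $-\overline{\gamma_i.z_i}=(\varepsilon\gamma_i\varepsilon).(-\overline{z_i})$, o\`u $\varepsilon=\mat{-1}{0}{\phantom{-}0}{1}$. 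Je l'obtiendrais par un calcul matriciel : en \'ecrivant $\gamma_i=\mat{a}{b}{c}{d}$ on a $\varepsilon\gamma_i\varepsilon=\mat{a}{-b}{-c}{d}$, tandis que $\overline{\gamma_i.z_i}=\frac{a\overline{z_i}+b}{c\overline{z_i}+d}$ car $a,b,c,d$ sont r\'eels, de sorte que $(\varepsilon\gamma_i\varepsilon).(-\overline{z_i})=\frac{-a\overline{z_i}-b}{c\overline{z_i}+d}=-\overline{\gamma_i.z_i}$. La seule difficult\'e conceptuelle est donc de reconna\^itre que l'involution antiholomorphe $z\mapsto -\overline{z}$ normalise $\Gamma$ et y induit la conjugaison par $\varepsilon$ ; une fois cette relation acquise, les trois \'egalit\'es de la proposition suivent par simple lecture des d\'efinitions.
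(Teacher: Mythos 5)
Votre démonstration est correcte et suit essentiellement la même voie que celle du texte : une vérification coordonnée par coordonnée déroulant les définitions, l'identité clé étant $-\overline{\gamma.z}=(\varepsilon\gamma\varepsilon).(-\bar{z})$ pour le cas de la conjugaison. Vous explicitez un peu plus les cas de bord $j=0,n$ et le calcul matriciel de $\varepsilon\gamma\varepsilon$, mais l'argument est identique.
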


\begin{proof}
On vérifie les formules par le calcul:
\begin{align*}
\varphi_j^g((\gamma_1,...,\gamma_{n-1}).(z_1,...,z_{n-1}))&=(\gamma_1.z_1,...,\gamma_j.z_j,g\gamma_j.z_j,...,\gamma_{n-1}.z_{n-1})\\
&=(\gamma_1,...,\gamma_j,g\gamma_j,...,\gamma_{n-1}).(z_1,...,z_j,z_j,...,z_{n-1})\\
&=\phi_j^g(\gamma) .\varphi_j(z).
\end{align*}
Et pour le cas plus simple $\varphi_{\sigma}(\gamma.z)=(\gamma_{\sigma(1)}.z_{\sigma(1)},...,\gamma_{\sigma(n)}.z_{\sigma(n)})=\phi_{\sigma}(\gamma).\varphi_{\sigma}(z)$.\par
La dernière provient du calcul $-(\overline{\gamma z})=\varepsilon\gamma.(\bar{z})=(\varepsilon\gamma\varepsilon).(-\bar{z})$.
\end{proof}

\begin{rem}
Les applications ainsi construites sont des morphismes de groupes:
$$\S_n\to Aut(\Gamma^n),\sigma\mapsto\phi_{\sigma}\text{ et }\{\pm 1\}^n\to Aut(\Gamma^n),\epsilon\to\phi_{\epsilon}.$$\par
On dispose ainsi d'un moyen simple d'inversion:
$$\gamma=\phi_{\sigma}(\phi_{\sigma^{-1}}(\gamma))\text{ et }\gamma=\phi_{\epsilon}(\phi_{\epsilon}(\gamma)).$$
\end{rem}

De même pour tout $1\leq j\leq n$, il existe une décomposition similaire en fonction de l'image des $(\phi_j^g)_{g\in\Gamma}$.
\begin{prop}\label{decompgamman}
Pour tout $0\leq j\leq n$, on dispose de la décomposition:
\begin{equation}
\Z[\Gamma^n]=\bigoplus_{g\in\Gamma} \Z[\phi_j^g\left(\Gamma^{n-1}\right)].
\end{equation}
Ainsi tout élément $\gamma\in\Z[\Gamma^n]$, il existe une famille unique de $(\gamma_j^g)_{g\in\Gamma}\in(\Z[\Gamma^{n-1}])^{\Gamma}$ telle que:
$$\gamma=\sum_{g\in\Gamma}\phi_j^g(\gamma_j^g).$$
\end{prop}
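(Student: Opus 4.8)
The plan is to reduce the statement to a purely combinatorial fact about the set $\Gamma^n$, which is a $\Z$-basis of the free $\Z$-module $\Z[\Gamma^n]$. Indeed, it suffices to prove that for fixed $j$ the maps $\phi_j^g$ ($g\in\Gamma$) are injective and that their images $\phi_j^g(\Gamma^{n-1})$, as $g$ ranges over $\Gamma$, partition $\Gamma^n$. The announced direct sum decomposition and the uniqueness of the family $(\gamma_j^g)_{g\in\Gamma}$ then follow formally: a partition of a basis induces a direct sum of the corresponding free submodules, and each $\phi_j^g$ carries the basis $\Gamma^{n-1}$ bijectively onto the basis $\phi_j^g(\Gamma^{n-1})$ of its summand, so that $\phi_j^g:\Z[\Gamma^{n-1}]\to\Z[\phi_j^g(\Gamma^{n-1})]$ is an isomorphism.

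First I would check injectivity of $\phi_j^g$ for fixed $g$. Comparing the coordinates of $\phi_j^g(\gamma)$ and $\phi_j^g(\gamma')$ at the positions carrying the input variables recovers $\gamma_i=\gamma_i'$ for every $i$; this is immediate in each of the three cases $j=0$, $j=n$ and $0<j<n$.

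The heart of the argument is to show that the collating map $\Gamma\times\Gamma^{n-1}\to\Gamma^n$, $(g,\gamma)\mapsto\phi_j^g(\gamma)$, is a bijection, equivalently that every $\eta=(\eta_1,\dots,\eta_n)\in\Gamma^n$ lies in the image of exactly one $\phi_j^g$. I would exhibit the inverse explicitly. For $j=0$ one reads off $g=\eta_1$ and $\gamma=(\eta_2,\dots,\eta_n)$; for $j=n$ one takes $g=\eta_n$ and $\gamma=(\eta_1,\dots,\eta_{n-1})$. The only case requiring the group structure of $\Gamma$ is $0<j<n$: there the two consecutive coordinates satisfy $\eta_{j+1}=g\eta_j$, so the inserted parameter is forced to be $g=\eta_{j+1}\eta_j^{-1}$, and deleting the $(j+1)$-st coordinate yields the unique preimage $\gamma=(\eta_1,\dots,\eta_j,\eta_{j+2},\dots,\eta_n)$. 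This is exactly where invertibility in $\Gamma$ is used, and it is the one step deserving care; everything else is bookkeeping.

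Finally, from the bijection I conclude that the images $\phi_j^g(\Gamma^{n-1})$ are pairwise disjoint with union $\Gamma^n$, so that $\{\phi_j^g(\gamma):g\in\Gamma,\ \gamma\in\Gamma^{n-1}\}$ is nothing but the basis $\Gamma^n$ reindexed. Passing to $\Z$-linear combinations gives $\Z[\Gamma^n]=\bigoplus_{g\in\Gamma}\Z[\phi_j^g(\Gamma^{n-1})]$, and expanding a given $\gamma\in\Z[\Gamma^n]$ in this basis produces, after grouping the terms according to their value of $g$, a finitely supported family $(\gamma_j^g)_{g\in\Gamma}$ with $\gamma=\sum_{g\in\Gamma}\phi_j^g(\gamma_j^g)$; its uniqueness is guaranteed by the directness of the sum together with the injectivity established above.
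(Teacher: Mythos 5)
Your proof is correct and follows essentially the same route as the paper: both reduce the statement to the bijectivity of the collating map $(g,\gamma)\mapsto\phi_j^g(\gamma)$, with the inverse in the case $0<j<n$ given by reading off $g=\eta_{j+1}\eta_j^{-1}$ and deleting the duplicated coordinate. Your treatment is slightly more explicit about the cases $j=0$, $j=n$ and about injectivity, but there is no substantive difference.
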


\begin{proof}
En effet, pour $(\gamma_1,...,\gamma_n)\in\Gamma^n$, on peut récrire:
$$(\gamma_1,...,\gamma_n)=(...,\gamma_j,\gamma_{j+1}\gamma_j^{-1}\gamma_j,...)
=\phi_j^{\gamma_{j+1}\gamma_j^{-1}}(\gamma_1,...,\gamma_j,\gamma_{j+2},...,\gamma_n).$$
Puis les espaces sont bien en sommes directes car l'application suivante est bien bijective:
$$(\Z[\Gamma^{n-1}])^{\Gamma}\to \Z[\Gamma^n], (\gamma_j^g)_{g\in\Gamma}\mapsto \sum_{g\in\Gamma}\phi_j^g(\gamma_j^g).$$
\end{proof}

\begin{rem}
Un corollaire de ce résultat est que pour tout $0\leq j\leq n$ les espaces $\varphi_j^g(\H^{n-1})$ et $\varphi_j^{g'}(\H^{n-1})$ sont disjoints dès que $g\neq g'$ sauf si:
$$\left(j=0 \text{ et }g'\in g\Gamma_{\infty}\right)\text{ ou }\left(j=n \text{ et }g'\in g\Gamma_{0}\right).$$
\end{rem}

\begin{prop}\label{inver}
Soit $(n,..,1)\in\S_n$ et $(S,...,S)\in\Gamma^n$ alors les images de $\tau_n$ sont liées par:
$$\varphi_{(n,...,1)}(\tau_n)=\varphi_{(S,...,S)}(\tau_n).$$
\end{prop}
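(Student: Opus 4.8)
The plan is to exploit the fact, already used throughout the $n=2$ chapter, that a class in $M_n^{pte}(\H^n,\Z)$ is entirely determined by the ordered tuple of its vertex-images in $\pte^n$. Indeed, since $\H$ is convex, any continuous $C:\Delta_n\to\H^n$ with $C(\Delta_n^0)\subset\pte^n$ is homotopic, rel vertices, to the affine simplex with the same vertices (a straight-line homotopy in each factor stays in $\H^n$ and is constant on the discrete set $\pte^n$); this is the isomorphism $M_n^{pte}(\H^n,\Z)\cong\Z[(\pte^n)^{n+1}]$, $C\mapsto(C(e_0),\dots,C(e_n))$. Thus it suffices to compare the vertex images of $\varphi_{(n,\dots,1)}(\tau_n)$ and $\varphi_{(S,\dots,S)}(\tau_n)$.

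First I would compute the vertices of $\tau_n$ itself by evaluating its defining map at each $e_j\in\Delta_n^0$. Using $-i\log 1=0$ and $\lim_{s\to 0^+}\bigl(-i\log s\bigr)=i\infty$, the $l$-th coordinate of $\tau_n(e_j)$ is $i\infty$ when $l\le j$ and $0$ when $l>j$, so that
$$\tau_n(e_j)=(\underbrace{i\infty,\dots,i\infty}_{j},\underbrace{0,\dots,0}_{n-j}).$$
Next I would apply the two isometries. The permutation $\varphi_{(n,\dots,1)}$ simply reverses the order of the coordinates, while $\varphi_{(S,\dots,S)}$ applies $S=\mat{0}{-1}{1}{0}$ in each coordinate. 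The decisive input is the action of $S$ on the only two cusps occurring in the vertices of $\tau_n$: computing on projective coordinates, $S.i\infty=S.[1:0]=[0:1]=0$ and $S.0=S.[0:1]=[-1:0]=i\infty$, i.e. $S$ \emph{exchanges} $0$ and $i\infty$. Consequently reversing the order of the string $(\,(i\infty)^{j},0^{\,n-j}\,)$ and swapping $0\leftrightarrow i\infty$ in each entry produce, up to the relabelling $e_j\leftrightarrow e_{n-j}$ of the simplex vertices, the very same flag of cusps; carrying out this comparison vertex by vertex yields the claimed equality of the two chains.

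The cleanest way to organize the last step is to introduce the single isometry $\Psi=\varphi_{(n,\dots,1)}\circ\varphi_{(S,\dots,S)}$ (coordinatewise $S$ followed by coordinate reversal, each an involution) and to check that $\Psi$ preserves $\tau_n$, which is equivalent to the statement. The main obstacle is precisely the bookkeeping of this \emph{double} reversal: one must match the reversal of the coordinate order against the reversal of the vertex-indexing induced by $S$ swapping the two ends $0$ and $i\infty$ of the flag, and confirm that the two ordered vertex tuples genuinely coincide rather than merely agreeing as point-sets. I would finally note that this topological identity is the geometric counterpart, under the pairing $\langle\cdot,\tau_n\rangle$, of the functional-equation relation $P^{(n,\dots,1)}_{f_1,\dots,f_n}=P_{f_1,\dots,f_n}|_{(S,\dots,S)}$ established in Chapter~2, and that it is exactly what justifies placing the generator $[(S,\dots,S),id]-[(id,\dots,id),(n,\dots,1)]$ in the ideal $\mathcal{A}_n$.
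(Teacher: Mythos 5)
Your reduction to vertices is the natural sharpening of the paper's argument (the paper's proof only compares the supports of the two images as subsets of $\H^n$), and your computations are correct: $\tau_n(e_j)=((i\infty)^{j},0^{\,n-j})$ and $S$ exchanges the cusps $0$ and $i\infty$. But the conclusion you draw from them does not follow. Writing $V_j$ for the $j$-th vertex of $\varphi_{(n,\dots,1)}(\tau_n)$ and $W_j$ for that of $\varphi_{(S,\dots,S)}(\tau_n)$, your own computation gives $V_j=(0^{\,n-j},(i\infty)^{j})$ and $W_j=(0^{\,j},(i\infty)^{n-j})$, that is $W_j=V_{n-j}$: the two ordered vertex tuples are \emph{reversals} of one another, not equal (compare already $j=0$: $(0,\dots,0)$ against $(i\infty,\dots,i\infty)$). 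Under the identification $M_n^{pte}(\H^n,\Z)\cong\Z[(\pte^n)^{n+1}]$ that you set up, reversed tuples are distinct basis elements, so taken literally your argument would show the two chains are \emph{different} generators. The sentence ``carrying out this comparison vertex by vertex yields the claimed equality'' is therefore exactly where the proof breaks: you correctly name the double reversal as the main obstacle, but the tuples do not ``genuinely coincide'' and you never resolve the mismatch.

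What is missing is the step identifying a simplex with its reparametrization by the vertex reversal $r:(u_0,\dots,u_n)\mapsto(u_n,\dots,u_0)$ of $\Delta_n$: what your vertex computation actually proves is $\varphi_{(n,\dots,1)}(\tau_n)=\varphi_{(S,\dots,S)}(\tau_n)\circ r$, and since $r$ is not the identity on $\Delta_n^0$ this is not covered by the homotopy-rel-vertices equivalence defining $M_n^{pte}$. One must then argue that this reparametrization is invisible to the pairing $\langle\omega,\cdot\rangle$ --- which is in effect what the verification of $P^{(n,\dots,1)}_{f_1,\dots,f_n}=P_{f_1,\dots,f_n}|_{(S,\dots,S)}$ by the substitution $t_j\mapsto 1/t_{n+1-j}$ in the iterated integral does --- or else work, as the paper does, directly with the supports and a fixed convention for orienting them. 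The $n=1$ instance shows the bookkeeping is not automatic: there the same pattern of reasoning would assert $\tau_1=S\tau_1$, whereas $S\tau_1=\{0,i\infty\}$ is the reversed path and $\langle\omega_f,S\tau_1\rangle=-\langle\omega_f,\tau_1\rangle$. So the gap is genuine: the vertex comparison alone cannot close the proof, and the orientation/reparametrization argument it calls for is absent.
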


\begin{proof}
Le support de $\tau_n$ est donné par $\{(it_1,...,it_n);0<t_1<...<t_n\}$ et donc:
\begin{align*}
\varphi_{(n,...,1)}(Supp(\tau_n))&=\{(it_n,...,it_1);0<t_1<...<t_n\},\\
\text{et }\varphi_{(S,...,S)}(Supp(\tau_n))&=\{(-1/it_1,...,-1/it_n);0<t_1<...<t_n\}.
\end{align*}
\end{proof}

\subsection{L'idéal des relations $\mathcal{A}_n$}

Venons-en à la détermination de $\mathcal{A}(k)\subset\mathbb{Z}[G_n]$, introduit en \ref{Akpoly} puis récrit en \ref{Aktopo} comme:
$$\mathcal{A}(k)=\{a\in\Z[G_n]\text{ tel que }\langle \omega,\left(\tau^{\sigma}_n\right)|_a \rangle=0\text{ pour tout }\omega\in\Omega_{k}\}.$$

Notons $\mathcal{I}_n$ l'idéal à gauche de $\Z[\Gamma^n]$ formé des éléments $\gamma$ vérifiant :
\begin{equation}
\gamma.\delta_{n} (\tau_n)\in \sum_{j=0}^n\sum_{g\in\Gamma} \delta_{n}\left[ M_{n}^{pte}(\varphi_j^g(\H^{n-1}),\Z)\right].
\end{equation}
Introduisons à nouveau l'antiautomorphisme de $\Z[\Gamma^n]$ défini par:
$$\widetilde{(\gamma_1,...,\gamma_n)}=(\gamma_1^{-1},...,\gamma_n^{-1}).$$
Il permet de considérer l'idéal à gauche (resp. à droite) $\widetilde{I}$ associé à tout idéal à droite (resp. à gauche) $I\subset\Z[\Gamma^n]$.

\begin{prop}\label{prop19}
Pour toute famille de poids $k=(k_1,...,k_n)$, on a:
$$[\widetilde{\mathcal{I}_n},1]\subset\mathcal{A}(k).$$
\end{prop}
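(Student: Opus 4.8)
L'objectif est de montrer que $[\widetilde{\mathcal{I}_n},1]\subset\mathcal{A}(k)$, c'est-à-dire que pour tout $\gamma\in\widetilde{\mathcal{I}_n}$, l'élément $[\gamma,1]\in\Z[G_n]$ annule les polynômes des multipériodes. D'après la réécriture topologique \eqref{Aktopo} de $\mathcal{A}(k)$, il suffit de vérifier que $\langle\omega,\mathcal{T}_n|_{[\gamma,1]}\rangle=0$ pour tout $\omega\in\Omega_k$. Le plan est de ramener cette annulation à la définition même de $\mathcal{I}_n$ via le Théorème \ref{decompomegathm}.

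Je commencerais par calculer l'action de $[\gamma,1]$ sur $\mathcal{T}_n$. Pour $\gamma\in\widetilde{\mathcal{I}_n}$, notons $\gamma'=\widetilde{\gamma}\in\mathcal{I}_n$ (de sorte que $\widetilde{\gamma'}=\gamma$ puisque l'antiautomorphisme est involutif). L'action définie au Corollaire \ref{coro36} donne $\mathcal{T}_n|_{[\gamma,1]}=\bigl(\sigma\mapsto(\widetilde{\gamma}\,\tau_n)^{\sigma}\bigr)=\bigl(\sigma\mapsto(\gamma'\tau_n)^{\sigma}\bigr)$, chaque composante étant essentiellement $\gamma'.\tau_n$ transportée par $\sigma$. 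Comme $\Omega_k$ est stable par l'action de $\S_n$ (à un changement de signe près par le corollaire sur les $\varphi_{\sigma}^*$), il suffit de vérifier l'annulation $\langle\omega,\gamma'.\tau_n\rangle=0$ pour la composante $\sigma=\mathrm{id}$ et pour tout $\omega\in\Omega_k$. L'étape suivante consiste à appliquer le bord : par définition de $\mathcal{I}_n$, on a
$$\gamma'.\delta_n(\tau_n)\in\sum_{j=0}^n\sum_{g\in\Gamma}\delta_n\bigl[M_n^{pte}(\varphi_j^g(\H^{n-1}),\Z)\bigr].$$
Or la Proposition \ref{condcycle} transforme cette appartenance portant sur le bord en une appartenance portant sur la chaîne elle-même, à savoir
$$\gamma'.\tau_n\in\delta_{n+1}(M_{n+1}^{pte}(\H^n,\Z))+\sum_{j,g,\sigma}M_n^{pte}(\varphi_{\sigma}\varphi_j^g(\H^{n-1}),\Z).$$

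Il ne reste alors qu'à invoquer le Théorème \ref{decompomegathm}, dont l'inclusion \eqref{thmeq:2} affirme précisément que le membre de droite ci-dessus est contenu dans $\Omega_k^{\bot}=(\Omega_k^{\holo})^{\bot}$. Ainsi $\gamma'.\tau_n\in\Omega_k^{\bot}$, ce qui donne $\langle\omega,\gamma'.\tau_n\rangle=0$ pour tout $\omega\in\Omega_k$, et achève la démonstration une fois la stabilité par $\S_n$ prise en compte pour les autres composantes $\sigma$. Le point délicat n'est pas l'enchaînement logique, qui est essentiellement formel, mais l'articulation entre l'antiautomorphisme et l'action de $G_n$ : il faut s'assurer que $[\widetilde{\gamma'},1]$ agit bien sur $\mathcal{T}_n$ en produisant $\gamma'.\tau_n$ (et non $\widetilde{\gamma'}^{-1}.\tau_n$ ou une autre variante), ce qui repose sur le passage à l'inverse $\gamma^{-1}$ apparaissant dans la formule du Corollaire \ref{coro36}. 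C'est exactement pour compenser ce passage à l'inverse que l'énoncé fait intervenir $\widetilde{\mathcal{I}_n}$ plutôt que $\mathcal{I}_n$ lui-même ; cette vérification de cohérence des signes et des inverses constitue le seul véritable soin à apporter.
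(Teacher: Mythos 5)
Votre démonstration est correcte et suit exactement le même chemin que celle du texte : définition de $\mathcal{I}_n$ pour contrôler $\gamma'.\delta_n\tau_n$, passage du bord à la chaîne par la Proposition \ref{condcycle}, orthogonalité par le Théorème \ref{decompomegathm}, puis conclusion via la dualité du Corollaire \ref{coro36}. Votre remarque finale sur le rôle de l'antiautomorphisme pour compenser l'inverse $\gamma^{-1}$ dans l'action est juste et correspond bien à la raison d'être de $\widetilde{\mathcal{I}_n}$ dans l'énoncé.
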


\begin{proof}
En effet, pour tout $\gamma\in\mathcal{I}_n$, on obtient:
$$\delta_n(\gamma\tau_n)=\gamma\delta_n(\tau_n)\in \sum_{j,g} \delta_{n}\left[ M_{n}^{pte}(\varphi_j^g(\H^{n-1}),\Z)\right].$$
La Proposition \ref{condcycle} permet d'avoir l'appartenance:
$$\gamma\tau_n \in \Im(\delta_{n+1})+\sum_{j,g} M_n^{pte}(\varphi_j^g(\H^{n-1}),\Z).$$
Et le Théorème \ref{decompomegathm} s'applique pour obtenir : $\gamma\tau_n\in\Omega_k^{\bot}$. On obtient alors pour tout $\omega\in\Omega_k$ :
$$\left(\sigma\mapsto\langle\omega,\tau_n^{\sigma}\rangle\right)|_{[\widetilde{\gamma},1]}=\left(\sigma\mapsto\langle\omega,(\gamma\tau_n)^{\sigma}\rangle\right)=0,$$
démontrant que $[\widetilde{\gamma},1]\in\mathcal{A}(k)$.
\end{proof}

\begin{rem}Soit $\epsilon\in\{\pm 1\}^n$. 
On dispose de l'idéal $\mathcal{I}^{\epsilon}_n=\phi_{\epsilon}(\mathcal{I}_n)$ formé par des éléments $\gamma\in\Z[\Gamma^n]$ vérifiant:
$$\varphi_{\epsilon}(\gamma.\delta_{n} (\tau_n))\in \sum_{j=0}^n\sum_{g\in\Gamma} \delta_{n}\left[ M_{n}^{pte}(\varphi_j^g(\H^{n-1}),\Z)\right]$$
$$\Leftrightarrow\gamma.\delta_{n} (\tau_n)\in \sum_{j=0}^n\sum_{g\in\Gamma} \delta_{n}\left[ M_{n}^{pte}(\varphi_{\epsilon}\varphi_j^g(\H^{n-1}),\Z)\right].$$
Ainsi pour toute poids multiple $k$, on a : $[\widetilde{\mathcal{I}_n^{\epsilon}},1]\subset\mathcal{A}(k)^{\epsilon}$.
\end{rem}

On dispose d'une construction par récurrence sur $n$ de $\mathcal{I}_n$.\par
Pour tout entier $j$ vérifiant $0\leq j\leq n$, définissons $\mathcal{I}_{n-1}[j]$ est l'idéal à gauche formé des $\gamma\in\Z[\Gamma^n]$ tels que:
\begin{equation}
\gamma.\varphi_j(\tau_{n-1})\in\sum_{g\in\Gamma} \delta_{n}\left[ M_{n}^{pte}(\varphi_j^g(\H^{n-1}),\Z)\right].
\end{equation}

\begin{prop}
L'idéal à gauche $\mathcal{I}_n$ est déterminé par:
\begin{equation}
\mathcal{I}_n=\bigcap_{j=0}^n\mathcal{I}_{n-1}[j].
\end{equation}
\end{prop}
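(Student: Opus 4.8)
Il s'agit de démontrer l'identité $\mathcal{I}_n=\bigcap_{j=0}^n\mathcal{I}_{n-1}[j]$, où $\mathcal{I}_n$ est l'idéal à gauche des $\gamma\in\Z[\Gamma^n]$ tels que $\gamma.\delta_n(\tau_n)$ appartienne à $\sum_{j,g}\delta_n[M_n^{pte}(\varphi_j^g(\H^{n-1}),\Z)]$, et où $\mathcal{I}_{n-1}[j]$ est formé des $\gamma$ tels que $\gamma.\varphi_j(\tau_{n-1})$ appartienne à $\sum_g\delta_n[M_n^{pte}(\varphi_j^g(\H^{n-1}),\Z)]$.

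\begin{proof}
La clé du raisonnement est la décomposition du bord de $\tau_n$ établie en (\ref{bordtaun}):
$$\delta_n\tau_n=\sum_{j=0}^n(-1)^j\varphi_j(\tau_{n-1}).$$
Pour l'inclusion $\bigcap_j\mathcal{I}_{n-1}[j]\subset\mathcal{I}_n$, je procède par sommation directe. Soit $\gamma\in\bigcap_j\mathcal{I}_{n-1}[j]$. Alors pour chaque $0\leq j\leq n$, on a $\gamma.\varphi_j(\tau_{n-1})\in\sum_g\delta_n[M_n^{pte}(\varphi_j^g(\H^{n-1}),\Z)]$. En appliquant $\gamma$ à la formule du bord et en utilisant la linéarité de l'action de $\Z[\Gamma^n]$, on obtient
$$\gamma.\delta_n\tau_n=\sum_{j=0}^n(-1)^j\gamma.\varphi_j(\tau_{n-1})\in\sum_{j=0}^n\sum_{g\in\Gamma}\delta_n[M_n^{pte}(\varphi_j^g(\H^{n-1}),\Z)],$$
ce qui est exactement la condition $\gamma\in\mathcal{I}_n$.

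L'inclusion réciproque $\mathcal{I}_n\subset\mathcal{I}_{n-1}[j]$ pour chaque $j$ est le point délicat. Je pars de $\gamma\in\mathcal{I}_n$, donc $\gamma.\delta_n\tau_n=\sum_{l=0}^n(-1)^l\gamma.\varphi_l(\tau_{n-1})$ appartient à $\sum_{l,g}\delta_n[M_n^{pte}(\varphi_l^g(\H^{n-1}),\Z)]$. L'idée est d'exploiter l'indépendance des sous-espaces transverses pour isoler la composante d'indice $j$. La Proposition \ref{decompgamman} fournit précisément, pour tout $j$ fixé, la décomposition en somme directe $\Z[\Gamma^n]=\bigoplus_{g}\Z[\phi_j^g(\Gamma^{n-1})]$, et la remarque qui suit précise que les espaces $\varphi_j^g(\H^{n-1})$ sont deux à deux disjoints (modulo les stabilisateurs $\Gamma_\infty$ pour $j=0$ et $\Gamma_0$ pour $j=n$). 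Le raisonnement est analogue à celui de la Proposition \ref{inclhvd} dans le cas $n=2$, où l'on écrivait $\mathcal{I}_2=I_H\cap I_V\cap I_D$ en utilisant que $H$, $V$ et $D$ sont deux à deux disjoints et sans torsion. Il s'agit ici de généraliser cette disjonction aux $n+1$ familles transverses indexées par $0\leq j\leq n$.

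\textbf{Le cœur de l'argument} est donc de montrer que si une combinaison $\sum_l c_l$ avec $c_l\in\sum_g M_n^{pte}(\varphi_l^g(\H^{n-1}),\Z)$ appartient au noyau de $\delta_{n-1}$ composé avec la projection, alors chaque terme $c_l$ y appartient individuellement. Le point subtil est que les bords $\delta_n\varphi_l(\tau_{n-1})$ ne vivent pas dans des espaces strictement disjoints : un même sommet de $\pte^n$ peut apparaître sur les faces de plusieurs $\varphi_l$. Je m'attends à ce que la principale difficulté soit de contrôler ces intersections aux sommets. La stratégie sera, après passage aux $\Q$-espaces vectoriels associés (les modules étant sans torsion), d'utiliser la structure explicite des images $\varphi_j^g(\H^{n-1})$ et le fait que l'action de $\Gamma$ respecte chaque famille transverse, pour en déduire que la composante d'indice $j$ de $\gamma.\delta_n\tau_n$ est précisément $(-1)^j\gamma.\varphi_j(\tau_{n-1})$, et que celle-ci doit donc appartenir à $\sum_g\delta_n[M_n^{pte}(\varphi_j^g(\H^{n-1}),\Z)]$. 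Cela établira $\gamma\in\mathcal{I}_{n-1}[j]$ pour tout $j$, d'où $\mathcal{I}_n\subset\bigcap_j\mathcal{I}_{n-1}[j]$ et l'égalité cherchée.
\end{proof}
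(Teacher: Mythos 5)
Votre premi\`ere inclusion $\bigcap_{j}\mathcal{I}_{n-1}[j]\subset\mathcal{I}_n$ est correcte et co\"incide avec celle du texte : on applique $\gamma$ \`a la formule $\delta_n\tau_n=\sum_j(-1)^j\varphi_j(\tau_{n-1})$ et on somme. En revanche, l'inclusion r\'eciproque n'est qu'annonc\'ee, et la strat\'egie que vous d\'ecrivez ne peut pas aboutir telle quelle. Vous voulez \og isoler la composante d'indice $j$ \fg{} de $\gamma.\delta_n\tau_n$ ; or cela suppose que la somme $\sum_{j,g}M_{n-1}^{pte}(\varphi_j^g(\H^{n-1}),\Z)$ soit directe \emph{en $j$}, ce qui est faux : une cha\^ine port\'ee par $\varphi_i^g(\H^{n-1})\cap\varphi_j^{g'}(\H^{n-1})$ appartient aux deux familles, et la Proposition \ref{decompgamman} ne fournit une d\'ecomposition en somme directe que pour un indice $j$ \emph{fix\'e} (selon $g$), pas entre indices distincts. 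Vous signalez vous-m\^eme cette difficult\'e (\og un m\^eme sommet peut appara\^itre sur les faces de plusieurs $\varphi_l$ \fg), mais \og contr\^oler ces intersections aux sommets \fg{} n'est pas un argument : c'est pr\'ecis\'ement l\`a que se trouve tout le contenu de la d\'emonstration, et il manque.

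L'ingr\'edient absent est le lemme interm\'ediaire suivant, que le texte \'etablit et utilise : si $C\in M_{n-1}^{pte}(\varphi_i^g(\H^{n-1}),\Z)\cap M_{n-1}^{pte}(\varphi_j^{g'}(\H^{n-1}),\Z)$ avec $i<j$, alors $C$ est port\'ee par $\varphi_i^g(\H^{n-1})\cap\varphi_j^{g'}(\H^{n-1})=\varphi_j^{g'}\varphi_i^g(\H^{n-2})$ (les espaces \'etant contractiles, seules les pointes d\'eterminent les classes d'homotopie), et l'homologie relative de $\H^{n-2}$ montre alors que $C$ est un bord $\delta_n(c)$ avec $c\in M_n^{pte}(\varphi_j^{g'}\varphi_i^g(\H^{n-2}),\Z)$. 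Muni de ce lemme, l'argument correct ne consiste pas \`a extraire une composante : on observe que $\gamma\varphi_j(\tau_{n-1})$ appartient d'une part \`a $\sum_{g'}M_{n-1}^{pte}(\varphi_j^{g'}(\H^{n-1}),\Z)$ (son support reste dans la $j$-i\`eme famille, par la relation $\varphi_j^g(\gamma.z)=\phi_j^g(\gamma).\varphi_j(z)$), et d'autre part, en r\'esolvant la formule du bord, \`a
\begin{equation*}
\sum_{i,g}\delta_{n}\left[M_n^{pte}(\varphi_i^g(\H^{n-1}),\Z)\right]+\sum_{i\neq j,g} M_{n-1}^{pte}(\varphi_i^g(\H^{n-1}),\Z);
\end{equation*}
il est donc dans l'intersection de ces deux ensembles, et c'est le lemme qui permet d'absorber les termes crois\'es dans $\sum_{g'}\delta_{n}\left[M_n^{pte}(\varphi_j^{g'}(\H^{n-1}),\Z)\right]$, d'o\`u $\gamma\in\mathcal{I}_{n-1}[j]$. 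Sans cette \'etape, votre r\'edaction reste un plan de preuve, non une preuve.
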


\begin{proof}
On raisonne par double inclusion en commençant par $\bigcap_{j=0}^n\mathcal{I}_{n-1}[j]\subset\mathcal{I}_n$.\par
Soit $\gamma\in\bigcap_{j=0}^n\mathcal{I}_{n-1}[j]$, c'est-à-dire pour tout $0\leq j\leq n$:
$$\gamma.\varphi_j(\tau_{n-1})\in \sum_{g\in\Gamma} \delta_{n}\left[ M_{n}^{pte}(\varphi_j^g(\H^{n-1}),\Z)\right],$$
et donc le calcul du bord de $\tau_n$ (\ref{bordtaun}) donne: 
$$\gamma.\delta_{n}(\tau_n)=\sum_{j=0}^n (-1)^j \gamma.\varphi_j(\tau_{n-1})\in\sum_{j=0}^n\sum_{g\in\Gamma} \delta_{n}\left[ M_{n}^{pte}(\varphi_j^g(\H^{n-1}),\Z)\right].$$
Et ainsi $\gamma\in\mathcal{I}_n$.\par

Pour démontrer l'inclusion réciproque, on commence par démontrer un résultat annexe:
\textit{
Considérons $g,g'\in\Gamma$, des entiers $0\leq i<j\leq n$ et $C$ une $(n-1)$-chaîne tel que:
$$C\in M_{n-1}^{pte}(\varphi_i^g(\H^{n-1}),\Z)\cap M_{n-1}^{pte}(\varphi_j^{g'}(\H^{n-1}),\Z)\text{ pour }i< j.$$
Alors $C=\delta_{n}(c)$ pour une chaîne $c\in M_{n}^{pte}(\varphi_j^{g'}\varphi_i^g(\H^{n-2}),\Z)$.
}\\
En effet, $C$ est alors un élément de $ M_{n-1}^{pte}(\varphi_i^g(\H^{n-1})\cap\varphi_j^{g'}(\H^{n-1}),\Z)$ car ces espaces topologiques sont contractiles et seules les pointes définissent ainsi les classes d'homotopie. De plus, un calcul immédiat donne : $\varphi_i^g(\H^{n-1})\cap\varphi_j^{g'}(\H^{n-1})=\varphi_j^{g'}\varphi_i^g(\H^{n-2})$.
Ainsi $C\in M_{n-1}^{pte}(\varphi_j^{g'}\varphi_i^g(\H^{n-2}),\Z)=\varphi_j^{g'}\varphi_i^g(M_{n-1}^{pte}(\H^{n-2},\Z))$ d'après (\ref{mchtrans}) et l'homologie de $\H^{n-2}$ donne:
$$M_{n-1}^{pte}(\H^{n-2},\Z)=\delta_n(M_n^{pte}(\H^{n-2},\Z)),$$
démontrant ce lemme intermédiaire.\par
Soient $\gamma\in\mathcal{I}_n$ et $j$ un entier tel que $0\leq j\leq n$. Montrons que $\gamma\in\mathcal{I}_{n-1}[j]$. La formule (\ref{bordtaun}) donne:
$$\varphi_j(\tau_{n-1})=(-1)^j\left(\delta_n\tau_n-\sum_{i\neq j} (-1)^i\varphi_i(\tau_{n-1})\right),$$
montrant l'appartenance suivante:
$$\gamma\varphi_j(\tau_{n-1})\in\left[\sum_{i,g}\delta_{n}\left[ M_n^{pte}(\varphi_i^g(\H^{n-1}),\Z)\right]+\sum_{i\neq j,g} M_{n-1}^{pte}(\varphi_i^g(\H^{n-1}),\Z)\right].$$
Or cet élément est aussi dans $\sum_{g'} M_{n-1}^{pte}(\varphi_j^{g'}(\H^{n-1}),\Z)$. Donc $\gamma\varphi_j(\tau_{n-1})$ est dans la somme des intersections.\par
D'une part, on a simplement:
$$\sum_{i,g}\delta_{n}\left[ M_n^{pte}(\varphi_i^g(\H^{n-1}),\Z)\right]\cap\sum_{g'} M_{n-1}^{pte}(\varphi_j^{g'}(\H^{n-1}),\Z)\subset\sum_{g'}\delta_{n}\left[ M_n^{pte}(\varphi_j^{g'}(\H^{n-1}),\Z)\right].$$
Et le lemme intermédiaire montre d'autre part que:
$$\sum_{i\neq j,g} M_{n-1}^{pte}(\varphi_i^g(\H^{n-1}),\Z)\cap\sum_{g'} M_{n-1}^{pte}(\varphi_j^{g'}(\H^{n-1}),\Z)\subset\sum_{g'}\delta_{n}\left[ M_n^{pte}(\varphi_j^{g'}(\H^{n-1}),\Z)\right].$$
Ainsi on obtient bien $\gamma\in\mathcal{I}_{n-1}[j]$.
\end{proof}

\begin{prop}
Pour tous les entiers $n$ et $j$ tels que $n>1$ et $0\leq j \leq n$, les idéaux $\mathcal{I}_{n-1}[j]$ sont calculables en fonction de $\mathcal{I}_{n-1}$:
\begin{equation}
\mathcal{I}_{n-1}[j]=
\begin{cases}
\phi_0(\mathcal{I}_{n-1})+\mathbb{Z}[\G^n].(1-T,1,...,1)&\text{ si }j=0\\
\phi_n(\mathcal{I}_{n-1})+\mathbb{Z}[\G^n].(1,...,1,1-US)&\text{ si }j=n\\
\phi_j(\mathcal{I}_{n-1})&\text{ sinon.}
\end{cases}
\end{equation}
\end{prop}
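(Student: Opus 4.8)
The plan is to establish the three cases by a single fibered analysis of the action of $\Z[\Gamma^n]$ on $\varphi_j(\tau_{n-1})$, isolating the cusp stabilizers in the boundary cases $j=0$ and $j=n$. Fixing $j$ and $\gamma\in\Z[\Gamma^n]$, I would first use Proposition \ref{decompgamman} to write $\gamma=\sum_{g\in\Gamma}\phi_j^g(\gamma_j^g)$ with uniquely determined components $\gamma_j^g\in\Z[\Gamma^{n-1}]$. The compatibility relation $\varphi_j^g(\gamma'.z)=\phi_j^g(\gamma').\varphi_j(z)$ then gives $\gamma.\varphi_j(\tau_{n-1})=\sum_{g}\varphi_j^g(\gamma_j^g.\tau_{n-1})$, the $g$-th summand being supported in $\varphi_j^g(\H^{n-1})$. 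This reduces the membership condition defining $\mathcal{I}_{n-1}[j]$ to a condition on the family $(\gamma_j^g)_g$.

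For $0<j<n$ the subspaces $\varphi_j^g(\H^{n-1})$ are pairwise disjoint (the remark following Proposition \ref{decompgamman}), so the condition $\gamma.\varphi_j(\tau_{n-1})\in\sum_g\delta_n[M_n^{pte}(\varphi_j^g(\H^{n-1}),\Z)]$ splits into $\varphi_j^g(\gamma_j^g.\tau_{n-1})\in\delta_n[M_n^{pte}(\varphi_j^g(\H^{n-1}),\Z)]$ for each $g$. Using (\ref{mchtrans}) and the injectivity of $\varphi_j^g$, I would pull this back to $\H^{n-1}$ and invoke Proposition \ref{condcycle} together with the vanishing $H_{n-1}^{pte}(\H^{n-1})=0$ to recognize it as the statement that $\gamma_j^g.\delta_{n-1}\tau_{n-1}$ lies in the span of boundaries of transverse chains of $\H^{n-1}$, that is, exactly $\gamma_j^g\in\mathcal{I}_{n-1}$. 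Since $\phi_j$ is the diagonal ring embedding on the $j$-th coordinate, the condition that every $\gamma_j^g$ lie in $\mathcal{I}_{n-1}$ is equivalent to $\gamma$ lying in the left ideal $\phi_j(\mathcal{I}_{n-1})$ of $\Z[\Gamma^n]$, which yields $\mathcal{I}_{n-1}[j]=\phi_j(\mathcal{I}_{n-1})$.

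For $j=0$ and $j=n$ the same reduction applies, but now $\varphi_0^g$ depends only on $g.i\infty$ and $\varphi_n^g$ only on $g.0$, so the subspaces coincide along the cosets of the stabilizers $\Gamma_{\infty}=\langle T\rangle$ and $\Gamma_0=\langle US\rangle$. Grouping the fiber sum over these cosets, the per-coset condition involves only $\sum_a\gamma_0^{gT^a}$ (respectively $\sum_a\gamma_n^{g(US)^a}$); the kernel of this averaging contributes precisely the generator $(1-T,1,\dots,1)$ (respectively $(1,\dots,1,1-US)$), while the surviving data again records membership in $\mathcal{I}_{n-1}$. This produces the stated corrections $\phi_0(\mathcal{I}_{n-1})+\Z[\Gamma^n](1-T,1,\dots,1)$ and $\phi_n(\mathcal{I}_{n-1})+\Z[\Gamma^n](1,\dots,1,1-US)$.

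The main obstacle is the fiberwise identification in the middle step: translating ``$\gamma_j^g.\tau_{n-1}$ bounds inside the transverse copy $\varphi_j^g(\H^{n-1})$'' into the transverse-cycle condition $\gamma_j^g\in\mathcal{I}_{n-1}$. This is where the homology of $\H^{n-1}$ and Proposition \ref{condcycle} do the real work, since the boundary $\varphi_j^g(\delta_{n-1}\tau_{n-1})$ falls into the intersections of $\varphi_j^g(\H^{n-1})$ with the neighbouring faces, and only modulo those transverse pieces does the bounding condition coincide with membership in $\mathcal{I}_{n-1}$. Care is also needed to check that $\phi_j$ carries the generating relations of $\mathcal{I}_{n-1}$ to a genuine left ideal of $\Z[\Gamma^n]$ and that the double inclusion is sharp, so that no relations beyond the stabilizer corrections survive in the boundary cases.
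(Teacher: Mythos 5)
Your proposal follows the paper's proof essentially verbatim: decompose $\gamma=\sum_g\phi_j^g(\gamma_j^g)$ via Proposition \ref{decompgamman}, use $\varphi_j^g(\gamma'.z)=\phi_j^g(\gamma').\varphi_j(z)$ to split the membership condition fiberwise over $g$, exploit the disjointness of the $\varphi_j^g(\H^{n-1})$ for $0<j<n$, and in the boundary cases $j=0,n$ account for the failure of disjointness along the cosets of $\Gamma_\infty=\langle T\rangle$ and $\Gamma_0=\langle US\rangle$, which produces the extra generators $(1-T,1,\dots,1)$ and $(1,\dots,1,1-US)$. The only difference is that you spell out the fiberwise identification with $\mathcal{I}_{n-1}$ (via Proposition \ref{condcycle} and the homology of $\H^{n-1}$) more explicitly than the paper does, which is a welcome clarification rather than a change of method.
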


\begin{proof}
Ces formules sont dues au résultat de décomposition de $\Z[\Gamma^n]$ suivant les $\phi_j^g$ de la proposition \ref{decompgamman}. Soit $\gamma\in\mathcal{I}_{n-1}[j]$.\par
Lorsque $j\neq 0,n$, on écrit $\gamma=\sum_{g\in\Gamma}\phi_j^g(\gamma_g)$ alors :
$$\sum_{g\in\Gamma}\varphi_j^g(\gamma_g.\tau_{n-1})=\sum_{g\in\Gamma}\phi_j^g(\gamma_g).\varphi_j(\tau_{n-1})\in\sum_{g\in\Gamma}\delta_{n}\left[ M_{n}^{pte}(\varphi_j^g(\H^{n-1}),\Z)\right].$$
Or les espaces $\varphi_j^g(\H^{n-1})$ étant disjoints alors cette dernière somme est directe et l'indépendance des termes donnent, pour tout $g\in\Gamma$:
$$\varphi_j^g(\gamma_g.\tau_{n-1})\in\delta_{n}\left[ M_{n}^{pte}(\varphi_j^g(\H^{n-1}),\Z)\right]=\varphi_j^g\left(\delta_{n}[ M_{n}^{pte}(\H^{n-1},\Z)]\right).$$
Ceci démontrant $\gamma_g\in\mathcal{I}_{n-1}$, pour tout $g\in\Gamma$.\par
La même démonstration s'adapte bien dans les cas où $j=0$ ou $n$ dans ces cas l'indépendance est valide à la condition près:
$$\left(j=0 \text{ et }g'\in g\Gamma_{\infty}\right)\text{ ou }\left(j=n \text{ et }g'\in g\Gamma_{0}\right),$$
expliquant l'apparition de termes supplémentaires.
\end{proof}

\begin{rem}
1) Ceci donne une méthode de construction par récurrence et partant simplement de $\mathcal{I}_1=(1+S,1+U+U^2)\Z[\Gamma]$. Pourtant la recherche de générateurs n'est pas immédiat car on dispose ici d'intersection. Pourtant nous verrons que $\mathcal{I}_n$ est de type fini et nous disposons d'une méthode de construction des générateurs.\par
2) Cette méthode de construction par récurrence stabilise la conjugaison par $(\varepsilon,...,\varepsilon)$. En effet, on démontre simplement que pour $\epsilon\in\{\pm 1\}^n$:
$$\mathcal{I}_n^{\epsilon}=\mathcal{I}_n \Leftrightarrow \epsilon\in\{(1,...,1),(-1,...,-1)\}.$$
Ceci est vrai pour $n=1$ car $\mathcal{I}_1^-=\mathcal{I}_1$. Puis on voit que les formules de $\mathcal{I}_{n-1}[j]$ donne un conjugué lié à celui de $\mathcal{I}_{n-1}$ stable par hypothèse de récurrence.\par
Par conséquence, l'application $P(X_1,...,X_n)\mapsto P(-X_1,...,-X_n)$ est une involution de $V_k[\mathcal{I}_n]$. Ainsi les polynômes pair et impair des multipériodes d'une famille de forme $f_1\otimes...\otimes f_n\in\otimes_{j=1}^n S_{k_j}$, définis par:
\begin{align}
P_{f_1,...,f_n}^+(X_1,..,X_n)&=1/2\left(P_{f_1,...,f_n}(X_1,...,X_n)+P_{f_1,...,f_n}(-X_1,...,-X_n)\right),\\
\text{et }P_{f_1,...,f_n}^-(X_1,..,X_n)&=1/2\left(P_{f_1,...,f_n}(X_1,...,X_n)-P_{f_1,...,f_n}(-X_1,...,-X_n)\right),
\end{align}
sont des éléments de $V_k[\mathcal{I}_n]$. De plus, en considérant la famille des permutés, on obtient un élément de $V_k^{\S_n}[\mathcal{A}(k)]$.
\end{rem}

On va chercher désormais à traduire les relations de mélange. Soient $a,b\geq 0$ vérifiant $n=a+b$. Notons $(I_a,1)$ l'idéal à droite dans $\Z[\Gamma^n]$ engendré par l'image d'un idéal à droite $I_a\subset\Z[\Gamma^a]$ par le morphisme de $\Z[\Gamma^a]$-modules:
$$\Z[\Gamma^a]\to\Z[\Gamma^n],\quad (\gamma_1,...,\gamma_a)\mapsto (\gamma_1,...,\gamma_a,1,...,1).$$
On définit, de manière analogue, pour tout $I_b\subset\Z[\Gamma^b]$ un idéal $(1,I_b)\subset\Z[\Gamma^n]$.
\begin{prop}\label{prop20}
On a les inclusions des idéaux à droite de $\Z[G_n]$:
\begin{equation}
[(\widetilde{\mathcal{I}_a},1),\sum_{\rho\in\S_{a,b}}(\rho)]\subset\mathcal{A}(k)\text{ et }[(1,\widetilde{\mathcal{I}_b}),\sum_{\rho\in\S_{a,b}}(\rho)]\subset\mathcal{A}(k).
\end{equation}
De plus, soit $(n,...,1)\in\S_n$ et $(S,...,S)\in\Gamma^n$ alors:
\begin{equation}
[(S,...,S),id]-[(id,...,id),(n,...,1)]\in\mathcal{A}(k)
\end{equation}
\end{prop}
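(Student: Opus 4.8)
Le plan est de démontrer chacune des trois inclusions séparément en exploitant la caractérisation topologique \ref{Aktopo} de $\mathcal{A}(k)$, à savoir qu'un élément $a\in\Z[G_n]$ appartient à $\mathcal{A}(k)$ si et seulement si $\langle\omega,\mathcal{T}_n|_a\rangle=0$ pour toute forme $\omega\in\Omega_k$. Pour la dernière inclusion, qui est la plus élémentaire, je partirais de la Proposition \ref{inver} qui donne $\varphi_{(n,...,1)}(\tau_n)=\varphi_{(S,...,S)}(\tau_n)$, c'est-à-dire l'égalité des chaînes $\tau_n^{(n,...,1)}=(S,...,S).\tau_n$ au niveau des supports. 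En appliquant l'action de $[\,(S,...,S),id\,]-[\,(id,...,id),(n,...,1)\,]$ à $\mathcal{T}_n$ et en utilisant la formule d'action à droite $\left(\sigma\mapsto C(\sigma)\right)|_{[\gamma,\rho]}=\left(\sigma\mapsto (\gamma^{-1})^{\sigma}C(\rho\sigma)\right)$, on obtient que les deux termes coïncident composante par composante, d'où l'annulation du crochet.

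Pour les deux premières inclusions, qui encodent les relations de mélange \ref{relmelab}, je procéderais ainsi. Les relations de mélange affirment que $\sum_{\sigma\in\S_{a,b}}P^{\sigma}_{f_1,...,f_n}=P_{f_1,...,f_a}\otimes P_{f_{a+1},...,f_n}$. Le facteur $\sum_{\rho\in\S_{a,b}}(\rho)$ dans l'élément de $\Z[G_n]$ sert précisément à sommer sur les permutations de battage, transformant la famille des permutés en le produit tensoriel de deux polynômes de multipériodes de longueurs $a$ et $b$. Une fois cette réduction effectuée, l'action de $(\widetilde{\mathcal{I}_a},1)$ n'affecte que le premier facteur $P_{f_1,...,f_a}$, auquel on applique un élément de $\widetilde{\mathcal{I}_a}$. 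D'après la Proposition \ref{prop19} appliquée en longueur $a$, on sait que $[\widetilde{\mathcal{I}_a},1]\subset\mathcal{A}(k_1,...,k_a)$, ce qui garantit l'annulation de $P_{f_1,...,f_a}|_{\widetilde{\mathcal{I}_a}}$. Le second facteur restant intact, le produit tensoriel s'annule, d'où l'appartenance à $\mathcal{A}(k)$. L'inclusion symétrique pour $(1,\widetilde{\mathcal{I}_b})$ se traite de façon identique en faisant agir sur le second facteur.

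L'obstacle principal réside dans la manipulation soigneuse de l'ordre des actions : il faut vérifier que l'action de la partie $\S_{a,b}$ (qui produit la décomposition en produit tensoriel via le mélange) et celle de la partie $(\widetilde{\mathcal{I}_a},1)$ (qui agit sur le premier bloc de coordonnées) commutent correctement au sein du produit semi-direct $G_n$, compte tenu de la loi $[\gamma,\rho][\gamma',\rho']=[\gamma^{\rho'}\gamma',\rho\rho']$. Concrètement, il faut s'assurer que l'élément $(\widetilde{\mathcal{I}_a},1)$, une fois tordu par l'action des permutations de $\S_{a,b}$, continue bien de n'agir que sur le premier facteur du produit tensoriel issu du mélange. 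Je vérifierais ce point en écrivant explicitement, pour un générateur $\widetilde{\gamma}\in\widetilde{\mathcal{I}_a}$, l'image de $\mathcal{T}_n|_{[(\widetilde{\gamma},1),\sum_{\rho}(\rho)]}$ et en la confrontant à la décomposition en support $\tau_a\times\tau_b$ du simplexe $\tau_n$ sous l'action des battages. Le reste des étapes est de nature formelle et découle directement des propriétés d'invariance \ref{mod} et \ref{acc} déjà établies.
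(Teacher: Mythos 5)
Votre démonstration suit essentiellement la même route que celle du texte : la troisième appartenance se déduit de la Proposition \ref{inver} par dualité des actions, et les deux premières reposent sur le fait que la somme sur les battages $\S_{a,b}$ décompose la situation en un produit de deux facteurs de longueurs $a$ et $b$ (au niveau des chaînes, $\sum_{\rho\in\S_{a,b}}\tau_n^{\rho}=\tau_a\otimes\tau_b$, ce qui est la forme duale de la relation de mélange que vous invoquez), le facteur concerné étant ensuite annulé par $\mathcal{I}_a$ via l'argument de la Proposition \ref{prop19}. Le point de vigilance que vous signalez sur la compatibilité du tordage $\gamma^{\sigma}$ avec la décomposition en produit tensoriel est légitime mais se résout par le calcul direct que vous esquissez; l'approche est correcte.
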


\begin{proof}
Soit $\gamma\in\Gamma^n$ alors on peut effectuer le calcul:
$$\left(\sigma\mapsto\langle\omega,\tau_n^{\sigma}\rangle\right)|_{[\gamma^{-1},\sum_{\rho\in\S_{a,b}}(\rho)]}=\left(\sigma\mapsto\langle\omega,(\gamma(\sum_{\rho\in\S_{a,b}}\tau_n^{\rho}))^{\sigma}\rangle\right).$$
Or on remarque que $\sum_{\rho\in\S_{a,b}}\tau_n^{\rho}=\tau_a\otimes\tau_b$ ceci permet de séparer les tenseurs purs:
$$\langle \omega_a\otimes\omega_b,\tau_a\otimes\tau_b\rangle=\langle \omega_a,\tau_a\rangle\langle \omega_b,\tau_b\rangle,$$
où $\omega_a\in\Omega^a(\H^a,\C)$ et $\omega_b\in\Omega^b(\H^b,\C)$.\\
Supposons $\gamma=(\widetilde{\gamma_a},\widetilde{\gamma_b})\in(\widetilde{\mathcal{I}_a},1)$, alors :
$$\langle\omega_a\otimes\omega_b,\tau_a\otimes\tau_b\rangle|_{\gamma}=\langle \omega_a,\gamma_a.\tau_a\rangle\langle \omega_b,\gamma_b.\tau_b\rangle=0.$$
Le même calcul reste valide lorsque $\gamma=(\widetilde{\gamma_a},\widetilde{\gamma_b})\in(1,\widetilde{\mathcal{I}_b})$.\par
L'appartenance de l'élément $[(S,...,S),id]-[(id,...,id),(n,...,1)]$ dans $\mathcal{A}(k)$ est une réécriture de la Proposition \ref{inver} après passage à la dualité des actions. On a bien $\widetilde{(S,...,S)}=(S,...,S)$.
\end{proof}

Ceci permet de considérer l'idéal à droite de $\Z[G_n]$ indépendant des poids:
\begin{equation}
\mathcal{A}_n=\sum_{a=1}^n\left([(\widetilde{\mathcal{I}_a},1),\sum_{\sigma\in\S_{a,n-a}} (\sigma)]\right)+\left([(S,...,S),id]-[(1,...,1),(n,...,1)]\right).
\end{equation}

Les Propositions \ref{prop19} et \ref{prop20} démontrent pour toute famille de poids que :
\begin{equation}\label{incl:1}
\mathcal{A}_n\subset\mathcal{A}(k)\text{ et donc }(V_{k}^{\Z})^{\S_n}[\mathcal{A}_n]\supset (V_{k}^{\Z})^{\S_n}[\mathcal{A}(k)].
\end{equation}\par
Les relations du type $[(1,\widetilde{\mathcal{I}_b}),\sum_{\sigma\in \S_{a,b}}(\sigma)]$ sont superflues car elles sont engendrées par :\\
$[(\widetilde{\mathcal{I}_a},1),\sum_{\sigma\in\S_{a,n-a}} (\sigma)]$ et $[(S,...,S),id]-[(1,...,1),(n,...,1)]$.\\
En effet, les formules de constructions récurrentes de $\mathcal{I}_n$ permettent de démontrer que pour tout $1\leq a\leq n$:
\begin{equation}
(S,...,S)(\mathcal{I}_a,1)(S,...,S)=(\mathcal{I}_a^{(a,...,1)},1)=(1,\mathcal{I}_a)^{(a,...,1)}.
\end{equation}

\begin{rem}
Bien que dans la pratique ceci est peu commode, on peut réaliser la construction par récurrence directement sur $\mathcal{A}_n$.
Considérons pour tout $\sigma\in\S_n$ et $0\leq j\leq n$, l'idéal à gauche $\mathcal{A}_{n-1}[j,\sigma]\subset\Z[G_n]$, formé par les $a\in\Z[G_n]$ tel que:
$$\left[\varphi_{\sigma}\varphi_j(\mathcal{T}_{n-1})\right]|_a\in\sum_{g\in\Gamma} \delta_n\left[M_n^{pte}(\varphi_{\sigma}\varphi_j^g(\H^{n-1}),\Z)\right]^{\S_n}.$$
Et ainsi on obtient la formule:
\begin{equation}\label{incl:2}
\mathcal{A}_n=\cap_{j=0}^n\cap_{\sigma\in\S_n}\mathcal{A}_{n-1}[j,\sigma].
\end{equation}
D'autre part, on a une construction par récurrence déterminant $\mathcal{A}_{n-1}[j,\sigma]$ en fonction de $\mathcal{A}_{n-1}$.
Ces idéaux permettent d'introduire les termes de bords:
\begin{equation}
ME_k^{\Q}=\sum_{\sigma\in\S_n}\sum_{j=0}^n (V_{k}^{\Q})^{\S_n}[\mathcal{A}_{n-1}[j,\sigma]].
\end{equation}
Ce sont des permutations de polynômes rationnelles induit par la connaissance de $(V_{l}^{\Q})^{\S_{n-1}}[\mathcal{A}_{n-1}]$ pour les poids $l$ de longueurs $n-1$.
\end{rem}

Dans la suite, on va chercher à préciser les inclusions:
\begin{align*}
MP(k)\subset(V_{k}^{\C})^{\S_n}&[\mathcal{A}(k)]\subset(V_{k}^{\C})^{\S_n}[\mathcal{A}_n],\\
\text{et }ME_k^{\Q}&\subset(V_{k}^{\Q})^{\S_n}[\mathcal{A}_n].
\end{align*}

\subsection{Cohomologie relative de $P\G^n\rtimes \S_n$}

On dispose d'une construction analogue à celle réalisée dans le cas $n=2$ dans un cadre plus large.\par
Soit $n$ un entier. Notons $P_n=\pte^n$ les pointes de $\H^n$. On dispose de l'application:
\begin{equation}
\delta_{m}:\Z[P_n^m]\to\Z[P_n^{m-1}],\quad (a_1,...,a_m)\mapsto\sum_{j=1}^m(-1)^j(a_1,...,\hat{a_j},...,a_m).
\end{equation}
On dispose de la suite exacte longue:
$$...\stackrel{\delta_{m+2}}{\longrightarrow}\Z[P_n^{m+1}]\stackrel{\delta_{m+1}}{\longrightarrow}\Z[P_n^m]\stackrel{\delta_{m}}{\longrightarrow}...
\stackrel{\delta_2}{\longrightarrow}\Z[P_n]\stackrel{\delta_{1}}{\longrightarrow}\Z\stackrel{\delta_{0}}{\longrightarrow} 0.$$
On peut ainsi définir:
\begin{equation}
\Z[P_n^m]^0=\Ker(\delta_{m})=\Im(\delta_{m+1}).
\end{equation}
Définissons, de manière analogue au cas $n=2$, un espace quotient $H_m(P_n)$ en considérant les espaces transverses induits par les dimensions inférieurs et les applications $\varphi_j$:
\begin{equation}
\Z[P_n^m]_{trans}^0=\sum_{j=0}^n\sum_{\sigma\in\S_n}\sum_{g\in\Gamma}\varphi_{\sigma}\varphi_j^g(\Z[P_{n-1}^m]^0),
\end{equation}
qui permet de définir l'espace:
\begin{equation}
H_m(P_n)=\Z[P_n^m]^0/\Z[P_n^m]^0_{trans}.
\end{equation}

\begin{prop}
L'ensemble $P_n$ peut être munie d'une action de $G_n=\Gamma^n\rtimes \S_n$ donnée par:
\begin{equation}
[(\gamma_1,...,\gamma_n),\rho].(p_1,...,p_n)=(\gamma_{\rho^{-1}(1)}.p_{\rho^{-1}(1)},...,\gamma_{\rho^{-1}(n)}.p_{\rho^{-1}(n)}).
\end{equation}
On peut alors étendre cette action diagonalement puis par linéarité à $\Z[P_n^m]$. Les espaces $\Z[P_n^m]^0$ et $\Z[P_n^m]^0_{trans}$ sont stables par cette action, nous donnant ainsi une action à gauche de $G_n$ sur $H_m(P_n)$.
\end{prop}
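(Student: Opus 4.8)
Il faut vérifier deux points : d'une part que l'action de $G_n$ sur $\Z[P_n^m]$ préserve le sous-groupe des chaînes fermées $\Z[P_n^m]^0=\Ker(\delta_m)$, et d'autre part qu'elle préserve le sous-groupe des chaînes transverses $\Z[P_n^m]^0_{trans}$. Une fois ces deux stabilités acquises, l'action passe automatiquement au quotient $H_m(P_n)=\Z[P_n^m]^0/\Z[P_n^m]^0_{trans}$, ce qui est exactement l'énoncé. Je commencerais par vérifier que la formule proposée définit bien une action de groupe à gauche sur $P_n$ : cela revient à contrôler la loi $[\gamma,\rho][\gamma',\rho']=[\gamma^{\rho'}\gamma',\rho\rho']$ du produit semi-direct, en calculant directement la composée des deux actions sur un point $(p_1,\dots,p_n)$ et en retrouvant l'action de $[\gamma^{\rho'}\gamma',\rho\rho']$. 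C'est le même type de vérification que celle faite dans la Proposition décrivant l'action de $G_n$ sur $(V_k^{\Z})^{\S_n}$, adaptée ici à une action à gauche sur les pointes plutôt qu'à droite sur les polynômes.

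\textbf{Stabilité de $\Z[P_n^m]^0$.} L'action diagonale sur $\Z[P_n^m]$ commute avec les applications de bord $\delta_m$. En effet, $\delta_m$ n'agit que sur la position des sommets $(a_1,\dots,a_m)$, en supprimant l'un d'eux, alors que l'action de $[\gamma,\rho]$ agit coordonnée par coordonnée à l'intérieur de chaque sommet $a_l\in P_n$. Ces deux opérations portent sur des indices indépendants, donc pour tout $g\in G_n$ on a $\delta_m(g.c)=g.\delta_m(c)$. Cette relation de commutation a déjà été établie dans le cas $n=2$ (voir la Proposition sur l'action de $\Gamma^2$ sur $H_m(P_2)$), et la vérification est identique ici. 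On en déduit immédiatement $g.\Ker(\delta_m)\subset\Ker(\delta_m)$, c'est-à-dire la stabilité de $\Z[P_n^m]^0$.

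\textbf{Stabilité de $\Z[P_n^m]^0_{trans}$.} C'est le point qui demande le plus de soin. Il faut montrer que pour tout $[\gamma,\rho]\in G_n$, l'image par $[\gamma,\rho]$ d'un générateur $\varphi_{\sigma}\varphi_j^g(c)$ (avec $c\in\Z[P_{n-1}^m]^0$) reste dans $\Z[P_n^m]^0_{trans}$. La clé est la série de relations de conjugaison entre l'action de $G_n$ et les applications $\varphi_j^g$, $\varphi_\sigma$ déjà démontrée, à savoir $\varphi_j^g(\gamma.z)=\phi_j^g(\gamma).\varphi_j(z)$ et $\varphi_\sigma(\gamma.z)=\phi_\sigma(\gamma).\varphi_\sigma(z)$, qui se transposent terme à terme aux pointes. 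En composant ces identités, l'action de $[\gamma,\rho]$ sur $\varphi_{\sigma}\varphi_j^g$ se réécrit comme $\varphi_{\rho\sigma}\varphi_{j}^{g'}$ pour un nouveau paramètre $g'\in\Gamma$ convenable (obtenu en faisant remonter $\gamma$ à travers $\varphi_j^g$ via $\phi_j^g$, comme dans le cas $n=2$ où l'on avait $(\gamma_1,\gamma_2).H_g(c)=H_{\gamma_2 g}(\gamma_1.c)$, etc.). Comme la somme définissant $\Z[P_n^m]^0_{trans}$ est précisément indexée par \emph{tous} les triplets $(j,\sigma,g)$, le terme image retombe dans cette somme, et la stabilité en résulte. \textbf{La principale difficulté} sera de traiter correctement les cas de bord $j=0$ et $j=n$, où le paramètre $g$ n'est défini qu'à multiplication près par un stabilisateur de pointe ($\Gamma_\infty$ pour $j=0$, $\Gamma_0$ pour $j=n$) — il faudra vérifier que le décalage induit par $\phi_0^g$ ou $\phi_n^g$ respecte cette ambiguïté, exactement comme l'apparition des conditions $g'\in g\Gamma_\infty$ et $g'\in g\Gamma_0$ notée dans la remarque suivant la Proposition \ref{decompgamman}. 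Moyennant cette précaution, les deux stabilités donnent le passage au quotient et achèvent la preuve.
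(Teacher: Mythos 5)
Votre démonstration est correcte et suit exactement la même démarche que celle du texte : vérification de la loi d'action, commutation avec les applications de bord pour la stabilité de $\Z[P_n^m]^0$, puis permutation des sous-espaces $\varphi_{\sigma}\varphi_j^g(\Z[P_{n-1}^m]^0)$ sous l'action de $G_n$ pour la stabilité de $\Z[P_n^m]^0_{trans}$. Vous détaillez simplement davantage (notamment les cas de bord $j=0,n$ et les stabilisateurs de pointes) ce que le texte expédie en trois phrases.
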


\begin{proof}
On vérifie simplement que ceci définit bien une action à gauche de $G_n$ sur $P_n$. Puis la stabilité de $\Z[P_n^m]^0$ provient du calcul:
$$\delta_{m+1}([\gamma,\rho].p)=[\gamma,\rho].\delta_{m+1}(p)\text{ pour }p\in\Z[P_n^{m+1}].$$
Et celle de $\Z[P_n^m]^0_{trans}$ de l'échange des $\varphi_{\sigma}\varphi_j^g(\Z[P_{n-1}^m]^0)$ par $\S_n$ et de la stabilité par $\Gamma^{n-1}$.
\end{proof}

\begin{prop}
On peut associer à $\tau_n$ un élément $T_n\in H_n(P_n)$ comme étant la classe de:
\begin{equation}
\delta_{n+1} (U_n)\text{ où }U_n=\left[\tau_n(e_0),...,\tau_n(e_n)\right]\in P_n^{n+1}.
\end{equation}
Ceci nous permet de considérer l'application suivante:
\begin{equation}
\Theta_n:\Z[G_n]\to H_n(P_n),\quad [\gamma,\rho] \mapsto \gamma^{-1}\rho^{-1}T_n.
\end{equation}
Son noyau est $\mathcal{A}_n$. Notons $H_n(P_n)^0$ son image.
\end{prop}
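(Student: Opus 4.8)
The plan is to establish the equality $\ker(\Theta_n)=\mathcal{A}_n$ by proving the two inclusions, generalizing the argument given for $\Theta_2$ in Chapter~3. The conceptual backbone is the vertex-to-cusp dictionary: since $\mathcal{H}^n$ is contractible, a class in $M_n^{pte}(\mathcal{H}^n,\mathbb{Z})$ is determined by its tuple of cusp-vertices, so the identification $C\mapsto(C(e_0),\dots,C(e_n))$ matches $M_n^{pte}(\mathcal{H}^n,\mathbb{Z})$ with $\mathbb{Z}[P_n^{n+1}]$ and the chain boundary $\delta_n$ with $\delta_{n+1}$. Under this dictionary $\tau_n$ corresponds to $U_n$ and $\delta_n\tau_n$ to $\delta_{n+1}U_n$. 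First I would record, using this dictionary together with Proposition~\ref{condcycle}, the reformulation that $a\cdot T_n$ vanishes in $H_n(P_n)=\mathbb{Z}[P_n^n]^0/\mathbb{Z}[P_n^n]^0_{trans}$ precisely when $a\cdot\delta_n\mathcal{T}_n$ lies in $\sum_{j,g,\sigma}\delta_n\left[M_n^{pte}(\varphi_\sigma\varphi_j^g(\mathcal{H}^{n-1}),\mathbb{Z})\right]^{\mathfrak{S}_n}$, so that both sides of the desired equality become conditions on $a\cdot\delta_n\mathcal{T}_n$.

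For the inclusion $\mathcal{A}_n\subseteq\ker(\Theta_n)$, I would check the listed generators one at a time, which amounts to restating Propositions~\ref{prop19} and~\ref{prop20} at the level of $H_n(P_n)$ rather than of $\mathcal{A}(k)$. For the shuffle generators $[(\widetilde{\mathcal{I}_a},1),\sum_{\sigma\in\mathfrak{S}_{a,n-a}}(\sigma)]$ I would use the tensor identity $\sum_{\sigma\in\mathfrak{S}_{a,b}}\tau_n^\sigma=\tau_a\otimes\tau_b$ together with the defining property of $\mathcal{I}_a$, namely that $\widetilde{\mathcal{I}_a}$ annihilates $\tau_a$ modulo boundaries and transverse chains in the first $a$ coordinates; tensoring with $\tau_b$ turns these into transverse chains of $\mathcal{H}^n$, hence into $\mathbb{Z}[P_n^n]^0_{trans}$. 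The inversion generator $[(S,\dots,S),id]-[(1,\dots,1),(n,\dots,1)]$ maps to zero directly by Proposition~\ref{inver}, which identifies $\varphi_{(n,\dots,1)}(\tau_n)$ with $\varphi_{(S,\dots,S)}(\tau_n)$.

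The difficult inclusion is $\ker(\Theta_n)\subseteq\mathcal{A}_n$, which I would prove by induction on $n$, the case $n=1$ being Manin's theorem $\ker(\Theta_1)=\mathcal{I}_1$. Using the boundary formula (\ref{bordtaun}), $\delta_n\tau_n=\sum_{j=0}^n(-1)^j\varphi_j(\tau_{n-1})$, and the intersection lemma for transverse spaces $\varphi_i^g(\mathcal{H}^{n-1})\cap\varphi_j^{g'}(\mathcal{H}^{n-1})=\varphi_j^{g'}\varphi_i^g(\mathcal{H}^{n-2})$ already used to prove $\mathcal{I}_n=\cap_j\mathcal{I}_{n-1}[j]$, the single condition defining $\ker(\Theta_n)$ splits into the intersection over $0\le j\le n$ and $\sigma\in\mathfrak{S}_n$ of the conditions defining $\mathcal{A}_{n-1}[j,\sigma]$; this is exactly the recursive description (\ref{incl:2}) of $\mathcal{A}_n$. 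It then remains to show that the explicit generators listed actually generate this intersection ideal, which requires the finite-generation argument: I would transport the height function and the descent-on-the-graph method of Chapter~3 to $P_n=\mathbb{P}^1(\mathbb{Q})^n$, reducing an arbitrary closed transverse combination to one supported on $B_0^n$ by lowering the maximal height one coordinate at a time.

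The main obstacle will be precisely this last reduction: controlling simultaneously the $\Gamma^n$-descent, which lowers the height coordinate by coordinate as for $\mathcal{I}_1$ and $\mathcal{I}_2$, and the $\mathfrak{S}_n$-symmetry that interlocks the coordinates through the shuffle relations. I expect that the projection onto each factor respects the graph $\mathcal{G}_1$, so that the one-variable descent of Chapter~3 can be iterated, and that the symmetrization by $\mathfrak{S}_n$ is absorbed by the shuffle generators; verifying that no further relations survive after reduction to $B_0^n$ is the combinatorial heart of the argument, generalizing the exhaustive check of the eighteen triangles performed when $n=2$.
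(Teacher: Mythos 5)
Votre démonstration suit essentiellement la même voie que celle du texte : le dictionnaire sommet--pointe identifiant $M_n^{pte}(\H^n,\Z)$ à $\Z[P_n^{n+1}]$, la vérification des générateurs de $\mathcal{A}_n$ via la formule $\sum_{\rho\in\S_{a,b}}\rho^{-1}T_n=T_a\otimes T_b$ et la Proposition sur $\varphi_{(n,...,1)}(\tau_n)=\varphi_{(S,...,S)}(\tau_n)$, puis la récurrence sur $n$ fondée sur $\delta_n\tau_n=\sum_j(-1)^j\varphi_j(\tau_{n-1})$ et sur $\mathcal{I}_n=\bigcap_j\mathcal{I}_{n-1}[j]$. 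La seule divergence est que l'argument de descente sur la hauteur, que vous incorporez à la fin, est traité dans le texte comme un théorème séparé (le caractère de type fini de $\mathcal{A}_n$) et n'est pas nécessaire pour l'égalité $\Ker(\Theta_n)=\mathcal{A}_n$ elle-même.
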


\begin{proof}
Le calcul du noyau peut être fait par récurrence et repose d'une part sur la formule:
$$\mathcal{I}_n=\bigcap_{j=0}^n \mathcal{I}_{n-1}[j].$$
Et en remarquant qu'il existe une relation de récurrence entre $T_n$ et $T_{n-1}$:
$$T_n=\sum_{j=0}^n (-1)^j\varphi_j(U_{n-1})\in H_n(P_n),$$
où $U_{n-1}=\left[\tau_{n-1}(e_0),...,\tau_{n-1}(e_{n-1})\right]$ est un antécédent de $T_{n-1}=\delta_{n}(U_{n-1})$.\\
La liberté de choix de $U_{n-1}$ est bien résolue dans l'espace quotient $H_n(P_n)$.\par
Et d'autre part, sur la formule:
$$\sum_{\rho\in \S_{a,b}} \rho^{-1}T_n=T_a\otimes T_b.$$
\end{proof}

\begin{prop}
Le groupe $H_n(P_n)^0$, l'image de l'application $\Theta_n$, est engendré par les classes dans $H_n(P_n)$ des éléments:
\begin{equation}
\gamma.\varphi_{\sigma}\delta_{n+1}\left[(a,...,a),(a,...,a,b),...(a,b,...,b),(b,...,b)\right],
\end{equation}
où $a,b\in\pte$, $\sigma\in\S_n$ et $\gamma\in\Gamma^n$.
\end{prop}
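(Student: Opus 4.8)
The plan is to recognise $T_n$ as a ``diagonal staircase'' and then to control the whole image of $\Theta_n$ by an induction on the distance $d$ on $\pte$. Throughout I abbreviate by $E(a,b)=\delta_{n+1}[(a,\dots,a),(a,\dots,a,b),\dots,(a,b,\dots,b),(b,\dots,b)]$ the diagonal staircase attached to a pair $a,b\in\pte$, so that the proposed generators are exactly the classes of $\gamma.\varphi_\sigma E(a,b)$. First I would compute the vertices of $U_n$: evaluating the defining map of $\tau_n$ at the vertex $e_j$ of $\Delta_n$, its $l$-th coordinate $-i\log\big(\sum_{\alpha<l}u_\alpha\big)$ equals $0$ for $j<l$ and $i\infty$ for $j\ge l$, so $\tau_n(e_j)$ carries $i\infty$ in its first $j$ slots and $0$ in the remaining $n-j$. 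Writing $\sigma_0=(n,\dots,1)\in\S_n$ for the reversal, $\varphi_{\sigma_0}$ sends $\tau_n(e_j)$ to $(0,\dots,0,i\infty,\dots,i\infty)$ (with $j$ entries $i\infty$), which is the $j$-th vertex of the staircase of the pair $(0,i\infty)$; since $\varphi_{\sigma_0}$ commutes with $\delta_{n+1}$ we obtain $T_n=\varphi_{\sigma_0}E(0,i\infty)$, so $T_n$ is itself one of the proposed generators.

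This settles the inclusion $\subseteq$ at once: $H_n(P_n)^0$ is by definition the $\Z$-span of the elements $\gamma^{-1}\rho^{-1}T_n$, and each of these equals $\gamma.\varphi_\sigma E(0,i\infty)$ for suitable $\gamma\in\Gamma^n$, $\sigma\in\S_n$, hence is a generator of the claimed shape. Moreover, as $[\gamma,\rho]$ ranges over $G_n$ the operators $\gamma^{-1}\rho^{-1}$ range over the whole left action, so $H_n(P_n)^0$ is the cyclic $\Z[G_n]$-submodule generated by $T_n$; in particular it is $G_n$-stable. Thanks to this stability, for the reverse inclusion it suffices to show that $E(a,b)\in H_n(P_n)^0$ for every $a,b\in\pte$, the outer $\gamma$ and $\varphi_\sigma$ being absorbed by the $G_n$-action.

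I would prove $E(a,b)\in H_n(P_n)^0$ by induction on the graph distance $d(a,b)$ defined earlier. If $d(a,b)=0$ then $a=b$, all vertices coincide and the class $E(a,b)$ is degenerate, hence $0$. If $d(a,b)=1$, then $a=g.i\infty$ and $b=g.0$ for some $g\in\Gamma$, and applying $[(g,\dots,g),\sigma_0]\in G_n$ to $T_n$ produces $E(a,b)$ up to the harmless exchange $a\leftrightarrow b$ (which is again a distance-$1$ pair); thus $E(a,b)\in\Z[G_n]T_n=H_n(P_n)^0$. For $d(a,b)=N\ge 2$ I would choose, along a minimal chain realising $d(a,b)$, an intermediate cusp $c$ with $d(a,c)=1$ and $d(c,b)=N-1$, and establish a subdivision identity in $H_n(P_n)$ of the form $E(a,b)=E(a,c)+E(c,b)+(\text{transverse terms})$, the transverse terms lying in $\Z[P_n^n]^0_{trans}$ and so vanishing in $H_n(P_n)$. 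By the induction hypothesis $E(a,c)$ and $E(c,b)$ belong to $H_n(P_n)^0$, whence so does $E(a,b)$, closing the induction and the proof. This identity generalises the explicit four-triangle decomposition used for $n=2$.

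The hardest part will be this subdivision identity. For general $n$ one must refine the staircase $n$-simplex by inserting the intermediate cusp $c$ in all coordinates, and check that every piece other than the two pure staircases $E(a,c)$ and $E(c,b)$ is transverse, i.e. factors through some $\varphi_\sigma\varphi_j^g$ and thus lands in $\Z[P_n^n]^0_{trans}$. I expect the cleanest route to build this refinement one coordinate-block at a time, repeatedly using $\delta_n\circ\delta_{n+1}=0$ to trade an $(n+2)$-point boundary relation for a decomposition of the $n$-chain, exactly as an extra cusp was adjoined in the $n=2$ computation; the delicate bookkeeping is to verify that each mixed piece carries a plateau in consecutive coordinates forcing it into the transverse subspace.
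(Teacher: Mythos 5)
Votre sens de la strat�gie est le bon et plusieurs points sont corrects : le calcul des sommets de $U_n$ donne bien $T_n=\varphi_{(n,\dots,1)}E(0,i\infty)$, ce qui r�gle l'inclusion directe, et la r�duction � montrer $E(a,b)\in H_n(P_n)^0$ par r�currence sur $d(a,b)$, avec les cas de base que vous indiquez, est exactement la d�marche du texte. La lacune est dans l'�tape de r�currence : l'identit� de subdivision $E(a,b)=E(a,c)+E(c,b)+(\text{termes transverses})$ est fausse d�s que $n\geq 2$. Testez-la pour $n=2$ avec $a=g_1 i\infty$, $c=g_1 0=g_2 i\infty$, $b=g_2 0$ : la d�composition explicite en quatre triangles utilis�e au chapitre $3$ s'�crit
$$E(a,b)=E(a,c)+E(c,b)+(g_1,g_2)\partial T_2+(g_1S,g_2S)\partial T_2,$$
et les deux triangles mixtes, de sommets $\{(a,c),(a,b),(c,b)\}$ et $\{(a,c),(c,c),(c,b)\}$, ne sont \emph{pas} dans $\Z[P_2^2]^0_{trans}$ : chacun est $\Theta_2$ d'un �l�ment non diagonal de $\Gamma^2$, et son bord se d�compose en une ar�te horizontale, une verticale et une diagonale dont aucune n'est ferm�e ; il d�finit donc une classe non nulle de $H_2(P_2)$. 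Le d�faut $E(a,b)-E(a,c)-E(c,b)$ est un �l�ment en g�n�ral non nul de $H_n(P_n)$, et votre identit� ne peut pas tenir.

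La d�composition correcte, obtenue en intercalant $c$ dans la cha�ne $a<t_1<\dots<t_n<b$ puis en appliquant la relation de m�lange $\sum_{\sigma\in\S_{n_1,n_2}}T_n^{\sigma}=T_{n_1}\otimes T_{n_2}$, produit $2^n$ simplexes dans lesquels chaque coordonn�e $j$ parcourt une paire $\{a_j,b_j\}$ �gale � $\{a,c\}$ ou � $\{c,b\}$. Seuls deux d'entre eux sont des escaliers purs ; les $2^n-2$ termes mixtes constituent l'obstruction. Pire : lorsque $d(a,c)=1$ et $d(c,b)=N-1>1$, un simplexe mixte n'est pas de la forme $\gamma.\varphi_{\sigma}E(a',b')$ (l'invariance de $d$ par $\Gamma$ forcerait $d(a,c)=d(c,b)$), donc votre hypoth�se de r�currence, qui ne porte que sur les escaliers purs, ne s'y applique pas. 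Pour conclure il faut renforcer la r�currence afin qu'elle traite les produits m�lang�s $T_{n_1}(a,c)\otimes T_{n_2}(c,b)$ d'escaliers � paires de coordonn�es distinctes — par exemple en raisonnant sur le multi-ensemble des distances coordonn�e par coordonn�e, et en subdivisant r�cursivement chaque bloc de type $\{c,b\}$ jusqu'� ce que toute paire soit de distance $1$, auquel cas le simplexe vaut $\gamma\varphi_{\sigma}T_n$ pour un $\gamma\in\Gamma^n$ en g�n�ral non diagonal, donc appartient � l'image de $\Theta_n$.
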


\begin{proof}
On remarque que le cas où $a=g i\infty$ et $b=g 0$ est immédiat car l'élément s'écrit simplement:
$$\gamma\varphi_{\sigma}\delta_{n+1}\left[(g i\infty,...,g i\infty),(g i\infty,...,g i\infty,g0),...,(g0,...,g0)\right]=\gamma\varphi_{\sigma}(g,...,g)T_n.$$\par
Puis pour $a$ et $b$ quelconque on connait l'existence d'une chaîne de matrices de $PSL_2(\Z)$, $\gamma_1,...,\gamma_N$ telle que:
$$a=\gamma_1 i\infty\to\gamma_10=\gamma_2i\infty\to...\to\gamma_N0=b.$$
Il nous suffit alors de démontrer un résultat de décomposition conforme.
\end{proof}

\begin{lem}
Soient $a,b,c\in\pte$ alors on peut décomposer:
$$T_n(a,b)=\delta_n\left[(a,...,a),(a,...,a,b)...,(b,...,b)\right]$$
en somme de $2^n$ triangles de la forme:
$$\delta_{n+1}\left[(a_1,...,a_n),(a_1,...,a_{n-1},b_n)...,(b_1,...,b_n)\right],$$
où pour tout $1\leq j\leq n, \{a_j,b_j\}$ est soit $\{a,c\}$ soit $\{c,b\}$.
\end{lem}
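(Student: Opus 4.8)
The plan is to prove the decomposition by induction on the length $n$, working throughout inside the quotient $H_n(P_n)=\Z[P_n^n]^0/\Z[P_n^n]^0_{trans}$, so that two relative cycles are identified as soon as their difference is a boundary $\delta_{n+1}(\dots)$ together with a \emph{closed} transverse chain lying in $\sum_{j,\sigma,g}\varphi_\sigma\varphi_j^g\!\left(\Z[P_{n-1}^n]^0\right)$. Each staircase appearing in the statement is of the form $\delta_{n+1}[v_0,\dots,v_n]$, hence is a relative cycle since $\delta_n\delta_{n+1}=0$; thus the asserted equality is an honest equality of classes in $H_n(P_n)$, and every residual term that can be shown to be a boundary plus a closed transverse chain simply disappears.

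The base case $n=1$ is an exact identity in $\Z[\pte]^0$. Indexing the two permitted choices $\{a,c\}$ and $\{c,b\}$, one has
\[
\delta_2[(a),(c)]+\delta_2[(c),(b)]=\big((a)-(c)\big)+\big((c)-(b)\big)=(a)-(b)=T_1(a,b),
\]
with no transverse correction needed. This is the mechanism I would then feed into each coordinate separately.

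For the inductive step I would subdivide the intermediate point $c$ coordinate by coordinate, starting from $T_n(a,b)=\delta_{n+1}[(a,\dots,a),\dots,(b,\dots,b)]$. Concretely, at the consecutive pair of vertices where a given coordinate $p$ flips from $a$ to $b$, I adjoin the new vertex obtained by setting that $p$\nobreakdash-th entry to $c$, forming an $(n+1)$-simplex, and expand the relation $\delta_{n+1}\delta_{n+2}[\dots]=0$. One of the resulting faces is the original staircase, two of them are the staircases in which coordinate $p$ runs over $a\to c$ and $c\to b$, and the remaining faces share an adjacent pair of vertices agreeing in all but the $p$\nobreakdash-th coordinate. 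These residual faces are pushed, after one further application of $\delta_{n+1}\delta_{n+2}=0$ (adjoining the vertex that equalises the offending coordinate to $c$), onto simplices each having one coordinate constant equal to $c$; moving that coordinate into first position by a permutation $\varphi_\sigma$ exhibits them in the image of some $\varphi_\sigma\varphi_0^g$ or $\varphi_\sigma\varphi_n^g$. This is exactly the move already used for $n=2$ above, where the spurious triangle was rewritten as $\varphi_2^{b_2}\!\left(\delta_2[\,\cdot\,]\right)$ before being discarded. Iterating the subdivision over $p=1,\dots,n$ doubles the number of staircases at each stage and produces, after $n$ stages, precisely the $2^n$ staircases indexed by the choice $\{a,c\}$ or $\{c,b\}$ in each coordinate.

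The hard part will be the bookkeeping concealed in ``modulo transverse chains and boundaries'': since only \emph{closed} transverse chains (images of elements of $\Z[P_{n-1}^n]^0$) are annihilated in $H_n(P_n)$, I must verify that the residual simplices assemble into closed transverse chains and are not merely supported on a coordinate hyperplane. This forces the two further-reduction steps described above, and it is here that the signs coming from the alternating sums $\delta_{n+1}\delta_{n+2}=0$ and the orientations of the antidiagonal residual faces must be tracked with care, exactly as in the iterated triangulation carried out for $n=2$. Once these cancellations are organised uniformly via the two relations $\delta_{n+1}\delta_{n+2}=0$ and the $\varphi_\sigma\varphi_j^g$\nobreakdash-invariance of the transverse subgroup, the surviving terms number $2^n$ and have the prescribed staircase shape, closing the induction.
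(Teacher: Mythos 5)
Your framework (working in $H_n(P_n)$ and discarding boundaries together with closed transverse chains) and your base case $n=1$ are correct, but the inductive step has a genuine gap: inserting $c$ coordinate by coordinate does not produce simplices of the required shape. Take $n=2$ and $p=2$. Inserting $w=(a,c)$ between $(a,a)$ and $(a,b)$ and expanding $\delta_2\delta_3[(a,a),(a,c),(a,b),(b,b)]=0$ gives, besides the original staircase and the genuinely transverse face $[(a,a),(a,c),(a,b)]$ (first coordinate constant), the two faces $[(a,c),(a,b),(b,b)]$ and $[(a,a),(a,c),(b,b)]$. Neither is of the target form: in the first, coordinate $1$ still runs over $\{a,b\}$, which is not one of the allowed pairs; in the second, coordinate $2$ takes the three values $a,c,b$ and the step from $(a,c)$ to $(b,b)$ flips both coordinates at once, so it is not even a staircase --- nor is it transverse, since its two coordinates are non-constant and the vertices $(a,a)$ and $(a,c)$ share the same first coordinate but not the same second, ruling out containment in any diagonal $z_2=g.z_1$. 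Geometrically, your insertion creates the spurious edge from $(a,c)$ to $(b,b)$, which is not an edge of the final decomposition; iterating the procedure keeps regenerating such bad simplices instead of eliminating them, so the induction over $p=1,\dots,n$ does not close, and the claim that each stage ``doubles the number of staircases'' of the right shape is not substantiated.

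The missing idea is the one the paper uses: cut according to the position of $c$ in the ordered chain $a<t_1<\cdots<t_n<b$, not coordinate by coordinate. This partitions $T_n(a,b)$ into the $n+1$ prisms $T_{n_1}(a,c)\otimes T_{n_2}(c,b)$ with $n_1+n_2=n$, and each prism is then triangulated into staircases by the already-established shuffle identity $\sum_{\sigma\in\S_{n_1,n_2}}T_n^{\sigma}=T_{n_1}\otimes T_{n_2}$. Every staircase so obtained has its first $n_1$ coordinates running over $\{a,c\}$ and the remaining $n_2$ over $\{c,b\}$, and the count is $\sum_{n_1+n_2=n}\binom{n}{n_1}=2^n$. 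If you wish to retain an inductive scheme, the recursion must be on this cut position (equivalently, on the shuffle structure), not on the coordinates.
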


\begin{proof}
Pour obtenir un candidat à la décomposition, on écrit par abus que le simplexe a pour équation:
$$a<t_1<...<t_n<b.$$
On doit alors intercaler la valeur $c$ pour obtenir les ensembles:
$$\{c<t_1<...<t_n<b\},\quad \{a<t_1<c<t_2<...<t_n<b\},...,\{a<t_1<...<t_n<c\}.$$
Chacun de ces ensembles est de la forme $T_{n_1}(a,c)\otimes T_{n_2}(c,b)$ pour toute pair $n_1+n_2=n$.\\
Il suffit alors d'utiliser la formule $\sum_{\sigma\in\S_{n_1,n_2}} T_n^{\sigma}=T_{n_1}\otimes T_{n_2}$ pour obtenir une décomposition adéquate. Le nombre de triangle obtenu est notamment :
$$\sum_{n_1+n_2=n} \sharp\S_{n_1,n_2}=\sum_{n_1+n_2=n}\frac{n!}{n_1!n_2!}=2^n.$$
Ce lemme permet donc de décomposer de manière récursive sur la distance $d(a,b)$ un triangle $T_n(a,b)$ en triangles $\gamma\varphi_{\sigma}T_n(i\infty,0)$. Ceci donne exactement la proposition.
\end{proof}

Comme dans le cas $n=2$, on remarque que cet espace correspond à l'ensemble des sommes de chaînes transverses globalement fermés quotienté par les sommes des chaînes transverses fermées:

\begin{coro}\label{formH}
On a:
\begin{equation}
H_n(P_n)^0=\left[\left(\sum_{j,g,\sigma}\varphi_{\sigma}\varphi_j^g(\Z[P_{n-1}^{n}])\right)\cap\Z[P_n^n]^0\right]/\sum_{j,g,\sigma}\varphi_{\sigma}\varphi_j^g(\Z[P_{n-1}^{n}]^0).
\end{equation} 
\end{coro}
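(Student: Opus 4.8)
The plan is to establish the stated equality of subquotients of $H_n(P_n)$ by two inclusions, the forward one being a direct boundary computation and the reverse one being the combinatorial core.

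For the inclusion of $H_n(P_n)^0$ in the right-hand side, I would start from the preceding proposition, which exhibits $H_n(P_n)^0$ as generated by the classes of the elements $\gamma\,\varphi_\sigma\,\delta_{n+1}[v_0,\dots,v_n]$, where $v_l=(a,\dots,a,b,\dots,b)$ carries $l$ trailing entries equal to $b$. The key is to compute the $n+1$ faces $F_j=[v_0,\dots,\widehat{v_j},\dots,v_n]$ explicitly. Since the $m$-th coordinate of $v_l$ equals $b$ exactly when $l>n-m$, the face $F_0=[v_1,\dots,v_n]$ has its last coordinate constantly equal to $b$, hence lies in $\varphi_n^g(\Z[P_{n-1}^n])$ for the $g$ with $g\cdot 0=b$; symmetrically $F_n=[v_0,\dots,v_{n-1}]$ has its first coordinate constantly $a$ and lies in $\varphi_0^g(\Z[P_{n-1}^n])$; and for $0<j<n$ the coordinates $n-j$ and $n-j+1$ coincide on every surviving vertex, so $F_j$ lies in the diagonal $\varphi_{n-j}^{\mathrm{id}}(\Z[P_{n-1}^n])$. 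Therefore $\delta_{n+1}[v_0,\dots,v_n]$ is a transverse $n$-chain, and being a boundary it belongs to $\Z[P_n^n]^0$; applying $\gamma\,\varphi_\sigma$ preserves both features. This shows every generator of $H_n(P_n)^0$, and hence all of it, is the class of an element of $\bigl(\sum_{j,g,\sigma}\varphi_\sigma\varphi_j^g(\Z[P_{n-1}^n])\bigr)\cap\Z[P_n^n]^0$ modulo $\sum_{j,g,\sigma}\varphi_\sigma\varphi_j^g(\Z[P_{n-1}^n]^0)$.

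For the reverse inclusion I would take a transverse closed chain, writing it as $C=\sum_\alpha\varphi_{\sigma_\alpha}\varphi_{j_\alpha}^{g_\alpha}(d_\alpha)$ with $\delta_n C=0$, and prove that its class lies in the image of $\Theta_n$. The strategy imitates the $n=2$ argument of Chapter 3: first apply the preceding triangulation lemma to each piece $d_\alpha$ to reduce $C$, modulo closed transverse chains, to a combination of staircase simplices $\gamma\,\varphi_\sigma\,T_n(a,b)$; then run a descent on the $\Gamma$-invariant distance $d(a,b)$, using the same height function and Lemma \ref{lemtri} already employed for $\mathcal{I}_n$, to rewrite each staircase in terms of the standard triangle $\gamma\,\varphi_\sigma\,(g,\dots,g)\,T_n(i\infty,0)$, which lies in the image of $\Theta_n$. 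The whole reduction is organized as an induction on $n$, with base case $n=2$ furnished by Chapter 3 and inductive step structured by the recursive decompositions $\mathcal{I}_n=\bigcap_{j}\mathcal{I}_{n-1}[j]$ and $\mathcal{A}_n=\bigcap_{j,\sigma}\mathcal{A}_{n-1}[j,\sigma]$.

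I expect the reverse inclusion to be the main obstacle. The delicate point is that each face produced by inserting an intermediate cusp in the triangulation lemma must be either a standard staircase simplex or a \emph{closed} transverse chain, so that it vanishes in $H_n(P_n)$; this is precisely the local cancellation already checked by hand for $n=2$, where a triangle with two transverse sides forces the third to be transverse, and closedness of $C$ is exactly what triggers it. Promoting this cancellation to arbitrary $n$ requires verifying that the conditions coming from all indices $0\le j\le n$ and all permutations $\sigma\in\S_n$ remain simultaneously compatible throughout the descent, which is the genuinely combinatorial heart of the proof.
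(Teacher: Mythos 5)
Your forward inclusion is correct and coincides with the paper's argument: the face computation you describe is precisely the identity $\delta_{n+1}U_n=\sum_{j=0}^n(-1)^j\varphi_j(U_{n-1})$, which exhibits each generator $\gamma\varphi_{\sigma}T_n(a,b)$ of $H_n(P_n)^0$ as a closed transverse chain.

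The reverse inclusion, however, has a genuine gap. You propose to start from a transverse closed chain $C=\sum_\alpha\varphi_{\sigma_\alpha}\varphi_{j_\alpha}^{g_\alpha}(d_\alpha)$ and to apply the triangulation lemma to each piece $d_\alpha$ so as to reach staircase simplices $\gamma\varphi_{\sigma}T_n(a,b)$. This step does not go through: each $d_\alpha$ is an arbitrary element of $\Z[P_{n-1}^{n}]$, not a staircase boundary, and more fundamentally each transverse piece $\varphi_{\sigma_\alpha}\varphi_{j_\alpha}^{g_\alpha}(d_\alpha)$ is supported in a single subspace $\varphi_{\sigma_\alpha}\varphi_{j_\alpha}^{g_\alpha}(\H^{n-1})$, whereas a staircase $T_n(a,b)$ has its $n+1$ faces distributed over $n+1$ \emph{distinct} such subspaces; no triangulation of the individual wall-supported pieces can assemble them into staircases. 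The ingredient you are missing is the one the paper uses first: closedness gives $C\in\Z[P_n^n]^0=\Im(\delta_{n+1})$, so $C$ is globally the boundary of an $(n+1)$-chain, and the generation lemma for $H_n(P_n)$ (adjoin the vertices $(\alpha_1^i,...,\alpha_{n-1}^i,\alpha_n^j)$ and use $\delta_{n+1}\circ\delta_{n+2}=0$) rewrites this boundary, modulo closed transverse chains, as a combination of \emph{generalized} staircases $T_n^{\sigma}(a_j,b_j)$ with coordinate-dependent endpoints. Only at that point does transversality of $C$ enter, to collapse the endpoints by successive projection onto the coordinates and land on the generators $\varphi_{\gamma}T_n^{\sigma}(a,b)$ of $H_n(P_n)^0$; your plan omits this collapse entirely. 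Finally, your descent on the distance $d(a,b)$ is redundant rather than wrong: the proposition preceding the corollary already shows that $\gamma\varphi_{\sigma}T_n(a,b)$ lies in $\Im(\Theta_n)$ for arbitrary cusps $a,b$, via the continued-fraction chain.
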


\begin{proof}
Il suffit de remarquer que $T_n=\delta_{n+1} U_n=\sum_{j=0}^n (-1)^j\varphi_j(U_{n-1})$ donne bien l'inclusion $\subset$. L'inclusion réciproque provient du résultat de décomposition de $H_n(P_n)$ suivant:
\end{proof}

\begin{lem}
Le groupe $H_n(P_n)$ est engendrés par les éléments de la forme:
\begin{equation}
T^{\sigma}_n(a_j,b_j)_{1\leq j\leq n}=\varphi_{\sigma}\delta_{n+1}\left[(a_1,...,a_n),(a_1,...,a_{n-1},b_n),...,(b_1,...,b_n)\right],
\end{equation}
pour toute famille de $2n$ points $a_j,b_j\in\pte$ et $\sigma\in\S_n$.
\end{lem}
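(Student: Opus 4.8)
The group $\Z[P_n^n]^0$ equals $\Im(\delta_{n+1})$, so it is generated by the boundaries $\delta_{n+1}[q_0,\dots,q_n]$ of arbitrary $(n+1)$-tuples $(q_0,\dots,q_n)\in P_n^{n+1}$; it therefore suffices to express the class of each such boundary in $H_n(P_n)$ as a $\Z$-combination of the $T^{\sigma}_n(a_j,b_j)$. The plan is first to isolate the role played by the \emph{simple} tuples, those for which every consecutive pair $q_i,q_{i+1}$ differs in at most one coordinate. If such a tuple changes some coordinate $\ell$ in none of its $n$ edges, then coordinate $\ell$ is constant across all vertices, so the tuple lies in the image of $\varphi_{\sigma}\varphi_0^g$ for a suitable $\sigma\in\S_n$ and $g\in\Gamma$ with $g.i\infty$ equal to that constant value (transitivity of $\Gamma$ on $\pte$); since $\delta_{n+1}$ commutes with $\varphi_{\sigma}\varphi_0^g$, the boundary then lies in $\Z[P_n^n]^0_{trans}$ and vanishes in $H_n(P_n)$. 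If instead each of the $n$ coordinates changes exactly once, the edges realise a monotone path between the opposite corners of a box $\prod_j\{a_j,b_j\}$, and the boundary is precisely $T^{\sigma}_n(a_j,b_j)$ for the permutation $\sigma$ recording the order of the changes. Thus the lemma reduces to showing that every $\delta_{n+1}[q_0,\dots,q_n]$ is, modulo $\Z[P_n^n]^0_{trans}$, a $\Z$-combination of boundaries of simple tuples.

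To carry out this straightening I would imitate the explicit $n=2$ computation. Along any edge $q_i\to q_{i+1}$ differing in at least two coordinates, insert an intermediate corner vertex $w$ agreeing with $q_i$ in one of the changing coordinates and with $q_{i+1}$ in the rest, and apply the relation $\delta_{n+1}\circ\delta_{n+2}=0$ to the $(n+2)$-tuple $[q_0,\dots,q_i,w,q_{i+1},\dots,q_n]$. This rewrites $\delta_{n+1}[q_0,\dots,q_n]$ as an alternating sum of the remaining $n+1$ faces: the two faces adjacent to $w$ split the offending edge, while the faces deleting a far vertex reuse only coordinate values already present. I would control the descent by the complexity $C[q_0,\dots,q_n]=\sum_{\ell=1}^n m_\ell$, where $m_\ell$ is the number of distinct values taken by coordinate $\ell$ over $q_0,\dots,q_n$. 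Choosing every inserted value among those already occurring, and iterating the insertions in the manner of the five-point reductions of the $n=2$ proof, each resulting face either is a simple tuple, or has a constant coordinate (transverse, hence $0$ in $H_n(P_n)$), or has strictly smaller $C$. Since $C$ is a non-negative integer bounded below by $n$, the process terminates, leaving a $\Z$-combination of simple, hence staircase or transverse, boundaries.

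The main obstacle is the uniform bookkeeping in $n$. For $n=2$ the reduction was a finite hand computation on at most six coordinate values, where one checks directly that each auxiliary face has fewer distinct values or a constant coordinate; for general $n$ one must specify the insertion order precisely so that $C$ provably drops after every round of the relation $\delta_{n+1}\delta_{n+2}=0$, and one must verify that \emph{all} the non-staircase faces produced genuinely lie in $\sum_{j,g,\sigma}\varphi_{\sigma}\varphi_j^g(\Z[P_{n-1}^n]^0)$ rather than merely looking degenerate. Matching each such face to its transverse model $\varphi_{\sigma}\varphi_j^g$ (a constant coordinate for $j\in\{0,n\}$, two $\Gamma$-linked adjacent coordinates for $0<j<n$) and confirming that $\delta_{n+1}$ intertwines correctly with these maps, via the identity $M_n^{pte}(\varphi_j^g(\H^{n-1}),\Z)=\varphi_j^g(M_n^{pte}(\H^{n-1},\Z))$, is the delicate step; everything else is routine simplicial algebra resting on $\delta\circ\delta=0$.
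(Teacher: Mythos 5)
Your skeleton is the paper's: since $\Z[P_n^n]^0=\Im(\delta_{n+1})$, one reduces to boundaries of $(n+1)$-tuples, recognises the two terminal cases (a monotone lattice path in a box $\prod_j\{a_j,b_j\}$ is exactly a $T^{\sigma}_n(a_j,b_j)$; a tuple with a constant coordinate has boundary in $\varphi_{\sigma}\varphi_0^g(\Z[P_{n-1}^n]^0)$, hence vanishes in the quotient), and subdivides everything else by inserting corner vertices and invoking $\delta_{n+1}\circ\delta_{n+2}=0$. The divergence is precisely at the point you flag as your main obstacle: how the subdivision is organised. Your descent on $C=\sum_{\ell}m_{\ell}$ is not obviously monotone --- deleting a far vertex of the augmented tuple need not remove any coordinate value, so some faces can keep the same $C$ --- and you do not specify an insertion order that repairs this, so as written the argument does not close. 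The paper sidesteps value-counting entirely: it argues by induction on $n$, adjoining in one round all the corner points $(\alpha_1^i,\dots,\alpha_{n-1}^i,\alpha_n^j)$ for $0\le i<j\le n$, which normalises a single coordinate at a time; modulo faces on which that coordinate is constant (transverse of type $\varphi_{\sigma}\varphi_j^g$ with $j\in\{0,n\}$), the chain becomes a combination of boundaries of tuples of the shape $[(*,\dots,*),(*,\dots,*,b_n),\dots,(*,\dots,*,b_n)]$, and the remaining $n-1$ coordinates are handled by the induction hypothesis. Termination is then automatic because the recursion is on $n$, and your second worry dissolves as well: every auxiliary face produced this way has a visibly constant coordinate, so only the $j\in\{0,n\}$ transverse models are needed and no delicate matching against the diagonal maps $\varphi_j^g$ with $0<j<n$ arises in this lemma. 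Replacing your complexity $C$ by this coordinate-by-coordinate normalisation turns your plan into the paper's proof.
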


\begin{proof}
On démontre ceci par récurrence sur $n$. Il suffit ainsi d'adjoindre un point supplémentaire de $P_n$ et utiliser le fait que $\delta_{n+1}\circ\delta_{n+2}=0$ pour décomposer tout élément de $H_n(P_n)$ en élément de la forme:
$$\varphi_{\sigma}\delta_{n+1}\left[(*,...,*),(*,...,*,b_n),...,(*,...,*,b_n)\right].$$
Ainsi partant d'un élément $\delta_{n+1}\left[(\alpha_1^0,...,\alpha_n^0),....,(\alpha_1^n,...,\alpha_n^n)\right]$, on adjoint les points $(\alpha_1^i,...,\alpha_{n-1}^i,\alpha_n^j)\in P_n$ pour $0\leq i<j\leq n$. Les éléments obtenues sont soit de la forme désirée soit transverse et donc nul dans le quotient.\par
Ceci permet de compléter la démonstration du corolaire. En effet, si une combinaison linéaire de $T_n^{\sigma}(a_j,b_j)$ est transverse alors il s'écrit comme combinaison linéaire d'éléments transverses: $\varphi_{\gamma}T_n^{\sigma}(a,b)$ pour $\gamma\in\Gamma^n,a,b\in\pte$ par projection successive sur les coordonnées.
\end{proof}

\begin{thm}
L'idéal $\mathcal{A}_n$ est de type fini.
\end{thm}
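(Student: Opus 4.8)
The plan is to reduce the finiteness of $\mathcal{A}_n$ to that of the left ideals $\mathcal{I}_a\subset\Z[\Gamma^a]$ for $1\le a\le n$, and to establish the latter by the descent on heights already performed for $a=1$ and $a=2$. Indeed, the explicit description
$$\mathcal{A}_n=\sum_{a=1}^n[(\widetilde{\mathcal{I}_a},1),\sum_{\sigma\in\S_{a,n-a}}(\sigma)]+\left([(S,\dots,S),id]-[(1,\dots,1),(n,\dots,1)]\right)$$
exhibits $\mathcal{A}_n$ as a finite sum of right ideals of $\Z[G_n]$, each generated by the images of generators of some $\mathcal{I}_a$, together with one extra generator. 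Thus, if each $\mathcal{I}_a$ is a finitely generated left ideal of $\Z[\Gamma^a]$, then applying the antiautomorphism $g\mapsto\widetilde{g}$ and the embedding $\Z[\Gamma^a]\hookrightarrow\Z[\Gamma^n]$ produces a finite generating family of $(\widetilde{\mathcal{I}_a},1)$, whence a finite generating family of $\mathcal{A}_n$.

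First I would prove that for every $a$,
$$\mathcal{I}_a=\Z[\Gamma^a]\,C_a^0(B_0^{\,a}),\qquad B_0=\{i\infty,0,1,-1\},$$
the left $\Z[\Gamma^a]$-module generated by the closed transverse $a$-chains of the graph $\mathcal{G}_a$ supported in $B_0^{\,a}$, where $\mathcal{G}_a$ has vertex set $\pte^a$ and simplices indexed by $\Gamma^a$, and where by Corollary \ref{formH} an element of $\mathcal{I}_a$ is a closed $a$-chain modulo transverse chains. Since $B_0^{\,a}$ is finite and only finitely many simplices of $\mathcal{G}_a$ have all their vertices in $B_0^{\,a}$, the group $C_a^0(B_0^{\,a})$ is a subgroup of a free $\Z$-module of finite rank, hence finitely generated; the displayed equality then yields the finite generation of $\mathcal{I}_a$.

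The equality itself is obtained by a descent on the height $h(p_1,\dots,p_a)=\max_{1\le j\le a}h(p_j)$, where $h:\pte\to\Z_{\ge 0}$ is the height of the one-variable theory. Given $g\in\mathcal{I}_a$, one reduces the support one coordinate at a time, exactly as in the case $a=2$: using the triangulation Lemma \ref{lemtri} in the coordinate achieving the maximal height, each vertex of maximal height is replaced by vertices of strictly smaller height in that coordinate, at the cost of elements of $\Z[\Gamma^a]\,C_a^0(X)$ for translates $X$ of $B_0^{\,a}$. Translating by $(\Gamma,1,\dots,1)$, then by $(1,\Gamma,1,\dots)$, and so on, brings the successive coordinates down to height $\le 1$ while leaving the already-reduced coordinates untouched modulo transverse chains. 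Because heights are non-negative integers and the single vertex introduced at each step satisfies $h(q)<h(p)$, the process terminates with a representative supported in $B_0^{\,a}$.

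The main obstacle is the bookkeeping of the interaction between coordinates throughout the descent: one must verify that lowering the height in one coordinate via the triangulation does not raise it in the others, so that the lexicographic vector of coordinate heights, refined by the cardinality of the maximal stratum, really is a well-founded termination measure. This is precisely where the coordinate-by-coordinate strategy and the disjointness of the transverse subspaces $\varphi_\sigma\varphi_j^g(\H^{a-1})$ are used, together with the compatibility of the projections $\pte^a\to\pte$ with the graph $\mathcal{G}_1$; the delicate point, already visible for $a=2$, is that the quotient by the transverse chains $H_a(P_a)$ must be threaded through each reduction, so that the new vertices contribute only transverse, hence vanishing, chains.
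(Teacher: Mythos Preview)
Your approach is correct and rests on the same idea as the paper's proof: a coordinate-by-coordinate descent on the height $h$ reduces the support to $B_0^{\,a}=\{i\infty,0,1,-1\}^a$, where only finitely many simplices live. The organization differs: the paper performs the descent directly for $\mathcal{A}_n$ as the kernel of $\Theta_n:\Z[G_n]\to H_n(P_n)^0$, obtaining en route an explicit bound $3^{2n!3^n}$ on the number of generators; you instead invoke the defining formula for $\mathcal{A}_n$ to reduce to the finite generation of each $\mathcal{I}_a\subset\Z[\Gamma^a]$ separately, and only then run the descent. Your route is a bit more modular and arguably cleaner; the paper's is more direct and quantitative.

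One small correction: Corollaire~\ref{formH} describes $H_n(P_n)^0$ with transverse chains indexed by $(j,g,\sigma)$, and its associated kernel is $\mathcal{A}_n$, not $\mathcal{I}_a$. For your reduction you need the analogue \emph{without} the $\S_a$-action, namely an exact sequence $0\to\mathcal{I}_a\to\Z[\Gamma^a]\to H_a'(P_a)^0\to 0$ where the quotient is by the transverse chains $\varphi_j^g(\Z[P_{a-1}^a]^0)$ only (no $\varphi_\sigma$). This is exactly what Chapter~3 establishes for $a=2$, and its extension to general $a$ goes through by the same arguments, but it is not literally Corollaire~\ref{formH}.
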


\begin{proof}
L'idée est à nouveau de réduire $\mathcal{A}_n$ à l'annulateur d'une famille finie de simplexe proche du simplexe fondamental $T_n=T_n(i\infty,0)$. On a vu que $\mathcal{A}_n$ est le noyau de l'application $\Z$-linéaire surjective:
$$\Theta_n:\Z[G_n]\to H_n(P_n)^0, [\gamma,\rho] \mapsto \gamma^{-1}T_n^{\rho}.$$
Or étant donnée que nous savons décrire l'image $H_n(P_n)^0$. Il suffit de remarquer que les annulations dans ce groupe proviennent de celle des simplexes à coordonnées des sommets dans $B=\{i\infty,0,1,-1\}$. Pour cela, nous faisons une descente selon chacune des coordonnées sur la hauteur $h$ défini sur $\pte$. Ces simplexes sont alors simplement les images:
$$\Theta_n([(U^{a_1}S^{\delta_1},...,U^{a_n}S^{\delta_n}),\sigma])\text{ pour }(a_j)\in(\Z/3)^n,(\delta_j)\in (\Z/2)^n\text{ et }\sigma\in\S_n.$$
Les combinaisons linéaire de ces chaînes sont pris avec des coefficients parmi $\{-1,0,1\}$ traduisant la présence et l'orientation dans le recouvrement. Ceci démontre bien que $\mathcal{A}_n$ est de type fini et on peut majorer le nombre de générateurs par $3^{n!6^n}$. On peut améliorer cette majoration grâce au travail sur les conjugués, on obtient que $\delta_1=...=\delta_n$ et ainsi le nombre de générateur est au plus $3^{2n!3^n}$.
\end{proof}

\begin{thm}
Pour toute famille de poids $k=(k_1,...,k_n)$, on a l'égalité des $\Q$-espaces vectoriels:
\begin{equation}
\left(V_k^{\Q}\right)^{\S_n}[\mathcal{A}_n]=\left(V_k^{\Q}\right)^{\S_n}[\mathcal{A}(k)]+\sum_{j,\sigma} \left(V_k^{\Q}\right)^{\S_n}[\mathcal{A}_{n-1}[j,\sigma]].
\end{equation}
\end{thm}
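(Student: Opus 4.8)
The plan is to prove the two inclusions separately, establishing equality of $\Q$-vector spaces by a dimension count that mirrors the case $n=2$ treated in Chapter $3$. First I would establish the containment
$$\left(V_k^{\Q}\right)^{\S_n}[\mathcal{A}(k)]+\sum_{j,\sigma} \left(V_k^{\Q}\right)^{\S_n}[\mathcal{A}_{n-1}[j,\sigma]]\subset\left(V_k^{\Q}\right)^{\S_n}[\mathcal{A}_n].$$
This is the easy direction and follows from the ideal inclusions already in hand. Indeed, by (\ref{incl:1}) we have $\mathcal{A}_n\subset\mathcal{A}(k)$, so annihilation by $\mathcal{A}(k)$ forces annihilation by $\mathcal{A}_n$, giving the first summand. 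For the bordism terms, the recursive identity (\ref{incl:2}) states $\mathcal{A}_n=\bigcap_{j,\sigma}\mathcal{A}_{n-1}[j,\sigma]$, whence $\mathcal{A}_n\subset\mathcal{A}_{n-1}[j,\sigma]$ for each pair $(j,\sigma)$, and again annihilation by the smaller ideal implies annihilation by $\mathcal{A}_n$. Summing over $(j,\sigma)$ yields the desired inclusion.

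For the reverse inclusion, the strategy is to transport the problem to homology via the exact sequence attached to $\Theta_n$. Since $\mathcal{A}_n=\Ker(\Theta_n)$ and $\Theta_n$ surjects onto $H_n(P_n)^0$, I would tensor the short exact sequence
$$0\to\mathcal{A}_n\to\Z[G_n]\stackrel{\Theta_n}{\to}H_n(P_n)^0\to 0$$
with $\left(V_k^{\Q}\right)^{\S_n}$ over $\Q[G_n]$. The crucial input is that each representation $\Gamma\to GL(V_{k_j}^{\Q})$ is irreducible (as recalled in the $n=1$ case), so $\Q[\Gamma]\to \mathrm{End}_{\Q}(V_{k_j}^{\Q})$ is surjective; taking tensor products makes $\left(V_k^{\Q}\right)^{\S_n}$ a simple $\Q[G_n]$-module in the appropriate sense, and the sequence stays exact after tensoring. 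This gives, via the nondegenerate $\Gamma^n$-invariant pairing built from the form of Remark \ref{prodvk}, a duality between $V_k^{\Q}|_{\widetilde{\mathcal{A}_n}}$ and $\left(V_k^{\Q}\right)^{\S_n}[\mathcal{A}_n]$, hence the dimension relation
$$\dim_{\Q}\left(\left(V_k^{\Q}\right)^{\S_n}[\mathcal{A}_n]\right)=\dim_{\Q}\left(\left(V_k^{\Q}\right)^{\S_n}\right)-\dim_{\Q}\left(H_n(P_n)^0\otimes_{\Q[G_n]}V_k^{\Q}\right).$$

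The heart of the argument, and the main obstacle, is to identify $H_n(P_n)^0\otimes_{\Q[G_n]}\left(V_k^{\Q}\right)^{\S_n}$ with the right-hand space. Using the explicit description of $H_n(P_n)^0$ from Corollary \ref{formH} as transverse closed chains modulo transverse boundaries, together with Theorem \ref{decompomegathm} characterizing $\Omega_k$ by the vanishing conditions $d\omega=\varphi_j^*\varphi_\sigma^*\omega=0$, I would match the quotient defining $H_n(P_n)^0$ term-by-term with the duals of $\mathcal{A}(k)$ and of the ideals $\mathcal{A}_{n-1}[j,\sigma]$. Concretely, the accouplement $\Omega_{par}^n\times M_n^{pte}\to\C$ identifies $\{d\omega=\varphi_j^*\varphi_\sigma^*\omega=0\}\otimes_{\Q[G_n]}V_k^{\Q}$ with $\Omega_k$ (giving the $\mathcal{A}(k)$-annihilator), while each single condition $\varphi_j^*\varphi_\sigma^*\omega=0$ relaxed out contributes exactly $\left(V_k^{\Q}\right)^{\S_n}[\mathcal{A}_{n-1}[j,\sigma]]$. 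This step is delicate because one must keep careful track of the $\S_n$-equivariance throughout the transverse decomposition, and verify that the sum over $(j,\sigma)$ of the relaxed spaces realizes precisely the image under the tensor product; the $n=2$ calculation of Chapter $3$ provides the template, but the combinatorics of the $\varphi_\sigma$-permutations is genuinely new for $n\geq 3$. Once this identification is in place, the exactness of the tensored sequence forces
$$\dim_{\Q}\left(\left(V_k^{\Q}\right)^{\S_n}[\mathcal{A}(k)]+\sum_{j,\sigma}\left(V_k^{\Q}\right)^{\S_n}[\mathcal{A}_{n-1}[j,\sigma]]\right)+\dim_{\Q}\left(V_k^{\Q}|_{\widetilde{\mathcal{A}_n}}\right)=\dim_{\Q}\left(\left(V_k^{\Q}\right)^{\S_n}\right),$$
so the two $\Q$-spaces of the first inclusion have equal dimension, and equality follows.
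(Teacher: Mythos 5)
Votre démonstration suit essentiellement la même démarche que celle du texte : l'inclusion facile par les contenances d'idéaux $\mathcal{A}_n\subset\mathcal{A}(k)$ et $\mathcal{A}_n\subset\mathcal{A}_{n-1}[j,\sigma]$, puis l'égalité des dimensions via la suite exacte issue de $\Theta_n$ tensorisée par le $\Q[G_n]$-module simple $\left(V_k^{\Q}\right)^{\S_n}$, la dualité fournie par l'accouplement invariant, et l'identification de $H_n(P_n)^0\otimes_{\Q[G_n]}\left(V_k^{\Q}\right)^{\S_n}$ au membre de droite au moyen du Corollaire \ref{formH} et du Théorème \ref{decompomegathm}. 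L'argument est correct et coïncide avec la preuve du texte.
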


\begin{proof}
On a déjà vu que $\mathcal{A}_n\subset\mathcal{A}(k)$ en (\ref{incl:1}) puis que $\mathcal{A}_n\subset\mathcal{A}_{n-1}[j,\sigma]$ pour tout $j,\sigma$ en (\ref{incl:2}). On en déduit l'inclusion :
$$\left(V_k^{\Z}\right)^{\S_n}[\mathcal{A}_n]\supset\left(V_k^{\Z}\right)^{\S_n}[\mathcal{A}(k)]+\sum_{j,\sigma} \left(V_k^{\Z}\right)^{\S_n}[\mathcal{A}_{n-1}[j,\sigma]].$$
Cherchons à démontrer l'égalité des dimensions des $\Q$-espaces vectoriels engendrés par les deux termes de l'inclusion. On commence par utiliser la dualité donnée par le produit bilinéaire symétrique et $G_n$-invariant:
$$\left(V_k^{\Q}\right)^{\S_n}\times\left(V_k^{\Q}\right)^{\S_n}\to \Q, \left((\otimes_{j=1}^n P_j^{\sigma})_{\sigma\in\S_n},(\otimes_{j=1}^n Q_j^{\sigma})_{\sigma\in\S_n}\right)\mapsto \sum_{\sigma\in\S_n}\big|\prod_{j=1}^n [P_j^{\sigma},Q_j^{\sigma}]\big|,$$
induit par celui de la Remarque \ref{prodvk}. Il met en dualité les espaces $\left(V_k^{\Q}\right)^{\S_n}/\left(V_k^{\Q}\right)^{\S_n}|_{\mathcal{A}_n}$ et $\left(V_k^{\Q}\right)^{\S_n}[{\mathcal{A}_n}]$ et  permet de déduire:
$$\dim_{\Q}\left(\left(V_k^{\Q}\right)^{\S_n}|_{\mathcal{A}_n}\right)+\dim_{\Q}\left(\left(V_k^{\Q}\right)^{\S_n}[{\mathcal{A}_n}]\right)=\dim_{\Q}\left(\left(V_k^{\Q}\right)^{\S_n}\right).$$\par
D'autre part, on utilise la suite exacte des $\Q$-espaces vectoriels:
$$0\to \left(V_k^{\Q}\right)^{\S_n}|_{\mathcal{A}_n}\to \left(V_k^{\Q}\right)^{\S_n} \stackrel{P\mapsto T_n\otimes P}{\to} H_n(P_n)^0 \otimes_{\Q[G_n]} \left(V_k^{\Q}\right)^{\S_n}\to 0.$$
Elle est obtenue par tensorisation par le $\Q[G_n]$-module simple $\left(V_k^{\Q}\right)^{\S_n}$ de la suite exacte associée à l'application $\Theta_n$. La simplicité du $\Q[G_n]$-module $\left(V_k^{\Q}\right)^{\S_n}$ est due à l'irréductibilité des représentations $\Gamma\to GL(V_{k_j})$ pour tout $j$. On utilise alors le résultat du Corollaire \ref{formH}, pour obtenir:
\begin{align*}
H_n(P_n)^0&=\left[\left(\sum_{j,g,\sigma}\varphi_{\sigma}\varphi_j^g(\Z[P_{n-1}^{n}])\right)\cap\Z[P_n^n]^0\right]/\sum_{j,g,\sigma}\varphi_{\sigma}\varphi_j^g(\Z[P_{n-1}^{n}]^0)\\
&=\left\{d\omega=(\varphi_{\sigma}\varphi_j)^*\omega=0\text{ pour tout }j,\sigma\right\}^{\bot}/\sum_{j,\sigma}\left\{d\omega=(\varphi_{\sigma}\varphi_j)^*\omega=0\right\}^{\bot}.
\end{align*}
Son produit tensoriel par $\left(V_k^{\Q}\right)^{\S_n}$ est ainsi:
$$H_n(P_n)^0 \otimes_{\Q[G_n]} \left(V_k^{\Q}\right)^{\S_n}=\left(V_k^{\Q}\right)^{\S_n}[{\mathcal{A}(k)}]+\sum_{j,\sigma} \left(V_k^{\Q}\right)^{\S_n}[\mathcal{A}_{n-1}[j,\sigma]].$$
En effet, on a les résultat de dualité donnant d'une part, d'après le Théorème \ref{decompomegathm} :
\begin{align*}
\mathcal{A}(k)&=\{a\in G_n\text{ tel que }\mathcal{T}_n|_a\in\Omega_k^{\bot}\},\\
\text{et }\Omega_k&=\left\{d\omega=(\varphi_{\sigma}\varphi_j)^*\omega=0\text{ pour tout }j,\sigma\right\}\otimes_{\Q[G_n]}\left(V_k^{\Q}\right)^{\S_n}.
\end{align*}
Et d'autre part, pour tout $0\leq j\leq n$ et $\sigma\in\S_n$, on a:
\begin{align*}
\mathcal{A}_{n-1}[j,\sigma]&=\{a\in G_n\text{ tel que }\mathcal{T}_n|_a\in\Omega_k(j,\sigma)^{\bot}\},\\
\text{où }\Omega_k(j,\sigma)&=\left\{d\omega=(\varphi_{\sigma}\varphi_j)^*\omega=0\right\}\otimes_{\Q[G_n]}\left(V_k^{\Q}\right)^{\S_n}.
\end{align*}
Enfin, cette suite exacte donne l'égalité des dimensions:
$$\dim_{\Q}\left(\left(V_k^{\Q}\right)^{\S_n}|_{\mathcal{A}_n}\right)+\dim_{\Q}\left(H_n(P_n)^0 \otimes_{\Q[G_n]} \left(V_k^{\Q}\right)^{\S_n}\right)=\dim_{\Q}\left(\left(V_k^{\Q}\right)^{\S_n}\right).$$
Ceci démontre l'égalité des dimensions recherchée.
\end{proof}

\begin{coro}\label{corofin}
Le $\Q$-espace vectoriel $(V_{k}^{\Q})^{\S_n}[\mathcal{A}_n]$ est le plus petit sous-espace de $(V_{k}^{\Q})^{\S_n}$ contenant $ME_k^{\Q}$ et dont l'extension au corps des complexes contient $MP(k)$.
\end{coro}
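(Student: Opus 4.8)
Le corollaire affirme deux choses au sujet du $\Q$-espace vectoriel $(V_k^{\Q})^{\S_n}[\mathcal{A}_n]$ :
1. Il contient $ME_k^{\Q}$ et son extension à $\C$ contient $MP(k)$.
2. C'est le **plus petit** sous-espace ayant ces deux propriétés.

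La partie (1) est essentiellement acquise. La partie (2), la minimalité, est le cœur du corollaire. Le théorème qui précède donne l'égalité clé :
$$(V_k^{\Q})^{\S_n}[\mathcal{A}_n] = (V_k^{\Q})^{\S_n}[\mathcal{A}(k)] + ME_k^{\Q}.$$

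Voyons comment j'enchaînerais les arguments.

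**Plan de démonstration.**

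D'abord j'établirais la partie (1). L'inclusion $ME_k^{\Q} \subset (V_k^{\Q})^{\S_n}[\mathcal{A}_n]$ résulte directement de $\mathcal{A}_n \subset \mathcal{A}_{n-1}[j,\sigma]$ pour tout $j,\sigma$, relation notée (\ref{incl:2}) dans le texte, puisque $ME_k^{\Q} = \sum_{j,\sigma}(V_k^{\Q})^{\S_n}[\mathcal{A}_{n-1}[j,\sigma]]$ et que les annulateurs renversent les inclusions. De même, $MP(k) \subset (V_k^{\C})^{\S_n}[\mathcal{A}_n]$ vient de $\mathcal{A}_n \subset \mathcal{A}(k)$, relation (\ref{incl:1}), et du fait que $MP(k)$ est par définition annulé par $\mathcal{A}(k)$ : en effet $R_k(f)|_a = 0$ pour tout $a \in \mathcal{A}(k)$ par la définition même de $\mathcal{A}(k)$ donnée en (\ref{Akholo}).

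Ensuite viendrait la minimalité, qui est le point délicat. Soit $W$ un sous-$\Q$-espace de $(V_k^{\Q})^{\S_n}$ tel que $ME_k^{\Q} \subset W$ et $MP(k) \subset W \otimes_{\Q} \C$. Je voudrais montrer $(V_k^{\Q})^{\S_n}[\mathcal{A}_n] \subset W$. L'argument s'appuie sur la simplicité du $\Q[G_n]$-module $(V_k^{\Q})^{\S_n}$, établie dans la preuve du théorème précédent à partir de l'irréductibilité des représentations $\Gamma \to GL(V_{k_j})$. Grâce à cette simplicité, tout sous-espace rationnel de $(V_k^{\Q})^{\S_n}$ s'écrit comme annulateur $(V_k^{\Q})^{\S_n}[A]$ d'un idéal à droite $A$ de $\Z[G_n]$ (c'est l'analogue pour $n\geq 1$ de la remarque a) du chapitre 3, où la surjectivité de $\Q[G_n] \to End_{\Q}((V_k^{\Q})^{\S_n})$ permet de réaliser tout sous-espace comme un tel noyau). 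Je l'applique à $W = (V_k^{\Q})^{\S_n}[A]$. L'hypothèse $MP(k) \subset W\otimes\C$ impose alors $A \subset \mathcal{A}(k)$ par dualité des actions, et l'hypothèse $ME_k^{\Q} \subset W$ impose $A \subset \mathcal{A}_{n-1}[j,\sigma]$ pour tout couple $(j,\sigma)$, donc $A \subset \bigcap_{j,\sigma}\mathcal{A}_{n-1}[j,\sigma]$. En combinant, $A \subset \mathcal{A}(k) \cap \bigcap_{j,\sigma}\mathcal{A}_{n-1}[j,\sigma]$, d'où $(V_k^{\Q})^{\S_n}[A] \supset (V_k^{\Q})^{\S_n}[\mathcal{A}(k)] + ME_k^{\Q}$, c'est-à-dire $W \supset (V_k^{\Q})^{\S_n}[\mathcal{A}_n]$ par le théorème précédent. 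Cela donne exactement la minimalité.

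**Principal obstacle.**

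La difficulté concentrée est la justification propre que tout sous-espace rationnel $W$ provient d'un idéal annulateur, puis le passage des inclusions d'espaces $MP(k)\subset W\otimes\C$ et $ME_k^{\Q}\subset W$ aux inclusions d'idéaux $A \subset \mathcal{A}(k)$ et $A \subset \mathcal{A}_{n-1}[j,\sigma]$. Le premier point exige de vérifier soigneusement que la simplicité du $\Q[G_n]$-module autorise cette représentation par noyau — l'argument est standard via la surjectivité $\Q[G_n] \to End_{\Q}((V_k^{\Q})^{\S_n})$, mais il faut contrôler que le produit bilinéaire symétrique $G_n$-invariant de la Remarque \ref{prodvk} reste non dégénéré sur $(V_k^{\Q})^{\S_n}$, ce qui fournit la dualité annulateur/image indispensable. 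Le second point repose sur cette même dualité : l'inclusion $MP(k)\subset W\otimes\C$ se lit, via $R_k(f)|_a = \langle \omega_f, \mathcal{T}_n|_a\rangle$, comme une condition d'annulation de crochets qui force précisément $A\subset\mathcal{A}(k)$. Une fois ces deux traductions établies, le reste n'est qu'un jeu formel d'intersections d'idéaux et l'invocation du théorème précédent.
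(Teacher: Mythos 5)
Your proof is correct and follows the route the paper intends: the corollary is an immediate consequence of the preceding theorem, via exactly the simplicity/annihilator argument the paper spells out for $n=1$ (write $W=(V_k^{\Q})^{\S_n}[A]$ using the surjectivity of $\Q[G_n]\to End_{\Q}((V_k^{\Q})^{\S_n})$, deduce $A\subset\mathcal{A}(k)$ from $MP(k)\subset W\otimes\C$, and conclude with the identity $(V_k^{\Q})^{\S_n}[\mathcal{A}_n]=(V_k^{\Q})^{\S_n}[\mathcal{A}(k)]+ME_k^{\Q}$). One small remark: your intermediate claim that $ME_k^{\Q}\subset W$ forces $A\subset\mathcal{A}_{n-1}[j,\sigma]$ is a double-annihilator assertion that is not justified as stated (those ideals need not coincide with the full annihilators of their fixed spaces), but it is also unnecessary, since $W\supset ME_k^{\Q}$ is a hypothesis and, combined with $W\supset (V_k^{\Q})^{\S_n}[\mathcal{A}(k)]$, already yields $W\supset (V_k^{\Q})^{\S_n}[\mathcal{A}_n]$.
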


\chapter{Généralisations diverses}
\minitoc
\section{Généralisation aux formes de poids $2$}

\subsection{Série génératrice associée aux formes modulaires de poids $2$}

On précise ici l'exemple (\ref{defw2}) pour une forme modulaire parabolique de poids $2$
associée à un groupe de congruence $\Gamma$.  Soit $f\in S_2(\Gamma)$. Posons:
$$V_0=\Gamma\backslash SL_2(\Z).$$
Et on rappelle la définition de la famille de $1$-formes holomorphes de $\H$ indexée par $V_0$:
$$\Omega_f(z)=\sum_{g\in V_0}f|_g(z)dz [g]\in\Omega_{par}^1(\H,\C)^{V_0}.$$

Soit $\gamma\in PSL_2(\Z)$. On définit une action à droite sur $\Ser(V_0)$ par son action sur les indéterminées:
$$[g]\mapsto [g\gamma] \text{ pour toute classe }g\in V_0.$$

\begin{prop}
L'action construite sur $\Ser(V_0)$ est duale à celle construite sur les $J_a^b(\Omega)$, c'est à dire:
\begin{equation}
J_{\gamma}(\Omega_f)=J(\Omega_f|_{\gamma^{-1}}),\text{ pour tout }\gamma\in PSL_2(\Z).
\end{equation}
\end{prop}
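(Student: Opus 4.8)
La démonstration repose sur une identité de dualité entre l'action par tiré en arrière des formes différentielles et l'action sur les indéterminées de $\Ser(V_0)$. Le point de départ sera d'établir que, pour tout $\gamma\in PSL_2(\Z)$, on a
$$\gamma^*\Omega_f=\Omega_f|_{\gamma^{-1}}.$$
Pour cela, je partirai de la propriété de cocycle de l'action $|_2$ en poids $2$, à savoir $(f|_g)|_\gamma=f|_{g\gamma}$, combinée au fait que le tiré en arrière d'une $1$-forme de poids $2$ vérifie $\gamma^*\bigl(f|_g(z)\,dz\bigr)=f|_{g\gamma}(z)\,dz$ (c'est le cas de poids $2$ de la relation (\ref{modl1})). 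En effet, $d(\gamma z)=(cz+d)^{-2}dz$ pour $\gamma=\mat{a}{b}{c}{d}$, de sorte que $f|_g(\gamma z)\,d(\gamma z)=(cz+d)^{-2}f|_g(\gamma z)\,dz=(f|_g)|_\gamma(z)\,dz$. En sommant sur les classes $g\in V_0$ et en laissant fixes les indéterminées, j'obtiendrai $\gamma^*\Omega_f=\sum_{g\in V_0}f|_{g\gamma}(z)\,dz\,[g]$.

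Ensuite, je procéderai au réindexage qui fait apparaître la dualité. En posant $h=g\gamma^{-1}$ dans la somme ci-dessus, il vient $\gamma^*\Omega_f=\sum_{h\in V_0}f|_{h\gamma}(z)\,dz\,[h]$. Or, par définition de l'action à droite sur $\Ser(V_0)$ qui envoie $[g]$ sur $[g\gamma^{-1}]$ sans toucher aux formes, on a exactement $\Omega_f|_{\gamma^{-1}}=\sum_{g\in V_0}f|_g(z)\,dz\,[g\gamma^{-1}]=\sum_{h\in V_0}f|_{h\gamma}(z)\,dz\,[h]$. Les deux expressions coïncident terme à terme (le coefficient de $[h]$ valant $f|_{h\gamma}(z)\,dz$ de part et d'autre), ce qui établit l'identité voulue.

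Il restera à conjuguer cette identité avec l'invariance des intégrales itérées par changement de variables. En reprenant la définition (\ref{defact}) de $J_\gamma$, on a $J_\gamma(\Omega_f)=J_{\gamma.i\infty}^{\gamma.0}(\Omega_f)$. La substitution $z_i=\gamma w_i$, biholomorphisme de $\mathfrak{H}$ préservant l'orientation donc l'ordre des bornes le long de chaque chemin, donne terme à terme dans le développement de l'intégrale itérée la relation $J_{\gamma.a}^{\gamma.b}(\Omega)=J_a^b(\gamma^*\Omega)$; appliquée à $a=i\infty$ et $b=0$, elle fournit $J_\gamma(\Omega_f)=J_{i\infty}^0(\gamma^*\Omega_f)$. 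En remplaçant $\gamma^*\Omega_f$ par $\Omega_f|_{\gamma^{-1}}$ grâce à l'étape précédente, je conclurai $J_\gamma(\Omega_f)=J_{i\infty}^0(\Omega_f|_{\gamma^{-1}})=J(\Omega_f|_{\gamma^{-1}})$.

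Le principal obstacle technique sera la justification soigneuse de l'invariance des intégrales itérées par substitution : il faudra vérifier que le changement de variables préserve la structure simpliciale $a<t_1<\dots<t_n<b$ des domaines imbriqués (ce qui repose sur la monotonie de $\gamma$ le long du chemin) et que chaque facteur se transforme bien selon $\gamma^*$. Le cœur conceptuel, lui, est le réindexage $g\mapsto g\gamma^{-1}$ qui transforme une action géométrique, le tiré en arrière agissant sur les formes, en l'action combinatoire sur les indéterminées ; c'est précisément cette bascule qui exprime la dualité annoncée entre les deux actions.
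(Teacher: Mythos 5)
Votre démonstration est correcte et suit essentiellement la même démarche que celle du texte : tirer en arrière la forme par le changement de variables $z\mapsto\gamma z$ pour obtenir $\sum_g f|_{g\gamma}(z)\,dz\,[g]$, puis réindexer par $g\mapsto g\gamma^{-1}$ pour reconnaître $\Omega_f|_{\gamma^{-1}}$. Seule remarque mineure : la substitution annoncée « en posant $h=g\gamma^{-1}$ » dans la première somme est en réalité un simple renommage de l'indice (la vraie réindexation n'intervient que dans la comparaison avec $\Omega_f|_{\gamma^{-1}}$), mais cela n'affecte pas la validité de l'argument.
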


\begin{proof}
On effectue le calcul, pour $\gamma\in PSL_2(\Z)$:
\begin{align*}
J_{\gamma}(\Omega_f)&=J_{\gamma.i\infty}^{\gamma.0}\left(\sum_{g\in\Gamma\backslash SL_2(\Z)} f|_g(z)dz[g]\right)\\
&=J_{i\infty}^{0}\left(\sum_{g\in\Gamma\backslash SL_2(\Z)}f|_{g\gamma}[g]\right)\\
&=J_{i\infty}^{0}\left(\sum_{g\in\Gamma\backslash SL_2(\Z)}f|_{g}[{g\gamma^{-1}}]\right)=J(\Omega_f|_{\gamma^{-1}}).
\end{align*}
\end{proof}

Ceci est généralisable pour une famille quelconque de formes de poids $2$.\\
Soient $\Gamma_1,...,\Gamma_n$ une famille de groupes de congruences de $SL_2(\Z)$.
Soient $f_1,...,f_n$ des formes modulaires paraboliques de poids $2$ associées respectivement aux groupes de congruences $\Gamma_1,...,\Gamma_n$. Posons:
$$V_n=\coprod_{i=1}^n \Gamma_i\backslash SL_2(\Z)=\{(g_i,i)\text{ tel que }1\leq i\leq n\text{ et }g_i\in\Gamma_i\backslash\G\}.$$
Définissons une famille de $1$-formes holomorphes indexée par $V_n$:
\begin{align*}
\Omega_{f_1,...,f_n}(z)&=\sum_{i=1}^n \Omega_{f_i}(z) Y_i\\
&=\sum_{i=1}^n \sum_{g_i\in \Gamma_i\backslash SL_2(\Z)} f_i|_{g_i}(z)dz X_{g_i}Y_i.
\end{align*}

On définit ainsi la série en les indéterminés $A_{(g_i,i)}=X_{g_i}Y_i$ pour $(g_i,i)\in V_n$:
\begin{equation}\label{serformp2}
J(f_1,...,f_n)=J(\Omega_{f_1,...,f_n}).
\end{equation}
Les actions sur $Ser\left(\Gamma_i\backslash SL_2(\Z)\right)$ donne une action diagonale sur $Ser\left(\coprod_{i=1}^n\Gamma_i\backslash SL_2(\Z)\right)$:
$$\left(X_{g_i}Y_i\right)|_{\gamma}=X_{g_i\gamma}Y_i,\text{ pour tout }\gamma\in PSL_2(\Z)\text{ et }(g_i,i)\in V.$$

Elle est alors à nouveau compatible à l'action de $PSL_2(\Z)$ sur $\H$:
\begin{equation}
J_{\gamma}(f_1,...,f_n)=J(f_1,...,f_n)|_{\gamma},\text{ pour tout }\gamma\in PSL_2(\Z).
\end{equation}

Et les relations données par la structure se transpose donnant notamment:

\begin{thm}[Relations de Manin non commutatives]
Pour toute famille de formes modulaires paraboliques $f=(f_1,...,f_n)$ de poids $2$, on dispose des relations:
$$J(f)J_S(f)=J_S(f)J(f)=1,$$
$$J_{U^2}(f)J_U(f)J(f)=J_U(f)J(f)J_{U^2}(f)=J(f)J_{U^2}(f)J_U(f)=1.$$
\end{thm}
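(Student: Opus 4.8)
The statement to prove asserts the non-commutative Manin relations for a family $f=(f_1,\dots,f_n)$ of weight-$2$ cusp forms attached to congruence groups $\Gamma_1,\dots,\Gamma_n$, namely the two identities $J(f)J_S(f)=J_S(f)J(f)=1$ and the three-term relation $J_{U^2}(f)J_U(f)J(f)=J_U(f)J(f)J_{U^2}(f)=J(f)J_{U^2}(f)J_U(f)=1$. The key observation is that these are formal consequences of the Chasles-type cocycle relation (\ref{chasles}) for iterated integrals, exactly as in the level-$1$ case treated earlier in the excerpt. Let me trace how I would assemble the argument.

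The plan is to recognize that this theorem is the exact weight-$2$ analogue of the non-commutative Manin relations already established for an arbitrary family of holomorphic $1$-forms, and to derive it as a direct specialization. Concretely, the series $J(f_1,\dots,f_n)=J(\Omega_{f_1,\dots,f_n})$ of (\ref{serformp2}) is built from the family $\Omega_{f_1,\dots,f_n}\in\Omega_{par}^1(\H,\C)^{V_n}$ of holomorphic $1$-forms on $\H$, and the operators $J_S(f)$, $J_U(f)$, $J_{U^2}(f)$ are the values $J_\gamma(\Omega_{f_1,\dots,f_n})=J_{\gamma.i\infty}^{\gamma.0}(\Omega_{f_1,\dots,f_n})$ of the iterated-integral cocycle. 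Hence the entire statement is a formal consequence of the Chasles relation (\ref{chasles}), namely $J_a^c(\Omega)=J_b^c(\Omega)J_a^b(\Omega)$, together with the normalization $J_a^a(\Omega)=1$.

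First I would record how $S$ and $U$ permute the two base cusps $i\infty$ and $0$, using the cycles
$$i\infty\stackrel{S}{\longrightarrow}0\stackrel{S}{\longrightarrow}i\infty\quad\text{and}\quad i\infty\stackrel{U}{\longrightarrow}0\stackrel{U}{\longrightarrow}1\stackrel{U}{\longrightarrow}i\infty.$$
Combined with the action $\gamma.J_a^b(\Omega)=J_{\gamma.a}^{\gamma.b}(\Omega)$, this gives $J_S(\Omega)=J_0^{i\infty}(\Omega)$ and exhibits $J(\Omega),J_U(\Omega),J_{U^2}(\Omega)$ as the three iterated integrals running along the consecutive edges of the triangle $\{i\infty,0,1\}\subset\pte$. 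The first relation then telescopes:
$$J_S(\Omega)J(\Omega)=J_0^{i\infty}(\Omega)\,J_{i\infty}^0(\Omega)=J_{i\infty}^{i\infty}(\Omega)=1,$$
and symmetrically $J(\Omega)J_S(\Omega)=1$. The three-term identity follows in the same way by composing the three edges of the $U$-triangle in each cyclic order, each product collapsing to $J_a^a(\Omega)=1$ by (\ref{chasles}).

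The only points genuinely requiring care — and the part I would check rather than the formal telescoping — are the analytic hypotheses that make every $J_a^b(\Omega_{f_1,\dots,f_n})$ well defined and independent of the path. This rests on the fact that each $f_i\in S_2(\Gamma_i)$ is parabolic, so that the forms $f_i|_{g_i}(z)\,dz$ are integrable at every cusp $g_i.a\in\pte$; homotopy invariance of iterated integrals on the simply connected $\H$ then secures both the Chasles relation and the identity $J_{i\infty}^{i\infty}(\Omega)=1$. I would also invoke the compatibility $J_\gamma(f_1,\dots,f_n)=J(f_1,\dots,f_n)|_\gamma$ of the two actions, already established for this generating series, which is exactly what lets one pass between the geometric operator $J_\gamma$ and the algebraic action on $\Ser(V_n)$. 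No new obstacle arises beyond verifying these integrability facts, which are the weight-$2$ counterpart of the Hecke-type estimates used in the level-$1$ construction.
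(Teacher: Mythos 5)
Your proposal is correct and follows essentially the same route as the paper: the paper proves the general version (for an arbitrary family of holomorphic $1$-forms) in Chapter 2 precisely by tracking the action of $S$ and $U$ on the cusps $i\infty$ and $0$ and telescoping via the Chasles relation (\ref{chasles}) with $J_{i\infty}^{i\infty}(\Omega)=1$, and the weight-$2$ theorem is then a direct specialization. Your additional remarks on integrability at the cusps and on the compatibility $J_\gamma(f)=J(f)|_\gamma$ are consistent with what the paper establishes elsewhere and do not change the argument.
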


\subsection{Vecteurs des multipériodes des formes de poids $2$}

Soient $\Gamma\subset SL_2(\Z)$ un groupe de congruence et $f\in S_2(\Gamma)$ une forme modulaire holomorphe pour $\Gamma$ de poids $2$.
Définissons le groupe abélien librement engendré par les classes de $\Gamma\backslash SL_2(\Z)$:
\begin{equation}
V_{\Gamma}^{\Z}=\Z[\Gamma\backslash SL_2(\Z)].
\end{equation}
Nous noterons $\sum \lambda_g [g]$ ses éléments. Pour tout anneau commutatif $A$, nous noterons $V_{\Gamma}^A=V_{\Gamma}^{\Z}\otimes_{\Z} A$ et $V_{\Gamma}=V_{\Gamma}^{\C}$. Le $\C$-espace-vectoriel $V_{\Gamma}$ dispose ainsi d'une structure réelle donnée par $V_{\Gamma}^{\R}$ et d'une conjugaison complexe.\par
Ces groupes abéliens sont munis d'une action à droite de $PSL_2(\Z)$ donnée par $[g]|_{\gamma}=[g\gamma]$.

\begin{defi}
Les nombres complexes suivant sont appelées \textit{périodes} de la forme $f$:
$$\int_{i\infty}^0 f|_{g}(z)dz\text{ pour }g\text{ parcourant }\Gamma\backslash SL_2(\Z).$$
On définit le \textit{vecteur des périodes} de $f$ par:
\begin{equation}
P_f=\sum_{g\in\Gamma\backslash SL_2(\Z)}\int_{i\infty}^0 f|_{g}(z)dz[g]\in V_{\Gamma}.
\end{equation}
\end{defi}

Ce vecteur des périodes vérifie les relations de Manin et caractérise la forme $f$. En effet, l'application $\C$-linéaire suivante est injective:
\begin{equation}
S_2(\Gamma)\to W_{\Gamma}=\{v\in V_{\Gamma};v|_{1+S}=v|_{1+U+U^2}=0\},\quad f\mapsto P_f.
\end{equation}

On généralise ainsi ce formalisme pour une famille de formes de poids $2$.\par
Soit $n\geq 1$. Soient $\Gamma_1,..,\Gamma_n$ des groupes de congruence de $SL_2(\Z)$. Considérons une famille $f_1,...,f_n$ de formes modulaires paraboliques de poids $2$ pour les groupes respectifs $\Gamma_1,...,\Gamma_n$, c'est à dire $f_j\in S_2(\Gamma_j)$ pour $1\leq j\leq n$.\par
Définissons le groupe abélien librement engendré par les $n$-upplets de classes de $\prod_{j=1}^n \Gamma_j\backslash SL_2(\Z)$:
\begin{equation}
V_{\Gamma_1,...,\Gamma_n}^{\Z}=\Z\left[\prod_{j=1}^n \Gamma_j\backslash SL_2(\Z)\right].
\end{equation}
On peut librement l'identifier à $\otimes_{j=1}^n V_{\Gamma_j}^{\Z}$ via $[g_1,...,g_n]\cong [g_1]\otimes...\otimes[g_n]$. 
De plus, nous définissons de manière analogue pour tout anneau commutatif $A$, $V_{\Gamma_1,...,\Gamma_n}^{A}=V_{\Gamma_1,...,\Gamma_n}^{\Z}\otimes_{\Z} A.$
Ceci muni à nouveau l'extension au corps des complexes, noté simplement $V_{\Gamma_1,...,\Gamma_n}$, d'une structure réelle fournie par $V_{\Gamma_1,...,\Gamma_n}^{\R}$.\par

On peut étudier, comme pour les formes de niveau $1$, les termes de différents degrés de la série formelle vue en (\ref{serformp2}):
$$J(f_1,...,f_n)=\sum_{N\geq 0}\int_{0<t_1<...<t_N}\Omega_{f_1,...,f_n}(it_1)\wedge...\wedge\Omega_{f_1,...,f_n}(it_N).$$
Ceci permet de mettre en évidence des vecteurs de $\left(\prod_{j=1}^n V_{\Gamma_j}\right)^{\otimes N}$ que l'on nommera vecteur des multipériodes:

\begin{defi}
On appelle \textit{multipériodes de longueur} $n$ de la famille des formes modulaires paraboliques $(f_j)_{1\leq j\leq n}$ les nombres complexes :
\begin{equation}
\Lambda(f_1|_{g_1},...,f_n|_{g_n};1,...,1)=\int_{0<t_1<...<t_n}f_1|_{g_1}(it_1)...f_n|_{g_n}(it_n)dt_1...dt_n,
\end{equation}
pour toutes les classes $g_j\in \Gamma_j\backslash SL_2(\Z), (1\leq j \leq n)$.\par
On peut les regrouper dans le \textit{vecteur des multipériodes de longueur} $n$ de $(f_1,...,f_n):$
\begin{equation}
P_{f_1,...,f_n}=\sum_{\substack{g_j\in\Gamma_j\backslash SL_2(\Z)\\ 1\leq j\leq n}} \Lambda(f_1|_{g_1},...,f_n|_{g_n};1,...,1)[g_1,...,g_n]\in V_{\Gamma_1,...,\Gamma_n}.
\end{equation}
Pour toute permutation $\sigma\in\S_n$, nous disposons du \textit{permuté du vecteur des multipériodes} :
\begin{equation}
P^{\sigma}_{f_1,...,f_n}=\phi_{\sigma}(P_{f_{\sigma(1),...,\sigma(n)}}),
\end{equation}
où $\phi_{\sigma}:\otimes_{j=1}^n V_{\Gamma_{\sigma(j)}}^{\Z}\to\otimes_{j=1}^n V_{\Gamma_{j}}^{\Z},[g_{\sigma(1)},...,g_{\sigma(n)}]\mapsto [g_1,...,g_n]$.
\end{defi}

On dispose à nouveau d'une écriture unifiée dépendante du même $n$-cycle $\tau_n\in M_n^{pte}(\H^n,\Z)$ et des formes différentielles invariantes.
Pour toute forme modulaire parabolique holomorphe $f\in S_2(\Gamma)$, posons:
$$\omega_f(z)=\sum_{g\in \Gamma\backslash SL_2(\Z)} f|_g(z)dz[g]\in \Omega^1_{par}(\H,\C)\otimes_{\C}V_{\Gamma}=\Omega^1_{par}(\H,V_{\Gamma}).$$

\begin{prop}
Soit $\sigma\in\S_n$. Alors l'accouplement $\Omega^n_{par}(\H^n,\C)\times M_n^{pte}(\H^n,\Z)\to\C$ de l'intégration donne la formule:
\begin{equation}
P^{\sigma}_{f_1,...,f_n}=\langle\omega_{f_1}\wedge...\wedge\omega_{f_n},\tau_n^{\sigma}\rangle.
\end{equation}
\end{prop}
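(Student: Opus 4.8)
For forms of weight $2$ attached to congruence subgroups $\Gamma_1,\dots,\Gamma_n$, and for any $\sigma\in\S_n$, the permuted multiperiod vector satisfies
$$
P^{\sigma}_{f_1,\dots,f_n}=\langle\omega_{f_1}\wedge\cdots\wedge\omega_{f_n},\tau_n^{\sigma}\rangle.
$$

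The plan is to reduce this to an unwinding of the definitions, exactly as was done for the level-$1$ case in the proposition of Chapter~2 section~2.4.4 (formula $P_{f_1,\dots,f_n}(X_1,\dots,X_n)=\langle\omega_{f_1}(z_1,X_1)\wedge\cdots\wedge\omega_{f_n}(z_n,X_n),\tau_n\rangle$), the only new feature being that the coefficient module is now $V_{\Gamma_1,\dots,\Gamma_n}=\otimes_j V_{\Gamma_j}$ instead of $V_k$, and that the $1$-forms $\omega_{f_j}$ are themselves vector-valued, indexed by the cosets $\Gamma_j\backslash SL_2(\Z)$. First I would treat the case $\sigma=\mathrm{id}$. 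By definition the wedge $\omega_{f_1}\wedge\cdots\wedge\omega_{f_n}$ expands, by bilinearity over the coset indices, as
$$
\sum_{\substack{g_j\in\Gamma_j\backslash SL_2(\Z)\\ 1\le j\le n}} f_1|_{g_1}(z_1)\cdots f_n|_{g_n}(z_n)\,dz_1\wedge\cdots\wedge dz_n\,[g_1,\dots,g_n],
$$
so pairing against $\tau_n$ and using that the support of $\tau_n$ is $\{(it_1,\dots,it_n):0<t_1<\cdots<t_n\}$ turns each scalar coefficient integral into exactly $\Lambda(f_1|_{g_1},\dots,f_n|_{g_n};1,\dots,1)$. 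Collecting these with their basis vectors $[g_1,\dots,g_n]$ yields precisely $P_{f_1,\dots,f_n}$, which is the $\sigma=\mathrm{id}$ case.

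For general $\sigma$ I would combine two ingredients already established in the excerpt. First, the definition of $P^{\sigma}_{f_1,\dots,f_n}=\phi_\sigma(P_{f_{\sigma(1)},\dots,f_{\sigma(n)}})$ together with the $\sigma=\mathrm{id}$ computation applied to the reordered family $(f_{\sigma(1)},\dots,f_{\sigma(n)})$. Second, the duality between the $\S_n$-action on differential forms and on chains: the action $C\mapsto C^{\sigma}$ on $M_n^{pte}(\H^n,\Z)$ and the formula $\langle\omega|_\rho,C\rangle=\langle\omega,\rho.C\rangle$ from the previous chapter give $\langle\omega_{f_{\sigma(1)}}\wedge\cdots\wedge\omega_{f_{\sigma(n)}},\tau_n\rangle$ and $\langle\omega_{f_1}\wedge\cdots\wedge\omega_{f_n},\tau_n^{\sigma}\rangle$ as two descriptions of the same number after applying $\phi_\sigma$. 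Concretely, I would check that $\phi_\sigma$ on the coefficient side corresponds, under the pairing, exactly to the permutation of coordinates realized by $\tau_n^{\sigma}=\sigma^{-1}.\tau_n$; this is the same bookkeeping that produced formula $R_k(f)(\sigma)=\langle\omega_f(z,X),\tau_n^{\sigma}\rangle$ in Chapter~4.

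The main obstacle—really the only point requiring care—is the convergence and well-definedness of the iterated integral defining $\Lambda(f_1|_{g_1},\dots,f_n|_{g_n};1,\dots,1)$ when the $f_j|_{g_j}$ are weight-$2$ forms at possibly different cusps. Unlike the level-$1$ higher-weight situation, weight $2$ is the delicate borderline case for Mellin convergence, and one must verify that the boundary condition built into $\Omega^n_{par}(\H^n,\C)$ (vanishing on $\partial\H^n$) guarantees the pairing $\langle\cdot,\tau_n\rangle$ is finite along the non-compact chain $\tau_n$; this is precisely the parabolicity hypothesis, which forces each $f_j|_{g_j}$ to decay at its cusps. Once this is granted, the remaining steps are the purely formal expansion and the duality bookkeeping described above, and no genuinely new computation is needed beyond what was carried out for $n=1$ and for the level-$1$ case.
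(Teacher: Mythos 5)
Your argument is correct and follows exactly the route the paper intends: the paper states this proposition without any written proof, treating it as the weight-$2$ analogue of the level-$1$ identity $P_{f_1,\dots,f_n}=\langle\omega_{f_1}\wedge\cdots\wedge\omega_{f_n},\tau_n\rangle$ of Chapter 2 and of the permutation bookkeeping $R_k(f)(\sigma)=\langle\omega_f,\tau_n^{\sigma}\rangle$ proved in Chapter 4 via $\langle\omega|_{\rho},C\rangle=\langle\omega,\rho.C\rangle$ and $\tau_n^{\sigma}=\sigma^{-1}.\tau_n$. Your expansion over the coset indices, the identification of each coefficient with $\Lambda(f_1|_{g_1},\dots,f_n|_{g_n};1,\dots,1)$ on the support of $\tau_n$, and your remark that parabolicity is what makes the weight-$2$ iterated Mellin integral converge are all consistent with the paper's setup and fill in the omitted details appropriately.
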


La forme différentielle $\omega_f$ vérifie une propriété liée à la modularité analogue à celle du cas de niveau $1$. Pour tout $f\in S_2(\Gamma)$ et $\gamma\in PSL_2(\Z)$, on a:
\begin{multline}\label{modw2}
\gamma^*\omega_f(z)=\sum_{g\in \Gamma\backslash SL_2(\Z)} f|_g(\gamma z)d(\gamma z)[g]\\
=\sum_{g\in \Gamma\backslash SL_2(\Z)} f|_{g\gamma}(z)dz[g]=\sum_{g\in \Gamma\backslash SL_2(\Z)} f|_g(z)dz[g\gamma^{-1}]=\omega_f(z)|_{\gamma^{-1}}.
\end{multline}
Ceci peut s'écrire de manière formelle:
\begin{equation}
\omega_f\in\Omega_{par}^1(\H,V_{\Gamma})^{P\G}=\Omega_{par}^1(\H,\C)\otimes_{\C[P\G]}V_{\Gamma}.
\end{equation}
On peut en déduire à nouveau la structure de $PSL_2(\Z)$-morphisme et donc les relations de mélange et de Manin sur les vecteurs des multipériodes. En effet, l'application suivante est un morphisme de $\Z[P\G^n\rtimes \S_n]$-modules:
\begin{equation}
Ser\left(\coprod_{1\leq j\leq n} \Gamma_j\backslash \G\right)\to \left(V_{\Gamma_1,...,\Gamma_n}\right)^{\S_n},\quad J(\Omega_{f_1,...,f_n})\mapsto \left(P_{f_1,...,f_n}^{\sigma}\right)_{\sigma\in\S_n}.
\end{equation}

\begin{thm}[Relations de mélange et de Manin généralisées]\ \\
1) Pour toute décomposition entière $a+b=n$ de la longueur, on dispose de la formule de mélange:
\begin{equation}
\sum_{\sigma\in\mathfrak{S}_{a,b}}P_{f_1,...,f_n}^{\sigma}=P_{f_1,...,f_a}\otimes P_{f_{a+1},...,f_{a+b}}.
\end{equation}
2) On dispose de même des relations de Manin généralisées:
\begin{align}
\sum_{\substack{a,b\geq 0\\ a+b=n}}P_{f_1,...,f_a}|_{(S,...,S)}&\otimes P_{f_{a+1},...,f_{a+b}}=0,\\
\sum_{\substack{a,b,c\geq 0\\ a+b+c=n}}P_{f_1,...,f_a}|_{(U^2,...,U^2)}&\otimes P_{f_{a+1},...,f_{a+b}}|_{(U,...,U)}\otimes P_{f_{a+b+1},...,f_{a+b+c}}=0.
\end{align}
où on pose $P_{\emptyset}=1$ pour simplifier les notations.
\end{thm}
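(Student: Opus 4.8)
The plan is to transpose, almost verbatim, the argument of Chapitre 2, section \ref{secmel}, which established exactly these two families of relations for the multiperiod polynomials of level-$1$ forms of higher weight. The whole derivation rests on Manin's iterated-integral formalism, which is insensitive to the weight and to the level; the only ingredients needed are three facts already available in the present section: the definition (\ref{serformp2}) of $J(f_1,\dots,f_n)=J(\Omega_{f_1,\dots,f_n})$; the non-commutative Manin relations $J(f)J_S(f)=1$ and $J_{U^2}(f)J_U(f)J(f)=1$, just proven here; and the graded morphism $\Psi$ of $\Z[P\G^n\rtimes\S_n]$-modules sending $J(\Omega_{f_1,\dots,f_n})$ to $(P^\sigma_{f_1,\dots,f_n})_\sigma$, which intertwines the coproduct, the concatenation product graded by length, and the $P\G$-action $[g]\mapsto[g\gamma]$ through the modularity identity (\ref{modw2}). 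The key observation that makes the bookkeeping work is that the coefficient in $J(\Omega_{f_1,\dots,f_n})$ of a sub-word using only the slots $1,\dots,a$ equals $\Lambda(f_1|_{g_1},\dots,f_a|_{g_a};1,\dots,1)$, i.e. the corresponding component of the length-$a$ multiperiod vector of the subfamily $f_1,\dots,f_a$.

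For part 1, I would apply the comultiplication relation (\ref{relmel}) to $\Omega=\Omega_{f_1,\dots,f_n}$ on $E=\H$ with endpoints $i\infty$ and $0$, yielding $\Delta\big(J(f_1,\dots,f_n)\big)=J(f_1,\dots,f_n)\otimes J(f_1,\dots,f_n)$. Since $\Delta$ is the algebra homomorphism with $\Delta A_v=A_v\otimes 1+1\otimes A_v$, the coefficient of a fixed tensor monomial $A_{(g_1,1)}\cdots A_{(g_a,a)}\otimes A_{(g_{a+1},a+1)}\cdots A_{(g_n,n)}$ on the left equals the sum, over the shuffle permutations $\sigma\in\S_{a,b}$, of the coefficients in $J(f_1,\dots,f_n)$ of the interleaved length-$n$ words, whereas on the right it factors as the product of the length-$a$ and length-$b$ coefficients. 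Reading these two expressions through $\Psi$ and using the observation above, the left side becomes the $(g_1,\dots,g_n)$-component of $\sum_{\sigma\in\S_{a,b}}P^\sigma_{f_1,\dots,f_n}$ and the right side that of $P_{f_1,\dots,f_a}\otimes P_{f_{a+1},\dots,f_{a+b}}$, which is the claimed shuffle relation.

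For part 2, I would start from the non-commutative Manin relations and use $J_\gamma(f)=J(f)|_\gamma$ together with the Chasles relation (\ref{chasles}). Extracting the homogeneous component of degree $n\ge 1$ of each product, which vanishes because the right-hand side is the constant series $1$, the relation $J(f)J_S(f)=1$ gives $\sum_{a+b=n}[J_S(f)]_a\,[J(f)]_b=0$ and the relation $J_{U^2}(f)J_U(f)J(f)=1$ gives $\sum_{a+b+c=n}[J_{U^2}(f)]_a\,[J_U(f)]_b\,[J(f)]_c=0$, where $[\,\cdot\,]_c$ denotes the degree-$c$ part. Since $[J(f)|_\gamma]_c$ maps under $\Psi$ to the length-$c$ multiperiod vector of the relevant sub-block with $|_{(\gamma,\dots,\gamma)}$ applied, and since the concatenation product of homogeneous pieces corresponds to the tensor product of multiperiod vectors, applying $\Psi$ produces the two generalized Manin relations, the convention $P_\emptyset=1$ absorbing the constant term of $J$.

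The main obstacle is not conceptual but one of careful bookkeeping: one must verify that $\Psi$ is genuinely compatible both with the coproduct, so that a shuffle on the series side becomes the sum of permuted vectors $P^\sigma$, and with the length grading and concatenation, so that products of shifted series become tensor products of shorter multiperiod vectors, and that the diagonal $P\G$-action induced on $V_{\Gamma_1,\dots,\Gamma_n}$ matches the action on iterated integrals via (\ref{modw2}). All of these checks are formally identical to the level-$1$ case already treated in Chapitre 2, the only novelty being that the indeterminates are now indexed by the disjoint union $\coprod_j \Gamma_j\backslash\G$ of coset spaces rather than by critical integers; because the comultiplication and the non-commutative Manin relations hold for this indexing set for exactly the same reasons, the transposition goes through with no new input.
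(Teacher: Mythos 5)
Your proposal is correct and follows essentially the same route as the paper: the paper likewise deduces both families of relations from the non-commutative shuffle identity $\Delta J(\Omega_{f_1,\dots,f_n})=J\otimes J$ and the non-commutative Manin relations, projected onto the degree-$n$ component via the map $\Ser(V_n)\to \Ser(V_n)_n\cong V_{\Gamma_1,\dots,\Gamma_n}^{\S_n}$, which is a $\Z[P\G^n\rtimes\S_n]$-morphism by construction. Your write-up merely makes explicit the bookkeeping that the paper leaves implicit.
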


\begin{proof}
On dispose de la relation de mélange non commutatives sur la série génératrice, pour mémoire:
$$\Delta J(\Omega_{f_1,...,f_n}) = J(\Omega_{f_1,...,f_n})\otimes J(\Omega_{f_1,...,f_n}).$$
Ainsi que des relations de Manin non commutatives, à savoir:
$$J(\Omega_{f_1,...,f_n})|_S.J(\Omega_{f_1,...,f_n})=1\text{ et }J(\Omega_{f_1,...,f_n})|_{U^2}.J(\Omega_{f_1,...,f_n})|_U.J(\Omega_{f_1,...,f_n})=1.$$
Elle se traduise sur les vecteurs des multipériodes par les formules du théorème après projection par:
$$\Ser(V_n)\to \Ser(V_n)_n \cong V_{\Gamma_1,...,\Gamma_n}^{\S_n},$$
qui est un $\Z[PSL_2(\Z)^n\rtimes \S_n]$-morphisme par construction.
\end{proof}

\subsection{Relations vérifiées par les vecteurs de périodes}

Nous donnons une description rapide des resultats existants sur les vecteurs des périodes.\\
Définissons un sous-groupe de $V_{\Gamma}^{\Z}$, pour tout sous-groupe de congruence $\Gamma\subset SL_2(\Z)$, par:
\begin{equation}
W_{\Gamma}^{\Z}=\{v\in V_{\Gamma}^{\Z}\text{ tel que }v|_{1+S}=v|_{1+U+U^2}=0\}.
\end{equation}
Il contient la droite associée aux formes non paraboliques: 
\begin{equation}
E_{\Gamma}^{\Z}=\langle [\Gamma]-[\Gamma S]\rangle_{\Z}\subset W_{\Gamma}^{\Z}.
\end{equation}
De plus son extension au corps des complexes contient les vecteurs des périodes:
\begin{equation}
\Per_{\Gamma}=\{P_f\text{ pour }f\in S_2(\Gamma)\}\subset W_{\Gamma}=W_{\Gamma}^{\Z}\otimes\C.
\end{equation}\par

D'autre part, lorsque $\varepsilon\Gamma\varepsilon=\Gamma$, on peut décomposer $V_{\Gamma}=V_{\Gamma}^+\oplus V_{\Gamma}^-$ où:
\begin{equation}
V_{\Gamma}^{\pm}=\{v\in V_{\Gamma}\text{ tel que }c_{\varepsilon}(v)=\pm v\},
\end{equation}
et $c_{\varepsilon}([g])=[\varepsilon g\varepsilon]$ pour tout $g\in\Gamma\backslash SL_2(\Z)$.\\
On peut alors scinder $W_{\Gamma}^{\Z}$ suivant cette décomposition car $\varepsilon\mathcal{I}_1\varepsilon=\mathcal{I}_1$:
$$W_{\Gamma}^{\Z}=W_{\Gamma}^{+,\Z}\oplus W_{\Gamma}^{-,\Z},\text{ où }W_{\Gamma}^{\pm,\Z}=W_{\Gamma}^{\Z}\cap V_{\Gamma}^{\pm}.$$
 Et en particulier, on a $E_{\Gamma}^{\Z}\subset W_{\Gamma}^{+,\Z}$.

\begin{prop}
On dispose de la décomposition:
\begin{equation}
W_{\Gamma}=\Per_{\Gamma}\oplus \overline{\Per_{\Gamma}}\oplus E_{\Gamma}
\end{equation}
De plus, si $\varepsilon\Gamma\varepsilon=\Gamma$ alors:
\begin{equation}
W_{\Gamma}^+=\Per_{\Gamma}^+\oplus E_{\Gamma}\text{ et }W_{\Gamma}^-=\Per_{\Gamma}^-.
\end{equation}
\end{prop}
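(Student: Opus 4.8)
The plan is to reproduce, in the weight~$2$ vector formalism, the classical Eichler--Shimura argument in three movements: first the three inclusions $\Per_{\Gamma},\overline{\Per_{\Gamma}},E_{\Gamma}\subset W_{\Gamma}$, then their pairwise directness, and finally equality of dimensions, which upgrades the direct sum to the full space. For $\Per_{\Gamma}\subset W_{\Gamma}$ I would argue exactly as in the level-one case: formula (\ref{modw2}) says that $\omega_f$ is $P\G$-invariant, so that $P_f|_{1+S}=\langle\omega_f,(1+S)\tau_1\rangle$ and $P_f|_{1+U+U^2}=\langle\omega_f,(1+U+U^2)\tau_1\rangle$. Because $S$ and $U$ act cyclically on the cusps, $i\infty\stackrel{S}{\to}0\stackrel{S}{\to}i\infty$ and $i\infty\stackrel{U}{\to}0\stackrel{U}{\to}1\stackrel{U}{\to}i\infty$, the chains $(1+S)\tau_1$ and $(1+U+U^2)\tau_1$ are closed, and the integral of the holomorphic form $\omega_f$ over a closed chain vanishes; hence $P_f\in W_{\Gamma}$. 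The inclusion $\overline{\Per_{\Gamma}}\subset W_{\Gamma}$ is immediate since $W_{\Gamma}=W_{\Gamma}^{\Z}\otimes\C$ is defined over $\Z$, hence stable under complex conjugation. Finally $E_{\Gamma}\subset W_{\Gamma}$ is a direct verification: $([\Gamma]-[\Gamma S])|_{1+S}=0$ using $S^2=1$ in $P\G$, and $([\Gamma]-[\Gamma S])|_{1+U+U^2}=0$ using $T\in\Gamma$, which identifies $[\Gamma]-[\Gamma S]$ with the period vector of the regularized weight-$2$ Eisenstein contribution, for which the Manin relations persist by the same contour argument.

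For directness I would use the real structure together with the intersection (Petersson) pairing. Complex conjugation exchanges $\Per_{\Gamma}$ and $\overline{\Per_{\Gamma}}$ and fixes $E_{\Gamma}$, so that $\Per_{\Gamma}\cap\overline{\Per_{\Gamma}}$ is defined over $\R$; the nondegeneracy and definiteness of the Petersson form on the holomorphic part --- realized as a cup/intersection pairing on the period vectors, in the spirit of the computation of $\langle f,g\rangle$ via period polynomials in Chapter~1 --- forces this intersection to be $0$, and likewise separates $E_{\Gamma}$. When $\varepsilon\Gamma\varepsilon=\Gamma$, the involution $c_{\varepsilon}$ commutes with the Manin relations since $\varepsilon\mathcal{I}_1\varepsilon=\mathcal{I}_1$, so $W_{\Gamma}=W_{\Gamma}^{+}\oplus W_{\Gamma}^{-}$ splits compatibly; noting $E_{\Gamma}\subset W_{\Gamma}^{+}$ and that conjugation sends $\Per_{\Gamma}^{\pm}$ into the matching parity, the global decomposition restricts to $W_{\Gamma}^{+}=\Per_{\Gamma}^{+}\oplus E_{\Gamma}$ and $W_{\Gamma}^{-}=\Per_{\Gamma}^{-}$.

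Once the three subspaces are in direct sum, it remains to prove the equality, which by the injectivity of $f\mapsto P_f$ (giving $\dim_{\C}\Per_{\Gamma}=\dim_{\C}\overline{\Per_{\Gamma}}=\dim_{\C}S_2(\Gamma)$) and $\dim_{\C}E_{\Gamma}=1$ amounts to the single dimension count
\begin{equation}
\dim_{\C}W_{\Gamma}=2\dim_{\C}S_2(\Gamma)+1.
\end{equation}
This is the main obstacle and the genuine Eichler--Shimura content. I would establish it by identifying $W_{\Gamma}$ with a relative homology group of the modular curve: the exact sequence $0\to\mathcal{I}_1\to\Z[\Gamma]\to\Z[\pte]^0\to 0$ and Manin's presentation of $\Gamma\backslash SL_2(\Z)$ as the edges of the graph on the cusps (via continued fractions) realize $W_{\Gamma}$ as the dual of $H_1$ of $\Gamma\backslash\H$ relative to the cusps, whose dimension is $2g+1$ by the genus formula combined with the Eichler--Shimura isomorphism $H^1_{par}(\Gamma,\C)\cong S_2(\Gamma)\oplus\overline{S_2(\Gamma)}$. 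The delicate point is controlling the boundary/Eisenstein contribution so that it contributes exactly the one-dimensional $E_{\Gamma}$ rather than the full $c-1$ cuspidal directions; this is precisely what the choice of the relations $v|_{1+S}=v|_{1+U+U^2}=0$ achieves, and verifying it rigorously --- rather than invoking the known result --- is where the argument requires the most care.
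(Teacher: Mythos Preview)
The paper does not actually prove this proposition: it is presented in the subsection as a ``description rapide des r\'esultats existants''. The nearest thing to a proof is the general-representation version in the following section, where the argument goes through parabolic group cohomology of $PSL_2(\Z)$: one sends $\omega\in\Omega_V^{\holo}\oplus c(\Omega_V^{\holo})$ to the parabolic cocycle $\varphi_\omega(\gamma)=\langle\omega,\{i\infty,\gamma^{-1}i\infty\}\rangle$, identifies $Z^1_{par}(PSL_2(\Z),V)$ with $W_V$ via $\varphi\mapsto\varphi(S)$, and reads off the coboundary piece $B^1_{par}$ as $V^T|_{(1-S)}$. That route bypasses both the Petersson pairing and any explicit dimension count on the modular curve. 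Your strategy (inclusions, Petersson directness, then a numerical dimension match) is therefore genuinely different in shape.

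There is, however, a real gap in your final step, precisely at the point you flag as delicate. You assert that the Manin relations $v|_{1+S}=v|_{1+U+U^2}=0$ force the boundary contribution down to one dimension, so that $\dim_{\C}W_\Gamma=2g+1$. This is false for a general congruence subgroup. The Manin presentation identifies $V_\Gamma/(V_\Gamma|_{1+S}+V_\Gamma|_{1+U+U^2})$ with $H_1(X_\Gamma,\text{cusps};\C)$, and by the $SL_2(\Z)$-invariant pairing on $V_\Gamma$ one has $\dim_{\C}W_\Gamma=\dim_{\C}H_1(X_\Gamma,\text{cusps};\C)=2g+c-1$, where $c$ is the number of cusps. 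For $c\geq 3$ (e.g.\ $\Gamma_0(15)$, with $g=1$ and $c=4$) your count gives $3$ while the correct dimension is $5$. The boundary symbols all satisfy the Manin relations; nothing in those relations singles out the class $[\Gamma]-[\Gamma S]$.

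In fact the paper's own general formulation confirms this: there $E_V=V^T|_{(1-S)}$, and for $V=V_\Gamma$ one computes $\dim V_\Gamma^T=c$ (the $T$-orbits on $\Gamma\backslash SL_2(\Z)$ are the cusps) and $\ker(1-S)\cap V_\Gamma^T=V_\Gamma^{SL_2(\Z)}$ is one-dimensional, so $\dim E_{V_\Gamma}=c-1$. The one-dimensional definition $E_\Gamma=\langle[\Gamma]-[\Gamma S]\rangle$ in the weight-$2$ subsection is thus too small in general; the proposition as literally stated there only holds when $c\leq 2$. Your argument can be salvaged, but only by enlarging $E_\Gamma$ to the full $(c-1)$-dimensional boundary space --- after which the dimension count $2g+(c-1)=\dim W_\Gamma$ goes through, and the cohomological proof in the paper and your direct approach converge.
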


Et ceci permet de déduire le caractère minimal suivant des relations de Manin:

\begin{coro}
Le $\Q$-espace vectoriel $W_{\Gamma}^{\Q}$ est le plus petit sous-espace-vectoriel de $V_{\Gamma}^{\Q}$ contenant $E_{\Gamma}^{\Q}$ tel que son extension au corps des complexes contient $\Per_{\Gamma}$.
\end{coro}

\subsection{Relations vérifiées par les vecteurs des bipériodes}

Soient $\Gamma_1$ et $\Gamma_2$ des groupes de congruence de $SL_2(\Z)$.
Soient $f_1$ et $f_2$ des formes modulaires paraboliques de poids $2$ pour les groupes respectifs $\Gamma_1$ et $\Gamma_2$.
Le \textit{vecteur des bipériodes} de $f_1$ et $f_2$ est défini par:
\begin{equation}
P_{f_1,f_2}=\sum_{g_1\in\Gamma_1\backslash SL_2(\Z)}\sum_{g_2\in\Gamma_2\backslash SL_2(\Z)}\int_{0}^{\infty}\int_{0}^{t_1}f_1|_{g_1}(it_1)f_2|_{g_2}(it_2)dt_1dt_2[g_1,g_2]\in V_{\Gamma_1,\Gamma_2}.
\end{equation}
Nous noterons $\Per_{\Gamma_1,\Gamma_2}$ le $\C$-espace vectoriel formé par ces vecteurs.

Notons:
\begin{equation}
W_{\Gamma_1,\Gamma_2}^{\Z}=\{v\in V_{\Gamma_1,\Gamma_2}^{\Z}\text{ tel que }v|_{\gamma}=0\text{ pour tout }\gamma\in \widetilde{\mathcal{I}_2}\}
\end{equation}
Définissons le sous-groupe des vecteurs vérifiant les relations de Manin diagonales:
\begin{equation}
V_{\Gamma_1,\Gamma_2}^{\Z}[I_D]=\{v\in V_{\Gamma_1,\Gamma_2}^{\Z}\text{ tel que }v|_{(1,1)+(S,S)}=v|_{(1,1)+(U,U)+(U^2,U^2)}=0\}.
\end{equation}
Puis posons:
\begin{equation}
E_{\Gamma_1,\Gamma_2}^{\Z}=\left(W_{\Gamma_1}^{\Z}\otimes (V_{\Gamma_2}^{\Z})^{T}\right)+ V_{\Gamma_1,\Gamma_2}^{\Z}[I_D]+ \left((V_{\Gamma_1}^{\Z})^{US}\otimes W_{\Gamma_2}^{\Z}\right).
\end{equation}
On démontre le théorème suivant précisant l'inclusion:
\begin{equation}
\Per_{\Gamma_1,\Gamma_2}\subset W_{\Gamma_1,\Gamma_2}=W_{\Gamma_1,\Gamma_2}^{\Z}\otimes_{\Z}\C.
\end{equation}

\begin{thm}\label{thmp2}
Le $\Q$-espace vectoriel $W_{\Gamma_1,\Gamma_2}^{\Q}=W_{\Gamma_1,\Gamma_2}^{\Z}\otimes_{\Z}\Q$ est le plus petit sous-espace-vectoriel $W$ de $V_{\Gamma_1,\Gamma_2}^{\Q}$ tel que:
\begin{equation}
\Per_{\Gamma_1,\Gamma_2}\subset W\otimes \C \text{ et } E_{\Gamma_1,\Gamma_2}^{\Q}\subset W.
\end{equation}
\end{thm}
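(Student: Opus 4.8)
The goal is to prove the weight-2, level-arbitrary analogue of the bi-period decomposition, stated as Théorème~\ref{thmp2}: that $W_{\Gamma_1,\Gamma_2}^{\Q}$ is the smallest $\Q$-subspace of $V_{\Gamma_1,\Gamma_2}^{\Q}$ containing $E_{\Gamma_1,\Gamma_2}^{\Q}$ whose complexification contains $\Per_{\Gamma_1,\Gamma_2}$.

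The plan is to recognize the weight-$2$ situation as a special case of the representation-theoretic framework of Théorème~\ref{thmchap5}, and then to run a Galois-descent argument over $\Q$ to extract the minimality. First I would set $V_j = V_{\Gamma_j} = \C^{\Gamma_j\backslash SL_2(\Z)}$ equipped with the right-multiplication action of $SL_2(\Z)$; since each $\Gamma_j$ is normalisé par $\varepsilon = \mat{-1}{0}{\phantom{-}0}{1}$, this permutation representation extends to the group engendré par $SL_2(\Z)$ et $\varepsilon$, which is exactly what is needed to speak of parities and of the subspaces $V_j^T$, $V_j^{US}$. The forms $\omega_{f_j}$ built from $f_j\in S_2(\Gamma_j)$ satisfy the invariance (\ref{modw2}), i.e. they lie in $\Omega^1_{par}(\H,V_{\Gamma_j})^{P\G}$, so the unified pairing $P^{\sigma}_{f_1,f_2} = \langle \omega_{f_1}\wedge\omega_{f_2},\tau_2^{\sigma}\rangle$ places the vectors of bipériodes into the objects $MP(\rho_1,\rho_2)$ of the general theory.

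Next I would set up the dictionary between the single-vector formulation used in the statement and the $\S_2$-indexed formulation of Chapter~4. As in the discussion preceding $\mathcal{A}_2$, the relation $P_{f_1,f_2}|_{(S,S)} = P^{\sigma}_{f_1,f_2}$ for $\sigma = (2,1)$ gives a bijection $\Phi : V_{\Gamma_1,\Gamma_2}\to (V_{\Gamma_1,\Gamma_2})^{\S_2}$, $P\mapsto (P, P|_{(S,S)})$, carrying $\Per_{\Gamma_1,\Gamma_2}$ onto $MP(\rho_1,\rho_2)$ and $W_{\Gamma_1,\Gamma_2}$ onto $W_{\rho_1,\rho_2}$. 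Under this dictionary I would identify each summand of (\ref{decomp2}): the single-period spaces $W_{\rho_j}$ are the weight-$2$ spaces $W_{\Gamma_j}$, the fixed subspaces $V_j^T$ and $V_j^{US}$ become $V_{\Gamma_2}^T$ and $V_{\Gamma_1}^{US}$, and the tensor piece $W_{\rho_1\otimes\rho_2}$ becomes $V_{\Gamma_1,\Gamma_2}[I_D]$. This last identification is the crux: for the diagonal action $\rho_1\otimes\rho_2$ the single-period Manin relations $1+S$ and $1+U+U^2$ become $(1,1)+(S,S)$ and $(1,1)+(U,U)+(U^2,U^2)$, which are precisely the generators of $I_D$. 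Thus the bracketed error term of (\ref{decomp2}) matches $E_{\Gamma_1,\Gamma_2}^{\C}$.

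Applying Théorème~\ref{thmchap5} and transporting back through $\Phi^{-1}$ then yields the $\C$-décomposition $W_{\Gamma_1,\Gamma_2} = \Per_{\Gamma_1,\Gamma_2}\oplus\overline{\Per_{\Gamma_1,\Gamma_2}}\oplus E_{\Gamma_1,\Gamma_2}^{\C}$. From here the minimality over $\Q$ is formal. The space $W_{\Gamma_1,\Gamma_2}^{\Q} = V_{\Gamma_1,\Gamma_2}^{\Q}[\widetilde{\mathcal{I}_2}]$ and the space $E_{\Gamma_1,\Gamma_2}^{\Q}$ are both defined over $\Q$, and the complex conjugation exchanges $\Per_{\Gamma_1,\Gamma_2}$ with $\overline{\Per_{\Gamma_1,\Gamma_2}}$. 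Hence if $W\subset V_{\Gamma_1,\Gamma_2}^{\Q}$ satisfies $\Per_{\Gamma_1,\Gamma_2}\subset W\otimes\C$ and $E_{\Gamma_1,\Gamma_2}^{\Q}\subset W$, then $W\otimes\C$, being stable par conjugaison, also contains $\overline{\Per_{\Gamma_1,\Gamma_2}}$ and $E_{\Gamma_1,\Gamma_2}^{\C}$, so $W\otimes\C\supseteq W_{\Gamma_1,\Gamma_2}^{\C}$; taking $\Q$-points gives $W\supseteq W_{\Gamma_1,\Gamma_2}^{\Q}$. Since $W_{\Gamma_1,\Gamma_2}^{\Q}$ itself satisfies both conditions, it is the smallest such subspace.

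The main obstacle is the faithful translation of the objects, and in particular checking that the permutation representation $V_{\Gamma}$, which is not irreducible (it contains the constants, reflecting the séries d'Eisenstein), still falls within the scope of Théorème~\ref{thmchap5} — this is exactly why that theorem was established for arbitrary finite-dimensional representations rather than only irreducible ones. One must verify that the non-cuspidal directions are absorbed precisely into $E_{\Gamma_1,\Gamma_2}$ (through the $T$- and $US$-fixed subspaces and the diagonal space $V_{\Gamma_1,\Gamma_2}[I_D]$), and that the hypothesis $\varepsilon\Gamma_j\varepsilon = \Gamma_j$ is genuinely used to make the décomposition en parités and the identification of $V_{\Gamma}^T$, $V_{\Gamma}^{US}$ meaningful.
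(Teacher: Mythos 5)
Your route is genuinely different from the paper's. The paper proves Théorème \ref{thmp2} directly, without passing through the decomposition: it first establishes the duality $P_{f_1,f_2}|_{(\gamma_1,\gamma_2)}=\langle\omega_{f_1,f_2},(\gamma_1^{-1},\gamma_2^{-1})\tau_2\rangle$ and the inclusion $\mathcal{I}_2\subset\widetilde{\mathcal{J}_{\Gamma_1,\Gamma_2}}$, then invokes irreducibility of $V_{\Gamma_1,\Gamma_2}$ and Schur to write every $\Q$-subspace as an annihilator, and concludes by a dimension count using the exact sequence $0\to V_{\Gamma_1,\Gamma_2}^{\Q}|_{\mathcal{I}_2}\to V_{\Gamma_1,\Gamma_2}^{\Q}\to H_2(P_2)^0\otimes_{\Q[P\G^2]}V_{\Gamma_1,\Gamma_2}^{\Q}\to 0$ to get $W_{\Gamma_1,\Gamma_2}^{\Q}=V_{\Gamma_1,\Gamma_2}^{\Q}[\mathcal{J}_{\Gamma_1,\Gamma_2}]+E_{\Gamma_1,\Gamma_2}^{\Q}$; the direct-sum decomposition is only stated afterwards, as a separate proposition. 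You instead take the decomposition $W_{\Gamma_1,\Gamma_2}=\Per_{\Gamma_1,\Gamma_2}\oplus\overline{\Per_{\Gamma_1,\Gamma_2}}\oplus E_{\Gamma_1,\Gamma_2}$ as the input and deduce minimality by the conjugation-stability descent ($W\otimes\C$ stable under conjugation, contains $\Per$, $\overline{\Per}$ and $E^{\C}$, hence contains $W^{\C}$, hence $W\supseteq W^{\Q}$). That final step is correct and is in fact cleaner than the paper's argument: it avoids entirely the appeal to Schur's lemma and to the (delicate) claim that the permutation module $V_{\Gamma_1,\Gamma_2}$ is irreducible. Your dictionary $\Phi:P\mapsto(P,P|_{(S,S)})$ and the identification of $W_{\rho_1\otimes\rho_2}$ with $V_{\Gamma_1,\Gamma_2}[I_D]$ are also consistent with the paper's conventions.

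The gap is in how you obtain the decomposition. You import it from Théorème \ref{thmchap5}, asserting that this theorem ``was established for arbitrary finite-dimensional representations rather than only irreducible ones.'' That is true of the statement in the introduction, but the theorem actually proved in the paper (section 5.2.2) is stated for $P\G$-modules ``fournissant une représentation sur $\Q$ irréductible,'' a hypothesis the permutation modules $V_{\Gamma_j}=\Q[\Gamma_j\backslash SL_2(\Z)]$ visibly fail (they contain the trivial subrepresentation). Moreover, in the paper the logical order is the reverse of yours: the weight-$2$ case (5.1) is proved before and independently of the representation-theoretic case (5.2), so you cannot simply cite the latter without first checking that its proof — in particular the surjectivity of $\phi_S$, which rests on the length-one Eichler--Shimura statement for the coefficient module, and the kernel computation — goes through for these non-irreducible modules. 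To close the gap you would either have to rework the proof of Théorème \ref{thmchap5} without the irreducibility hypothesis, or (as the paper does) establish the weight-$2$ decomposition directly by adapting the Chapter~3 arguments to the coefficients $V_{\Gamma_j}$; once that is done, your descent argument finishes the minimality correctly.
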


Pour cela, on recherche à calculer l'idéal à droite de $\Z[PSL_2(\Z)^2]$ défini par:
\begin{equation}
\mathcal{J}_{\Gamma_1,\Gamma_2}=\{\gamma\in\Z[PSL_2(\Z)^2]\text{ tel que }P_{f_1,f_2}|_{\gamma}=0\text{ pour tout }(f_1,f_2)\in S_2(\Gamma_1)\times S_2(\Gamma_2)\}.
\end{equation}

Posons pour tout couple $(f_1,f_2)\in S_2(\Gamma_1)\times S_2(\Gamma_2)$:
$$\omega_{f_1,f_2}=\sum_{g_1\in\Gamma_1\backslash SL_2(\Z)}\sum_{g_2\in\Gamma_2\backslash SL_2(\Z)}\left(f_1|_{g_1}(z_1)dz_1\right)\wedge\left(f_2|_{g_2}(z_2)dz_2\right)[g_1,g_2]
\cong \omega_{f_1}\wedge\omega_{f_2}.$$
La formule liant $P_{f_1,f_2}$ à $\tau_2$ permet de ramener les calculs à ceux déjà réalisés:
\begin{equation}
P_{f_1,f_2}=\langle\omega_{f_1,f_2},\tau_2\rangle.
\end{equation}

\begin{prop}
Soit $(\gamma_1,\gamma_2)\in PSL_2(\Z)^2$.\\
La famille de $2$-formes holomorphes $\omega_{f_1,f_2}$ vérifie la propriété d'invariance:
\begin{equation}
\omega_{f_1,f_2}(\gamma_1.z_1,\gamma_2.z_2)|_{(\gamma_1,\gamma_2)}=\omega_{f_1,f_2}(z_1,z_2)
\end{equation}
L'action sur le vecteur des bipériodes se transpose sur l'homologie de $\H^2$:
\begin{equation}
P_{f_1,f_2}|_{(\gamma_1,\gamma_2)}=\langle\omega_{f_1,f_2},(\gamma_1^{-1},\gamma_2^{-1})\tau_2\rangle.
\end{equation}
\end{prop}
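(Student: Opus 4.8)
The statement is the weight-two counterpart of Proposition~\ref{prop37}, and the plan is to transcribe the argument given there, the only change being that the right action of $\Gamma^2$ now lives on the coefficient module $V_{\Gamma_1,\Gamma_2}^{\Z}=\Z[\Gamma_1\backslash\G]\otimes\Z[\Gamma_2\backslash\G]$ rather than on the polynomial variables $X_1,X_2$. The two ingredients I would isolate first are: (i) the single-form modularity~(\ref{modw2}), which plays the role of~(\ref{relmod31}) and reads $\gamma^*\omega_f(z)=\omega_f(z)|_{\gamma^{-1}}$; and (ii) the change-of-variable identity for the pairing, $\langle\omega(\gamma.z),C\rangle=\langle\omega(z),\gamma.C\rangle$, the analogue of~(\ref{relacc31}), which I would first note extends verbatim to $V_{\Gamma}$-valued forms because the pairing integrates only the scalar $1$-form part of each factor while carrying the coset symbol $[g]$ along $\C$-linearly. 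The plan is to prove the invariance formula first and then read the duality off it.

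For the invariance I would compute $\omega_{f_1,f_2}(\gamma_1.z_1,\gamma_2.z_2)$ factor by factor. Writing $\omega_{f_1,f_2}=\omega_{f_1}\wedge\omega_{f_2}$ and applying~(\ref{modw2}) to each factor gives $\omega_{f_i}(\gamma_i z_i)=\omega_{f_i}(z_i)|_{\gamma_i^{-1}}$, so that the wedge equals $\omega_{f_1,f_2}(z_1,z_2)|_{(\gamma_1^{-1},\gamma_2^{-1})}$, where the diagonal right action sends $[g_1,g_2]\mapsto[g_1\gamma_1^{-1},g_2\gamma_2^{-1}]$. The only bookkeeping point is that this coset shift is exactly undone by the reindexing $g_i\mapsto g_i\gamma_i$ in the defining sum of $\omega_{f_i}$, which is why the group element lands on the coefficient rather than producing a genuinely different differential form. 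Applying $|_{(\gamma_1,\gamma_2)}$ to both sides then yields $\omega_{f_1,f_2}(\gamma_1.z_1,\gamma_2.z_2)|_{(\gamma_1,\gamma_2)}=\omega_{f_1,f_2}(z_1,z_2)$, the first equation of the proposition.

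To obtain the duality I would use the $\C$-linearity of the pairing in its coefficient slot to write $P_{f_1,f_2}|_{(\gamma_1,\gamma_2)}=\langle\omega_{f_1,f_2}(z_1,z_2)|_{(\gamma_1,\gamma_2)},\tau_2\rangle$, then rewrite the invariance formula (after the substitution $z_i\mapsto\gamma_i^{-1}z_i$) as $\omega_{f_1,f_2}(z_1,z_2)|_{(\gamma_1,\gamma_2)}=\omega_{f_1,f_2}(\gamma_1^{-1}.z_1,\gamma_2^{-1}.z_2)$, and finally push $(\gamma_1^{-1},\gamma_2^{-1})$ onto the cycle by ingredient~(ii), giving $P_{f_1,f_2}|_{(\gamma_1,\gamma_2)}=\langle\omega_{f_1,f_2},(\gamma_1^{-1},\gamma_2^{-1})\tau_2\rangle$; the proof then extends to $\Z[\Gamma^2]$ by linearity. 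I do not expect a genuine obstacle here: the substance is entirely contained in~(\ref{modw2}), which the text has already established. The one point worth checking carefully is that the pairing and its change-of-variable rule behave exactly as in the level-one case once the target of the forms is the free module $V_{\Gamma}$ instead of $V_k$, but this is immediate since integration acts only on the analytic factors and the coset symbols ride along formally.
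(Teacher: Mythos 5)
Your proposal is correct and follows essentially the same route as the paper: the invariance is obtained by the coset-reindexing computation $\omega_f(\gamma.z)=\omega_f(z)|_{\gamma^{-1}}$ applied factorwise to $\omega_{f_1}\wedge\omega_{f_2}$, and the duality formula is then read off by linearity of the pairing in the coefficient slot together with the change-of-variable identity pushing $(\gamma_1^{-1},\gamma_2^{-1})$ onto $\tau_2$. The paper's own proof is exactly this, phrased as a direct verification that $\omega_f(\gamma.z)|_{\gamma}=\omega_f(z)$ followed by the same three-step chain for $P_{f_1,f_2}|_{(\gamma_1,\gamma_2)}$.
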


\begin{proof}
On a : $\omega_{f_1,f_2}(z_1,z_2)=\omega_{f_1}(z_1)\wedge\omega_{f_2}(z_2)$ donc il suffit d'obtenir l'invariance de l'action de $\gamma\in PSL_2(\Z)$ sur une famille de $1$-formes holomorphes. Or on a déjà:
$$\omega_f(\gamma.z)|_{\gamma}=\sum_{g\in\Gamma\backslash PSL_2(\Z)}f|_g(\gamma.z)d(\gamma.z)[g\gamma]
=\sum_{g\in\Gamma\backslash PSL_2(\Z)}f|_{g\gamma}(z)dz[g\gamma]=\omega_f(z).$$
Une démonstration similaire à celle du cas du niveau $1$ donne:
$$P_{f_1,f_2}|_{(\gamma_1,\gamma_2)}=\langle\omega_{f_1,f_2}(z_1,z_2)|_{(\gamma_1,\gamma_2)},\tau_2\rangle
=\langle\omega_{f_1,f_2}(\gamma_1^{-1}.z_1,\gamma_2^{-1}.z_2),\tau_2\rangle=\langle\omega_{f_1,f_2},(\gamma_1^{-1},\gamma_2^{-1})\tau_2\rangle.$$
\end{proof}

\begin{coro}
On a l'inclusion des idéaux:
$\mathcal{I}_2\subset \widetilde{\mathcal{J}_{\Gamma_1,\Gamma_2}}.$
\end{coro}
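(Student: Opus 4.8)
La strat�gie est de reprendre la d�monstration de la Proposition \ref{incljkk}, en prenant soin de n'invoquer que la partie \emph{facile} de l'argument, celle qui ne d�pend pas du poids. La proposition pr�c�dente fournit, par $\Z$-lin�arit�, l'identit� $P_{f_1,f_2}|_{g}=\langle\omega_{f_1,f_2},\widetilde{g}.\tau_2\rangle$ pour tout $g\in\Z[PSL_2(\Z)^2]$. En notant $\Omega_{\Gamma_1,\Gamma_2}$ le $\C$-espace vectoriel engendr� par les formes $\omega_{f_1}\wedge\omega_{f_2}$ pour $(f_1,f_2)\in S_2(\Gamma_1)\times S_2(\Gamma_2)$, ceci se r�crit
$$\widetilde{\mathcal{J}_{\Gamma_1,\Gamma_2}}=\{g\in\Z[PSL_2(\Z)^2]\text{ tel que }g.\tau_2\in\Omega_{\Gamma_1,\Gamma_2}^{\bot}\}.$$
Il suffira donc de d�montrer que $g.\tau_2\in\Omega_{\Gamma_1,\Gamma_2}^{\bot}$ d�s que $g\in\mathcal{I}_2$.

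Je commencerais par v�rifier les deux annulations �l�mentaires, valables ici sans condition sur le poids car elles rel�vent uniquement de la dimension. D'une part, chaque forme $\omega_{f_1}\wedge\omega_{f_2}$ est ferm�e puisque $d\omega_f=d\left(\sum_g f|_g(z)dz[g]\right)=0$, d'o� $\langle\omega_{f_1,f_2},C\rangle=0$ pour toute cha�ne $C\in\Im(\delta_3)$ par la formule de Stokes. D'autre part, le tir� en arri�re de $\omega_{f_1}\wedge\omega_{f_2}$ le long de chacune des applications $\varphi_0(z)=(i\infty,z)$, $\varphi_1(z)=(z,z)$ et $\varphi_2(z)=(z,0)$ est nul: dans les cas $j=0$ et $j=2$ une coordonn�e est constante donc $dz_j$ s'annule, et dans le cas $j=1$ on obtient $dz_1\wedge dz_1=0$. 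Il en r�sulte que $\langle\omega_{f_1,f_2},C\rangle=0$ pour toute $2$-cha�ne � support dans un translat� $\psi_{\gamma}\varphi_j(\H)$, c'est-�-dire pour toute cha�ne transverse.

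Ces deux faits donneront l'inclusion
$$\Ker(\delta_2)+\sum_{\gamma\in\Gamma^2}\sum_{j=0}^2\psi_{\gamma}\varphi_j\left(M_2^{pte}(\H,\Z)\right)\subset\Omega_{\Gamma_1,\Gamma_2}^{\bot},$$
l'invariance par $\Gamma^2$ du membre de droite provenant de la propri�t� de modularit� de $\omega_{f_1,f_2}$ d�j� �tablie. J'utiliserais alors la caract�risation topologique de $\mathcal{I}_2$ obtenue au chapitre $3$: puisque $\psi_{\gamma}(\Ker\delta_2)=\Ker\delta_2$ et que $\Im(\delta_3)=\Ker(\delta_2)$ dans $M_2^{pte}(\H^2,\Z)$, la condition $g.\delta_2\tau_2\in H^0+V^0+D^0$ d�finissant $\mathcal{I}_2$ �quivaut � $g.\tau_2\in\Ker(\delta_2)+\sum_{\gamma,j}\psi_{\gamma}\varphi_j(M_2^{pte}(\H,\Z))$. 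Pour $g\in\mathcal{I}_2$ on aura donc $g.\tau_2\in\Omega_{\Gamma_1,\Gamma_2}^{\bot}$, soit $g\in\widetilde{\mathcal{J}_{\Gamma_1,\Gamma_2}}$, ce qui conclut.

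Le point d�licat, et la seule v�ritable diff�rence avec le chapitre $3$, est qu'on ne pourra pas reprendre le \emph{lemme essentiel} caract�risant $\Omega_{k_1,k_2}$ comme l'ensemble des $2$-formes ferm�es invariantes � tir�s en arri�re transverses nuls: sa d�monstration exploitait $k_1,k_2>2$ (via $M_2=\{0\}$ et $V_{k_j}^{\Gamma}=0$). Cette caract�risation tombe en d�faut en poids $2$, mais elle n'intervient pas ici: l'inclusion recherch�e ne requiert que la direction ais�e $\Omega_{\Gamma_1,\Gamma_2}\subset\{\omega\,;\,d\omega=0,\ \varphi_j^*\omega=0\}$, qui se lit directement sur l'holomorphie des formes. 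La caract�risation inverse ne servira, elle, que pour �tablir la minimalit� de $W_{\Gamma_1,\Gamma_2}^{\Q}$ du Th�or�me \ref{thmp2}.
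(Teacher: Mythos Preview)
Your proof is correct and follows the same approach as the paper: both rewrite $\widetilde{\mathcal{J}_{\Gamma_1,\Gamma_2}}$ as $\{g:g.\tau_2\in\Omega_{\Gamma_1,\Gamma_2}^{\bot}\}$, check that $\omega_{f_1,f_2}$ is closed with vanishing transverse pullbacks, and conclude via the topological definition of $\mathcal{I}_2$. Your final paragraph is in fact more careful than the paper, which asserts the full equality $\Omega_{\Gamma_1,\Gamma_2}=\{\omega:d\omega=\varphi_j^*\omega=0\}$ in weight $2$ without re-proving it---but as you rightly observe, only the easy inclusion is needed for this corollary.
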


\begin{proof}
La proposition précédente démontre que $\omega_{f_1,f_2}$ est un élément de l'espace des $2$-formes différentielles harmoniques sur $\H^2$, à valeurs dans $V_{\Gamma_1,\Gamma_2}$, nulles en l'infini et invariantes par $PSL_2(\Z)^2$:  
$$\Omega_{par}^2(\H^2,V_{\Gamma_1,\Gamma_2})^{P\G^2}=\Omega_{par}^2(\H^2,\C)\otimes_{\C[PSL_2(\Z)^2]}V_{\Gamma_1,\Gamma_2}.$$
Définissons les sous-espaces vectoriels suivant de ce dernier:
\begin{align*}
\Omega^{\holo}_{\Gamma_1,\Gamma_2}&=\{\omega_{f_1,f_2}\text{ pour }f_1\otimes f_2\in S_2(\Gamma_1)\otimes S_2(\Gamma_2)\},\\
\text{et }\overline{\Omega^{\holo}_{\Gamma_1,\Gamma_2}}&=\{\overline{\omega_{f_1,f_2}}\text{ pour }f_1\otimes f_2\in S_2(\Gamma_1)\otimes S_2(\Gamma_2)\}.
\end{align*}
On dispose ainsi de l'espace : $\Omega_{\Gamma_1,\Gamma_2}=\Omega_{\Gamma_1,\Gamma_2}^{\holo}\oplus \overline{\Omega_{\Gamma_1,\Gamma_2}^{\holo}}\subset \Omega_{par}^2(\H^2,V_{\Gamma_1,\Gamma_2})^{P\G^2}$.\par

Pour démontrer le corollaire, il suffit de déterminer:
\begin{equation}
\left(\Omega_{\Gamma_1,\Gamma_2}\right)^{\bot}=\{c\in M_2^{pte}(\H^2,\Z);\langle \omega,c\rangle=0\text{ pour tout }\omega\in\Omega_{\Gamma_1,\Gamma_2}\}.
\end{equation}
En effet la proposition précédente permet d'écrire :
\begin{equation}
\widetilde{\mathcal{J}_{\Gamma_1,\Gamma_2}}=\{\gamma\in\Z[PSL_2(\Z)^2]\text{ tel que }\gamma.\tau_2\in\left(\Omega_{\Gamma_1,\Gamma_2}\right)^{\bot}\}.
\end{equation}
Et la définition de $\mathcal{I}_2$ est:
$$\mathcal{I}_2=\{\gamma\in\Z[PSL_2(\Z)^2]\text{ tel que }\delta_2(\gamma.\tau_2)\in H^0+D^0+V^0 \}.$$
Or comme dans le cas du niveau $1$, on donne une nouvelle description de $\Omega_{\Gamma_1,\Gamma_2}$:
$$\Omega_{\Gamma_1,\Gamma_2}=\{\omega\in\Omega_{par}^2(\H^2,V_{\Gamma_1,\Gamma_2})^{P\G^2}\text{ tel que }d\omega=\varphi_0^*\omega=\varphi_1^*\omega=\varphi_2^*\omega=0\}.$$
Ceci permet d'écrire:
$$\{d\omega=\varphi_0^*\omega=0\}^{\bot}+\{d\omega=\varphi_1^*\omega=0\}^{\bot}+\{d\omega=\varphi_2^*\omega=0\}^{\bot}\subset\Omega_{\Gamma_1,\Gamma_2}^{\bot},$$
équivalent pour tout élément $c\in M_2^{pte}(\H^2,\Z)$ à:
$$\delta_2 c\in H^0+D^0+V^0\Rightarrow c\in \Omega_{\Gamma_1,\Gamma_2}^{\bot}.$$
Ceci démontre bien l'inclusion des idéaux $\mathcal{I}_2\subset \widetilde{\mathcal{J}_{\Gamma_1,\Gamma_2}}$.
\end{proof}

Nous avons désormais tous les éléments pour la démonstration du Théorème \ref{thmp2}:

\begin{proof}
En effet, la représentation de $PSL_2(\Z)^2$ fournie par $V_{\Gamma_1,\Gamma_2}$ est irréductible. Ainsi le morphisme de $\Q[P\G^2]$-modules est surjective par le Lemme de Schur:
$$\Q[P\G^2]\to End_{\Q}(V_{\Gamma_1,\Gamma_2}^{\Q}),\quad \gamma\mapsto\left(v\mapsto v|_{\gamma}\right).$$
Ainsi tous les sous-$\Q$-espaces vectoriels de $V_{\Gamma_1,\Gamma_2}^{\Q}$ s'écrivent comme l'annulateur d'un élément de $\Q[P\G^2]$. 
Il suffit ainsi d'obtenir $W_{\Gamma_1,\Gamma_2}^{\Q}=V_{\Gamma_1,\Gamma_2}^{\Q}[\mathcal{J}_{\Gamma_1,\Gamma_2}]+E_{\Gamma_1,\Gamma_2}^{\Q}$ pour démontrer la minimalité comme $\Q$-espace vectoriel de $W_{\Gamma_1,\Gamma_2}^{\Q}$.\par  
Pour cela, on observe que l'application surjective: $\Theta_2:\Z[PSL_2(\Z)^2]\to H_2(P_2)^0$ dont le noyau défini $\mathcal{I}_2$ donne l'exactitude de la suite:
$$0\to V_{\Gamma_1,\Gamma_2}^{\Q}|_{\mathcal{I}_2}\to V_{\Gamma_1,\Gamma_2}^{\Q} \to \left[H_2(P_2)^0\otimes_{\Q[P\G^2]}V_{\Gamma_1,\Gamma_2}^{\Q}\right]^{\Gamma^2}\to 0.$$
Et ce denier peut être identifié à:
$$\left[V_{\Gamma_1,\Gamma_2}^{\Q}\otimes H_2(P_2)^0\right]^{\Gamma^2}
\cong V_{\Gamma_1,\Gamma_2}^{\Q}[\mathcal{J}_{\Gamma_1,\Gamma_2}]+E_{\Gamma_1,\Gamma_2}^{\Q}.$$
Car d'une part :
$$H_2(P_2)^0=\{d\omega=\varphi_0^*\omega=\varphi_1^*\omega=\varphi_2^*\omega=0\}^{\bot}/\left(\{d\omega=\varphi_0^*\omega=0\}^{\bot}+\{d\omega=\varphi_1^*\omega=0\}^{\bot}+\{d\omega=\varphi_2^*\omega=0\}^{\bot}\right).$$
Et d'autre part:
\begin{align*}
\Omega_{\Gamma_1,\Gamma_2}&=\{d\omega=\varphi_0^*\omega=\varphi_1^*\omega=\varphi_2^*\omega=0\}\otimes_{\Q[P\G^2]}V_{\Gamma_1,\Gamma_2}^{\Q},\\
\text{et }E_{\Gamma_1,\Gamma_2}^{\Q}&=V_{\Gamma_1,\Gamma_2}^{\Q}[I_H]+V_{\Gamma_1,\Gamma_2}^{\Q}[I_D]+V_{\Gamma_1,\Gamma_2}^{\Q}[I_V].
\end{align*}
Ceci permet d'obtenir l'égalité: $\dim_{\Q}\left(W_{\Gamma_1,\Gamma_2}^{\Q}\right)=\dim_{\Q}\left(V_{\Gamma_1,\Gamma_2}^{\Q}[\mathcal{J}_{\Gamma_1,\Gamma_2}]+E_{\Gamma_1,\Gamma_2}^{\Q}\right)$. Puis on obtient l'égalité de ces espaces car l'idéal $\mathcal{I}_2$ est inclus dans les idéaux $\widetilde{\mathcal{J}_{\Gamma_1,\Gamma_2}}$, $I_H$ , $I_D$ et $I_V$ donc induit l'inclusion inverse de leurs annulateurs.
\end{proof}

\begin{prop}
On dispose de la décomposition du $\C$-espace vectoriel:
\begin{equation}
W_{\Gamma_1,\Gamma_2}=\Per_{\Gamma_1,\Gamma_2}\oplus \overline{\Per_{\Gamma_1,\Gamma_2}}\oplus E_{\Gamma_1,\Gamma_2}.
\end{equation}
Si l'on suppose de plus $\varepsilon\Gamma_1\varepsilon=\Gamma_1$ et $\varepsilon\Gamma_2\varepsilon=\Gamma_2$ alors on peut considérer les parties paire et impaire et obtenir:
\begin{equation}
W_{\Gamma_1,\Gamma_2}^+=\Per_{\Gamma_1,\Gamma_2}^+\oplus E_{\Gamma_1,\Gamma_2}^+ \text{ et }
W_{\Gamma_1,\Gamma_2}^-=\Per_{\Gamma_1,\Gamma_2}^-\oplus E_{\Gamma_1,\Gamma_2}^-.
\end{equation}
\end{prop}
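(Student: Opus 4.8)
La décomposition cherchée reproduit pour le poids 2 celle déjà établie pour le niveau 1 au chapitre 3 (Théorème \ref{thmp2} l'a préparée en identifiant $W_{\Gamma_1,\Gamma_2}^{\Q}$ comme le plus petit $\Q$-espace contenant $E_{\Gamma_1,\Gamma_2}^{\Q}$ dont l'extension complexe contient $\Per_{\Gamma_1,\Gamma_2}$). L'idée est donc de transporter toute l'analyse faite dans le cas des formes de niveau $1$ via le dictionnaire $\omega_{f_1,f_2}\mapsto P_{f_1,f_2}=\langle\omega_{f_1,f_2},\tau_2\rangle$, en exploitant que l'identité topologique $[(1,1)+(S,S)]\tau_2=\tau_1\times\tau_1$ reste valable puisqu'elle ne dépend que de $\tau_2$ et non du poids. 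Cela donne la factorisation diagonale qui décompose $W_{\Gamma_1,\Gamma_2}$ en une partie provenant des formes paraboliques (holomorphes et anti-holomorphes) et un terme d'erreur eisensteinien $E_{\Gamma_1,\Gamma_2}$.

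\textbf{Première étape : les trois espaces sont dans $W_{\Gamma_1,\Gamma_2}$ et en somme.} Je commencerais par vérifier que $\Per_{\Gamma_1,\Gamma_2}$, son conjugué $\overline{\Per_{\Gamma_1,\Gamma_2}}$ et $E_{\Gamma_1,\Gamma_2}$ sont bien contenus dans $W_{\Gamma_1,\Gamma_2}=W_{\Gamma_1,\Gamma_2}^{\Z}\otimes\C$. Pour les deux premiers, c'est le corollaire $\mathcal{I}_2\subset\widetilde{\mathcal{J}_{\Gamma_1,\Gamma_2}}$ établi juste avant, combiné à l'invariance $\overline{\Omega}^{\bot}=\Omega^{\bot}$ déjà observée au chapitre 3. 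Pour $E_{\Gamma_1,\Gamma_2}$, la définition même comme somme des trois annulateurs $V_{\Gamma_1,\Gamma_2}^{\Z}[I_H]$, $V_{\Gamma_1,\Gamma_2}^{\Z}[I_D]$, $V_{\Gamma_1,\Gamma_2}^{\Z}[I_V]$ force l'appartenance à $W_{\Gamma_1,\Gamma_2}^{\Z}$ puisque $\mathcal{I}_2=I_H\cap I_D\cap I_V$. La disjonction des sommes se montre comme au chapitre 3 : sur $\C$, l'application $\varphi_S:P\mapsto P|_{(1,1)+(S,S)}$ envoie $\Per_{\Gamma_1,\Gamma_2}\oplus\overline{\Per_{\Gamma_1,\Gamma_2}}$ bijectivement dans le produit tensoriel $W_{\Gamma_1}\otimes W_{\Gamma_2}$ modulo les parties eisensteiniennes, via la formule de factorisation, tandis que $E_{\Gamma_1,\Gamma_2}$ constitue précisément le noyau de $\varphi_S$ (aux conjugués près). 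L'injectivité des $\Pi_{\Gamma_j}$ donnée par le théorème d'Eichler-Shimura du poids 2 (énoncé dans la sous-section sur les vecteurs de périodes) garantit que les composantes paraboliques n'interfèrent pas.

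\textbf{Deuxième étape : comptage de dimensions.} Le cœur consiste à vérifier l'égalité $W_{\Gamma_1,\Gamma_2}=\Per_{\Gamma_1,\Gamma_2}\oplus\overline{\Per_{\Gamma_1,\Gamma_2}}\oplus E_{\Gamma_1,\Gamma_2}$, c'est-à-dire que la somme directe exhaustive épuise tout $W_{\Gamma_1,\Gamma_2}$. Comme dans la démonstration du Théorème \ref{thmp2}, j'utiliserais la suite exacte provenant de $\Theta_2:\Z[PSL_2(\Z)^2]\to H_2(P_2)^0$ tensorisée par le $\Q[P\G^2]$-module (irréductible, donc simple) $V_{\Gamma_1,\Gamma_2}^{\Q}$, puis la dualité fournie par l'accouplement bilinéaire symétrique $\Gamma^2$-invariant. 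Cela ramène le calcul de $\dim_{\Q}W_{\Gamma_1,\Gamma_2}^{\Q}$ à celui de $\dim_{\Q}\big(V_{\Gamma_1,\Gamma_2}^{\Q}[\mathcal{J}_{\Gamma_1,\Gamma_2}]+E_{\Gamma_1,\Gamma_2}^{\Q}\big)$, déjà identifié dans le Théorème \ref{thmp2}. Le Théorème des noyaux appliqué à $\varphi_S$ sur $\C$ sépare alors les contributions paraboliques et eisensteiniennes exactement comme au chapitre 3, et l'on reconnaît $\Per_{\Gamma_1,\Gamma_2}^{-,-}=\overline{\Per_{\Gamma_1,\Gamma_2}}$.

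\textbf{La spécialisation selon les parités et l'obstacle principal.} Sous l'hypothèse supplémentaire $\varepsilon\Gamma_j\varepsilon=\Gamma_j$, la conjugaison par $(\varepsilon,\varepsilon)$ stabilise $V_{\Gamma_1,\Gamma_2}$ et, comme $\mathcal{I}_2^{(+,+)}=\mathcal{I}_2$, scinde tout en parties paire et impaire. Il suffira d'appliquer la décomposition globale à chaque facteur $V_{\Gamma_1,\Gamma_2}^{\pm}$ et de vérifier la compatibilité des projecteurs pairs/impairs avec $\varphi_S$, en notant que $S$ et $\varepsilon$ commutent dans $PSL_2(\Z)$. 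L'obstacle technique principal sera de garantir la stabilité par conjugaison complexe et par $(\varepsilon,\varepsilon)$ de $E_{\Gamma_1,\Gamma_2}$ décrit avec les facteurs tordus $(V_{\Gamma_2}^{\Z})^{T}$ et $(V_{\Gamma_1}^{\Z})^{US}$, plutôt que les facteurs simples $1$ et $X_1^{k_1-2}$ du niveau $1$ : il faudra vérifier que l'involution $\delta$ échange correctement $V_{\Gamma_1,\Gamma_2}[I_D]$ et $V_{\Gamma_1,\Gamma_2}[I_D^{-,+}]$, et contrôler l'unique relation de dépendance analogue à $1-X_1^{k_1-2}X_2^{k_2-2}$ dans ce cadre combinatoire où les \og monômes extrémaux \fg{} sont remplacés par des classes $[\Gamma_j g_j]$ fixes sous $T$ ou $US$.
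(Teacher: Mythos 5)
Votre démonstration est correcte et suit essentiellement la même voie que le texte : le mémoire ne donne pas de preuve locale pour cette proposition, mais l'argument que vous proposez (application $\varphi_S:P\mapsto P|_{(1,1)+(S,S)}$, surjectivité vers $\left(W_{\Gamma_1}/E_{\Gamma_1}\right)\otimes\left(W_{\Gamma_2}/E_{\Gamma_2}\right)$ via Eichler--Shimura et l'identité $[(1,1)+(S,S)]\tau_2=\tau_1\times\tau_1$, noyau égal à la partie eisensteinienne, puis théorème des noyaux et scindage par $(\varepsilon,\varepsilon)$) est exactement celui employé au chapitre 3 pour $W_{k_1,k_2}$ et repris dans la preuve du théorème général sur $W_{V_1,V_2}$ pour un couple de représentations, dont la présente proposition est le cas particulier $V_j=V_{\Gamma_j}$. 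Votre remarque finale sur les facteurs $(V_{\Gamma_2}^{\Z})^{T}$ et $(V_{\Gamma_1}^{\Z})^{US}$ et sur l'involution $\delta$ correspond bien aux vérifications effectivement faites dans cette preuve générale.
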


\subsection{Calcul de $V_{\Gamma_1,\Gamma_2}^{\Z}[I_D]$}

Soit $(\Gamma_1,\Gamma_2)$ le plus petit sous-groupe de $PSL_2(\Z)$ contenant $\Gamma_1$ et $\Gamma_2$.\\
Soit $\Gamma_1\cap\Gamma_2$ est le plus grand sous-groupe de $PSL_2(\Z)$ contenu dans $\Gamma_1$ et $\Gamma_2$. On a:
$$\Gamma_1\cap\Gamma_2\subset \Gamma_1,\Gamma_2 \subset (\Gamma_1,\Gamma_2).$$
On peut construire les applications $\Z$-linéaires:
\begin{align}
\Psi:V_{\Gamma_1\cap\Gamma_2}^{\Z}\to V_{\Gamma_1,\Gamma_2}^{\Z},\quad&[\Gamma_1\cap\Gamma_2 g]\mapsto [\Gamma_1 g,\Gamma_2 g],\\
\Phi:V_{\Gamma_1,\Gamma_2}^{\Z}\to V_{(\Gamma_1,\Gamma_2)}^{\Z},\quad&[\Gamma_1 g_1,\Gamma_2 g_2]\mapsto [(\Gamma_1,\Gamma_2) g_1]-[(\Gamma_1,\Gamma_2) g_2].
\end{align}
Ces applications sont des homomorphismes de $PSL_2(\Z)$-module, où l'on muni $V_{\Gamma_1,\Gamma_2}^{\Z}$ de la structure diagonale. En effet, pour tout $\gamma\in PSL_2(Z)$:
\begin{align*}
\Phi([g_1,g_2])|_{\gamma}&=[(\Gamma_1,\Gamma_2) g_1]|_{\gamma}-[(\Gamma_1,\Gamma_2) g_2]|_{\gamma}\\
&=[(\Gamma_1,\Gamma_2) g_1\gamma]-[(\Gamma_1,\Gamma_2) g_2\gamma]=\Phi([g_1\gamma,g_2\gamma]),\\
\text{et }\Psi([g])|_{\gamma}&=[\Gamma_1g,\Gamma_2g]|_{\gamma}=[\Gamma_1g\gamma,\Gamma_2g\gamma]=\Psi([g\gamma]).
\end{align*}

\begin{prop}
La suite suivante est exacte:
\begin{equation}
0\to W_{\Gamma_1\cap\Gamma_2}^{\Q}\stackrel{\Phi}{\to}V_{\Gamma_1,\Gamma_2}^{\Q}[I_D]\stackrel{\Psi}{\to}W_{(\Gamma_1,\Gamma_2)}^{\Q}\to 0.
\end{equation}
De plus, elle est scindée et on peut écrire:
$$V_{\Gamma_1,\Gamma_2}^{\Q}[I_D]=W_{\Gamma_1\cap\Gamma_2}^{\Q}\oplus W_{(\Gamma_1,\Gamma_2)}^{\Q}.$$
\end{prop}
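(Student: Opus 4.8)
Set $K=\Gamma_1\cap\Gamma_2$ and $H=(\Gamma_1,\Gamma_2)$. The plan is to read $V_{\Gamma_1,\Gamma_2}^{\Q}[I_D]$ as the space of \emph{diagonal} Manin relations attached to the permutation module $\C[\Gamma_1\backslash P\G]\otimes\C[\Gamma_2\backslash P\G]$ with the diagonal $P\G$-action: the two generators $(1,1)+(S,S)$ and $(1,1)+(U,U)+(U^2,U^2)$ of $I_D$ are precisely $1+S$ and $1+U+U^2$ acting diagonally, so $V_{\Gamma_1,\Gamma_2}^{\Q}[I_D]=W_\rho$ with $\rho=\C[\Gamma_1\backslash P\G]\otimes\C[\Gamma_2\backslash P\G]$. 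This is the level-$N$ counterpart of the niveau $1$ computation of Chapter $3$, where $V_{k_1,k_2}^{\Q}[I_D]$ was decomposed through the Clebsch--Gordan splitting of $V_{k_1}\otimes V_{k_2}$; here the role of Clebsch--Gordan is played by the decomposition of the diagonal $P\G$-set $\Gamma_1\backslash P\G\times\Gamma_2\backslash P\G$ into orbits. In this language $\Psi$ is the inclusion of the diagonal orbit, $[Kg]\mapsto[\Gamma_1 g,\Gamma_2 g]$, and $\Phi$ is the difference of the two projections, $[\Gamma_1 g_1,\Gamma_2 g_2]\mapsto[Hg_1]-[Hg_2]$.

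First I would dispatch the formal points, all of which rest on the $P\G$-equivariance of $\Psi$ and $\Phi$ already verified above. Since $\Psi(v)|_{(\gamma,\gamma)}=\Psi(v|_\gamma)$, for $v\in W_K^{\Q}$ we get $\Psi(v)|_{(1,1)+(S,S)}=\Psi(v|_{1+S})=0$ and likewise for $1+U+U^2$, so $\Psi(W_K^{\Q})\subset V_{\Gamma_1,\Gamma_2}^{\Q}[I_D]$; running the same computation through $\Phi$ (using $\Phi(w)|_\gamma=\Phi(w|_{(\gamma,\gamma)})$) shows $\Phi\bigl(V_{\Gamma_1,\Gamma_2}^{\Q}[I_D]\bigr)\subset W_H^{\Q}$. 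The map $\Psi$ is injective because distinct cosets $Kg$ yield distinct basis vectors $[\Gamma_1 g,\Gamma_2 g]$ (two coincide only when $g'g^{-1}\in\Gamma_1\cap\Gamma_2$), and $\Phi\circ\Psi=0$ since $[Hg]-[Hg]=0$. Hence $\Psi(W_K^{\Q})\subseteq\ker\Phi\cap V_{\Gamma_1,\Gamma_2}^{\Q}[I_D]$, and what remains is the reverse inclusion together with surjectivity of $\Phi$.

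For surjectivity I would build a section on each summand of the decomposition $W_H=\Per_H\oplus\overline{\Per_H}\oplus E_H$ established for weight $2$. A form $h\in S_2(H)$ is modular for $\Gamma_1$ and $\Gamma_2$ as well; imitating the diagonal integral $Q_h$ of Chapter $3$, with the weight-$2$ polynomial factors replaced by the coset bookkeeping of the two modules, I would produce a vector of $V_{\Gamma_1,\Gamma_2}^{\Q}[I_D]$ whose image under $\Phi$ is the period vector $P_h\in\Per_H$ (and its conjugate for $\overline{\Per_H}$), while the Eisenstein line $E_H=\langle[H]-[HS]\rangle$ is lifted by an explicit element analogous to the niveau $1$ vector $1-X_1^{k_1-2}X_2^{k_2-2}$, whose membership in $V[I_D]$ is checked by hand. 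A section $s$ being available, one has $V_{\Gamma_1,\Gamma_2}^{\Q}[I_D]=\ker\Phi\oplus s(W_H^{\Q})$, and both the exact sequence and the splitting drop out as soon as $\ker\Phi=\Psi(W_K^{\Q})$ is known.

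The hard part will be exactly this last identification, equivalently the dimension equality $\dim_{\Q}V_{\Gamma_1,\Gamma_2}^{\Q}[I_D]=\dim_{\Q}W_K^{\Q}+\dim_{\Q}W_H^{\Q}$. It requires genuinely controlling the orbit structure of the diagonal $P\G$-set $\Gamma_1\backslash P\G\times\Gamma_2\backslash P\G$: the base orbit is isomorphic to $K\backslash P\G$ and accounts precisely for $\Psi(V_K)$, but the remaining orbits are indexed by the double cosets $\Gamma_1\backslash P\G/\Gamma_2$ and each contributes a space $W_{\Gamma_1\cap d^{-1}\Gamma_2 d}$. One must show that these off-diagonal orbits contribute nothing to $\ker\Phi$ and that their total image under $\Phi$ is all of $W_H^{\Q}$. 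Verifying this collapse — rather than the routine equivariance and the section construction — is where the real content of the proposition lies, and it is the step that must be carried out with care, since it is the only place where the interplay between the double-coset combinatorics and the Manin relations is genuinely used.
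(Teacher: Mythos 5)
Your formal steps and your section for the surjectivity onto $W_{(\Gamma_1,\Gamma_2)}^{\Q}$ are essentially the paper's: the Eisenstein line is lifted by the explicit element $[\Gamma_1,\Gamma_2 S]-[\Gamma_1 S,\Gamma_2]$, and a period vector $P_f$, $f\in S_2((\Gamma_1,\Gamma_2))$, is lifted by the twisted-diagonal integral $\langle \sum_{g} f|_g(z)\d z\,[g,gS],\tau_1\rangle$, which is exactly the weight-$2$ avatar of the $Q_h$ of Chapter 3 that you propose. (The arrows in the Proposition are labelled with $\Phi$ and $\Psi$ swapped relative to the definitions that precede it; you follow the definitions, the statement follows the proof, so this is only a notational wrinkle.)

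The genuine gap is the step you yourself flag and then leave open: the identification of the kernel of the difference map inside $V_{\Gamma_1,\Gamma_2}^{\Q}[I_D]$ with the diagonally embedded copy of $W_{\Gamma_1\cap\Gamma_2}^{\Q}$. Your double-coset picture is the right lens --- as a diagonal $P\G$-module, $V_{\Gamma_1}^{\Q}\otimes V_{\Gamma_2}^{\Q}$ splits along the orbits of $\Gamma_1\backslash P\G\times\Gamma_2\backslash P\G$, indexed by $\Gamma_1\backslash P\G/\Gamma_2$, so that $V_{\Gamma_1,\Gamma_2}^{\Q}[I_D]$ splits into the spaces $W_{\Gamma_1\cap d^{-1}\Gamma_2 d}^{\Q}$ --- but the assertion that the off-diagonal orbits contribute nothing to the kernel and that their images exactly fill $W_{(\Gamma_1,\Gamma_2)}^{\Q}$ \emph{is} the content of the proposition, and your proposal contains no argument for it. A dimension count is not free either: there is no a priori reason for $\sum_d \dim W_{\Gamma_1\cap d^{-1}\Gamma_2 d}^{\Q}$ to equal $\dim W_{\Gamma_1\cap\Gamma_2}^{\Q}+\dim W_{(\Gamma_1,\Gamma_2)}^{\Q}$. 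For comparison, the paper disposes of this step in one sentence, claiming that $\Ker(\text{diff})$ equals the image of the diagonal already in all of $V_{\Gamma_1,\Gamma_2}$ and hence in $V_{\Gamma_1,\Gamma_2}[I_D]$; that claim is false in the full module (the kernel of $[g_1,g_2]\mapsto[Hg_1]-[Hg_2]$ is much larger than the diagonal as soon as the indices exceed $1$), so the equality can only hold, if at all, after cutting down by $I_D$, which is precisely where an actual argument is required. Your instinct about where the difficulty sits is therefore sound, but the proposal stops exactly at the point where the proof has to begin.
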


\begin{proof}
Les applications étant des homomorphismes de groupes, la restriction prise aux espaces des périodes à bien un sens.\\
Puis l'application $\Phi$ est injective car $(\Gamma_1\cap\Gamma_2)g=(\Gamma_1 g)\cap(\Gamma_2 g)$. Ainsi sa restriction, $W_{\Gamma_1\cap\Gamma_2}\to V_{\Gamma_1,\Gamma_2}[I_D]$ aussi.\\
On a $\Ker \Psi=Im \Phi$ qui est valide dans $V_{\Gamma_1,\Gamma_2}$ donc aussi dans $V_{\Gamma_1,\Gamma_2}[I_D]$.\\
Enfin, on montre explicitement la surjectivité en construisant un antécédent à tout élément de $W_{(\Gamma_1,\Gamma_2)}$. On commence par l'élément non parabolique on a: 
\begin{align*}
\Psi([\Gamma_1,\Gamma_2 S]-[\Gamma_1 S,\Gamma_2 ])&=2[(\Gamma_1,\Gamma_2)]-2[(\Gamma_1,\Gamma_2)S]\in E_{(\Gamma_1,\Gamma_2)}^{\Q}\\
([\Gamma_1,\Gamma_2 S]-[\Gamma_1 S,\Gamma_2])|_{(1,1)+(S,S)}&=0\\
([\Gamma_1,\Gamma_2 S]-[\Gamma_1 S,\Gamma_2])|_{(1,1)+(U,U)+(U^2,U^2)}&=0
\end{align*}
Car $SU=T^{-1}\in\Gamma_{\infty}\subset\Gamma_j$ et donc $\Gamma_j SU=\Gamma_j$ pour $j=1,2$.\par
Pour $v\in \Per_{(\Gamma_1,\Gamma_2)}$, il existe une forme $f\in S_2((\Gamma_1,\Gamma_2))$ tel que $v=P_f$. Posons:
$$u=\langle \sum_{g\in (\Gamma_1,\Gamma_2)}f|_g(z)dz[g,gS],\tau_1\rangle,\text{ alors }\Psi(u)=P_f-P_f|_S=2P_f.$$
Lorsque $v\in \overline{\Per_{(\Gamma_1,\Gamma_2)}}$, il suffit de prendre le conjugué complexe. L'égalité des dimensions obtenue sur $\C$ sont encore valide sur $\Q$ et la suite reste exacte sur les corps des rationnelles.
\end{proof}

Comme dans le cas du niveau $1$, la démonstration nous donne une méthode de construction valide sur $\Q$ après avoir identifiée les périodes comme étant les coordonnées d'un vecteur de $W_{(\Gamma_1,\Gamma_2)}^{\C}$. La connaissance des générateurs des espaces de longueur $1$ permet donc d'avoir une construction explicite des espaces rationnelles de longueur $2$.

\section{Généralisation aux représentations irréductibles de $PSL_2(\Z)$} 

\subsection{Vecteurs des périodes d'une représentation}

Soit $V^{\Z}$ un $PSL_2(\Z)$-module. Pour tout anneau commutatif $A$, notons $V^A=V^{\Z}\otimes_{\Z} A$ le $A$-module associé. Nous noterons simplement $V=V^{\Z}\otimes\C$ le $\C$-espace vectoriel associé, on le suppose de dimension finie. De plus, munissons le groupe $V^{\Z}$ d'une conjugaison compatible à l'action. C'est l'endomorphisme $\Z$-linéaire vérifiant:
\begin{equation}
c: V^{\Z}\to V^{\Z}\text{ tel que } c(v|_{\gamma})=c(v)|_{\epsilon\gamma\epsilon}\text{ pour tout }\gamma\in PSL_2(\Z)\text{ et }v\in V^{\Z},
\end{equation}
où $\varepsilon=\mat{-1}{0}{\phantom{-}0}{1}$.\par
Définissons alors le $\R$-espace-vectoriel des \textit{formes modulaires paraboliques harmoniques réelles}:
\begin{equation}
\Omega_{V}=\{\omega\in\Omega_{par}^1(\H,V)\text{ tel que }\omega(\gamma.z)|_{\gamma}=\omega(z)\text{ et }\bar{\omega}(z)=c(\omega)(-\bar{z})\}.
\end{equation}
On décompose celui-ci en $\Omega_V=\Omega_V^{\holo}\oplus c(\Omega_V^{\holo})$ où:
$$\Omega_V^{\holo}=\{\omega\in\Omega_V\text{ holomorphe}\}.$$
Considérons l'espace des \textit{vecteurs des périodes}:
\begin{equation}
\Per_{V}=\{\langle \omega,\tau_1\rangle\text{ pour tout }\omega\in\Omega_{V}^{\holo}\},
\end{equation}
et celui déterminé par les relations de Manin:
\begin{equation}
W_{V}^{\Z}=\{v\in V^{\Z}\text{ tel que }v|_{1+S}=v|_{1+U+U^2}=0\}.
\end{equation}
Il contient l'espace $E_{V}^{\Z}=(V^{\Z})^T|_{(1-S)}$.
\begin{prop}
On dispose de la décomposition:
\begin{equation}
W_{V}=\Per_{V}\oplus\overline{\Per_{V}}\oplus E_V^{\C}.
\end{equation}
\end{prop}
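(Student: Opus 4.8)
The plan is to deduce this decomposition from the parabolic cohomology of $PSL_2(\Z)$, exactly as in the Eichler--Shimura remark of Chapter $1$, thereby reducing the geometric assertion to an identification of the period map with the natural projection onto $H^1_{par}(\Gamma,V)$. First I would invoke the general fact, valid for any $\Gamma$-module and already recalled in the excerpt: the assignment $\varphi\mapsto\varphi(S)$ identifies the parabolic cocycles $Z^1_{par}(\Gamma,V)$ with $W_V$, and carries the coboundaries $B^1_{par}(\Gamma,V)$ onto $V^T|_{(1-S)}=E_V^{\C}$. This gives at once the splitting
\begin{equation*}
W_V=H^1_{par}(\Gamma,V)\oplus E_V^{\C},
\end{equation*}
where the first summand is realized by a system of values at $S$ of representing cocycles, and in particular $E_V^{\C}\subset W_V$. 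It then remains to exhibit an isomorphism between $H^1_{par}(\Gamma,V)$ and $\Per_V\oplus\overline{\Per_V}$ compatible with this identification, and to check that the period vectors themselves land in $W_V$.

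Second, I would construct the period map $\Pi_V:\Omega_V\to V$, $\omega\mapsto\langle\omega,\tau_1\rangle$, and attach to each holomorphic $\omega\in\Omega_V^{\holo}$ the primitive $F(z)=\int^{i\infty}_z\omega$, which converges because $\omega$ is parabolic and vanishes at the cusp. The invariance $\omega(\gamma.z)|_{\gamma}=\omega(z)$ shows that $\gamma\mapsto F|_{(\gamma-1)}$ is a well-defined element of $V$ independent of $z$, while $\omega|_T=\omega$ together with the vanishing at $i\infty$ gives $\varphi_\omega(T)=0$; hence $\varphi_\omega\in Z^1_{par}(\Gamma,V)$ and $\varphi_\omega(S)=\langle\omega,\tau_1\rangle$, in perfect analogy with the period-polynomial computation of Chapter $1$. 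By the identification above this proves $\Per_V\subset W_V$. Applying the conjugation $c$ and using $\overline{\omega}(z)=c(\omega)(-\bar z)$ together with the stability of $\tau_1$ under the involution $z\mapsto-\bar z$, one gets $\overline{\langle\omega,\tau_1\rangle}=\langle c(\omega),\tau_1\rangle$, so that $\overline{\Per_V}=\Pi_V\bigl(c(\Omega_V^{\holo})\bigr)\subset W_V$ as well.

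Third, I would establish the injectivity of $\Pi_V$ and finish by a dimension count. Injectivity is the decisive point: for the standard modules $V_k$ it is the nondegeneracy of the Petersson pairing, and I would reproduce that argument abstractly, pairing $\Pi_V(\omega)$ against $\omega$ through a $\Gamma$-invariant bilinear form on $V$ furnished by the conjugation $c$, via a Stokes computation over a fundamental domain. This forces $\Per_V\cap\overline{\Per_V}=0$ and $\bigl(\Per_V\oplus\overline{\Per_V}\bigr)\cap E_V^{\C}=0$ simultaneously. Combined with the equality $\dim_{\C}H^1_{par}(\Gamma,V)=2\dim_{\C}\Omega_V^{\holo}$ and the given splitting $\Omega_V=\Omega_V^{\holo}\oplus c(\Omega_V^{\holo})$, the injective period map must then surject onto a complement of $E_V^{\C}$, yielding $W_V=\Per_V\oplus\overline{\Per_V}\oplus E_V^{\C}$.

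The hard part will be this last step for a general irreducible representation: producing the $\Gamma$-invariant pairing on $V$ and the dimension equality $\dim_{\C}H^1_{par}(\Gamma,V)=2\dim_{\C}\Omega_V^{\holo}$ without the explicit period-polynomial machinery available in the $V_k$ case. I expect this to require a Hodge-theoretic input, namely the decomposition of the (parabolic) cohomology of the local system attached to $V$ on $\Gamma\backslash\H$ into holomorphic and antiholomorphic parts, which would at the same time fix the dimension and match the two summands with $\Omega_V^{\holo}$ and $c(\Omega_V^{\holo})$. Once these two ingredients are in place, everything else is the formal bookkeeping described above.
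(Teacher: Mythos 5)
Your skeleton is the same as the paper's: identify $Z^1_{par}(\Gamma,V)$ with $W_V$ via $\varphi\mapsto\varphi(S)$, send the coboundaries onto $E_V^{\C}=V^T|_{(1-S)}$, attach to each $\omega\in\Omega_V^{\holo}$ the cocycle $\gamma\mapsto\langle\omega,\{i\infty,\gamma^{-1}i\infty\}\rangle$, and use $\overline{\langle\omega,\tau_1\rangle}=\langle c(\omega),\tau_1\rangle$ together with the stability of $\tau_1$ under $z\mapsto-\bar z$ to place $\overline{\Per_V}$ inside $W_V$. Where you diverge is on the decisive bijectivity statement. You propose to prove injectivity of the period map by a Petersson-type Stokes computation against a $\Gamma$-invariant bilinear form on $V$, and then to conclude by the dimension equality $\dim_{\C}H^1_{par}(\Gamma,V)=2\dim_{\C}\Omega_V^{\holo}$, which you defer to Hodge theory. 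The paper instead asserts that the map $\Omega_V^{\holo}\oplus c(\Omega_V^{\holo})\to Z^1_{par}(\Gamma,V)$ admits an explicit inverse: from the cocycle one reconstructs all periods $\langle\omega,\{a,b\}\rangle$ for $a,b\in\pte$ using the surjectivity of $\Theta_1$ (continued fractions / modular symbols), obtaining a map on $\Z[\pte]^0$, and the invariance conditions then recover $\omega$ by "la correspondance de Riemann--Roch". Neither treatment is fully worked out, but the two routes are genuinely different: yours is the classical Eichler--Shimura argument for $V_k$ transposed abstractly, the paper's is a reconstruction argument in the spirit of the modular-symbol machinery built in Chapters 3 and 4.

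One concrete caveat about your route: the $\Gamma$-invariant pairing you want to "reproduce abstractly" from the $V_k$ case does not exist for an arbitrary irreducible $V$ — a nondegenerate invariant bilinear form on $V$ requires $V$ to be self-dual, and the conjugation $c$ is only semilinear with respect to $\gamma\mapsto\varepsilon\gamma\varepsilon$, so it does not by itself furnish such a form. You would either have to restrict to self-dual $V$ (which covers $V_k$ and $\C[\Gamma\backslash SL_2(\Z)]$, the cases actually used in the thesis) or replace the pairing argument by the reconstruction-from-periods argument the paper uses, which makes no self-duality assumption. The dimension input $\dim H^1_{par}=2\dim\Omega_V^{\holo}$ that you flag is indeed the other missing ingredient, and the paper does not supply it explicitly either; it is absorbed into the appeal to the Riemann--Roch correspondence for the local system on $\Gamma\backslash\H$. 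So your plan is viable but needs the self-duality hypothesis made explicit, or the injectivity step replaced.
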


\begin{proof}
Définissons l'application:
\begin{align*}
\Omega^{\holo}_{V}\oplus c(\Omega_{V}^{\holo})&\to Z_{par}^1(PSL_2(\Z),V)=\{\varphi:PSL_2(\Z)\to V;\varphi(T)=0\text{ et }\varphi(\gamma_1\gamma_2)=\varphi(\gamma_1)|_{\gamma_2}+\varphi(\gamma_2)\}\\
\omega&\mapsto \varphi_{\omega}:\left(\gamma\mapsto \langle\omega,\{i\infty,\gamma^{-1}i\infty\}\rangle\right).
\end{align*}
Cette application est bien définie car pour tout $\omega$, $\varphi_{\omega}(T)=0$ et:
\begin{multline*}
\varphi_{\omega}(\gamma_1\gamma_2)=\langle\omega,\{i\infty,(\gamma_1\gamma_2)^{-1}i\infty\}\rangle\\
=\langle\omega,\{\gamma_2^{-1}i\infty,\gamma_2^{-1}\gamma_1^{-1}i\infty\}\rangle+\langle\omega,\{i\infty,\gamma_2^{-1}i\infty\}\rangle
=\varphi_{\omega}(\gamma_1)|_{\gamma_2}+\varphi_{\omega}(\gamma_2).
\end{multline*}
C'est en faite un isomorphisme car on peut construire une application réciproque. En effet, le Théorème d'Eichler-Shimura où dans notre contexte la surjectivité de l'application $\Theta_1:\Z[PSL_2(\Z)]\to H_1(P_1)^0$ permet de reconstruire à partir de $\varphi_{\omega}$ les périodes
$\langle \omega,\{a,b\}\rangle$ pour tout $a,b\in\pte$. Ces valeurs permette d'obtenir:
$$Div^0(\pte)\to V,\quad (b)-(a)\mapsto \langle \omega,\{a,b\}\rangle.$$
et les conditions d'invariances déterminent entièrement la forme par la correspondance de Riemann-Roch.\par
Pour utiliser un résultat de cohomologie générale de $PSL_2(\Z)$, il nous reste à déterminer les cobords:
$$B_{par}^1(PSL_2(\Z),V)=\{\gamma\mapsto v|_{1-\gamma}\text{ pour }v\in V^T\}.$$
L'isomorphisme $\varphi\mapsto\varphi(S)$ donne donc bien $W_V=\Per_{V}\oplus\overline{\Per_V}\oplus V^T|_{(1-S)}$.\\
Car on a $\overline{\langle\omega,\tau_1\rangle}=\langle c(\omega),\tau_1\rangle$ par invariance de $\omega\in\Omega_V^{\holo}$.
\end{proof}

\subsection{Vecteurs des bipériodes d'un couple de représentations}

Soient $V_1^{\Z}$ et $V_2^{\Z}$ des $P\G^2$-modules fournissant une représentation sur $\Q$ irréductible et de dimension finie. On suppose de plus qu'il existe des conjugaisons:
\begin{equation*}
c_j: V_j^{\Z}\to V_j^{\Z}\text{ tel que } c_j(v|_{\gamma})=c_j(v)|_{\epsilon\gamma\epsilon}\text{ pour tout }\gamma\in PSL_2(\Z)\text{ et }v\in V_j^{\Z},\text{ pour }j=1,2.
\end{equation*}
Définissons:
\begin{equation}
\Omega_{V_1,V_2}^{\holo}=\Omega_{V_1}^{\holo}\wedge \Omega_{V_2}^{\holo}\subset \Omega^2_{par}(\H^2,V_1\otimes V_2).
\end{equation}
Ceci permet de considérer l'espace des bipériodes:
\begin{equation}
\Per_{V_1,V_2}=\{\langle \omega,\tau_2\rangle\text{ pour }\omega\in\Omega_{V_1,V_2}^{\holo}\}.
\end{equation}
Ainsi que l'espace vérifiant les relations de Manin de longueur $2$:
\begin{equation}
W_{V_1,V_2}^{\Z}=\{v\in V_1^{\Z}\otimes V_2^{\Z}\text{ tel que }v|_g=0\text{ pour tout }g\in\widetilde{\mathcal{I}_2}\}.
\end{equation}
Puis définissons le sous-espace particulier de $W_{V_1,V_2}^{\Z}$ suivant:
\begin{equation}
E_{V_1,V_2}^{\Z}=\left(W_{1}^{\Z}\otimes (V_2^{\Z})^T\right)+\left((V_1^{\Z})^{US}\otimes W_{2}^{\Z}\right)+ (V_1^{\Z}\otimes V_2^{\Z})[I_D].
\end{equation}

\begin{thm}
On dispose de la décomposition:
\begin{equation}
W_{V_1,V_2}=\Per_{V_1,V_2}\oplus \overline{\Per_{V_1,V_2}}\oplus E_{V_1,V_2}.
\end{equation}
\end{thm}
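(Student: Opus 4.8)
L'objectif est d'obtenir la décomposition
\[
W_{V_1,V_2}=\Per_{V_1,V_2}\oplus \overline{\Per_{V_1,V_2}}\oplus E_{V_1,V_2},
\]
en décalquant la preuve déjà établie au Théorème \ref{thmp2} pour le poids $2$ et au Chapitre $3$ pour le niveau $1$. La stratégie consiste à réduire toute l'analyse à l'étude topologique de $\tau_2$ et de son bord, puis à exploiter l'irréductibilité des représentations $V_1$ et $V_2$. Tout d'abord je rappelle que l'accouplement $\langle\,\cdot\,,\tau_2\rangle$ envoie $\Omega_{V_1,V_2}^{\holo}$ sur $\Per_{V_1,V_2}$, et que la propriété d'invariance $\omega(\gamma_1 z_1,\gamma_2 z_2)|_{(\gamma_1,\gamma_2)}=\omega(z_1,z_2)$ (identique à celle obtenue dans la proposition précédant le Corollaire sur $\mathcal{I}_2\subset\widetilde{\mathcal{J}_{\Gamma_1,\Gamma_2}}$) se transmet aux représentations abstraites $V_1,V_2$ grâce aux conjugaisons $c_1,c_2$. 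Ceci établit l'inclusion $\Per_{V_1,V_2}\subset W_{V_1,V_2}$.

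Ensuite je reproduis la caractérisation de $\Omega_{V_1,V_2}$ par les équations $d\omega=\varphi_0^*\omega=\varphi_1^*\omega=\varphi_2^*\omega=0$. C'est le point où le passage du cas concret (poids $2$ ou niveau $1$) au cas d'une représentation abstraite demande le plus de soin : dans les versions antérieures, l'annulation des termes croisés utilisait explicitement $M_2(\Gamma)=\{0\}$ ou $k_j>2$ pour séparer les coefficients des monômes en $X_1,X_2$. Ici il faut remplacer cet argument par l'hypothèse d'irréductibilité sur $\Q$ de $V_1$ et $V_2$, qui garantit que les sous-représentations triviales n'apparaissent pas et force l'annulation des formes mixtes. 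Cette étape — reconstituer le Théorème \ref{decompomegathm} dans le cadre des représentations — constitue le cœur technique et l'obstacle principal : il faut vérifier que la dualité $\Omega_{V_1,V_2}^{\bot}=(\Omega_{V_1,V_2}^{\holo})^{\bot}$ et la description de $\Omega_{V_1,V_2}$ restent valides sans structure polynomiale sous-jacente.

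Une fois cette caractérisation acquise, l'inclusion $\mathcal{I}_2\subset\widetilde{\mathcal{J}_{V_1,V_2}}$ résulte, comme pour les niveaux supérieurs, du fait que $\delta_2 c\in H^0+D^0+V^0$ entraîne $c\in\Omega_{V_1,V_2}^{\bot}$. Je définis alors l'idéal annulateur $\mathcal{J}_{V_1,V_2}$ et j'utilise l'irréductibilité pour obtenir, via le Lemme de Schur, la surjectivité de $\Q[P\G^2]\to\mathrm{End}_{\Q}(V_1^{\Q}\otimes V_2^{\Q})$, ce qui rend $V_1^{\Q}\otimes V_2^{\Q}$ un $\Q[P\G^2]$-module simple. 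La suite exacte associée à $\Theta_2:\Z[P\G^2]\to H_2(P_2)^0$ reste exacte après tensorisation par ce module simple, et l'identification
\[
H_2(P_2)^0\otimes_{\Q[P\G^2]}(V_1^{\Q}\otimes V_2^{\Q})\cong (V_1^{\Q}\otimes V_2^{\Q})[\mathcal{J}_{V_1,V_2}]+E_{V_1,V_2}^{\Q}
\]
donne l'égalité des dimensions $W_{V_1,V_2}^{\Q}=(V_1^{\Q}\otimes V_2^{\Q})[\mathcal{J}_{V_1,V_2}]+E_{V_1,V_2}^{\Q}$.

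Enfin, pour passer de cette description à la décomposition en somme directe annoncée, je calcule l'intersection des trois constituants de $E_{V_1,V_2}$ en suivant la preuve du Théorème de décomposition de $W_{k_1,k_2}$ : l'application $\varphi_S:P\mapsto P|_{(1,1)+(S,S)}$ fournie par la relation $[(1,1)+(S,S)]\tau_2=\tau_1\times\tau_1$ admet une section sur $\C$ dont l'image est exactement $\Per_{V_1,V_2}\oplus\overline{\Per_{V_1,V_2}}$, et son noyau est $E_{V_1,V_2}$. Il faut alors vérifier la disjonction de $\Per_{V_1,V_2}$, de son conjugué et de $E_{V_1,V_2}$, qui découle de l'injectivité des applications $\Pi_{V_j}:\Omega_{V_j}^{\holo}\oplus c(\Omega_{V_j}^{\holo})\to W_{V_j}/E_{V_j}$ établie à la proposition précédente et du fait que $\Per_{V_1,V_2}^{-,-}=\overline{\Per_{V_1,V_2}}$. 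Le Théorème des noyaux appliqué à $\varphi_S\otimes\C$ conclut alors la démonstration.
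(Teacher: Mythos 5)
Votre conclusion co\"{\i}ncide avec la d\'emonstration du texte : celle-ci repose uniquement sur l'application $\varphi_S$ d\'efinie sur $W_{V_1,V_2}+\delta(W_{V_1,V_2})$ par $v\mapsto \left[v|_{(1,1)+(S,S)}\right]$, dont la surjectivit\'e sur $\left(W_{V_1}/E_{V_1}\right)\otimes\left(W_{V_2}/E_{V_2}\right)$ se d\'eduit du th\'eor\`eme de longueur $1$ pour les repr\'esentations via $[(1,1)+(S,S)]\tau_2=\tau_1\times\tau_1$, dont le noyau se calcule alg\'ebriquement comme $(V_1\otimes V_2)[I_H]+(V_1\otimes V_2)[I_V]+(V_1\otimes V_2)[I_D]+(V_1\otimes V_2)[I_D^{-,+}]$, et dont la restriction aux quatre espaces $\Per^{\epsilon_1,\epsilon_2}_{V_1,V_2}$ est injective. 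En revanche, le d\'etour que vous pr\'esentez comme le c\oe{}ur technique --- reconstituer l'analogue du Th\'eor\`eme \ref{decompomegathm}, c'est-\`a-dire la caract\'erisation de $\Omega_{V_1,V_2}$ par $d\omega=\varphi_0^*\omega=\varphi_1^*\omega=\varphi_2^*\omega=0$, puis l'\'egalit\'e de dimensions via le lemme de Schur --- n'est pas utilis\'e par le texte pour ce th\'eor\`eme (il ne sert qu'aux \'enonc\'es de minimalit\'e), et votre justification de l'annulation des termes crois\'es par l'irr\'eductibilit\'e de $V_1$ et $V_2$ ne tient pas : dans le cas polynomial, cette annulation provient de la s\'eparation des coefficients des mon\^omes en $X_1$ et $X_2$, donc de la graduation de $V_{k_1,k_2}$ et de l'hypoth\`ese $k_1+k_2>4$, structure qui n'a pas d'analogue pour des repr\'esentations abstraites et \`a laquelle l'irr\'eductibilit\'e ne suppl\'ee pas. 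Seule l'inclusion facile (les bords et les cha\^{\i}nes transverses sont orthogonaux \`a $\Omega_{V_1}^{\holo}\wedge\Omega_{V_2}^{\holo}$) est n\'ecessaire pour obtenir $\Per_{V_1,V_2}\subset W_{V_1,V_2}$ ; en supprimant ce d\'etour, votre d\'emonstration se r\'eduit exactement \`a celle du texte. Notez enfin que l'image de la section de $\varphi_S$ est $\bigoplus_{\epsilon_1,\epsilon_2}\Per^{\epsilon_1,\epsilon_2}_{V_1,V_2}$ et non $\Per_{V_1,V_2}\oplus\overline{\Per_{V_1,V_2}}$ : c'est pr\'ecis\'ement pour cela que le domaine doit \^etre \'elargi \`a $W_{V_1,V_2}+\delta(W_{V_1,V_2})$ avant d'intersecter avec $W_{V_1,V_2}$.
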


\begin{proof} Pour tout idéal à gauche $I$ tel que $\widetilde{I}$ agisse sur un module $M$ à droite, notons:
$$M[I]=\{m\in M\text{ tel que }m|_{\gamma}=0\text{ pour tout }\gamma\in \widetilde{I}\}.$$
Cette notation donne $W_{j}^{\Z}=V_j^{\Z}[\mathcal{I}_1]$ et $W_{V_1,V_2}^{\Z}=(V_1^{\Z}\otimes V_2^{\Z})[\mathcal{I}_2]$.\\
On va à nouveau introduite l'application de multiplication par $(1,1)+(S,S)$ pour cela nous élargissons l'espace de départ pour bien obtenir une application surjective. Définissons:
$$Per^{+,+}_{V_1,V_2}=\Per_{V_1,V_2}\text{ et }Per^{-,-}_{V_1,V_2}=(c_1\otimes c_2)(\Per_{V_1,V_2}),$$
ce sont tous les deux des sous-espaces de $W_{V_1,V_2}$ car on a vu que $\mathcal{I}_2$ est stable par conjugaison diagonale par $\varepsilon$. Puis posons:
$$Per^{-,+}_{V_1,V_2}=(c_1\otimes id)(\Per_{V_1,V_2})\text{ et }Per^{+,-}_{V_1,V_2}=(id\otimes c_2)(\Per_{V_1,V_2}),$$
ils sont eux dans $(V_1\otimes V_2)[\mathcal{I}_2^{-,+}]$. C'est quatre espaces sont clairement deux à deux disjoints. Posons:
$$\delta:V_1\otimes V_2\to V_1\otimes V_2,\quad v_1\otimes v_2\mapsto c_1(v_1)\otimes v_2.$$
Cette involution est bien $\C$-linéaire et échange les espaces $W_{V_1,V_2}$ et $\delta(W_{V_1,V_2})=(V_1\otimes V_2)[\mathcal{I}_2^{-,+}]$. Posons $E_M=M^T|_{(1-S)}$ et définissons:
$$\phi_S:W_{V_1,V_2}+\delta(W_{V_1,V_2})\to \left(W_{V_1}/E_{V_1}\right)\otimes\left(W_{V_2}/E_{V_2}\right),$$
la projection dans l'espace quotient de l'image par $(1,1)+(S,S)$.\par
L'étude des relations $\mathcal{I}_2$ montre que cette application est bien définie. Elle est surjective.\par
En effet, un tenseur pûr de l'image $P_1\otimes P_2$ est associé à $\omega_1\in\Omega_{V_1}^{\holo}\oplus c_1(\Omega_{V_1}^{\holo})$ et $\omega_2\in\Omega_{V_2}^{\holo}\oplus c_2(\Omega_{V_2}^{\holo})$ vérifiant $P_j=\langle \omega_j,\tau_1\rangle$ pour $j=1,2$ d'après le théorème précédent. Ainsi l'élément:
$$P=\langle \omega_1\wedge\omega_2,\tau_2\rangle\in \bigoplus_{\epsilon_1,\epsilon_2} Per^{\epsilon_1,\epsilon_2}_{V_1,V_2}\subset W_{V_1,V_2}+\delta(W_{V_1,V_2}),$$
admet pour image par $\varphi_S$ l'élément $P_1\otimes P_2$ car $[(1,1)+(S,S)]\tau_2=\tau_1\times\tau_1$.\par
Calculons désormais le noyau de $\varphi_S$, on a:
$$\Ker(\varphi_S)=(V_1\otimes V_2)[I_H]+ (V_1\otimes V_2)[I_V]+ (V_1\otimes V_2)[I_D] + (V_1\otimes V_2)[I_D^{-,+}].$$
La démonstration pour le cas du niveau $1$ reposait entièrement sur l'étude des relations et donc s'adapte sans difficulté. Ceci permet d'obtenir:
\begin{align*}
\Ker(\varphi_S)\cap W_{V_1,V_2}^{\Z}&=(V_1^{\Z}\otimes V_2^{\Z})[I_H]+ (V_1^{\Z}\otimes V_2^{\Z})[I_V]+ (V_1^{\Z}\otimes V_2^{\Z})[I_D],\\
\text{et }\Ker(\varphi_S)\cap \delta(W_{V_1,V_2}^{\Z})&=(V_1^{\Z}\otimes V_2^{\Z})[I_H]+ (V_1^{\Z}\otimes V_2^{\Z})[I_V]+ (V_1^{\Z}\otimes V_2^{\Z})[I_D^{-,+}].
\end{align*}
Ceci nous donnant:
$$W_{V_1,V_2}=\Per_{V_1,V_2}\oplus Per^{-,-}_{V_1,V_2}\oplus \left(E_{V_1,V_2}^{\Z}\otimes \C\right).$$
Il reste à spécifier certain de ces espaces.\par
Pour tout $\omega\in\Omega_{V_1,V_2}^{\holo}$, on a: 
$(c_1\otimes c_2)\langle \omega,\tau_2\rangle=\overline{\langle \omega,\tau_2\rangle},$
par invariance de la conjugaison sur $\omega$ et car $\tau_2$ est stable par $z\mapsto(-\bar{z})$. Ceci donne:
$$\Per_{V_1,V_2}^{-,-}=(c_1\otimes c_2)\Per_{V_1,V_2}=\overline{\Per_{V_1,V_2}}.$$\par
D'autre part, on a:
$$(V_1^{\Z}\otimes V_2^{\Z})[I_H]=W^{\Z}_{1}\otimes (V_2^{\Z})^T\text{ et }(V_1^{\Z}\otimes V_2^{\Z})[I_D]=(V_1^{\Z})^{US}\otimes W_{2}^{\Z}.$$
Car les relations déterminant les idéaux $I_H$ et $I_V$ se scinde respectivement selon deux idéaux $(I_1,1)$ et $(1,I_2)$ qui commutent. 
Et on applique alors la formule:
$$(V_1^{\Z}\otimes V_2^{\Z})[(I_1,1)+(1,I_2)]=V_1^{\Z}[I_1]\otimes V_2^{\Z}[I_2].$$
\end{proof}

\begin{rem}
Ceci permet de caractériser les bipériodes de manière précise. Seul le calcul de $V_1^{\Z}\otimes V_2^{\Z}[I_D]$ semble dépendre du contexte bien qu'il soit un espace vérifiant les relations Manin de longueur $1$.
\end{rem}

\subsection{Vecteurs des multipériodes d'une famille de représentations}

On remarque que l'action de $PSL_2(\Z)$ sur $\H$ est souvent complétée par celle de $\varepsilon$ donnée par $\varepsilon.z=-\bar{z}$ pour tout $z\in\H$. Ainsi plutôt de que de choisir des actions de $PSL_2(\Z)$ compatible à une conjugaison, on prendra des actions de $PGL_2(\Z)=\{\mat{a}{b}{c}{d};ad-bc=\pm 1\}/\{\pm id\}$ et l'action de $\varepsilon$ correspondra à la conjugaison.\par

Soit $\rho:PGL_2(\Z)\to GL(V_{\rho}^{\Z})$ une représentation irréductible sur $\Z$ de $PGL_2(\Z)$. On suppose que $V_{\rho}^{\Z}$ est un $\Z$-module libre de rang fini sans torsion. Ainsi $\rho$ s'entend au corps des complexes en posant $V_{\rho}=V_{\rho}^{\Z}\otimes\C$. Alors on peut construire l'espace des \textit{formes modulaires paraboliques et harmoniques} pour cette représentation:
\begin{equation}
\Omega_{\rho}=\{\omega\in\Omega^1_{par}(\H,V_{\rho})\text{ tel que }\omega(\gamma.z)=\rho(\gamma)\left[\omega(z)\right]\text{ pour }\gamma\in PGL_2(\Z)\text{ et }d\omega=0\}.
\end{equation}
Notons $\Omega_{\rho}^{\holo}$ le sous-espace des formes holomorphes. Alors $\rho(\varepsilon)\Omega_{\rho}^{\holo}$ est celui des formes antiholomorphes.

\begin{prop}
L'application linéaire suivante est injective:
\begin{equation}
\Omega_{\rho}^{\holo}\to V_{\rho},\quad \omega\mapsto \langle \omega,\tau_1\rangle.
\end{equation}
Nous noterons $Per(\rho)$ son image.\\
Posons : $W_{\rho}=\{v\in V_{\rho}\text{ tel que }\rho(1+S)(v)=\rho(1+U+U^2)(v)=0\}$. Alors on a : 
\begin{equation}
W_{\rho}=Per(\rho)\oplus\overline{Per(\rho)}\oplus \rho(1+S)(V_{\rho}^T).
\end{equation}
\end{prop}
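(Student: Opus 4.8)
The plan is to reproduce, in this representation-theoretic setting, the cohomological proof of the preceding proposition for a $PSL_2(\Z)$-module $V$, with the right action $|$ and the conjugation $c$ replaced throughout by the left action $\rho$ and the involution $\rho(\varepsilon)$. The backbone of the argument is an Eichler--Shimura isomorphism identifying the space of harmonic $\rho$-forms $\Omega_\rho^{\holo}\oplus\rho(\varepsilon)\Omega_\rho^{\holo}$ with the parabolic cohomology $H^1_{par}(PSL_2(\Z),V_\rho)$; the stated decomposition of $W_\rho$ then falls out by splitting cocycles into their holomorphic and antiholomorphic parts and reading off the coboundary piece.

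First I would attach to each harmonic form $\omega$ the map $\varphi_\omega(\gamma)=\langle\omega,\{i\infty,\gamma^{-1}i\infty\}\rangle$. Using the Chasles relation for modular symbols together with the equivariance $\omega(\gamma z)=\rho(\gamma)[\omega(z)]$, which yields $\langle\omega,\gamma.C\rangle=\rho(\gamma)\langle\omega,C\rangle$, one checks that $\varphi_\omega$ is a parabolic cocycle: it satisfies the twisted cocycle identity and $\varphi_\omega(T)=\langle\omega,\{i\infty,i\infty\}\rangle=0$ since $T$ fixes $i\infty$. Because $\tau_1=\{i\infty,0\}=\{i\infty,S.i\infty\}$ and $S^{-1}=S$, the period is recovered as $\langle\omega,\tau_1\rangle=\varphi_\omega(S)$. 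The injectivity statement (1) then drops out immediately: if $\langle\omega,\tau_1\rangle=\varphi_\omega(S)=0$ for $\omega\in\Omega_\rho^{\holo}$, then since $\varphi_\omega(T)=0$ and $S,T$ generate $PSL_2(\Z)$, the cocycle identity forces $\varphi_\omega\equiv 0$; hence every period $\langle\omega,\{a,b\}\rangle$ vanishes, because the surjectivity of $\Theta_1$ (Manin's theorem) writes every class of $\Z[\pte]^0$ as a combination of translates of $(\gamma i\infty)-(\gamma 0)$, and a harmonic form with vanishing periods along all of $H_1^{pte}(\H,\Z)$ is zero.

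For part (2) I would upgrade $\omega\mapsto\varphi_\omega$ to an isomorphism onto $Z^1_{par}(PSL_2(\Z),V_\rho)$; injectivity is the argument just given, and surjectivity is the genuine Eichler--Shimura content. Then the assignment $\varphi\mapsto\varphi(S)$ identifies $Z^1_{par}$ with $W_\rho$: the computations $\varphi(S)|_{1+S}=\varphi(S^2)=0$ and $\varphi(S)|_{1+U+U^2}=0$ (via $U=ST^{-1}$ and $\varphi(T)=0$, exactly as recalled in the Chapter~1 remark) land $\varphi(S)$ in $W_\rho$, and the map is bijective because a cocycle is determined by $\varphi(S)$ and $\varphi(T)$. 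Evaluating the parabolic coboundaries $B^1_{par}$ under this identification gives the Eisenstein summand $\rho(1-S)(V_\rho^T)$, the exact analogue of the term $(V^\Z)^T|_{1-S}$ of the previous proposition; note $\rho(1+S)\rho(1-S)=\rho(1-S^2)=0$, so this space indeed sits inside $W_\rho$. Finally, splitting $Z^1_{par}$ into its holomorphic and antiholomorphic contributions, with $\rho(\varepsilon)$ implementing complex conjugation on periods (so that $\rho(\varepsilon)Per(\rho)=\overline{Per(\rho)}$, using that $\tau_1$ is stable under $z\mapsto-\bar z$), produces $W_\rho=Per(\rho)\oplus\overline{Per(\rho)}\oplus\rho(1-S)(V_\rho^T)$.

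The main obstacle is the surjectivity half of the Eichler--Shimura isomorphism in this abstract generality: for an arbitrary irreducible $\rho$ of $PGL_2(\Z)$ there is no preferred geometric local system, so one must manufacture, from a given parabolic cocycle, an honest harmonic $V_\rho$-valued $1$-form realizing it. Concretely this reduces to the dimension count $\dim_\C\bigl(\Omega_\rho^{\holo}\oplus\rho(\varepsilon)\Omega_\rho^{\holo}\bigr)=\dim_\C Z^1_{par}(PSL_2(\Z),V_\rho)$, i.e. to the Hodge-type decomposition of parabolic cohomology into holomorphic and antiholomorphic pieces. As in the earlier proofs I expect to recover the form from its prescribed periods through the invariance conditions and the Riemann--Roch correspondence on $\pte$, the delicate point being to guarantee that the reconstructed form is genuinely holomorphic-plus-antiholomorphic (parabolic and harmonic) rather than merely matching the period constraints.
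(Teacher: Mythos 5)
Your proposal follows the paper's proof essentially verbatim: the same parabolic cocycle $\varphi_\omega(\gamma)=\langle\omega,\{\gamma i\infty,i\infty\}\rangle$, the same appeal to the surjectivity of $\Theta_1$ to get injectivity, the same identification $\varphi\mapsto\varphi(S)$ of the parabolic cocycles with $W_{\rho}$, and the same splitting into holomorphic and antiholomorphic parts plus coboundaries --- including the same essentially unproved appeal to Eichler--Shimura surjectivity that the paper itself leaves at the level of an assertion. Note only that your coboundary term $\rho(1-S)(V_{\rho}^T)$, which is forced by $\rho(1+S)\rho(1-S)=\rho(1-S^2)=0$ and matches the term $(V^{\Z})^T|_{(1-S)}$ of the preceding proposition, is the correct one; the $\rho(1+S)$ appearing in the statement is evidently a misprint.
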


\begin{proof}
On peut construire l'application:
$$\Omega_{\rho}\to H^1_{par}(PGL_2(\Z),V_{\rho}),\omega\mapsto(\varphi_{\omega}:\gamma\mapsto \langle \omega,\{\gamma i\infty,i\infty\}).$$
Soit $\omega\in\Omega_{\rho}$. On a bien $\varphi_{\omega}(T)=0$, $\varphi_{\omega}(\varepsilon)=0$ et $\varphi_{\omega}(\gamma_1\gamma_2)=\rho(\gamma_1)\varphi_{\omega}(\gamma_2)+\varphi_{\omega}(\gamma_1)$. Puis la théorie des symboles modulaires, c'est à dire la surjectivité de l'application $\Theta_1$ permet d'obtenir l'application injective:
$$H^1_{par}(PGL_2(\Z),V_{\rho})\to V_{\rho}^{Div^0(\pte)},$$
tel que l'image de $\varphi_{\omega}$ soit l'application: $(b)-(a)\mapsto \langle \omega,\{a,b\}\rangle.$ La composée des deux applications est injective:
$$\Omega_{\rho}\to H^1_{par}(PGL_2(\Z),V_{\rho})\to V_{\rho}^{Div^0(\pte)}.$$
Et $H^1_{par}(PGL_2(\Z),V_{\rho})\to W_{\rho},\quad \varphi\mapsto \varphi(S)$ est bijective où:
$$W_{\rho}=\{v\in V_{\rho}\text{ tel que }\rho(1-\varepsilon)(v)=\rho(1+S)(v)=\rho(1+U+U^2)(v)=0\}.$$
Ainsi $\Omega_{\rho}\to V_{\rho},\quad \omega\mapsto \varphi_{\omega}(S)$ est bien injective. De plus, on dispose de l'inclusion $Per(\rho)\subset W_{\rho}$.
\end{proof}

\begin{thm}
Soient $\rho_1,...,\rho_n$ une famille de représentations complexes de $PGL_2(\Z)$. Alors l'application linéaire suivante est injective:
\begin{equation}
\bigotimes_{j=1}^n\Omega_{\rho_j}^{\holo}\to \left(\bigotimes_{j=1}^n V_{\rho_j}\right)^{\S_n},\quad \otimes_{j=1}^n\omega_j\mapsto \left(\sigma\mapsto\langle\wedge_{j=1}^n\omega_j,\tau_n^{\sigma}\rangle\right).
\end{equation}
De plus, si l'on note $MP(\otimes_{j=1}^n\rho_j)$ son image. Alors:
$$MP(\otimes_{j=1}^n\rho_j)\subset W_{\otimes_{j=1}^n\rho_j}=\left(\bigotimes_{j=1}^n V_{\rho_j}\right)^{\S_n}[\mathcal{A}_n].$$
\end{thm}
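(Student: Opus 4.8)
The plan is to establish the two assertions of the theorem separately, mirroring the structure of the two proofs already carried out for forms of niveau $1$ (the representation $V_k$ case of Chapitre $4$) and for forms of poids $2$ (the $V_{\Gamma_1,\dots,\Gamma_n}$ case). The injectivity of the map $\bigotimes_{j=1}^n\Omega_{\rho_j}^{\holo}\to\left(\bigotimes_{j=1}^n V_{\rho_j}\right)^{\S_n}$ will be proved by induction on $n$, exactly as in the proof of the Proposition stating that $R_k$ is injective. The base case $n=1$ is the injectivity statement of the Proposition just established above, namely that $\omega\mapsto\langle\omega,\tau_1\rangle$ is injective on $\Omega_{\rho}^{\holo}$. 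For the inductive step I would invoke the mixing relation: writing $\S_{n,1}\subset\S_{n+1}$ and summing $\langle\wedge_{j=1}^{n+1}\omega_j,\tau_{n+1}^{\sigma}\rangle$ over $\sigma\in\S_{n,1}$ produces the product $\langle\wedge_{j=1}^n\omega_j,\tau_n\rangle\cdot\langle\omega_{n+1},\tau_1\rangle$, because $\sum_{\sigma\in\S_{n,1}}\tau_{n+1}^{\sigma}=\tau_n\otimes\tau_1$. Given an element of the kernel, written $\sum_{\alpha}\Omega_{\alpha}\otimes\omega_{\alpha}$ with $(\omega_{\alpha})$ a free family in $\Omega_{\rho_{n+1}}^{\holo}$, the injectivity at length $1$ makes the vectors $\langle\omega_{\alpha},\tau_1\rangle$ linearly independent, and the mixing identity forces $\langle\wedge\Omega_{\alpha},\tau_n^{\rho}\rangle=0$ for every $\rho\in\S_n$ and every $\alpha$; the induction hypothesis then gives $\Omega_{\alpha}=0$.

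For the second assertion, the inclusion $MP(\otimes_{j=1}^n\rho_j)\subset\left(\bigotimes_{j=1}^n V_{\rho_j}\right)^{\S_n}[\mathcal{A}_n]$, the key observation is that the entire construction of $\mathcal{A}_n$ is purely topological: it depends only on the action of $G_n=\Gamma^n\rtimes\S_n$ on $\H^n$ and on the chain $\mathcal{T}_n=(\sigma\mapsto\tau_n^{\sigma})$, and not at all on the specific modules $V_{k_j}$. I would first record that for any $\omega\in\Omega_{\rho_1,\dots,\rho_n}$ the two fundamental annulations of the Lemma underlying Theorem \ref{decompomegathm} still hold, namely $\langle\omega,C\rangle=0$ for $C\in\Im(\delta_{n+1})$ (by Stokes, since $d\omega=0$ because each $\omega_j$ is closed) and $\langle\omega,C\rangle=0$ for $C$ a transverse chain in $M_n^{pte}(\varphi_{\sigma}\varphi_j^g(\H^{n-1}),\Z)$ (because $(\varphi_j^g)^*$ kills a wedge of $n$ one-forms in $n$ variables). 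These two facts are exactly what identifies $\mathcal{I}_n$, and hence $\mathcal{A}_n$, as annihilating $\mathcal{T}_n$ on the orthogonal $\Omega_{\rho_1,\dots,\rho_n}^{\bot}$. Concretely, for any $a\in\mathcal{A}_n$ one has $\mathcal{T}_n|_a\in\Omega_{\rho_1,\dots,\rho_n}^{\bot}$ by the same computation as in Propositions \ref{prop19} and \ref{prop20}, so $\langle\omega,\mathcal{T}_n|_a\rangle=0$ for every $\omega\in\Omega_{\rho_1,\dots,\rho_n}^{\holo}$, which is precisely the statement that the image $MP(\otimes_j\rho_j)$ is annihilated by $\widetilde{\mathcal{A}_n}$, i.e. lies in $\left(\bigotimes_j V_{\rho_j}\right)^{\S_n}[\mathcal{A}_n]$.

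The main obstacle will be verifying that the structural identities used in the niveau $1$ argument survive the passage to general irreducible representations $\rho_j$ of $PGL_2(\Z)$. The delicate point is the characterization of $\Omega_{\rho_1,\dots,\rho_n}$ analogous to Theorem \ref{decompomegathm}: in the $V_k$ case the proof of that theorem used $M_{k}=\{0\}$ for $k=2$ and an explicit separation of monomials in the variables $X_j$ to kill the mixed terms $F_3,F_4$ and the antiholomorphic diagonal terms. For a general representation there is no monomial basis and no weight hypothesis to exploit, so I would instead argue abstractly: the closedness condition $d\omega=0$ together with $PGL_2(\Z)$-invariance forces each surviving component to factor as a wedge of forms pulled back from the individual factors, and the pullback conditions $\varphi_{\sigma}^*\varphi_j^*\omega=0$ eliminate all components except the fully holomorphic and fully antiholomorphic ones, precisely because an invariant harmonic form restricted to a diagonal $\varphi_1$-locus or a boundary $\varphi_0,\varphi_n$-locus would descend to an invariant form for a representation that does not contribute. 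I expect this abstract replacement of the explicit computation to be the technically demanding step; everything downstream — the orthogonality, the use of $\widetilde{\mathcal{A}_n}$, and the final inclusion — then follows formally exactly as in the niveau $1$ and poids $2$ treatments, the only genuinely new input being that the pairing $\Omega^n_{par}(\H^n,\C)\times M_n^{pte}(\H^n,\Z)\to\C$ and the $G_n$-action on $\mathcal{T}_n$ are insensitive to which $V_{\rho_j}$ one tensors with.
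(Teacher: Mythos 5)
Your proof is correct and follows essentially the same route as the paper: injectivity by induction on $n$ via the mixing identity $\sum_{\sigma\in\S_{n,1}}\tau_{n+1}^{\sigma}=\tau_n\otimes\tau_1$ together with the linear independence furnished by the length-one proposition, and the inclusion by transporting the $G_n$-action onto $\mathcal{T}_n$ and using the exact sequence $0\to\mathcal{A}_n\to\Z[G_n]\to H_n(P_n)^0\to 0$, so that $\mathcal{T}_n|_a$ lands in boundaries plus transverse chains, which are orthogonal to closed invariant wedges of holomorphic $1$-forms regardless of the coefficient module. The only remark is that the ``main obstacle'' you flag --- an analogue of the characterization of $\Omega_k$ in Theorem \ref{decompomegathm} --- is not needed for this statement: the inclusion $MP(\otimes_j\rho_j)\subset W_{\otimes_j\rho_j}$ uses only the easy direction (Stokes plus the vanishing of $(\varphi_{\sigma}\varphi_j^g)^*$ on wedges of $n$ one-forms), the hard converse being required only for the minimality theorem that follows.
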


\begin{proof}
On peut démontrer l'injectivité par récurrence sur $n$, en adaptant une démonstration précédente. Pour cela, on utilise:
$$\sum_{\sigma\in\S_{n,1}} \tau_{n+1}^{\sigma}=\tau_n\otimes\tau_1.$$
Puis la liberté du cas $n=1$, correspondant à la proposition précédente, permet bien de déduire l'injectivité par récurrence.\par
Pour démontrer l'inclusion, il suffit de s'intéresser à l'action de $PGL_2(\Z)\rtimes \S_n$. Elle est donnée par:
\begin{align*}
[(\gamma_1,...,\gamma_n),\sigma].\left(\sigma'\mapsto \langle \wedge_{j=1}^n\omega_j,\tau_n^{\sigma'}\rangle\right)
&=\left(\sigma'\mapsto \otimes_{j=1}^n\rho_j(\gamma_{\sigma'(j)})(\langle \wedge_{j=1}^n\omega_j,\tau_n^{\sigma\sigma'}\rangle)\right)\\
&=\left(\sigma'\mapsto \langle \wedge_{j=1}^n\rho_j(\gamma_{\sigma'(j)})[\omega_j(z)],\tau_n^{\sigma\sigma'}\rangle\right)\\
&=\left(\sigma'\mapsto \langle \wedge_{j=1}^n\omega_j(\gamma_{\sigma'(j)} z),\tau_n^{\sigma\sigma'}\rangle\right)\\
&=\left(\sigma'\mapsto \langle \wedge_{j=1}^n\omega_j(z),(\gamma_1,...,\gamma_n)^{\sigma'}\tau_n^{\sigma\sigma'}\rangle\right)\\
&=\left(\sigma'\mapsto \langle \wedge_{j=1}^n\omega_j(z),((\gamma_1,...,\gamma_n)\tau_n^{\sigma})^{\sigma'}\rangle\right).
\end{align*}
Elle se transporte donc bien sur l'homologie et l'inclusion est donc due à la suite exacte:
$$0\to \mathcal{A}_n\to \Z[PSL_2(\Z)^n\rtimes \S_n] \to H_n(P_n)^0\to 0.$$
L'action de $\varepsilon$ étant dans ce contexte triviale: $\varepsilon=1$.
\end{proof}

On cherche désormais à résoudre les différentes actions de $\varepsilon$. Pour cela, posons:
$$\Omega_{\rho_j}^+=\Omega_{\rho_j}^{\holo}\text{ et }\Omega_{\rho_j}^-=\rho_j(\varepsilon)\Omega_{\rho_j}^{\holo}\text{ pour }1\leq j\leq n.$$
Ainsi pour toute famille de signes $(\epsilon_j)_{1\leq j\leq n}$, posons:
$$\Omega_{\rho_1\otimes...\otimes\rho_n}^{\epsilon_1,...,\epsilon_n}=\bigotimes_{j=1}^n\Omega_{\rho_j}^{\epsilon_j}.$$
On dispose du résultat suivant:

\begin{prop}
Pour toute famille de signe $\epsilon\in\{\pm 1\}^n$, l'application linéaire suivante est bien définie et est injective:
\begin{equation}
\Omega_{\rho}^{\epsilon}\to \left(V_{\rho}\right)^{\S_n},\quad \otimes_{j=1}^n\omega_j\mapsto \left(\sigma\mapsto\langle\wedge_{j=1}^n\omega_j,\tau_n^{\sigma}\rangle\right).
\end{equation}
L'image de cette application que nous noterons $MP^{\epsilon}(\rho)$ est contrôlée par:
\begin{equation}
MP^{\epsilon}(\rho)\subset \left(V_{\rho}\right)^{\S_n}[\mathcal{A}_n^{\epsilon}].
\end{equation}
\end{prop}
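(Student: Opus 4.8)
### Le plan de démonstration

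La démonstration suivra de très près celle du théorème précédent concernant $MP(\otimes_{j=1}^n\rho_j)$, en tenant compte des torsions par $\rho_j(\varepsilon)$. L'idée directrice est de ramener la preuve au cas déjà traité par un changement d'action de $\varepsilon$, en exploitant la structure des idéaux conjugués $\mathcal{A}_n^{\epsilon}$ déjà construits. Tout d'abord, je remarquerais que d'après l'égalité $\Omega_{\rho_j}^-=\rho_j(\varepsilon)\Omega_{\rho_j}^{\holo}$, chaque forme $\omega_j\in\Omega_{\rho_j}^{\epsilon_j}$ s'écrit $\omega_j=\rho_j(\varepsilon)^{(1-\epsilon_j)/2}\omega_j^+$ avec $\omega_j^+\in\Omega_{\rho_j}^{\holo}$, établissant une bijection linéaire $\Omega_{\rho}^{\epsilon}\cong\Omega_{\rho}^{\holo}=\bigotimes_{j=1}^n\Omega_{\rho_j}^{\holo}$.

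Pour l'injectivité, je procéderais exactement comme dans le théorème précédent. La clé est l'identité $\sum_{\sigma\in\S_{n,1}}\tau_{n+1}^{\sigma}=\tau_n\otimes\tau_1$, combinée à une récurrence sur $n$. Le cas $n=1$ résulte de la proposition sur $W_{\rho}$ qui donne l'injectivité de $\omega\mapsto\langle\omega,\tau_1\rangle$ sur $\Omega_{\rho}^{\holo}$, et donc aussi sur $\Omega_{\rho}^{-}=\rho(\varepsilon)\Omega_{\rho}^{\holo}$ puisque $\rho(\varepsilon)$ est un isomorphisme. Pour le pas de récurrence, si $\otimes_{j=1}^{n+1}\omega_j$ est dans le noyau, j'écrirais un élément du noyau sous la forme $\sum_{\alpha}F_{\alpha}\otimes\omega_{\alpha}$ avec $(\omega_{\alpha})$ une famille libre dans $\Omega_{\rho_{n+1}}^{\epsilon_{n+1}}$, et j'utiliserais l'indépendance des vecteurs des périodes de cette famille (donnée par le cas $n=1$) pour conclure $R(F_{\alpha})=0$, d'où $F_{\alpha}=0$ par hypothèse de récurrence.

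Pour l'inclusion $MP^{\epsilon}(\rho)\subset(V_{\rho})^{\S_n}[\mathcal{A}_n^{\epsilon}]$, je reprendrais le calcul d'action de $G_n=PGL_2(\Z)^n\rtimes\S_n$ sur $\left(\sigma'\mapsto\langle\wedge_{j=1}^n\omega_j,\tau_n^{\sigma'}\rangle\right)$ qui transporte l'action sur $V_{\rho}$ vers l'action sur l'homologie relative de $\H^n$ via la suite exacte $0\to\mathcal{A}_n\to\Z[G_n]\to H_n(P_n)^0\to 0$. Le point essentiel est que l'action de $\varepsilon$ via $\varphi_{\epsilon}$ laisse $\tau_n^{\sigma}$ invariant (les imaginaires purs sont stables par $z\mapsto-\bar z$), ce qui se traduit au niveau des idéaux par l'identité $\mathcal{A}_n^{\epsilon}=\phi_{\epsilon}(\mathcal{A}_n)$. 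L'annulation des crochets $\langle\omega,\mathcal{T}_n|_a\rangle$ pour $a\in\mathcal{A}_n^{\epsilon}$ et $\omega\in\varphi_{\epsilon}^*(\Omega_{\rho}^{\holo})=\Omega_{\rho}^{\epsilon}$ découle alors formellement du théorème de décomposition et de la caractérisation $\mathcal{A}(k)^{\epsilon}\Leftrightarrow\forall\omega\in\varphi_{\epsilon}^*(\Omega_k^{\R}),\langle\omega,\mathcal{T}_n|_a\rangle=0$, adaptée ici au contexte des représentations.

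La principale difficulté sera de vérifier soigneusement que toutes les identités de commutation entre $\varphi_{\epsilon}^*$, $\varphi_{\sigma}^*$, $\varphi_{\gamma}^*$ établies pour les formes modulaires de niveau $1$ restent valides pour des formes modulaires associées à une représentation générale $\rho$. En particulier, la relation de dualité $\langle\omega|_{[\gamma,\rho]},C\rangle=\langle\omega,[\gamma,\rho].C\rangle$ et l'invariance $\omega_f(\gamma.z,X|_{\gamma})=\omega_f(z,X)$ doivent être remplacées par les conditions d'invariance de $\Omega_{\rho}$ sous l'action de $\rho$, ce qui ne pose pas de problème conceptuel mais demande de réécrire chaque étape en termes de l'action $\rho(\gamma)$ plutôt que de l'action $|_{2-k}$ sur les polynômes. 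L'action de $\varepsilon$ étant ici portée par $\rho(\varepsilon)$ plutôt que par la conjugaison complexe, il faudra s'assurer que $\rho(\varepsilon)$ commute de manière compatible avec la permutation des variables, ce qui est exactement encodé par la définition de $\phi_{\epsilon}$ sur $\Z[G_n]$.
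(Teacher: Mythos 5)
Votre démonstration est correcte et suit essentiellement la même stratégie que celle du texte, qui se contente d'ailleurs d'indiquer que le résultat est une adaptation directe du théorème précédent lorsque $\varepsilon$ agit via $\rho(\varepsilon)$ : récurrence sur $n$ fondée sur l'identité $\sum_{\sigma\in\S_{n,1}}\tau_{n+1}^{\sigma}=\tau_n\otimes\tau_1$ pour l'injectivité, et transport de l'action de $G_n$ sur l'homologie via la suite exacte définissant $\mathcal{A}_n$ pour l'inclusion, les torsions par $\rho_j(\varepsilon)$ étant résorbées par la bijection $\Omega_{\rho}^{\epsilon}\cong\Omega_{\rho}^{\holo}$ et l'invariance de $\tau_n$ sous $z\mapsto-\bar{z}$. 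Votre rédaction est en fait plus détaillée que celle du texte, mais n'apporte pas de route différente.
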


\begin{proof}
Ce résultat est une simple généralisation du théorème précédent lorsque que $\varepsilon$ est centrale. On peut ainsi adapter directement la démonstration précédente.
\end{proof}

Nous définissons à présent l'espace rationnel:
\begin{equation}
ME_{\rho}^{\epsilon,\Z}=\sum_{j=0}^n\sum_{\sigma\in\S_n} \left(V_{\rho}^{\Z}\right)^{\S_n}[\mathcal{A}_{n-1}[j,\sigma]^{\epsilon}].
\end{equation}
On dispose du théorème suivant:

\begin{thm}
Soit $\epsilon\in\{\pm 1\}^n$. Alors $\left(V_{\rho}^{\Q}\right)^{\S_n}[\mathcal{A}_n^{\epsilon}]$ est le plus petit sous-espace de $\left(V_{\rho}^{\Q}\right)^{\S_n}$ contenant $ME_{\rho}^{\epsilon,\Q}$ tel que son extension au corps des complexes contiennent  $MP^{\epsilon}(\rho)$.
\end{thm}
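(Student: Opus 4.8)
La stratégie est de décalquer exactement la démonstration du théorème analogue pour les formes paraboliques de niveau $1$ (le théorème de la section $4$ affirmant que $\left(V_k^{\Q}\right)^{\S_n}[\mathcal{A}_n]$ est le plus petit sous-espace contenant $ME_k^{\Q}$ dont l'extension complexe contient $MP(k)$), en exploitant le fait crucial que l'idéal $\mathcal{A}_n$ est \emph{indépendant des poids} et ne dépend que de la topologie de $\H^n$ sous l'action de $\Gamma^n\rtimes\S_n$. On commence par traduire l'énoncé en termes de dimensions : puisque $\rho$ est irréductible, le $\Q[PGL_2(\Z)^n\rtimes\S_n]$-module $\left(V_{\rho}^{\Q}\right)^{\S_n}$ est simple, donc le morphisme $\Q[PGL_2(\Z)^n\rtimes\S_n]\to End_{\Q}\left(\left(V_{\rho}^{\Q}\right)^{\S_n}\right)$ est surjectif. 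Comme dans la remarque a) du chapitre $3$, ceci entraîne que tout sous-$\Q$-espace vectoriel de $\left(V_{\rho}^{\Q}\right)^{\S_n}$ s'écrit comme l'annulateur d'un élément de $\Q[PGL_2(\Z)^n\rtimes\S_n]$. La minimalité voulue se ramène alors à établir l'égalité
\[
\left(V_{\rho}^{\Q}\right)^{\S_n}[\mathcal{A}_n^{\epsilon}]=\left(V_{\rho}^{\Q}\right)^{\S_n}[\mathcal{A}(\rho)^{\epsilon}]+ME_{\rho}^{\epsilon,\Q},
\]
où $\mathcal{A}(\rho)$ est l'idéal annulateur de l'espace des multipériodes de $\rho$, défini exactement comme $\mathcal{A}(k)$ mais avec $\Omega_{\rho}$ à la place de $\Omega_k^{\R}$.

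Les étapes se déroulent dans l'ordre suivant. D'abord j'établirais, via la proposition précédant l'énoncé et son analogue conjugué, les inclusions $\mathcal{A}_n^{\epsilon}\subset\mathcal{A}(\rho)^{\epsilon}$ et $\mathcal{A}_n^{\epsilon}\subset\mathcal{A}_{n-1}[j,\sigma]^{\epsilon}$ pour tout $j,\sigma$ ; ceci fournit gratuitement l'inclusion facile $\supset$ entre les deux membres ci-dessus. Ensuite j'utiliserais la suite exacte attachée à l'application $\Theta_n:\Z[G_n]\to H_n(P_n)^0$ dont le noyau est $\mathcal{A}_n$ : en la tensorisant par le module simple $\left(V_{\rho}^{\Q}\right)^{\S_n}$ (ce qui préserve l'exactitude), on obtient
\[
0\to \left(V_{\rho}^{\Q}\right)^{\S_n}|_{\mathcal{A}_n^{\epsilon}}\to \left(V_{\rho}^{\Q}\right)^{\S_n}\to H_n(P_n)^0\otimes_{\Q[G_n]}\left(V_{\rho}^{\Q}\right)^{\S_n}\to 0.
\]
Le point central est alors d'identifier le terme de droite : grâce au corollaire \ref{formH} décrivant $H_n(P_n)^0$ comme quotient de chaînes transverses fermées, et grâce à la description duale de $\mathcal{A}(\rho)$ et des $\mathcal{A}_{n-1}[j,\sigma]$ par l'accouplement $\langle\cdot,\cdot\rangle$ (théorème \ref{decompomegathm}, qui caractérise $\Omega_{\rho}$ par les conditions $d\omega=\varphi_{\sigma}^*\varphi_j^*\omega=0$), on obtient l'isomorphisme
\[
H_n(P_n)^0\otimes_{\Q[G_n]}\left(V_{\rho}^{\Q}\right)^{\S_n}\cong \left(V_{\rho}^{\Q}\right)^{\S_n}[\mathcal{A}(\rho)^{\epsilon}]+ME_{\rho}^{\epsilon,\Q}.
\]
Enfin la dualité donnée par le produit bilinéaire symétrique $G_n$-invariant sur $\left(V_{\rho}^{\Q}\right)^{\S_n}$ (induit par celui de la remarque \ref{prodvk}) relie les dimensions de $\left(V_{\rho}^{\Q}\right)^{\S_n}|_{\mathcal{A}_n^{\epsilon}}$ et $\left(V_{\rho}^{\Q}\right)^{\S_n}[\mathcal{A}_n^{\epsilon}]$, et l'exactitude de la suite force l'égalité des dimensions des deux membres, d'où leur égalité puisque l'une est incluse dans l'autre.

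Le principal obstacle technique se situe à l'étape où l'on identifie $\Omega_{\rho}$ par les équations différentielles $d\omega=0$ et $\varphi_{\sigma}^*\varphi_j^*\omega=0$ : le théorème \ref{decompomegathm} a été démontré pour les formes modulaires scalaires de poids $k_j\geq 4$, et sa preuve repose de façon essentielle sur l'analyse des termes croisés via l'annulation $M_2=\{0\}$ et l'hypothèse $k_1+k_2>4$ (dans le cas $n=2$). Il faudra vérifier que l'argument survit lorsqu'on remplace chaque $\Omega_{k_j}$ par $\Omega_{\rho_j}$ pour une représentation irréductible \emph{quelconque} de $PGL_2(\Z)$ : la décomposition harmonique d'une $n$-forme exacte invariante reste valable, mais la séparation des termes anti-holomorphes et l'exclusion des termes mixtes demandent que la représentation ne contienne pas de sous-objet trivial gênant, rôle jadis tenu par $M_2=\{0\}$. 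C'est précisément là que l'irréductibilité de $\rho$ et l'hypothèse de dimension finie doivent être mobilisées, et c'est le seul endroit où le passage du cas scalaire au cas d'une représentation abstraite n'est pas purement formel ; le reste de la démonstration est un transport direct de la structure déjà établie.
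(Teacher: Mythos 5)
Votre démonstration suit essentiellement la même voie que celle du texte : introduction de l'idéal maximal $\mathcal{A}(\rho)$, inclusion $\mathcal{A}_n\subset\mathcal{A}(\rho)$ et $ME_{\rho}^{\Q}\subset\left(V_{\rho}^{\Q}\right)^{\S_n}[\mathcal{A}_n]$, puis égalité des dimensions via la suite exacte issue de $\Theta_n$ tensorisée par le module simple, l'identification du terme de droite avec $\left(V_{\rho}^{\Q}\right)^{\S_n}[\mathcal{A}(\rho)]+ME_{\rho}^{\Q}$ par dualité, et le passage au cas d'un $\epsilon$ quelconque par conjugaison. Votre remarque sur le point délicat (l'extension du théorème \ref{decompomegathm} au cas d'une représentation abstraite, où l'irréductibilité remplace l'argument $M_2=\{0\}$) identifie d'ailleurs précisément l'endroit que la rédaction du texte laisse implicite.
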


\begin{proof}
Pour cela, il suffit d'étudier le cas de l'idéal maximal vérifiant l'hypothèse:
$$\mathcal{A}(\rho)=\{a\in\Z[G_n]\text{ tel que }\langle \omega, \tau_n^{\sigma}\rangle|_a=0\text{ pour tout }\omega\in \Omega_{\rho}\}.$$
Le théorème précédent démontre que $\mathcal{A}_n\subset\mathcal{A}(\rho)$ et ainsi :
$$\left(V_{\rho}^{\Q}\right)^{\S_n}[\mathcal{A}(\rho)]\subset\left(V_{\rho}^{\Q}\right)^{\S_n}[\mathcal{A}_n].$$
D'autre part la construction de $\mathcal{A}_n$ démontre que $ME_{\rho}^{\Q}\subset \left(V_{\rho}^{\Q}\right)^{\S_n}[\mathcal{A}_n]$.\par
Il suffit donc d'obtenir l'égalité des dimensions pour démontrer le théorème. Or on dispose de la suite exacte:
$$0\to \left(V_{\rho}^{\epsilon,\Q}\right)^{\S_n}|_{\mathcal{A}_n} \to \left(V_{\rho}^{\epsilon,\Q}\right)^{\S_n} 
\to \left[ \left(V_{\rho}^{\epsilon,\Q}\right)^{\S_n} \otimes H_n(P_n)^0 \right]^{G_n} \to 0.$$
Et ce dernier est isomorphe à $\left(V_{\rho}^{\Q}\right)^{\S_n}[\mathcal{A}(\rho)]+ME_{\rho}^{\Q}$ d'après la dualité mis en place et les résultats obtenues sur l'homologie. 
Le résultat est alors valide pour un $\epsilon$ quelconque en passant à la conjugaison par $\rho_j(\varepsilon)$.
\end{proof}

\nocite{*}
\bibliography{bibmod}
\addcontentsline{toc}{chapter}{Bibliographie}

\cleardoublepage 
\renewcommand{\nomname}{Index des notations} 
\addcontentsline{toc}{chapter}{Index des notations}
\printnomenclature[2cm]
\end{document}